\DeclareMathAlphabet{\mathscrbf}{OMS}{mdugm}{b}{n}
\definecolor{myorange}{RGB}{240,127,0}
\definecolor{mygreen}{RGB}{0,240,0}
\definecolor{mypurple}{RGB}{140,0,140}
\definecolor{myred}{RGB}{255,0,0}
\definecolor{myblue}{RGB}{0,0,210}
\definecolor{myyellow}{RGB}{210,210,0}
\definecolor{mycream}{RGB}{200,200,150}
\definecolor{dummy}{RGB}{10,10,10}
\definecolor{mygray}{gray}{0.7}
\definecolor{orchid}{RGB}{143,40,194}
\definecolor{lava}{RGB}{207,16,32}
\definecolor{mydarkblue}{RGB}{10,10,170}
\tikzset{anchorbase/.style={baseline={([yshift=-0.5ex]current bounding box.center)}},
  tinynodes/.style={font=\tiny,text height=0.75ex,text depth=0.15ex},
  bstrand/.style={line width=1.5, color=myblue},
  rstrand/.style={line width=1.5, color=myred},
  ystrand/.style={line width=1.5, color=myyellow},
  dstrand/.style={line width=1.5, color=black},
}
\tikzstyle directed=[postaction={decorate,decoration={markings,
    mark=at position #1 with {\arrow[draw=black, line width=0.3mm]{>}}}}]
\tikzstyle rdirected=[postaction={decorate,decoration={markings,
    mark=at position #1 with {\arrow[draw=black, line width=0.3mm]{<}}}}]
\tikzstyle ddirected=[postaction={decorate,decoration={markings,
    mark=at position #1 with {\arrow[draw=black, line width=0.3mm]{<>}}}}]
\tikzstyle{snakeline} = [decorate, decoration={pre length=0.2cm,
\tikzstyle marked=[postaction={decorate,decoration={markings,
    mark=at position #1 with {\fill[mygray] (.5pt,2pt)--(-.5pt,2pt)--(-.5pt,-2pt)--(.5pt,-2pt)--cycle;}}}]                         
\tikzstyle Xmarked=[postaction={decorate,decoration={markings,
    mark=at position #1 with {\arrow[line width=0.25mm, black]{>}}}}]
\tikzstyle Ymarked=[postaction={decorate,decoration={markings,
    mark=at position #1 with {\arrow[line width=0.25mm, black]{<}}}}]
\tikzstyle crossmarked=[postaction={decorate,decoration={markings,
    mark=at position #1 with {\draw[ultra thin, white, fill=white] (0,0) circle (.175cm);}}}]
\tikzstyle crossmarkedd=[postaction={decorate,decoration={markings,
    mark=at position #1 with {\draw[ultra thin, white, fill=white] (0,0) circle (.1cm);}}}]
\newcommand{\setword}[2]{%
  \phantomsection
  #1\def\@currentlabel{\unexpanded{#1}}\label{#2}%
}
\newcommand{\neatfrac}[2]{\tfrac{#1}{#2}}
\newcommand{\sneatfrac}[2]{\scalebox{.75}{$\tfrac{#1}{#2}$}}
\newcommand{\placeholder}{\underline{\phantom{a}}}
\newcommand{\bluebox}{\,\tikz[baseline=-.05,scale=0.25]{\draw[myblue,fill=myblue] (0,0) rectangle (1,1);}\,}
\newcommand{\redbox}{\,\tikz[baseline=-.05,scale=0.25]{\draw[myred,fill=myred] (0,0) rectangle (1,1);}\,}
\newcommand{\yellowbox}{\,\tikz[baseline=-.05,scale=0.25]{\draw[myyellow,fill=myyellow] (0,0) rectangle (1,1);}\,}
\newcommand{\greenbox}{\,\tikz[baseline=-.05,scale=0.25]{\draw[mygreen,fill=mygreen] (0,0) rectangle (1,1);}\,}
\newcommand{\orangebox}{\,\tikz[baseline=-.05,scale=0.25]{\draw[myorange,fill=myorange] (0,0) rectangle (1,1);}\,}
\newcommand{\purplebox}{\,\tikz[baseline=-.05,scale=0.25]{\draw[mypurple,fill=mypurple] (0,0) rectangle (1,1);}\,}
\newcommand{\C}{\mathbb{C}}
\newcommand{\Z}{\mathbb{Z}}
\newcommand{\R}{\mathbb{R}}
\newcommand{\N}{\mathbb{N}}
\newcommand{\zeetwo}{\Z/2\Z}
\newcommand{\zeethree}{\Z/3\Z}
\newcommand{\varstuff}[1]{\mathtt{#1}}
\newcommand{\algstuff}[1]{\mathrm{#1}}
\newcommand{\catstuff}[1]{\mathcal{#1}}
\newcommand{\twocatstuff}[1]{\mathscrbf{#1}}
\newcommand{\obstuff}[1]{\mathtt{#1}}
\newcommand{\morstuff}[1]{\mathrm{#1}}
\newcommand{\twomorstuff}[1]{\mathsf{#1}}
\newcommand{\modcat}[1]{\catstuff{M}\mathrm{od}_{#1}}
\newcommand{\modtwocat}[1]{\twocatstuff{M}\mathrm{od}_{#1}}
\newcommand{\K}{\algstuff{R}}
\newcommand{\iunit}{\mathsf{i}}
\newcommand{\zetaroot}{\mathtt{\zeta}}
\newcommand{\vpar}{\varstuff{v}}
\newcommand{\intvpar}{\scalebox{.6}{$[\vpar]$}}
\newcommand{\Cv}{\C_{\vpar}}
\newcommand{\vnumber}[1]{[#1]_{\vpar}}
\newcommand{\vfrac}[1]{[#1]_{\vpar}!\,}
\newcommand{\aformv}{\Z_{\intvpar}}
\newcommand{\aformvN}{\N_{\intvpar}}
\newcommand{\vbin}[2]{{\textstyle\genfrac{[}{]}{0pt}{}{#1}{#2}}_{\vpar}}
\newcommand{\vbinn}[2]{\genfrac{[}{]}{0pt}{}{#1}{#2}_{\vpar}}
\newcommand{\qpar}{\varstuff{q}}
\newcommand{\intqpar}{\scalebox{.6}{$[\qpar]$}}
\newcommand{\qqpar}{\varstuff{\eta}}
\newcommand{\Cq}[1][\qpar]{\C_{#1}}
\newcommand{\qnumber}[1]{[#1]_{\qpar}}
\newcommand{\aformq}{\C_{\intqpar}}
\newcommand{\qqnumber}[1]{[#1]_{\qqpar}}
\newcommand{\qbinq}[2]{{\textstyle\genfrac{[}{]}{0pt}{}{#1}{#2}}_{\qqpar}}
\newcommand{\typeA}{\mathsf{A}}
\newcommand{\typeD}{\mathsf{D}}
\newcommand{\typeE}{\mathsf{E}}
\newcommand{\typea}[1]{\mathsf{A}_{#1}}
\newcommand{\typeat}[1]{\tilde{\mathsf{A}}_{#1}}
\newcommand{\typedt}[1]{\tilde{\mathsf{D}}_{#1}}
\newcommand{\typeet}[1]{\tilde{\mathsf{E}}_{#1}}
\newcommand{\typei}[1][e+2]{\mathsf{I}_{2}(#1)}
\DeclareRobustCommand{\ADE}{\mathsf{ADE}}
\DeclareRobustCommand{\graphA}[1]{\boldsymbol{\mathsf{A}}_{#1}}
\DeclareRobustCommand{\graphD}[1]{\boldsymbol{\mathsf{D}}_{#1}}
\DeclareRobustCommand{\graphC}[1]{\boldsymbol{\mathsf{cA}}_{#1}}
\DeclareRobustCommand{\graphE}[1]{\boldsymbol{\mathsf{E}}_{#1}}
\newcommand{\bc}{{\color{myblue}b}}
\newcommand{\rc}{{\color{myred}r}}
\newcommand{\yc}{{\color{myyellow}y}}
\newcommand{\gc}{{\color{mygreen}g}}
\newcommand{\oc}{{\color{myorange}o}}
\newcommand{\pc}{{\color{mypurple}p}}
\newcommand{\bcblack}{{\color{black}b}}
\newcommand{\ycblack}{{\color{black}y}}
\newcommand{\gcblack}{{\color{black}g}}
\newcommand{\wc}{\emptyset}
\newcommand{\duc}{{\color{dummy}c}}
\newcommand{\dudc}{{\color{dummy}d}}
\newcommand{\tduc}{{\color{dummy}\textbf{u}}}
\newcommand{\tdudc}{{\color{dummy}\textbf{v}}}
\newcommand{\Gg}{\boldsymbol{\Gamma}}
\newcommand{\oGg}{\Gg}
\newcommand{\bulletg}{\scalebox{1.05}{\text{{\color{mygreen}$\bullet$}}}}
\newcommand{\bulleto}{\scalebox{.7}{\text{{\color{myorange}$\blacksquare$}}}}
\newcommand{\bulletp}{\scalebox{.7}{\text{{\color{mypurple}$\blacklozenge$}}}}
\newcommand{\bulletb}{\scalebox{.7}{\text{{\color{myblue}$\blacktriangledown$}}}}
\newcommand{\bulletr}{\scalebox{.7}{\text{{\color{myred}$\bigstar$}}}}
\newcommand{\bullety}{\scalebox{.7}{\text{{\color{myyellow}$\blacktriangle$}}}}
\newcommand{\bulletgstart}{\scalebox{1.5}{\text{{\color{mygreen}$\star$}}}}
\newcommand{\Gset}{{\color{mygreen}G}}
\newcommand{\Oset}{{\color{myorange}O}}
\newcommand{\Pset}{{\color{mypurple}P}}
\newcommand{\Bset}{{\color{myblue}B}}
\newcommand{\Rset}{{\color{myred}R}}
\newcommand{\Yset}{{\color{myyellow}Y}}
\newcommand{\Prset}{\Bset\Rset\Yset}
\newcommand{\Seset}{\Gset\Oset\Pset}
\newcommand{\zig}[1][e]{\nabla_{#1}}
\newcommand{\zigmod}[1][e]{\zig[#1]\text{-}\mathrm{p}\catstuff{M}\mathrm{od}^{\mathrm{gr}}}
\newcommand{\circled}[1]{\tikz{
	\draw[thin,fill=mycream] (0,0) circle (.35cm);
    \node at (0,-.1) {#1};}}
\newcommand{\dvert}[1]{\circled{${\color{black}\mathtt{#1}}$}}
\newcommand{\somevert}[1]{\mathtt{#1}}
\newcommand{\pathx}[2]{\mathtt{#2}\vert\mathtt{#1}}
\newcommand{\pathy}[2]{\mathtt{#2}\vert\mathtt{#1}}
\newcommand{\pathxx}[3]{\mathtt{#3}\vert\mathtt{#2}\vert\mathtt{#1}}
\newcommand{\pathxxx}[4]{\mathtt{#4}\vert\mathtt{#3}\vert\mathtt{#2}\vert\mathtt{#1}}
\newcommand{\pathxxxx}[5]{\mathtt{#5}\vert\mathtt{#4}\vert\mathtt{#3}\vert\mathtt{#2}\vert\mathtt{#1}}
\newcommand{\loopy}[1]{\alpha_{\mathtt{#1}}}
\newcommand{\lpro}[1][\somevert{i}_{m,n}]{\algstuff{P}_{#1}}
\newcommand{\rpro}[1][\somevert{i}_{m,n}]{{}_{#1}\algstuff{P}}
\newcommand{\thetaf}[1]{\boldsymbol{\Theta}_{#1}}
\newcommand{\zigideal}[1]{\algstuff{K}_{#1}}
\newcommand{\hecke}[1][\vpar]{\algstuff{H}_{#1}}
\newcommand{\subquo}[1][\infty]{\algstuff{T}_{#1}}
\newcommand{\dihquo}[1][\infty]{\algstuff{D}_{#1}}
\newcommand{\Wgroup}{\algstuff{W}}
\newcommand{\rklg}[1]{\algstuff{h}\hspace*{-.015cm}{}^{#1}_{\gc}}
\newcommand{\rklo}[1]{\algstuff{h}\hspace*{-.015cm}{}^{#1}_{\oc}}
\newcommand{\rklp}[1]{\algstuff{h}\hspace*{-.015cm}{}^{#1}_{\pc}}
\newcommand{\rklx}[1]{\algstuff{h}\hspace*{-.015cm}{}^{#1}_{\tduc}}
\newcommand{\RKLg}[1]{\algstuff{c}\hspace*{-.015cm}{}^{#1}_{\gc}}
\newcommand{\RKLo}[1]{\algstuff{c}\hspace*{-.015cm}{}^{#1}_{\oc}}
\newcommand{\RKLp}[1]{\algstuff{c}\hspace*{-.015cm}{}^{#1}_{\pc}}
\newcommand{\RKLx}[1]{\algstuff{c}\hspace*{-.015cm}{}^{#1}_{\tduc}}
\newcommand{\RKLy}[1]{\algstuff{c}\hspace*{-.015cm}{}^{#1}_{\tdudc}}
\newcommand{\klx}[1]{{}^{#1}\hspace*{-.2cm}{}_{\tduc}\algstuff{h}}
\newcommand{\kly}[1]{{}^{#1}\hspace*{-.2cm}{}_{\tdudc}\algstuff{h}}
\newcommand{\KLx}[1]{{}^{#1}\hspace*{-.2cm}{}_{\tduc}\algstuff{c}}
\newcommand{\KLy}[1]{{}^{#1}\hspace*{-.2cm}{}_{\tdudc}\algstuff{c}}
\newcommand{\basisH}[1][\infty]{\algstuff{H}\hspace*{-.015cm}{}^{#1}}
\newcommand{\Hbasis}[1][\infty]{{}^{#1}\algstuff{H}}
\newcommand{\basisC}[1][\infty]{\algstuff{C}\hspace*{-.015cm}{}^{#1}}
\newcommand{\Cbasis}[1][\infty]{{}^{#1}\algstuff{C}}
\newcommand{\killideal}[1]{\algstuff{I}_{#1}}
\newcommand{\vanideal}[1]{\algstuff{J}_{#1}}
\newcommand{\vanset}[1]{\algstuff{V}_{#1}}
\newcommand{\posalg}{\algstuff{P}}
\newcommand{\posmod}{\algstuff{M}}
\newcommand{\posbasis}[1][\posalg]{\algstuff{B}^{#1}}
\newcommand{\posmodbasis}[1][\posmod]{\algstuff{B}^{#1}}
\newcommand{\lcell}{\mathsf{L}}
\newcommand{\rcell}{\mathsf{R}}
\newcommand{\tcell}{\mathsf{J}}
\newcommand{\Lcell}{\mathrm{L}}
\newcommand{\Rcell}{\mathrm{R}}
\newcommand{\Tcell}{\mathrm{J}}
\newcommand{\lgeq}{\geq_{\Lcell}}
\newcommand{\rgeq}{\geq_{\Rcell}}
\newcommand{\tgeq}{\geq_{\Tcell}}
\newcommand{\lgeqs}{>_{\Lcell}}
\newcommand{\lsim}{\sim_{\Lcell}}
\newcommand{\rsim}{\sim_{\Rcell}}
\newcommand{\tsim}{\sim_{\Tcell}}
\newcommand{\qDema}{\partial}
\newcommand{\fdual}{\star}
\newcommand{\rootb}{\alpha_{\bc}}
\newcommand{\rootr}{\alpha_{\rc}}
\newcommand{\rooty}{\alpha_{\yc}}
\newcommand{\rootdu}{\alpha_{\duc}}
\newcommand{\frobel}[2]{\mu_{#1}^{#2}}
\newcommand{\Frobel}[2]{\Delta_{#1}^{#2}}
\newcommand{\somebasis}{\algstuff{B}}
\newcommand{\dubasis}{\algstuff{B}_{\duc}}
\newcommand{\dutbasis}{\algstuff{B}_{\tduc}}
\newcommand{\dubcbasis}{\algstuff{B}^{\duc}_{\tduc}}
\newcommand{\dubasisd}{\algstuff{B}_{\duc}^{\fdual}}
\newcommand{\dutbasisd}{\algstuff{B}_{\tduc}^{\fdual}}
\newcommand{\dubcbasisd}{(\algstuff{B}^{\duc}_{\tduc})^\fdual}
\newcommand{\slt}{\mathfrak{sl}_{3}}
\newcommand{\Uslt}{\algstuff{U}_{\qpar}(\mathfrak{sl}_{3})}
\newcommand{\Uqslt}{\algstuff{U}_{\qqpar}(\mathfrak{sl}_{3})}
\newcommand{\Ll}{\algstuff{L}}
\newcommand{\fu}{\varstuff{X}}
\newcommand{\fud}{\varstuff{Y}}
\newcommand{\sltcatpre}[1][\qpar]{\algstuff{U}_{#1}(\mathfrak{sl}_{3})\text{-}\catstuff{M}\mathrm{od}}
\newcommand{\sltcat}[1][\qpar]{\catstuff{Q}_{#1}}
\newcommand{\slqmod}[1][e]{\catstuff{Q}_{#1}}
\newcommand{\slqmodgop}[1][e]{\twocatstuff{Q}^{\Seset}_{#1}}
\newcommand{\sltcatgop}[1][\qpar]{\twocatstuff{Q}^{\Seset}_{#1}}
\newcommand{\fuf}[2]{{}_{#2}\morstuff{X}_{#1}}
\newcommand{\fudf}[2]{{}_{#2}\morstuff{Y}_{#1}}
\newcommand{\qdim}[1][\qqpar]{#1\mathrm{dim}}
\newcommand{\sltwo}{\mathfrak{sl}_{2}}
\newcommand{\pxy}[2][\fu,\fud]{\mathsf{U}_{#2}(#1)}
\newcommand{\amod}[1][M]{\algstuff{#1}}
\newcommand{\M}{\algstuff{M}}
\newcommand{\Mt}[1][z]{\M_{#1}^{\mathrm{tot}}}
\newcommand{\Mg}[1][z]{\M_{#1}(\gc)}
\newcommand{\Mo}[1][z]{\M_{#1}(\oc)}
\newcommand{\Mp}[1][z]{\M_{#1}(\pc)}
\newcommand{\cM}{\twocatstuff{M}}
\newcommand{\varx}{\varstuff{x}}
\newcommand{\vary}{\varstuff{y}}
\newcommand{\deltoid}{\mathsf{d}}
\newcommand{\discoid}{\mathsf{d}_{3}}
\newcommand{\Idmatrix}{\mathrm{Id}}
\newcommand{\claspideal}[1][e]{\twocatstuff{I}_{#1}}
\newcommand{\Rbim}[1][\intqpar]{\algstuff{R}_{#1}}
\newcommand{\elfunctor}[1][\qpar]{\twomorstuff{S}_{#1}}
\newcommand{\ADiag}[1][\intqpar]{\boldsymbol{s}\twocatstuff{BS}_{#1}^{\ast}}
\newcommand{\Adiag}[1][\intqpar]{\boldsymbol{s}\twocatstuff{BS}_{#1}}
\newcommand{\adiag}[1][\intqpar]{\twocatstuff{BS}_{#1}}
\newcommand{\aDiag}[1][\intqpar]{\boldsymbol{m}\twocatstuff{BS}_{#1}}
\newcommand{\subcatquo}[1][{\infty,\intqpar}]{\twocatstuff{T}_{#1}}
\newcommand{\Subcatquo}[1][{\infty,\intqpar}]{\boldsymbol{m}\twocatstuff{T}_{#1}}
\newcommand{\Kar}[1]{\twocatstuff{K}\!\mathrm{ar}(#1)}
\newcommand{\polybox}[1]{\fcolorbox{black}{mygray!30}{\color{black}$#1$}}
\newcommand{\cRKLg}[1]{\twomorstuff{p}^{#1}_{\gc}}
\newcommand{\cRKLo}[1]{\twomorstuff{p}^{#1}_{\oc}}
\newcommand{\cRKLp}[1]{\twomorstuff{p}^{#1}_{\pc}}
\newcommand{\cRKLx}[1]{\twomorstuff{p}^{#1}_{\tduc}}
\newcommand{\CRKLg}[1]{\twomorstuff{c}^{#1}_{\gc}}
\newcommand{\CRKLx}[1]{\twomorstuff{c}^{#1}_{\tduc}}
\newcommand{\cKLg}[1]{{}^{#1}\hspace*{-.2cm}{}_{\gc}\twomorstuff{p}}
\newcommand{\cKLo}[1]{{}^{#1}\hspace*{-.2cm}{}_{\oc}\twomorstuff{p}}
\newcommand{\cKLp}[1]{{}^{#1}\hspace*{-.2cm}{}_{\pc}\twomorstuff{p}}
\newcommand{\cKLx}[1]{{}^{#1}\hspace*{-.2cm}{}_{\tduc}\twomorstuff{p}}
\newcommand{\CKLx}[1]{{}^{#1}\hspace*{-.2cm}{}_{\tduc}\twomorstuff{c}}
\newcommand{\adiagfield}{\adiag[\qpar]}
\newcommand{\Adiagfield}{\Adiag[\qpar]}
\newcommand{\aDiagfield}{\aDiag[\qpar]}
\newcommand{\subcatquofield}{\subcatquo[\infty]}
\newcommand{\inte}{\scalebox{.6}{$[e]$}}
\newcommand{\aforme}{\C_{\inte}}
\newcommand{\sltcatefield}{\twocatstuff{Q}_{\inte}^{\Seset}}
\newcommand{\adiagefield}{\adiag[\inte]}
\newcommand{\subcatquoefield}{\subcatquo[\inte]}
\newcommand{\Subcatquoefield}{\Subcatquo[\inte]}
\newcommand{\elfunctorefield}{\twomorstuff{S}_{\inte}}
\newcommand{\claspidealefielde}{\claspideal[e]}
\newcommand{\claspidealgope}{\twocatstuff{H}_{e}}
\newcommand{\fincat}{\twocatstuff{A}^{\mathrm{f}}_{\mathrm{gr}}}
\newcommand{\GG}[1]{[#1]_{\oplus}}
\newcommand{\GGc}[1]{[#1]_{\oplus}^{\C}}
\newcommand{\GGcv}[1]{[#1]_{\oplus}^{\Cv}}
\newcommand{\End}{\algstuff{E}\mathrm{nd}}
\newcommand{\Hom}{\algstuff{H}\mathrm{om}}
\newcommand{\twoEnd}{\twocatstuff{E}\mathrm{nd}}
\newcommand{\twoHom}{\twocatstuff{H}\mathrm{om}}
\newcommand{\vcomp}{\circ_{v}}
\newcommand{\hcomp}{\circ_{h}}
\theoremstyle{definition}
\newtheorem{theoremm}{Theorem}[section]
\declaretheorem[style=definition,name=Theorem,qed=$\square$,numberlike=theoremm]{theorem}
\declaretheorem[style=definition,name=Theorem,qed=$\blacksquare$,numberlike=theoremm]{theoremqed}
\declaretheorem[style=definition,name=Corollary,qed=$\blacksquare$,numberlike=theoremm]{corollary}
\declaretheorem[style=definition,name=Lemma,qed=$\square$,numberlike=theoremm]{lemma}
\declaretheorem[style=definition,name=Lemma,qed=$\blacksquare$,numberlike=theoremm]{lemmaqed}
\declaretheorem[style=definition,name=Proposition,qed=$\square$,numberlike=theoremm]{proposition}
\declaretheorem[style=definition,name=Proposition,qed=$\blacksquare$,numberlike=theoremm]{propositionqed}
\declaretheorem[style=definition,name=Remark about colors,qed=$\blacktriangle$,numbered=no]{remarkcolor}
\declaretheorem[style=definition,name=Quantum conventions,qed=$\blacktriangle$,numbered=no]{qconventions}
\declaretheorem[style=definition,name=Example,qed=$\blacktriangle$,numberlike=theorem]{example}
\declaretheorem[style=definition,name=Definition,qed=$\blacktriangle$,numberlike=theorem]{definition}
\declaretheorem[style=definition,name=Definition,numberlike=theorem]{definitionnoqed}
\declaretheorem[style=definition,name=Remark,qed=$\blacktriangle$,numberlike=theorem]{remark}
\declaretheorem[style=definition,name=Convention,qed=$\blacktriangle$,numberlike=theorem]{convention}
\declaretheorem[style=definition,name=The dihedral story,qed=$\blacktriangle$,numberlike=theorem]{dihedral}
\declaretheorem[style=definition,name=CP,qed=$\blacktriangle$,numberlike=theorem]{problem}
\def\notation#1#2#3{\rlap{\hyperref[#1]{{\color{orchid}#2}}}\hspace*{8.2mm} \hbox to 47mm{#3\hfill}}
\newcommand{\makeqedtri}{\hfill\ensuremath{\blacktriangle}}
\numberwithin{equation}{section}
\let\fullref\autoref
\def\makeautorefname#1#2{\expandafter\def\csname#1autorefname\endcsname{#2}}
\begin{document}
\vbadness=10001
\hbadness=10001
\title[Trihedral Soergel bimodules]{Trihedral Soergel bimodules}
\author[M. Mackaay, V. Mazorchuk, V. Miemietz and D. Tubbenhauer]{Marco Mackaay, Volodymyr Mazorchuk, \\ Vanessa Miemietz and Daniel Tubbenhauer}

\address{M.M.: Center for Mathematical Analysis, Geometry, and Dynamical Systems,
Departamento de Matem\'{a}tica, Instituto Superior T\'{e}cnico, 1049-001 Lisboa, Portugal \& Departamento de Matem\'{a}tica, FCT, Universidade do Algarve, Campus de Gambelas, 8005-139 Faro, Portugal}
\email{mmackaay@ualg.pt}

\address{Vo.Ma.: Department of Mathematics, Uppsala University, Box. 480, SE-75106, Uppsala, Sweden}
\email{mazor@math.uu.se}

\address{Va.Mi.: School of Mathematics, University of East Anglia, Norwich NR4 7TJ, United Kingdom}
\email{v.miemietz@uea.ac.uk}

\address{D.T.: Institut f\"ur Mathematik, Universit\"at Z\"urich, Winterthurerstrasse 190, Campus Irchel, Office Y27J32, CH-8057 Z\"urich, Switzerland, \href{www.dtubbenhauer.com}{www.dtubbenhauer.com}}
\email{daniel.tubbenhauer@math.uzh.ch}

\begin{abstract}
The quantum Satake correspondence relates dihedral Soergel bimodules to the semisimple 
quotient of the quantum $\mathfrak{sl}_2$ representation category. It also establishes a 
precise relation between the simple transitive $2$-representations of both monoidal categories, which 
are indexed by bicolored $\mathsf{ADE}$ Dynkin diagrams.

Using the quantum Satake correspondence 
between affine $\mathsf{A}_{2}$ Soergel bimodules 
and the semisimple quotient of the quantum $\mathfrak{sl}_3$ representation 
category, we introduce trihedral Hecke algebras and Soergel bimodules, 
generalizing dihedral Hecke algebras and Soergel bimodules.
These have their own Kazhdan--Lusztig combinatorics, simple transitive 
$2$-representations corresponding to tricolored generalized $\mathsf{ADE}$ Dynkin 
diagrams.
\end{abstract}

\maketitle

\tableofcontents

\renewcommand{\theequation}{\thesection-\arabic{equation}}
\addtocontents{toc}{\protect\setcounter{tocdepth}{1}}
\section{Introduction}\label{sec:intro}

\subsection*{Non-negative integral representation theory}\label{subsec:intro-first}

In pioneering work \cite{KaLu}, Kazhdan--Lusztig defined 
their celebrated bases of Hecke algebras 
for Coxeter groups. Crucially, on these bases the structure 
constants of the algebras belong to 
$\N=\Z_{\geq 0}$. This started a program to study $\N$-algebras, 
which have a fixed basis with non-negative 
integral structure constants, see e.g. 
\cite{Lu2}, \cite{EK1}, where these algebras are called $\Z_+$-rings.

As proposed by the work of Kazhdan--Lusztig, 
for $\N$-algebras it makes sense to study and classify
$\N$-representations, i.e. representations with a fixed basis on which the 
fixed bases elements of the algebra act 
by non-negative integral matrices, see e.g. \cite{EK1}. 
The first examples are the so-called cell representations, which 
were originally defined for Hecke algebras \cite{KaLu}, but can be defined for 
all $\N$-algebras (and even $\R_{\geq 0}$-algebras, see 
\cite{KM1}). As it turns out, $\N$-representations are interesting from various 
points of view, with applications and connections to e.g. graph theory,
conformal field theory, fusion/modular tensor categories and subfactor theory.
\medskip

Categorical analogs of $\N$-algebras are 
monoidal categories, which we consider as one-object $2$-categories, 
or $2$-categories. These decategorify 
to $\N$-algebras, because the isomorphism classes 
of the indecomposable $1$-morphisms 
form naturally a $\N$-basis. For example, 
Hecke algebras of Coxeter groups are categorified by Soergel 
bimodules \cite{So0} such that indecomposable bimodules decategorify to the 
Kazhdan--Lusztig basis elements \cite{EW}. 

The categorical incarnation of 
$\N$-representation theory is $2$-re\-presentation theory. 
Any $2$-re\-presentation decategorifies 
naturally to a $\N$-representation, with the $\N$-basis given by 
the isomorphism classes of the indecomposable $1$-morphisms. However, not 
all $\N$-representations can be obtained in this way.  

In $2$-representation theory, the 
simple transitive $2$-representations play the role of the simple 
representations \cite{MM5}. Although 
their decategorifications need not be simple 
as complex representations, they are the ``simplest'' $2$-representations, 
as attested e.g. by the categorical 
Jordan--H{\"o}lder theorem \cite{MM5}. 
This naturally motivates the problem of classification of
simple transitive $2$-representations of $2$-categories. Just as the cell representations 
form a natural class of $\N$-representations of any
$\N$-algebra, cell $2$-representations form a 
natural class of simple transitive $2$-representations of any 
finitary $2$-category (i.e. $2$-categories 
with certain finiteness conditions \cite{MM1}). 
A crucial difference is that cell $2$-representations are always simple 
transitive, while cell representations are usually not simple. 
\medskip

In this paper, we restrict our attention 
to certain subquotients of the Hecke 
algebra of affine type $\typeA_2$, 
which we call \textit{trihedral Hecke algebras}, and their categorification by 
subquotients of Soergel bimodules of affine 
type $\typeA_2$, which we call 
\textit{trihedral Soergel bimodules}. 
These should have $2$-representations indexed by 
\textit{tricolored generalized 
$\ADE$ Dynkin diagrams} with \textit{trihedral zigzag algebras} making their appearance.
As we explain below, we think of these as 
rank three analogs of dihedral 
Hecke algebras, dihedral Soergel bimodules and zigzag algebras, respectively.
Finally, \cite{AT1} established a relation between dihedral
Soergel bimodules and the non-semisimple category of tilting modules
of quantum $\mathfrak{sl}_2$ at roots of unity. Based on that result and on
\cite{RiWi-tilting-p-canonical}, we expect there to be an interesting
relation between trihedral Soergel bimodules and a non-semisimple,
full subcategory of tilting modules of $\mathfrak{sl}_3$ at roots of
unity (or in prime characteristic).

\subsection*{The dihedral story}\label{subsec:intro-second}

For finite Coxeter types, the classification 
of the simple transitive $2$-re\-presentations of Soergel bimodules 
is only partially known, see e.g. \cite{KMMZ}, \cite{MMMZ}, \cite{Zi1}. 
There are two exceptions: 
For Coxeter type $\typeA$, the cell $2$-representations exhaust the 
simple transitive $2$-representations of Soergel bimodules \cite{MM5}, 
so the classification problem has been solved. 
For Coxeter type $\typei$, which is the type of the dihedral group 
with $2(e+2)$ elements, there also 
exists a complete classification of simple 
transitive $2$-representations \cite{KMMZ}, \cite{MT1} 
(for $e=10,16$ or $28$ the classification
is only known under the additional assumption of gradeability), 
which is completely different from the one for type $\typeA$. 
In this case, the simple transitive $2$-representations of rank greater than one 
are classified by bicolored $\ADE$ Dynkin diagrams, 
with the cell $2$-representations 
being the ones corresponding to Dynkin diagrams 
of type $\typeA$. The others, corresponding to Dynkin types $\typeD$ 
and $\typeE$, are not equivalent to cell 
$2$-representations and revealed interesting new features 
in $2$-representation theory, e.g. the two bicolorings of type $\typeE_7$ 
give non-equivalent $2$-representations which categorify the same $\N$-representation 
\cite{MT1}. For completeness, we remark that 
there are precisely two rank-one $2$-representations 
corresponding to the highest and the lowest two-sided cells, 
which categorify the trivial and the sign representation of the Hecke algebra. 
We note that going to the small quotient $\twocatstuff{D}_{e}$ by annihilating the highest cell 
avoids that we have to worry about the categorical analog of the trivial representation.
\medskip

This case is particularly interesting because 
of Elias' quantum Satake correspondence \cite{El2}, \cite{El1} between 
$\slqmod(\sltwo)$ and $\twocatstuff{D}_{e}$. 
Here $\slqmod(\sltwo)$ denotes the semisimple quotient of the monoidal category of 
finite-dimensional quantum $\sltwo$-modules 
(of type $1$), where the quantum parameter $\qqpar$ is a primitive, complex
$2(e+2)^{\mathrm{th}}$ root of unity. This correspondence is given by 
a nice, but slightly technical $2$-functor, 
so we omit further details at this stage.

Note that, when $\qpar$ is generic, the quantum 
Satake correspondence also exists, 
but between the whole category of finite-dimensional 
quantum $\sltwo$-modules $\slqmod[\qpar](\sltwo)$
and Soergel bimodules of the infinite 
dihedral type $\typei[\infty]$, which coincides with affine 
type $\typeA_{1}$.
 
One consequence of Elias' Satake 
correspondence is 
a precise relation between the simple 
transitive $2$-representations of 
$\slqmod(\sltwo)$ and $\twocatstuff{D}_{e}$.  
However, the corresponding $2$-representations
are not equivalent, because $\slqmod(\sltwo)$ is semisimple 
while $\twocatstuff{D}_{e}$ is not. 
\medskip

Let us explain this in a bit more detail. 
Equivalence classes of simple transitive $2$-re\-presentations of 
finitary $2$-categories (or graded versions of them) correspond 
bijectively to Morita equivalence classes of simple 
algebra $1$-morphisms in the abelianizations of these $2$-categories. 
This was initially proved for 
semisimple tensor categories \cite{Os1} and later 
generalized to certain finitary 
$2$-categories with duality \cite{MMMT1}. 
Kirillov--Ostrik \cite{K-O} classified 
the simple algebra $1$-morphisms 
in $\slqmod(\sltwo)$ up to Morita equivalence, 
under some natural assumptions, in terms of 
$\ADE$ Dynkin diagrams. From their results, 
via the quantum Satake correspondence, we can get all 
indecomposable algebra $1$-morphisms, up to Morita 
equivalence, in $\twocatstuff{D}_{e}$. (The latter is additive but 
not abelian, which is why we get 
indecomposable instead of simple algebra $1$-morphisms.)

Given an $\ADE$ Dynkin diagram $\Gg$ and the 
corresponding algebra $1$-morphism $\morstuff{A}^{\Gg}$, 
the category underlying the $2$-representation of $\slqmod(\sltwo)$ is equivalent to 
the category of $\morstuff{A}^{\Gg}$-mo\-dules 
in $\slqmod(\sltwo)$. The quiver of this category 
is trivial: its vertices coincide with those of $\Gg$, but it has no edges 
because $\slqmod(\sltwo)$ is semisimple. However, 
the quiver underlying the corresponding simple transitive 
$2$-representation of $\twocatstuff{D}_{e}$ 
is the so-called doubled quiver of type $\Gg$, 
which has two oppositely oriented edges between each pair of adjacent vertices. 
Its quiver algebra, the zigzag algebra, was for example
studied by Huerfano--Khovanov \cite{HK1}. 
It has very nice properties and shows up in 
various mathematical contexts nowadays.   
\medskip

Kirillov--Ostrik's classification can be seen 
as a quantum version of the McKay correspondence between 
finite subgroups of $\mathrm{SU}(2)$ 
and $\ADE$ Dynkin diagrams. The vertices of such a Dynkin diagram $\Gg$ 
correspond to the simple $\morstuff{A}^{\Gg}$-modules 
in $\slqmod(\sltwo)$. These module categories decategorify 
to $\N$-representations of the 
Grothendieck group of $\slqmod(\sltwo)$, the 
so-called Verlinde algebra, 
which were classified by Etingof--Khovanov \cite{EK1}. The Verlinde algebra is 
isomorphic to a polynomial algebra in one 
variable quotient by 
the ideal generated by the $(e+1)^{\mathrm{th}}$ 
Chebyshev polynomial $\pxy[\fu]{e+1}$ 
(normalized and of the second kind). Thus, 
Etingof--Khovanov basically classified all non-negative integer matrices which are 
killed by $\pxy[\fu]{e+1}$. (Note that not all of them come from 
$2$-representations of $\slqmod(\sltwo)$, 
because some correspond to 
graphs which are not Dynkin diagrams of type $\ADE$.)

Similarly, the Hecke algebra $\hecke(\typei)$ of Coxeter type $\typei$ 
can be obtained as a quotient of 
the Hecke algebra $\hecke(\typeat{1})$ of affine type 
$\typeA_{1}$, where $\vpar$ is a generic parameter 
(the decategorification of the grading within the Soergel 
$2$-category). Let $\theta_s,\theta_t$ denote 
the Kazhdan--Lusztig generators corresponding to 
the simple reflections, in 
both $\hecke(\typei)$ and $\hecke(\typeat{1})$. Furthermore, 
let $\theta_{w_{0}}$ be the Kazhdan--Lusztig 
basis element in $\hecke(\typei)$ for the longest 
word in the dihedral group. Then there are two ways to write 
$\theta_{w_{0}}$ as a linear combination 
of alternating products of $\theta_{s}$ and 
$\theta_{t}$, which only differ by the choice of the 
fixed final Kazhdan--Lusztig generator in each product. 
The coefficients in both linear 
combinations are precisely the coefficients of $\pxy[\fu]{e+1}$. 
(This observation is implicit in \cite{Lu1}.) 
Then $\hecke(\typei)$ is obtained from $\hecke(\typeat{1})$ 
by declaring both these linear combinations 
to be equal to each other. By 
declaring them to be equal to zero, 
we obtain the small quotient $\algstuff{D}_{e}$ 
of $\hecke(\typei)$, which is precisely the 
algebra that corresponds to the Verlinde algebra under 
the quantum Satake correspondence. 
Moreover, one can show that these algebras 
have a very similar $\N$-representation theory.

To conclude, one could say that Elias' quantum 
Satake correspondence \cite{El2}, \cite{El1} categorifies the relation 
between the Verlinde algebra and the small dihedral
quotient, while the results from 
\cite{KMMZ}, \cite{MT1}, \cite{MMMT1} 
categorify the relations between their $\N$-representations.

\subsection*{The trihedral story}\label{subsec:intro-third}

Now let us get to the topic of this paper. Elias 
also defined a quantum Satake correspondence between 
$\slqmod[e]=\slqmod[e](\mathfrak{sl}_3)$ and 
the $2$-category of Soergel bimodules of affine type $\typeA_{2}$ \cite{El1}. 
In this paper, we study certain 
subquotients of these Soergel bimodules, depending on a choice 
of a primitive, complex $2(e+3)^{\mathrm{th}}$ 
root of unity $\qqpar$, and their $2$-representation theory. 
Our construction uses the quantum Satake 
correspondence with $\slqmod[e]$, whose 
Grothendieck group is isomorphic to a polynomial algebra in 
two variables quotient by the ideal generated by a set 
of polynomials $\pxy{m,n}$, for $m+n=e+1, m,n\in\N$.
These polynomials were introduced to the field of orthogonal polynomials by 
Koornwinder \cite{Ko} and they generalize the Chebyshev polynomials.
To the best of our knowledge, these subquotients are new 
and have not been studied before.
\medskip

In fact, even their decategorifications, 
which are certain subquotients of the Hecke algebra $\hecke(\typeat{2})$
of affine type $\typeA_{2}$, seem to be 
new. For each $e\in\N$, we call the corresponding 
subquotient the trihedral Hecke algebra of level $e$ 
and denoted it by $\subquo[e]$. 
These algebras have their own Kazhdan--Lusztig 
combinatorics and interesting $\N$-representations. 
We see the trihedral Hecke algebras as rank 
three analogs of the small quotients of the 
dihedral Hecke algebras. 
There are many similarities, but also some differences. 
For example, as far as we can tell, 
the trihedral Hecke algebras are not deformations of any 
group algebra. But they are 
semisimple algebras and the classification of their 
irreducible representations 
runs in parallel to the analogous classification for 
dihedral Hecke algebras, and their $\N$-representation theory 
has also a very similar behavior.
\medskip

Now to the categorified story: In the trihedral case, the quantum Satake correspondence 
for $\qpar$ being generic only gives a 
$2$-subcategory of the affine type $\typeA_{2}$ Soergel $2$-category. We call this 
the $2$-category of trihedral Soergel bimodules of 
level $\infty$ and denote it by $\subcatquo[\infty]$. 
The $2$-category $\subcatquo[\infty]$ 
admits quotients 
$\subcatquo[e]$, 
the trihedral Soergel bimodules of 
level $e$, which via the 
quantum Satake correspondence for $\qqpar$ is related to $\slqmod[e]$.
The corresponding decategorifications are 
the trihedral Hecke algebras $\subquo[\infty]$ 
and $\subquo[e]$.
\medskip

Coming back to representation theory, people have studied the 
$\N$-representations of the Grothendieck group of $\slqmod[e]$, 
as they arise in conformal field theory and 
the study of fusion/modular tensor 
categories, see e.g. \cite{Ga1}, \cite{EP}, 
\cite{Sch} and related works. This time, four 
families of graphs play an important role and, by analogy with 
the $\sltwo$ case, their types are called 
$\typeA$, conjugate $\typeA$, $\typeD$ and $\typeE$, although they are 
not Dynkin diagrams. Their adjacency matrices, which 
are non-negative integral matrices, 
are annihilated by Koornwinder's polynomials, 
just as in \cite{EK1}. 
Furthermore, the type $\typeA$ graphs can be seen 
as a cut-off of the positive Weyl chamber of $\slt$, just as the usual 
type $\typeA$ Dynkin diagrams can be seen as cut-offs for $\sltwo$.
Finally, the type $\typeD$ graphs for 
$\slt$ come from a $\zeethree$-symmetry of 
these cut-offs, just as the type $\typeD$ Dynkin 
diagrams come from a $\zeetwo$-symmetry.

Simple algebra $1$-morphisms in $\slqmod[e]$ 
and the corresponding simple transitive $2$-representations 
have also been studied e.g. in \cite{Sch} 
and are closely related to these $\ADE$ 
type graphs. Via the quantum Satake correspondence, we therefore 
get indecomposable algebra $1$-mor\-phisms in $\subquo[e]$ 
and the corresponding simple transitive 
$2$-representations of the trihedral Soergel bimodules. 
Since we are not familiar with some of the 
ingredients in the construction of algebra $1$-morphisms in \cite{Sch}, we 
have given an alternative construction, using the symmetric 
$\slt$-web calculus, 
as in \cite{RT1}, \cite{TVW1}.
For this reason, our construction so 
far only works for types $\typeA$ and $\typeD$, so 
we restrict our attention to those two types. Almost by construction, the 
cell $2$-representations are equivalent to the simple transitive 
$2$-representations of type $\typeA$. The ones 
of type $\typeD$ have a different rank and are therefore inequivalent. 
For the other types, we have no 
conjectures at all, and we are not even sure whether they 
correspond to $2$-representations. 
\medskip

Computing the quiver algebras explicitly proved to be much 
harder this time. We define type $\typeA$ quiver algebras which, up to scalars, 
are the ones underlying the cell $2$-representations of $\subcatquo[e]$. 
These algebras are the trihedral analogs of the zigzag algebras
of type $\typeA$, e.g. the 
endomorphisms algebras of their vertices 
are the cohomology rings of the full flag variety of flags in
$\C^3$, instead of the flags in $\C^2$ as in the dihedral case. 
For this reason, we call them trihedral zigzag algebras. The type $\typeD$ 
trihedral zigzag algebras can be obtained 
from these by using the $\zeethree$-symmetry, just as the 
$\typeD$ dihedral zigzag algebras can be obtained from the type $\typeA$ 
via a $\zeetwo$-symmetry,
but we have not worked out the details.

Finally, let us stress that our trihedral 
zigzag algebras are different from Grant's 
\cite{Gr1} higher zigzag 
algebras, which are only subalgebras of the 
trihedral zigzag algebras of type $\typeA$, although both 
underlying graphs come from a cut-off of the positive Weyl chamber of $\slt$.

\subsection*{The Nhedral story}\label{subsec:intro-fourth}

We expect that our story generalizes to 
$\mathfrak{sl}_N$ for arbitrary $N\geq 2$: 
the Soergel bimodules of affine 
type $\typeA_{N-1}$ are known, the quantum 
Satake correspondence is conjectured to exist, 
the analogs of Koornwinder's 
Chebyshev polynomials are also known, and the corresponding 
generalized $\ADE$ type graphs appear in the mathematical 
physics literature on fusion algebras or 
the classification of subgroups of quantum $\mathrm{SU}(N)$, see e.g. \cite{DFZ}, \cite{Oc}.
We expect that there exist \textit{$N$hedral 
algebras} and \textit{$N$hedral Soergel bimodules} of level $e$ 
(where $\qqpar$ would be a primitive, complex $2(e+N)^{\mathrm{th}}$ root of unity),
and \textit{$N$hedral zigzag algebras} of $\ADE$-type quivers 
such that the endomorphism algebra 
of every vertex is isomorphic to the cohomology ring 
of the full flag variety of $\C^N$.

\begin{remarkcolor}\label{remark:colors}
We use colors in this paper (we recommend to read the paper in color), 
and the colors which we need are blue $\bluebox$, 
red $\redbox$, yellow $\yellowbox$, green $\greenbox$, orange $\orangebox$ 
and purple $\purplebox$ which will appear as indicated by the 
preceding boxes.
\end{remarkcolor}

\begin{qconventions}\label{remark:qparamters}
The notation $\vpar$ will mean a generic parameter which plays the role of 
the decategorification of the grading which we will meet in \fullref{sec:A2-diagrams}.

In contrast, $\qpar$ will also denote a generic parameter, 
but it will turn up on the categorified 
level as our quantum parameter.
Moreover, $\qqpar$ will be a primitive, complex $2(e+3)^{\mathrm{th}}$ 
root of unity $\qqpar^{2(e+3)}=1$ 
which is a specialization of $\qpar$, but never of $\vpar$. 
Here $e\in\N=\Z_{\geq 0}$ will usually be arbitrary, but fixed, and is called the level.

The ground field will always be $\Cv=\C(\vpar)$, 
$\Cq=\C(\qpar)$ or $\C=\C(\qqpar)$, if not stated otherwise. 
Sometimes, instead of working over a ground field, 
we will work over rings as e.g. $\aformv=\Z[\vpar,\vpar^{-1}]$ or 
semirings as e.g. $\aformvN=\N[\vpar,\vpar^{-1}]$ and their quantum counterparts.
(It will be clear from the notation whether we work 
with $\vpar$ or $\qpar$. Moreover, a subscript $[\placeholder]$ 
will always indicate that we are in the case of (semi)rings rather than fields.)
In this context, we use the 
$\vpar$-numbers, factorials and binomials, where $s\in\Z,t\in\Z_{\geq 1}$
\begin{gather}\label{eq:qnumbers-typeAD}
\vnumber{s} 
= 
\tfrac{\vpar^s - \vpar^{-s}}{\vpar^{\phantom{1}} - \vpar^{-1}},
\quad
\vfrac{t} 
= 
\vnumber{t} \vnumber{t{-}1} \dots \vnumber{1},
\quad
\vbinn{s}{t} 
= 
\tfrac{\vnumber{s} \vnumber{s{-}1} 
\dots 
\vnumber{s{-}t{+}1}}{\vnumber{t} \vnumber{t{-}1} \dots \vnumber{1}},
\end{gather}
all of which are in $\aformv$.
By convention, $\vnumber{0}!=1=\vbin{s}{0}$. 
Note that $\vnumber{0}=0=\vbin{0\leq s<t}{t}$ 
and $\vnumber{-s}=-\vnumber{s}$. Similarly with $\qpar$ or $\qqpar$ instead of $\vpar$.
\end{qconventions}
\addtocontents{toc}{\protect\setcounter{tocdepth}{2}}
\noindent \textbf{Acknowledgments.} We thank Ben Elias for
generously sharing his ideas and his insights into the field of 
Soergel bimodules. We also thank Rostyslav Kozhan for his help with
orthogonal polynomials, Michael Ehrig, Joseph Grant and Mikhail Khovanov for some helpful discussions, 
and the referee for an extremely careful reading of the manuscript.
D.T. likes to thank SAGE for disproving all of his conjectures related to this project.

A part of this paper was written while Va.Mi. was visiting the 
Max Planck Institute in Bonn, whose hospitality and financial support is 
gratefully acknowledged.
Another part of this paper was
written while D.T. participated in the Junior
Trimester Program ``Symplectic Geometry and 
Representation Theory'' of the Hausdorff Research Institute 
for mathematics (HIM). 
The hospitality of the HIM during this 
period is gratefully acknowledged.

M.M. was partially supported by FCT/Portugal through project UID/MAT/04459/2013,  
Vo.Ma. was supported
by the Swedish Research Council and
G{\"o}ran Gustafssons Stiftelse during this project, and 
D.T. is grateful to NCCR SwissMAP for generous support.
%
\section{Some \texorpdfstring{$\slt$}{sl3} combinatorics}\label{section:sl3-stuff}

This section is mostly a collection of known 
results, formulated in our notation.

\subsection{Quantum \texorpdfstring{$\slt$}{sl3}}\label{subsec:qslthree}

Throughout, $m,n,k,l$ will denote non-negative integers.

\subsubsection{Some conventions}\label{subsec:weights-etc}

We always use the following conventions when working with $\slt$.
Denote by 
$\varepsilon_1,\varepsilon_2,\varepsilon_3$ 
the standard basis vectors of $\R^3$. We endow 
$\R^3$ with the usual symmetric bilinear form 
$(\varepsilon_i,\varepsilon_j)=\delta_{ij}$ and 
let $E=\{(x_1,x_2,x_3)\in\R^3\mid x_1+x_2+x_3=0\}$ be the 
Euclidean subspace of $\R^3$ (with induced 
symmetric bilinear form).
We also fix two simple roots $\alpha_1=\varepsilon_1-\varepsilon_2$ 
and $\alpha_2=\varepsilon_2-\varepsilon_3$, 
and coroots $\alpha_1^\vee$ and $\alpha_2^\vee$ 
such that 
$\langle\alpha_i,\alpha_j^{\vee}
\rangle=(\alpha_i,\alpha_j)=a_{ij}$, for $i,j=1,2$,
are the entries of 
the (usual) Cartan matrix 
$\begin{psmallmatrix}
2 & -1 \\
-1 & 2
\end{psmallmatrix}$ of $\slt$.
The (integral) weights 
are $X=\{\lambda \in E\mid 
\langle\lambda,\alpha_1^{\vee}\rangle\in\Z\text{ and }
\langle\lambda,\alpha_2^{\vee}\rangle\in\Z\}$. The 
dominant (integral) weights are 
$X^+=\{\lambda \in E\mid 
\langle\lambda,\alpha_1^{\vee}\rangle\in\N\text{ and }
\langle\lambda,\alpha_2^{\vee}\rangle\in\N\}$.
We identify $X=\Z^2$ and $X^+=\N^2$, cf. \eqref{eq:weight-picture}, 
with $X^+$ also called the
positive Weyl chamber. We also use the 
fundamental weights $\omega_{1},\omega_{2}\in E$
(which are characterized by 
$\langle\omega_{i},\alpha_j^{\vee}\rangle=\delta_{i,j}$), 
and $\lambda=(m,n)\in X^+$ for us 
means $\lambda=m\omega_{1}+n\omega_{2}$.

The following picture summarizes our root and weight conventions for $e=3$:
\begin{gather}\label{eq:weight-picture}
\begin{tikzpicture}[anchorbase, xscale=.35, yscale=.5]
	\draw[thin, white, fill=mygray, opacity=.2] (0,0) to (5,5) to (-5,5) to (0,0);
	\draw[thin, densely dashed, mygray] (0,0) to (7,7);
	\draw[thin, densely dashed, mygray] (0,0) to (-7,7);
	\node at (0,6.75) {\text{{\tiny $X^+=\N^{2}$}}};
	\draw[thick, densely dotted, myblue] (4,3) node [right] {\text{{\tiny $e=3$}}} to (-4,3);
	\draw[thick, densely dotted, myred] (-5,4) node [left] {\text{{\tiny $e+1=4$}}} to (5,4);
	\draw[thick, densely dotted, myyellow] (6,5) node [right] {\text{{\tiny $e+2=5$}}} to (-6,5);
	\draw[thick, myorange, ->] (0,0) to (3,1.5) node [right] {\text{{\tiny $\alpha_1$}}};
	\draw[thick, myorange, ->] (0,0) to (-3,1.5) node [left] {\text{{\tiny $\alpha_2$}}};
	\node at (0,0) {$\bulletb$};
	\node at (0,2) {$\bulletb$};
	\node at (3,3) {$\bulletb$};
	\node at (-3,3) {$\bulletb$};
	\node at (0,4) {$\bulletr$};
	\node at (1,1) {$\bulletb$};
	\node at (-2,2) {$\bulletb$};
	\node at (1,3) {$\bulletb$};
	\node at (-2,4) {$\bulletr$};
	\node at (4,4) {$\bulletr$};
	\node at (2,2) {$\bulletb$};
	\node at (-1,1) {$\bulletb$};
	\node at (-1,3) {$\bulletb$};
	\node at (2,4) {$\bulletr$};
	\node at (-4,4) {$\bulletr$};
	\node at (-5,5) {$\bullety$};
	\node at (-3,5) {$\bullety$};
	\node at (-1,5) {$\bullety$};
	\node at (1,5) {$\bullety$};
	\node at (3,5) {$\bullety$};
	\node at (5,5) {$\bullety$};
	\node at (-6,6) {$\bullety$};
	\node at (-4,6) {$\bullety$};
	\node at (-2,6) {$\bullety$};
	\node at (0,6) {$\bullety$};
	\node at (2,6) {$\bullety$};
	\node at (4,6) {$\bullety$};
	\node at (6,6) {$\bullety$};
	\draw[thin, ->] (-5.5,4.75) node [left] {\text{{\tiny $(2,2)$}}} to [out=0, in=165] (-.2,4.2);
	\draw[thin, ->] (5.5,3.75) node [right] {\text{{\tiny $(1,2)$}}} to [out=180, in=15] (-.8,3.2);
	\draw[thin, ->] (-5.5,2.75) node [left] {\text{{\tiny $(0,2)$}}} to [out=0, in=165] (-2.2,2.2);
\end{tikzpicture}
\end{gather}
We have also indicated an example of a cut-off, denoting its weights by $\bulletb$, which 
depend on the level $e$, i.e. the integral 
points $X^+(e)=\{\lambda \in X^+\mid 
\langle\lambda,\alpha^{\vee}_{1}\rangle\leq 
e\text{ and }\langle\lambda,\alpha^{\vee}_{2}\rangle\leq 
e\text{ and }\langle\lambda,\alpha^{\vee}_{1}+\alpha^{\vee}_{2}\rangle\leq e\}$. Such 
cut-offs play an important role in our paper. Moreover, we 
usually quotient by data associated to 
the line $e+1$ as illustrate by the symbols $\bulletr$ in \eqref{eq:weight-picture}.

\subsubsection{Generic quantum \texorpdfstring{$\slt$}{sl3}}\label{subsec:generic}

Let $\Uslt$ denote the quantum enveloping ($\Cq$-)algebra 
associated to $\slt$. We refer the reader to \cite[Chapters 4-7]{Ja} 
(whose conventions we silently adopt using 
the root and weight setting from above)
for details. 
We denote by $\sltcat=\sltcatpre$ the category 
of finite-dimensional (left) 
$\Uslt$-modules (of type $1$, cf. \cite[Section 5.2]{Ja}). 
Recall that $\sltcat$ is semisimple with a 
complete set of pairwise non-isomorphic, irreducible 
$\Uslt$-modules parametrized by (the integral part of) the
positive Weyl chamber 
\[
\left\{
\Ll_{m,n}\mid (m,n)\in X^+
\right\}.
\]
The subscripts $m,n$ indicate the highest weight 
$m\omega_1+n\omega_2$ of the 
irreducible module, by which it is uniquely determined. Moreover, 
$\Uslt$ is a Hopf algebra, so we can tensor 
$\Uslt$-modules and take duals. Thus, if $\GG{\placeholder}$ denotes the (additive)
Grothendieck group, then 
\[
\left\{
[\Ll_{m,n}]=\GG{\Ll_{m,n}}\in\GG{\sltcat}\mid (m,n)\in X^+
\right\}
\]
is a $\Z$-basis of $\GG{\sltcat}$, and 
$\GG{\sltcat}$ is a ring. Extending the scalars to $\C$, we get a $\C$-algebra:
\[
\GGc{\sltcat}=\GG{\sltcat}\otimes_\Z\C.
\] 
Throughout the paper, we will use 
notations similar to $\GGc{\placeholder}$, indicating scalar extensions.

\begin{remark}\label{remark:generic-quantum-group}
Since $\qpar$ is generic, we can 
identify $\GG{\sltcat}$ 
with the corresponding Grothendieck ring of the 
category of complex, finite-dimensional representations
of $\slt$, cf. \cite[Theorems 5.15 and 5.17]{Ja}. 
This means that all our calculations 
below follow from standard results in the representation theory of $\slt$.
\end{remark}

The two $\Uslt$-modules
\begin{gather}\label{eq:the-variables}
\fu=\Ll_{1,0},
\quad
\quad
\fud=\Ll_{0,1}
\end{gather}
are called the fundamental representations 
of $\slt$. Note that they are dual, i.e. $\fu^*\cong\fud$ as 
$\Uslt$-modules. More generally, we have 
$(\Ll_{m,n})^*\cong\Ll_{n,m}$ for all $m,n\in\N$.

In the following we write $\fu^k=\fu^{\otimes k}$, 
$\fud^l=\fud^{\otimes l}$ and $\fu\fud=\fu\otimes\fud$ for short, and below
we will consider these as variables in some polynomial ring.

\begin{remark}\label{remark:sl3-cat-GG}
Every $\Ll_{m,n}$ appears  
as a direct summand of a suitable tensor product 
of $\fu$ and $\fud$. Moreover, 
$\sltcat$ is braided monoidal, so $\fu\fud\cong\fud\fu$ 
as $\Uslt$-modules.
Hence, the Grothendieck group of $\sltcat$ is a commutative ring and 
\[
\left\{
[\fu^k\fud^l]
\mid
(k,l)\in X^+
\right\}
\]
is an alternative basis of $\GG{\sltcat}$ and $\GGc{\sltcat}$.
\end{remark}

Using the above, in particular \fullref{remark:sl3-cat-GG},
we define $d^{k,l}_{m,n}\in\Z$ as follows:
\begin{gather}\label{eq:L-vs-L}
[\Ll_{m,n}]
=
{\textstyle\sum_{k,l}}\,
d^{k,l}_{m,n}\cdot
[\fu^{k}\fud^{l}].
\end{gather}
Clearly, $d^{k,l}_{m,n}=0$ unless $k+l\leq m+n$. 
Thus, the sum in \eqref{eq:L-vs-L} is finite. 

Note that the $d^{k,l}_{m,n}$ can be computed inductively, 
cf. \fullref{example:sl3-polys}.
Moreover, we have $d^{k,l}_{m,n}=d^{l,k}_{n,m}$ and 
$d_{m,n}^{m,n}=1=d_{k,l}^{k,l}$.

\begin{definition}\label{definition:color-code}
For later usage, let us define colors associated 
to the $\Ll$'s:
\begin{gather}\label{eq:color-code}
\chi_c(\Ll_{m,n})
=
\begin{cases}
\gc, & \text{if }m+2n\equiv 0 \bmod 3,\\
\oc, & \text{if }m+2n\equiv 1 \bmod 3,\\
\pc, & \text{if }m+2n\equiv 2 \bmod 3.\\ 
\end{cases}
\end{gather}
We call $\chi_c(\Ll_{m,n})$ 
the central character of $\Ll_{m,n}$.
\end{definition}

(To explain our choice of name: The center of $\mathrm{SU}_3$ is 
$\zeethree$. The generator of $\zeethree$ 
can act on any irreducible $\mathrm{SU}_3$-module 
by multiplication with a primitive, complex, third root of unity. This is what is 
encoded by $\chi_c$.)

Observe that $\chi_c(\fu)=\oc$ and $\chi_c(\fud)=\pc$, while
the representation theory of $\slt$ immediately gives that tensoring with $\fu$ 
changes the central character by adding $1 \bmod 3$, while tensoring 
with $\fud$ adds $2 \bmod 3$. Thus:

\begin{lemmaqed}\label{lemma:central-character}
All irreducible summands of $\fu^k\fud^l$ have 
central character $\chi_c(\Ll_{k,l})$.
\end{lemmaqed}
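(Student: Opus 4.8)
The plan is a short induction on $k+l$, leaning on the two facts recorded just above the lemma: $\chi_c(\fu)=\oc$, $\chi_c(\fud)=\pc$, and that, on irreducibles, tensoring with $\fu$ (resp. $\fud$) shifts the central character by $+1$ (resp. $+2$) modulo $3$. First I would upgrade the shift statement to the form actually needed: if $N$ is a $\Uslt$-module whose irreducible summands all have the same central character $c$, then all irreducible summands of $N\otimes\fu$ have central character $c+1$, and all of $N\otimes\fud$ have central character $c+2$. This follows from the irreducible case by decomposing $N$ into irreducibles (using semisimplicity of $\sltcat$) together with the additivity of $\otimes$.

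For the underlying shift fact itself I would give the standard one-line justification: by \fullref{remark:generic-quantum-group} we may argue inside the representation theory of $\slt$, and there $\chi_c(\Ll_{m,n})$ depends only on the image of the highest weight $m\omega_1+n\omega_2$ in the quotient $X/Q$ of the weight lattice by the root lattice $Q$, which is cyclic of order $3$ with $m\omega_1+n\omega_2\mapsto m+2n\bmod 3$. Since all weights of an irreducible module differ from its highest weight by sums of roots, they all lie in one coset of $Q$; and since the weights of a tensor product are sums of weights of the factors, every irreducible summand of $\Ll_{m,n}\otimes\Ll_{m^{\prime},n^{\prime}}$ has highest weight in the coset of $(m+m^{\prime})\omega_1+(n+n^{\prime})\omega_2$, i.e. central character $\chi_c(\Ll_{m,n})+\chi_c(\Ll_{m^{\prime},n^{\prime}})$.

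Granting this, the induction is immediate. The base case $k=l=0$ is the trivial module $\Ll_{0,0}$, which has $\chi_c(\Ll_{0,0})=\gc$ since $0+2\cdot 0\equiv 0$. For $k\geq 1$ write $\fu^{k}\fud^{l}=(\fu^{k-1}\fud^{l})\otimes\fu$: by the inductive hypothesis every summand of $\fu^{k-1}\fud^{l}$ has central character $\chi_c(\Ll_{k-1,l})$, hence every summand of $\fu^{k}\fud^{l}$ has central character $\chi_c(\Ll_{k-1,l})+1$, and $(k-1)+2l+1=k+2l$ gives $\chi_c(\Ll_{k-1,l})+1=\chi_c(\Ll_{k,l})$ from \eqref{eq:color-code}. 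For $k=0$, $l\geq 1$ one argues the same way with $\fud^{l}=(\fud^{l-1})\otimes\fud$ and $2(l-1)+2=2l$. I do not expect any genuine obstacle here; the only points to watch are that the shift statement must be applied to the (generally decomposable) module $\fu^{k-1}\fud^{l}$, which is exactly why the reduction step above is needed, and the bookkeeping of the arithmetic modulo $3$.
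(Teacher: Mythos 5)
Your proof is correct and follows the same route as the paper: the paper states the lemma without a separate proof, treating it as immediate from the observation directly above it that $\chi_c(\fu)=\oc$, $\chi_c(\fud)=\pc$, and that tensoring with $\fu$ (resp.\ $\fud$) shifts the central character by $1$ (resp.\ $2$) modulo $3$. Your induction on $k+l$, together with the justification via the coset of the highest weight in $X/Q\cong\zeethree$, is exactly that observation spelled out in detail.
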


\subsubsection{The semisimplified root of unity case}\label{subsec:non-generic}

Let $\Uqslt$ be the specialization 
of (the integral form of) $\Uslt$ obtained by 
specializing $\qpar$ to $\qqpar$, see e.g. \cite{Lu3}, \cite{APW} for details.

Its category $\sltcat[\qqpar]=\sltcatpre[\qqpar]$ of 
finite-dimensional (left) $\sltcat[\qqpar]$-modules
(of type $1$) is far from being semisimple. However, 
it has a semisimple quotient $\slqmod$, which is roughly obtained 
by killing the so-called tilting modules 
of quantum dimension zero. We refer 
to \cite{AP} for details, but all the reader needs to know 
for our purposes is that all $\Uslt$-modules $\Ll_{m,n}$ with 
$0\leq m+n\leq e$ can also be regarded as irreducible $\Uqslt$-modules. 
Moreover,
\[
\left\{
[\Ll_{m,n}]\phantom{\fu^{k}}\!\!\!\mid 0\leq m+n\leq e
\right\},
\quad
\left\{
[\fu^{k}\fud^{l}]\mid 0\leq k+l\leq e
\right\},
\]
are bases of $\GG{\slqmod}$ and $\GGc{\slqmod}$, and 
the quantum fusion product endows
$\slqmod$ with the structure of a monoidal category, so 
$\GG{\slqmod}$ is a ring and $\GGc{\slqmod}$ is an algebra.

Note also that the rank of $\GG{\slqmod}$, respectively the dimension of $\GGc{\slqmod}$, is equal to 
the triangular number 
\[
t_e=\tfrac{(e+1)(e+2)}{2},
\] 
which follows from the fact that $\slqmod$ is only supported 
on a triangular cut-off of the positive Weyl chamber of $\slt$, 
cf. \eqref{eq:weight-picture}. 
Said otherwise, since $\slqmod$ is semisimple, we have  
\[
\GGc{\slqmod}\cong\C^{t_e}
\] 
as vector spaces.

\subsection{Chebyshev-like polynomials for \texorpdfstring{$\slt$}{sl3}}\label{subsec:opolys}

We now recall certain polynomials introduced in the context of orthogonal polynomials 
by Koornwinder \cite{Ko}, but phrased in a more convenient way for our purposes.

\subsubsection{The $\slt$-polynomials}\label{subsec:def-poly}

Consider the polynomial ring $\Z[\fu,\fud]$, 
in which $\fu$ and $\fud$ from \eqref{eq:the-variables} are treated 
as formal variables.

\begin{definition}\label{definition:sl3-polys}
For each $m,n$ we define 
$\pxy{m,n}\in\Z[\fu,\fud]$ by
\begin{gather}\label{eq:the-c-poly}
\pxy{m,n}=
{\textstyle\sum_{k,l}}\,
d_{m,n}^{k,l}
\cdot
\fu^k\fud^l,
\end{gather}
with $d_{m,n}^{k,l}\in\Z$ as in \eqref{eq:L-vs-L}.
\end{definition}

For fixed $e$, we often consider all the polynomials 
$\pxy{m,n}$ with $m+n=e+1$ together, cf. \fullref{example:sl3-polys}.

\begin{example}\label{example:sl3-polys}
The first few of these polynomials are $\pxy{0,0}=1$ and:
\[
\begin{tikzpicture}[baseline=(current bounding box.center)]
  \matrix (m) [matrix of math nodes, row sep=.0cm, column
  sep=.0cm, text height=1.5ex, text depth=0.25ex, ampersand replacement=\&, font=\scriptsize,
  nodes={anchor=south},
  row 3/.style={nodes={minimum height=1cm}},
  row 4/.style={nodes={minimum height=1.5cm}},
  row 5/.style={nodes={minimum height=1.5cm}},
  ] {
e=0 \& 
\pxy{1,0}=\fu,
\;\;
\pxy{0,1}=\fud, 
\\
e=1 \&
\pxy{2,0}
=
\fu^{2}-\fud,
\;\;
\pxy{1,1}
=
\fu\fud-1,
\;\;
\pxy{0,2}
=
\fud^{2}-\fu,
\\
e=2 \&
\begin{gathered}
\pxy{3,0}
=
\fu^{3}-2\fu\fud+1,
\;\;
\pxy{2,1}
=
\fu^{2}\fud-\fud^{2}-\fu,
\\
\pxy{1,2}
=
\fu\fud^{2}-\fu^{2}-\fud,
\;\;
\pxy{0,3}
=
\fud^{3}-2\fu\fud+1,
\end{gathered}
\\
e=3 \&
\begin{gathered}
\pxy{4,0}
=
\fu^{4}-3\fu^{2}\fud+\fud^{2}+2\fu,
\;\;
\pxy{3,1}
=
\fu^{3}\fud-2\fu\fud^{2}-\fu^{2}+2\fud,
\\
\pxy{2,2}
=
\fu^{2}\fud^{2}-\fu^{3}-\fud^{3},
\\
\pxy{1,3}
=
\fu\fud^{3}-2\fu^{2}\fud-\fud^{2}+2\fu,
\;\;
\pxy{0,4}
=
\fud^{4}-3\fu\fud^{2}+\fu^{2}+2\fud,
\end{gathered}
\\
e=4 \&
\begin{gathered}
\pxy{5,0}
=
\fu^{5}-4\fu^{3}\fud+3\fu\fud^{2}+3\fu^{2}-2\fud,
\;\;
\pxy{4,1}
=
\fu^{4}\fud-3\fu^{2}\fud^{2}-\fu^{3}+\fud^{3}+4\fu\fud-1,
\\
\pxy{3,2}
=
\fu^{3}\fud^{2}-\fu^{4}-2\fu\fud^{3}
+\fu^{2}\fud+2\fud^{2}-\fu,
\;\;
\pxy{2,3}
=
\fu^{2}\fud^{3}-\fud^{4}-2\fu^{3}\fud
+\fu\fud^{2}+2\fu^{2}-\fud,
\\
\pxy{1,4}
=
\fu\fud^{4}-3\fu^{2}\fud^{2}-\fud^{3}+\fu^{3}+4\fu\fud-1,
\;\;
\pxy{0,5}
=
\fud^{5}-4\fu\fud^{3}+3\fu^{2}\fud+3\fud^{2}-2\fu.
\end{gathered}
\\
};
\draw[thin, densely dotted] ($(m-1-1)+(-.5,-.25)$) to ($(m-1-1)+(13.1,-.25)$);
\draw[thin, densely dotted] ($(m-1-1)+(-.5,-.8)$) to ($(m-1-1)+(13.1,-.8)$);
\draw[thin, densely dotted] ($(m-1-1)+(-.5,-1.8)$) to ($(m-1-1)+(13.1,-1.8)$);
\draw[thin, densely dotted] ($(m-1-1)+(-.5,-3.35)$) to ($(m-1-1)+(13.1,-3.35)$);
\draw[thin, densely dotted] ($(m-1-1)+(.5,.25)$) to ($(m-1-1)+(.5,-4.8)$);
\end{tikzpicture}
\]
Note that the ones with $m+n=e+1$ correspond to the $e+1$-line in \eqref{eq:weight-picture}. 
\end{example}

By convention, $\pxy{m,n}$ and $\Ll_{m,n}$ with negative subscripts $m$ or $n$ are zero.

\begin{lemma}\label{lemma:recursion}
We have the following Chebyshev-like recursion relations 
\begin{gather*}
\pxy{m,n}=\pxy[\fud,\fu]{n,m},
\\
\fu\pxy{m,n}
=
\pxy{m+1,n}+\pxy{m-1,n+1}+\pxy{m,n-1},
\\
\fud\pxy{m,n}=
\pxy{m,n+1}+\pxy{m+1,n-1}+\pxy{m-1,n}.
\end{gather*}
Together with the starting conditions for $e=0,1$ as in \fullref{example:sl3-polys}, these 
recursion relations determine the polynomials 
$\pxy{m,n}$ for all $m,n$.
\end{lemma}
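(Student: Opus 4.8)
The plan is to move the three identities into the Grothendieck ring $\GG{\sltcat}$ and then read them off from the classical $\slt$ tensor-product rule. By \fullref{remark:sl3-cat-GG} the map $\Z[\fu,\fud]\to\GG{\sltcat}$, $\fu\mapsto[\fu]$, $\fud\mapsto[\fud]$, is a ring isomorphism: it is surjective because every $\Ll_{m,n}$ is a summand of a tensor power of $\fu$ and $\fud$, and injective because it carries the monomial basis $\{\fu^{k}\fud^{l}\}$ onto the $\Z$-basis $\{[\fu^{k}\fud^{l}]\}$. Writing $\phi$ for this isomorphism, we have $\phi(\fu^{k}\fud^{l})=[\fu^{k}\fud^{l}]$, and comparing \eqref{eq:the-c-poly} with \eqref{eq:L-vs-L} shows $\phi(\pxy{m,n})=[\Ll_{m,n}]$ for all $m,n$ (consistent, on both sides, with the convention that $\pxy{m,n}$ and $\Ll_{m,n}$ vanish when $m$ or $n$ is negative). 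Hence it suffices to establish the two multiplicative recursions after applying $\phi$, i.e. inside $\GG{\sltcat}$, while the symmetry is cleanest to prove directly on polynomials.

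For the symmetry, recall from the discussion just after \eqref{eq:L-vs-L} that $d^{k,l}_{m,n}=d^{l,k}_{n,m}$ (itself a consequence of $(\Ll_{m,n})^{*}\cong\Ll_{n,m}$ and $\fu^{*}\cong\fud$). Substituting $\fu\mapsto\fud$, $\fud\mapsto\fu$ in $\pxy{n,m}=\sum_{k,l}d^{k,l}_{n,m}\fu^{k}\fud^{l}$ and re-indexing the resulting sum with this identity gives $\pxy[\fud,\fu]{n,m}=\pxy{m,n}$ at once. For the two remaining recursions, note that under $\phi$ the first becomes
\[
[\fu]\cdot[\Ll_{m,n}]=[\Ll_{m+1,n}]+[\Ll_{m-1,n+1}]+[\Ll_{m,n-1}],
\]
and the second is its mirror under $\fu\leftrightarrow\fud$, $m\leftrightarrow n$. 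By \fullref{remark:generic-quantum-group} these can be checked in the classical representation ring of $\slt$, where they are the familiar decompositions $\Ll_{1,0}\otimes\Ll_{m,n}\cong\Ll_{m+1,n}\oplus\Ll_{m-1,n+1}\oplus\Ll_{m,n-1}$ and $\Ll_{0,1}\otimes\Ll_{m,n}\cong\Ll_{m,n+1}\oplus\Ll_{m+1,n-1}\oplus\Ll_{m-1,n}$. I would prove these by the Brauer--Klimyk (Racah--Speiser) rule: the weights of the vector representation $\Ll_{1,0}$ are $\omega_{1}$, $\omega_{2}-\omega_{1}$, $-\omega_{2}$, and adding each in turn to $m\omega_{1}+n\omega_{2}$ produces exactly the three summands on the right, a summand dropping out precisely when $m=0$ (resp. $n=0$) because the shifted weight then lands on a wall — which is exactly the negative-index zero convention. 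The $\Ll_{0,1}$ case follows by duality, or by the identical computation with the weights $-\omega_{1}$, $\omega_{1}-\omega_{2}$, $\omega_{2}$ of $\Ll_{0,1}$.

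For the determinacy statement, solve the first recursion for its leading term, $\pxy{m+1,n}=\fu\cdot\pxy{m,n}-\pxy{m-1,n+1}-\pxy{m,n-1}$, and take the $m=0$ case of the second, $\pxy{0,n+1}=\fud\cdot\pxy{0,n}-\pxy{1,n-1}$ (using $\pxy{-1,n}=0$). Then induct on the total degree $N=m+n$: assuming all $\pxy{a,b}$ with $a+b\le N$ are already determined, any $\pxy{c,d}$ with $c+d=N+1$ is either $\pxy{0,N+1}$, which the second formula expresses through polynomials of degree $\le N$, or $\pxy{m+1,n}$ with $m\ge0$ and $m+n=N$, which the first formula expresses through $\pxy{m,n}$ and $\pxy{m-1,n+1}$ (degree $N$, or zero by convention) and $\pxy{m,n-1}$ (degree $N-1$). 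The base of the induction is provided by $\pxy{0,0}=1$ together with the $e=0,1$ rows of \fullref{example:sl3-polys}.

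The proof has no genuinely hard step — the single substantive ingredient is the classical decomposition of $\Ll_{1,0}\otimes\Ll_{m,n}$. The one thing that requires attention is the boundary bookkeeping: one must check that the recursions stay consistent with the convention that $\pxy{m,n}$ and $\Ll_{m,n}$ are zero for negative $m$ or $n$ (equivalently, that the Weyl walls absorb the extra Brauer--Klimyk terms), and one must arrange the induction on $m+n$ so that each new polynomial is expressed purely in terms of earlier ones.
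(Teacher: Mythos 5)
Your proof is correct and takes essentially the same route as the paper: the symmetry is read off from $\fu\cong\fud^{*}$ (equivalently $d^{k,l}_{m,n}=d^{l,k}_{n,m}$), and the two multiplicative recursions are exactly the images, under the identification $\pxy{m,n}\leftrightarrow[\Ll_{m,n}]$ in $\GG{\sltcat}$, of the standard decompositions $\fu\otimes\Ll_{m,n}$ and $\fud\otimes\Ll_{m,n}$, which is precisely \eqref{eq:sl3-thingy-a}--\eqref{eq:sl3-thingy-b} in the paper's argument. Your extra care with the Brauer--Klimyk wall-cancellation and the explicit induction on $m+n$ for the determinacy claim are fine elaborations of steps the paper leaves implicit.
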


\begin{proof}
The relation $\pxy{m,n}=\pxy[\fud,\fu]{n,m}$ 
boils down to $\fu\cong\fud^*$. Moreover, 
by standard results in the representation theory of $\slt$, we obtain
\begin{align}
\fu\otimes\Ll_{m,n}
&\cong
\Ll_{m+1,n}\oplus\Ll_{m-1,n+1}\oplus\Ll_{m,n-1},
\label{eq:sl3-thingy-a}
\\
\fud\otimes\Ll_{m,n}
&\cong
\Ll_{m,n+1}\oplus\Ll_{m+1,n-1}\oplus\Ll_{m-1,n},
\label{eq:sl3-thingy-b}
\end{align}
which proves the two recursions.
\end{proof}

\begin{lemma}\label{lemma:no-const-term}
The polynomial $\pxy{m,n}$ has a non-zero 
constant term if and only if $m\equiv n\bmod 3$ and $m\not\equiv 2\bmod 3$. 
This constant term is equal to 
$1$ if $m\equiv n\equiv 0\bmod 3$, and equal to $-1$ if $m\equiv n\equiv 1\bmod 3$.
\end{lemma}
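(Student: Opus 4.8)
The plan is to split the assertion into the congruence statement, which I would read off from the central-character grading, and the computation of the actual value, which I would extract from the Chebyshev-like recursion of \fullref{lemma:recursion}. Throughout, write $c(m,n)\in\Z$ for the constant term of $\pxy{m,n}$; by \eqref{eq:the-c-poly} it equals $d_{m,n}^{0,0}$.

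For the congruence part, recall from \fullref{lemma:central-character} that every irreducible summand of $\fu^k\fud^l$ has central character $\chi_c(\Ll_{k,l})$. Since in \eqref{eq:L-vs-L} the class $[\Ll_{m,n}]$ and every contributing $[\fu^k\fud^l]$ must sit in the same graded piece of the central-character decomposition of $\GG{\sltcat}$, a nonzero $d_{m,n}^{k,l}$ forces $k+2l\equiv m+2n\bmod 3$; equivalently, $\pxy{m,n}$ is homogeneous of degree $m+2n\bmod 3$ for the $\Z/3$-grading on $\Z[\fu,\fud]$ in which $\fu$ and $\fud$ have degrees $1$ and $2$ (this can also be seen directly by induction from \fullref{lemma:recursion}). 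As the constant monomial has degree $0$, I conclude that $c(m,n)=0$ whenever $m\not\equiv n\bmod 3$, which is the ``only if'' direction apart from the subcase $m\equiv n\equiv 2\bmod 3$; that subcase I expect to drop out of the value computation below.

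Next I would turn the two recursions of \fullref{lemma:recursion} into a scalar recursion for the $c(m,n)$, using only the trivial fact that multiplying a polynomial by $\fu$ or by $\fud$ annihilates its constant term. Reading off constant terms yields, for all $m,n\in\N$ and with the convention $c(a,b)=0$ whenever $a<0$ or $b<0$, the relations
\begin{gather*}
c(m{+}1,n)+c(m{-}1,n{+}1)+c(m,n{-}1)=0
\qquad\text{and}\qquad
c(m,n{+}1)+c(m{+}1,n{-}1)+c(m{-}1,n)=0,
\end{gather*}
and hence $c(m,n)=-c(m{-}2,n{+}1)-c(m{-}1,n{-}1)$ for $m\geq 1$ together with the mirror identity $c(m,n)=-c(m{+}1,n{-}2)-c(m{-}1,n{-}1)$ for $n\geq 1$; in each of these, the right-hand side involves only pairs of strictly smaller total degree. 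Starting from $c(0,0)=1$ (visible in $\pxy{0,0}=1$, see \fullref{example:sl3-polys}), I would then prove by induction on $m+n$ that $c(m,n)$ equals $1$ if $m\equiv n\equiv 0\bmod 3$, equals $-1$ if $m\equiv n\equiv 1\bmod 3$, and is $0$ otherwise. In the inductive step, assuming $m\equiv n\bmod 3$, $m+n\geq 1$ and $m\geq 1$: one checks $m{-}2\equiv n{+}1\bmod 3$ and $m{-}1\equiv n{-}1\bmod 3$, so both arguments again lie on one of the three congruence diagonals, and substituting the inductive values gives $-(-1)-0=1$, $-0-1=-1$ or $-1-(-1)=0$ according as $(m,n)$ is on the $0$-, $1$- or $2$-diagonal; the case $m=0$ is dealt with by the mirror identity $c(0,n)=-c(1,n{-}2)$, and any boundary pair with a negative index is harmless, since the convention declares it $0$ while the diagonal it would lie on is always the $2$-diagonal, which the formula also sends to $0$.

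I do not expect a serious obstacle. The two places that need a little care are (i) the elementary but load-bearing observation that $\fu\cdot p$ and $\fud\cdot p$ have no constant term, which is what licenses passing to the scalar recursion, and (ii) keeping the finite mod-$3$ case analysis, together with the boundary cases $m=0$, $n=0$ and small $m+n$, organized cleanly enough that the three diagonals are manifestly consistent with one another. Should one prefer to sidestep the induction, an alternative is to identify $c(m,n)$ with the value of the $\slt$-character of $\Ll_{m,n}$ at the semisimple element on which both fundamental characters vanish --- necessarily the element whose eigenvalues are the three cube roots of unity --- and to evaluate that character via the Weyl character formula; this reaches the same conclusion at the cost of comparable bookkeeping.
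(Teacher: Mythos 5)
Your proof is correct and follows essentially the same route as the paper, whose own proof is just the one-line remark that the claim follows inductively from \fullref{example:sl3-polys} and \fullref{lemma:recursion}; you have simply carried out that induction explicitly (passing to the scalar recursion for the constant terms and running the mod-$3$ case analysis), and your boundary checks and the central-character observation for the congruence part are both sound. No gaps.
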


\begin{proof}
The claim follows inductively from \fullref{example:sl3-polys} 
and \fullref{lemma:recursion}.
\end{proof}

\subsubsection{Their complex roots}\label{subsec:roots-poly}

The following definition will be crucial for us.
\begin{definition}\label{definition:the-main-ideal}
For fixed level $e$, let $\vanideal{e}$ be the ideal generated by
\[
\left\{
\pxy{m,n}\mid m+n=e+1
\right\}
\subset\Z[\fu,\fud].
\]
We call $\vanideal{e}$ the vanishing ideal of level $e$. Associated 
to it is the vanishing set of level $e$
\[
\vanset{e}=
\left\{
(\alpha,\beta)\in\C^2\mid 
p(\alpha,\beta)=0\;\text{for all}\; p\in \vanideal{e}
\right\}\subset\C^2
\]
which we consider as a complex variety.
\end{definition}

Since 
$\fu$ and $\fud$ generate 
$\slqmod$, we have 
\begin{gather}\label{eq:roots1}
\GGc{\slqmod}\cong\C[\fu,\fud]/\vanideal{e}\cong\C^{t_e},
\end{gather}
as vector spaces, where $t_e=\tfrac{(e+1)(e+2)}{2}$ denotes the triangular number. 
Note that the left isomorphism in \eqref{eq:roots1} is actually an isomorphism of algebras, 
which follows from the explicit form of the 
fusion rules for $\slqmod$ (which can be deduced from e.g. \cite[Corollary 8]{Saw} 
or \cite[Proposition 3.2.2]{Sch}).

Using this, we can compute $\#\vanset{e}$, the number of points in $\vanset{e}$, i.e. 
the number of common roots of the polynomials in $\vanideal{e}$.

\begin{lemma}\label{lemma:level-vanishing}
We have $\#\vanset{e}=t_e$.
\end{lemma}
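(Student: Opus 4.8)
The plan is to count $\#\vanset{e}$ by interpreting it as the number of maximal ideals (equivalently, the dimension as a $\C$-vector space, once we know the quotient is reduced) of the algebra $\C[\fu,\fud]/\vanideal{e}$, and then to invoke the algebra isomorphism $\GGc{\slqmod}\cong\C[\fu,\fud]/\vanideal{e}$ from \eqref{eq:roots1}, which tells us this algebra has dimension $t_e$. Concretely, first I would recall that $A=\C[\fu,\fud]/\vanideal{e}$ is a finite-dimensional commutative $\C$-algebra of dimension $t_e$. A finite-dimensional commutative algebra over $\C$ is a product of local rings $A\cong\prod_{i=1}^{r} A_i$, where $r=\#\mathrm{MaxSpec}(A)$, and by the Nullstellensatz $\mathrm{MaxSpec}(A)$ is exactly $\vanset{e}$, the common zero locus in $\C^2$. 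So $\#\vanset{e}=r\leq\dim_\C A=t_e$, with equality if and only if every local factor $A_i$ is one-dimensional, i.e. if and only if $A$ is reduced (equivalently, $\vanideal{e}$ is a radical ideal).

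Thus the crux is to show that $A=\GGc{\slqmod}\cong\C[\fu,\fud]/\vanideal{e}$ is reduced. The cleanest route is to use that $\GGc{\slqmod}\cong\C^{t_e}$ \emph{as an algebra}, not merely as a vector space: since $\slqmod$ is a semisimple monoidal category whose fusion ring $\GG{\slqmod}$ is commutative (it is spanned by the classes $[\Ll_{m,n}]$ with structure constants the fusion multiplicities), the complexified fusion algebra $\GGc{\slqmod}$ is a finite-dimensional commutative $\C$-algebra that is split semisimple — this is the standard Verlinde-type statement, provable e.g. by the Verlinde formula / the fact that the $S$-matrix (or characters) diagonalize the fusion rules, or more elementarily by noting that $\GGc{\slqmod}$ carries a non-degenerate trace form (from the pairing $[\Ll_{m,n}]\mapsto \dim\Hom$) making it a commutative Frobenius algebra with no nilpotents. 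Hence $\GGc{\slqmod}\cong\C^{t_e}$ as algebras, so $A$ has exactly $t_e$ maximal ideals, giving $\#\vanset{e}=t_e$.

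Alternatively, and perhaps more in the spirit of the paper, one can argue directly on the polynomial side: the recursion in \fullref{lemma:recursion} shows that $\fu$ and $\fud$ act on $A$ by the (transposes of the) adjacency operators of the level-$e$ fusion graph, and these operators are simultaneously diagonalizable with the eigenvalues given by evaluating characters $\chi_{\fu},\chi_{\fud}$ of $\slt$ at the $t_e$ regular points of the finite order torus determined by $\qqpar$ — i.e. the points $(\alpha,\beta)=(\chi_\fu(\xi),\chi_\fud(\xi))$ for $\xi$ running over a suitable $\zeethree$-free orbit of size $t_e$. Distinctness of these $t_e$ joint eigenvalue pairs $(\alpha,\beta)$ shows both that $A$ is reduced and that $\vanset{e}$ consists of exactly those $t_e$ points.

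The main obstacle is precisely establishing that $\C[\fu,\fud]/\vanideal{e}$ is reduced, equivalently that $\vanideal{e}$ is radical; the bare vector-space isomorphism $\GGc{\slqmod}\cong\C^{t_e}$ only gives the inequality $\#\vanset{e}\leq t_e$. Everything else — the Nullstellensatz identification of $\vanset{e}$ with $\mathrm{MaxSpec}$, the structure theorem for finite-dimensional commutative algebras, the fact that $\GG{\slqmod}$ is a commutative ring — is standard and already recorded in the excerpt. So in the writeup I would lead with the reduction to "$A$ reduced $\Rightarrow$ done," then spend the real effort citing/proving semisimplicity of the Verlinde algebra $\GGc{\slqmod}$ (via the non-degenerate symmetric Frobenius form, or the diagonalization of fusion by characters at roots of unity), and conclude $\#\vanset{e}=\dim_\C\GGc{\slqmod}=t_e$.
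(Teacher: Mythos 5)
Your argument is correct, but it takes a genuinely different route from the paper. The paper's proof splits into an upper bound and a lower bound: the inequality $\#\vanset{e}\leq t_e$ comes from the vector-space isomorphism \eqref{eq:roots1} together with the Nullstellensatz corollary \cite[Corollary I.7.4]{Fu}, and the reverse inequality is obtained by \emph{explicitly exhibiting} $t_e$ distinct common zeros via Koornwinder's trigonometric functions $Z$ and $E^-_{a,b}$ on the torus, using the identity $\pxy[Z(\sigma,\tau),\overline{Z(\sigma,\tau)}]{m,n}=E^-_{m+1,n+1}/E^-_{1,1}$ and the zero set \eqref{eq:zeros}. You instead upgrade \eqref{eq:roots1} to the algebra isomorphism (which the paper does assert, via the fusion rules) and invoke semisimplicity of the complexified fusion ring to conclude that $\C[\fu,\fud]/\vanideal{e}$ is reduced, whence $\#\mathrm{MaxSpec}=\dim_{\C}=t_e$. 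This is cleaner and avoids the trigonometric computation, at the cost of importing the (standard but nontrivial) fact that the Verlinde algebra of a fusion category is split semisimple. Two caveats. First, your parenthetical justification ``commutative Frobenius algebra with no nilpotents'' is not self-contained: being Frobenius does not exclude nilpotents (e.g.\ $\C[X]/(X^2)$); the correct input is either the Verlinde/character diagonalization you also mention, or the positivity argument for based rings as in \cite[Chapter 3]{EGNO}. Second, the paper's explicit parametrization is not wasted effort: \fullref{remark:level-vanishing} records the precise elements of $\vanset{e}$, which are needed later (in \fullref{lemma:three-dim-rep1}, the count of the $\zeethree$-orbits, and \eqref{eq:the-simples}), so a proof that only counts the roots would still have to be supplemented by that description elsewhere.
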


Before we prove \fullref{lemma:level-vanishing}, let us fix some notation for 
complex numbers: $\iunit$ denotes $\sqrt{-1}$ 
(in the positive upper half-plane), 
$\zetaroot=\exp(2\pi\iunit\neatfrac{1}{3})$ and $\overline{z}$ 
will denote the complex conjugate of a complex number $z\in\C$.

\begin{proof}
By \eqref{eq:roots1} and a corollary of Hilbert's Nullstellensatz
\cite[Corollary I.7.4]{Fu}, we immediately see that
$\#\vanset{e}\leq t_e$, 

To see the equality, consider the following functions, due to \cite{Ko}:
\begin{align*}
Z\colon\C^2\to\C,\quad
Z(\sigma,\tau)=
&\exp(\iunit\sigma)+\exp(-\iunit\tau)+\exp(\iunit(-\sigma+\tau)),
\\
E^-_{a,b}\colon\C^2\to\C,\quad
E^-_{a,b}(\sigma,\tau)=
&\exp(\iunit(a\sigma+b\tau))-\exp(\iunit((a+b)\sigma-b\tau))
\\
&+\exp(\iunit(-(a+b)\sigma+a\tau))-\exp(\iunit(-b\sigma-a\tau))
\\
&+\exp(\iunit(b\sigma-(a+b)\tau))-\exp(\iunit(-a\sigma+(a+b)\tau)),
\end{align*}
where $a,b\in\N$.

The functions $Z$ and 
$E^-_{a,b}$ are clearly $2\pi$-periodic 
in both variables, i.e. they define 
functions on a $2$-torus $\mathrm{T}^2$. As one easily checks, $Z$ is invariant 
and $E^-_{a,b}$ is antiinvariant 
under the reflections $(\sigma,\tau)\mapsto (-\sigma+\tau,\tau)$
and $(\sigma,\tau)\mapsto (\sigma, \sigma-\tau)$, which generate 
the symmetric group $S_3$. The fundamental domain of the quotient 
$\mathrm{T}^2/S_3$ is equal to
\[
D=
\left\{
(\sigma,\tau)\mid 0\leq \sigma+\tau\leq 2\pi,\;
\neatfrac{\sigma}{2}\leq \tau\leq 2\sigma
\right\}.
\] 
Note that all zeros of $E^-_{1,1}$ lie on the boundary of $D$. Therefore  
$Z$ and 
$E^-_{m+1,n+1}/E^-_{1,1}$
define functions on the interior of $D$. 

As explained in \cite{Ko}, $Z$ and 
its complex conjugate $\overline{Z}$ map $D$ bijectively onto 
the ($3$-cusps) discoid $\discoid=\{z=(x,y)\in\C\mid -z^2\overline{z}^2+4z^3+\overline{z}^3-18z\overline{z}+27\geq 0\}$ 
bounded by the deltoid curve $\deltoid=\{z=2\exp(\iunit t)+\exp(-2\iunit t)\mid t\in[0,2\pi[\}$
(also called Steiner's hypocycloid):
\begin{gather}\label{eq:deltoid}
\begin{tikzpicture}[anchorbase, scale=.6, tinynodes]
\draw[thin, marked=.0, marked=.166, marked=.333, marked=.666, marked=.833, marked=1.0, white] (0,-3) to (0,3);
\draw[thin, marked=.0, marked=.166, marked=.333, marked=.666, marked=.833, marked=1.0, white] (-3,0) to (3,0);
\draw[thick, white, fill=mygreen, opacity=.2] (3,0) to [out=170, in=315] (-1.5,2.5) to [out=290, in=70] (-1.5,-2.5) to [out=45, in=190] (3,0);
\draw[thin, densely dotted, ->, >=stealth] (-3.5,0) 
to (-3.35,0) node [above] {$-3$}
to (3.2,0) node [above] {$3$}
to (3.5,0) node[right] {$x$};
\draw[thin, densely dotted, ->, >=stealth] (0,-3.5) 
to (0,-3.2) node [right] {$-3$}
to (0,3.2) node [right] {$3$}
to (0,3.5) node[above] {$y$};
\draw[thick] (3,0) to [out=170, in=315] (-1.5,2.5) to [out=290, in=70] (-1.5,-2.5) to [out=45, in=190] (3,0);
\node at (-2,3) {\scalebox{.85}{$3\exp(2\pi\iunit\neatfrac{1}{3})$}};
\node at (-2,-3) {\scalebox{.85}{$3\exp(2\pi\iunit\neatfrac{2}{3})$}};
\node at (3,3) {$\C$};
\node at (5.6,1.75) {\scalebox{.85}{$\deltoid= \begin{gathered} \{z=2\exp(\iunit t)+\exp(-2\iunit t)
\\
\mid t\in[0,2\pi[
\} \end{gathered}$}};
\draw[thin, ->] (2.9,1.75) to [out=180, in=45] (.9,.75);
\node at (-4.5,1.75) {\scalebox{.85}{$\begin{gathered}
\discoid=\{z=(x,y)\in\C \\
\mid -z^2\overline{z}^2{+}4z^3{+}\\
\overline{z}^3{-}18z\overline{z}{+}27{\geq} 0\}
\end{gathered}$}};
\draw[thin, ->] (-3,1.75) to [out=0, in=180] (-.5,.75);
\draw[thin, densely dashed, opacity=.5, myorange] (0,0) circle (3cm);
\node at (0,-4) {The disciod $\discoid=\discoid(\slt)$ bounded by the deltoid curve $\deltoid$};
\end{tikzpicture}
\end{gather}
The discoid $\discoid$ has a $\zeethree$-symmetry,  
given by $(z,\overline{z})\mapsto (\zetaroot^{\pm 1}z,\zetaroot^{\mp 1}\overline{z})$, and 
its singularities are the primitive, complex third roots of unity multiplied by $3$.

For any $a,b\in\N$, the zeros of $E^-_{a,b}$ are 
known, cf. \cite[Section 7.1]{EP}. However, let us give an independent proof.    
\newline

\noindent\textit{\setword{`\fullref{lemma:level-vanishing}.Claim'}{claim-section-sl3}.}
Let $a,b\in\N,a+b=s\geq 2$. 
Then $E^-_{a,b}(\sigma,\tau)=0$ if
\begin{equation}\label{eq:zeros}
\left(\sigma,\tau\right)=
\left(\neatfrac{2\pi(2c+d+3)}{3s},\neatfrac{2\pi (c+2d+3)}{3s}\right), 
\quad\text{with}\; c,d\in\N.
\end{equation}

\noindent\textit{Proof of \ref{claim-section-sl3}.} 
We have
\[
\zetaroot^{\sneatfrac{a(2c+d)+b(c+2d)}{s}}
=
\zetaroot^{\sneatfrac{a(2c+d)+b(c+2d)-3(a+b)(c+d)}{s}}
=
\zetaroot^{\sneatfrac{-b(2c+d)-a(c+2d)}{s}}, 
\]
using that $a+b=s$, $(2c+d)+(c+2d)=3(c+d)$ and $\zetaroot^3=1$. Similarly, we obtain
\begin{gather*}
\zetaroot^{\sneatfrac{(a+b)(2c+d+3)-b(c+2d+3)}{s}}
=\zetaroot^{\sneatfrac{(a+b)(2c+d+3)-b(c+2d+3)+3(a+b)(c+d+2)}{s}}
\\
=\zetaroot^{\sneatfrac{2(a+b)(2c+d+3)+a(c+2d+3)}{s}}
=\zetaroot^{\sneatfrac{-(a+b)(2c+d+3)+a(c+2d+3)}{s}}.
\end{gather*}
\begin{gather*}
\zetaroot^{\sneatfrac{b(2c+d+3)-(a+b)(c+2d+3)}{s}}
=\zetaroot^{\sneatfrac{b(2c+d+3)-(a+b)(c+2d+3)-3(a+b)(c+d+2)}{s}}
\\
=\zetaroot^{\sneatfrac{-a(2c+d+3)-2(a+b)(c+2d+3)}{s}}
=\zetaroot^{\sneatfrac{-a(2c+d+3)+(a+b)(c+2d+3)}{s}}.
\end{gather*}
This gives $E^-_{a,b}(\sigma,\tau)=0$ for 
$(\sigma,\tau)$ as in \eqref{eq:zeros}, and completes the proof of \ref{claim-section-sl3}.
\medskip

Next, for any $m,n$, we have   
\[
\pxy[Z(\sigma,\tau),\overline{Z(\sigma,\tau)}]{m,n}
=
E^-_{m+1,n+1}(\sigma,\tau)/E^-_{1,1}(\sigma,\tau). 
\]
Let $(\sigma,\tau)$ be as 
in \eqref{eq:zeros} with $a=m+1$ and 
$b=n+1$, and assume $(\sigma,\tau)$ is in 
the interior of $D$. 
Then we have $\pxy[Z(\sigma,\tau),\overline{Z(\sigma,\tau)}]{m,n}=0$ 
by \ref{claim-section-sl3}.

To make the connection with our notation from before, 
let $m+n=e+1$ and $k=c,l=d$. By the above, for all  
\begin{equation}\label{eq:zeros2}
\left(\sigma,\tau\right)=
\left(\neatfrac{2\pi(2k+l+3)}{3(e+3)},\neatfrac{2\pi (k+2l+3)}{3(e+3)}\right) 
\quad\text{with}\; 0\leq k+l\leq e,
\end{equation} 
we have $\pxy[Z(\sigma,\tau),\overline{Z(\sigma,\tau)}]{m,n}=0$. 

Thus, $\#\vanset{e}\geq\#\left\{(k,l)\in X^+\mid 0\leq k+l\leq e \right\} = t_e$.
Since we already know that $\#\vanset{e}\leq t_e$, equality must hold.
\end{proof}

\begin{remark}\label{remark:level-vanishing}
Applying $Z$ to \eqref{eq:zeros2} gives the 
precise form of the elements of $\vanset{e}$:
\[
\vanset{e}=
\left\{
(\alpha,\beta)\in\C^2
\mid
\alpha=Z(\sigma,\tau),
\beta=\overline{Z(\sigma,\tau)}
\right\}
\]
for $(\sigma,\tau)$ as in \eqref{eq:zeros2}. For a fixed level, the common 
roots of the polynomials $\pxy{m,n}$ all lie in the interior of the 
discoid from \eqref{eq:deltoid}.
\end{remark}

\begin{example}\label{example:plot-zeros}
The polynomials for $e=1,2,3$
are given in \fullref{example:sl3-polys}.
The first (or $\fu$) entries of their common zeros are
\begin{gather*}
e=1\colon
\{
\text{roots of }
(X-1)(X^2+X+1)
\}
,
\\
e=2\colon
\{ 
\text{roots of }
(X^2-X-1)(X^4+X^3+2X^2-X+1)
\}
,
\\
e=3\colon
\{
\text{roots of }
X(X-2)(X^2 + 2X + 4)(X^6 - X^3 + 1)
\}
.
\end{gather*}
The second (or $\fud$)
entries are the complex conjugates.
Plotted to $\C$ one gets
\[
\begin{tikzpicture}[anchorbase, scale=.6, tinynodes]
\draw[thin, marked=.0, marked=.166, marked=.333, marked=.666, marked=.833, marked=1.0, white] (0,-3) to (0,3);
\draw[thin, marked=.0, marked=.166, marked=.333, marked=.666, marked=.833, marked=1.0, white] (-3,0) to (3,0);
\draw[thick, white, fill=mygreen, opacity=.2] (3,0) to [out=170, in=315] (-1.5,2.5) to [out=290, in=70] (-1.5,-2.5) to [out=45, in=190] (3,0);
\draw[thin, densely dotted, ->, >=stealth] (-3.5,0) 
to (-3.35,0) node [above] {$-3$}
to (3.2,0) node [above] {$3$}
to (3.5,0) node[right] {$x$};
\draw[thin, densely dotted, ->, >=stealth] (0,-3.5) 
to (0,-3.2) node [right] {$-3$}
to (0,3.2) node [right] {$3$}
to (0,3.5) node[above] {$y$};
\draw[thick] (3,0) to [out=170, in=315] (-1.5,2.5) to [out=290, in=70] (-1.5,-2.5) to [out=45, in=190] (3,0);
\node at (3,3) {$\C$};
\node[myorange] at (1,0) {$\bullet$};
\node[myorange] at (-.5,.87) {$\bullet$};
\node[myorange] at (-.5,-.87) {$\bullet$};
\draw[very thin, densely dashed, myorange] (1,0) to (-.5,.87) to (-.5,-.87) to (1,0);
\node[mypurple] at (1.62,0) {$\bullet$};
\node[mypurple] at (-.62,0) {$\bullet$};
\node[mypurple] at (-.81,1.4) {$\bullet$};
\node[mypurple] at (-.81,-1.4) {$\bullet$};
\node[mypurple] at (.31,.54) {$\bullet$};
\node[mypurple] at (.31,-.54) {$\bullet$};
\draw[very thin, densely dashed, mypurple] (1.62,0) to (.31,.54) to (-.81,1.4) to (-.62,0) to (-.81,-1.4) to (.31,-.54) to (1.62,0);
\node[myblue] at (0,0) {$\bullet$};
\node[myblue] at (2,0) {$\bullet$};
\node[myblue] at (-1,1.73) {$\bullet$};
\node[myblue] at (-1,-1.73) {$\bullet$};
\node[myblue] at (-.77,.64) {$\bullet$};
\node[myblue] at (-.77,-.64) {$\bullet$};
\node[myblue] at (-.17,.98) {$\bullet$};
\node[myblue] at (-.17,-.98) {$\bullet$};
\node[myblue] at (.94,.34) {$\bullet$};
\node[myblue] at (.94,-.34) {$\bullet$};
\draw[very thin, densely dashed, myblue] (2,0) to (.94,.34) to (-.17,.98) to (-1,1.73) to (-.77,.64) to (-.77,-.64) to (-1,-1.73) to (-.17,-.98) to (.94,-.34) to (2,0);
\node[myorange] at (2.75,2) {inner is $e=1$};
\node[mypurple] at (2.75,1.5) {middle is $e=2$};
\node[myblue] at (2.75,1) {outer is $e=3$};
\end{tikzpicture}
\]
Letting $e\gg 0$, these approximate the deltoid curve 
$\deltoid$ (layer-wise).
\end{example}
%
\section{Trihedral Hecke algebras}\label{section:funny-algebra}

As before, $k,l,m,n$ etc. will be non-negative integers, 
and $e$ will denote the level.
We are now going to introduce the trihedral Hecke algebras. 
The reader possibly spots the 
analogies with the dihedral Hecke algebra right away, but, for 
completeness, we have also listed some of
them in \fullref{subsec:dihedral-group}. 

\subsection{Some color conventions}\label{subsec:our-color-code}

Throughout we will use the set of primary colors 
$\Prset=\{\bc,\rc,\yc\}$, the elements of which are {\color{myblue}blue} $\bc$, 
{\color{myred}red} $\rc$ and {\color{myyellow}yellow} $\yc$, the set of 
secondary colors $\Seset=\{\gc,\oc,\pc\}$, the elements of which are 
{\color{mygreen}green} $\gc=\{\bc,\yc\}$, {\color{myorange}orange} $\oc=\{\yc,\rc\}$ and 
{\color{mypurple}purple} $\pc=\{\bc,\rc\}$, and the color {\color{mycream}white} $\wc$. 
We also use dummy colors  $\tduc,\tdudc\in\Seset$, and from now on $\tduc,\tdudc$, etc. will 
always denote arbitrary elements in $\Seset$.

Moreover, we fix a cyclic ordering, and its inverse, of the secondary colors:
\begin{gather}\label{eq:color-tensor}
\xy
(0,0)*{
\raisebox{.1cm}{$\begin{tikzpicture}[baseline=(current bounding box.center),yscale=0.6]
  \matrix (m) [matrix of math nodes, row sep=.2cm, column
  sep=.1cm, text height=1.5ex, text depth=0.25ex, ampersand replacement=\&] {
\pc \&  \& \oc \\
    \& \gc \&  \\};
  \path[thick, myyellow, ->] ($(m-2-2)+(.1,.15)$) edge ($(m-1-3)+(-.1,-.3)$);
  \path[thick, densely dashed, myred, ->] (m-1-3) edge (m-1-1);
  \path[thick, densely dotted, myblue, ->] ($(m-1-1)+(.1,-.3)$) edge ($(m-2-2)+(-.1,.15)$);
\end{tikzpicture}$}};
(0,-7)*{\text{{\tiny$\rho^{\phantom{-}}\!\!\!\!\colon \gc\mapsfrom\pc\mapsfrom\oc\mapsfrom\gc$}}};
\endxy
,\quad\quad
\xy
(0,0)*{
\raisebox{.1cm}{$\begin{tikzpicture}[baseline=(current bounding box.center),yscale=0.6]
  \matrix (m) [matrix of math nodes, row sep=.2cm, column
  sep=.1cm, text height=1.5ex, text depth=0.25ex, ampersand replacement=\&] {
\pc \&  \& \oc \\
    \& \gc \&  \\};
  \path[thick, myyellow, <-] ($(m-2-2)+(.1,.15)$) edge ($(m-1-3)+(-.1,-.3)$);
  \path[thick, densely dashed, myred, <-] (m-1-3) edge (m-1-1);
  \path[thick, densely dotted, myblue, <-] ($(m-1-1)+(.1,-.3)$) edge ($(m-2-2)+(-.1,.15)$);
\end{tikzpicture}$}};
(0,-7)*{\text{{\tiny$\rho^{-1}\colon\gc\mapsfrom\oc\mapsfrom\pc\mapsfrom\gc$}}};
\endxy
\end{gather}
Note that we usually read from right to left, i.e. 
we use the operator notation. 

The action of $\rho$ on $\Seset$ can be read off 
from \eqref{eq:color-tensor}: $\rho(\gc)=\oc$, $\rho(\oc)=\pc$ 
and $\rho(\pc)=\gc$, and $\rho^{k-l}$ only depends on $(k-l)\bmod 3$, 
for any $k,l$.

\subsection{The trihedral Hecke algebra of level \texorpdfstring{$\infty$}{infty}}\label{subsec:definition}

In this section and in \fullref{subsec:quotient-algebra}, we work over $\Cv=\C(\vpar)$,
with $\vpar$ being a generic parameter.

\subsubsection{The underlying Coxeter group}\label{subsec:weyl-group}

Let $\Wgroup$ be the Coxeter group of affine type $\typea{2}$, generated by 
three reflections that we denote by $\bc,\rc,\yc$, i.e.
\begin{gather*}
\begin{tikzpicture}[anchorbase, xscale=.4, yscale=.55]
	\draw [thick] (0,0) to (.4,.4) node[right] {\text{{\tiny$3$}}} to (1,1);
	\draw [thick] (0,0) to (-.4,.4) node[left] {\text{{\tiny$3$}}} to (-1,1);
	\draw [thick] (1,1) to (0,1) node[above] {\text{{\tiny$3$}}} to (-1,1);
	\node at (0,0) {$\bullet$};
	\node at (0,-.3) {$\text{{\tiny$\bc$}}$};
	\node at (1,1) {$\bullet$};
	\node at (1.25,1.25) {$\text{{\tiny$\rc$}}$};
	\node at (-1,1) {$\bullet$};
	\node at (-1.25,1.25) {$\text{{\tiny$\yc$}}$};
	\node at (0,.675) {\text{{\tiny$\typeat{2}$}}};
\end{tikzpicture}
\rightsquigarrow
\Wgroup
=
\left\langle
\bc,\rc,\yc\mid
\bc^2=\rc^2=\yc^2=1,
\;
\bc\rc\bc=\rc\bc\rc,
\,
\bc\yc\bc=\yc\bc\yc,
\,
\rc\yc\rc=\yc\rc\yc
\right\rangle.
\end{gather*}
In order to simplify the notation, 
we identify the vertices in the Coxeter diagram of $\Wgroup$ with 
the corresponding reflections. 

Moreover, let $\gc$, $\oc$ and $\pc$ be the maximal proper parabolic subsets, 
and let $\Wgroup_{\gc}, \Wgroup_{\oc}$ and $\Wgroup_{\pc}$ 
be the corresponding 
standard parabolic subgroups of $\Wgroup$, which 
are all isomorphic to the (finite) type $\typeA_2$ Weyl group. 
Furthermore, let
\begin{gather}\label{eq:sec-Weyl}
w_{\gc}=\bc\yc\bc=\yc\bc\yc\in\Wgroup_{\gc},
\quad
\quad 
w_{\oc}=\rc\yc\rc=\yc\rc\yc\in\Wgroup_{\oc},
\quad
\quad
w_{\pc}=\bc\rc\bc=\rc\bc\rc\in\Wgroup_{\pc}
\end{gather}
denote the longest elements 
in these parabolic 
subgroups.

\subsubsection{The trihedral Hecke algebra}\label{subsec:def-sl3alg-1}

We define now the trihedral Hecke algebra 
of level $\infty$.  

\begin{definition}\label{definition:funny-alg1}
Let $\subquo$ be the associative, unital ($\Cv$-)algebra generated by three elements
$\theta_{\gc}$, $\theta_{\oc}$, $\theta_{\pc}$
subject to the following relations.
\begin{gather}\label{eq:first-rel}
\theta_{\gc}^2=\vfrac{3}\theta_{\gc},
\quad\quad
\theta_{\oc}^2=\vfrac{3}\theta_{\oc},
\quad\quad
\theta_{\pc}^2=\vfrac{3}\theta_{\pc},
\end{gather}
\begin{gather}\label{eq:second-rel}
\theta_{\gc}\theta_{\oc}\theta_{\gc}=\theta_{\gc}\theta_{\pc}\theta_{\gc},
\quad\quad
\theta_{\oc}\theta_{\gc}\theta_{\oc}=\theta_{\oc}\theta_{\pc}\theta_{\oc},
\quad\quad
\theta_{\pc}\theta_{\gc}\theta_{\pc}=\theta_{\pc}\theta_{\oc}\theta_{\pc}.
\end{gather}
Here, $\vfrac{3}$ is the $\vpar$-factorial from \eqref{eq:qnumbers-typeAD}.
\end{definition}

Let $\hecke=\hecke(\typeat{2})$ denote the Hecke algebra 
of affine type $\typea{2}$, see e.g. \cite[Section 2]{So}. 
Recall that $\hecke$ can be defined as  
the associative, unital ($\Cv$)-algebra
generated by $\theta_{\bc},\theta_{\rc}$ and $\theta_{\yc}$ 
subject to
\begin{gather}\label{eq:quadratic}
\theta_{\bc}^2=\vnumber{2}\theta_{\bc},
\quad\quad
\theta_{\yc}^2
=\vnumber{2}\theta_{\yc},
\quad\quad
\theta_{\rc}^2=\vnumber{2}\theta_{\rc},
\\
\label{eq:cubic}
\begin{aligned}
(\theta_{w_{\gc}}
=)&\theta_{\bc}\theta_{\yc}\theta_{\bc}-\theta_{\bc}\\
=&\theta_{\yc}\theta_{\bc}\theta_{\yc}-\theta_{\yc},
\end{aligned}
\quad\quad
\begin{aligned}
(\theta_{w_{\oc}}
=)&\theta_{\rc}\theta_{\yc}\theta_{\rc}-\theta_{\rc}\\
=&\theta_{\yc}\theta_{\rc}\theta_{\yc}-\theta_{\yc},
\end{aligned}
\quad\quad
\begin{aligned}
(\theta_{w_{\pc}}
=)&\theta_{\bc}\theta_{\rc}\theta_{\bc}-\theta_{\bc}\\
=&\theta_{\rc}\theta_{\bc}\theta_{\rc}-\theta_{\rc}.
\end{aligned}
\end{gather}
For any $w\in\Wgroup$, 
let $\theta_w$ be the 
corresponding Kazhdan--Lusztig (KL for short) basis element of 
$\hecke$, e.g. the expression $\theta_{w_{\tduc}}$ 
in \eqref{eq:cubic}. 
(Note that $\theta_w$ is denoted $\underline{H}_w$ in \cite[Section 2]{So},
while the standard basis is denoted $H_w$ therein.)

\begin{lemma}\label{lemma:quotient-of-affine}
The algebra homomorphism given by 
\[
\theta_{\gc}\mapsto\theta_{w_{\gc}},
\quad\quad
\theta_{\oc}\mapsto\theta_{w_{\oc}},
\quad\quad
\theta_{\pc}\mapsto\theta_{w_{\pc}},
\]
defines an embedding $\subquo\hookrightarrow\hecke$ of algebras.
\end{lemma}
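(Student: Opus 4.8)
The plan is to check first that $\theta_{w_\gc},\theta_{w_\oc},\theta_{w_\pc}$ satisfy the defining relations \eqref{eq:first-rel}--\eqref{eq:second-rel} of $\subquo$, so that the prescribed assignment extends to an algebra homomorphism $\phi\colon\subquo\to\hecke$, and then to prove $\phi$ injective by producing a spanning set of $\subquo$ whose $\phi$-image is linearly independent in the Kazhdan--Lusztig basis of $\hecke$.

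\emph{Well-definedness.} The only input is the standard absorption rule: if $xs<x$ for a simple reflection $s$ then $\theta_x\theta_s=\vnumber{2}\theta_x$ in $\hecke$, and likewise on the left. Since $w_\tduc$ is the longest element of the finite parabolic $\Wgroup_\tduc$, this applies to every $s\in\tduc$. Combining it with the cubic relation \eqref{eq:cubic} in the form $\theta_{w_\gc}=\theta_\bc\theta_\yc\theta_\bc-\theta_\bc$ gives
\[
\theta_{w_\gc}^{2}=\vnumber{2}\,\theta_{w_\gc}\theta_\yc\theta_\bc-\vnumber{2}\,\theta_{w_\gc}=\bigl(\vnumber{2}^{3}-\vnumber{2}\bigr)\theta_{w_\gc}=\vfrac{3}\theta_{w_\gc},
\]
since $\vnumber{2}^{3}-\vnumber{2}=\vnumber{2}\vnumber{3}=\vfrac{3}$; the same computation works for $w_\oc$ and $w_\pc$, giving \eqref{eq:first-rel}. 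For \eqref{eq:second-rel}, use $\yc\in\gc\cap\oc$ and $\bc\in\gc\cap\pc$ together with \eqref{eq:cubic} in the forms $\theta_{w_\oc}=\theta_\yc\theta_\rc\theta_\yc-\theta_\yc$ and $\theta_{w_\pc}=\theta_\bc\theta_\rc\theta_\bc-\theta_\bc$: absorbing the outer $\theta_\yc$, resp. $\theta_\bc$, into the flanking copies of $\theta_{w_\gc}$ and using $\theta_{w_\gc}^{2}=\vfrac{3}\theta_{w_\gc}$ yields
\[
\theta_{w_\gc}\theta_{w_\oc}\theta_{w_\gc}=\vnumber{2}^{2}\,\theta_{w_\gc}\theta_\rc\theta_{w_\gc}-\vnumber{2}\vfrac{3}\,\theta_{w_\gc}=\theta_{w_\gc}\theta_{w_\pc}\theta_{w_\gc},
\]
$\rc$ being the unique simple reflection outside $\gc$. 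The other two relations of \eqref{eq:second-rel} then follow by transporting this one along the Coxeter-diagram automorphism of $\typeat{2}$ cycling $\bc\mapsto\yc\mapsto\rc\mapsto\bc$ (hence $\gc\mapsto\oc\mapsto\pc\mapsto\gc$), which induces an automorphism of $\hecke$ permuting the Kazhdan--Lusztig basis. So $\phi$ exists.

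\emph{Injectivity.} First, using \eqref{eq:first-rel} to delete repeated letters from monomials in $\theta_\gc,\theta_\oc,\theta_\pc$ and \eqref{eq:second-rel} to rewrite each occurrence of $\theta_\tduc\theta_{\tduc'}\theta_\tduc$ to its preferred form (the one with $\tduc'=\rho(\tduc)$), one shows that $\subquo$ is spanned over $\Cv$ by the set $\mathcal N$ of normal-form monomials $\theta_{\tduc_1}\cdots\theta_{\tduc_k}$ — square-free colored words in which $\tduc_i=\tduc_{i+2}$ forces $\tduc_{i+1}=\rho(\tduc_i)$. Second, expand $\phi(m)=\theta_{w_{\tduc_1}}\cdots\theta_{w_{\tduc_k}}$ in the Kazhdan--Lusztig basis: by positivity of the structure constants (or a direct induction on products with $\theta_s$) the Bruhat-maximal term is $\vpar^{\,a(m)}\theta_{u(m)}$, where $u(m)=w_{\tduc_1}\ast\cdots\ast w_{\tduc_k}$ is the Demazure ($0$-Hecke) product and $a(m)\ge 0$. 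If $m\mapsto u(m)$ is injective on $\mathcal N$, linear independence of the $\theta_u$ forces linear independence of $\{\phi(m):m\in\mathcal N\}$, whence $\phi$ is injective (and $\mathcal N$ is a $\Cv$-basis of $\subquo$). The injectivity of $m\mapsto u(m)$ would be checked directly in $\Wgroup$ by showing that a canonical reduced expression for $u(m)$ is built by concatenating, left to right, the reduced words for $w_{\tduc_1},w_{\tduc_2},\dots$ with only the $0$-Hecke merges forced by the parabolic overlaps, and that the normal-form condition is exactly what lets one read $m$ back off this expression.

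\emph{Main obstacle.} Well-definedness is a short computation; the content is in the injectivity half, and specifically in the two combinatorial facts: that the rewriting system on colored words terminates with the normal forms spanning $\subquo$ (delicate because the rule $\theta_\tduc\theta_{\tduc'}\theta_\tduc\to\theta_\tduc\theta_{\rho(\tduc)}\theta_\tduc$ can create fresh non-normal triples on either side, so one needs a well-chosen terminating order and a diamond-lemma check), and that the Demazure product $u(m)$ determines $m$, which rests on a careful analysis of the Bruhat order of $\typeat{2}$. Alternatively, injectivity may be deferred to the explicit basis of $\subquo$ established in \fullref{subsec:quotient-algebra}.
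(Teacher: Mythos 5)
Your verification of the relations is correct and in fact tidier than the paper's. The paper also checks \eqref{eq:first-rel} via $\vnumber{2}^3-\vnumber{2}=\vnumber{2}\vnumber{3}$, but for \eqref{eq:second-rel} it expands both sides of $\theta_{w_{\gc}}\theta_{w_{\oc}}\theta_{w_{\gc}}=\theta_{w_{\gc}}\theta_{w_{\pc}}\theta_{w_{\gc}}$ fully in $\theta_{\bc},\theta_{\rc},\theta_{\yc}$ and rewrites term by term; your observation that the outer letters of $\theta_{w_{\oc}}=\theta_{\yc}\theta_{\rc}\theta_{\yc}-\theta_{\yc}$ and $\theta_{w_{\pc}}=\theta_{\bc}\theta_{\rc}\theta_{\bc}-\theta_{\bc}$ lie in $\gc$ and absorb into the flanking $\theta_{w_{\gc}}$, so that both sides collapse to $\vnumber{2}^2\theta_{w_{\gc}}\theta_{\rc}\theta_{w_{\gc}}-\vnumber{2}\vfrac{3}\theta_{w_{\gc}}$, shortcuts that computation, and the diagram-automorphism argument matches the paper's ``exchange colors.''

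The injectivity half is where you diverge, and where your argument is not complete. The paper's route is entirely different: it invokes Soergel's $\vpar$-deformation $\algstuff{P}_{\vpar}$ of the regular representation of $\Wgroup$ on alcoves, notes that faithfulness survives deformation because of the unitriangular change of basis $\theta_w\in H_w+\sum_{w'<_{\mathrm{B}}w}\vpar\Z[\vpar]H_{w'}$, and pulls this representation back to $\subquo$. Your route (normal forms for colored words, then reading off the Bruhat-maximal KL term of $\phi(m)$ as a Demazure product $u(m)$) could work, but as you yourself flag, it rests on two unproven combinatorial claims: that the rewriting system terminates with your normal forms spanning $\subquo$ (a diamond-lemma check you do not carry out), and that $m\mapsto u(m)$ is injective on normal forms (a nontrivial statement about reduced expressions and the Bruhat order in $\typeat{2}$ that you only describe how one ``would'' verify). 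Until those are established, the injectivity half is a plan rather than a proof. Moreover, your proposed fallback --- deferring injectivity to the basis of $\subquo$ from \fullref{proposition:two-bases} --- is circular: that proposition is proved only at the end of \fullref{sec:A2-diagrams}, and its proof (via \fullref{lemma:multiplication} and the Satake $2$-functor) explicitly uses the embedding $\subquo\hookrightarrow\hecke$ established by the present lemma. So either complete the two combinatorial claims or replace this half by the paper's faithful-representation argument.
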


\begin{proof}
By \eqref{eq:quadratic}, \eqref{eq:cubic}
and the identity $\vnumber{2}^3-\vnumber{2}=\vnumber{2}\vnumber{3}$, we obtain  
\[
\theta_{w_{\gc}}^2=\vfrac{3}\theta_{w_{\gc}}, 
\quad\quad
\theta_{w_{\oc}}^2=\vfrac{3}\theta_{w_{\oc}}, 
\quad\quad
\theta_{w_{\pc}}^2=\vfrac{3}\theta_{w_{\pc}}.
\]
This shows that \eqref{eq:first-rel} holds in $\hecke$.

Proving \eqref{eq:second-rel} is harder. 
Let us indicate how to prove 
$\theta_{w_{\gc}}\theta_{w_{\oc}}\theta_{w_{\gc}}
=\theta_{w_{\gc}}\theta_{w_{\pc}}\theta_{w_{\gc}}$. 
(The other two follow by exchanging colors.)
By \eqref{eq:cubic}, 
this is equivalent to proving
\begin{gather}\label{eq:triple}
\left(\theta_{\bc}\theta_{\yc}\theta_{\bc}-\theta_{\bc}\right)
\left(\theta_{\bc}\theta_{\rc}\theta_{\bc}-\theta_{\bc}\right)
\left(\theta_{\bc}\theta_{\yc}\theta_{\bc}-\theta_{\bc}\right)
=
\left(\theta_{\yc}\theta_{\bc}\theta_{\yc}-\theta_{\yc}\right)
\left(\theta_{\yc}\theta_{\rc}\theta_{\yc}-\theta_{\yc}\right)
\left(\theta_{\yc}\theta_{\bc}\theta_{\yc}-\theta_{\yc}\right).
\end{gather}
By \eqref{eq:quadratic}, the right-hand side in \eqref{eq:triple} is equal to 
\begin{gather}\label{eq:triple-lhs-rewritten}
\begin{gathered}
\vnumber{2}^2(\theta_{\yc}\theta_{\bc}\theta_{\yc}\theta_{\rc}
\theta_{\yc}\theta_{\bc}\theta_{\yc}
-\theta_{\yc}\theta_{\bc}\theta_{\yc}\theta_{\rc}\theta_{\yc}
-\theta_{\yc}\theta_{\rc}\theta_{\yc}\theta_{\bc}\theta_{\yc}
-\theta_{\yc}\theta_{\bc}\theta_{\yc}\theta_{\bc}\theta_{\yc}
+2\theta_{\yc}\theta_{\bc}\theta_{\yc} + 
\theta_{\yc}\theta_{\rc}\theta_{\yc}-\theta_{\yc})
\\
\stackrel{\eqref{eq:cubic}}{=}
\vnumber{2}^2\left(\theta_{\yc}\theta_{\bc}\theta_{\yc}
\theta_{\rc}\theta_{\yc}\theta_{\bc}\theta_{\yc}
-\theta_{\yc}\theta_{\bc}\theta_{\yc}\theta_{\rc}\theta_{\yc}
-\theta_{\yc}\theta_{\rc}\theta_{\yc}\theta_{\bc}\theta_{\yc}
-\vnumber{3}\theta_{w_{\gc}}
+\theta_{\yc}\theta_{\rc}\theta_{\yc}\right).
\end{gathered}
\end{gather}
Similarly, the left-hand side in \eqref{eq:triple} is equal to 
\begin{gather}\label{eq:triple-rhs-rewritten}
\vnumber{2}^2\left(\theta_{\bc}\theta_{\yc}\theta_{\bc}\theta_{\rc}
\theta_{\bc}\theta_{\yc}\theta_{\bc}
-\theta_{\bc}\theta_{\yc}\theta_{\bc}\theta_{\rc}\theta_{\bc}
-\theta_{\bc}\theta_{\rc}\theta_{\bc}\theta_{\yc}\theta_{\bc}
-\vnumber{3}\theta_{w_{\gc}}
+\theta_{\bc}\theta_{\rc}\theta_{\bc}\right).
\end{gather}

One can obtain \eqref{eq:triple-rhs-rewritten} 
from \eqref{eq:triple-lhs-rewritten} by systematically 
using \eqref{eq:cubic} 
and the fact that $w_{\gc},w_{\oc},w_{\pc}$ have two equivalent expressions each. 
For example, by \eqref{eq:cubic}, 
we have $\theta_{\yc}\theta_{\bc}\theta_{\yc}=
\theta_{\bc}\theta_{\yc}\theta_{\bc}+\theta_{\bc}-\theta_{\yc}$. 
Using this to rewrite the first term in \eqref{eq:triple-lhs-rewritten} and 
carefully continuing in this way yields the 
claimed equality.

Finally, using an appropriate integral form, $\hecke$ specializes 
to $\C[\Wgroup]$ for $\vpar=1$. Moreover, recall that $\C[\Wgroup]$ has a faithful
representation $\algstuff{P}_{1}$, which is induced by the regular $\Wgroup$-action on the 
set of alcoves obtained from the hyperplane 
arrangement associated to $\Wgroup$, and
that $\algstuff{P}_{1}$ can be 
$\vpar$-deformed to $\algstuff{P}_{\vpar}$, cf. \cite[Section 4 and Lemma 4.1]{So}. The 
$\vpar$-deformation $\algstuff{P}_{\vpar}$ 
stays faithful: Each standard basis element $H_w\in\C[\Wgroup]$ 
is mapped to a different $\C$-linear operator by $\algstuff{P}_{1}$, 
so each KL basis element $\theta_w\in\hecke$ 
is mapped to a different $\Cv$-linear operator by 
$\algstuff{P}_{\vpar}$, due to the particular form of the change 
of basis 
\[
\theta_w\in H_w+
{\scriptstyle\sum_{w^{\prime}\leq_{\mathrm{B}} w}}\,
\vpar\Z[\vpar]H_{w^{\prime}}.
\] 
Here $\leq_{\mathrm{B}}$ is the Bruhat order, see e.g. \cite[Claim 2.3]{So}. 
By pulling back $\algstuff{P}_{\vpar}$ to $\subquo$ along the 
algebra homomorphism in this lemma, injectivity of the latter follows from the 
faithfulness of the representation.
\end{proof}

\subsubsection{The trihedral Kazhdan--Lusztig combinatorics}\label{subsec:def-sl3alg-2}

We are going to define a quotient of $\subquo$. 
In order to do that, we first have to introduce certain elements. 
For any $k,l,\tduc$, let 
\begin{gather}\label{eq:KLelement}
\rklx{k,l}=\rklx{k,l}(\theta)=\theta_{\tduc_{k+l}}\cdots\theta_{\tduc_1}\theta_{\tduc_0},
\end{gather}
where $\tduc_i$ for all 
$0\leq i\leq k+l$ is given by $\tduc_0=\tduc$, $\tduc_{i+1}=\rho(\tduc_i)$ 
for (any) $k$ values of 
$i$, and $\tduc_{i+1}=\rho^{-1}(\tduc_i)$ for the remaining values 
of $i$. Note that 
\[
\rklx{0,0}=\theta_\tduc\quad\text{for any}\; \tduc.
\]
Moreover, by convention, $\rklx{k,l}=0$ in case $k$ or $l$ are negative.
We call $\tduc$ the (right) starting color of $\rklx{k,l}$. 
The fact that $\rklx{k,l}$ is well-defined is established by the following lemma.

\begin{lemma}\label{lemma:well-defined-paths}
For any $k,l,\tduc$, 
the element $\rklx{k,l}$ only depends on 
$k$ and $l$, not on the chosen sequence $\tduc_{k+l},\cdots,\tduc_{1},\tduc_{0}=\tduc$.
\end{lemma}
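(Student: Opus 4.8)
The plan is to reinterpret the choice of sequence as a binary word and then reduce the whole statement to a single instance of relation \eqref{eq:second-rel}. A sequence as in \eqref{eq:KLelement} is the same datum as a word $\omega=(\varepsilon_{1},\dots,\varepsilon_{k+l})$ with each $\varepsilon_{i}\in\{+1,-1\}$, exactly $k$ of the $\varepsilon_{i}$ equal to $+1$ and the remaining $l$ equal to $-1$: one sets $\tduc_{0}=\tduc$ and $\tduc_{i}=\rho^{\varepsilon_{i}}(\tduc_{i-1})$, so that $\rklx{k,l}(\omega)=\theta_{\tduc_{k+l}}\cdots\theta_{\tduc_{1}}\theta_{\tduc_{0}}$, and the assertion is that $\rklx{k,l}(\omega)\in\subquo$ is independent of $\omega$ within this class of words. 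First I would note that any two such words are joined by a chain of elementary moves each of which interchanges a $+1$ with an adjacent $-1$: all words with the prescribed content form a single orbit of the symmetric group $S_{k+l}$ acting by permuting positions, and $S_{k+l}$ is generated by the adjacent transpositions, each of which, applied to such a word, either fixes it or performs exactly such an interchange (and keeps us within the class). Hence it suffices to show that interchanging $\varepsilon_{j}$ and $\varepsilon_{j+1}$, in the case $\{\varepsilon_{j},\varepsilon_{j+1}\}=\{+1,-1\}$, leaves the product unchanged.

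Next I would trace the effect of one such move on the colors $\tduc_{0},\dots,\tduc_{k+l}$. The colors $\tduc_{0},\dots,\tduc_{j-1}$ are untouched, and $\tduc_{j+1}=\rho^{\varepsilon_{j+1}}\rho^{\varepsilon_{j}}(\tduc_{j-1})=\tduc_{j-1}$ both before and after the move, since powers of $\rho$ commute and $\varepsilon_{j}+\varepsilon_{j+1}=0$; therefore $\tduc_{j+1},\dots,\tduc_{k+l}$ are also untouched. The only color that changes is $\tduc_{j}$, which is $\rho^{\varepsilon_{j}}(\tduc_{j-1})$ before the move and $\rho^{-\varepsilon_{j}}(\tduc_{j-1})$ after. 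Correspondingly, exactly one factor of the product changes, inside the length-three block $\theta_{\tduc_{j+1}}\theta_{\tduc_{j}}\theta_{\tduc_{j-1}}=\theta_{\tduc_{j-1}}\theta_{\tduc_{j}}\theta_{\tduc_{j-1}}$. Writing $a=\tduc_{j-1}\in\Seset$: since $\rho$ is a $3$-cycle on $\Seset$, the elements $\rho(a)$ and $\rho^{-1}(a)$ are precisely the two colors in $\Seset\setminus\{a\}$, so this block is $\theta_{a}\theta_{b}\theta_{a}$ with $b$ running over exactly these two colors, and \eqref{eq:second-rel} states precisely that $\theta_{a}\theta_{b}\theta_{a}$ does not depend on which of the two is chosen. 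Thus the product is unchanged, which completes the reduction. The cases $k+l\leq 1$ are vacuous, and the boundary indices $j=1$ and $j=k+l-1$ pose no problem because the identity $\tduc_{j+1}=\tduc_{j-1}$ still applies there.

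The main (and essentially only) point, which I would emphasize, is that the cubic relation \eqref{eq:second-rel} of the trihedral Hecke algebra is exactly the braid-type move needed to push a $\rho$-step past an adjacent $\rho^{-1}$-step in the color sequence; everything else is the standard combinatorial fact that two binary words with the same number of $1$'s are connected by adjacent transpositions. If any part demands care it is the bookkeeping in the second paragraph, verifying that a single swap alters precisely one factor of the product — but this is immediate from $\rho\rho^{-1}=\rho^{-1}\rho=\mathrm{id}$.
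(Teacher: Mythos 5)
Your proof is correct and follows essentially the same route as the paper's: both arguments come down to the single observation that an adjacent swap of a $\rho$-step with a $\rho^{-1}$-step changes exactly one factor of the product, inside a block $\theta_{a}\theta_{b}\theta_{a}$, which is precisely what relation \eqref{eq:second-rel} controls. The paper packages the combinatorics as a reduction to a normal form via repeated application of this move, whereas you phrase it as connectivity of binary words under adjacent transpositions; these are equivalent, and your version is if anything slightly more explicit about the bookkeeping.
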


\begin{proof}
We claim that there is a normal form, i.e. any 
word representing $\rklx{k,l}$ is 
equivalent to the word $\theta_{\tduc_{k+l}}\cdots\theta_{\tduc_{0}}$
such that $\tduc_0=\tduc$ and
\begin{gather}\label{eq:normal-form}
\tduc_r=\rho(\tduc_{r-1}),\text{ for all }1\leq r\leq k,
\quad\quad 
\tduc_r=\rho^{-1}(\tduc_{r-1}),\text{ for all }k+1\leq r\leq k+l,
\end{gather}
which is clear if $l=0$.
Otherwise, any word representing $\rklx{k,l}$ involves $k$ 
counterclockwise rotations and $l$ clockwise rotations of $\Seset$. 
Hence, if such a word is not in normal form, then 
we will find a subsequence of the form
\begin{gather*}
\theta_{\tduc_{i}}\theta_{\rho^{-1}(\tduc_{i})}\theta_{\tduc_{i}}
\stackrel{\eqref{eq:second-rel}}{=}
\theta_{\tduc_{i}}\theta_{\rho(\tduc_{i})}\theta_{\tduc_{i}},
\end{gather*}
which we rewrite as above.
We can then continue recursively until we get \eqref{eq:normal-form}.
\end{proof}

Similarly, we can define 
\begin{gather}\label{eq:left-right-relations}
\kly{k,l}=\rklx{k,l}\quad\text{such that}\; \tdudc=\rho^{k-l}(\tduc),
\end{gather}
for $k,l,\tdudc$. 
\fullref{lemma:well-defined-paths} implies, 
mutatis mutandis, that
$\kly{k,l}$ is also independent of the chosen sequence 
$\tdudc=\tduc_{k+l},\dots,\tduc_0$.

\begin{remark}\label{remark-KL-combinatorics}
We can view $\fu$ and $\fud$ as acting via counterclockwise 
respectively clockwise rotation of \eqref{eq:color-tensor}. 
By \fullref{lemma:well-defined-paths},
we can view the elements $\rklx{k,l}$ as being associated to 
$\fu^k\fud^l$ (after fixing a starting color $\tduc$), because its 
definition involves $k$ times the application of $\rho$ and 
$l$ times that of $\rho^{-1}$. \fullref{lemma:well-defined-paths} 
then translates into the equality 
$\fu\fud=\fud\fu$. 
\end{remark}

\begin{example}\label{example-KL-combinatorics}
Let us fix $\gc$ as a starting color. Then
\begin{gather*}
\,
\xy
(0,2.5)*{\rklg{2,0}=\theta_\pc\theta_\oc\theta_\gc};
(0,-2.5)*{\text{$\leftrightsquigarrow\fu^2\fud^0$}};
\endxy
,
\quad
\xy
(0,2.5)*{\rklg{1,1}=\theta_\gc\theta_\pc\theta_\gc
=\theta_\gc\theta_\oc\theta_\gc
};
(0,-2.5)*{\text{$\leftrightsquigarrow\fu^1\fud^1=\fud^1\fu^1$}};
\endxy
,
\quad
\xy
(0,2.5)*{\rklg{0,2}=\theta_\oc\theta_\pc\theta_\gc};
(0,-2.5)*{\text{$\leftrightsquigarrow\fu^0\fud^2$}};
\endxy
,
\end{gather*}
where we think of the color changes 
$\gc\mapsfrom\pc\mapsfrom\oc\mapsfrom\gc$ as corresponding to 
multiplication by $\fu$, and the 
color changes $\gc\mapsfrom\oc\mapsfrom\pc\mapsfrom\gc$ as corresponding to 
multiplication by $\fud$.
\end{example}

Recall that $d^{k,l}_{m,n}$ denote the numbers from
\fullref{section:sl3-stuff}, coming from the 
representation theory of $\slt$. For each pair $m,n$, we define 
three colored KL basis elements:
\begin{gather}\label{eq:the-expressions}
\begin{gathered}
\RKLg{m,n}=
{\textstyle\sum_{k,l}}\,
\vnumber{2}^{-k-l}\,d^{k,l}_{m,n}\,\rklg{k,l},
\quad\quad
\RKLo{m,n}=
{\textstyle\sum_{k,l}}\,
\vnumber{2}^{-k-l}\,d^{k,l}_{m,n}\,\rklo{k,l},
\\
\RKLp{m,n}=
{\textstyle\sum_{k,l}}\,
\vnumber{2}^{-k-l}\,d^{k,l}_{m,n}\,\rklp{k,l}.
\end{gathered}
\end{gather}
Note that the three sums are finite, 
because $d^{k,l}_{m,n}=0$ unless $k+l\leq m+n$, as mentioned after \eqref{eq:L-vs-L}. 
Moreover, by convention, $\RKLx{k,l}=0$ in case $k$ or $l$ are negative.

Furthermore, by \eqref{eq:L-vs-L} and \fullref{lemma:central-character},
$d^{k,l}_{m,n}=0$ if $k-l \not\equiv m-n\bmod 3$. 
This implies that, for any $m,n,\tduc$, 
all terms $\rklx{k,l}$ of $\RKLx{m,n}$ in \eqref{eq:the-expressions} 
have the same left-most factor $\theta_\tdudc$, where 
$\tdudc=\rho^{m-n}(\tduc)$, by \eqref{eq:left-right-relations}. Therefore, we can also define  
\begin{gather}\label{eq:left-right-KL}
\KLy{m,n}=\RKLx{m,n} \quad\text{such that}\; \tdudc=\rho^{m-n}(\tduc).
\end{gather}
We call $\RKLg{m,n}, \RKLo{m,n}$ and $\RKLp{m,n}$ 
the (right) colored KL elements. 
As before, 
\[
\RKLx{0,0}=\theta_\tduc\quad\text{for any}\; \tduc.
\]

\begin{example}\label{example-KL-combinatorics-2}
For a fixed $\tduc$, the element $\RKLx{m,n}$ (or alternatively $\KLx{m,n}$) is associated 
to the orthogonal polynomial $\pxy{m,n}$ 
from \fullref{subsec:opolys}, cf. \fullref{example-KL-combinatorics-2}. 
For example, fix $\gc$ as a starting color. Then
\begin{gather*}
\,
\xy
(0,2.5)*{\RKLg{2,0}=\vnumber{2}^{-2}\theta_\pc\theta_\oc\theta_\gc-\vnumber{2}^{-1}\,\theta_\pc\theta_\gc};
(0,-2.5)*{\text{$\leftrightsquigarrow\pxy{2,0}=\fu^2-\fud$}};
\endxy
,
\;\;
\xy
(0,2.5)*{\RKLg{1,1}=\vnumber{2}^{-2}\theta_\gc\theta_\pc\theta_\gc
-\,\theta_\gc
};
(0,-2.5)*{\text{$\leftrightsquigarrow\pxy{1,1}=\fu\fud-1$}};
\endxy
,
\;\;
\xy
(0,2.5)*{\RKLg{0,2}=\vnumber{2}^{-2}\theta_\oc\theta_\pc\theta_\gc-\vnumber{2}^{-1}\,\theta_\oc\theta_\gc};
(0,-2.5)*{\text{$\leftrightsquigarrow\pxy{0,2}=\fud^2-\fu$}};
\endxy
.
\end{gather*}
Similarly for the other colors.
\end{example}

As we will see, \fullref{proposition:cat-the-algebra} identifies the colored KL elements with the Grothendieck 
classes of the indecomposables in a certain 
$2$-full $2$-subcategory of singular Soergel bimodules. In particular, the 
next lemma and proposition need some notions from categorification 
which we only recall in \fullref{sec:A2-diagrams}. Consequently, 
we postpone their proofs until the end of \fullref{sec:A2-diagrams}.

\begin{lemma}\label{lemma:multiplication}
For all $m,n,\tduc,\tdudc$, we have
\begin{gather}\label{eq:multiplication}
\theta_{\tduc}\RKLy{m,n}=
\begin{cases}
\vfrac{3}\RKLy{m,n}, &\text{if } \rho^{m-n}(\tduc)=\tdudc,\\
\vnumber{2}\left(\RKLy{m+1,n}+ \RKLy{m-1,n+1} + \RKLy{m,n-1}\right), &\text{if } \rho^{m+1-n}(\tduc)=\tdudc,
\\
\vnumber{2}\left(\RKLy{m,n+1}+ \RKLy{m+1,n-1} + \RKLy{m-1,n}\right), &\text{if } \rho^{m-(n+1)}(\tduc)=\tdudc,
\end{cases}
\end{gather}
where terms with negative indices are zero. 
\end{lemma}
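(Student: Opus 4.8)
The plan is to compute $\theta_{\tduc}\RKLy{m,n}$ by hand, reducing everything to the quadratic relation \eqref{eq:first-rel} and the Chebyshev-like recursions of \fullref{lemma:recursion}. Expand $\RKLy{m,n}=\sum_{k,l}\vnumber{2}^{-k-l}d^{k,l}_{m,n}\rkly{k,l}$ as in \eqref{eq:the-expressions}, where each $\rkly{k,l}$ is the element of \eqref{eq:KLelement}, well-defined by \fullref{lemma:well-defined-paths}. Since $d^{k,l}_{m,n}=0$ unless $k-l\equiv m-n\bmod 3$ (by \eqref{eq:L-vs-L} and \fullref{lemma:central-character}), every nonzero summand $\rkly{k,l}$ has the same left-most factor $\theta_{\rho^{m-n}(\tdudc)}$. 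Hence exactly one of three mutually exclusive cases occurs: $\tduc=\rho^{m-n}(\tdudc)$, or $\tduc$ is the counterclockwise, respectively clockwise, $\rho$-neighbour of $\rho^{m-n}(\tdudc)$ in \eqref{eq:color-tensor}; these exhaust $\{\gc,\oc,\pc\}$ and I claim they produce the three cases of \eqref{eq:multiplication} in that order.

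If $\tduc=\rho^{m-n}(\tdudc)$, the relation $\theta_{\tduc}^{2}=\vfrac{3}\theta_{\tduc}$ absorbs the prepended $\theta_{\tduc}$ into the (equal) left-most factor of each word, so $\theta_{\tduc}\rkly{k,l}=\vfrac{3}\rkly{k,l}$ and thus $\theta_{\tduc}\RKLy{m,n}=\vfrac{3}\RKLy{m,n}$. If $\tduc$ is the counterclockwise $\rho$-neighbour, then prepending $\theta_{\tduc}$ adds one more counterclockwise step, so by \fullref{lemma:well-defined-paths} (which lets one recognise the resulting word) $\theta_{\tduc}\rkly{k,l}=\rkly{k+1,l}$; summing against the coefficients $\vnumber{2}^{-k-l}d^{k,l}_{m,n}$ and reindexing $k\mapsto k-1$ — out-of-range terms vanishing by the conventions on $d^{k,l}_{m,n}$ and on $\rkly{k,l}$ — gives $\theta_{\tduc}\RKLy{m,n}=\vnumber{2}\sum_{k,l}\vnumber{2}^{-k-l}d^{k-1,l}_{m,n}\rkly{k,l}$. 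Comparing the coefficients of $\fu^{k}\fud^{l}$ on the two sides of $\fu\pxy{m,n}=\pxy{m+1,n}+\pxy{m-1,n+1}+\pxy{m,n-1}$ from \fullref{lemma:recursion} (together with \fullref{definition:sl3-polys}) yields $d^{k-1,l}_{m,n}=d^{k,l}_{m+1,n}+d^{k,l}_{m-1,n+1}+d^{k,l}_{m,n-1}$, which turns the previous expression into $\vnumber{2}\big(\RKLy{m+1,n}+\RKLy{m-1,n+1}+\RKLy{m,n-1}\big)$. The clockwise case is the mirror image: $\theta_{\tduc}\rkly{k,l}=\rkly{k,l+1}$, and the $\fud$-recursion of \fullref{lemma:recursion} gives $d^{k,l-1}_{m,n}=d^{k,l}_{m,n+1}+d^{k,l}_{m+1,n-1}+d^{k,l}_{m-1,n}$, producing $\vnumber{2}\big(\RKLy{m,n+1}+\RKLy{m+1,n-1}+\RKLy{m-1,n}\big)$.

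The algebra is routine; the part that needs care is the colour bookkeeping — checking that ``prepend one counterclockwise step'' is exactly the map $\rkly{k,l}\mapsto\rkly{k+1,l}$, that it matches multiplication of $\pxy{m,n}$ by $\fu$ under the dictionary of \fullref{remark-KL-combinatorics}, and that the three $\rho$-neighbour cases fall on the exponents written in \eqref{eq:multiplication}. All of this is already packaged in \fullref{lemma:well-defined-paths} and \fullref{remark-KL-combinatorics}. Alternatively — and this is presumably why the proof is postponed until the diagrammatics are set up — the lemma follows from \fullref{proposition:cat-the-algebra}: there $\RKLy{m,n}$ is the Grothendieck class of an indecomposable $1$-morphism $\CRKLy{m,n}$ in a $2$-subcategory of singular Soergel bimodules of affine type $\typeat{2}$, $\theta_{\tduc}$ is the class of the big bimodule $\thetaf{\tduc}$ attached to the finite parabolic $w_{\tduc}$, and \eqref{eq:multiplication} is the decategorification of the decomposition of $\thetaf{\tduc}\CRKLy{m,n}$, which under Elias' quantum Satake correspondence is controlled by the $\slt$-fusion rules \eqref{eq:sl3-thingy-a}--\eqref{eq:sl3-thingy-b}, the collapsing case reflecting that $\thetaf{\tduc}\thetaf{\tduc}$ is a direct sum of shifts of $\thetaf{\tduc}$ of graded multiplicity $\vfrac{3}$.
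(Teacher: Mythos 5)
Your main argument is correct, but it takes a genuinely different route from the paper's. The paper deliberately postpones this lemma to the end of \fullref{sec:A2-diagrams} and proves it by categorification: it identifies $\rklx{k,l}$ and $\RKLx{m,n}$ with Grothendieck classes of Bott--Samelson, respectively indecomposable, $1$-morphisms via \eqref{eq:EW-cat}, \fullref{lemma:remove-white} and the Satake $2$-functor $\elfunctor$, obtains the first case from the circle evaluations of \fullref{example:some-relations}, and deduces the other two cases by transporting the tensor product decompositions \eqref{eq:sl3-thingy-a}--\eqref{eq:sl3-thingy-b} through $[\elfunctor]$ --- essentially the route you only sketch in your final paragraph. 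Your primary proof instead stays entirely inside $\subquo$: absorption via \eqref{eq:first-rel} when $\tduc$ is the leftmost colour, $\theta_{\tduc}\rkly{k,l}=\rkly{k+1,l}$ (respectively $\rkly{k,l+1}$) by \fullref{lemma:well-defined-paths} otherwise, and then the coefficient identity $d^{k-1,l}_{m,n}=d^{k,l}_{m+1,n}+d^{k,l}_{m-1,n+1}+d^{k,l}_{m,n-1}$ read off from \fullref{lemma:recursion}. This is legitimate: nothing is circular, since only material preceding the lemma is used, and no linear independence of the $\rkly{k,l}$ is needed to verify an identity of elements; it makes \fullref{section:funny-algebra} self-contained at this point. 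What the categorical proof buys is that the same machinery is unavoidable anyway for \fullref{proposition:two-bases}, where linear independence genuinely requires a faithful (categorical) representation, so the paper extracts both statements from one setup. One small point: you read the case conditions as $\tduc=\rho^{m-n}(\tdudc)$ and its two $\rho$-shifts, i.e. as conditions on the leftmost colour of $\RKLy{m,n}$; this is the reading forced by \fullref{example-KL-combinatorics-3} and by the passage to \eqref{eq:multiplication2}, even though the displayed conditions in \eqref{eq:multiplication} apply $\rho$ to $\tduc$ rather than to $\tdudc$, so your case-matching is the intended one.
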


By \eqref{eq:left-right-KL} and \fullref{lemma:multiplication}, we also have 
\begin{gather}\label{eq:multiplication2}
\RKLy{m,n}\theta_{\tduc}=
\begin{cases}
\vfrac{3}\RKLx{m,n}, &\text{if } \tduc=\tdudc,\\
\vnumber{2}\left(\RKLx{m+1,n}+ \RKLx{m-1,n+1} + \RKLx{m,n-1}\right), &\text{if }\rho(\tduc)=\tdudc,\\
\vnumber{2}\left(\RKLx{m,n+1}+ \RKLx{m+1,n-1} + \RKLx{m-1,n}\right), &\text{if }\rho^{-1}(\tduc)=\tdudc,
\end{cases}
\end{gather}
where again terms with negative indices are zero. Moreover, 
there are also the evident versions of \eqref{eq:multiplication} 
and \eqref{eq:multiplication2} using $\KLy{m,n}$ instead of $\RKLy{m,n}$.

\begin{example}\label{example-KL-combinatorics-3}
The reader should compare \eqref{eq:multiplication} 
and \eqref{eq:multiplication2} with the recursion formulas 
from \fullref{lemma:recursion}. This is no coincidence, keeping 
\fullref{remark-KL-combinatorics} and \fullref{example-KL-combinatorics-2} in mind. For example, 
one can easily check directly that
\begin{gather*}
\RKLo{0,1}\theta_\gc=
(\vnumber{2}^{-1}\theta_\gc\theta_\oc)\theta_\gc=
\vnumber{2}(\vnumber{2}^{-2}\theta_\gc\theta_\oc\theta_\gc-\theta_\gc)
+\vnumber{2}\theta_\gc
=
\vnumber{2}(\RKLg{1,1}+\underbrace{\RKLg{-1,1}}_{=0}+\RKLg{0,0}),
\\
\theta_\gc\RKLo{0,1}=
\theta_\gc(\vnumber{2}^{-1}\theta_\gc\theta_\oc)
=
\vfrac{3}(\vnumber{2}^{-1}\theta_\gc\theta_\oc)
=
\vfrac{3}\RKLo{0,1},
\end{gather*}
and similarly for right or left multiplication by $\theta_\oc$ or $\theta_\pc$.
\end{example}

\begin{proposition}\label{proposition:two-bases} 
Each of the four sets 
\begin{gather*}
\basisH=
\{1\}
\cup
\{\rklx{k,l}\mid (k,l)\in X^+,\, \tduc\in\Seset\},
\;\;
\basisC=
\{1\}
\cup\{\RKLx{m,n}\mid (m,n)\in X^+,\, \tduc\in\Seset\}
\\
\Hbasis=
\{1\}\cup
\{\klx{k,l}\mid (k,l)\in X^+,\, \tduc\in\Seset\},
\;\;
\Cbasis=
\{1\}\cup
\{\KLx{m,n}\mid (m,n)\in X^+,\, \tduc\in\Seset\}
\end{gather*} 
is a basis of $\subquo$.
\end{proposition}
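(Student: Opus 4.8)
The plan is to prove the statement for $\basisH$ and then transfer it to the other three sets. First, by \eqref{eq:left-right-relations} and \eqref{eq:left-right-KL} the sets $\Hbasis$ and $\Cbasis$ coincide with $\basisH$ and $\basisC$ respectively \emph{as subsets} of $\subquo$ (they are merely indexed by the left rather than the right starting color), so only $\basisH$ and $\basisC$ need attention. Second, for each fixed $\tduc$ the expansion \eqref{eq:the-expressions} expresses $\RKLx{m,n}$ in terms of the $\rklx{k,l}$ by a matrix which, after ordering $X^+$ by $m+n$, is upper-triangular with invertible diagonal: $d^{k,l}_{m,n}=0$ for $k+l>m+n$, and $d^{k,l}_{m,n}=\delta_{(k,l),(m,n)}$ when $k+l=m+n$ (the unique top summand of $\fu^k\fud^l$ is $\Ll_{k,l}$, with multiplicity one), so the diagonal entries are the nonzero scalars $\vnumber{2}^{-(m+n)}$. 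This transition and its inverse involve only finite sums, hence $\basisH$ is a $\Cv$-basis if and only if $\basisC$ is; so everything reduces to showing that $\basisH$ is a $\Cv$-basis of $\subquo$.

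Spanning is straightforward. Since $\subquo$ is generated by $\theta_{\gc},\theta_{\oc},\theta_{\pc}$, it is spanned by all words in these generators, the empty word giving $1$. By \eqref{eq:first-rel} one may delete any repeated adjacent letter from a word at the cost of a scalar $\vfrac{3}$, so $\subquo$ is spanned by $1$ together with the ``zigzag'' words, those having no two equal consecutive letters. In a zigzag word of length $k+l\geq 1$ with rightmost letter $\theta_{\tduc}$, each step from one color to the next (both among the three secondary colors) is either a $\rho$- or a $\rho^{-1}$-move; if $k$ of them are $\rho$-moves and $l$ are $\rho^{-1}$-moves, then by \fullref{lemma:well-defined-paths} the word equals $\rklx{k,l}$. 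Thus $\basisH$ spans $\subquo$.

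Linear independence is the crux, and I expect it to be the main obstacle. There is no cheap ``leading term'' argument: after the embedding $\subquo\hookrightarrow\hecke$ of \fullref{lemma:quotient-of-affine}, the element $\rklx{k,l}$ becomes a product $\theta_{w_{\tduc_{k+l}}}\cdots\theta_{w_{\tduc_0}}$ of Kazhdan--Lusztig generators whose underlying group element is very far from having length $3(k+l)$ (already $w_{\gc}w_{\oc}=\yc\bc\rc\yc$ has length $4$, not $6$), so neither the standard nor the Kazhdan--Lusztig basis of $\hecke$ visibly separates the $\rklx{k,l}$, and the relevant structure constants are essentially governed by \fullref{lemma:multiplication}, which is itself only proved categorically. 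I would therefore settle independence categorically: realize $\subquo$ (or a $\Z[\vpar,\vpar^{-1}]$-form of it) as the split Grothendieck ring of the additive, Krull--Schmidt $2$-full $2$-subcategory of singular Soergel bimodules of affine type $\typeA_2$ of \fullref{proposition:cat-the-algebra}, in which the indecomposable $1$-morphisms are in bijection with $\{1\}\cup\{(m,n,\tduc)\mid (m,n)\in X^+,\ \tduc\in\Seset\}$ and decategorify precisely to $1$ and the colored KL elements $\RKLx{m,n}$, i.e.\ to $\basisC$. Since the split Grothendieck group of such a category is free with basis the isomorphism classes of its indecomposable objects, $\basisC$ is linearly independent, and together with the spanning statement and the triangular transition above this proves the proposition. (A self-contained alternative would be to pull back the faithful $\vpar$-deformed alcove representation $\algstuff{P}_{\vpar}$ used in \fullref{lemma:quotient-of-affine} and to check directly that the operators $\rklx{k,l}$ on it are linearly independent; but making this bookkeeping precise seems no easier than setting up the categorification, which the paper needs anyway.)
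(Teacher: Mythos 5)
Your proof is correct and follows essentially the same route as the paper: the paper also reduces everything to the right-colored Bott--Samelson set via the triangular change of basis with diagonal entries $\vnumber{2}^{-m-n}$, and settles linear independence by identifying the elements (for fixed starting color) with Grothendieck classes under the quantum Satake correspondence and the categorification $\hecke\cong\GGcv{\Kar{\adiag[\qpar]}}$, which is why the paper likewise postpones the proof to the end of \fullref{sec:A2-diagrams}. Your explicit spanning argument (reducing to zigzag words via \eqref{eq:first-rel} and \fullref{lemma:well-defined-paths}) and the observation that $\Hbasis,\Cbasis$ coincide with $\basisH,\basisC$ as subsets are left implicit in the paper but are exactly the intended justifications.
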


As for \fullref{lemma:multiplication}, 
the proof of \fullref{proposition:two-bases} is postponed until \fullref{sec:A2-diagrams}.
As we will see, the bases $\basisH$ and $\Hbasis$ could be called 
Bott--Samelson bases.

Following \cite{KaLu}, we can define 
left, right and two-sided 
cells for $\subquo$. We have chosen to work with the basis $\basisC$.

\begin{definition}\label{definition:cells-first}
We define a left preorder on $\basisC$ 
by declaring $\RKLx{m,n}\lgeq\RKLy{m^{\prime},n^{\prime}}$ 
if there exists an element 
$\algstuff{Z}\in\basisC$ such that $\RKLx{m,n}$ appears as a summand of 
$\algstuff{Z}\RKLy{m^{\prime},n^{\prime}}$, when the latter is written as a linear combination 
of elements in $\basisC$. 

This preorder gives rise to 
an equivalence relation by declaring 
$\RKLx{m,n}\lsim\RKLy{m^{\prime},n^{\prime}}$ whenever 
$\RKLx{m,n}\lgeq\RKLy{m^{\prime},n^{\prime}}$ and 
$\RKLy{m^{\prime},n^{\prime}}\lgeq\RKLx{m,n}$. 
The equivalence classes of $\lsim$ are called left cells.

Similarly, right multiplication gives rise to a 
right preorder $\rgeq$, a right equivalence relation $\rsim$ and 
right cells $\rcell$. Multiplication on both 
sides, gives rise to a two-sided preorder $\tgeq$, 
a two-sided equivalence relation $\tsim$ and 
two-sided cells $\tcell$.
\end{definition}

Clearly, $1\in\subquo$ forms 
a cell $\{1\}$ on its own, which is left, right and two-sided at once, and always the lowest cell. 
We call $\{1\}$ the trivial cell. The other cells are as follows.

\begin{proposition}\label{proposition:cells}
The non-trivial cells for the algebra $\subquo$ are
\begin{gather*}
\lcell_{\tduc}=\left\{\RKLx{m,n}\mid (m,n)\in X^+\right\},
\quad\quad
{}_{\tduc}\rcell=\left\{\KLx{m,n}\mid (m,n)\in X^+\right\},
\quad\quad \hbox{ for $\tduc\in \Seset$,}
\\
\tcell=
\left\{\RKLx{m,n}\mid (m,n)\in X^+,\tduc\in\Seset\right\}
=
\left\{\KLx{m,n}\mid (m,n)\in X^+,\tduc\in\Seset\right\},
\end{gather*}
where $\lcell_{\tduc}$, ${}_{\tduc}\rcell$ and $\tcell$ 
are left, right and two-sided cells
respectively.
\end{proposition}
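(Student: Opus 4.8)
The plan is to read the cell structure directly off the multiplication rules \eqref{eq:multiplication} and \eqref{eq:multiplication2}, using that $\basisC$ is a basis of $\subquo$ by \fullref{proposition:two-bases}. The first step is to note that the $\Cv$-span $\mathsf{I}$ of $\tcell$ --- that is, of all the $\RKLx{m,n}$ with $(m,n)\in X^+$ and $\tduc\in\Seset$ --- is a two-sided ideal of $\subquo$. Indeed $\subquo$ is generated by $\theta_{\gc}=\RKLg{0,0}$, $\theta_{\oc}=\RKLo{0,0}$ and $\theta_{\pc}=\RKLp{0,0}$, and \eqref{eq:multiplication}, \eqref{eq:multiplication2} express each product of a generator with an element of $\tcell$, on either side, as a $\Cv$-linear combination of elements of $\tcell$, so $\mathsf{I}$ is stable under left and right multiplication by $\subquo$ by an induction on word length. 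Since $\{1\}\cup\tcell$ is a basis we have $1\notin\mathsf{I}$, so $\{1\}$ forms a cell on its own --- left, right and two-sided at once --- it is the lowest cell, and it is distinct from every cell contained in $\tcell$. It remains to analyse the cells inside $\tcell$.

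For the left cells, note that left multiplication by a generator leaves the right-most factor of a word untouched; more precisely, \eqref{eq:multiplication} expresses each of $\theta_{\gc}\RKLx{m,n}$, $\theta_{\oc}\RKLx{m,n}$, $\theta_{\pc}\RKLx{m,n}$ as a $\Cv$-combination of elements $\RKLx{m',n'}$ with the \emph{same} right starting colour $\tduc$. Iterating (writing $\algstuff{Z}$ as a combination of products of generators and peeling these off one at a time), $\algstuff{Z}\RKLx{m,n}$ lies in the $\Cv$-span of $\lcell_{\tduc}$ for every $\algstuff{Z}\in\basisC$, so the left cell of $\RKLx{m,n}$ is contained in $\lcell_{\tduc}$. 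For the reverse inclusion, \eqref{eq:multiplication} shows that for an appropriate generator left multiplication sends $\RKLx{m,n}$ to $\vnumber{2}(\RKLx{m+1,n}+\RKLx{m-1,n+1}+\RKLx{m,n-1})$, and for another appropriate generator to $\vnumber{2}(\RKLx{m,n+1}+\RKLx{m+1,n-1}+\RKLx{m-1,n})$ (the terms with a negative index being zero), all coefficients equal to the nonzero scalar $\vnumber{2}$. Raising one index at a time starting from $\RKLx{0,0}=\theta_{\tduc}$ gives $\RKLx{m,n}\lgeq\theta_{\tduc}$, and starting from $\RKLx{m,n}$ and lowering first $n$ and then $m$ back to $(0,0)$ gives $\theta_{\tduc}\lgeq\RKLx{m,n}$; hence $\RKLx{m,n}\lsim\theta_{\tduc}$ for all $(m,n)\in X^+$. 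As $\basisC\setminus\{1\}$ is the disjoint union $\lcell_{\gc}\sqcup\lcell_{\oc}\sqcup\lcell_{\pc}$ (distinct right starting colours yield distinct basis elements), the non-trivial left cells are precisely $\lcell_{\gc},\lcell_{\oc},\lcell_{\pc}$. The statement about right cells is the mirror image: it follows either from the symmetric argument using \eqref{eq:multiplication2}, since right multiplication fixes the left-most factor, or by applying the anti-automorphism of $\subquo$ fixing $\theta_{\gc},\theta_{\oc},\theta_{\pc}$ --- which is well-defined because the relations \eqref{eq:first-rel}, \eqref{eq:second-rel} are unchanged when words are reversed --- and checking, using \eqref{eq:KLelement}, \eqref{eq:the-expressions} and $d^{k,l}_{m,n}=d^{l,k}_{n,m}$, that it carries each $\lcell_{\tduc}$ bijectively onto ${}_{\tduc}\rcell$ while interchanging $\lgeq$ and $\rgeq$.

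For the two-sided cell, observe that $\lgeq$ implies $\tgeq$, so each $\lcell_{\tduc}$ is contained in a single two-sided cell, and it suffices to connect the three of them. Applying \eqref{eq:multiplication} with $(m,n)=(0,0)$ --- where the first case (scaling by $\vfrac{3}$) does not occur because the colours are distinct, and the negative-index neighbour terms vanish --- I would get $\theta_{\gc}\theta_{\oc}=\vnumber{2}\RKLo{a,b}$ and $\theta_{\oc}\theta_{\gc}=\vnumber{2}\RKLg{a',b'}$ for some $(a,b),(a',b')\in\{(1,0),(0,1)\}$. Now $\RKLo{a,b}\lsim\theta_{\oc}$ by the previous paragraph, hence $\RKLo{a,b}\tsim\theta_{\oc}$; and $\RKLo{a,b}$ is a summand of $\theta_{\gc}$ right-multiplied by $\theta_{\oc}\in\basisC$, so $\RKLo{a,b}\rgeq\theta_{\gc}$ and therefore $\RKLo{a,b}\tgeq\theta_{\gc}$; combining, $\theta_{\oc}\tgeq\theta_{\gc}$, and symmetrically $\theta_{\gc}\tgeq\theta_{\oc}$, so $\theta_{\gc}\tsim\theta_{\oc}$. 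The cyclic colour permutation $\theta_{\gc}\mapsto\theta_{\oc}\mapsto\theta_{\pc}\mapsto\theta_{\gc}$ is an algebra automorphism of $\subquo$ (the relations \eqref{eq:first-rel}, \eqref{eq:second-rel} are cyclically symmetric), so applying it also gives $\theta_{\oc}\tsim\theta_{\pc}$. Hence all three generators are $\tsim$-equivalent, $\tcell$ is a single two-sided cell, and by the first paragraph it is distinct from $\{1\}$.

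I do not expect a deep difficulty here --- the whole argument is a direct unpacking of \fullref{lemma:multiplication} --- but the part needing genuine care will be the bookkeeping of colours: checking in each use of \eqref{eq:multiplication} or \eqref{eq:multiplication2} that the intended case applies and that the element produced has the starting colour claimed, and recording the elementary fact that the adjacency graph on $X^+$ used for the left cells (the triangular lattice intersected with the first quadrant) is connected.
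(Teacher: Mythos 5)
Your proof is correct and follows essentially the same route as the paper: both read the cell structure directly off the multiplication rules of \fullref{lemma:multiplication}, using that left (resp.\ right) multiplication preserves the rightmost (resp.\ leftmost) colour and that raising/lowering the indices $(m,n)$ one step at a time connects every $\RKLx{m,n}$ to $\theta_{\tduc}$. You merely spell out two points the paper leaves terse — the ideal argument isolating the trivial cell $\{1\}$ (which the paper states in the text preceding the proposition) and the explicit computation $\theta_{\gc}\theta_{\oc}=\vnumber{2}\RKLo{1,0}$ linking the three left cells into one two-sided cell, which the paper dismisses with ``follows from \eqref{eq:multiplication}''.
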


\begin{proof}
Fix $\gc$ as a starting color. Applying \eqref{eq:multiplication2} to 
$\theta_{\tduc}\RKLg{m,n}$, yields $\RKLg{m+1,n}\lgeq\RKLg{m,n}$ for 
$\tduc$ being chosen such we can apply the middle cases. 
We also obtain
$\RKLg{m,n}\lgeq\RKLg{m+1,n}$, by 
applying \eqref{eq:multiplication2} to $\theta_{\tdudc}\RKLg{m+1,n}$ 
for appropriate $\tdudc$. Thus, we have
$\RKLg{m,n}\lsim\RKLg{m+1,n}$. 
Similarly, we deduce $\RKLg{m,n}\lsim\RKLg{m,n-1}$, 
$\RKLg{m,n}\lsim\RKLg{m,n+1}$ and $\RKLg{m,n}\lsim\RKLg{m-1,n}$.
Thus, for fixed $m$ we get that all $\RKLg{m,\placeholder}$ are in the same left cell, 
and similarly for fixed $n$ we get that all $\RKLg{\placeholder,n}$ are in the same left cell. 
We can also deduce that $\RKLg{m,n}\lsim\RKLg{m-1,n+1}$ and $\RKLg{m,n}\lsim\RKLg{m+1,n-1}$.
In summary, all $\RKLg{m,n}$ belong to the same 
left cell. 
Since left multiplication will never change the rightmost 
color of a word, we conclude
that $\lcell_{\gc}$ is indeed a left cell.

Analogously, one can show that $\lcell_{\oc}$ and 
$\lcell_{\pc}$ are left cells, and,
mutatis mutandis, that
${}_{\tduc}\rcell$ is a right cell, for $\tduc$.
Finally, the statement about two-sided cell follows from \eqref{eq:multiplication}.
\end{proof}

\subsection{The quotient of level \texorpdfstring{$e$}{e}}\label{subsec:quotient-algebra}

\subsubsection{Its definition}\label{subsec:quotient-algebra-1}

We are now ready to define interesting, 
finite-dimensional quotients of $\subquo$, which are compatible 
with the cell structure.

\begin{definition}\label{definition:the-quotient-defined}
For fixed level $e$, let $\killideal{e}$ 
be the two-sided ideal in $\subquo$ generated by
\begin{gather*}
\left\{
\RKLx{m,n} \mid m+n=e+1,\; \tduc\in\Seset
\right\}
=
\left\{
\phantom{\RKLx{m,n}}\hspace*{-.75cm}\KLx{m,n} \mid m+n=e+1,\; \tduc\in\Seset
\right\}.
\end{gather*}
We define the the trihedral Hecke algebra of level $e$ as
\[
\subquo[e]=\subquo/\killideal{e}
\]
and we call $\killideal{e}$ the vanishing ideal of level $e$.
\end{definition}

\begin{remark}\label{remark:small-quotient}
We point out that $\subquo[e]$ is the trihedral analog of the 
so called small quotient in the dihedral case, cf. \fullref{remark:dihedral-group2}.
\end{remark}

\begin{proposition}\label{proposition:dimension}
Each of the two sets 
\begin{gather*}\label{eq:cell-basis-2}
\begin{aligned}
\basisC[e]=
\{1\}\cup
&
\left
\{\RKLx{m,n} \mid 0\leq m+n\leq e,\; \tduc\in\Seset
\right\} 
,
\\
\Cbasis[e]=
\{1\}\cup
&
\left
\{\KLx{m,n} \mid 0\leq m+n\leq e,\; \tduc\in\Seset
\right\},
\end{aligned}
\end{gather*}
is a basis of $\subquo[e]$. 
Thus, we have 
$\dim_{\Cv}\subquo[e] = 3\tfrac{(e+1)(e+2)}{2}+1=3t_e+1$.
\end{proposition}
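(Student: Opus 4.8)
\emph{Plan.} The strategy is to use the basis $\basisC$ of $\subquo$ from \fullref{proposition:two-bases} and to organise $\subquo$ by ``right starting colour''. Since $\subquo$ is generated by $\theta_{\gc},\theta_{\oc},\theta_{\pc}$ and, by \eqref{eq:multiplication}--\eqref{eq:multiplication2}, left or right multiplication by a generator sends the span $\subquo^{>0}$ of $\basisC\setminus\{1\}$ into itself, $\subquo^{>0}$ is a two-sided ideal with $\subquo=\Cv 1\oplus\subquo^{>0}$; hence $\killideal{e}\subseteq\subquo^{>0}$ and $\bar 1\neq 0$ in $\subquo[e]$. Left multiplication by a generator preserves the right starting colour (both sides of \eqref{eq:multiplication} carry the same right colour), so for each $\tduc\in\Seset$ the subspace $M_{\tduc}:=\mathrm{span}_{\Cv}\{\RKLx{m,n}\mid(m,n)\in X^+\}$ is a left $\subquo$-submodule and $\subquo^{>0}=\bigoplus_{\tduc}M_{\tduc}$. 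As the generators of $\killideal{e}$ are homogeneous for this decomposition while right multiplication by a generator permutes the three summands, $\killideal{e}$ is homogeneous, $\killideal{e}=\bigoplus_{\tduc}K_{\tduc}$ with $K_{\tduc}:=\killideal{e}\cap M_{\tduc}$, so $\subquo[e]=\Cv\bar 1\oplus\bigoplus_{\tduc}M_{\tduc}/K_{\tduc}$. The cyclic rotation $\rho$ induces an algebra automorphism of $\subquo$ (it permutes the relations \eqref{eq:first-rel}--\eqref{eq:second-rel}) fixing $\killideal{e}$ and sending $\RKLg{m,n}\mapsto\RKLo{m,n}\mapsto\RKLp{m,n}$, so it suffices to prove $\dim_{\Cv}M_{\gc}/K_{\gc}=t_e$ with the images of $\{\RKLg{m,n}\mid m+n\le e\}$ a basis; the claim for $\Cbasis[e]$ is then automatic, as $\Cbasis[e]=\basisC[e]$ as sets by \eqref{eq:left-right-KL}.

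For the bound $\dim_{\Cv}M_{\gc}/K_{\gc}\le t_e$ I would show by induction on $m+n$ that, modulo $\killideal{e}$, every $\RKLg{m,n}$ with $m+n\ge e+1$ equals a $\Cv$-combination of the $\RKLg{m',n'}$ with $m'+n'\le e$. The base case $m+n=e+1$ holds because these elements generate $\killideal{e}$. For $m+n\ge e+2$ one solves the recursions of \eqref{eq:multiplication} (which mirror those of \fullref{lemma:recursion}) to write $\RKLg{m,n}=\vnumber{2}^{-1}\theta_{\tduc}u-u'$ with $u\in M_{\gc}$ of level $m+n-1$ and $u'$ a combination of elements of level $\le m+n-1$, applies the inductive hypothesis to $u$ and $u'$, and uses that $\theta_{\tduc}$ sends a combination of levels $\le e$ to one of levels $\le e+1$ whose level-$(e+1)$ part lies in $\killideal{e}$. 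This reduction never leaves $M_{\gc}$, so $M_{\gc}/K_{\gc}$ is spanned by the images of $\{\RKLg{m,n}\mid m+n\le e\}$.

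For the matching lower bound I would produce a surjection $M_{\gc}/K_{\gc}\twoheadrightarrow\GGcv{\slqmod}$, using that $\GGcv{\slqmod}=\GG{\slqmod}\otimes_{\Z}\Cv\cong\Cv[\fu,\fud]/(\vanideal{e})$ has dimension $t_e$ by \eqref{eq:roots1}. Put a left $\subquo$-module structure on $\GGcv{\slqmod}$ modelled on \eqref{eq:multiplication}: the $\zeethree$-grading with $\deg\fu=1$, $\deg\fud=-1$ is well defined because $\vanideal{e}$ is generated by the $\pxy{m,n}$ with $m+n=e+1$, each homogeneous of degree $m-n$ by \eqref{eq:the-c-poly} and \fullref{lemma:central-character}; let $\theta_{\rho^{i}(\gc)}$ act as $\theta^{(i)}:=\vfrac{3}P_i+\vnumber{2}\fu P_{i-1}+\vnumber{2}\fud P_{i+1}$, where $P_i$ projects onto the degree-$i$ part ($i\in\zeethree$). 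Each $\theta^{(i)}$ maps $\GGcv{\slqmod}$ into its degree-$i$ part and acts there by $\vfrac{3}$, whence $(\theta^{(i)})^2=\vfrac{3}\theta^{(i)}$ and $\theta^{(i)}\theta^{(j)}\theta^{(i)}=\vnumber{2}^{2}\fu\fud\,\theta^{(i)}$ for every $j$, so \eqref{eq:first-rel}--\eqref{eq:second-rel} hold. Comparing this action with \eqref{eq:multiplication} through \fullref{lemma:recursion} shows that $\RKLg{m,n}\mapsto\pxy{m,n}$ (read in the quotient) is a left $\subquo$-module homomorphism $\phi\colon M_{\gc}\to\GGcv{\slqmod}$, surjective because $\{\pxy{m,n}\mid m+n\le e\}$, which matches $\{[\Ll_{m,n}]\mid m+n\le e\}$ under \eqref{eq:roots1}, is a basis of $\GGcv{\slqmod}$ (\fullref{subsec:non-generic}). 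Expanding a generator $\RKLx{m',n'}$ of $\killideal{e}$ ($m'+n'=e+1$) via \eqref{eq:the-c-poly} shows it acts on $\GGcv{\slqmod}$ as $v\mapsto\pxy{m',n'}\cdot\theta^{(i)}(v)$ for a suitable $i$, which is $0$ since $\pxy{m',n'}=0$ in $\Cv[\fu,\fud]/(\vanideal{e})$; hence $\killideal{e}$ annihilates $\GGcv{\slqmod}$, and $\phi$ kills $K_{\gc}$ — which, as a left submodule of $M_{\gc}$, is generated by the $\RKLg{m,n}$ with $m+n=e+1$ together with the left translates $\RKLx{m',n'}\!\cdot z$ ($m'+n'=e+1$, $z\in M_{\gc}$). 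So $\phi$ descends to a surjection $M_{\gc}/K_{\gc}\twoheadrightarrow\GGcv{\slqmod}$.

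Combining the two bounds gives $\dim_{\Cv}M_{\gc}/K_{\gc}=t_e$ and that $\phi$ is an isomorphism, so the images of $\{\RKLg{m,n}\mid m+n\le e\}$ form a basis of $M_{\gc}/K_{\gc}$; applying $\rho$ to the other colours and adjoining $\bar 1$ yields that $\basisC[e]$ (and hence $\Cbasis[e]$) is a basis and $\dim_{\Cv}\subquo[e]=3t_e+1$. I expect the third step to be the main obstacle: writing down the left $\subquo$-module structure on $\Cv[\fu,\fud]/(\vanideal{e})$ and checking carefully that it is compatible with \eqref{eq:multiplication} and annihilated by $\killideal{e}$ — that is, transporting the cyclic colour/grading bookkeeping of \eqref{eq:multiplication} onto the Chebyshev-like recursion of \fullref{lemma:recursion}.
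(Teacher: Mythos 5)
Your proof is correct, but it takes a genuinely different route from the paper's. The paper filters $\subquo$ by $m+n$ (via \fullref{lemma:multiplication}), observes that $\killideal{e}$ is generated in the top relevant filtration degree, and passes to the associated graded algebra, quoting \fullref{proposition:two-bases} for the basis claim in each graded piece; the whole argument is a few lines of filtered-algebra formalism. You instead split $\subquo^{>0}$ by right starting colour into left submodules $M_{\tduc}$, check that $\killideal{e}$ respects this splitting, obtain the spanning statement by an explicit induction on $m+n$ using the recursions \eqref{eq:multiplication}--\eqref{eq:multiplication2}, and obtain linear independence by exhibiting a concrete $t_e$-dimensional $\subquo$-module — the scalar-extended Verlinde algebra $\Cv[\fu,\fud]/\vanideal{e}$ with its $\zeethree$-grading and the operators $\theta^{(i)}=\vfrac{3}P_i+\vnumber{2}\fu P_{i-1}+\vnumber{2}\fud P_{i+1}$ — onto which $M_{\gc}/K_{\gc}$ surjects via $\RKLx{m,n}\mapsto\pxy{m,n}$. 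What your approach buys is a completely explicit justification, at the decategorified level, of the fact that $\killideal{e}$ does not eat into the span of $\{\RKLx{m,n}\mid m+n\leq e\}$ (a point the paper's associated-graded argument treats rather tersely); it also anticipates the graph modules $\M_{\Gg}$ of \fullref{definition:n-modules}, of which your module is essentially the rank-one (type $\graphA{e}$, single-column) shadow. What the paper's approach buys is brevity.

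One small imprecision: your identity $\theta^{(i)}\theta^{(j)}\theta^{(i)}=\vnumber{2}^{2}\fu\fud\,\theta^{(i)}$ holds only for $j\neq i$ (for $j=i$ the middle factor acts by $\vfrac{3}$, not by $\vnumber{2}\fu$ or $\vnumber{2}\fud$). Since \eqref{eq:second-rel} only compares the two choices $j\neq i$, this does not affect the verification that the $\theta^{(i)}$ define a $\subquo$-module, but the phrase ``for every $j$'' should be restricted accordingly.
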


\begin{proof}
By \fullref{lemma:multiplication}, $\subquo$ is an $\N$-filtered 
algebra with $\subquo\cong {\textstyle\bigcup_{i\in\N}}(\subquo)_i$
such that $(\subquo)_0=\{1\}$ and, for any $i\in\Z_{\geq 1}$, we have  
\[
(\subquo)_{i}=\Cv\left\{
\RKLx{m,n} \mid 0\leq m+n\leq i-1,\; \tduc\in\Seset 
\right\}.
\]
Note that $\RKLx{0,0}=\theta_{\tduc}$ has filtration degree $1$, so 
the multiplication rule in 
\fullref{lemma:multiplication} is compatible with this filtration. 

Since $\killideal{e}$ is generated by 
homogeneous elements, $\subquo[e]$ is also an $\N$-filtered algebra. 
In order to prove finite-dimensionality, consider the associated $\N$-graded algebra 
\[
\catstuff{E}(\subquo[e])=
{\textstyle\bigoplus_{i\in\N}}\,
(\subquo[e])_i/(\subquo[e])_{i-1},
\]
where $(\subquo[e])_{-1}=\{0\}$, by convention. Note that 
\[
\RKLx{m,n}\equiv\rklx{m,n}\bmod(\subquo[e])_{m+n},
\] 
for all $m,n$. We have $\catstuff{E}(\subquo[e])_{e+2}=\{0\}$,
by \fullref{lemma:multiplication}, and 
$\catstuff{E}(\subquo[e])_i=\{0\}$ for
all $i\geq e+3$, also by \fullref{lemma:multiplication}. 

The first statement follows, since
$\left\{\RKLx{m,n}\mid m+n=i-1,\; \tduc\in\Seset \right\}$ is, 
by \fullref{proposition:two-bases}, a 
basis of $\catstuff{E}(\subquo[e])_i$, for all $1\leq i\leq e+1$. 
The dimension 
formula is then clear.   

The version with $\KLx{m,n}$ can be shown verbatim.
\end{proof}

\begin{corollary}\label{corollary:cells}
The non-trivial cells for the algebra $\subquo[e]$ are
as in \fullref{proposition:cells}, but intersected with the 
bases from \fullref{proposition:dimension}. 

In particular, the non-trivial left and right cells have each 
cardinality $t_e=\tfrac{(e+1)(e+2)}{2}$. The non-trivial 
two-sided cell is the disjoint 
union of them all, so it has cardinality $3t_e$. 
\end{corollary}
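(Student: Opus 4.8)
The plan is to deduce the corollary from \fullref{proposition:dimension} and \fullref{proposition:cells} by showing that the passage to the quotient $\subquo[e]=\subquo/\killideal{e}$ truncates, but does not otherwise disturb, the cell structure of $\subquo$. The starting point is the observation, implicit in the proof of \fullref{proposition:dimension}, that $\killideal{e}$ is precisely the $\Cv$-span of $\{\RKLx{m,n}\mid m+n\geq e+1,\ \tduc\in\Seset\}$ (equivalently, of the corresponding $\KLx{m,n}$): these elements are linearly independent in $\subquo$ and complementary to the basis $\basisC[e]$ of $\subquo[e]$, so the canonical projection $\pi\colon\subquo\to\subquo[e]$ sends each $\RKLx{m,n}$ with $0\leq m+n\leq e$ to a basis vector of $\subquo[e]$ and each $\RKLx{m,n}$ with $m+n\geq e+1$ to $0$. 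Consequently the multiplication table of $\subquo[e]$ in the basis $\basisC[e]$ (resp. $\Cbasis[e]$) is obtained from that of $\subquo$ in $\basisC$ (resp. $\Cbasis$) simply by deleting every basis element with $m+n\geq e+1$: a term killed by $\pi$ can never coincide with a surviving basis vector, and no new relations among the surviving basis vectors are created.

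It follows at once that the left, right and two-sided preorders of \fullref{definition:cells-first} on $\basisC[e]$ are exactly the restrictions of the corresponding preorders on $\basisC$. Indeed, if $\RKLx{m,n}$ (with $m+n\leq e$) occurs in $\algstuff{Z}\RKLy{m^\prime,n^\prime}$ inside $\subquo[e]$ for some $\algstuff{Z}\in\basisC[e]$, then, lifting $\algstuff{Z}$ to $\basisC$, it occurs in the same product inside $\subquo$; conversely, if it occurs there for some $\algstuff{Z}\in\basisC$, then $\algstuff{Z}$ must itself be ``small'' (otherwise $\algstuff{Z}\RKLy{m^\prime,n^\prime}\in\killideal{e}$ is a combination of large basis vectors, which cannot involve the small element $\RKLx{m,n}$), and $\pi$ of that product still contains $\RKLx{m,n}$. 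Hence no two non-trivial cells of $\subquo$ merge in $\subquo[e]$, and the non-trivial cells of $\subquo[e]$ are among the intersections of the cells of \fullref{proposition:cells} with $\basisC[e]$ and $\Cbasis[e]$; that $\{1\}$ remains its own cell is clear.

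What remains is to check that these intersections stay \emph{connected}, i.e. that a single cell of $\subquo$ cannot break into several cells of $\subquo[e]$. For this I would re-examine the proof of \fullref{proposition:cells}. There, the fact that all $\RKLg{m,n}$ lie in one left cell is built from the elementary equivalences $\RKLg{m,n}\lsim\RKLg{m+1,n}$, $\RKLg{m,n}\lsim\RKLg{m,n+1}$, $\RKLg{m,n}\lsim\RKLg{m-1,n}$, $\RKLg{m,n}\lsim\RKLg{m,n-1}$, $\RKLg{m,n}\lsim\RKLg{m-1,n+1}$, $\RKLg{m,n}\lsim\RKLg{m+1,n-1}$, each witnessed by applying \eqref{eq:multiplication} and \eqref{eq:multiplication2} to $\theta_{\tduc}$ times a colored KL element. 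One checks that whenever \emph{both} colored KL elements appearing in such an equivalence lie in the triangle $\{m+n\leq e\}$, the term realizing that equivalence survives in $\subquo[e]$: only the terms leaving the triangle (those of degree $e+1$) are annihilated, and these are never the ones needed. Since the lattice $\{(m,n)\in\N^2\mid m+n\leq e\}$ is connected under the moves $(m,n)\mapsto(m\pm1,n),(m,n\pm1)$ \emph{within} the triangle, $\lcell_{\tduc}\cap\basisC[e]$ is a single left cell and, mutatis mutandis, ${}_{\tduc}\rcell\cap\Cbasis[e]$ a single right cell; that the three left cells stay mutually distinct is immediate because left multiplication never alters the rightmost color of a word, as in \fullref{proposition:cells}. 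The analogue of the final line of the proof of \fullref{proposition:cells}, using \eqref{eq:multiplication}, shows that all of these together form one two-sided cell.

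The cardinalities then fall out: $\lcell_{\tduc}\cap\basisC[e]$ is in bijection with $\{(m,n)\in\N^2\mid m+n\leq e\}$, which has $\binom{e+2}{2}=\tfrac{(e+1)(e+2)}{2}=t_e$ elements; there are three non-trivial left cells and three non-trivial right cells, and the non-trivial two-sided cell is their disjoint union, of size $3t_e$. The only genuine obstacle is the bookkeeping in the third paragraph — verifying that the connecting chains of \fullref{proposition:cells} can be kept inside the triangle — which, as sketched, reduces to the observation that truncating a term of degree $e+1$ only ever discards a term not needed for connectivity; everything else is a formal consequence of $\killideal{e}$ being spanned by a subset of the basis $\basisC$.
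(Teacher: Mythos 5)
There is a genuine gap, and it sits at the very foundation of your first two paragraphs. You assert that $\killideal{e}$ is the $\Cv$-span of $\{\RKLx{m,n}\mid m+n\geq e+1\}$, so that $\pi$ kills every "large" basis element and the multiplication table of $\subquo[e]$ is the naive truncation of that of $\subquo$. This is false. Take $m+n=e+1$ with $n\geq 1$. Since $\RKLy{m,n}\in\killideal{e}$, so is $\theta_{\tduc}\RKLy{m,n}$ for the appropriate $\tduc$, which by \eqref{eq:multiplication} equals $\vnumber{2}\bigl(\RKLy{m+1,n}+\RKLy{m-1,n+1}+\RKLy{m,n-1}\bigr)$. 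The middle term again has degree $e+1$ and lies in $\killideal{e}$, so $\RKLy{m+1,n}+\RKLy{m,n-1}\in\killideal{e}$, i.e.\ in $\subquo[e]$ the degree-$(e+2)$ element $\RKLy{m+1,n}$ is congruent to $-\RKLy{m,n-1}$, a \emph{nonzero} basis element of $\basisC[e]$ by \fullref{proposition:dimension}. So the large elements of degree $\geq e+2$ do not die in the quotient (they are reflected back into the triangle with signs, as in any Verlinde-type quotient), $\killideal{e}$ is merely \emph{a} vector-space complement of the span of the small elements and not the span of the large ones, and your "deletion" description of the quotient multiplication, together with both directions of the argument that the preorders restrict (in particular the claim that a large $\algstuff{Z}$ forces $\algstuff{Z}\RKLy{m^{\prime},n^{\prime}}$ to be a combination of large basis vectors), collapses.

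What actually suffices — and is essentially why the paper states this as an immediate corollary of \fullref{proposition:cells} and \fullref{proposition:dimension} with no proof — is the much weaker observation that the one-step rules \eqref{eq:multiplication} and \eqref{eq:multiplication2} remain valid in $\subquo[e]$ with the degree-$(e+1)$ terms set to zero: multiplying a basis element of degree $\leq e$ by a generator $\theta_{\tduc}$ only ever produces terms of degree $\leq e+1$, so no rewriting of degree-$\geq e+2$ elements is needed, and the surviving coefficients are unchanged because the small elements stay linearly independent. Your third paragraph (connectivity of the triangle $\{m+n\leq e\}$ under the elementary moves, preservation of the rightmost color, the trivial cell staying separate) is the correct core of the argument and goes through once it is based on this degree observation rather than on the false description of $\killideal{e}$; the cardinality count in your last paragraph is then fine.
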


\begin{example}\label{example:cells}
The left cells correspond to the generalized type $\typeA$ Dynkin diagrams 
$\graphA{e}$ in \fullref{subsec:gen-D-list}, which are 
cut-offs of the positive Weyl chamber as in \eqref{eq:weight-picture}, such that 
the basis elements of the left cells correspond to the vertices of the diagram. 

The prototypical examples to keep in mind are
\[
\xy
(0,0)*{
\begin{tikzpicture}[anchorbase, xscale=.35, yscale=.5]
	\draw [thick, myyellow] (0,0) node[below, black] {$\text{\tiny$\RKLg{0,0}$}$} to (1,1) node[right, black] {$\text{\tiny$\RKLg{1,0}$}$};
	\draw [thick, densely dotted, myblue] (0,0) to (-1,1) node[left, black] {$\text{\tiny$\RKLg{0,1}$}$};
	\draw [thick, densely dashed, myred] (1,1) to (-1,1);
	\node at (0,0) {$\bulletgstart$};
	\node at (1,1) {$\bulleto$};
	\node at (-1,1) {$\bulletp$};
\end{tikzpicture}};
(0,-14)*{\text{{\tiny $\lcell^{\gc}$ for $e=1$}}};
\endxy
,\quad\quad
\xy
(0,0)*{
\begin{tikzpicture}[anchorbase, xscale=.35, yscale=.5]
	\draw [thick, myyellow] (0,0) node[below, black] {$\text{\tiny$\RKLg{0,0}$}$} to (1,1) node[right, black] {$\text{\tiny$\RKLg{1,0}$}$} to (0,2);
	\draw [thick, myyellow] (0,2) to (-2,2) node[left, black] {$\text{\tiny$\RKLg{0,2}$}$};
	\draw [thick, densely dotted, myblue] (0,0) to (-1,1) node[left, black] {$\text{\tiny$\RKLg{0,1}$}$} to (0,2);
	\draw [thick, densely dotted, myblue] (0,2) node[above, black] {$\text{\tiny$\RKLg{1,1}$}$} to (2,2) node[right, black] {$\text{\tiny$\RKLg{2,0}$}$};
	\draw [thick, densely dashed, myred] (2,2) to (1,1) to (-1,1) to (-2,2);
	\node at (0,0) {$\bulletgstart$};
	\node at (0,2) {$\bulletg$};
	\node at (1,1) {$\bulleto$};
	\node at (-2,2) {$\bulleto$};
	\node at (2,2) {$\bulletp$};
	\node at (-1,1) {$\bulletp$};
\end{tikzpicture}};
(0,-14)*{\text{{\tiny $\lcell^{\gc}$ for $e=2$}}};
\endxy
,\quad\quad
\xy
(0,0)*{
\begin{tikzpicture}[anchorbase, xscale=.35, yscale=.5]
	\draw [thick, myyellow] (0,0) node[below, black] {$\text{\tiny$\RKLg{0,0}$}$} to (1,1) node[right, black] {$\text{\tiny$\RKLg{1,0}$}$} to (0,2) to (1,3) node[above, black] {$\text{\tiny$\RKLg{2,1}$}$} to (3,3) node[right, black] {$\text{\tiny$\RKLg{3,0}$}$} node[above, black] {$\text{\tiny$\RKLg{1,1}$}$};
	\draw [thick, myyellow] (0,2) to (-2,2) node[left, black] {$\text{\tiny$\RKLg{0,2}$}$} to (-3,3) node[left, black] {$\text{\tiny$\RKLg{0,3}$}$};
	\draw [thick, densely dotted, myblue] (0,0) to (-1,1) node[left, black] {$\text{\tiny$\RKLg{0,1}$}$} to (0,2) to (-1,3) to (-3,3);
	\draw [thick, densely dotted, myblue] (0,2) to (2,2) node[right, black] {$\text{\tiny$\RKLg{2,0}$}$} to (3,3);
	\draw [thick, densely dashed, myred] (1,1) to (-1,1) to (-2,2) to (-1,3) node[above, black] {$\text{\tiny$\RKLg{1,2}$}$} to (1,3) to (2,2) to (1,1);
	\node at (0,0) {$\bulletgstart$};
	\node at (0,2) {$\bulletg$};
	\node at (3,3) {$\bulletg$};
	\node at (-3,3) {$\bulletg$};
	\node at (1,1) {$\bulleto$};
	\node at (-2,2) {$\bulleto$};
	\node at (1,3) {$\bulleto$};
	\node at (2,2) {$\bulletp$};
	\node at (-1,1) {$\bulletp$};
	\node at (-1,3) {$\bulletp$};
	\draw [<-] (.35,2.15) to (2.5,3.4);
\end{tikzpicture}};
(0,-14)*{\text{{\tiny $\lcell^{\gc}$ for $e=3$}}};
\endxy
\]
where we also display the associated colored KL basis elements. 
The starting (rightmost) color is indicated by $\bulletgstart$.  
The color of any vertex is the color of the 
leftmost $\theta_{\tduc}$ in any of the terms of the corresponding 
colored KL basis element. The arrows of the same type, emanating from a given vertex, indicate 
the terms which appear on the right-hand 
side of the multiplication rule in \eqref{eq:multiplication2}.
\end{example}

\subsubsection{Trihedral simples}\label{subsec:semisimplicity}

Next, we classify all simple representations of $\subquo[e]$ on 
($\Cv$-)vector spaces, 
cf. \eqref{eq:the-simples}. To this end, note that 
the ideal $\killideal{e}$ 
defining $\subquo$ is built such that we can use 
Koornwinder's Chebyshev polynomials and their roots 
as in \fullref{subsec:roots-poly} below.

First, the one-dimensional representations of $\subquo[e]$ are easy 
to define, since they correspond to characters. Each such character 
\[
\M_{\lambda_\gc,\lambda_\oc,\lambda_\pc}
\colon\subquo[e]\to\Cv
\]
is completely determined by its value on the generators
\[
\theta_\gc\mapsto \lambda_\gc,\;
\theta_\oc\mapsto \lambda_\oc,\;
\theta_\pc\mapsto \lambda_\pc.
\]
Therefore, we can identify $\M_{\lambda_\gc,\lambda_\oc,\lambda_\pc}$ with a triple 
$(\lambda_\gc,\lambda_\oc,\lambda_\pc)\in\Cv^{3}$.

\begin{lemma}\label{lemma:further-restrictions} 
The following table 
\begin{gather}\label{eq:the-one-dims}
\begin{tikzpicture}[baseline=(current bounding box.center),yscale=0.6]
  \matrix (m) [matrix of math nodes, row sep={.85cm,between origins}, column
  sep={4.0cm,between origins}, text height=1.5ex, text depth=0.25ex, ampersand replacement=\&] {
e\equiv 0\bmod 3 \&  e\not\equiv 0\bmod 3 \\
  \begin{gathered}\M_{0,0,0},\,\M_{\vfrac{3},0,0},
  \\
  \M_{0,\vfrac{3},0},\,\M_{0,0,\vfrac{3}}\end{gathered} \& \M_{0,0,0}  \\};
  \draw[densely dashed] ($(m-1-1.south west)+ (-.9,0)$) to (m-1-2.south east);
  \draw[densely dashed] ($(m-1-2.north west) + (-.3,0)$) to ($(m-1-2.north west) + (-.3,-2.75)$);
\end{tikzpicture}
\end{gather}
gives a complete, irredundant list of one-dimensional 
$\subquo[e]$-representations.
\end{lemma}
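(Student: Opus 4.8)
The plan is to identify one-dimensional $\subquo[e]$-representations with the algebra characters $\subquo\to\Cv$ that vanish on the ideal $\killideal{e}$, and to reduce the vanishing condition to the constant terms $d^{0,0}_{m,n}$ of the polynomials $\pxy{m,n}$. First I would classify the characters of $\subquo$ itself: such a character is a triple $(\lambda_\gc,\lambda_\oc,\lambda_\pc)\in\Cv^3$ with $\theta_\tduc\mapsto\lambda_\tduc$, relation \eqref{eq:first-rel} forces $\lambda_\tduc\in\{0,\vfrac{3}\}$ for each color, and relation \eqref{eq:second-rel} forces $\lambda_\gc^2(\lambda_\oc-\lambda_\pc)=\lambda_\oc^2(\lambda_\pc-\lambda_\gc)=\lambda_\pc^2(\lambda_\gc-\lambda_\oc)=0$. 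If two of the $\lambda_\tduc$ are nonzero, these equations force all three to coincide, hence to equal $\vfrac{3}$; so $\subquo$ has exactly the five pairwise distinct characters $\M_{0,0,0}$, $\M_{\vfrac{3},0,0}$, $\M_{0,\vfrac{3},0}$, $\M_{0,0,\vfrac{3}}$, $\M_{\vfrac{3},\vfrac{3},\vfrac{3}}$. The task then reduces to deciding, for each of these, whether the generators $\RKLx{m,n}$ with $m+n=e+1$ of $\killideal{e}$ act by $0$; irredundancy of the resulting list is automatic.

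Next I would compute these actions, using that on a character the monomial $\rklx{k,l}=\theta_{\tduc_{k+l}}\cdots\theta_{\tduc_0}$ from \eqref{eq:KLelement} acts by the product $\lambda_{\tduc_{k+l}}\cdots\lambda_{\tduc_0}$. For $\M_{0,0,0}$ every such product vanishes, so $\M_{0,0,0}$ descends to $\subquo[e]$ for every $e$. For $\M_{\vfrac{3},0,0}$ the product is nonzero only if every $\tduc_i$ equals $\gc$; since consecutive colors along the defining path differ by $\rho^{\pm 1}$ and $\rho(\gc),\rho^{-1}(\gc)\neq\gc$, this forces $k=l=0$ with starting color $\gc$. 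Hence $\RKLg{m,n}$ acts by $d^{0,0}_{m,n}\vfrac{3}$ while $\RKLo{m,n}$ and $\RKLp{m,n}$ act by $0$, so (as $\vfrac{3}\neq 0$ in $\Cv$) the character $\M_{\vfrac{3},0,0}$ descends to $\subquo[e]$ if and only if $d^{0,0}_{m,n}=0$ for all $m+n=e+1$.

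By \eqref{eq:the-c-poly} the number $d^{0,0}_{m,n}$ is the constant term of $\pxy{m,n}$, so \fullref{lemma:no-const-term} gives $d^{0,0}_{m,n}\neq 0$ exactly when $m\equiv n\bmod 3$ and $m\not\equiv 2\bmod 3$. On the line $m+n=e+1$ the condition $m\equiv n\bmod 3$ reads $m\equiv 2(e+1)\bmod 3$, which is realized by some $0\leq m\leq e+1$, and for that $m$ one has $m\not\equiv 2\bmod 3$ precisely when $e\not\equiv 0\bmod 3$. Thus $\M_{\vfrac{3},0,0}$ descends to $\subquo[e]$ exactly when $e\equiv 0\bmod 3$; and since the cyclic color automorphism $\rho$ of $\subquo$ (sending $\theta_\gc\mapsto\theta_\oc\mapsto\theta_\pc\mapsto\theta_\gc$) preserves $\killideal{e}$ and permutes these three characters, the same holds for $\M_{0,\vfrac{3},0}$ and $\M_{0,0,\vfrac{3}}$.

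The remaining character $\M_{\vfrac{3},\vfrac{3},\vfrac{3}}$ is the main obstacle, since excluding it requires a genuine (if short) argument rather than pure bookkeeping. Here every $\rklx{k,l}$ acts by the $(k{+}l{+}1)$-st power of $\vfrac{3}$, so using $\vfrac{3}=\vnumber{2}\vnumber{3}$ one finds that $\RKLx{m,n}$ acts by the scalar $\vfrac{3}\cdot\pxy[\vnumber{3},\vnumber{3}]{m,n}$. Since the top total-degree term of $\pxy{m,n}$ is $\fu^m\fud^n$, the specialization $\pxy[z,z]{m,n}\in\Z[z]$ is monic of degree $m+n$, hence a nonzero polynomial; and $\vnumber{3}=\vpar^2+1+\vpar^{-2}$ is transcendental over $\Q$ because $\vpar$ is an indeterminate, so $\pxy[\vnumber{3},\vnumber{3}]{m,n}\neq 0$. (Equivalently, $\pxy[\vnumber{3},\vnumber{3}]{m,n}$ is the $\vpar$-quantum dimension of $\Ll_{m,n}$, a nonzero element of $\Cv$ for generic $\vpar$.) Therefore $\RKLx{m,n}$ never acts by $0$ on $\M_{\vfrac{3},\vfrac{3},\vfrac{3}}$, so this character descends to no $\subquo[e]$, and assembling the cases gives exactly the table \eqref{eq:the-one-dims}.
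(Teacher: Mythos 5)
Your proof is correct and follows essentially the same route as the paper: reduce to the five candidate characters via \eqref{eq:first-rel} and \eqref{eq:second-rel}, kill $\M_{\vfrac{3},\vfrac{3},\vfrac{3}}$ by non-vanishing of the leading term, and settle the remaining three via the constant terms of the $\pxy{m,n}$ and \fullref{lemma:no-const-term}. Your version is slightly more explicit in two spots the paper leaves terse — the "only if" direction showing $\M_{\vfrac{3},0,0}$ fails to descend when $e\not\equiv 0\bmod 3$, and the identification of the action of $\RKLx{m,n}$ on $\M_{\vfrac{3},\vfrac{3},\vfrac{3}}$ with $\vfrac{3}\,\pxy[\vnumber{3},\vnumber{3}]{m,n}$ (a quantum dimension) — but the substance is the same.
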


\begin{proof}
Let us first check which 
triples $(\lambda_\gc,\lambda_\oc,\lambda_\pc)$ give a well-defined character of $\subquo[e]$: 
by \eqref{eq:first-rel}, we see that $\lambda_\tduc$ has to be zero or $\vfrac{3}$. 
Moreover, \eqref{eq:second-rel} implies that either 
all $\theta_\tduc$ act by zero, precisely one 
of them acts by $\vfrac{3}$, or 
all of them act by $\vfrac{3}$. 
Further restrictions are imposed by requiring 
the representation to vanish
on $\vanideal{e}$.
 
Let us now give the details. The representation $\M_{0,0,0}$ vanishes on $\vanideal{e}$, 
because, by definition, $\RKLg{m,n}$, $\RKLo{m,n}$, $\RKLp{m,n}$ have no constant term for all $m,n$, 
since their starting color is always $\theta_\tduc$.

The representation $\M_{\vfrac{3},\vfrac{3},\vfrac{3}}$ does not vanish 
on $\vanideal{e}$, since all polynomials $\pxy{m,n}$ have a unique term 
of highest degree. This follows 
from the representation theory of of $\slt$, since $\fu^{m}\fud^{n}$ 
has a unique highest summand. This coefficient of this term contributes a maximal power of $\vpar$ when 
evaluated, which cannot be canceled by the coefficients of other terms, 
e.g.
\[
\fu^{2}-\fud
\leftrightsquigarrow
\vnumber{2}^{-2}\theta_\pc\theta_\oc\theta_\gc-\vnumber{2}^{-1}\theta_\pc\theta_\gc
\mapsto
\vnumber{2}^{-2}\vfrac{3}\vfrac{3}\vfrac{3}-\vnumber{2}^{-1}\vfrac{3}\vfrac{3}\neq 0.
\]
Thus, $\M_{\vfrac{3},\vfrac{3},\vfrac{3}}$ is not a representation 
of $\subquo[e]$.

When $e\equiv 0\bmod 3$, there are three more characters, namely 
\[
\M_{\vfrac{3},0,0},\quad\M_{0,\vfrac{3},0},\quad\M_{0,0,\vfrac{3}}.
\]
To see this note that, 
for $m+n=e+1$ and $e\equiv 0\bmod 3$, we have $m+n\equiv 1 \bmod 3$. 
Hence, $m\equiv n\equiv 0,1\bmod 3$ is impossible in this case. 
By \fullref{lemma:no-const-term}, this means that $\pxy{m,n}$ does not have a non-zero constant term.
It follows that all terms in $\RKLg{m,n}$, $\RKLo{m,n}$, $\RKLp{m,n}$ 
contain a factor $\theta_\tdudc\theta_\tduc$ for some $\tduc\neq\tdudc$. 
For any of the three 
$\M_{\vfrac{3},0,0}$, $\M_{0,\vfrac{3},0}$ or $\M_{0,0,\vfrac{3}}$, we therefore have 
$\theta_\tdudc\theta_\tduc\mapsto 0$. This shows that 
$\RKLg{m,n},\RKLo{m,n},\RKLp{m,n}\mapsto 0$.

The three corresponding one-dimensional representations are clearly non-isomorphic.
\end{proof}

Let us now study the simple representations of dimension three, which 
depend on a complex number $z\in\C$.
To this end, we define three matrices
\begin{gather}\label{eq:three-dims}
\begin{gathered}
\Mg=
\vnumber{2}
{\scriptstyle
\begin{pmatrix}
\vnumber{3} & \overline{z}& z\\
0 & 0 & 0\\
0& 0 & 0
\end{pmatrix}
}
,
\quad
\Mo=
\vnumber{2}
{\scriptstyle
\begin{pmatrix}
0 & 0 & 0\\
z& \vnumber{3} & \overline{z}\\ 
0& 0 & 0
\end{pmatrix}
}
,
\\
\Mp=
\vnumber{2}
{\scriptstyle
\begin{pmatrix}
0 & 0 & 0\\
0 & 0 & 0\\
\overline{z} & z & \vnumber{3}
\end{pmatrix}
}
.
\end{gathered}
\end{gather}
Let $\Mt=\Mg+\Mo+\Mp$.

Next, we use the explicit description 
of the elements in $\vanset{e}$, cf. \fullref{remark:level-vanishing}.

\begin{lemma}\label{lemma:three-dim-rep1}
The matrices $\Mg,\Mo,\Mp$ define a representation $\M_z$ of $\subquo[e]$ 
on $\Cv^3$, such that 
\[
\theta_{\gc} \mapsto \Mg,\quad \theta_{\oc} \mapsto \Mo,\quad \theta_{\pc} \mapsto \Mp,
\]
if and only if $(z,\overline{z})\in\vanset{e}$. 
\end{lemma}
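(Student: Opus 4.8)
\textit{Approach.} The plan is to notice that the three matrices satisfy the defining relations of $\subquo$ for \emph{every} $z\in\C$, so that for every $z$ they determine a unital algebra homomorphism $\M_z\colon\subquo\to\mathrm{Mat}_{3}(\Cv)$ (by the universal property of the presentation in \fullref{definition:funny-alg1}). The only content of the statement is therefore to decide when $\M_z$ factors through the quotient $\subquo[e]=\subquo/\killideal{e}$.

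First I would check \eqref{eq:first-rel}: writing $\Mg=\vnumber{2}A$, where $A$ is the rank-one matrix whose unique non-zero row is $(\vnumber{3},\overline{z},z)$, one has $A^{2}=\vnumber{3}A$ by inspection, hence $\Mg^{2}=\vnumber{2}^{2}\vnumber{3}A=\vfrac{3}\Mg$ because $\vfrac{3}=\vnumber{3}\vnumber{2}$; the same works for $\Mo$ and $\Mp$. For \eqref{eq:second-rel} the key point is that each $\theta_{\tduc}$ collapses $\Cv^{3}$ onto the line $\Cv v_{\tduc}$, with $\theta_{\tduc}(v_{\tdudc})=\vnumber{2}\kappa\,v_{\tduc}$ where $\kappa$ is $\vnumber{3}$, $z$ or $\overline{z}$ according as $\tduc=\tdudc$, $\tduc=\rho(\tdudc)$ or $\tduc=\rho^{-1}(\tdudc)$. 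Iterating this, $\Mg\Mo\Mg$ and $\Mg\Mp\Mg$ both send $v_{\tduc}$ to $\vnumber{2}^{3}z\overline{z}\cdot\kappa\,v_{\gc}$ for the appropriate $\kappa$, so they coincide; the other two relations in \eqref{eq:second-rel} are obtained the same way, or by the evident $\zeethree$-symmetry that cyclically permutes $\gc,\oc,\pc$ while swapping $z\leftrightarrow\overline{z}$. Thus $\M_z$ is indeed a representation of $\subquo$ for all $z\in\C$.

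The core computation is the action of the colored KL elements. For a word $\rklx{k,l}=\theta_{\tduc_{k+l}}\cdots\theta_{\tduc_{0}}$ as in \eqref{eq:KLelement}, repeated use of the rule above gives $\rklx{k,l}(v_{\tdudc})=\vnumber{2}^{k+l+1}\kappa\,z^{k}\overline{z}^{l}\,v_{\rho^{k-l}(\tduc)}$: the factor $\kappa\in\{\vnumber{3},z,\overline{z}\}$ comes from the first letter $\theta_{\tduc_{0}}$, and the exponents $k,l$ of $z,\overline{z}$ record that among the $k+l$ subsequent transitions exactly $k$ are $\rho$-steps and $l$ are $\rho^{-1}$-steps (consecutive colors in the word are always distinct, so no later letter can contribute a $\vnumber{3}$). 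Substituting into the definition \eqref{eq:the-expressions} of $\RKLx{m,n}$ and using that $d^{k,l}_{m,n}=0$ unless $k-l\equiv m-n\bmod 3$ — which follows from \eqref{eq:L-vs-L} and \fullref{lemma:central-character} — so that $\rho^{k-l}(\tduc)=\rho^{m-n}(\tduc)$ on every surviving term, the powers of $\vnumber{2}$ telescope and one obtains $\RKLx{m,n}(v_{\tdudc})=\vnumber{2}\kappa\,\pxy[z,\overline{z}]{m,n}\,v_{\rho^{m-n}(\tduc)}$, with $\pxy{m,n}$ Koornwinder's polynomial from \eqref{eq:the-c-poly}. Since each column of the matrix $\M_z(\RKLx{m,n})$ is thus $\pxy[z,\overline{z}]{m,n}$ times a vector, and the column indexed by $v_{\tduc}$ is $\vnumber{2}\vnumber{3}\,\pxy[z,\overline{z}]{m,n}\,v_{\rho^{m-n}(\tduc)}\neq0$ when $\pxy[z,\overline{z}]{m,n}\neq0$, we conclude $\M_z(\RKLx{m,n})=0\iff\pxy[z,\overline{z}]{m,n}=0$.

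To conclude, $\killideal{e}$ is the two-sided ideal of $\subquo$ generated by the $\RKLx{m,n}$ with $m+n=e+1$ and $\tduc\in\Seset$, so $\M_z$ annihilates $\killideal{e}$ — equivalently, descends to $\subquo[e]$ — exactly when all these generators go to $0$, i.e. when $\pxy[z,\overline{z}]{m,n}=0$ for all $m+n=e+1$; by \fullref{definition:the-main-ideal} this says precisely that $(z,\overline{z})\in\vanset{e}$. The converse implication is contained in the same equivalence. The only genuine (and minor) obstacle is the bookkeeping in the third paragraph: correctly tracking the exponent of $\vnumber{2}$ and invoking the central-character divisibility of the $d^{k,l}_{m,n}$ so that the target color $\rho^{k-l}(\tduc)$ is constant across the sum, letting $\pxy{m,n}$ factor out cleanly.
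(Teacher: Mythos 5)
Your proof is correct and follows essentially the same route as the paper's (much terser) argument: verify \eqref{eq:first-rel} and \eqref{eq:second-rel} directly, then observe that $\M_z(\RKLx{m,n})$ reduces to $\vnumber{2}\kappa\,\pxy[z,\overline{z}]{m,n}$ times a basis vector because the positive powers of $\vnumber{2}$ from \eqref{eq:three-dims} cancel the negative powers in \eqref{eq:the-expressions}, so vanishing on $\killideal{e}$ is equivalent to $(z,\overline{z})\in\vanset{e}$. Your bookkeeping of the exponent of $\vnumber{2}$, the factor $\kappa$, and the constancy of the target color $\rho^{k-l}(\tduc)=\rho^{m-n}(\tduc)$ across the sum (via the central-character constraint on $d^{k,l}_{m,n}$) correctly fills in the details the paper leaves implicit.
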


\begin{proof} 
Two short calculations show that 
$\M_z(\tduc)$ respects the 
relations \eqref{eq:first-rel} and \eqref{eq:second-rel}. 
The fact that $\M_z$ vanishes on $\vanideal{e}$ 
if and only if $(z,\overline{z})\in\vanset{e}$,  
follows by the proof of \fullref{lemma:level-vanishing}, as we defined 
$\RKLg{m,n}$, $\RKLo{m,n}$ and $\RKLp{m,n}$ 
in terms of $\pxy{m,n}$. Note that in the calculation of $\M_z(\RKLx{m,n})$ the positive powers of $\vnumber{2}$, 
due to \eqref{eq:three-dims}, cancel against the negative powers of $\vnumber{2}$, which 
appear in \eqref{eq:the-expressions}, up to an overall factor $\vnumber{2}$.
\end{proof}

Recall that $\zetaroot=\exp(2\pi\iunit\neatfrac{1}{3})$.
 
\begin{lemma}\label{lemma:three-dim-rep2} 
Let 
$(z,\overline{z}),(z^{\prime},
\overline{z}^{\prime})\in\vanset{e}$, $z\neq z^\prime$.
\smallskip 
\begin{enumerate}

\setlength\itemsep{.15cm}

\renewcommand{\theenumi}{(\ref{lemma:three-dim-rep2}.a)}
\renewcommand{\labelenumi}{\theenumi}

\item\label{lemma:three-dim-rep2-a} $\M_z\cong\M_{z^{\prime}}$ as representations 
of $\subquo[e]$ if and only if 
$z^{\prime}=\zetaroot^{\pm 1}z$.

\renewcommand{\theenumi}{(\ref{lemma:three-dim-rep2}.b)}
\renewcommand{\labelenumi}{\theenumi}

\item\label{lemma:three-dim-rep2-b} $\M_z$ is simple if and only if $z\neq 0$.\qedhere
\end{enumerate}
\end{lemma}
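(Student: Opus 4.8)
The plan is to work directly with the explicit matrices in \eqref{eq:three-dims}. First I would record the rank-one presentation: with $r_{\gc}=\vnumber{2}(\vnumber{3},\overline{z},z)^{T}$, $r_{\oc}=\vnumber{2}(z,\vnumber{3},\overline{z})^{T}$, $r_{\pc}=\vnumber{2}(\overline{z},z,\vnumber{3})^{T}$ and $e_{1},e_{2},e_{3}$ the standard basis of $\Cv^{3}$, one has $\Mg=e_{1}r_{\gc}^{T}$, $\Mo=e_{2}r_{\oc}^{T}$ and $\Mp=e_{3}r_{\pc}^{T}$. In particular the image of $\M_{z}(\tduc)$ is a coordinate line, and each of $\Mg,\Mo,\Mp$ sends one coordinate vector to $\vnumber{2}z$ times the next one (for instance $\Mo e_{1}=\vnumber{2}z\,e_{2}$, $\Mp e_{2}=\vnumber{2}z\,e_{3}$, $\Mg e_{3}=\vnumber{2}z\,e_{1}$).

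For \ref{lemma:three-dim-rep2-b} I would first treat $z\neq 0$. Let $V\subseteq\Cv^{3}$ be a non-zero invariant subspace and pick $0\neq v\in V$. If $\M_{z}(\tduc)v=0$ for every $\tduc\in\Seset$, then $v\in\ker\Mt$; but $\Mt$ is invertible over $\Cv=\C(\vpar)$, because $\Mt$ is $\vnumber{2}$ times a $3\times 3$ circulant matrix, so $\det\Mt=\vnumber{2}^{3}\prod_{j=0}^{2}(\vpar^{2}+\vpar^{-2}+c_{j})$ for suitable $c_{j}\in\C$, and each factor here is a non-zero element of $\C(\vpar)$. Hence $v=0$, a contradiction, so $\M_{z}(\tduc)v\neq 0$ for some $\tduc$, which forces $e_{i}\in V$ for some $i$. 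Applying the remaining two of $\Mg,\Mo,\Mp$ and using $z\neq 0$ then gives $e_{1},e_{2},e_{3}\in V$, so $V=\Cv^{3}$ and $\M_{z}$ is simple. If $z=0$, then $\overline{z}=0$, all three matrices are diagonal, and $\Cv e_{1}$ is a proper invariant subspace, so $\M_{0}$ is not simple.

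For \ref{lemma:three-dim-rep2-a}, for the ``if'' direction I would exhibit an explicit diagonal intertwiner. Since $\overline{\zetaroot z}=\zetaroot^{2}\overline{z}$, a short check of the $(i,j)$-entries shows that conjugation by $\mathrm{diag}(1,\zetaroot,\zetaroot^{2})$ carries $\Mg,\Mo,\Mp$ to the matrices defining $\M_{\zetaroot z}$, and conjugation by $\mathrm{diag}(1,\zetaroot^{2},\zetaroot)$ carries them to those of $\M_{\zetaroot^{2}z}$; both targets are representations of $\subquo[e]$ by hypothesis (and \fullref{lemma:three-dim-rep1}), so $z'=\zetaroot^{\pm 1}z$ gives $\M_{z}\cong\M_{z'}$. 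For the ``only if'' direction I would extract isomorphism invariants from traces: using the rank-one presentation, $\mathrm{tr}(\Mg\Mp\Mo)=(r_{\gc}^{T}e_{3})(r_{\pc}^{T}e_{2})(r_{\oc}^{T}e_{1})=\vnumber{2}^{3}z^{3}$ and, symmetrically, $\mathrm{tr}(\Mg\Mo\Mp)=\vnumber{2}^{3}\overline{z}^{3}$. Since $\mathrm{tr}(\theta_{\gc}\theta_{\pc}\theta_{\oc})$ is invariant under isomorphism of representations, $\M_{z}\cong\M_{z'}$ forces $z^{3}=z'^{3}$ (as $\vnumber{2}\neq 0$), and together with $z\neq z'$ this yields $z'\in\{\zetaroot z,\zetaroot^{2}z\}$, i.e. $z'=\zetaroot^{\pm 1}z$.

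I expect no serious obstacle. The one point that matters is that the ground field is $\C(\vpar)$, not $\C$: this is exactly what makes $\ker\Mt$ trivial, so that simplicity of $\M_{z}$ is decided purely by whether $z=0$. The rest is routine bookkeeping --- the two entrywise verifications and the trace computations.
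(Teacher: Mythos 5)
Your proof is correct, and for the two harder halves it takes a genuinely different (and arguably leaner) route than the paper. For the ``if'' direction of \ref{lemma:three-dim-rep2-a} you and the paper do essentially the same thing: the paper conjugates by $\mathrm{diag}(\zetaroot^{\mp1},1,\zetaroot^{\pm1})$, which is a scalar multiple of your $\mathrm{diag}(1,\zetaroot,\zetaroot^{2})$, so the intertwiner is the same. For the ``only if'' direction the paper computes the full eigendata of $\Mt$ (eigenvalues $\vnumber{2}(\zetaroot^{j}z+\zetaroot^{-j}\overline{z}+\vnumber{3})$ with eigenvectors $(1,\zetaroot^{j},\zetaroot^{-j})$) and matches spectra, which leads to a three-way case analysis of linear systems; your observation that $\mathrm{tr}\,\M(\theta_{\gc}\theta_{\pc}\theta_{\oc})=\vnumber{2}^{3}z^{3}$ is an isomorphism invariant replaces all of that with one line and immediately gives $z'^{3}=z^{3}$. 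For \ref{lemma:three-dim-rep2-b} the paper again leans on the eigenvectors of $\Mt$, splitting into cases according to whether eigenvalues coincide; your rank-one presentation $\M_{z}(\tduc)=e_{i}r_{\tduc}^{T}$ together with the cyclic relations $\Mo e_{1}=\vnumber{2}z\,e_{2}$, etc., gives a direct invariant-subspace chase with no case split, and your justification that $\Mt$ is invertible over $\Cv$ (circulant determinant, each factor a nonconstant Laurent polynomial in $\vpar$) is exactly the genericity point the paper also uses. What the paper's computation buys is the explicit spectrum of $\Mt$ recorded in \eqref{eq:eigen-things}; what yours buys is brevity and the reusable trace invariant. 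Both are complete.
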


\begin{proof}
\textit{\ref{lemma:three-dim-rep2-a}.}
Suppose that $z^{\prime}=\zetaroot^{\pm 1}z$. Then we have 
the following base change between $\Mt$ and $\Mt[z^{\prime}]$:
\[
\vnumber{2}
{\scriptstyle
\begin{pmatrix}
\vnumber{3} & \overline{z}^{\prime}& z^{\prime}\\
z^{\prime} & \vnumber{3}  & \overline{z}^{\prime}\\
\overline{z}^{\prime}& z^{\prime} & \vnumber{3} 
\end{pmatrix}
}
=
\vnumber{2}
{\scriptstyle
\begin{pmatrix}
\zetaroot^{\mp 1} & 0 & 0\\
0 & 1 & 0 \\
0 & 0 & \zetaroot^{\pm 1}
\end{pmatrix}
}
\!
{\scriptstyle
\begin{pmatrix}
\vnumber{3} & \overline{z}& z\\
z & \vnumber{3}  & \overline{z}\\
\overline{z} & z & \vnumber{3} 
\end{pmatrix}
}
\!
{\scriptstyle
\begin{pmatrix}
\zetaroot^{\pm 1} & 0 & 0\\
0 & 1 & 0 \\
0 & 0 & \zetaroot^{\mp 1}
\end{pmatrix}
}
.
\]
This shows that $\M_z\cong\M_{z^{\prime}}$ as $\subquo[e]$-representations. 

To see the converse, we compute the 
eigenvalues and eigenvectors of $\Mt$:
\begin{gather}\label{eq:eigen-things}
\begin{tikzpicture}[baseline=(current bounding box.center),yscale=0.6]
  \matrix (m) [matrix of math nodes, row sep=.5em, column
  sep=1em, text height=1.0ex, text depth=0.25ex, ampersand replacement=\&] {
\vnumber{2}\left(z+\overline{z}+\vnumber{3}\right) 
\&  
\vnumber{2}\left(\zetaroot^{-1} z+\zetaroot \overline{z}+\vnumber{3}\right)
\&
\vnumber{2}\left(\zetaroot z+\zetaroot^{-1}\overline{z}+\vnumber{3}\right)
\\
(1,1,1)\in\Cv^3
\& 
(1, \zetaroot,\zetaroot^{-1})\in\Cv^3 
\& 
(1,\zetaroot^{-1}, \zetaroot)\in\Cv^3  
\\};
  \draw[densely dotted] ($(m-1-2.north west) + (-.15,0)$) to ($(m-1-2.north west) + (-.15,-1.75)$);
  \draw[densely dotted] ($(m-1-3.north west) + (-.15,0)$) to ($(m-1-3.north west) + (-.15,-1.75)$);
\end{tikzpicture}
\end{gather}
Since $\vpar$ is generic, these are three non-zero eigenvalues with
three linearly independent eigenvectors showing that 
$\Mt$ can be diagonalized.

Now suppose $\M_z\cong\M_{z^{\prime}}$. Then $\Mt$ 
and $\Mt[z^{\prime}]$ must have the same eigenvalues, so 
the above implies that one of the following three triples of equations must hold.
\begin{gather*}
z+\overline{z}=z^{\prime}+\overline{z}^{\prime}
\quad\text{and}\quad
\zetaroot z + \zetaroot^{-1}\overline{z}=\zetaroot z^{\prime} + \zetaroot^{-1}\overline{z}^{\prime}
\quad\text{and}\quad
\zetaroot^{-1}z + \zetaroot \overline{z}=\zetaroot^{-1} z^{\prime} + \zetaroot \overline{z}^{\prime},
\\
z+\overline{z}=\zetaroot z^{\prime}+\zetaroot^{-1}\overline{z}^{\prime}
\quad\text{and}\quad
\zetaroot z + \zetaroot^{-1}\overline{z}=\zetaroot^{-1} z^{\prime} + \zetaroot\overline{z}^{\prime}
\quad\text{and}\quad
\zetaroot^{-1}z + \zetaroot \overline{z}=z^{\prime} +\overline{z}^{\prime},
\\
z+\overline{z}=\zetaroot^{-1} z^{\prime}+\zetaroot\overline{z}^{\prime}
\quad\text{and}\quad
\zetaroot z + \zetaroot^{-1}\overline{z}=z^{\prime} + \overline{z}^{\prime}
\quad\text{and}\quad
\zetaroot^{-1}z + \zetaroot \overline{z}=\zetaroot z^{\prime} + \zetaroot^{-1} \overline{z}^{\prime}.
\end{gather*}
One easily checks that these are satisfied 
if and only if 
$z=z^{\prime}$ (top triple), 
$z=\zetaroot z^{\prime}$ (middle triple)
or $z=\zetaroot^{-1} z^{\prime}$ (bottom triple).
\newline

\noindent\textit{\ref{lemma:three-dim-rep2-b}.}
In case $z=0$, one clearly has
\[
\M_0\cong\M_{\vfrac{3},0,0}\oplus\M_{0,\vfrac{3},0}\oplus\M_{0,0,\vfrac{3}}, 
\] 
where the one-dimensional representations were defined in \eqref{eq:the-one-dims}.

Now suppose that $z\neq 0$ and that $\M_z$ is reducible. 
Then it must have a subrepresentation of dimension one or two. 
The explicit description of 
the eigenvalues and eigenvectors of $\Mt$ 
from \eqref{eq:eigen-things} shows that this is impossible.

To see this, first note that
the restriction of $\Mt$ 
to the vector space underlying the potential subrepresentation 
would be diagonalizable as well.

Secondly, in case the eigenvalues in \eqref{eq:eigen-things} 
are all distinct, at least one eigenvector therein is also an eigenvector 
for the restriction.
However, applying $\Mg$, $\Mo$ and $\Mp$ to 
any of the three eigenvectors in \eqref{eq:eigen-things} gives three linear independent vectors, which 
shows that no subrepresentation can exist in case of distinct eigenvalues.

Thirdly, assume that 
two of the three eigenvalues in \eqref{eq:eigen-things} coincide. 
Then there must exist a linear combination of the corresponding two eigenvectors which is an eigenvector 
for the restriction. Applying $\Mg$, $\Mo$ and 
$\Mp$ to that eigenvector would give 
three linear independent vectors, as can easily be checked. We get a contradiction again.

Finally, since $z\neq 0$, not all eigenvalues 
in \eqref{eq:eigen-things} can be equal, so we are done.
\end{proof}

Recall from the proof 
of \fullref{lemma:level-vanishing} the functions $Z$ 
and $\overline{Z}$, which map $D$ bijectively onto 
the discoid $\discoid$, and which determine $\vanset{e}$.
If $e\not\equiv 0\bmod 3$, then 
$Z(\sigma,\tau)\neq 0$ for all $(\sigma,\tau)$ as in \eqref{eq:zeros2}. 
By \fullref{lemma:three-dim-rep2}, this implies 
that the total number of pairwise 
non-isomorphic $\M_z$ is 
equal to $\neatfrac{t_e}{3}$.
If $e\equiv 0\bmod 3$, then that number 
is equal to $\neatfrac{(t_e-1)}{3}$, because 
$Z(\sigma,\tau)=0$ if and only if $2k+l=e=k+2l$ if and only if
$k=l=\neatfrac{e}{3}$.

Summarized, we have the following 
non-isomorphic, simple 
$\subquo[e]$-representations:
\begin{gather}\label{eq:the-simples}
\begin{tikzpicture}[baseline=(current bounding box.center),yscale=0.6]
  \matrix (m) [matrix of math nodes, row sep=1em, column
  sep=1em, text height=1.5ex, text depth=0.25ex, ampersand replacement=\&] {
\phantom{a}
\& 
e\equiv 0\bmod 3 
\&  
e\not\equiv 0\bmod 3 
\\
\text{one-dim.}
\&
\begin{gathered}
\M_{0,0,0},\,\M_{\vfrac{3},0,0},
  \\
\M_{0,\vfrac{3},0},\,\M_{0,0,\vfrac{3}}
\end{gathered} 
\& 
\M_{0,0,0}  
\\
\text{quantity}
\&
4
\& 
1
\\
\text{three-dim.}
\&
\M_z,
\;
(z,\overline{z})\in\vanset{e}^{\zetaroot}-\{(0,0)\}
\& 
\M_z,
\;
(z,\overline{z})\in\vanset{e}^{\zetaroot}
\\
\text{quantity}
\&
\neatfrac{(t_e-1)}{3}
\& 
\neatfrac{t_e}{3}
\\};
  \draw[densely dashed] ($(m-1-1.south west)+ (-.75,0)$) to ($(m-1-3.south east)+ (.4,0)$);
  \draw[densely dashed] ($(m-3-1.south west)+ (-.25,-.25)$) to ($(m-3-3.south east)+ (1.25,-.25)$);
  \draw[densely dashed] ($(m-1-3.north west) + (-.6,0)$) to ($(m-1-3.north west) + (-.6,-7.2)$);
  \draw[densely dashed] ($(m-1-2.north west) + (-1.4,0)$) to ($(m-1-2.north west) + (-1.4,-7.2)$);
\end{tikzpicture}
\end{gather}
Here $\vanset{e}^{\zetaroot}$ denotes the set of $\zeethree$-orbits in $\vanset{e}$ 
under the action $(z,\overline{z})\mapsto (\zetaroot z,\zetaroot^{-1}\overline{z})$.

\begin{example}\label{example:three-dims}
By \eqref{eq:the-simples}, the three-dimensional simple
representation of $\subquo[e]$ are indexed by the $\zeethree$-orbits of points 
in the interior of $\discoid$ (cf. \fullref{example:plot-zeros}.), e.g.:
\[
\begin{tikzpicture}[anchorbase, scale=.6, tinynodes]
\draw[thin, marked=.0, marked=.166, marked=.333, marked=.666, marked=.833, marked=1.0, white] (0,-3) to (0,3);
\draw[thin, marked=.0, marked=.166, marked=.333, marked=.666, marked=.833, marked=1.0, white] (-3,0) to (3,0);
\draw[thick, white, fill=mygreen, opacity=.2] (3,0) to [out=170, in=315] (-1.5,2.5) to [out=290, in=70] (-1.5,-2.5) to [out=45, in=190] (3,0);
\draw[thin, densely dotted, ->, >=stealth] (-3.5,0) 
to (-3.35,0) node [above] {$-3$}
to (3.2,0) node [above] {$3$}
to (3.5,0) node[right] {$x$};
\draw[thin, densely dotted, ->, >=stealth] (0,-3.5) 
to (0,-3.2) node [right] {$-3$}
to (0,3.2) node [right] {$3$}
to (0,3.5) node[above] {$y$};
\draw[thick] (3,0) to [out=170, in=315] (-1.5,2.5) to [out=290, in=70] (-1.5,-2.5) to [out=45, in=190] (3,0);
\node at (3,3) {$\C$};
\node[myblue] at (0,0) {$\bullet$};
\node[myblue] at (2,0) {$\bullet$};
\node[myblue] at (-1,1.73) {$\bullet$};
\node[myblue] at (-1,-1.73) {$\bullet$};
\node[myblue] at (-.77,.64) {$\bullet$};
\node[myblue] at (-.77,-.64) {$\bullet$};
\node[myblue] at (-.17,.98) {$\bullet$};
\node[myblue] at (-.17,-.98) {$\bullet$};
\node[myblue] at (.94,.34) {$\bullet$};
\node[myblue] at (.94,-.34) {$\bullet$};
\draw[very thin, densely dashed, myblue] (2,0) to (.94,.34) to (-.17,.98) to (-1,1.73) to (-.77,.64) to (-.77,-.64) to (-1,-1.73) to (-.17,-.98) to (.94,-.34) to (2,0);
\node[myblue] at (2.9,2) {case $e=3$};
\node[myblue] at (2.9,1.5) {$\#(\vanset{3}{-}\{0,0\})=9$};
\node[myblue] at (2.9,1) {$t_e-1=9$};
\draw[ultra thick, mygreen, ->] (2,.2) to [out=110, in=0] (-.8,1.73);
\draw[ultra thick, mygreen, ->] (-1.1,1.63) to [out=225, in=135] (-1.1,-1.63);
\draw[ultra thick, mygreen, ->] (-.8,-1.73) to [out=0, in=250] (2,-.2);
\draw[ultra thick, myorange, ->] (.8,.37) to (-.62,.64);
\draw[ultra thick, myorange, ->] (-.72,.54) to (-.19,-.88);
\draw[ultra thick, myorange, ->] (-.1,-.88) to (.89,.28);
\draw[ultra thick, mypurple, ->] (-.19,.88) to (-.72,-.54);
\draw[ultra thick, mypurple, ->] (-.62,-.64) to (.8,-.37);
\draw[ultra thick, mypurple, ->] (.9,-.23) to (-.1,.88);
\end{tikzpicture}
\]
Here the arrows indicate the $\zeethree$-symmetry. 
\end{example}

We are now ready to provide a classification of simple 
$\subquo[e]$-representations.

\begin{theorem}\label{theorem:classification-simples}
The table \eqref{eq:the-simples} gives a complete, irredundant list of 
simple $\subquo[e]$-representa\-tions. 
Furthermore, the algebra $\subquo[e]$ is semisimple. 
\end{theorem}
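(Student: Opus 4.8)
The plan is to deduce both statements simultaneously from a dimension count and the Wedderburn--Artin theorem; the genuine representation-theoretic content has already been extracted. By \fullref{lemma:further-restrictions}, \fullref{lemma:three-dim-rep1} and \fullref{lemma:three-dim-rep2}, every module in the table \eqref{eq:the-simples} is a simple $\subquo[e]$-module and no two of them are isomorphic. The one additional thing I would record is that each listed module $S$ is \emph{absolutely} simple, so that $\mathrm{End}_{\subquo[e]}(S)=\Cv$: this is automatic for the one-dimensional ones, and for $\M_z$ with $z\neq 0$ it follows by re-reading the proof of \fullref{lemma:three-dim-rep2} (the part ruling out a proper nonzero submodule of $\M_z$), since that argument uses only the diagonalisability of $\Mt$ recorded in \eqref{eq:eigen-things} and the linear independence of the vectors $\Mg v,\Mo v,\Mp v$ attached to an eigenvector $v$, neither of which is affected by enlarging $\Cv$; hence $\M_z$ stays simple over every field extension of $\Cv$, and therefore $\mathrm{End}_{\subquo[e]}(\M_z)=\Cv$.

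Next I would carry out the bookkeeping. By \fullref{proposition:dimension}, $\dim_{\Cv}\subquo[e]=3t_e+1$ with $t_e=\tfrac{(e+1)(e+2)}{2}$. By \fullref{lemma:level-vanishing}, $\#\vanset{e}=t_e$; the $\zeethree$-action $(z,\overline{z})\mapsto(\zetaroot z,\zetaroot^{-1}\overline{z})$ on $\vanset{e}$ has $(0,0)$ as its only possible fixed point, and $(0,0)\in\vanset{e}$ precisely when $e\equiv 0\bmod 3$ (as observed just before \eqref{eq:the-simples}); therefore the three-dimensional $\M_z$ split into $t_e/3$ isomorphism classes when $e\not\equiv 0\bmod 3$ and $(t_e-1)/3$ classes when $e\equiv 0\bmod 3$, and in either case this count is an integer. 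Summing the squares of the dimensions of the modules in \eqref{eq:the-simples} now gives $1+\tfrac{t_e}{3}\cdot 9=3t_e+1$ when $e\not\equiv 0\bmod 3$ and $4+\tfrac{t_e-1}{3}\cdot 9=3t_e+1$ when $e\equiv 0\bmod 3$; in both cases the total equals $\dim_{\Cv}\subquo[e]$.

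Finally I would conclude via Wedderburn--Artin. For any finite-dimensional algebra $A$ over a field $k$ one has $\dim_k A\geq\dim_k\bigl(A/\mathrm{rad}(A)\bigr)=\sum_S(\dim_k S)^2/\dim_k\mathrm{End}_A(S)$, the sum running over all isomorphism classes of simple $A$-modules. Applied to $\subquo[e]$, discarding any hypothetical simple module not in \eqref{eq:the-simples} (its contribution is nonnegative) and using $\mathrm{End}_{\subquo[e]}(S)=\Cv$ for the listed ones, this yields $\dim_{\Cv}\subquo[e]\geq\dim_{\Cv}\bigl(\subquo[e]/\mathrm{rad}(\subquo[e])\bigr)\geq\sum_S(\dim_{\Cv}S)^2=\dim_{\Cv}\subquo[e]$, the last sum being over the modules in \eqref{eq:the-simples}. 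Every inequality in this chain is therefore an equality: from the first, $\mathrm{rad}(\subquo[e])=0$, so $\subquo[e]$ is semisimple; from the second, there is no simple module outside \eqref{eq:the-simples}, so the list is complete (irredundancy being the non-isomorphism statement already cited). The step most in need of care is the absolute-simplicity remark, because without $\mathrm{End}_{\subquo[e]}(S)=\Cv$ the Wedderburn estimate degrades to $\dim_{\Cv}\subquo[e]\geq\sum_S\dim_{\Cv}S$, which is far too crude here; but that remark is a one-liner, the submodule argument in \fullref{lemma:three-dim-rep2} being manifestly field-independent, and everything else is numerology already assembled in \fullref{proposition:dimension} and \fullref{lemma:level-vanishing}.
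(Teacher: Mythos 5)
Your proof is correct and follows essentially the same route as the paper's: both compare $\dim_{\Cv}\subquo[e]=3t_e+1$ with the sum of squared dimensions of the simples listed in \eqref{eq:the-simples} and deduce completeness and semisimplicity from the resulting equality. The one point where you go beyond the paper is the absolute-simplicity remark ensuring $\mathrm{End}_{\subquo[e]}(S)=\Cv$; the paper invokes the inequality $\dim A\geq\sum_S(\dim S)^2$ without comment, and over the non-algebraically-closed field $\Cv=\C(\vpar)$ this does require exactly the observation you supply (otherwise one only gets the weaker bound with each summand divided by $\dim\mathrm{End}(S)$). Your patch is valid, since the no-proper-submodule argument in \fullref{lemma:three-dim-rep2} uses only the eigenvectors of \eqref{eq:eigen-things}, which already lie in $\Cv^3$, so it persists under field extension.
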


\begin{proof}
By \fullref{proposition:dimension} 
the algebra $\subquo[e]$ is of dimension 
$3\tfrac{(e+1)(e+2)}{2}+1$. 
From the representation theory of finite-dimensional algebras we thus have
\begin{gather}\label{eq:ineq-semisimple}
\dim_{\Cv}\subquo[e]
=
3\tfrac{(e+1)(e+2)}{2}+1
=
3t_{e}+1 
\geq 
{\textstyle\sum_{\M}}\,(\dim_{\Cv}\M)^2,
\end{gather}
where the sum is taken over any set 
of pairwise non-isomorphic, simple 
$\subquo[e]$-representations $\M$. If equality holds in 
\eqref{eq:ineq-semisimple} for such a set, then that set 
is complete and $\subquo[e]$ is semisimple. 
\newline

\noindent\textit{Case $e\not\equiv 0\bmod 3$.} We 
use the data 
from \eqref{eq:the-simples} in 
\eqref{eq:ineq-semisimple} and obtain
\[
{\textstyle\sum_{\M}}\,(\dim_{\Cv}\M)^2
=
\neatfrac{t_e}{3}
\cdot
3^2
+
1\cdot 
1^2
=
3t_{e}+1
,
\] 
which shows both statements.
\newline

\noindent\textit{Case $e\equiv 0\bmod 3$.} 
Similarly, we compute
\[
{\textstyle\sum_{\M}}\,(\dim_{\Cv}\M)^2
=
\neatfrac{(t_e-1)}{3}
\cdot 3^2+4\cdot 1^2
=
3(t_e-1)+4=
3t_{e}+1
,
\] 
which again shows both statements.
\end{proof}

\subsection{Generalizing dihedral Hecke algebras}\label{subsec:dihedral-group}

We finish this section by listing some analogies to the dihedral case. 
The crucial link between the dihedral and the trihedral case is the following:
The $\sltwo$-version of the polynomial $\pxy{m,n}$ from \fullref{subsec:opolys}
is the Chebyshev polynomial $\pxy[\fu]{m}$ (normalized and of the second kind). 
Using the convention that the $\pxy[\fu]{m}$ are zero for negative subscripts, 
they satisfy the recursion relation  
\[
\pxy[\fu]{0}=1,
\quad
\pxy[\fu]{1}=\fu,
\quad
\fu\pxy[\fu]{m}=
\pxy[\fu]{m+1}+\pxy[\fu]{m-1}.
\] 
Here, $\fu$ corresponds to the 
fundamental representation of $\sltwo$. 
The analog of the discoid $\discoid$ from \eqref{eq:deltoid} 
is the interval $\deltoid_{2}=[-2,2]$, whose boundary is the 
pair of primitive, complex second roots of unity, multiplied by $2$. (Note the evident 
$\zeetwo$-symmetry of $\deltoid_{2}$.)

\begin{dihedral}\label{remark:dihedral-group1}
Let $\dihquo=\hecke(\typei[\infty])=\hecke(\typeat{1})$ denote the dihedral 
Hecke algebra of the infinite dihedral group, i.e. 
the Weyl group of affine type $\typea{1}$, 
and $\hecke(\typei)$ the dihedral Hecke algebra of dimension $2(e+2)$, 
which is of finite Coxeter type $\typei$.
The first analogy of our story to the dihedral case 
is provided by \fullref{lemma:quotient-of-affine}, the difference 
being that the trihedral Hecke algebra 
is a proper subalgebra of $\hecke$. 
The entries of the change-of-basis matrix from the 
(colored) KL basis to the Bott--Samelson basis of $\dihquo$ are precisely the 
coefficients of the polynomials $\pxy[\fu]{m}$, see for example \cite[Section 2.2]{El2}.  
\end{dihedral}

\begin{dihedral}\label{remark:dihedral-group-cells}
By \fullref{proposition:cells},
all non-trivial cells of $\subquo$ are infinite, and there 
are three non-trivial left and right 
cells, one for each $\tduc\in\Seset$, whose disjoint union 
forms the only non-trivial two-sided cell. This is
another analogy to the dihedral case: 
the algebra $\dihquo$ has two non-trivial left and right cells, one for each of its
Coxeter generators, whose disjoint union forms 
the only non-trivial two-sided cell.
\end{dihedral}

\begin{dihedral}\label{remark:dihedral-group2} 
Let $\dihquo[e]$ denote the small quotient of $\hecke(\typei)$, 
obtained by killing the top cell. \fullref{subsec:quotient-algebra-1} provides the 
third analogy: $\dihquo[e]$ can be obtained as a quotient of $\dihquo[\infty]$ 
by the ideal generated by the two elements related to the irreducible 
$\sltwo$-module $\Ll_{e+1}$ under the quantum Satake correspondence; the non-trivial 
left cells of $\dihquo[e]$ have order $e+1$ and $\dim_{\Cv}\dihquo[e]=2(e+2)-1=2e+3$.
\end{dihedral}

\begin{dihedral}\label{remark:dihedral-group3}
\fullref{theorem:classification-simples} 
provides another analogy to the dihedral case: 
$\dihquo[e]$ is semisimple over $\C$, and all of its 
simples are either one- or two-dimensional, 
with the number of their isomorphism classes 
depending on whether $e\equiv 0\bmod 2$ or 
$e\equiv 1\bmod 2$. 
Analogously to \eqref{eq:three-dims}, the two-dimensional simples 
can be defined by matrices whose off-diagonal, non-zero entries are the roots of the 
Chebyshev polynomials $\pxy[\fu]{e+1}$, i.e. 
its (colored) KL generators are send to
\scalebox{.8}{$\begin{psmallmatrix}
\vnumber{2} & z\\
0 & 0
\end{psmallmatrix}$} and 
\scalebox{.8}{$\begin{psmallmatrix}
0 & 0\\
\overline{z} & \vnumber{2}
\end{psmallmatrix}$},
where $z=\overline{z}$ is a root of $\pxy[\fu]{e+1}$.
\end{dihedral}
%
\section{Trihedral Soergel bimodules}\label{sec:A2-diagrams}

The purpose of this section is to categorify the 
trihedral Hecke algebras $\subquo$ and $\subquo[e]$ 
from \fullref{section:funny-algebra}, where $e$ still denotes the level. 
As before, we have collected some analogies to the 
dihedral case at the end of the section, cf. \fullref{subsec:dihedral-SB}

\subsection{Bott--Samelson bimodules for affine \texorpdfstring{$\typea{2}$}{A2}}\label{subsec:sbim}

First, we recall the diagrammatic $2$-category 
$\Adiag$ from \cite[Section 3.3]{El1}. We 
call it the ($2$-category of) 
singular Bott--Samelson bimodules of affine type $\typea{2}$.

\subsubsection{\texorpdfstring{$2$}{2}-categorical conventions}\label{subsubsec:2-cat-conv}

For generalities and terminology on $2$-categories, we 
refer for example to \cite{Le1} or \cite{McL1}.

\begin{convention}\label{convention:generated}
We use $2$-categories given by generators and relations.
This means that 
$1$-morphisms are obtained by 
compositions $\circ$ of the generating $1$-morphisms, and 
$2$-morphisms are obtained by horizontal $\hcomp$ and vertical 
$\vcomp$ compositions of the $2$-generators whenever this makes sense.
(In particular, the interchange law leads to additional relations in our 
$2$-categories, called height relations.)
Relations are supposed to hold between $2$-morphisms.
Details about such $2$-categories can be found e.g. in \cite[Section 2.2]{Ro1}.
\end{convention}

\begin{convention}\label{convention:reading}
We read $1$-morphisms from right to left, using the operator-notation, and 
$2$-morphisms from bottom to top and right to left. 
These conventions are illustrated in \fullref{definition:ssbim-free} below.
Note that we usually omit the $1$-morphisms in the pictures, 
and we will simplify diagrams by drawing them in a more 
topological fashion, using e.g. \fullref{example:more-gens}.
\end{convention}

\begin{convention}\label{convention:grading}
A ($\Z$-)graded $2$-category for us is a $2$-category 
whose $2$-hom spaces are ($\Z$-)graded, meaning that the 
$2$-generators have a given degree, the relations are homogeneous 
and the degree is additive under horizontal and vertical 
composition. Moreover, $1$-morphisms are formal shifts 
of generating $1$-morphisms, indicated by $\{a\}$ for $a\in\Z$, so 
there is a formal $\Z$-action on $1$-morphisms
such that 
$\{k\}(\morstuff{F}\{a\})=\morstuff{F}\{a+k\}$ for all $k\in\Z$. Finally, 
a $2$-morphism 
$\twomorstuff{f}\colon\morstuff{F}\{0\}\to\morstuff{G}\{0\}$, 
homogeneous of degree $d$, is of degree $d-a+b$ seen as 
a $2$-morphism $\twomorstuff{f}\colon\morstuff{F}\{a\}\to\morstuff{G}\{b\}$. 
For more information on 
such $2$-categories, see e.g. \cite[Section 5.1]{Lau}.
\end{convention}

\subsubsection{The definition of \texorpdfstring{$\Adiag$}{singSbim}}\label{subsubsec:def-adiag}

Let $\aformq=\C[\qpar,\qpar^{-1}]$ and $\Rbim=\aformq[\rootb,\rootr,\rooty]$, 
where $\rootb,\rootr,\rooty$ are formal variables.
We define an action of the affine Weyl group $\Wgroup$ from 
\fullref{subsec:weyl-group} on $\Rbim$:
\begin{gather}\label{eq:sl3-exotic-action}
\begin{tikzpicture}[baseline=(current bounding box.center)]
  \matrix (m) [matrix of math nodes, nodes in empty cells, row sep={0.5cm,between origins}, column
   sep={2.25cm,between origins}, text height=1.6ex, text depth=0.25ex, ampersand replacement=\&] {
\phantom{\bc}    \& \rootb \& \rootr \& \rooty\\
\bc \& -\rootb \& \rootb+\rootr \& \qpar^{-1}\rootb+\rooty\\
\rc \& \rootb+\rootr \& -\rootr \& \qpar\rootr+\rooty\\
\yc \& \rootb+\qpar\rooty \& \rootr+\qpar^{-1}\rooty \& -\rooty\\
    };
\draw[densely dashed] ($(m-2-1.north)+(-.15,0)$) edge ($(m-2-4.north)+(1.05,0)$);
\draw[densely dashed] ($(m-3-1.north)+(-.15,0)$) edge ($(m-3-4.north)+(1.05,0)$);
\draw[densely dashed] ($(m-4-1.north)+(-.15,0)$) edge ($(m-4-4.north)+(1.05,0)$);
\draw[densely dashed] ($(m-1-1.east)+(1.1,.2)$) edge ($(m-4-1.east)+(1.1,-.3)$);
\draw[densely dashed] ($(m-1-1.east)+(3,.2)$) edge ($(m-4-1.east)+(3,-.3)$);
\draw[densely dashed] ($(m-1-1.east)+(5.4,.2)$) edge ($(m-4-1.east)+(5.4,-.3)$);
\end{tikzpicture}
\end{gather}
One easily checks that \eqref{eq:sl3-exotic-action} is well-defined. This also 
gives rise to an action of the secondary colors on $\Rbim$
by using \eqref{eq:sec-Weyl} (recalling that e.g. $\gc=\{\bc,\yc\}$). Thus, we can define:

\begin{definition}\label{definition:thin-invariant-subrings}
For any $\duc\in\Prset$ and $\tduc\in\Seset$, let $\Rbim^{\duc}$ and $\Rbim^{\tduc}$ 
denote the subrings of $\Rbim$ consisting of all $\duc$-invariant and $\tduc$-invariant elements,
respectively.
\end{definition}

Recall that we always use $\tduc,\tdudc\in\Seset$ as secondary dummy colors, 
and we also use the primary dummy colors $\duc,\dudc\in\Prset$ from now on. 
Moreover, identifying our colors with proper subsets of $\Prset$, 
including the empty subset, we say that 
two of them are compatible if one is a subset of the 
other, e.g. as the colors connected by an edge below. 
\begin{gather}\label{eq:color-compatible}
\begin{tikzpicture}[baseline=(current bounding box.center), tinynodes]
  \matrix (m) [matrix of math nodes, nodes in empty cells, row sep=.02cm, column
  sep=.01cm, text height=1.6ex, text depth=0.25ex, ampersand replacement=\&] {
{\color{mygray}\scalebox{.99}{$p$}} \& \scalebox{.99}{$\gc$}\, \& {\color{mygray}\scalebox{.99}{$o$}} \\
\,\scalebox{.99}{$\bc$} \& {\color{mygray}\scalebox{.99}{$r$}} \& \scalebox{.99}{$\yc$} \\
\& \scalebox{.99}{$\wc$} \&  \\
    };
	\draw[thin] ($(m-2-1.north)+(0,-.2cm)$) to ($(m-1-2.south)+(-.05,.1cm)$);
	\draw[thin] ($(m-2-3.north)+(0,-.2cm)$) to ($(m-1-2.south)+(.05,.1cm)$);
	\draw[thin] ($(m-3-2.north)+(-.05,-.2cm)$) to ($(m-2-1.south)+(0,.1cm)$);
	\draw[thin] ($(m-3-2.north)+(.05,-.2cm)$) to ($(m-2-3.south)+(0,.1cm)$);
\end{tikzpicture}
,\quad\quad
\begin{tikzpicture}[baseline=(current bounding box.center), tinynodes]
  \matrix (m) [matrix of math nodes, nodes in empty cells, row sep=.02cm, column
  sep=.01cm, text height=1.6ex, text depth=0.25ex, ampersand replacement=\&] {
{\color{mygray}\scalebox{.99}{$p$}} \& {\color{mygray}\scalebox{.99}{$g$}} \& \scalebox{.99}{$\oc$} \\
{\color{mygray}\scalebox{.99}{$b$}} \& \scalebox{.99}{$\rc$} \& \scalebox{.99}{$\yc$} \\
\& \scalebox{.99}{$\wc$}\&  \\
    };
	\draw[thin] ($(m-2-3.north)+(.035,-.2cm)$) to ($(m-1-3.south)+(.035,.1cm)$);
	\draw[thin] ($(m-3-2.north)+(0,-.2cm)$) to ($(m-2-2.south)+(0,.1cm)$);
	\draw[thin] ($(m-3-2.north)+(.05,-.2cm)$) to ($(m-2-3.south)+(0,.1cm)$);
	\draw[thin] ($(m-2-2.north)+(.05,-.2cm)$) to ($(m-1-3.south)+(0,.1cm)$);
\end{tikzpicture}
,\quad\quad
\begin{tikzpicture}[baseline=(current bounding box.center), tinynodes]
  \matrix (m) [matrix of math nodes, nodes in empty cells, row sep=.02cm, column
  sep=.01cm, text height=1.6ex, text depth=0.25ex, ampersand replacement=\&] {
\scalebox{.99}{$\pc$} \& {\color{mygray}\scalebox{.99}{$g$}} \& {\color{mygray}\scalebox{.99}{$o$}} \\
\scalebox{.99}{$\bc$} \& \scalebox{.99}{$\rc$} \& {\color{mygray}\scalebox{.99}{$y$}} \\
\& \scalebox{.99}{$\wc$} \&  \\
    };
	\draw[thin] ($(m-2-1.north)+(-.035,-.2cm)$) to ($(m-1-1.south)+(-.035,.1cm)$);
	\draw[thin] ($(m-3-2.north)+(-.05,-.2cm)$) to ($(m-2-1.south)+(0,.1cm)$);
	\draw[thin] ($(m-3-2.north)+(0,-.2cm)$) to ($(m-2-2.south)+(0,.1cm)$);
	\draw[thin] ($(m-2-2.north)+(-.05,-.2cm)$) to ($(m-1-1.south)+(0,.1cm)$);
\end{tikzpicture}
\end{gather}

\begin{example}\label{example:colors-compatible}
The color $\bc$ is compatible with $\wc$, $\gc$ and $\pc$, but not with $\rc$, $\yc$ or $\oc$.
\end{example}

We will define the $2$-category of singular Soergel bimodules 
as a quotient of the following $2$-category, which we view as a free version 
of it.

\begin{definition}\label{definition:ssbim-free}
Let $\ADiag$ be the $2$-category defined as follows.
\medskip

\noindent\textit{\setword{`Objects of $\ADiag$'}{sbim-objects}.}
The objects are proper subsets of $\Prset=\{\bc,\yc,\rc\}$, 
including the empty subset $\wc$.  
The one-element subsets are identified with 
$\bc,\yc,\rc$, the two-element subsets are 
identified with $\gc,\oc,\pc$, using 
the color conventions from \fullref{subsec:our-color-code}.  
\medskip

\noindent\textit{\setword{`$1$-morphisms of $\ADiag$'}{sbim-morphisms}.}
By definition, there is one generating $1$-morphism 
for each pair of distinct compatible colors.
Namely, including all other 
compatible variations using the conventions from \eqref{eq:color-compatible} 
and writing e.g. $\bc\wc=\bc\circ\wc$ for short:
\begin{gather*}
\xy
(0,0)*{
\wc\bc\colon\wc\leftarrow\bc,
\quad
\bc\wc\colon\bc\leftarrow\wc,
\quad
\bc\gc\colon\bc\leftarrow\gc,
\quad
\yc\gc\colon\yc\leftarrow\gc,
\quad
\gc\bc\colon\gc\leftarrow\bc,
\quad
\gc\yc\colon\gc\leftarrow\yc,
\quad
\text{etc.}};
(0,-4.5)*{\text{{\tiny compatible as in \eqref{eq:color-compatible}}}};
\endxy
\end{gather*}
\smallskip

\noindent\textit{\setword{`$2$-morphisms of $\ADiag$'}{sbim-two-morphisms}.}
The $2$-morphisms are generated by 
two kinds of $2$-generators.
The first kind are
cups, caps and crossings given as follows.
\begin{gather}\label{eq:gens-sbim-1}
\xy
(0,0)*{
\begin{tikzpicture}[anchorbase, scale=.4, tinynodes]
	\draw[very thin, densely dotted, fill=white] (0,0) to [out=90, in=180] (1,1) to [out=0, in=90] (2,0) to (3,0) to (3,2) to (-1,2) to (-1,0) to (0,0);
	\fill[myblue, opacity=0.3] (0,0) to [out=90, in=180] (1,1) to [out=0, in=90] (2,0) to (0,0);
	\draw[bstrand, directed=.999] (0,0) to [out=90, in=180] (1,1) to [out=0, in=90] (2,0);
	\node at (-.5,-.375) {$\wc$};
	\node at (1,2.3) {$\wc$};
	\node at (1,-.375) {$\bc$};
	\node at (2.5,-.375) {$\wc$};
\end{tikzpicture}
\colon
\begin{matrix}
\wc
\\
\Uparrow
\\
\wc\bc\wc
\end{matrix}};
(-3.6,-8)*{\text{{\tiny degree $1$}}};
\endxy
,\quad\quad
\xy
(0,0)*{
\begin{tikzpicture}[anchorbase, scale=.4, tinynodes]
	\draw[very thin, densely dotted, fill=white] (0,0) to [out=90, in=180] (1,1) to [out=0, in=90] (2,0) to (0,0);
	\fill[myblue, opacity=0.3] (0,0) to [out=90, in=180] (1,1) to [out=0, in=90] (2,0) to (3,0) to (3,2) to (-1,2) to (-1,0) to (0,0);
	\draw[bstrand, directed=.999] (2,0) to [out=90, in=0] (1,1) to [out=180, in=90] (0,0);
	\node at (-.5,-.375) {$\bc$};
	\node at (1,2.3) {$\bc$};
	\node at (1,-.375) {$\wc$};
	\node at (2.5,-.375) {$\bc$};
\end{tikzpicture}};
(0,-8)*{\text{{\tiny degree $-1$}}};
\endxy
,\quad\quad
\xy
(0,0)*{
\begin{tikzpicture}[anchorbase, scale=.4, tinynodes]
	\fill[myorange, opacity=0.8] (0,2) to [out=270, in=180] (1,1) to [out=0, in=270] (2,2) to (0,2);
	\fill[myyellow, opacity=0.3] (0,2) to [out=270, in=180] (1,1) to [out=0, in=270] (2,2) to (3,2) to (3,0) to (-1,0) to (-1,2) to (0,2);
	\draw[rstrand, directed=.999] (2,2) to [out=270, in=0] (1,1) to [out=180, in=270] (0,2);
	\node at (-.5,2.3) {$\yc$};
	\node at (1,-.375) {$\yc$};
	\node at (1,2.3) {$\oc$};
	\node at (2.5,2.3) {$\yc$};
\end{tikzpicture}};
(0,-8)*{\text{{\tiny degree $2$}}};
\endxy
,\quad\quad
\xy
(0,0)*{
\begin{tikzpicture}[anchorbase, scale=.4, tinynodes]
	\fill[myorange, opacity=0.8] (0,2) to [out=270, in=180] (1,1) to [out=0, in=270] (2,2) to (3,2) to (3,0) to (-1,0) to (-1,2) to (0,2);
	\fill[myyellow, opacity=0.3] (0,2) to [out=270, in=180] (1,1) to [out=0, in=270] (2,2) to (0,2);
	\draw[rstrand, directed=.999] (0,2) to [out=270, in=180] (1,1) to [out=0, in=270] (2,2);
	\node at (-.5,2.3) {$\oc$};
	\node at (1,-.375) {$\oc$};
	\node at (1,2.3) {$\yc$};
	\node at (2.5,2.3) {$\oc$};
\end{tikzpicture}};
(0,-8)*{\text{{\tiny degree $-2$}}};
\endxy
,\quad\quad
\xy
(0,0)*{
\begin{tikzpicture}[anchorbase, scale=.4, tinynodes]
	\fill[mypurple, opacity=0.8] (3,2) to (2,2) to [out=270, in=0] (1,1) to [out=0, in=90] (2,0) to (3,0) to (3,2);
	\draw[very thin, densely dotted, fill=white] (-1,2) to (0,2) to [out=270, in=180] (1,1) to [out=180, in=90] (0,0) to (-1,0) to (-1,2);
	\fill[myblue, opacity=0.3] (1,1) to [out=0, in=90] (2,0) to (0,0) to [out=90, in=180] (1,1);
	\fill[myred, opacity=0.3] (1,1) to [out=0, in=270] (2,2) to (0,2) to [out=270, in=180] (1,1);
	\draw[rstrand, directed=.999] (2,0) to [out=90, in=0] (1,1) to [out=180, in=270] (0,2);
	\draw[bstrand, directed=.999] (0,0) to [out=90, in=180] (1,1) to [out=0, in=270] (2,2);
	\node at (-.5,2.3) {$\wc$};
	\node at (-.5,-.375) {$\wc$};
	\node at (1,2.3) {$\rc$};
	\node at (1,-.375) {$\bc$};
	\node at (2.5,2.3) {$\pc$};
	\node at (2.5,-.375) {$\pc$};
\end{tikzpicture}};
(0,-8)*{\text{{\tiny degree $0$}}};
\endxy
\end{gather}
(We frame $\wc$-colored regions for readability.)
The generators displayed in \eqref{eq:gens-sbim-1} are all generators up to
colored variations: each strand separates 
two regions colored by subsets of $\Prset$ that differ by a primary color, 
which is used to color that strand. 
The strands are oriented such that the region colored by 
the smaller subset of $\Prset$ lies to their left.

The second kind of $2$-generators are decorations 
of the regions by polynomials in $\Rbim$ that are invariant under the parabolic subgroup corresponding 
to the color of the region, i.e. 
\begin{gather}\label{eq:gens-sbim-2}
\begin{tikzpicture}[anchorbase, scale=.4, tinynodes]
	\draw[very thin, densely dotted, fill=white] (0,0) to (2,0) to (2,2) to (0,2) to (0,0);
	\node at (1,1) {$\polybox{p}$};
	\node at (1,-.5) {$p{\in}\Rbim^{\wc}{=}\Rbim$};
\end{tikzpicture}
,\quad
\begin{tikzpicture}[anchorbase, scale=.4, tinynodes]
	\fill[myblue, opacity=0.3] (0,0) to (2,0) to (2,2) to (0,2) to (0,0);
	\node at (1,1) {$\polybox{p}$};
	\node at (1,-.5) {$p\in\Rbim^{\bc}$};
\end{tikzpicture}
,\quad
\begin{tikzpicture}[anchorbase, scale=.4, tinynodes]
	\fill[myred, opacity=0.3] (0,0) to (2,0) to (2,2) to (0,2) to (0,0);
	\node at (1,1) {$\polybox{p}$};
	\node at (1,-.5) {$p\in\Rbim^{\rc}$};
\end{tikzpicture}
,\quad
\begin{tikzpicture}[anchorbase, scale=.4, tinynodes]
	\fill[myyellow, opacity=0.3] (0,0) to (2,0) to (2,2) to (0,2) to (0,0);
	\node at (1,1) {$\polybox{p}$};
	\node at (1,-.5) {$p\in\Rbim^{\yc}$};
\end{tikzpicture}
,\quad
\begin{tikzpicture}[anchorbase, scale=.4, tinynodes]
	\fill[mygreen, opacity=0.8] (0,0) to (2,0) to (2,2) to (0,2) to (0,0);
	\node at (1,1) {$\polybox{p}$};
	\node at (1,-.5) {$p\in\Rbim^{\gc}$};
\end{tikzpicture}
,\quad
\begin{tikzpicture}[anchorbase, scale=.4, tinynodes]
	\fill[myorange, opacity=0.8] (0,0) to (2,0) to (2,2) to (0,2) to (0,0);
	\node at (1,1) {$\polybox{p}$};
	\node at (1,-.5) {$p\in\Rbim^{\oc}$};
\end{tikzpicture}
,\quad
\begin{tikzpicture}[anchorbase, scale=.4, tinynodes]
	\fill[mypurple, opacity=0.8] (0,0) to (2,0) to (2,2) to (0,2) to (0,0);
	\node at (1,1) {$\polybox{p}$};
	\node at (1,-.5) {$p\in\Rbim^{\pc}$};
\end{tikzpicture}
\end{gather}
The polynomials are allowed to move around as long as they do not cross any strand.
\medskip

\noindent\textit{\setword{`Grading on $\ADiag$'}{sbim-grading}.}
We endow $\ADiag$ with the structure of a graded $2$-category 
by giving the generators from \eqref{eq:gens-sbim-1} and 
\eqref{eq:gens-sbim-2}
the following degree.

\begin{enumerate}[label=$\blacktriangleright$]

\setlength\itemsep{.15cm}

\item Clockwise cups and caps between $\wc$ and $\duc$ have degree $1$, while 
their anticlockwise counterparts have degree $-1$.

\item Clockwise cups and caps between $\duc$ and a compatible $\tduc$ have degree $2$, while 
their anticlockwise counterparts have degree $-2$.

\item Crossings are of degree $0$.

\item Homogeneous polynomials are graded by twice their polynomial degree, i.e. the 
formal variables $\rootb,\rootr,\rooty$ are of degree $2$.

\end{enumerate}
We have indicate some of these in \eqref{eq:gens-sbim-1}. 
\end{definition}

\begin{example}\label{example:2-cat-con-a}
In general, a $1$-morphism is a finite 
string of generating $1$-morphisms, which are 
indicated by their source and target, e.g. 
$\yc\oc\rc\pc\bc\wc\colon\yc\leftarrow\oc\leftarrow\rc\leftarrow\pc\leftarrow\bc\leftarrow\wc$. 
(By convention, we identify the objects $\duc,\tduc$ with the 
identity $1$-morphisms on them.)
Furthermore, 
\[
\begin{tikzpicture}[anchorbase, scale=.4, tinynodes]
	\draw[very thin, densely dotted, fill=white] (-6,0) to (-8,0) to (-8,2) to (-6,2) to (-6,0);
	\draw[very thin, densely dotted, fill=white] (-2,0) to (-4,0) to (-4,2) to (-2,2) to (-2,0);
	\draw[very thin, densely dotted, fill=white] (4,0) to (6,0) to (6,2) to (4,2) to (4,0);
	\fill[myyellow, opacity=0.3] (-4,0) to (-4,2) to (-6,2) to (-6,0) to (0-4,0);
	\fill[myred, opacity=0.3] (0,0) to (0,2) to (-2,2) to (-2,0) to (0,0);
	\fill[myblue, opacity=0.3] (2,0) to (4,0) to (4,2) to (2,2) to (2,0);
	\fill[myblue, opacity=0.3] (6,0) to (8,0) to (8,2) to (6,2) to (6,0);
	\fill[mygreen, opacity=0.8] (8,0) to (8,2) to (10,2) to (10,0) to (8,0);
	\fill[mypurple, opacity=0.8] (0,0) to (0,2) to (2,2) to (2,0) to (0,0);
	\draw[ystrand, directed=.999] (-6,0) to (-6,2);
	\draw[ystrand, directed=.999] (-4,2) to (-4,0);
	\draw[ystrand, directed=.999] (8,0) to (8,2);
	\draw[rstrand, directed=.999] (-2,0) to (-2,2);
	\draw[rstrand, directed=.999] (2,2) to (2,0);
	\draw[bstrand, directed=.999] (0,0) to (0,2);
	\draw[bstrand, directed=.999] (4,2) to (4,0);
	\draw[bstrand, directed=.999] (6,0) to (6,2);
\end{tikzpicture}
\]
is an example of the 
coloring of facets 
and strands.
\end{example}

\begin{example}\label{example:more-gens}
As usual, one can define sideways crossings, e.g.
\[
\xy
(0,0)*{
\begin{tikzpicture}[anchorbase, scale=.4, tinynodes]
	\draw[very thin, densely dotted, fill=white] (1,1) to [out=0, in=270] (2,2) to (0,2) to [out=270, in=180] (1,1);
	\fill[myred, opacity=0.3] (3,2) to (2,2) to [out=270, in=0] (1,1) to [out=0, in=90] (2,0) to (3,0) to (3,2);
	\fill[myblue, opacity=0.3] (-1,2) to (0,2) to [out=270, in=180] (1,1) to [out=180, in=90] (0,0) to (-1,0) to (-1,2);
	\fill[mypurple, opacity=0.8] (1,1) to [out=0, in=90] (2,0) to (0,0) to [out=90, in=180] (1,1);
	\draw[rstrand, directed=.999] (0,0) to [out=90, in=180] (1,1) to [out=0, in=270] (2,2);
	\draw[bstrand, directed=.999] (0,2) to [out=270, in=180] (1,1) to [out=0, in=90] (2,0);
\end{tikzpicture}
=
\begin{tikzpicture}[anchorbase, scale=.4, tinynodes]
	\draw[very thin, densely dotted, fill=white] (-2,4) to (-2,0) to [out=270, in=180] (-1,-1) to [out=0, in=270] (0,0) to [out=90, in=180] (1,1) to [out=180, in=270] (0,2) to (0,4) to (-2,4);
	\fill[myred, opacity=0.3] (5,-2) to (5,4) to (0,4) to (0,2) to [out=270, in=180] (1,1) to [out=0, in=270] (2,2) to [out=90, in=180] (3,3) to [out=0, in=90] (4,2) to (4,-2) to (5,-2);
	\fill[myblue, opacity=0.3] (-3,4) to (-2,4) to (-2,0) to [out=270, in=180] (-1,-1) to [out=0, in=270] (0,0) to [out=90, in=180] (1,1) to [out=0, in=90] (2,-2) to (-3,-2) to (-3,4);
	\fill[mypurple, opacity=0.8] (2,-2) to (2,0) to [out=90, in=0] (1,1) to [out=0, in=270] (2,2) to [out=90, in=180] (3,3) to [out=0, in=90] (4,2) to (4,-2) to (2,-2);
	\draw[rstrand, directed=.999] (2,0) to [out=90, in=0] (1,1) to [out=180, in=270] (0,2);
	\draw[rstrand] (2,0) to (2,-2);
	\draw[rstrand] (0,2) to (0,4);
	\draw[bstrand, directed=.999] (0,0) to [out=90, in=180] (1,1) to [out=0, in=270] (2,2);
	\draw[bstrand] (2,2) to [out=90, in=180] (3,3) to [out=0, in=90] (4,2) to (4,-2);
	\draw[bstrand] (-2,4) to (-2,0) to [out=270, in=180] (-1,-1) to [out=0, in=270] (0,0);
\end{tikzpicture}};
(0,-14.5)*{\text{{\tiny degree $1$}}};
\endxy
,\quad\quad
\xy
(0,0)*{
\begin{tikzpicture}[anchorbase, scale=.4, tinynodes]
	\draw[very thin, densely dotted, fill=white] (1,1) to [out=0, in=90] (2,0) to (0,0) to [out=90, in=180] (1,1);
	\fill[myred, opacity=0.3] (3,2) to (2,2) to [out=270, in=0] (1,1) to [out=0, in=90] (2,0) to (3,0) to (3,2);
	\fill[myblue, opacity=0.3] (-1,2) to (0,2) to [out=270, in=180] (1,1) to [out=180, in=90] (0,0) to (-1,0) to (-1,2);
	\fill[mypurple, opacity=0.8] (1,1) to [out=0, in=270] (2,2) to (0,2) to [out=270, in=180] (1,1);
	\draw[rstrand, directed=.999] (2,0) to [out=90, in=0] (1,1) to [out=180, in=270] (0,2);
	\draw[bstrand, directed=.999] (2,2) to [out=270, in=0] (1,1) to [out=180, in=90] (0,0);
\end{tikzpicture}
=
\begin{tikzpicture}[anchorbase, scale=.4, tinynodes]
	\draw[very thin, densely dotted, fill=white] (-2,-2) to (-2,0) to [out=90, in=180] (-1,1) to [out=180, in=270] (-2,2) to [out=90, in=0] (-3,3) to [out=180, in=90] (-4,2) to (-4,-2) to (-2,-2);
	\fill[myred, opacity=0.3] (3,4) to (2,4) to (2,0) to [out=270, in=0] (1,-1) to [out=180, in=270] (0,0) to [out=90, in=0] (-1,1) to [out=180, in=90] (-2,-2) to (3,-2) to (3,4);
	\fill[myblue, opacity=0.3] (-5,-2) to (-5,4) to (0,4) to (0,2) to [out=270, in=0] (-1,1) to [out=180, in=270] (-2,2) to [out=90, in=0] (-3,3) to [out=180, in=90] (-4,2) to (-4,-2) to (-5,-2);
	\fill[mypurple, opacity=0.8] (2,4) to (2,0) to [out=270, in=0] (1,-1) to [out=180, in=270] (0,0) to [out=90, in=0] (-1,1) to [out=0, in=270] (0,2) to (0,4) to (2,4);
	\draw[rstrand, directed=.999] (-2,0) to [out=90, in=180] (-1,1) to [out=0, in=270] (0,2);
	\draw[rstrand] (-2,0) to (-2,-2);
	\draw[rstrand] (0,2) to (0,4);
	\draw[bstrand, directed=.999] (0,0) to [out=90, in=0] (-1,1) to [out=180, in=270] (-2,2);
	\draw[bstrand] (-2,2) to [out=90, in=0] (-3,3) to [out=180, in=90] (-4,2) to (-4,-2);
	\draw[bstrand] (2,4) to (2,0) to [out=270, in=0] (1,-1) to [out=180, in=270] (0,0);
\end{tikzpicture}};
(0,-14.5)*{\text{{\tiny degree $1$}}};
\endxy
\]
Note that these are of degree $1$.
\end{example}

\begin{remark}\label{remark:q-sneaks-in}
The $2$-category $\ADiag$ depends on $\qpar$, since the quantum parameter is 
in the definition of the rings $\Rbim$, cf. \eqref{eq:sl3-exotic-action}.
\end{remark}

Before we can go on, we need some algebraic notions.

\subsubsection{An interlude on Frobenius extensions}\label{subsubsec:frob}

The relations of $\Adiag$ actually come from 
a cube of Frobenius extensions. 
(For details on Frobenius extensions see e.g. \cite{ESW}.)

\begin{definition}\label{definition:frob-extensions}
A (commutative) Frobenius extension is an extension of
commutative rings $\K^{\prime}\subset\K$ with 
$\K$ being a 
free $\K^{\prime}$-bimodule of finite rank, 
together with a $\K^{\prime}$-bilinear trace map
$\qDema\colon\K\to\K^{\prime}$ which  
gives rise to a non-degenerate bilinear pairing 
\[
\langle\cdot,\cdot\rangle
\colon
\somebasis\times\somebasis^{\fdual}
\to
\K^{\prime}.
\]
Moreover, for a Frobenius extension there exist
two $\K^{\prime}$-bases 
$\somebasis,\somebasis^{\fdual}$ of $\K$, 
such that for any $\algstuff{x}\in\somebasis$ there is precisely one 
element $\algstuff{x}^{\fdual}\in\somebasis^{\fdual}$ satisfying
\[
\langle \algstuff{x},\algstuff{x}^{\prime}\rangle=\qDema(\algstuff{x}\algstuff{x}^{\prime})
=
\delta_{\algstuff{x}^{\prime},\algstuff{x}^{\fdual}}.
\]
The elements $\algstuff{x}$ and 
$\algstuff{x}^{\fdual}$, respectively the bases $\somebasis$ and $\somebasis^{\fdual}$ 
are called dual to each other. 

The number of elements $\#\somebasis=\#\somebasis^{\fdual}$ is called the 
rank. 

Such an extension is called graded if $\K,\K^{\prime}$ are 
graded rings, $\K$ is graded as an  
$\K^{\prime}$-bimodule, $\somebasis,\somebasis^{\fdual}$ consist 
of homogeneous elements, and $\qDema$ is a homogeneous map.
\end{definition}

Note that the dual elements $\algstuff{x},\algstuff{x}^{\fdual}$ satisfy 
$\deg(\algstuff{x})+\deg(\algstuff{x}^{\fdual})=-\deg(\qDema)$.

\begin{definition}\label{definition:thin-demazure-action}
We let $\qDema_{\duc}\colon\Rbim\to\Rbim^{\duc}$ 
be defined via the formula
$\qDema_{\duc}(f)=\sneatfrac{f-\duc(f)}{\rootdu}$.
We call these the primary Demazure operators.
Similarly, we define 
\begin{gather*}
\begin{gathered}
\qDema_{\gc}^{\bc}=\qpar\qDema_{\bc}\qDema_{\yc}
\colon\Rbim^{\bc}\to\Rbim^{\gc},
\quad\quad
\qDema_{\gc}^{\yc}=\qDema_{\yc}\qDema_{\bc}
\colon\Rbim^{\yc}\to\Rbim^{\gc},
\quad\quad
\qDema_{\oc}^{\rc}=\qpar^{-1}\qDema_{\rc}\qDema_{\yc}
\colon\Rbim^{\rc}\to\Rbim^{\oc},
\\
\qDema_{\oc}^{\yc}=\qDema_{\yc}\qDema_{\rc}
\colon\Rbim^{\yc}\to\Rbim^{\oc},
\quad\quad
\qDema_{\pc}^{\bc}=\qDema_{\bc}\qDema_{\rc}
\colon\Rbim^{\bc}\to\Rbim^{\pc},
\quad\quad
\qDema_{\pc}^{\rc}=\qDema_{\rc}\qDema_{\bc}
\colon\Rbim^{\rc}\to\Rbim^{\pc},
\end{gathered}
\end{gather*}
which we call the mixed Demazure operators.
Finally, we define
\begin{gather*}
\begin{gathered}
\qDema_{\gc}=\qpar\qDema_{\bc}\qDema_{\yc}\qDema_{\bc}
=\qDema_{\yc}\qDema_{\bc}\qDema_{\yc}
\colon\Rbim\to\Rbim^{\gc},
\quad
\qDema_{\oc}=\qpar^{-1}\qDema_{\rc}\qDema_{\yc}\qDema_{\rc}
=\qDema_{\yc}\qDema_{\rc}\qDema_{\yc}
\colon\Rbim\to\Rbim^{\oc},
\\
\qDema_{\pc}=\qDema_{\bc}\qDema_{\rc}\qDema_{\bc}
=\qDema_{\rc}\qDema_{\bc}\qDema_{\rc}
\colon\Rbim\to\Rbim^{\pc},
\end{gathered}
\end{gather*}
which we call the secondary Demazure operators.
\end{definition}

Note that the action on the linear terms
determines the whole action since 
we have the twisted Leibniz rule 
$\qDema_{\duc}(fg)=\qDema_{\duc}(f)g+\duc(f)\qDema_{\duc}(g)$.
Moreover, a straightforward calculation (cf. \cite[(3.9)]{El1}) yields
\begin{gather*}
\qpar\qDema_{\bc}\qDema_{\yc}\qDema_{\bc}
=\qDema_{\yc}\qDema_{\bc}\qDema_{\yc},
\quad\quad
\qpar^{-1}\qDema_{\rc}\qDema_{\yc}\qDema_{\rc}
=\qDema_{\yc}\qDema_{\rc}\qDema_{\yc},
\quad\quad
\qDema_{\bc}\qDema_{\rc}\qDema_{\bc}
=\qDema_{\rc}\qDema_{\bc}\qDema_{\rc},
\end{gather*}
showing that the mixed Demazure operators are well-defined. 
(The careful reader might additionally want 
to check that the primary Demazure operators 
are well-defined by checking that $\qDema_{\duc}(f)$ is a 
$\duc$-invariant polynomial.)

\begin{remark}\label{remark:grading}
Recalling that the root variables are of degree $2$, 
one easily observes that the 
primary, mixed and secondary Demazure operators 
are homogeneous of degree $-2,-4$ and $-6$, respectively.
\end{remark}

\begin{lemma}\label{lemma:frob-extensions}
We have Frobenius extensions
\[
\qDema_{\duc}\colon\Rbim\to\Rbim^{\duc},
\quad\quad
\qDema_{\tduc}^{\duc}\colon\Rbim^{\duc}\to\Rbim^{\tduc},
\quad\quad
\qDema_{\tduc}
\colon\Rbim\to\Rbim^{\tduc},
\]
of rank $2,3$ and $6$, respectively,
which are compatible in the sense that $\qDema_{\duc}=\qDema_{\tduc}^{\duc}\qDema_{\tduc}$.
\end{lemma}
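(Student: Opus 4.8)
The plan is to establish each of the three Frobenius extensions separately and then verify the compatibility relation. The underlying algebraic input is the explicit $\Wgroup$-action from \eqref{eq:sl3-exotic-action}, which restricts to actions of the parabolic subgroups $\Wgroup_{\tduc}$ (each isomorphic to the type $\typeA_2$ Weyl group $S_3$) on $\Rbim$. First I would recall that $\Rbim=\aformq[\rootb,\rootr,\rooty]$ is a polynomial ring, and that the quantum twist in \eqref{eq:sl3-exotic-action} does not change the fact that the subrings of invariants have the expected size: concretely, $\Rbim$ is free of rank $2$ over $\Rbim^{\duc}$ for a primary color $\duc$, of rank $3$ over $\Rbim^{\tduc}$ viewed from the intermediate $\Rbim^{\duc}$, and of rank $6$ over $\Rbim^{\tduc}$. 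This is the $q$-deformed analog of the classical statement that $\R[\text{roots}]$ is free of rank $|W|$ over its $W$-invariants; one can see it by noting that specializing $\qpar=1$ recovers the classical situation and that freeness/rank is detected after this flat base change, or alternatively by exhibiting explicit bases (e.g. $\{1,\rootdu\}$ over $\Rbim^{\duc}$).

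Next I would construct the trace maps. For the primary extension, $\qDema_{\duc}\colon\Rbim\to\Rbim^{\duc}$ defined by $\qDema_{\duc}(f)=\sneatfrac{f-\duc(f)}{\rootdu}$ is the required $\Rbim^{\duc}$-bilinear trace: one checks it lands in $\Rbim^{\duc}$ (flagged as a routine check in the excerpt, using the twisted Leibniz rule), is $\Rbim^{\duc}$-linear, and that the pairing $\langle x,y\rangle=\qDema_{\duc}(xy)$ is non-degenerate with dual bases, say $\{1,\tfrac{\rootdu}{2}\}$ versus $\{\tfrac{\rootdu}{2},1\}$ up to the appropriate normalization, or more robustly a triangular pair of bases in which the Gram matrix is unitriangular. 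For the middle extension I would use $\qDema_{\tduc}^{\duc}$ as defined in \fullref{definition:thin-demazure-action}; the key point is that the composite formulas like $\qpar\qDema_{\bc}\qDema_{\yc}\qDema_{\bc}=\qDema_{\yc}\qDema_{\bc}\qDema_{\yc}$ (quoted from \cite[(3.9)]{El1} in the excerpt) guarantee these are well-defined independent of the chosen reduced word, hence descend to genuine maps on invariant subrings, and one produces dual bases of rank $3$ (the analog of $\{1,\rootdu,\rootdu^2\}$-type bases for the $S_3/S_2$ coset). For the secondary extension one similarly uses $\qDema_{\tduc}$, a degree-$(-6)$ map by \fullref{remark:grading}, with dual bases of rank $6$; here a clean approach is to take products of the two smaller dual bases rather than writing out all six by hand.

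Finally, the compatibility $\qDema_{\duc}=\qDema_{\tduc}^{\duc}\qDema_{\tduc}$ follows directly by composing the defining formulas: e.g. $\qDema_{\gc}=\qDema_{\tduc}^{\gc}\qDema_{\tduc}$ reduces, after unwinding $\qDema_{\gc}^{\bc}=\qpar\qDema_{\bc}\qDema_{\yc}$ and $\qDema_{\gc}=\qpar\qDema_{\bc}\qDema_{\yc}\qDema_{\bc}=\qDema_{\yc}\qDema_{\bc}\qDema_{\yc}$, to an identity among iterated primary Demazure operators that is exactly one of the word-independence relations already recorded. Equivalently, and perhaps cleaner, one checks that both sides agree as $\Rbim^{\tduc}$-bilinear traces by comparing their values on a generating set, using that a Frobenius trace is determined up to a unit by the extension together with the grading (the top-degree component of $\qDema$ is forced), and then matching normalizations via the $\qpar$-powers built into \fullref{definition:thin-demazure-action}.

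I expect the main obstacle to be the bookkeeping around the quantum parameter $\qpar$: the twists in \eqref{eq:sl3-exotic-action} and the $\qpar^{\pm1}$ factors in the mixed Demazure operators must be tracked carefully to ensure the dual bases pair correctly and that the trace maps are genuinely well-defined on invariants (not merely on $\Rbim$). Producing a clean, explicitly checkable pair of dual bases for the rank-$6$ extension — rather than an existence argument — is the place where a direct computation is unavoidable, though factoring it through the rank-$2$ and rank-$3$ cases via the compatibility makes this manageable. Everything else is either a flat-base-change argument to $\qpar=1$ (reducing to the classical theory of coinvariant algebras of $S_3$) or a mechanical verification of the stated Demazure identities.
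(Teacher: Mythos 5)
Your approach — exhibiting explicit dual bases for each of the three extensions, building the rank-$6$ ones as products of the rank-$2$ and rank-$3$ ones, and reading off the compatibility from the defining formulas for the Demazure operators together with the word-independence identities — is exactly what the paper intends: its proof consists of the single remark that one can compute explicit dual bases (noting that $2$ and $3$ must be invertible) and then omits the calculations. The only soft spot is your alternative "specialize at $\qpar=1$" argument (that specialization is a quotient, not a flat base change), but your primary route via explicit bases such as $\{1,\rootdu\}$ over $\Rbim^{\duc}$ is the correct and intended one.
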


\begin{proof}
One can prove this lemma by computing explicit dual bases. 
(Note that this requires $2$ and $3$ to be invertible.)
We do not need them here and omit the calculations.
\end{proof}

\begin{definition}\label{definition:comult-elements}
Choose any pairs of dual bases $\dubasis,\dubasisd$ of 
$\qDema_{\duc}\colon \Rbim\to \Rbim^{\duc}$, $\dubcbasis,\dubcbasisd$ 
of $\qDema^{\duc}_{\tduc}\colon \Rbim^{\duc}\to \Rbim^{\tduc}$ and $\dutbasis,\dutbasisd$ of $\qDema^{\tduc} 
\colon\Rbim\to\Rbim^{\tduc}$. Let
\begin{gather}\label{eq:comult-elements}
\Frobel{\duc}{\phantom{\duc}}={\textstyle \sum_{a\in \dubasis}} a\otimes a^{\fdual},
\quad\quad
\Frobel{\tduc}{\duc}={\textstyle \sum_{a\in \dubcbasis}} a\otimes a^{\fdual},
\quad\quad
\Frobel{\tduc}{\phantom{\duc}}={\textstyle \sum_{a\in \dutbasis}} a\otimes a^{\fdual},
\end{gather}
where $a^{\fdual}$ denotes the basis element dual to $a$.
\end{definition}

Note that the elements $\Frobel{\placeholder}{\placeholder}$ are 
well-defined, i.e. do not depend on the choice 
of dual bases (see e.g. \cite[Section 2.4]{El1}).

\begin{definition}\label{definition:frob-elements}
We define the following elements $\frobel{\placeholder}{\placeholder}$ in $\Rbim$.
\begin{gather}\label{eq:frob-elements}
\begin{gathered}
\begin{tikzpicture}[baseline=(current bounding box.center)]
  \matrix (m) [matrix of math nodes, nodes in empty cells, row sep={0.5cm,between origins}, column
   sep={2.25cm,between origins}, text height=1.6ex, text depth=0.25ex, ampersand replacement=\&] {
\phantom{\bc}    \& \bc,\yc \& \bc \& \yc \& \wc\\
\gc \& \qpar^{-1}\rootb+\rooty \& \rooty\frobel{\gc}{\bc,\yc} \& \rootb\frobel{\gc}{\bc,\yc} 
\& \rootb\rooty\frobel{\gc}{\bc,\yc}
\\
    };
\draw[densely dashed] ($(m-2-1.north)+(-.15,0)$) edge ($(m-2-5.north)+(1.05,0)$);
\draw[densely dashed] ($(m-1-1.east)+(1.0,.2)$) edge ($(m-2-1.east)+(1.0,-.3)$);
\draw[densely dashed] ($(m-1-1.east)+(3.2,.2)$) edge ($(m-2-1.east)+(3.2,-.3)$);
\draw[densely dashed] ($(m-1-1.east)+(5.3,.2)$) edge ($(m-2-1.east)+(5.3,-.3)$);
\draw[densely dashed] ($(m-1-1.east)+(7.7,.2)$) edge ($(m-2-1.east)+(7.7,-.3)$);
\end{tikzpicture}
\\
\begin{tikzpicture}[baseline=(current bounding box.center)]
  \matrix (m) [matrix of math nodes, nodes in empty cells, row sep={0.5cm,between origins}, column
   sep={2.25cm,between origins}, text height=1.6ex, text depth=0.25ex, ampersand replacement=\&] {
\phantom{\bc}    \& \rc,\yc \& \rc \& \yc \& \wc\\
\oc \& \qpar\rootr+\rooty \& \rooty\frobel{\oc}{\rc,\yc} \& \rootr\frobel{\oc}{\rc,\yc} 
\& \rootb\rooty\frobel{\oc}{\rc,\yc}
\\
    };
\draw[densely dashed] ($(m-2-1.north)+(-.15,0)$) edge ($(m-2-5.north)+(1.05,0)$);
\draw[densely dashed] ($(m-1-1.east)+(1.0,.2)$) edge ($(m-2-1.east)+(1.0,-.3)$);
\draw[densely dashed] ($(m-1-1.east)+(3.2,.2)$) edge ($(m-2-1.east)+(3.2,-.3)$);
\draw[densely dashed] ($(m-1-1.east)+(5.3,.2)$) edge ($(m-2-1.east)+(5.3,-.3)$);
\draw[densely dashed] ($(m-1-1.east)+(7.7,.2)$) edge ($(m-2-1.east)+(7.7,-.3)$);
\end{tikzpicture}
\\
\begin{tikzpicture}[baseline=(current bounding box.center)]
  \matrix (m) [matrix of math nodes, nodes in empty cells, row sep={0.5cm,between origins}, column
   sep={2.25cm,between origins}, text height=1.6ex, text depth=0.25ex, ampersand replacement=\&] {
\phantom{\bc}    \& \bc,\rc \& \bc \& \rc \& \wc\\
\pc \& \rootb+\rootr \& \rootr\frobel{\pc}{\bc,\rc} \& \rootb\frobel{\pc}{\bc,\rc} 
\& \rootb\rootr\frobel{\pc}{\bc,\rc}
\\
    };
\draw[densely dashed] ($(m-2-1.north)+(-.15,0)$) edge ($(m-2-5.north)+(1.05,0)$);
\draw[densely dashed] ($(m-1-1.east)+(1.0,.2)$) edge ($(m-2-1.east)+(1.0,-.3)$);
\draw[densely dashed] ($(m-1-1.east)+(3.2,.2)$) edge ($(m-2-1.east)+(3.2,-.3)$);
\draw[densely dashed] ($(m-1-1.east)+(5.3,.2)$) edge ($(m-2-1.east)+(5.3,-.3)$);
\draw[densely dashed] ($(m-1-1.east)+(7.7,.2)$) edge ($(m-2-1.east)+(7.7,-.3)$);
\end{tikzpicture}
\end{gathered}
\end{gather}
This is to be read as e.g. 
$\frobel{\gc}{\bc,\yc}=\qpar^{-1}\rootb+\rooty$
and 
$\frobel{\gc}{\wc}=\frobel{\gc}{\phantom{\wc}}=\rootb\rooty(\qpar^{-1}\rootb+\rooty)$ etc.
\end{definition}

\subsubsection{The continuation of definition of \texorpdfstring{$\Adiag$}{singSbim}}\label{subsubsec:def-adiag-second}

\begin{definition}\label{definition:ssbim}
Let $\Adiag$ be the $2$-quotient of the additive, $\aformq$-linear 
closure of $\ADiag$ defined as follows.
\medskip

\noindent\textit{\setword{`Relations of $\Adiag$'}{sbim-rel}.}
(We only give the relations for one choice of compatible colors and comment 
on the others choices, where `Var.: comp. color.' means that the analogous relation 
holds for other compatible colorings in the sense of \eqref{eq:color-compatible}.)

First, polynomial multiplication, i.e. polynomial decorations 
on a facet multiply, and isotopy relations:
\begin{gather}\label{eq:iso-rel}
\xy
(0,0)*{
\begin{tikzpicture}[anchorbase, scale=.4, tinynodes]
	\fill[myyellow, opacity=0.3] (-1,0) to (-1,1.5) to [out=90, in=180] (-.5,2) to [out=0, in=90] (0,1.5) to [out=270, in=180] (.5,1) to [out=0, in=270] (1,1.5) to (1,3) to (-2,3) to (-2,0) to (-1,0);
	\fill[mygreen, opacity=0.8] (-1,0) to (-1,1.5) to [out=90, in=180] (-.5,2) to [out=0, in=90] (0,1.5) to [out=270, in=180] (.5,1) to [out=0, in=270] (1,1.5) to (1,3) to (2,3) to (2,0) to (-1,0);
	\draw[bstrand] (-1,0) to (-1,1.5) to [out=90, in=180] (-.5,2) to [out=0, in=90] (0,1.5) to [out=270, in=180] (.5,1) to [out=0, in=270] (1,1.5) to (1,3);
	\draw[bstrand, directed=.5] (0,1.4) to (0,1.39);
\end{tikzpicture}
=
\begin{tikzpicture}[anchorbase, scale=.4, tinynodes]
	\fill[myyellow, opacity=0.3] (0,0) to (0,3) to (-1,3) to (-1,0) to (0,0);
	\fill[mygreen, opacity=0.8] (0,0) to (0,3) to (1,3) to (1,0) to (0,0);
	\draw[bstrand, directed=.55] (0,0) to (0,3);
\end{tikzpicture}
=
\begin{tikzpicture}[anchorbase, scale=.4, tinynodes]
	\fill[myyellow, opacity=0.3] (1,0) to (1,1.5) to [out=90, in=0] (.5,2) to [out=180, in=90] (0,1.5) to [out=270, in=0] (-.5,1) to [out=180, in=270] (-1,1.5) to (-1,3) to (-2,3) to (-2,0) to (1,0);
	\fill[mygreen, opacity=0.8] (1,0) to (1,1.5) to [out=90, in=0] (.5,2) to [out=180, in=90] (0,1.5) to [out=270, in=0] (-.5,1) to [out=180, in=270] (-1,1.5) to (-1,3) to (2,3) to (2,0) to (1,0);
	\draw[bstrand] (1,0) to (1,1.5) to [out=90, in=0] (.5,2) to [out=180, in=90] (0,1.5) to [out=270, in=0] (-.5,1) to [out=180, in=270] (-1,1.5) to (-1,3);
	\draw[bstrand, directed=.5] (0,1.4) to (0,1.39);
\end{tikzpicture}
,\;\;\;\;
\begin{tikzpicture}[anchorbase, scale=.4, tinynodes]
	\draw[very thin, densely dotted, fill=white] (-2,2) to [out=90, in=180] (0,3.5) to [out=0, in=90] (2,2) to (2,-2) to (2.5,-2) to (2.5,4) to (-3,4) to (-3,0) to [out=270, in=180] (-2.5,-.5) to [out=0, in=270] (-2,0) to [out=90, in=180] (-1,1) to [out=180, in=270] (-2,2);
	\fill[myorange, opacity=0.8] (0,0) to [out=270, in=0] (-2,-1.5) to [out=180, in=270] (-4,0) to (-4,4) to (-4.5,4) to (-4.5,-2) to (1,-2) to (1,2) to [out=90, in=0] (.5,2.5) to [out=180, in=90] (0,2) to [out=270, in=0] (-1,1) to [out=0, in=90] (0,0);
	\fill[myyellow, opacity=0.3] (0,0) to [out=270, in=0] (-2,-1.5) to [out=180, in=270] (-4,0) to (-4,4) to (-3,4) to (-3,0) to [out=270, in=180] (-2.5,-.5) to [out=0, in=270] (-2,0) to [out=90, in=180] (-1,1) to [out=0, in=90] (0,0);
	\fill[myred, opacity=0.3] (-2,2) to [out=90, in=180] (0,3.5) to [out=0, in=90] (2,2) to (2,-2) to (1,-2) to (1,2) to [out=90, in=0] (.5,2.5) to [out=180, in=90] (0,2) to [out=270, in=0] (-1,1) to [out=180, in=270] (-2,2);
	\draw[ystrand, directed=.999] (-2,0) to [out=90, in=180] (-1,1) to [out=0, in=270] (0,2);
	\draw[ystrand] (-2,0) to [out=270, in=0] (-2.5,-.5) to [out=180, in=270] (-3,0) to (-3,4);
	\draw[ystrand] (0,2) to [out=90, in=180] (.5,2.5) to [out=0, in=90] (1,2) to (1,-2);
	\draw[rstrand, directed=.999] (0,0) to [out=90, in=0] (-1,1) to [out=180, in=270] (-2,2);
	\draw[rstrand] (0,0) to [out=270, in=0] (-2,-1.5) to [out=180, in=270] (-4,0) to (-4,4);
	\draw[rstrand] (-2,2) to [out=90, in=180] (0,3.5) to [out=0, in=90] (2,2) to (2,-2);
\end{tikzpicture}
=
\begin{tikzpicture}[anchorbase, scale=.4, tinynodes]
	\draw[very thin, densely dotted, fill=white] (3,2) to (2,2) to [out=270, in=0] (1,1) to [out=0, in=90] (2,0) to (3,0) to (3,2);
	\fill[myorange, opacity=0.8] (-1,2) to (0,2) to [out=270, in=180] (1,1) to [out=180, in=90] (0,0) to (-1,0) to (-1,2);
	\fill[myyellow, opacity=0.3] (1,1) to [out=0, in=270] (2,2) to (0,2) to [out=270, in=180] (1,1);
	\fill[myred, opacity=0.3] (1,1) to [out=0, in=90] (2,0) to (0,0) to [out=90, in=180] (1,1);
	\draw[ystrand, directed=.999] (2,2) to [out=270, in=0] (1,1) to [out=180, in=90] (0,0);
	\draw[rstrand, directed=.999] (0,2) to [out=270, in=180] (1,1) to [out=0, in=90] (2,0);
\end{tikzpicture}
=
\begin{tikzpicture}[anchorbase, scale=.4, tinynodes]
	\draw[very thin, densely dotted, fill=white] (0,0) to [out=270, in=180] (2,-1.5) to [out=0, in=270] (4,0) to (4,4) to (4.5,4) to (4.5,-2) to (-1,-2) to (-1,2) to [out=90, in=180] (-.5,2.5) to [out=0, in=90] (0,2) to [out=270, in=180] (1,1) to [out=180, in=90] (0,0);
	\fill[myorange, opacity=0.8] (2,2) to [out=90, in=0] (0,3.5) to [out=180, in=90] (-2,2) to (-2,-2) to (-2.5,-2) to (-2.5,4) to (3,4) to (3,0) to [out=270, in=0] (2.5,-.5) to [out=180, in=270] (2,0) to [out=90, in=0] (1,1) to [out=0, in=270] (2,2);
	\fill[myyellow, opacity=0.3] (0,0) to [out=270, in=180] (2,-1.5) to [out=0, in=270] (4,0) to (4,4) to (3,4) to (3,0) to [out=270, in=0] (2.5,-.5) to [out=180, in=270] (2,0) to [out=90, in=0] (1,1) to [out=180, in=90] (0,0);
	\fill[myred, opacity=0.3] (2,2) to [out=90, in=0] (0,3.5) to [out=180, in=90] (-2,2) to (-2,-2) to (-1,-2) to (-1,2) to [out=90, in=180] (-.5,2.5) to [out=0, in=90] (0,2) to [out=270, in=180] (1,1) to [out=0, in=270] (2,2);
	\draw[ystrand, directed=.999] (0,0) to [out=90, in=180] (1,1) to [out=0, in=270] (2,2);
	\draw[ystrand] (0,0) to [out=270, in=180] (2,-1.5) to [out=0, in=270] (4,0) to (4,4);
	\draw[ystrand] (2,2) to [out=90, in=0] (0,3.5) to [out=180, in=90] (-2,2) to (-2,-2);
	\draw[rstrand, directed=.999] (2,0) to [out=90, in=0] (1,1) to [out=180, in=270] (0,2);
	\draw[rstrand] (2,0) to [out=270, in=180] (2.5,-.5) to [out=0, in=270] (3,0) to (3,4);
	\draw[rstrand] (0,2) to [out=90, in=0] (-.5,2.5) to [out=180, in=90] (-1,2) to (-1,-2);
\end{tikzpicture}
};
(-10,-14.5)*{\text{{\tiny Var.: comp. color.}}};
\endxy
\end{gather}

Then various relations involving circles, called 
circle removals:
\\
\noindent\begin{tabularx}{0.99\textwidth}{XXX}
\begin{equation}\hspace{-9.5cm}\label{eq:circle-first}
\xy
(0,0)*{
\begin{tikzpicture}[anchorbase, scale=.4, tinynodes]
	\draw[very thin, densely dotted, fill=white] (-.5,-1.5) to (-.5,1.5) to (2.5,1.5) to (2.5,-1.5) to (-.5,-1.5);
	\fill[myblue, opacity=0.3] (0,0) to [out=90, in=180] (1,1) to [out=0, in=90] (2,0) to [out=270, in=0] (1,-1) to [out=180, in=270] (0,0);
	\draw[bstrand, directed=.999] (0,0) to [out=90, in=180] (1,1) to [out=0, in=90] (2,0);
	\draw[bstrand] (0,0) to [out=270, in=180] (1,-1) to [out=0, in=270] (2,0);
\end{tikzpicture}
=
\begin{tikzpicture}[anchorbase, scale=.4, tinynodes]
	\draw[very thin, densely dotted, fill=white] (-.5,-1.5) to (-.5,1.5) to (2.5,1.5) to (2.5,-1.5) to (-.5,-1.5);
	\node at (1,0) {$\polybox{\rootb}$};
\end{tikzpicture}};
(0,-8)*{\text{{\tiny Var.: comp. color.,}}};
(0,-10.5)*{\text{{\tiny using 
$\rootr$ or $\rooty$.}}};
\endxy
\end{equation} &
\begin{equation}\hspace{-9.5cm}\label{eq:circle-secondary}
\xy
(0,0)*{
\begin{tikzpicture}[anchorbase, scale=.4, tinynodes]
	\fill[myblue, opacity=0.3] (-.5,-1.5) to (-.5,1.5) to (2.5,1.5) to (2.5,-1.5) to (-.5,-1.5);
	\fill[white] (0,0) to [out=90, in=180] (1,1) to [out=0, in=90] (2,0) to [out=270, in=0] (1,-1) to [out=180, in=270] (0,0);
	\fill[mypurple, opacity=0.8] (0,0) to [out=90, in=180] (1,1) to [out=0, in=90] (2,0) to [out=270, in=0] (1,-1) to [out=180, in=270] (0,0);
	\draw[rstrand, directed=.999] (0,0) to [out=90, in=180] (1,1) to [out=0, in=90] (2,0);
	\draw[rstrand] (0,0) to [out=270, in=180] (1,-1) to [out=0, in=270] (2,0);
\end{tikzpicture}
=
\begin{tikzpicture}[anchorbase, scale=.4, tinynodes]
	\fill[myblue, opacity=0.3] (-.5,-1.5) to (-.5,1.5) to (2.5,1.5) to (2.5,-1.5) to (-.5,-1.5);
	\node at (1,0) {$\polybox{\frobel{\pc}{\bc}}$};
\end{tikzpicture}};
(0,-8)*{\text{{\tiny Var.: comp. color.,}}};
(0,-10.5)*{\text{{\tiny using $\frobel{\tduc}{\duc}$.}}};
\endxy
\end{equation} &
\begin{equation}\hspace{-9.5cm}\label{eq:circle-primary}
\xy
(0,0)*{
\begin{tikzpicture}[anchorbase, scale=.4, tinynodes]
	\fill[mygreen, opacity=0.8] (-.5,-1.5) to (-.5,1.5) to (2.5,1.5) to (2.5,-1.5) to (-.5,-1.5);
	\fill[white] (0,0) to [out=90, in=180] (1,1) to [out=0, in=90] (2,0) to [out=270, in=0] (1,-1) to [out=180, in=270] (0,0);
	\fill[myyellow, opacity=0.3] (0,0) to [out=90, in=180] (1,1) to [out=0, in=90] (2,0) to [out=270, in=0] (1,-1) to [out=180, in=270] (0,0);
	\draw[bstrand, rdirected=.999] (0,0) to [out=90, in=180] (1,1) to [out=0, in=90] (2,0);
	\draw[bstrand] (0,0) to [out=270, in=180] (1,-1) to [out=0, in=270] (2,0);
	\node at (1,0) {$\polybox{p}$};
\end{tikzpicture}
=
\begin{tikzpicture}[anchorbase, scale=.4, tinynodes]
	\fill[mygreen, opacity=0.8] (-.5,-1.5) to (-.5,1.5) to (2.5,1.5) to (2.5,-1.5) to (-.5,-1.5);
	\node at (1,0) {$\polybox{\qDema_{\gc}^{\yc}(p)}$};
\end{tikzpicture}};
(0,-8)*{\text{{\tiny Var.: comp. color.,}}};
(0,-10.5)*{\text{{\tiny using 
$\qDema_{\duc}^{\phantom{\duc}}$ or $\qDema_{\tduc}^{\duc}$.}}};
\endxy
\end{equation}
\end{tabularx}\\
(Note that there is also a variation of \eqref{eq:circle-primary} 
with a circular $\wc$-region in the middle bounded by a primary colored region outside.)

Moreover, we have polynomial sliding and neck cutting relations, i.e.
\\
\noindent\begin{tabularx}{0.99\textwidth}{XX}
\begin{equation}\hspace{-7cm}\label{eq:poly-move}
\xy
(0,0)*{
\begin{tikzpicture}[anchorbase, scale=.4, tinynodes]
	\fill[myyellow, opacity=0.3] (0,0) to (0,3) to (-2,3) to (-2,0) to (0,0);
	\fill[myorange, opacity=0.8] (0,0) to (0,3) to (2,3) to (2,0) to (0,0);
	\draw[rstrand, directed=.55] (0,0) to (0,3);
	\node at (-1,1.5) {$\polybox{p}$};
\end{tikzpicture}
=
\begin{tikzpicture}[anchorbase, scale=.4, tinynodes]
	\fill[myyellow, opacity=0.3] (0,0) to (0,3) to (-2,3) to (-2,0) to (0,0);
	\fill[myorange, opacity=0.8] (0,0) to (0,3) to (2,3) to (2,0) to (0,0);
	\draw[rstrand, directed=.55] (0,0) to (0,3);
	\node at (1,1.5) {$\polybox{p}$};
\end{tikzpicture}
,\,p\in\Rbim^{\oc}
};
(0,-8)*{\text{{\tiny Var.: comp. color.,}}};
(0,-10.5)*{\text{{\tiny for 
$p\in\Rbim^{\duc}$ or $p\in\Rbim^{\tduc}$.}}};
\endxy
\end{equation} &
\begin{equation}\hspace{-8cm}\label{eq:neck-cut}
\xy
(0,0)*{
\begin{tikzpicture}[anchorbase, scale=.4, tinynodes]
	\fill[myred, opacity=0.3] (-1,0) to (3,0) to (3,3) to (-1,3) to (-1,0);
	\fill[white] (.5,0) to [out=90, in=180] (1,.75) to [out=0, in=90] (1.5,0) to (.5,0);
	\fill[white] (.5,3) to [out=270, in=180] (1,2.25) to [out=0, in=270] (1.5,3) to (.5,3);
	\fill[mypurple, opacity=0.8] (.5,0) to [out=90, in=180] (1,.75) to [out=0, in=90] (1.5,0) to (.5,0);
	\fill[mypurple, opacity=0.8] (.5,3) to [out=270, in=180] (1,2.25) to [out=0, in=270] (1.5,3) to (.5,3);
	\draw[bstrand] (.5,0) to [out=90, in=180] (1,.75) to [out=0, in=90] (1.5,0);
	\draw[bstrand] (1.5,3) to [out=270, in=0] (1,2.25) to [out=180, in=270] (.5,3);
	\draw[bstrand, directed=.5] (1.1,.75) to (1.11,.75);
	\draw[bstrand, directed=.5] (.91,2.25) to (.9,2.25);
\end{tikzpicture}
=
\begin{tikzpicture}[anchorbase, scale=.4, tinynodes]
	\fill[myred, opacity=0.3] (-1,0) to (.5,0) to (.5,3) to (-1,3) to (-1,0);
	\fill[myred, opacity=0.3] (3,0) to (1.5,0) to (1.5,3) to (3,3) to (3,0);
	\fill[mypurple, opacity=0.8] (.5,0) to (.5,3) to (1.5,3) to (1.5,0) to (.5,0);
	\draw[bstrand, directed=.2, directed=.9] (.5,0) to (.5,3);
	\draw[bstrand, directed=.2, directed=.9] (1.5,3) to (1.5,0);
	\node at (1,1.5) {$\polybox{\Frobel{\pc}{\rc}}$};
\end{tikzpicture}};
(0,-8)*{\text{{\tiny Var.: comp. color.,}}};
(0,-10.5)*{\text{{\tiny using 
$\Frobel{\duc}{\phantom{\duc}}$ or $\Frobel{\tduc}{\duc}$.}}};
\endxy
\end{equation}
\end{tabularx}\\
The notation in the neck cutting relations \eqref{eq:neck-cut} indicates that one has put the tensor factors 
of the various summands of the $\Frobel{\placeholder}{\placeholder}$ in the 
corresponding regions (i.e. left tensor factors in the leftmost region and 
right tensor factors in the rightmost region), cf. \fullref{example:more-relations}.

Next, Reidemeister-like relations:
\\
\noindent\begin{tabularx}{0.99\textwidth}{XXX}
\begin{equation}\hspace{-9.0cm}\label{eq:rm-first}
\xy
(0,0)*{
\begin{tikzpicture}[anchorbase, scale=.4, tinynodes]
	\draw[very thin, densely dotted, fill=white] (-1,0) to [out=90, in=225] (-.5,.75) to [out=135, in=270] (-1,1.5) to [out=90, in=225] (-.5,2.25) to [out=135, in=270] (-1,3) to (-1.75,3) to (-1.75,0) to (-1,0);
	\fill[myblue, opacity=0.3] (-1,0) to [out=90, in=225] (-.5,.75) to [out=315, in=90] (0,0) to (-1,0);
	\fill[myblue, opacity=0.3] (-1,3) to [out=270, in=135] (-.5,2.25) to [out=45, in=270] (0,3) to (-1,3);
	\fill[myyellow, opacity=0.3] (-.5,.75) to [out=135, in=270] (-1,1.5) to [out=90, in=225] (-.5,2.25) to [out=315, in=90] (0,1.5) to [out=270, in=45] (-.5,.75);
	\fill[mygreen, opacity=0.8] (0,0) to [out=90, in=315] (-.5,.75) to [out=45, in=270] (0,1.5) to [out=90, in=315] (-.5,2.25) to [out=45, in=270] (0,3) to (.75,3) to (.75,0) to (0,0);
	\draw[ystrand, directed=.55] (0,0) to [out=90, in=270] (-1,1.5) to [out=90, in=270] (0,3);
	\draw[bstrand, directed=.55] (-1,0) to [out=90, in=270] (0,1.5) to [out=90, in=270] (-1,3);
\end{tikzpicture}
=
\begin{tikzpicture}[anchorbase, scale=.4, tinynodes]
	\draw[very thin, densely dotted, fill=white] (-1.75,0) to (-1.75,3) to (-1,3) to (-1,0) to (-1.75,0);
	\fill[myblue, opacity=0.3] (-1,0) to (-1,3) to (0,3) to (0,0) to (-1,0);
	\fill[mygreen, opacity=0.8] (0,0) to (0,3) to (.75,3) to (.75,0) to (0,0);
	\draw[ystrand, directed=.55] (0,0) to (0,3);
	\draw[bstrand, directed=.55] (-1,0) to (-1,3);
\end{tikzpicture}
};
(0,-8)*{\text{{\tiny Var.: comp. color.}}};
(0,-10.5)*{\text{{\tiny $\phantom{\frobel{\tduc}{\duc}}$}}};
\endxy
\end{equation} &
\begin{equation}\hspace{-9.0cm}\label{eq:rm-second}
\xy
(0,0)*{
\begin{tikzpicture}[anchorbase, scale=.4, tinynodes]
	\fill[myyellow, opacity=0.3] (-1,0) to [out=90, in=225] (-.5,.75) to [out=135, in=270] (-1,1.5) to [out=90, in=225] (-.5,2.25) to [out=135, in=270] (-1,3) to (-1.75,3) to (-1.75,0) to (-1,0);
	\fill[myorange, opacity=0.8] (-1,0) to [out=90, in=225] (-.5,.75) to [out=315, in=90] (0,0) to (-1,0);
	\fill[myorange, opacity=0.8] (-1,3) to [out=270, in=135] (-.5,2.25) to [out=45, in=270] (0,3) to (-1,3);
	\fill[white] (-.5,.75) to [out=135, in=270] (-1,1.5) to [out=90, in=225] (-.5,2.25) to [out=315, in=90] (0,1.5) to [out=270, in=45] (-.5,.75);
	\fill[myred, opacity=0.3] (0,0) to [out=90, in=315] (-.5,.75) to [out=45, in=270] (0,1.5) to [out=90, in=315] (-.5,2.25) to [out=45, in=270] (0,3) to (.75,3) to (.75,0) to (0,0);
	\draw[ystrand, directed=.55] (0,3) to [out=270, in=90] (-1,1.5) to [out=270, in=90] (0,0);
	\draw[rstrand, directed=.55] (-1,0) to [out=90, in=270] (0,1.5) to [out=90, in=270] (-1,3);
\end{tikzpicture}
=
\begin{tikzpicture}[anchorbase, scale=.4, tinynodes]
	\fill[myyellow, opacity=0.3] (-1.75,0) to (-1.75,3) to (-1,3) to (-1,0) to (-1.75,0);
	\fill[myorange, opacity=0.8] (-1,0) to (-1,3) to (0,3) to (0,0) to (-1,0);
	\fill[myred, opacity=0.3] (0,0) to (0,3) to (.75,3) to (.75,0) to (0,0);
	\draw[ystrand, directed=.2, directed=.9] (0,3) to (0,0);
	\draw[rstrand, directed=.2, directed=.9] (-1,0) to (-1,3);
	\node at (-.5,1.5) {$\polybox{\qDema\Frobel{\oc}{\phantom{.}}}$};
\end{tikzpicture}};
(0,-8)*{\text{{\tiny Var.: comp. color.,}}};
(0,-10.5)*{\text{{\tiny using $\qDema\Frobel{\tduc}{\phantom{.}}$.}}};
\endxy
\end{equation} &
\begin{equation}\hspace{-9.0cm}\label{eq:rm-third}
\xy
(0,0)*{
\begin{tikzpicture}[anchorbase, scale=.4, tinynodes]
	\fill[myred, opacity=0.3] (-1,0) to [out=90, in=225] (-.5,.75) to [out=135, in=270] (-1,1.5) to [out=90, in=225] (-.5,2.25) to [out=135, in=270] (-1,3) to (-1.75,3) to (-1.75,0) to (-1,0);
	\draw[very thin, densely dotted, fill=white] (-1,0) to [out=90, in=225] (-.5,.75) to [out=315, in=90] (0,0) to (-1,0);
	\draw[very thin, densely dotted, fill=white] (-1,3) to [out=270, in=135] (-.5,2.25) to [out=45, in=270] (0,3) to (-1,3);
	\fill[mypurple, opacity=0.8] (-.5,.75) to [out=135, in=270] (-1,1.5) to [out=90, in=225] (-.5,2.25) to [out=315, in=90] (0,1.5) to [out=270, in=45] (-.5,.75);
	\fill[myblue, opacity=0.3] (0,0) to [out=90, in=315] (-.5,.75) to [out=45, in=270] (0,1.5) to [out=90, in=315] (-.5,2.25) to [out=45, in=270] (0,3) to (.75,3) to (.75,0) to (0,0);
	\draw[rstrand, directed=.55] (-1,3) to [out=270, in=90] (0,1.5) to [out=270, in=90] (-1,0);
	\draw[bstrand, directed=.55] (0,0) to [out=90, in=270] (-1,1.5) to [out=90, in=270] (0,3);
\end{tikzpicture}
=
\begin{tikzpicture}[anchorbase, scale=.4, tinynodes]
	\fill[myred, opacity=0.3] (-1.75,0) to (-1.75,3) to (-1,3) to (-1,0) to (-1.75,0);
	\fill[myblue, opacity=0.3] (0,0) to (0,3) to (.75,3) to (.75,0) to (0,0);
	\draw[very thin, densely dotted, fill=white] (-1,0) to [out=90, in=270] (-1.32,1.5) to [out=90, in=270] (-1,3) to (0,3) to [out=270, in=90] (.32,1.5) to [out=270, in=90] (0,0) to (-1,0);
	\draw[rstrand, directed=.55] (-1,3) to [out=270, in=90] (-1.32,1.5) to [out=270, in=90] (-1,0);
	\draw[bstrand, directed=.55] (0,0) to [out=90, in=270] (.32,1.5) to [out=90, in=270] (0,3);
	\node at (-.475,1.5) {$\frobel{\pc}{\bc,\rc}$};
\end{tikzpicture}};
(0,-8)*{\text{{\tiny Var.: comp. color.,}}};
(0,-10.5)*{\text{{\tiny using $\frobel{\tduc}{\duc,\dudc}$.}}};
\endxy
\end{equation}
\end{tabularx}\\
where the notation $\qDema\Frobel{\oc}{\phantom{.}}$ in \eqref{eq:rm-second} means 
\begin{gather*}
\qDema\Frobel{\oc}{\phantom{.}}=
\qDema_{\yc}(\Frobel{\oc}{\rc}(1))\otimes\Frobel{\oc}{\rc}(2)
=
\Frobel{\oc}{\yc}(1)\otimes\qDema_{\rc}(\Frobel{\oc}{\yc}(2))
\end{gather*}
which is to be read again in the corresponding regions.

Finally, the square relations, which we exemplify 
by the case in which $\yc\wc\bc$ is at the bottom and $\bc\wc\yc$ 
is at the top:
\begin{gather}\label{eq:square}
\xy
(0,0)*{
\begin{tikzpicture}[anchorbase, scale=.4, tinynodes]
	\draw[very thin, densely dotted] (-3,0) to (-3,3);
	\draw[very thin, densely dotted] (-1,3) to (1,3);
	\draw[very thin, densely dotted] (-1,0) to (1,0);
	\draw[very thin, densely dotted] (3,0) to (3,3);
	\fill[myyellow, opacity=.3] (3,3) to (1,3) to (0,2.25) to (1,1.5) to (3,3);
	\fill[myyellow, opacity=.3] (-3,0) to (-1,0) to (0,.75) to (-1,1.5) to (-3,0);
	\fill[myblue, opacity=.3] (-3,3) to (-1,3) to (0,2.25) to (-1,1.5) to (-3,3);
	\fill[myblue, opacity=.3] (3,0) to (1,0) to (0,.75) to (1,1.5) to (3,0);
	\fill[mygreen, opacity=.8] (0,2.25) to (-1,1.5) to (0,.75) to (1,1.5) to (0,2.25);
	\draw[ystrand, directed=.65] (-3,0) to (1,3);
	\draw[ystrand, rdirected=.4] (-1,0) to (3,3);
	\draw[bstrand, directed=.4] (1,0) to (-3,3);
	\draw[bstrand, rdirected=.65] (3,0) to (-1,3);
\end{tikzpicture}
=
\begin{tikzpicture}[anchorbase, scale=.4, tinynodes]
	\draw[very thin, densely dotted] (-3,0) to (-3,3);
	\draw[very thin, densely dotted] (-1,3) to (1,3);
	\draw[very thin, densely dotted] (-1,0) to (1,0);
	\draw[very thin, densely dotted] (3,0) to (3,3);
	\fill[myyellow, opacity=.3] (-3,0) to (-2.75,.25) to [out=45, in=45] (-.75,.25) to (-1,0) to (-3,0);
	\fill[myyellow, opacity=.3] (1,3) to (.75,2.75) to [out=225, in=225] (2.75,2.75) to (3,3) to (1,3);
	\fill[myblue, opacity=.3] (-3,3) to (-1,3) to (3,0) to (1,0) to (-3,3);
	\draw[ystrand, directed=.55] (-3,0) to (-2.75,.25) to [out=45, in=45] (-.75,.25) to (-1,0);
	\draw[ystrand, rdirected=.55] (1,3) to (.75,2.75) to [out=225, in=225] (2.75,2.75) to (3,3);
	\draw[bstrand, directed=.4] (1,0) to (-3,3);
	\draw[bstrand, rdirected=.65] (3,0) to (-1,3);
\end{tikzpicture}
+
\qpar^{-1}
\begin{tikzpicture}[anchorbase, scale=.4, tinynodes]
	\draw[very thin, densely dotted] (-3,0) to (-3,3);
	\draw[very thin, densely dotted] (-1,3) to (1,3);
	\draw[very thin, densely dotted] (-1,0) to (1,0);
	\draw[very thin, densely dotted] (3,0) to (3,3);
	\fill[myyellow, opacity=.3] (3,3) to (1,3) to (-3,0) to (-1,0) to (3,3);
	\fill[myblue, opacity=.3] (1,0) to (.75,.25) to [out=135, in=135] (2.75,.25) to (3,0) to (1,0);
	\fill[myblue, opacity=.3] (-3,3) to (-2.75,2.75) to [out=315, in=315] (-.75,2.75) to (-1,3) to (-3,3);
	\draw[ystrand, directed=.65] (-3,0) to (1,3);
	\draw[ystrand, rdirected=.4] (-1,0) to (3,3);
	\draw[bstrand, directed=.45] (1,0) to (.75,.25) to [out=135, in=135] (2.75,.25) to (3,0);
	\draw[bstrand, rdirected=.65] (-3,3) to (-2.75,2.75) to [out=315, in=315] (-.75,2.75) to (-1,3);
\end{tikzpicture}};
(0,-8.5)*{\text{{\tiny Var.: comp. color.; in case $\yc\wc\rc$ replace $\qpar^{-1}\rightsquigarrow\qpar$; 
in case $\pc\wc\bc$ replace $\qpar^{-1}\rightsquigarrow 1$.}}};
\endxy
\end{gather}
(We stress that \eqref{eq:square} is not invariant under color change.)
\end{definition}

\begin{definition}\label{definition:regular}
The $2$-category of regular Bott--Samelson bimodules is defined as 
\[
\adiag=\Adiag(\wc,\wc),
\] 
i.e. the $2$-full $2$-subcategory of $\Adiag$ generated by diagrams 
whose left- and rightmost color is $\wc$. Note that $\adiag$ 
has only one object, namely $\wc$.

The $2$-category of maximally singular Bott--Samelson bimodules is defined as 
\[
\aDiag={\textstyle\bigoplus_{\tduc,\tdudc\in\Seset}}\,\Adiag(\tduc,\tdudc),
\] 
i.e. the $2$-full $2$-subcategory of $\Adiag$ generated by diagrams whose 
left- and rightmost colors are secondary.
\end{definition}

Note that we can always extend scalars to 
e.g. $\Cq=\C(\qpar)$ and we indicate this by changing the 
subscript $[\qpar]$ to $\qpar$.

\begin{remark}\label{remark:cat-affine-a2-scalars}
$\Adiag$ is an additive, $\aformq$-linear, graded $2$-category, which is, 
however, not idempotent closed. 
This is remedied by considering its Karoubi envelope $\Kar{\Adiag}$, which we 
take as the definition of the $2$-category of 
singular Soergel bimodules of affine type $\typea{2}$.
\end{remark}

Thus, we have:
\smallskip
\begin{enumerate}[label=$\blacktriangleright$]

\setlength\itemsep{.15cm}

\item The $2$-category of singular 
Bott--Samelson bimodules, whose notation contains an $\boldsymbol{s}$.

\item The $2$-category of regular 
Bott--Samelson bimodules, whose notation has no $\boldsymbol{s}$. 

\item The $2$-category of maximally 
singular Bott--Samelson bimodules, 
whose notation contains an $\boldsymbol{m}$. 
As we will see, the degree-zero 
part of this $2$-subcategory, 
for a fixed choice of shifts of the $1$-morphisms, is semisimple.

\item The corresponding $2$-categories 
of singular, regular and maximally singular Soergel bimodules are the Karoubi envelopes of these, by definition.

\item Various scalar extensions of these, indicated by subscripts.

\end{enumerate}

We use similar notations throughout, e.g. for scalar extensions 
of $2$-functors.

\begin{remark}\label{remark:cat-affine-a2} 
By \cite[Theorem A.1]{El1}, the decategorification 
of $\Kar{\Adiagfield}$, via the split Grothendieck group, is isomorphic to the 
affine $\typea{2}$ Hecke algebroid. As explained for example in \cite[Section 2.3]{Wi-sing-soergel} 
(under the name Schur algebroid), this is a multi object version of the affine 
Hecke algebra $\hecke$ from \fullref{subsec:def-sl3alg-1}. Moreover, 
the $2$-full $2$-subcategory $\Kar{\adiagfield}$ decategorifies to 
$\hecke$, see e.g. \cite[Theorem 3.17]{ElWi-soergel-calculus}.
\end{remark}

\subsubsection{Examples and further comments}\label{subsubsec:further-comments}

\begin{example}\label{example:q-demazure-action}
In accordance with \eqref{eq:color-compatible}, we have the following Frobenius extensions 
\[
\begin{tikzpicture}[baseline=(current bounding box.center), tinynodes]
  \matrix (m) [matrix of math nodes, nodes in empty cells, row sep=.02cm, column
  sep=.01cm, text height=1.6ex, text depth=0.25ex, ampersand replacement=\&] {
{\color{mygray}\scalebox{.8}{$\Rbim^{p}$}} \& \scalebox{.8}{$\Rbim^{\gc}$}\, \& {\color{mygray}\scalebox{.8}{$\Rbim^{o}$}} \\
\,\scalebox{.8}{$\Rbim^{\bc}$} \& {\color{mygray}\scalebox{.8}{$\Rbim^{r}$}} \& \scalebox{.8}{$\Rbim^{\yc}$} \\
\& \scalebox{.8}{$\Rbim$} \&  \\
    };
	\draw[thin] ($(m-2-1.north)+(0,-.225cm)$) to ($(m-1-2.south)+(-.05,.075cm)$);
	\draw[thin] ($(m-2-3.north)+(0,-.225cm)$) to ($(m-1-2.south)+(.05,.075cm)$);
	\draw[thin] ($(m-3-2.north)+(-.05,-.225cm)$) to ($(m-2-1.south)+(0,.075cm)$);
	\draw[thin] ($(m-3-2.north)+(.05,-.225cm)$) to ($(m-2-3.south)+(0,.075cm)$);
\end{tikzpicture}
\quad\quad
\begin{tikzpicture}[baseline=(current bounding box.center), tinynodes]
  \matrix (m) [matrix of math nodes, nodes in empty cells, row sep=.02cm, column
  sep=.01cm, text height=1.6ex, text depth=0.25ex, ampersand replacement=\&] {
{\color{mygray}\scalebox{.8}{$\Rbim^{p}$}} \& {\color{mygray}\scalebox{.8}{$\Rbim^{g}$}} \& \scalebox{.8}{$\Rbim^{\oc}$} \\
{\color{mygray}\scalebox{.8}{$\Rbim^{b}$}} \& \scalebox{.8}{$\Rbim^{\rc}$} \& \scalebox{.8}{$\Rbim^{\yc}$} \\
\& \scalebox{.8}{$\Rbim$}\&  \\
    };
	\draw[thin] ($(m-2-3.north)+(.035,-.225cm)$) to ($(m-1-3.south)+(.035,.075cm)$);
	\draw[thin] ($(m-3-2.north)+(0,-.225cm)$) to ($(m-2-2.south)+(0,.075cm)$);
	\draw[thin] ($(m-3-2.north)+(.05,-.225cm)$) to ($(m-2-3.south)+(0,.075cm)$);
	\draw[thin] ($(m-2-2.north)+(.05,-.225cm)$) to ($(m-1-3.south)+(0,.075cm)$);
\end{tikzpicture}
\quad\quad
\begin{tikzpicture}[baseline=(current bounding box.center), tinynodes]
  \matrix (m) [matrix of math nodes, nodes in empty cells, row sep=.02cm, column
  sep=.01cm, text height=1.6ex, text depth=0.25ex, ampersand replacement=\&] {
\scalebox{.8}{$\Rbim^{\pc}$} \& {\color{mygray}\scalebox{.8}{$\Rbim^{g}$}} \& {\color{mygray}\scalebox{.8}{$\Rbim^{o}$}} \\
\scalebox{.8}{$\Rbim^{\bc}$} \& \scalebox{.8}{$\Rbim^{\rc}$} \& {\color{mygray}\scalebox{.8}{$\Rbim^{y}$}} \\
\& \scalebox{.8}{$\Rbim$} \&  \\
    };
	\draw[thin] ($(m-2-1.north)+(-.035,-.225cm)$) to ($(m-1-1.south)+(-.035,.075cm)$);
	\draw[thin] ($(m-3-2.north)+(-.05,-.225cm)$) to ($(m-2-1.south)+(0,.075cm)$);
	\draw[thin] ($(m-3-2.north)+(0,-.225cm)$) to ($(m-2-2.south)+(0,.075cm)$);
	\draw[thin] ($(m-2-2.north)+(-.05,-.225cm)$) to ($(m-1-1.south)+(0,.075cm)$);
\end{tikzpicture}
\]
with the corresponding trace maps going upwards. Moreover,
\begin{gather}\label{eq:ranks}
\qDema(\rootb)=2,
\quad\quad
\qDema_{\gc}^{\bc}(\frobel{\gc}{\bc})=3,
\quad\quad
\qDema_{\gc}(\frobel{\gc}{})=6,
\end{gather}
as an easy calculation shows. Similar results hold for other colors.
 
Note that the numbers in \eqref{eq:ranks}, which follow from 
\eqref{eq:circle-first}, \eqref{eq:circle-secondary} and \eqref{eq:circle-primary}, are precisely the ranks of the corresponding Frobenius extensions.
\end{example}

\begin{example}\label{example:some-relations}
When working with $\Adiag$, it is important to remember 
that the polynomial $2$-generators 
of a given facet are invariant under the action of the parabolic subgroup 
which corresponds to the color of that region. For example, $\frobel{\gc}{\yc}$ is an element of 
$\Rbim^{\yc}$, and applying $\qDema_{\gc}^{\yc}$ to it 
will make it additionally $\bc$-invariant. In fact,
\[
\begin{tikzpicture}[anchorbase, scale=.4, tinynodes]
	\fill[mygreen, opacity=0.8] (-1,-2) to (-1,2) to (3,2) to (3,-2) to (-1,-2);
	\fill[white] (-.5,0) to [out=90, in=180] (1,1.5) to [out=0, in=90] (2.5,0) to [out=270, in=0] (1,-1.5) to [out=180, in=270] (-.5,0);
	\fill[myyellow, opacity=0.3] (-.5,0) to [out=90, in=180] (1,1.5) to [out=0, in=90] (2.5,0) to [out=270, in=0] (1,-1.5) to [out=180, in=270] (-.5,0);
	\fill[white] (.25,0) to [out=90, in=180] (1,.75) to [out=0, in=90] (1.75,0) to [out=270, in=0] (1,-.75) to [out=180, in=270] (.25,0);
	\fill[mygreen, opacity=0.8] (.25,0) to [out=90, in=180] (1,.75) to [out=0, in=90] (1.75,0) to [out=270, in=0] (1,-.75) to [out=180, in=270] (.25,0);
	\draw[bstrand, directed=.999] (.25,0) to [out=90, in=180] (1,.75) to [out=0, in=90] (1.75,0);
	\draw[bstrand] (.25,0) to [out=270, in=180] (1,-.75) to [out=0, in=270] (1.75,0);
	\draw[bstrand, rdirected=.999] (-.5,0) to [out=90, in=180] (1,1.5) to [out=0, in=90] (2.5,0);
	\draw[bstrand] (-.5,0) to [out=270, in=180] (1,-1.5) to [out=0, in=270] (2.5,0);
\end{tikzpicture}
\stackrel{\eqref{eq:circle-secondary}}{=}
\begin{tikzpicture}[anchorbase, scale=.4, tinynodes]
	\fill[mygreen, opacity=0.8] (-1,-2) to (-1,2) to (3,2) to (3,-2) to (-1,-2);
	\fill[white] (-.5,0) to [out=90, in=180] (1,1.5) to [out=0, in=90] (2.5,0) to [out=270, in=0] (1,-1.5) to [out=180, in=270] (-.5,0);
	\fill[myyellow, opacity=0.3] (-.5,0) to [out=90, in=180] (1,1.5) to [out=0, in=90] (2.5,0) to [out=270, in=0] (1,-1.5) to [out=180, in=270] (-.5,0);
	\draw[bstrand, rdirected=.999] (-.5,0) to [out=90, in=180] (1,1.5) to [out=0, in=90] (2.5,0);
	\draw[bstrand] (-.5,0) to [out=270, in=180] (1,-1.5) to [out=0, in=270] (2.5,0);
	\node at (1,0) {$\polybox{\frobel{\gc}{\yc}}$};
\end{tikzpicture}
\stackrel{\eqref{eq:circle-primary}}{=}
\begin{tikzpicture}[anchorbase, scale=.4, tinynodes]
	\fill[mygreen, opacity=0.8] (-1,-2) to (-1,2) to (3,2) to (3,-2) to (-1,-2);
	\node at (1,0) {$\polybox{\qDema_{\gc}^{\yc}(\frobel{\gc}{\yc})}$};
\end{tikzpicture}
=
3\,
\begin{tikzpicture}[anchorbase, scale=.4, tinynodes]
	\fill[mygreen, opacity=0.8] (-1,-2) to (-1,2) to (3,2) to (3,-2) to (-1,-2);
\end{tikzpicture}
\]
We also get
\[
\begin{tikzpicture}[anchorbase, scale=.4, tinynodes]
	\fill[mygreen, opacity=0.8] (-1,-2) to (-1,2) to (3,2) to (3,-2) to (-1,-2);
	\fill[white] (-.5,0) to [out=90, in=180] (1,1.5) to [out=0, in=90] (2.5,0) to [out=270, in=0] (1,-1.5) to [out=180, in=270] (-.5,0);
	\fill[myyellow, opacity=0.3] (-.5,0) to [out=90, in=180] (1,1.5) to [out=0, in=90] (2.5,0) to [out=270, in=0] (1,-1.5) to [out=180, in=270] (-.5,0);
	\fill[white] (.25,0) to [out=90, in=180] (1,.75) to [out=0, in=90] (1.75,0) to [out=270, in=0] (1,-.75) to [out=180, in=270] (.25,0);
	\fill[myorange, opacity=0.8] (.25,0) to [out=90, in=180] (1,.75) to [out=0, in=90] (1.75,0) to [out=270, in=0] (1,-.75) to [out=180, in=270] (.25,0);
	\draw[bstrand, rdirected=.999] (-.5,0) to [out=90, in=180] (1,1.5) to [out=0, in=90] (2.5,0);
	\draw[bstrand] (-.5,0) to [out=270, in=180] (1,-1.5) to [out=0, in=270] (2.5,0);
	\draw[rstrand, directed=.999] (.25,0) to [out=90, in=180] (1,.75) to [out=0, in=90] (1.75,0);
	\draw[rstrand] (.25,0) to [out=270, in=180] (1,-.75) to [out=0, in=270] (1.75,0);
\end{tikzpicture}
\stackrel{\eqref{eq:circle-secondary}}{=}
\begin{tikzpicture}[anchorbase, scale=.4, tinynodes]
	\fill[mygreen, opacity=0.8] (-1,-2) to (-1,2) to (3,2) to (3,-2) to (-1,-2);
	\fill[white] (-.5,0) to [out=90, in=180] (1,1.5) to [out=0, in=90] (2.5,0) to [out=270, in=0] (1,-1.5) to [out=180, in=270] (-.5,0);
	\fill[myyellow, opacity=0.3] (-.5,0) to [out=90, in=180] (1,1.5) to [out=0, in=90] (2.5,0) to [out=270, in=0] (1,-1.5) to [out=180, in=270] (-.5,0);
	\draw[bstrand, rdirected=.999] (-.5,0) to [out=90, in=180] (1,1.5) to [out=0, in=90] (2.5,0);
	\draw[bstrand] (-.5,0) to [out=270, in=180] (1,-1.5) to [out=0, in=270] (2.5,0);
	\node at (1,0) {$\polybox{\frobel{\oc}{\yc}}$};
\end{tikzpicture}
\stackrel{\eqref{eq:circle-primary}}{=}
\begin{tikzpicture}[anchorbase, scale=.4, tinynodes]
	\fill[mygreen, opacity=0.8] (-1,-2) to (-1,2) to (3,2) to (3,-2) to (-1,-2);
	\node at (1,0) {$\polybox{\qDema_{\gc}^{\yc}(\frobel{\oc}{\yc})}$};
\end{tikzpicture}
=
\qnumber{3}\,
\begin{tikzpicture}[anchorbase, scale=.4, tinynodes]
	\fill[mygreen, opacity=0.8] (-1,-2) to (-1,2) to (3,2) to (3,-2) to (-1,-2);
\end{tikzpicture}
\]
which we will need below.
\end{example}

\begin{example}\label{example:more-relations}
We have
\[
\begin{tikzpicture}[anchorbase, scale=.4, tinynodes]
	\fill[myyellow, opacity=0.3] (-1,0) to [out=90, in=225] (-.5,.75) to [out=135, in=270] (-1,1.5) to [out=90, in=225] (-.5,2.25) to [out=135, in=270] (-1,3) to (-1.75,3) to (-1.75,0) to (-1,0);
	\fill[mygreen, opacity=0.8] (-1,0) to [out=90, in=225] (-.5,.75) to [out=315, in=90] (0,0) to (-1,0);
	\fill[mygreen, opacity=0.8] (-1,3) to [out=270, in=135] (-.5,2.25) to [out=45, in=270] (0,3) to (-1,3);
	\fill[white] (-.5,.75) to [out=135, in=270] (-1,1.5) to [out=90, in=225] (-.5,2.25) to [out=315, in=90] (0,1.5) to [out=270, in=45] (-.5,.75);
	\fill[myblue, opacity=0.3] (0,0) to [out=90, in=315] (-.5,.75) to [out=45, in=270] (0,1.5) to [out=90, in=315] (-.5,2.25) to [out=45, in=270] (0,3) to (.75,3) to (.75,0) to (0,0);
	\draw[ystrand, directed=.55] (0,3) to [out=270, in=90] (-1,1.5) to [out=270, in=90] (0,0);
	\draw[bstrand, directed=.55] (-1,0) to [out=90, in=270] (0,1.5) to [out=90, in=270] (-1,3);
\end{tikzpicture}
\stackrel{\eqref{eq:rm-second}}{=}
\begin{tikzpicture}[anchorbase, scale=.4, tinynodes]
	\fill[myyellow, opacity=0.3] (-1.75,0) to (-1.75,3) to (-1,3) to (-1,0) to (-1.75,0);
	\fill[mygreen, opacity=0.8] (-1,0) to (-1,3) to (0,3) to (0,0) to (-1,0);
	\fill[myblue, opacity=0.3] (0,0) to (0,3) to (.75,3) to (.75,0) to (0,0);
	\draw[ystrand, directed=.2, directed=.9] (0,3) to (0,0);
	\draw[bstrand, directed=.2, directed=.9] (-1,0) to (-1,3);
	\node at (-.5,1.5) {$\polybox{\qDema\Frobel{\gc}{\phantom{.}}}$};
\end{tikzpicture}
=
{\textstyle \sum_{\algstuff{x}\in\somebasis_{\gc}^{\bc}}}\,
\begin{tikzpicture}[anchorbase, scale=.4, tinynodes]
	\fill[myyellow, opacity=0.3] (-4,0) to (-4,3) to (-1,3) to (-1,0) to (-4,0);
	\fill[mygreen, opacity=0.8] (-1,0) to (-1,3) to (0,3) to (0,0) to (-1,0);
	\fill[myblue, opacity=0.3] (0,0) to (0,3) to (3,3) to (3,0) to (0,0);
	\draw[ystrand, directed=.55] (0,3) to (0,0);
	\draw[bstrand, directed=.55] (-1,0) to (-1,3);
	\node at (-2.65,1.5) {$\polybox{\qDema_{\yc}(\algstuff{x})}$};
	\node at (1.5,1.5) {$\polybox{\algstuff{x}^{\fdual}}$};
\end{tikzpicture}
=
{\textstyle \sum_{\algstuff{y}\in\somebasis_{\gc}^{\yc}}}\,
\begin{tikzpicture}[anchorbase, scale=.4, tinynodes]
	\fill[myyellow, opacity=0.3] (-4,0) to (-4,3) to (-1,3) to (-1,0) to (-4,0);
	\fill[mygreen, opacity=0.8] (-1,0) to (-1,3) to (0,3) to (0,0) to (-1,0);
	\fill[myblue, opacity=0.3] (0,0) to (0,3) to (3,3) to (3,0) to (0,0);
	\draw[ystrand, directed=.55] (0,3) to (0,0);
	\draw[bstrand, directed=.55] (-1,0) to (-1,3);
	\node at (-2.5,1.5) {$\polybox{\algstuff{y}}$};
	\node at (1.65,1.5) {$\polybox{\qDema_{\bc}(\algstuff{y}^{\fdual})}$};
\end{tikzpicture}
\]

More generally, cf. \cite[(3.15d)]{El1}, the 
relations in
\fullref{definition:ssbim} imply 
\[
\begin{tikzpicture}[anchorbase, scale=.4, tinynodes]
	\fill[myyellow, opacity=0.3] (-1,0) to [out=90, in=225] (-.5,.75) to [out=135, in=270] (-1,1.5) to [out=90, in=225] (-.5,2.25) to [out=135, in=270] (-1,3) to (-1.75,3) to (-1.75,0) to (-1,0);
	\fill[mygreen, opacity=0.8] (-1,0) to [out=90, in=225] (-.5,.75) to [out=315, in=90] (0,0) to (-1,0);
	\fill[mygreen, opacity=0.8] (-1,3) to [out=270, in=135] (-.5,2.25) to [out=45, in=270] (0,3) to (-1,3);
	\fill[white] (-.5,.75) to [out=135, in=270] (-1,1.5) to [out=90, in=225] (-.5,2.25) to [out=315, in=90] (0,1.5) to [out=270, in=45] (-.5,.75);
	\fill[myblue, opacity=0.3] (0,0) to [out=90, in=315] (-.5,.75) to [out=45, in=270] (0,1.5) to [out=90, in=315] (-.5,2.25) to [out=45, in=270] (0,3) to (.75,3) to (.75,0) to (0,0);
	\draw[ystrand, directed=.55] (0,3) to [out=270, in=90] (-1,1.5) to [out=270, in=90] (0,0);
	\draw[bstrand, directed=.55] (-1,0) to [out=90, in=270] (0,1.5) to [out=90, in=270] (-1,3);
	\node at (-.5,1.5) {\text{{\tiny$p$}}};
\end{tikzpicture}
=
{\textstyle \sum_{\algstuff{x}\in\somebasis_{\gc}^{\bc}}}\,
\begin{tikzpicture}[anchorbase, scale=.4, tinynodes]
	\fill[myyellow, opacity=0.3] (-4,0) to (-4,3) to (-1,3) to (-1,0) to (-4,0);
	\fill[mygreen, opacity=0.8] (-1,0) to (-1,3) to (0,3) to (0,0) to (-1,0);
	\fill[myblue, opacity=0.3] (0,0) to (0,3) to (3,3) to (3,0) to (0,0);
	\draw[ystrand, directed=.55] (0,3) to (0,0);
	\draw[bstrand, directed=.55] (-1,0) to (-1,3);
	\node at (-2.65,1.5) {$\polybox{\qDema_{\yc}(p\algstuff{x})}$};
	\node at (1.5,1.5) {$\polybox{\algstuff{x}^{\fdual}}$};
\end{tikzpicture}
=
{\textstyle \sum_{\algstuff{y}\in\somebasis_{\gc}^{\yc}}}\,
\begin{tikzpicture}[anchorbase, scale=.4, tinynodes]
	\fill[myyellow, opacity=0.3] (-4,0) to (-4,3) to (-1,3) to (-1,0) to (-4,0);
	\fill[mygreen, opacity=0.8] (-1,0) to (-1,3) to (0,3) to (0,0) to (-1,0);
	\fill[myblue, opacity=0.3] (0,0) to (0,3) to (3,3) to (3,0) to (0,0);
	\draw[ystrand, directed=.55] (0,3) to (0,0);
	\draw[bstrand, directed=.55] (-1,0) to (-1,3);
	\node at (-2.5,1.5) {$\polybox{\algstuff{y}}$};
	\node at (1.65,1.5) {$\polybox{\!\qDema_{\bc}(p\algstuff{y}^{\fdual})\!\!}$};
\end{tikzpicture},
p\in\Rbim.
\]
As usual, similar relations hold for other colors.
\end{example}

\subsection{The trihedral Soergel bimodules of level \texorpdfstring{$\infty$}{infty}}\label{subsec:ubsbim}

\subsubsection{The definition}\label{subsec:cat-thealgebra-def}

We first consider a $2$-subcategory categorifying 
$\subquo$.

\begin{definition}\label{definition:the-subquo-cat}
Let $\subcatquo$ be 
the additive closure of 
the $2$-full $2$-subcategory of $\adiag$, 
whose $1$-morphisms are generated by the color strings that have at least one secondary color and 
have $\wc$ as the left- and rightmost color but nowhere else in the string. 

Its scalar extension is denoted by $\subcatquofield=\subcatquo[\infty,\qpar]$. 
\end{definition}

\begin{example}\label{example:the-subquo-cat}
The prototypical $1$-morphisms of $\subcatquo$ are $\wc$ and
all compatible color variation of
\[
\wc\bc\gc\bc\wc,
\quad
\wc\yc\gc\yc\wc,
\quad
\wc\yc\gc\bc\wc,
\quad
\wc\bc\gc\yc\wc,
\quad
\wc\bc\gc\yc\oc\yc\wc,
\quad
\wc\bc\gc\yc\oc\rc\wc,
\quad
\text{etc.}
\] 
All other $1$-morphism in $\subcatquo$ are direct sums of these, e.g. 
$\wc\yc\gc\yc\oc\yc\wc\oplus\wc\yc\oc\yc\wc$.
\end{example}

\subsubsection{Some lemmas}\label{subsec:cat-thealgebra-lemmas}

We note the following lemma, which follows 
directly from \eqref{eq:rm-first}.

\begin{lemmaqed}\label{lemma:clasps-well-defined}
The following diagrams commute in $\Adiag$.
\begin{gather}\label{eq:clasps-well-defined}
\xymatrix@C+=1.3cm@L+=6pt{
\wc\bc\gc
\ar@<-4pt>@/_/[rr]|{\,\twomorstuff{id}_{\wc\bc\gc}\,}
\ar[r]^{
\begin{tikzpicture}[anchorbase, scale=.4, tinynodes]
	\draw[very thin, densely dotted, fill=white] (-1,1.5) to [out=90, in=225] (-.5,2.25) to [out=135, in=270] (-1,3) to (-2,3) to (-2,1.5) to (-1,1.5);
	\fill[myyellow, opacity=0.3] (-1,3) to [out=270, in=135] (-.5,2.25) to [out=45, in=270] (0,3) to (-1,3);
	\fill[myblue, opacity=0.3] (-1,1.5) to [out=90, in=225] (-.5,2.25) to [out=315, in=90] (0,1.5) to (-1,1.5);
	\fill[mygreen, opacity=0.8] (0,1.5) to [out=90, in=315] (-.5,2.25) to [out=45, in=270] (0,3) to (1,3) to (1,1.5) to (0,1.5);
	\draw[ystrand, directed=.999] (0,1.5) to [out=90, in=270] (-1,3);
	\draw[bstrand, directed=.999] (-1,1.5) to [out=90, in=270] (0,3);
\end{tikzpicture}}
&
\wc\yc\gc
\ar[r]^{
\begin{tikzpicture}[anchorbase, scale=.4, tinynodes]
	\draw[very thin, densely dotted, fill=white] (-1,1.5) to [out=90, in=225] (-.5,2.25) to [out=135, in=270] (-1,3) to (-2,3) to (-2,1.5) to (-1,1.5);
	\fill[myblue, opacity=0.3] (-1,3) to [out=270, in=135] (-.5,2.25) to [out=45, in=270] (0,3) to (-1,3);
	\fill[myyellow, opacity=0.3] (-1,1.5) to [out=90, in=225] (-.5,2.25) to [out=315, in=90] (0,1.5) to (-1,1.5);
	\fill[mygreen, opacity=0.8] (0,1.5) to [out=90, in=315] (-.5,2.25) to [out=45, in=270] (0,3) to (1,3) to (1,1.5) to (0,1.5);
	\draw[ystrand, directed=.999] (-1,1.5) to [out=90, in=270] (0,3);
	\draw[bstrand, directed=.999] (0,1.5) to [out=90, in=270] (-1,3);
\end{tikzpicture}}
&
\wc\bc\gc
}
,\quad\quad
\xymatrix@C+=1.3cm@L+=6pt{
\wc\yc\gc
\ar@<-4pt>@/_/[rr]|{\,\twomorstuff{id}_{\wc\yc\gc}\,}
\ar[r]^{
\begin{tikzpicture}[anchorbase, scale=.4, tinynodes]
	\draw[very thin, densely dotted, fill=white] (-1,1.5) to [out=90, in=225] (-.5,2.25) to [out=135, in=270] (-1,3) to (-2,3) to (-2,1.5) to (-1,1.5);
	\fill[myblue, opacity=0.3] (-1,3) to [out=270, in=135] (-.5,2.25) to [out=45, in=270] (0,3) to (-1,3);
	\fill[myyellow, opacity=0.3] (-1,1.5) to [out=90, in=225] (-.5,2.25) to [out=315, in=90] (0,1.5) to (-1,1.5);
	\fill[mygreen, opacity=0.8] (0,1.5) to [out=90, in=315] (-.5,2.25) to [out=45, in=270] (0,3) to (1,3) to (1,1.5) to (0,1.5);
	\draw[ystrand, directed=.999] (-1,1.5) to [out=90, in=270] (0,3);
	\draw[bstrand, directed=.999] (0,1.5) to [out=90, in=270] (-1,3);
\end{tikzpicture}}
&
\wc\bc\gc
\ar[r]^{
\begin{tikzpicture}[anchorbase, scale=.4, tinynodes]
	\draw[very thin, densely dotted, fill=white] (-1,1.5) to [out=90, in=225] (-.5,2.25) to [out=135, in=270] (-1,3) to (-2,3) to (-2,1.5) to (-1,1.5);
	\fill[myyellow, opacity=0.3] (-1,3) to [out=270, in=135] (-.5,2.25) to [out=45, in=270] (0,3) to (-1,3);
	\fill[myblue, opacity=0.3] (-1,1.5) to [out=90, in=225] (-.5,2.25) to [out=315, in=90] (0,1.5) to (-1,1.5);
	\fill[mygreen, opacity=0.8] (0,1.5) to [out=90, in=315] (-.5,2.25) to [out=45, in=270] (0,3) to (1,3) to (1,1.5) to (0,1.5);
	\draw[ystrand, directed=.999] (0,1.5) to [out=90, in=270] (-1,3);
	\draw[bstrand, directed=.999] (-1,1.5) to [out=90, in=270] (0,3);
\end{tikzpicture}}
&
\wc\yc\gc
}
\end{gather}
In particular, $\wc\bc\gc\cong\wc\yc\gc$.
The same holds for color variations with compatible colors.
\end{lemmaqed}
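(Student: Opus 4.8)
The plan is to recognize both squares in \eqref{eq:clasps-well-defined} as instances of the Reidemeister-type relation \eqref{eq:rm-first}. First, I would unwind what the two labelled arrows are: each is the mixed crossing that exchanges the two primary strands $\bc$ and $\yc$ across a facet colored by the secondary color $\gc=\{\bc,\yc\}$ (with the outer $\wc$-facet framed on the far side, exactly as in the pictures labelling the arrows). Composing the two arrows of the left triangle vertically --- first $\wc\bc\gc\to\wc\yc\gc$, then $\wc\yc\gc\to\wc\bc\gc$ --- produces precisely the diagram depicted on the left-hand side of \eqref{eq:rm-first}: a blue strand and a yellow strand crossing twice, enclosing a $\gc$-colored bigon, with a $\wc$-region on the outside. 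Relation \eqref{eq:rm-first} equates this diagram with the two parallel strands, i.e.\ with $\twomorstuff{id}_{\wc\bc\gc}$, which is exactly the commutativity asserted by the left diagram of \eqref{eq:clasps-well-defined}.

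Second, running the analogous argument with the roles of $\bc$ and $\yc$ interchanged --- still an instance of \eqref{eq:rm-first}, since that relation is formulated for the crossing of a compatible primary/secondary pair and treats the two primary strands symmetrically --- shows that the composite $\wc\yc\gc\to\wc\bc\gc\to\wc\yc\gc$ equals $\twomorstuff{id}_{\wc\yc\gc}$, giving the right diagram. Having shown that both round-trip composites are identity $2$-morphisms, the two crossing $2$-morphisms are mutually inverse, hence $2$-isomorphisms, and therefore $\wc\bc\gc\cong\wc\yc\gc$ in $\Adiag$. The statement for the remaining compatible colorings is then immediate: \eqref{eq:rm-first} carries the annotation ``Var.: comp. color.'', so the identical argument applies verbatim to every pair consisting of a primary color and a secondary color containing it.

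I expect no genuine obstacle here: the only substantive input is the already-assumed relation \eqref{eq:rm-first}, and what remains is purely diagrammatic bookkeeping --- checking that the vertical stack of the two labelled arrows, with the correct strand orientations and the framed $\wc$-region in the right place, really is the left-hand side of that relation. Since all defining relations of $\Adiag$ are homogeneous, the degrees on the two sides match automatically, so there is nothing further to verify.
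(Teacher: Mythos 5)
Your proposal is correct and is exactly the paper's argument: the lemma is stated without proof immediately after the remark that it ``follows directly from \eqref{eq:rm-first}'', and your identification of each round-trip composite with the left-hand side of that Reidemeister-type relation (together with the ``Var.: comp.\ color.'' annotation for the other colorings) is precisely what is intended.
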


The following, where we 
silently use \fullref{lemma:clasps-well-defined}, should be 
compared to \fullref{lemma:quotient-of-affine}. 

\begin{lemma}\label{lemma:sts-decomp}
In $\Kar{\subcatquofield}$, 
the $1$-morphism $\wc\bc\gc\bc\wc\cong\wc\yc\gc\yc\wc$ is 
isomorphic to the indecomposable direct summand of 
$\wc\bc\wc\yc\wc\bc\wc$ or of $\wc\yc\wc\bc\wc\yc\wc$ which 
corresponds to the word $w_{\gc}=\bc\yc\bc=\yc\bc\yc\in\Wgroup_{\gc}$.
The same holds for all compatible color variations. 
\end{lemma}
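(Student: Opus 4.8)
The plan is to exhibit $\wc\bc\gc\bc\wc$ as a direct summand of the regular Bott--Samelson $1$-morphism $\wc\bc\wc\yc\wc\bc\wc$ and then to identify that summand by decategorifying.

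First I would construct $2$-morphisms $\iota\colon\wc\bc\gc\bc\wc\to\wc\bc\wc\yc\wc\bc\wc$ and $\pi\colon\wc\bc\wc\yc\wc\bc\wc\to\wc\bc\gc\bc\wc$ in $\adiagfield$ whose vertical composite $\pi\vcomp\iota$ is an invertible scalar multiple of $\twomorstuff{id}_{\wc\bc\gc\bc\wc}$. These are the usual ``down--up'' $2$-morphisms comparing a one-step singular Bott--Samelson $1$-morphism with a regular one: $\iota$ inflates the central $\gc$-coloured facet of the identity diagram of $\wc\bc\gc\bc\wc$ into the colour string $\bc\wc\yc\wc\bc$, using the degree $\pm2$ cups and caps between $\gc$ and the compatible primary colours $\bc,\yc$ from \eqref{eq:gens-sbim-1} together with an insertion of a tensor leg of the coproduct element $\Frobel{\gc}{\yc}$ of \fullref{definition:comult-elements}, and $\pi$ is the mirror diagram. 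The key point is that $\pi\vcomp\iota$ simplifies, via the Reidemeister relation \eqref{eq:rm-first}, the neck-cutting relation \eqref{eq:neck-cut} and polynomial sliding \eqref{eq:poly-move}, to a circle over the $\gc$-facet carrying the polynomial $\frobel{\gc}{\yc}$, which by the circle-removal relations \eqref{eq:circle-secondary} and \eqref{eq:circle-primary} equals $\qDema_{\gc}^{\yc}(\frobel{\gc}{\yc})=3$ times the empty diagram --- this is exactly the computation carried out in \fullref{example:some-relations}. Since $3$ is invertible, after rescaling $\pi$ we get $\pi\vcomp\iota=\twomorstuff{id}_{\wc\bc\gc\bc\wc}$, so $\iota\vcomp\pi$ is an idempotent in $\End_{\Kar{\adiagfield}}(\wc\bc\wc\yc\wc\bc\wc)$ with image isomorphic to $\wc\bc\gc\bc\wc$; hence $\wc\bc\gc\bc\wc$ is, up to a grading shift, a direct summand of $\wc\bc\wc\yc\wc\bc\wc$.

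Next I would identify this summand. By \fullref{remark:cat-affine-a2} the Karoubi envelope $\Kar{\adiagfield}$ categorifies the affine $\typea{2}$ Hecke algebra $\hecke$, and $\wc\bc\wc\yc\wc\bc\wc$ decategorifies to $\theta_{\bc}\theta_{\yc}\theta_{\bc}=\theta_{w_{\gc}}+\theta_{\bc}$, the standard product rule inside the finite type $\typeA_2$ parabolic $\Wgroup_{\gc}$. By the Soergel categorification theorem (see \fullref{remark:cat-affine-a2}) this forces $\wc\bc\wc\yc\wc\bc\wc\cong\morstuff{B}_{w_{\gc}}\oplus(\wc\bc\wc)$ in $\Kar{\adiagfield}$, where $\morstuff{B}_{w_{\gc}}$ is the indecomposable $1$-morphism labelled by $w_{\gc}$ and $\wc\bc\wc$ is the indecomposable labelled by $\bc$, each occurring once. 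Hence, by Krull--Schmidt, the summand $\wc\bc\gc\bc\wc$ produced above is isomorphic to $\morstuff{B}_{w_{\gc}}$, to $\wc\bc\wc$, or to their direct sum. The latter two are ruled out by a graded-rank count: under the bimodule dictionary of \fullref{remark:cat-affine-a2}, $\wc\bc\gc\bc\wc\cong\Rbim\otimes_{\Rbim^{\gc}}\Rbim$ up to shift, which is graded free of rank $\qnumber{2}\qnumber{3}$ over $\Rbim$ (the group $\Wgroup_{\gc}$ having six elements), whereas $\wc\bc\wc$ has rank $\qnumber{2}$. Therefore $\wc\bc\gc\bc\wc\cong\morstuff{B}_{w_{\gc}}$, the indecomposable summand of $\wc\bc\wc\yc\wc\bc\wc$ attached to $w_{\gc}$; in particular $\wc\bc\gc\bc\wc$ is indecomposable, and $\wc\bc\gc\bc\wc\cong\wc\yc\gc\yc\wc$ by \fullref{lemma:clasps-well-defined}. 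The other compatible colour variations follow in the same way, using \fullref{lemma:clasps-well-defined} and the symmetry of the relations under permuting the primary colours.

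The main obstacle is the first step: writing $\iota$ and $\pi$ down explicitly and checking that $\pi\vcomp\iota$ collapses, via the circle-removal and Reidemeister relations, to a nonzero scalar. Everything after that is a short computation in the Hecke algebroid together with the graded-rank comparison, and the indecomposability of $\wc\bc\gc\bc\wc$ comes out as a by-product.
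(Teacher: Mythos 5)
Your proposal is correct and follows the same basic strategy as the paper's proof. The paper applies the square relation \eqref{eq:square} to write $\twomorstuff{id}_{\wc\bc\wc\yc\wc\bc\wc}$ as a sum of two orthogonal idempotents \eqref{eq:byb-decomp}, the first of which visibly factors through $\wc\bc\gc\bc\wc$; your $\iota$ and $\pi$ are precisely the two halves of that first idempotent, and your deferred computation that $\pi\vcomp\iota$ is an invertible scalar is the same computational content as the paper's deferred check that \eqref{eq:byb-decomp} is an orthogonal idempotent decomposition (so the scalar you should expect is the one making $\qpar\,\iota\vcomp\pi$ idempotent, and your $\Frobel{\gc}{\yc}$ insertion is not actually needed for the bare maps). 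The only genuine divergence is in how the summand is identified: the paper obtains both summands simultaneously, matches the two-term expansion $\theta_{\bc}\theta_{\yc}\theta_{\bc}=\theta_{w_{\gc}}+\theta_{\bc}$, and cites \cite[Theorem A.1]{El1} for primitivity of the idempotents, whereas you use Krull--Schmidt together with a graded-rank comparison of $\wc\bc\gc\bc\wc\cong\Rbim\otimes_{\Rbim^{\gc}}\Rbim$ against $\wc\bc\wc$. Your route buys an explicit indecomposability criterion independent of exhibiting the complementary idempotent, at the cost of invoking the bimodule realization of the singular $1$-morphism, which the paper keeps implicit; both arguments ultimately rest on the same categorification theorem.
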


Recall that $\adiagfield$ 
decategorifies to $\hecke(\typeat{2})$ 
(cf. \fullref{remark:cat-affine-a2}), such that 
the indecomposable $1$-morphisms in $\adiagfield$ decategorify 
to the KL basis elements of the affine type $\typea{2}$ Weyl group $\Wgroup$. 
Since $\subcatquofield$ is a $2$-full $2$-subcategory of $\adiagfield$, its indecomposable 
$1$-morphisms are also indecomposable as $1$-morphisms of the latter. Therefore, $\Kar{\subcatquofield}$ 
decategorifies to a subalgebra of $\hecke(\typeat{2})$, with a basis consisting of a 
particular subset of the KL basis elements.

\begin{proof}
By \eqref{eq:square}, we have 
\begin{gather}\label{eq:byb-decomp}
\begin{tikzpicture}[anchorbase, scale=.4, tinynodes]
	\draw[very thin, densely dotted, fill=white] (-3.5,2) to (-3.5,-2) to (-2.5,-2) to (-2.5,2) to (-3.5,2);
	\draw[very thin, densely dotted, fill=white] (-1.5,2) to (-1.5,-2) to (-.5,-2) to (-.5,2) to (-1.5,2);
	\draw[very thin, densely dotted, fill=white] (.5,2) to (.5,-2) to (1.5,-2) to (1.5,2) to (.5,2);
	\draw[very thin, densely dotted, fill=white] (2.5,2) to (2.5,-2) to (3.5,-2) to (3.5,2) to (2.5,2);
	\fill[myyellow, opacity=0.3] (-.5,2) to (-.5,-2) to (.5,-2) to (.5,2);
	\fill[myblue, opacity=0.3] (-2.5,2) to (-2.5,-2) to (-1.5,-2) to (-1.5,2);
	\fill[myblue, opacity=0.3] (1.5,2) to (1.5,-2) to (2.5,-2) to (2.5,2);
	\draw[ystrand, directed=.55] (-.5,-2) to (-.5,2);
	\draw[ystrand, directed=.55] (.5,2) to (.5,-2);
	\draw[bstrand, directed=.55] (-2.5,-2) to (-2.5,2);
	\draw[bstrand, directed=.55] (-1.5,2) to (-1.5,-2);
	\draw[bstrand, directed=.55] (1.5,-2) to (1.5,2);
	\draw[bstrand, directed=.55] (2.5,2) to (2.5,-2);
	\node at (-3,-.4) {$\wc$};
	\node at (-2,-.4) {$\bcblack$};
	\node at (-1,-.4) {$\wc$};
	\node at (0,-.4) {$\ycblack$};
	\node at (1,-.4) {$\wc$};
	\node at (2,-.4) {$\bcblack$};
	\node at (3,-.4) {$\wc$};
\end{tikzpicture}
=
\qpar\,
\begin{tikzpicture}[anchorbase, scale=.4, tinynodes]
	\draw[very thin, densely dotted, fill=white] (-3.5,2) to (-3.5,-2) to (-2.5,-2) to (-2.5,2) to (-3.5,2);
	\draw[very thin, densely dotted] (-1.5,-2) to (-.5,-2);
	\draw[very thin, densely dotted] (.5,-2) to (1.5,-2);
	\draw[very thin, densely dotted] (-1.5,2) to (-.5,2);
	\draw[very thin, densely dotted] (.5,2) to (1.5,2);
	\draw[very thin, densely dotted, fill=white] (2.5,2) to (2.5,-2) to (3.5,-2) to (3.5,2) to (2.5,2);
	\fill[mygreen, opacity=0.8] (-.5,-.8) to (-.5,.8) to (0,.75) to (.5,.8) to (.5,-.8);
	\fill[myyellow, opacity=0.3] (-.5,2) to (-.5,.8) to (0,.75) to (.5,.8) to (.5,2) to (-.5,2);
	\fill[myyellow, opacity=0.3] (-.5,-2) to (-.5,-.8) to (0,-.75) to (.5,-.8) to (.5,-2) to (-0.5,-2);
	\fill[myblue, opacity=0.3] (-2.5,-2) to (-2.5,2) to (-1.5,2) to [out=270, in=160] (-.5,.8) to (-0.5,-.8) to [out=200, in=90] (-1.5,-2) to (-2.5,-2);
	\fill[myblue, opacity=0.3] (2.5,-2) to  (1.5,-2) to [out=90, in=350] (.5,-.8) to (.5,.8) to [out=10, in=270] (1.5,2) to (2.5,2) to (2.5,-2);
	\draw[ystrand, directed=.55] (-.5,-2) to (-.5,2);
	\draw[ystrand, directed=.55] (.5,2) to (.5,-2);
	\draw[bstrand, directed=.55] (-2.5,-2) to (-2.5,2);
	\draw[bstrand, directed=.85] (-1.5,2) to [out=270, in=180] (0,.75) to [out=0, in=270] (1.5,2);
	\draw[bstrand, directed=.85] (1.5,-2) to [out=90, in=0] (0,-.75) to [out=180, in=90] (-1.5,-2);
	\draw[bstrand, directed=.55] (2.5,2) to (2.5,-2);
	\node at (-3,-.4) {$\wc$};
	\node at (-1.5,-.4) {$\bcblack$};
	\node at (0,-.4) {$\gcblack$};
	\node at (1.5,-.4) {$\bcblack$};
	\node at (3,-.4) {$\wc$};
\end{tikzpicture}
-
\qpar\,
\begin{tikzpicture}[anchorbase, scale=.4, tinynodes]
	\draw[very thin, densely dotted, fill=white] (-3.5,2) to (-3.5,-2) to (-2.5,-2) to (-2.5,2) to (-3.5,2);
	\draw[very thin, densely dotted] (-1.5,-2) to (-.5,-2);
	\draw[very thin, densely dotted] (.5,-2) to (1.5,-2);
	\draw[very thin, densely dotted] (-1.5,2) to (-.5,2);
	\draw[very thin, densely dotted] (.5,2) to (1.5,2);
	\draw[very thin, densely dotted, fill=white] (2.5,2) to (2.5,-2) to (3.5,-2) to (3.5,2) to (2.5,2);
	\fill[myyellow, opacity=0.3] (.5,2) to [out=270, in=0] (0,1.5) to [out=180, in=270] (-.5,2) to (.5,2);
	\fill[myyellow, opacity=0.3] (-.5,-2) to [out=90, in=180] (0,-1.5) to [out=0, in=90] (.5,-2) to (-.5,-2);
	\fill[myblue, opacity=0.3] (-2.5,-2) to (-2.5,2) to (-1.5,2) to [out=270, in=180] (0,.75) to [out=0, in=270] (1.5,2) to (2.5,2) to (2.5,-2)  to (1.5,-2) to [out=90, in=0] (0,-.75) to [out=180, in=90] (-1.5,-2) to (-2.5,-2);
	\draw[ystrand, directed=.85] (0.5,2) to [out=270, in=0] (0,1.5) to [out=180, in=270] (-.5,2);
	\draw[ystrand, directed=.85] (-0.5,-2) to [out=90, in=180] (0,-1.5) to [out=0, in=90] (.5,-2);
	\draw[bstrand, directed=.55] (-2.5,-2) to (-2.5,2);
	\draw[bstrand, directed=.85] (-1.5,2) to [out=270, in=180] (0,.75) to [out=0, in=270] (1.5,2);
	\draw[bstrand, directed=.85] (1.5,-2) to [out=90, in=0] (0,-.75) to [out=180, in=90] (-1.5,-2);
	\draw[bstrand, directed=.55] (2.5,2) to (2.5,-2);
	\node at (-3,-.4) {$\wc$};
	\node at (0,-.4) {$\bcblack$};
	\node at (3,-.4) {$\wc$};
\end{tikzpicture}
\end{gather}
It is not hard to check, using the relations in 
\fullref{definition:ssbim},
that \eqref{eq:byb-decomp} gives a 
decomposition into orthogonal idempotents. 

Note that the first idempotent on the right-hand side 
shows that $\wc\bc\gc\bc\wc$ is a direct summand of 
$\wc\bc\wc\yc\wc\bc\wc$, as indicated in \eqref{eq:byb-decomp}, i.e. 
$\wc\bc\wc\yc\wc\bc\wc\cong\wc\bc\gc\bc\wc\oplus\wc\bc\wc$. 
This decomposition decategorifies to  
\[
\theta_{w_{\gc}}
\stackrel{\eqref{eq:cubic}}{=}
\theta_{\bc}\theta_{\yc}\theta_{\bc}=\theta_{\bc\yc\bc} + \theta_{\bc},
\]
and it then follows from \cite[Theorem A.1]{El1} that the idempotents on 
the right-hand side of \eqref{eq:byb-decomp} are primitive.
This shows the lemma in 
case of $\bc$, $\yc$ and $\gc$. The other cases are analogous.
\end{proof}

\begin{lemma}\label{lemma:remove-white}
We have $\bc\wc\bc\cong\bc\{+1\}\oplus\bc\{-1\}$ in $\Adiag$. 
A similar result holds for all compatible color variations, keeping the color $\wc$.
\end{lemma}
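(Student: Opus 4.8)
The plan is to exhibit an explicit decomposition of the identity $2$-morphism on $\bc\wc\bc$ into two orthogonal idempotents, each of which factors through a shifted copy of the identity $1$-morphism $\bc$. Concretely, I would use the cap-cup pair between $\wc$ and $\bc$ from \eqref{eq:gens-sbim-1}: composing the cup $\bc\wc\bc\to\bc$ (degree $1$) with the cap $\bc\to\bc\wc\bc$ (degree $-1$) gives one $2$-morphism $\bc\wc\bc\to\bc\wc\bc$, and to make this an idempotent one decorates the appropriate $\wc$-facet with the polynomial $\frobel{\bc}{\wc}$ (or rather the comultiplication element $\Frobel{\bc}{\phantom{\bc}}$ associated to the rank-$2$ Frobenius extension $\qDema_{\bc}\colon\Rbim\to\Rbim^{\bc}$). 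Since $\Frobel{\bc}{\phantom{\bc}}=\sum_{a\in\dubasis}a\otimes a^{\fdual}$ splits the neck, the neck-cutting relation \eqref{eq:neck-cut} together with circle removal \eqref{eq:circle-first} gives exactly a sum of two mutually orthogonal idempotents summing to $\twomorstuff{id}_{\bc\wc\bc}$, one with a $\rootb$-decoration (degree shifting by $+1$ on one side and $-1$ on the other, i.e. the $\{-1\}$ summand) and one without (the $\{+1\}$ summand). This is the direct analog, one dimension down, of \eqref{eq:byb-decomp} in the proof of \fullref{lemma:sts-decomp}.

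The key steps, in order: first, write down the two candidate idempotents $\twomorstuff{e}_{+}$ and $\twomorstuff{e}_{-}$ as the two summands obtained by inserting $\Frobel{\bc}{\phantom{\bc}}$ into the cap-cup bubble on $\bc\wc\bc$; second, verify $\twomorstuff{e}_{+}+\twomorstuff{e}_{-}=\twomorstuff{id}_{\bc\wc\bc}$ — this is precisely the neck-cutting relation \eqref{eq:neck-cut} read in the $\wc\subset\bc$ direction; third, verify $\twomorstuff{e}_{\pm}^2=\twomorstuff{e}_{\pm}$ and $\twomorstuff{e}_{+}\twomorstuff{e}_{-}=\twomorstuff{e}_{-}\twomorstuff{e}_{+}=0$, which reduces to circle removal \eqref{eq:circle-first} plus the fact that $\qDema_{\bc}$ pairs the dual bases orthogonally (the relevant computation is $\qDema_{\bc}(1)=0$, $\qDema_{\bc}(\rootb)=2$, as recorded in \eqref{eq:ranks}); fourth, identify the image of each idempotent with $\bc$ with the correct grading shift, by noting the cup and cap have degrees $+1$ and $-1$ and tracking how the $\rootb$-decoration (degree $2$) redistributes. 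Finally, observe that the same argument applies verbatim with $\wc$ and $\bc$ replaced by $\wc$ and $\rc$, or $\wc$ and $\yc$, giving the compatible color variations, since all three primary colors sit in rank-$2$ Frobenius extensions over $\Rbim$ of the same shape.

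I expect the main obstacle to be a bookkeeping one rather than a conceptual one: getting the grading shifts to come out as exactly $\{+1\}$ and $\{-1\}$ and not, say, $\{0\}$ and $\{-2\}$, requires care about the convention (\fullref{convention:grading}) for how a homogeneous $2$-morphism changes degree under a shift of its source and target, and about which facet carries the $\rootb$-decoration. One should cross-check against the decategorified statement $\theta_{\bc}^2=\vnumber{2}\theta_{\bc}$ from \eqref{eq:quadratic}: the split Grothendieck class of $\bc\wc\bc$ must be $\vnumber{2}[\bc]=(\vpar+\vpar^{-1})[\bc]$, which forces precisely the shifts $\{+1\}$ and $\{-1\}$, so this provides a sanity check that pins down the answer. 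Beyond that, verifying orthogonality and idempotency is a short diagrammatic manipulation using only \eqref{eq:iso-rel}, \eqref{eq:circle-first}, \eqref{eq:neck-cut} and the Frobenius-extension data of \fullref{definition:frob-extensions}, so I would simply cite these relations and state that the check is routine, exactly as the authors do for the analogous \fullref{lemma:sts-decomp}.
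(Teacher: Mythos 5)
Your proposal is correct and is essentially the paper's own argument: the paper exhibits the isomorphism as a pair of mutually inverse column/row matrices of decorated caps and cups, verified via $\Frobel{\bc}{\phantom{.}}=\tfrac{1}{2}(\rootb\otimes 1+1\otimes\rootb)$, $\qDema_{\bc}(\rootb)=2$ and $\qDema_{\bc}(\rootb^2)=0$, which is the same computation as your orthogonal-idempotent decomposition. The only nitpick is that the circle you need to remove is a $\wc$-disc inside a $\bc$-region, so the relevant relation is the parenthetical variation of \eqref{eq:circle-primary} (evaluation by $\qDema_{\bc}$) rather than \eqref{eq:circle-first}.
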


Note that \fullref{lemma:remove-white} is only true for primary 
colors, since $\wc$ is never compatible to a secondary color.

\begin{proof}
This follows from the following diagram.
\begin{gather*}
\xymatrix@C+=2.5cm@L+=6pt{
 & \bc\{+1\} 
\ar[rd]|{\tfrac{1}{2}\,
\begin{tikzpicture}[anchorbase, scale=.4, tinynodes]
	\draw[very thin, densely dotted, fill=white] (-.5,2) to [out=270, in=180] (0,1) to [out=0, in=270] (.5,2) to (-.5,2);
	\fill[myblue, opacity=0.3] (-.5,2) to [out=270, in=180] (0,1) to [out=0, in=270] (.5,2) to (1.5,2) to (1.5,0) to (-1.5,0) to (-1.5,2) to (-.5,2);
	\draw[bstrand] (-.5,2) to [out=270, in=180] (0,1) to [out=0, in=270] (.5,2);
	\draw[bstrand, directed=.55] (.1,1) to (.11,1);
\end{tikzpicture}} 
 & 
\\
\bc\wc\bc \ar[ru]|{
\begin{tikzpicture}[anchorbase, scale=.4, tinynodes]
	\draw[very thin, densely dotted, fill=white] (-.5,0) to [out=90, in=180] (0,1) to [out=0, in=90] (.5,0) to (-.5,0);
	\fill[myblue, opacity=0.3] (-.5,0) to [out=90, in=180] (0,1) to [out=0, in=90] (.5,0) to (1.5,0) to (1.5,2) to (-1.5,2) to (-1.5,0) to (-.5,0);
	\draw[bstrand] (-.5,0) to [out=90, in=180] (0,1) to [out=0, in=90] (.5,0);
	\draw[bstrand, directed=.55] (-.1,1) to (-.11,1);
	\node at (0,.4) {$\rootb$};
\end{tikzpicture}}
\ar[rd]|{
\begin{tikzpicture}[anchorbase, scale=.4, tinynodes]
	\draw[very thin, densely dotted, fill=white] (-.5,0) to [out=90, in=180] (0,1) to [out=0, in=90] (.5,0) to (-.5,0);
	\fill[myblue, opacity=0.3] (-.5,0) to [out=90, in=180] (0,1) to [out=0, in=90] (.5,0) to (1.5,0) to (1.5,2) to (-1.5,2) to (-1.5,0) to (-.5,0);
	\draw[bstrand] (-.5,0) to [out=90, in=180] (0,1) to [out=0, in=90] (.5,0);
	\draw[bstrand, directed=.55] (-.1,1) to (-.11,1);
\end{tikzpicture}}
 & & \bc\wc\bc
\\
 & \bc\{-1\} 
\ar[ru]|{\tfrac{1}{2}\,
\begin{tikzpicture}[anchorbase, scale=.4, tinynodes]
	\draw[very thin, densely dotted, fill=white] (-.5,2) to [out=270, in=180] (0,1) to [out=0, in=270] (.5,2) to (-.5,2);
	\fill[myblue, opacity=0.3] (-.5,2) to [out=270, in=180] (0,1) to [out=0, in=270] (.5,2) to (1.5,2) to (1.5,0) to (-1.5,0) to (-1.5,2) to (-.5,2);
	\draw[bstrand] (-.5,2) to [out=270, in=180] (0,1) to [out=0, in=270] (.5,2);
	\draw[bstrand, directed=.55] (.1,1) to (.11,1);
	\node at (0,1.6) {$\rootb$};
\end{tikzpicture}}  
 & 
\\
}
\end{gather*}
Observing that $\qDema_{\bc}(\rootb)=2$, $\qDema_{\bc}(\rootb^2)=0$ and 
$\Frobel{\bc}{\phantom{.}}=\tfrac{1}{2}(\rootb\otimes 1+1\otimes\rootb)$, 
and using the relations in \fullref{definition:ssbim} , one can check that the left and right column in this diagram define 
mutual inverses: In this setup $2$-morphisms correspond to matrices of
diagrams, such that composition corresponds to matrix multiplication. The two $2$-morphisms 
above corresponding to the column matrices are
each other's inverses with respect to this composition. 
Similar arguments can be used for all other compatible color variations.
\end{proof}

\begin{remark}\label{remark:no-white-regions}
Using \fullref{lemma:remove-white} we simplify 
our diagrams and do not illustrate 
$\wc$ colored regions in the middle, if not necessary. For example, 
$\theta_\gc\theta_\gc$ should be thought 
of as corresponding to $\wc\bc\gc\bc\gc\bc\wc$ 
instead of $\wc\bc\gc\bc\wc\bc\gc\bc\wc$. 
However, the appearing grading shift in this simplification 
is exactly the reason for the scaling by powers of 
$\vnumber{2}^{-1}$ in \fullref{section:funny-algebra}.
\end{remark}

\begin{definitionnoqed}\label{definition:the-gog-gpg-iso}
Define the following $2$-morphisms in $\Adiag$ (and similar ones for other colors).
\begin{gather}\label{eq:o-and-u-cross}
\begin{gathered}
\begin{tikzpicture}[anchorbase, scale=.4, tinynodes]
	\fill[mygreen, opacity=0.8] (-1,1.5) to [out=90, in=225] (-.5,2.25) to [out=135, in=270] (-1,3) to (-1.5,3) to (-1.5,1.5) to (-1,1.5);
	\fill[mypurple, opacity=0.8] (-1,3) to [out=270, in=135] (-.5,2.25) to [out=45, in=270] (0,3) to (-1,3);
	\fill[myorange, opacity=0.8] (-1,1.5) to [out=90, in=225] (-.5,2.25) to [out=315, in=90] (0,1.5) to (-1,1.5);
	\fill[mygreen, opacity=0.8] (0,1.5) to [out=90, in=315] (-.5,2.25) to [out=45, in=270] (0,3) to (.5,3) to (.5,1.5) to (0,1.5);
	\draw[dstrand, Xmarked=.999] (0,1.5) to [out=90, in=270] (-1,3);
	\draw[dstrand, Xmarked=.999] (0,3) to [out=270, in=90] (-1,1.5);
\end{tikzpicture}
=
\begin{tikzpicture}[anchorbase, scale=.4, tinynodes]
	\fill[mygreen, opacity=0.8] (-3,0) to (-2,0) to [out=90, in=225] (-1.25,2) to [out=135, in=270] (-2,4) to (-3,4) to (-3,0);
	\fill[myyellow, opacity=0.3] (0,0) to (0,1.1) to (-1.25,2) to [out=225, in=90] (-2,0) to (0,0);
	\fill[myblue, opacity=0.3] (0,4) to (0,2.9) to (-1.25,2) to [out=135, in=270] (-2,4) to (0,4);
	\fill[myorange, opacity=0.8] (0,0) to (2,0) to (2,1.1) to (1,1) to (0,1.1) to (0,0);
	\fill[myred, opacity=0.3] (2,1.1) to (1,1) to (0,1.1) to (0,2.9) to (1,3) to (2,2.9) to (2,1.1);
	\fill[mypurple, opacity=0.8] (0,4) to (2,4) to (2,2.9) to (1,3) to (0,2.9) to (0,0);
	\fill[myyellow, opacity=0.3] (2,0) to (2,1.1) to (3.25,2) to [out=315, in=90] (4,0) to (2,0);
	\fill[myblue, opacity=0.3] (2,4) to (2,2.9) to (3.25,2) to [out=45, in=270] (4,4) to (2,4);
	\fill[mygreen, opacity=0.8] (5,0) to (4,0) to [out=90, in=315] (3.25,2) to [out=45, in=270] (4,4) to (5,4) to (5,0);
	\draw[ystrand, directed=.525] (-2,4) to [out=270, in=180] (1,1) to [out=0, in=270] (4,4);
	\draw[rstrand, directed=.55] (0,0) to (0,4);
	\draw[rstrand, rdirected=.55] (2,0) to (2,4);	
	\draw[bstrand, rdirected=.525] (-2,0) to [out=90, in=180] (1,3) to [out=0, in=90] (4,0);
\end{tikzpicture}
+\qpar^{-1}
\begin{tikzpicture}[anchorbase, scale=.4, tinynodes]
	\fill[myblue, opacity=0.3] (-2,4) to [out=270, in=180] (1,2.25) to [out=0, in=270] (4,4) to (2,4) to [out=270, in=0] (1,3) to [out=180, in=270] (0,4) to (-2,4);
	\fill[mypurple, opacity=0.8] (0,4) to (0,4) to [out=270, in=180] (1,3) to [out=0, in=270] (2,4);
	\fill[myorange, opacity=0.8] (0,0) to [out=90, in=180] (1,1) to [out=0, in=90] (2,0) to (0,0);
	\fill[myyellow, opacity=0.3] (-2,0) to [out=90, in=180] (1,1.75) to [out=0, in=90] (4,0) to (2,0) to [out=90, in=0] (1,1) to [out=180, in=90] (0,0) to (-2,0);
	\fill[mygreen, opacity=0.8] (-2,0) to [out=90, in=180] (1,1.75) to [out=0, in=90] (4,0) to (5,0) to (5,4) to (4,4) to [out=270, in=0] (1,2.25) to [out=180, in=270] (-2,4) to (-3,4) to (-3,0) to (-2,0);
	\draw[bstrand, rdirected=.525] (-2,0) to [out=90, in=180] (1,1.75) to [out=0, in=90] (4,0);
	\draw[rstrand, directed=.55] (0,0) to [out=90, in=180] (1,1) to [out=0, in=90] (2,0);
	\draw[rstrand, rdirected=.55] (0,4) to [out=270, in=180] (1,3) to [out=0, in=270] (2,4);	
	\draw[ystrand, directed=.525] (-2,4) to [out=270, in=180] (1,2.25) to [out=0, in=270] (4,4);
\end{tikzpicture}
\\
\begin{tikzpicture}[anchorbase, scale=.4, tinynodes]
	\fill[mygreen, opacity=0.8] (-1,1.5) to [out=90, in=225] (-.5,2.25) to [out=135, in=270] (-1,3) to (-1.5,3) to (-1.5,1.5) to (-1,1.5);
	\fill[myorange, opacity=0.8] (-1,3) to [out=270, in=135] (-.5,2.25) to [out=45, in=270] (0,3) to (-1,3);
	\fill[mypurple, opacity=0.8] (-1,1.5) to [out=90, in=225] (-.5,2.25) to [out=315, in=90] (0,1.5) to (-1,1.5);
	\fill[mygreen, opacity=0.8] (0,1.5) to [out=90, in=315] (-.5,2.25) to [out=45, in=270] (0,3) to (.5,3) to (.5,1.5) to (0,1.5);
	\draw[dstrand, Xmarked=.999] (-1,1.5) to [out=90, in=270] (0,3);
	\draw[dstrand, Xmarked=.999] (-1,3) to [out=270, in=90] (0,1.5);
\end{tikzpicture}
=
\begin{tikzpicture}[anchorbase, scale=.4, tinynodes]
	\fill[mygreen, opacity=0.8] (-3,0) to (-2,0) to [out=90, in=225] (-1.25,2) to [out=135, in=270] (-2,4) to (-3,4) to (-3,0);
	\fill[myblue, opacity=0.3] (0,0) to (0,1.1) to (-1.25,2) to [out=225, in=90] (-2,0) to (0,0);
	\fill[myyellow, opacity=0.3] (0,4) to (0,2.9) to (-1.25,2) to [out=135, in=270] (-2,4) to (0,4);
	\fill[mypurple, opacity=0.8] (0,0) to (2,0) to (2,1.1) to (1,1) to (0,1.1) to (0,0);
	\fill[myred, opacity=0.3] (2,1.1) to (1,1) to (0,1.1) to (0,2.9) to (1,3) to (2,2.9) to (2,1.1);
	\fill[myorange, opacity=0.8] (0,4) to (2,4) to (2,2.9) to (1,3) to (0,2.9) to (0,0);
	\fill[myblue, opacity=0.3] (2,0) to (2,1.1) to (3.25,2) to [out=315, in=90] (4,0) to (2,0);
	\fill[myyellow, opacity=0.3] (2,4) to (2,2.9) to (3.25,2) to [out=45, in=270] (4,4) to (2,4);
	\fill[mygreen, opacity=0.8] (5,0) to (4,0) to [out=90, in=315] (3.25,2) to [out=45, in=270] (4,4) to (5,4) to (5,0);
	\draw[ystrand, rdirected=.525] (-2,0) to [out=90, in=180] (1,3) to [out=0, in=90] (4,0);
	\draw[rstrand, directed=.55] (0,0) to (0,4);
	\draw[rstrand, rdirected=.55] (2,0) to (2,4);	
	\draw[bstrand, directed=.525] (-2,4) to [out=270, in=180] (1,1) to [out=0, in=270] (4,4);
\end{tikzpicture}
+\qpar^{\phantom{-1}}
\begin{tikzpicture}[anchorbase, scale=.4, tinynodes]
	\fill[myyellow, opacity=0.3] (-2,4) to [out=270, in=180] (1,2.25) to [out=0, in=270] (4,4) to (2,4) to [out=270, in=0] (1,3) to [out=180, in=270] (0,4) to (-2,4);
	\fill[myorange, opacity=0.8] (0,4) to (0,4) to [out=270, in=180] (1,3) to [out=0, in=270] (2,4);
	\fill[mypurple, opacity=0.8] (0,0) to [out=90, in=180] (1,1) to [out=0, in=90] (2,0) to (0,0);
	\fill[myblue, opacity=0.3] (-2,0) to [out=90, in=180] (1,1.75) to [out=0, in=90] (4,0) to (2,0) to [out=90, in=0] (1,1) to [out=180, in=90] (0,0) to (-2,0);
	\fill[mygreen, opacity=0.8] (-2,0) to [out=90, in=180] (1,1.75) to [out=0, in=90] (4,0) to (5,0) to (5,4) to (4,4) to [out=270, in=0] (1,2.25) to [out=180, in=270] (-2,4) to (-3,4) to (-3,0) to (-2,0);
	\draw[bstrand, directed=.525] (-2,4) to [out=270, in=180] (1,2.25) to [out=0, in=270] (4,4);
	\draw[rstrand, directed=.55] (0,0) to [out=90, in=180] (1,1) to [out=0, in=90] (2,0);
	\draw[rstrand, rdirected=.55] (0,4) to [out=270, in=180] (1,3) to [out=0, in=270] (2,4);	
	\draw[ystrand, rdirected=.525] (-2,0) to [out=90, in=180] (1,1.75) to [out=0, in=90] (4,0);
\end{tikzpicture}
\end{gathered}
\hspace{3.1cm}
\raisebox{-1.55cm}{\makeqedtri}
\hspace{-3.1cm}
\end{gather}
\end{definitionnoqed}

\begin{lemma}\label{lemma:thecrossings-2}
The following diagrams commute in $\Adiag$.
\begin{gather}\label{eq:clasps-well-defined-2}
\xymatrix@C+=1.3cm@L+=6pt{
\gc\bc\pc\bc\gc
\ar@<-4pt>@/_/[rr]|{\,\twomorstuff{id}_{\gc\bc\pc\bc\gc}\,}
\ar[r]^{
\begin{tikzpicture}[anchorbase, scale=.4, tinynodes]
	\fill[mygreen, opacity=0.8] (-1,1.5) to [out=90, in=225] (-.5,2.25) to [out=135, in=270] (-1,3) to (-1.5,3) to (-1.5,1.5) to (-1,1.5);
	\fill[mypurple, opacity=0.8] (-1,3) to [out=270, in=135] (-.5,2.25) to [out=45, in=270] (0,3) to (-1,3);
	\fill[myorange, opacity=0.8] (-1,1.5) to [out=90, in=225] (-.5,2.25) to [out=315, in=90] (0,1.5) to (-1,1.5);
	\fill[mygreen, opacity=0.8] (0,1.5) to [out=90, in=315] (-.5,2.25) to [out=45, in=270] (0,3) to (.5,3) to (.5,1.5) to (0,1.5);
	\draw[dstrand, Xmarked=.999] (0,1.5) to [out=90, in=270] (-1,3);
	\draw[dstrand, Xmarked=.999] (0,3) to [out=270, in=90] (-1,1.5);
\end{tikzpicture}}
&
\gc\yc\oc\yc\gc
\ar[r]^{
\begin{tikzpicture}[anchorbase, scale=.4, tinynodes]
	\fill[mygreen, opacity=0.8] (-1,1.5) to [out=90, in=225] (-.5,2.25) to [out=135, in=270] (-1,3) to (-1.5,3) to (-1.5,1.5) to (-1,1.5);
	\fill[myorange, opacity=0.8] (-1,3) to [out=270, in=135] (-.5,2.25) to [out=45, in=270] (0,3) to (-1,3);
	\fill[mypurple, opacity=0.8] (-1,1.5) to [out=90, in=225] (-.5,2.25) to [out=315, in=90] (0,1.5) to (-1,1.5);
	\fill[mygreen, opacity=0.8] (0,1.5) to [out=90, in=315] (-.5,2.25) to [out=45, in=270] (0,3) to (.5,3) to (.5,1.5) to (0,1.5);
	\draw[dstrand, Xmarked=.999] (-1,1.5) to [out=90, in=270] (0,3);
	\draw[dstrand, Xmarked=.999] (-1,3) to [out=270, in=90] (0,1.5);
\end{tikzpicture}}
&
\gc\bc\pc\bc\gc
}
,\quad\quad
\xymatrix@C+=1.3cm@L+=6pt{
\gc\bc\pc\bc\gc
\ar@<-4pt>@/_/[rr]|{\,\twomorstuff{id}_{\gc\bc\pc\bc\gc}\,}
\ar[r]^{
\begin{tikzpicture}[anchorbase, scale=.4, tinynodes]
	\fill[mygreen, opacity=0.8] (-1,1.5) to [out=90, in=225] (-.5,2.25) to [out=135, in=270] (-1,3) to (-1.5,3) to (-1.5,1.5) to (-1,1.5);
	\fill[myorange, opacity=0.8] (-1,3) to [out=270, in=135] (-.5,2.25) to [out=45, in=270] (0,3) to (-1,3);
	\fill[mypurple, opacity=0.8] (-1,1.5) to [out=90, in=225] (-.5,2.25) to [out=315, in=90] (0,1.5) to (-1,1.5);
	\fill[mygreen, opacity=0.8] (0,1.5) to [out=90, in=315] (-.5,2.25) to [out=45, in=270] (0,3) to (.5,3) to (.5,1.5) to (0,1.5);
	\draw[dstrand, Xmarked=.999] (-1,1.5) to [out=90, in=270] (0,3);
	\draw[dstrand, Xmarked=.999] (-1,3) to [out=270, in=90] (0,1.5);
\end{tikzpicture}}
&
\gc\yc\oc\yc\gc
\ar[r]^{
\begin{tikzpicture}[anchorbase, scale=.4, tinynodes]
	\fill[mygreen, opacity=0.8] (-1,1.5) to [out=90, in=225] (-.5,2.25) to [out=135, in=270] (-1,3) to (-1.5,3) to (-1.5,1.5) to (-1,1.5);
	\fill[mypurple, opacity=0.8] (-1,3) to [out=270, in=135] (-.5,2.25) to [out=45, in=270] (0,3) to (-1,3);
	\fill[myorange, opacity=0.8] (-1,1.5) to [out=90, in=225] (-.5,2.25) to [out=315, in=90] (0,1.5) to (-1,1.5);
	\fill[mygreen, opacity=0.8] (0,1.5) to [out=90, in=315] (-.5,2.25) to [out=45, in=270] (0,3) to (.5,3) to (.5,1.5) to (0,1.5);
	\draw[dstrand, Xmarked=.999] (0,1.5) to [out=90, in=270] (-1,3);
	\draw[dstrand, Xmarked=.999] (0,3) to [out=270, in=90] (-1,1.5);
\end{tikzpicture}}
&
\gc\bc\pc\bc\gc
}
\end{gather}
In particular, $\gc\yc\oc\yc\gc\cong\gc\bc\pc\bc\gc$.
A similar result holds for other colors.
\end{lemma}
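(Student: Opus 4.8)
The plan is to prove both commuting squares, and hence the isomorphism $\gc\yc\oc\yc\gc\cong\gc\bc\pc\bc\gc$, by a direct computation inside $\Adiag$, in the same spirit as \fullref{lemma:clasps-well-defined} but one level of singularity higher. First I would unfold the schematic pictures: the $X$-marked two-strand crossings between the $\gc$, $\oc$ and $\pc$ regions in \eqref{eq:clasps-well-defined-2} are abbreviations, and by \fullref{definition:the-gog-gpg-iso} each of them equals the explicit sum of two honest diagrams written out in \eqref{eq:o-and-u-cross}: a \emph{main} term, which is the untwisted square-relation configuration, and a \emph{correction} term carrying a prefactor $\qpar^{-1}$ or $\qpar$.

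I would then compose the two crossings appearing in a given square, obtaining a $2$-endomorphism of $\gc\bc\pc\bc\gc$, and expand it into the four pieces main$\,\circ\,$main, main$\,\circ\,$correction, correction$\,\circ\,$main, correction$\,\circ\,$correction. For the main$\,\circ\,$main piece the primary strands in the middle double back on one another, so it can be straightened using the Reidemeister-type relations \eqref{eq:rm-first}, \eqref{eq:rm-second}, \eqref{eq:rm-third}; closed regions created in the process are evaluated by the circle-removal relations \eqref{eq:circle-first}--\eqref{eq:circle-primary}, and the isotopy relations \eqref{eq:iso-rel} together with neck cutting \eqref{eq:neck-cut} put the outcome in standard form. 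One checks that all the scalars that appear — ranks of the Frobenius extensions and values of Demazure operators on the relevant $\frobel{\placeholder}{\placeholder}$, cf.\ \eqref{eq:frob-elements} and \eqref{eq:ranks} — collapse exactly to $\twomorstuff{id}_{\gc\bc\pc\bc\gc}$. For the three remaining pieces one applies the square relation \eqref{eq:square} repeatedly, keeping careful track of the color-dependent twist it carries (a factor $\qpar$, $\qpar^{-1}$ or $1$, per the comment under \eqref{eq:square}), along with \eqref{eq:rm-first}--\eqref{eq:rm-third} and the circle relations, to reduce all three to a common diagram; the $\qpar^{\pm 1}$ normalizations built into \eqref{eq:o-and-u-cross} are chosen precisely so that these three contributions cancel.

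The second square in \eqref{eq:clasps-well-defined-2} is treated the same way with the roles of the two crossings swapped, which shows that the two crossings are mutually inverse; and the analogous statements for the other secondary colors follow by applying the cyclic symmetry $\rho$ of \eqref{eq:color-tensor}, under which the relations of \fullref{definition:ssbim} are invariant after the substitutions $\qpar\leftrightarrow\qpar^{-1}$ prescribed in \eqref{eq:square}. (As a consistency check, after extending scalars to $\Cq$ the isomorphism is already forced by \fullref{lemma:sts-decomp} and \cite[Theorem A.1]{El1}, both $1$-morphisms carrying the same class in the affine $\typea{2}$ Hecke algebroid; but the lemma asks for the explicit maps of \eqref{eq:o-and-u-cross} to be mutually inverse over $\aformq$, so the diagrammatic verification is the one actually needed.) The main obstacle is the bookkeeping in the middle step: one must be disciplined about which instance of the square relation is used where and with which $\qpar$-twist, and one must confirm that the main$\,\circ\,$main piece really reduces to the bare identity and not to the identity plus lower-order summands — this is exactly where the specific normalization of the crossings in \fullref{definition:the-gog-gpg-iso}, and the $\vnumber{2}^{-1}$-scalings flagged in \fullref{remark:no-white-regions}, pull their weight.
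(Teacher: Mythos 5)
Your overall framework matches the paper's: decompose each crossing into its two summands from \eqref{eq:o-and-u-cross}, expand the composite into four pieces, and evaluate them with \eqref{eq:square}, \eqref{eq:rm-first}--\eqref{eq:rm-third}, the circle relations and the computations of \fullref{example:some-relations} and \fullref{example:more-relations}. But your predicted cancellation pattern is wrong, and it is wrong at exactly the point you stake the proof on. Writing the first crossing as $\twomorstuff{f}_1+\qpar^{-1}\twomorstuff{f}_2$ and the second as $\twomorstuff{g}_1+\qpar\twomorstuff{g}_2$, the ``main$\circ$main'' piece does \emph{not} reduce to the bare identity: applying \eqref{eq:square} to the middle of $\twomorstuff{g}_1\vcomp\twomorstuff{f}_1$ necessarily produces two terms (the square relation has two summands on its right-hand side), and one gets $\twomorstuff{g}_1\vcomp\twomorstuff{f}_1=\twomorstuff{id}_{\gc\bc\pc\bc\gc}+\twomorstuff{h}$ for a nonzero extra $2$-morphism $\twomorstuff{h}$. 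Likewise the remaining three pieces do not cancel among themselves: the two cross terms give $\twomorstuff{g}_1\vcomp\qpar^{-1}\twomorstuff{f}_2+\qpar\twomorstuff{g}_2\vcomp\twomorstuff{f}_1=-\qnumber{2}^{2}\twomorstuff{h}$ (using $\qDema_{\bc}\frobel{\oc}{\rc,\yc}=-\qnumber{2}^{2}$), and the correction$\circ$correction term gives $\qpar\twomorstuff{g}_2\vcomp\qpar^{-1}\twomorstuff{f}_2=\qnumber{3}\twomorstuff{h}$.

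So the actual mechanism is a global three-way balance on the coefficient of the single leftover diagram $\twomorstuff{h}$: the total is $\twomorstuff{id}+(1-\qnumber{2}^{2}+\qnumber{3})\twomorstuff{h}$, which equals the identity precisely because of the quantum-integer identity $\qnumber{3}=\qnumber{2}^{2}-1$. If you carry out your computation insisting that the main piece is exactly the identity and that the other three vanish together, the argument fails; the fix is not a different normalization of the crossings but recognizing that all four pieces are proportional to the same residual diagram modulo the identity, and that their coefficients sum to zero. The rest of your plan (the second square by symmetry, other colors via the cyclic symmetry $\rho$, and the Grothendieck-group sanity check) is fine.
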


\begin{proof}
We only prove that the left diagram commutes. To this end, 
we write $\twomorstuff{f}_1$ and 
$\qpar^{-1}\twomorstuff{f}_2$ for the two summands on the right-hand side of the top 
equality in \eqref{eq:o-and-u-cross}, and similarly $\twomorstuff{g}_1$ and $\qpar\twomorstuff{g}_2$ for the bottom 
equality. Using \eqref{eq:square}, followed by \eqref{eq:rm-first} and 
\cite[Claim 3.14]{El1}, we get $\twomorstuff{g}_1\vcomp\twomorstuff{f}_1
=\twomorstuff{id}_{\gc\bc\pc\bc\gc}+\twomorstuff{h}$.
Moreover, by first using \eqref{eq:rm-first} and \eqref{eq:rm-third} 
(and $\qDema_{\bc}\frobel{\oc}{\rc,\yc}=-\qnumber{2}^{2}$),
and then \eqref{eq:circle-primary} and 
\fullref{example:more-relations}, we get 
$\twomorstuff{g}_1\vcomp\qpar^{-1}\twomorstuff{f}_2+
\qpar\twomorstuff{g}_2\vcomp\twomorstuff{f}_1=-\qnumber{2}^{2}\twomorstuff{h}$. 
Finally, $\qpar\twomorstuff{g}_2\vcomp\qpar^{-1}\twomorstuff{f}_2=\qnumber{3}\twomorstuff{h}$ 
follows from \fullref{example:some-relations}, and we are done 
since $\qnumber{3}=\qnumber{2}^{2}-1$.
\end{proof}

\subsubsection{Categorifying \texorpdfstring{$\subquo$}{Tinfty}}\label{subsec:cat-thealgebra}

\begin{proposition}\label{proposition:cat-the-algebra}
The assignment given by 
\[
\theta_{\gc}\mapsto[\wc\bc\gc\bc\wc]=[\wc\yc\gc\yc\wc], 
\quad\quad 
\theta_{\oc}\mapsto[\wc\rc\oc\rc\wc]=[\wc\yc\oc\yc\wc],
\quad\quad  
\theta_{\pc}\mapsto[\wc\bc\pc\bc\wc]=[\wc\rc\pc\rc\wc],
\]
defines an isomorphism 
$\subquo\xrightarrow{\cong}\GGcv{\Kar{\subcatquofield}}$ 
of algebras.

Under this isomorphism, the elements 
of the basis $\basisC$ (or of $\Cbasis$) from \fullref{proposition:two-bases}
correspond to a complete set of indecomposables in 
$\Kar{\subcatquofield}$ (up to grading shift).
\end{proposition}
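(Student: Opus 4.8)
The plan is to define $\phi\colon\subquo\to\GGcv{\Kar{\subcatquofield}}$ on the generators by $\theta_{\gc}\mapsto[\wc\bc\gc\bc\wc]$, $\theta_{\oc}\mapsto[\wc\rc\oc\rc\wc]$, $\theta_{\pc}\mapsto[\wc\bc\pc\bc\wc]$, to prove that $\phi$ is a well-defined algebra isomorphism, and then to match the distinguished bases. For well-definedness I would first record that, by \fullref{lemma:sts-decomp}, the $1$-morphism $\wc\bc\gc\bc\wc$ (and each compatible variant) is the indecomposable summand labelled by $w_{\gc}\in\Wgroup$, so under the decategorification $\GGcv{\Kar{\adiagfield}}\cong\hecke(\typeat{2})$ of \fullref{remark:cat-affine-a2} its class is exactly the Kazhdan--Lusztig element $\theta_{w_{\gc}}$, and likewise for $\oc,\pc$. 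Thus $\phi$ sends $\theta_{\gc},\theta_{\oc},\theta_{\pc}$ to the three elements for which \fullref{lemma:quotient-of-affine} has already verified the defining relations \eqref{eq:first-rel} and \eqref{eq:second-rel}, so $\phi$ is a well-defined homomorphism. It is injective because composing $\phi$ with the inclusion $\GGcv{\Kar{\subcatquofield}}\hookrightarrow\GGcv{\Kar{\adiagfield}}$ --- an inclusion precisely because $\subcatquofield$ is a $2$-full $2$-subcategory, whence its indecomposables remain indecomposable in $\adiagfield$ --- recovers the algebra embedding of \fullref{lemma:quotient-of-affine}.

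For surjectivity I would prove that $\Kar{\subcatquofield}$ is generated, under $\circ$, $\oplus$, grading shifts and passage to direct summands, by the three objects $\wc\bc\gc\bc\wc$, $\wc\rc\oc\rc\wc$, $\wc\bc\pc\bc\wc$. By \fullref{definition:the-subquo-cat} it suffices to exhibit every generating color string --- one with only secondary colors and $\wc$ occurring exactly at its two ends, as in \fullref{example:the-subquo-cat} --- as a direct summand of an iterated composite of these three rank-three objects: compose the rank-three objects in the prescribed order, use \fullref{lemma:clasps-well-defined} to arrange that the two primary colors flanking each internal $\wc$ agree, and then delete each resulting $\duc\wc\duc$ by \fullref{lemma:remove-white}, the grading-shift bookkeeping being exactly the source of the powers of $\vnumber{2}^{\pm1}$ in \fullref{remark:no-white-regions}. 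Consequently $\GGcv{\Kar{\subcatquofield}}$ is generated as a $\Cv$-algebra by the three classes, so $\phi$ is onto, hence an isomorphism of algebras. The same discussion identifies, for each word in normal form (cf. \eqref{eq:KLelement}), the element $\rklx{k,l}$ with $\vnumber{2}^{k+l}$ times the class of the ``white-reduced'' Bott--Samelson object attached to that word, which lies in $\Kar{\subcatquofield}$.

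It remains to identify the colored Kazhdan--Lusztig basis $\basisC=\{1\}\cup\{\RKLx{m,n}\}$ (and the analogous $\Cbasis$) with the classes of the indecomposable $1$-morphisms. By \fullref{remark:cat-affine-a2} the indecomposables of $\Kar{\subcatquofield}$ decategorify to a sub-collection of the Kazhdan--Lusztig basis of $\hecke(\typeat{2})$, and --- since $\GGcv{\Kar{\subcatquofield}}$ is the subgroup spanned by those indecomposables --- this sub-collection, transported along $\phi^{-1}$, is a $\Cv$-basis of $\subquo$. On the other hand, the $\RKLx{m,n}$ (together with $1$) span $\subquo$: this follows from $\phi$ being onto, the Bott--Samelson generation of the previous paragraph, and the unitriangular relation \eqref{eq:the-expressions} between the $\RKLx{m,n}$ and the $\rklx{k,l}$, using $d^{m,n}_{m,n}=1$. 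Therefore, once one shows that each $\RKLx{m,n}$ is itself the class of an indecomposable $1$-morphism, the two collections must coincide, because a spanning subset of a basis equals that basis; this simultaneously proves the $\basisC$ and $\Cbasis$ part of \fullref{proposition:two-bases}.

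To produce the required indecomposables I would construct ``clasp'' objects $\morstuff{C}^{m,n}_{\tduc}\in\Kar{\subcatquofield}$ inside the Bott--Samelson objects --- the $\slt$-analogue of Jones--Wenzl projectors, cut out by idempotents generalising the one in \eqref{eq:byb-decomp} --- with $\morstuff{C}^{0,0}_{\tduc}=\wc\duc\tduc\duc\wc$, and establish the categorical recursion $\wc\dudc\tdudc\dudc\wc\circ\morstuff{C}^{m,n}_{\tduc}\cong\morstuff{C}^{m+1,n}_{\tduc}\oplus\morstuff{C}^{m-1,n+1}_{\tduc}\oplus\morstuff{C}^{m,n-1}_{\tduc}$, up to the grading shifts dictated by the factor $\vnumber{2}$ in \eqref{eq:multiplication}, together with its two colour-rotated variants, using \eqref{eq:square}, the Reidemeister-type relations \eqref{eq:rm-first}--\eqref{eq:rm-third}, \fullref{lemma:remove-white} and \fullref{lemma:thecrossings-2}. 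Decategorifying this recursion yields at once $[\morstuff{C}^{m,n}_{\tduc}]=\RKLx{m,n}$ (matching \eqref{eq:the-expressions} against the Chebyshev-like recursion of the $d^{k,l}_{m,n}$) and the multiplication rule \eqref{eq:multiplication} of \fullref{lemma:multiplication}, whose proof is postponed to exactly this point. The hard part will be step (c): showing that the clasp idempotents are primitive and that the $\morstuff{C}^{m,n}_{\tduc}$ exhaust \emph{all} indecomposable summands of the Bott--Samelson objects --- the trihedral ``diagonal miracle''. I expect to reduce most of this to the bookkeeping already in hand, since by induction on $m+n$ the summands of strictly smaller degree appearing in the recursion are already classified and the total Grothendieck class is known, so only indecomposability of the new top summand needs to be checked, which can be done on its degree-zero endomorphism ring using the explicit relations of \fullref{definition:ssbim}.
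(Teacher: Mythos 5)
Your treatment of the algebra isomorphism is essentially the paper's: well-definedness via \fullref{lemma:sts-decomp} and \fullref{lemma:quotient-of-affine}, injectivity by composing with the inclusion $\GGcv{\Kar{\subcatquofield}}\hookrightarrow\GGcv{\Kar{\adiagfield}}\cong\hecke$ coming from $2$-fullness, and surjectivity from the definition of $\subcatquofield$ together with \fullref{lemma:clasps-well-defined} and \fullref{lemma:remove-white}. Your reduction of the second assertion to the single claim that each $\RKLx{m,n}$ is the class of an indecomposable $1$-morphism (a spanning subset of a basis equals that basis) is also the paper's logic.

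The gap is in how you propose to establish that single claim. You plan to build clasps $\morstuff{C}^{m,n}_{\tduc}$ by hand, verify a categorical recursion, and then argue that "only indecomposability of the new top summand needs to be checked, which can be done on its degree-zero endomorphism ring using the explicit relations". That last step is not bookkeeping: knowing the Grothendieck class of the complement of the lower summands does not preclude it from splitting further into shifted copies of lower clasps, and excluding this requires control of the graded ranks of $2$-hom spaces between Bott--Samelson objects for arbitrary $(m,n)$ --- which is precisely the hard content of the quantum Satake correspondence, not a consequence of the listed relations. The paper never computes these endomorphism rings; instead (in the postponed proof of \fullref{lemma:multiplication}) it applies the $2$-functor $\elfunctor$ of \fullref{theorem:q-satake}: being a degree-zero $2$-equivalence out of the semisimple $2$-category $\sltcatgop$, it sends the simples $\Ll_{m,n}$ to indecomposables, biinduction preserves indecomposability, and the class of the resulting indecomposable in $\GGcv{\Kar{\subcatquofield}}$ is exactly the expression \eqref{eq:the-expressions} defining $\RKLx{m,n}$. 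Your argument needs to invoke $\elfunctor$ (or an equivalent graded Hom-formula) at this point; as written, the "hard part (c)" you flag is a genuine hole, and it is also the point where your claimed simultaneous proof of \fullref{proposition:two-bases} would break down, since linear independence of the $\RKLx{m,n}$ is obtained there by the same mechanism.
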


\makeautorefname{lemma}{Lemmas}

\begin{proof}
This follows now directly from \fullref{remark:cat-affine-a2}, and
\fullref{lemma:quotient-of-affine}, \ref{lemma:clasps-well-defined} 
and \ref{lemma:sts-decomp}.
\end{proof}

\makeautorefname{lemma}{Lemma}

\subsection{The \texorpdfstring{$2$}{2}-quotient of level \texorpdfstring{$e$}{e}}\label{subsec:quotient-category}

The quotient $\subquo[e]$ of $\subquo$ 
from \fullref{definition:the-quotient-defined} is defined 
by killing certain elements 
which correspond to the irreducibles 
$\Ll_{m,n}$ for $m+n=e+1$
in the representation 
category of $\slt$. We follow 
the same strategy on the categorified level. 

\subsubsection{From \texorpdfstring{$\slt$}{sl3} to singular bimodules: the generic case}\label{subsubsec:webs-to-soergel}

Recall from \fullref{subsec:generic} that $\sltcat$ 
denotes the category of finite-dimensional $\Uslt$-modules 
(with $\Cq$ being the ground field). 
The central character \eqref{eq:color-code} allows us to view 
$\sltcat$ as a $2$-category $\sltcatgop$:

\begin{definition}\label{definition:colored-webs-1}
For $\tduc$, let $\sltcat^{\tduc}$ denote the full subcategory 
of $\sltcat$ generated by the irreducibles with central character $\tduc$.
\end{definition}

Note that the subcategories 
$\sltcat^{\tduc}$ are not monoidal. However,
by \fullref{lemma:central-character}, tensoring with $\fu$ or $\fud$ defines functors between them 
\begin{gather}\label{eq:some-colored-endos}
\xy
(0,0)*{
\fuf{\tduc}{\rho(\tduc)}=\fu\otimes\placeholder
\colon
\sltcat^{\tduc}\to\sltcat^{\rho(\tduc)},
\quad\quad
\fudf{\tduc}{\rho^{-1}(\tduc)}=\fud\otimes\placeholder
\colon
\sltcat^{\tduc}\to\sltcat^{\rho^{-1}(\tduc)},};
\endxy
\end{gather}
which the reader should compare to \eqref{eq:color-tensor}. We will (reading 
right to left) depict them by
\[
\fuf{\gc}{\oc}
=
\begin{tikzpicture}[anchorbase, scale=.4, tinynodes]
	\fill[mygreen, opacity=0.8] (0,0) to (0,2) to (1,2) to (1,0) to (0,0);
	\fill[myorange, opacity=0.8] (0,0) to (0,2) to (-1,2) to (-1,0) to (0,0);
	\draw[dstrand, Xmarked=.55] (0,0) node [below] {$\fu$} to (0,2) node [above] {$\fu$};
\end{tikzpicture}
\colon
\sltcat^{\gc}\to\sltcat^{\oc},
\quad\quad
\fudf{\oc}{\gc}
=
\begin{tikzpicture}[anchorbase, scale=.4, tinynodes]
	\fill[myorange, opacity=0.8] (0,0) to (0,2) to (1,2) to (1,0) to (0,0);
	\fill[mygreen, opacity=0.8] (0,0) to (0,2) to (-1,2) to (-1,0) to (0,0);
	\draw[dstrand, Ymarked=.55] (0,0) node [below] {$\fud$} to (0,2) node [above] {$\fud$};
\end{tikzpicture}
\colon
\sltcat^{\oc}\to\sltcat^{\gc},
\quad\quad
\fudf{\oc}{\gc}\circ\fuf{\gc}{\oc}
=
\begin{tikzpicture}[anchorbase, scale=.4, tinynodes]
	\fill[mygreen, opacity=0.8] (0,0) to (0,2) to (1,2) to (1,0) to (0,0);
	\fill[mygreen, opacity=0.8] (-2,0) to (-2,2) to (-1,2) to (-1,0) to (-2,0);
	\fill[myorange, opacity=0.8] (0,0) to (0,2) to (-1,2) to (-1,0) to (0,0);
	\draw[dstrand, Xmarked=.55] (0,0) node [below] {$\fu$} to (0,2) node [above] {$\fu$};
	\draw[dstrand, Ymarked=.55] (-1,0) node [below] {$\fud$} to (-1,2) node [above] {$\fud$};
\end{tikzpicture}
\]
etc. The orientation is such that the color on the
left-hand side comes directly after the color on the right-hand side in the cyclic ordering
determined by $\rho$ in \eqref{eq:color-tensor}.
(We omit the $\fu$ and the $\fud$ 
in the pictures from now on.)

\begin{definition}\label{definition:colored-webs-2}
We define $\sltcatgop$ to be the additive, $\Cq$-linear closure of the
$2$-category whose objects are the categories $\sltcat^{\tduc}$, 
whose $1$-morphisms are composites of the functors in \eqref{eq:some-colored-endos}, 
and whose $2$-morphisms are natural transformations.
\end{definition} 
 
A natural transformation between composites of 
the functors from \eqref{eq:some-colored-endos} is the same as 
an $\Uslt$-equivariant map, see e.g. \cite[Proposition 2.5.4]{EGNO}. Therefore, we define
\begin{gather}\label{eq:basic-webs}
\begin{aligned}
&
\begin{tikzpicture}[anchorbase, scale=.4, tinynodes]
	\fill[mygreen, opacity=0.8] (0,0) to [out=270, in=180] (1,-1) to [out=0, in=270] (2,0) to (2.5,0) to (2.5,-2) to (-.5,-2) to (-.5,0) to (0,0);
	\fill[myorange, opacity=0.8] (0,0) to [out=270, in=180] (1,-1) to [out=0, in=270] (2,0) to (0,0);
	\draw[dstrand, Xmarked=.55] (0,0) to [out=270, in=180] (1,-1) to [out=0, in=270] (2,0);
\end{tikzpicture}
\colon
\begin{matrix}
\fud\fu
\\
\rotatebox{90}{$\hookrightarrow$}
\\
\Cq
\end{matrix},
\\
&
\begin{tikzpicture}[anchorbase, scale=.4, tinynodes]
	\fill[mygreen, opacity=0.8] (0,-1) to (0,0) to (1,1) to (1.5,1) to (1.5,-1) to (0,-1);
	\fill[myorange, opacity=0.8] (0,0) to (-1,1) to (1,1) to (0,0); 
	\fill[mypurple, opacity=0.8] (0,1) to (0,0) to (-1,1) to (-1.5,1) to (-1.5,-1) to (0,-1);
	\draw[dstrand, Ymarked=.55] (-1,1) to (0,0);
	\draw[dstrand, Ymarked=.55] (1,1) to (0,0);
	\draw[dstrand, Xmarked=.55] (0,0) to (0,-1);
\end{tikzpicture}
\colon
\begin{matrix}
\fu\fu
\\
\rotatebox{90}{$\hookrightarrow$}
\\
\fud
\end{matrix},
\end{aligned}
\quad\quad
\begin{aligned}
&
\begin{tikzpicture}[anchorbase, scale=.4, tinynodes]
	\fill[mygreen, opacity=0.8] (0,0) to [out=90, in=180] (1,1) to [out=0, in=90] (2,0) to (2.5,0) to (2.5,2) to (-.5,2) to (-.5,0) to (0,0);
	\fill[myorange, opacity=0.8] (0,0) to [out=90, in=180] (1,1) to [out=0, in=90] (2,0) to (0,0);
	\draw[dstrand, Xmarked=.55] (2,0) to [out=90, in=0] (1,1) to [out=180, in=90] (0,0);
\end{tikzpicture}
\colon
\begin{matrix}
\Cq
\\
\rotatebox{90}{$\twoheadrightarrow$}
\\
\fud\fu
\end{matrix},
\\
&
\begin{tikzpicture}[anchorbase, scale=.4, tinynodes]
	\fill[mygreen, opacity=0.8] (0,1) to (0,0) to (1,-1) to (1.5,-1) to (1.5,1) to (0,1);
	\fill[myorange, opacity=0.8] (0,0) to (-1,-1) to (1,-1) to (0,0); 
	\fill[mypurple, opacity=0.8] (0,1) to (0,0) to (-1,-1) to (-1.5,-1) to (-1.5,1) to (0,1);
	\draw[dstrand, Xmarked=.55] (-1,-1) to (0,0);
	\draw[dstrand, Xmarked=.55] (1,-1) to (0,0);
	\draw[dstrand, Ymarked=.55] (0,0) to (0,1);
\end{tikzpicture}
\colon
\begin{matrix}
\fud
\\
\rotatebox{90}{$\twoheadrightarrow$}
\\
\fu\fu
\end{matrix},
\end{aligned}
\quad\quad
\begin{aligned}
&
\begin{tikzpicture}[anchorbase, scale=.4, tinynodes]
	\fill[mygreen, opacity=0.8] (0,0) to [out=270, in=180] (1,-1) to [out=0, in=270] (2,0) to (2.5,0) to (2.5,-2) to (-.5,-2) to (-.5,0) to (0,0);
	\fill[mypurple, opacity=0.8] (0,0) to [out=270, in=180] (1,-1) to [out=0, in=270] (2,0) to (0,0);
	\draw[dstrand, Ymarked=.55] (0,0) to [out=270, in=180] (1,-1) to [out=0, in=270] (2,0);
\end{tikzpicture}
\colon
\begin{matrix}
\fu\fud
\\
\rotatebox{90}{$\hookrightarrow$}
\\
\Cq
\end{matrix},
\\
&
\begin{tikzpicture}[anchorbase, scale=.4, tinynodes]
	\fill[mygreen, opacity=0.8] (0,-1) to (0,0) to (1,1) to (1.5,1) to (1.5,-1) to (0,-1);
	\fill[myorange, opacity=0.8] (0,1) to (0,0) to (-1,1) to (-1.5,1) to (-1.5,-1) to (0,-1);
	\fill[mypurple, opacity=0.8] (0,0) to (-1,1) to (1,1) to (0,0); 
	\draw[dstrand, Xmarked=.55] (-1,1) to (0,0);
	\draw[dstrand, Xmarked=.55] (1,1) to (0,0);
	\draw[dstrand, Ymarked=.55] (0,0) to (0,-1);
\end{tikzpicture}
\colon
\begin{matrix}
\fud\fud
\\
\rotatebox{90}{$\hookrightarrow$}
\\
\fu
\end{matrix},
\end{aligned}
\quad\quad
\begin{aligned}
&
\begin{tikzpicture}[anchorbase, scale=.4, tinynodes]
	\fill[mygreen, opacity=0.8] (0,0) to [out=90, in=180] (1,1) to [out=0, in=90] (2,0) to (2.5,0) to (2.5,2) to (-.5,2) to (-.5,0) to (0,0);
	\fill[mypurple, opacity=0.8] (0,0) to [out=90, in=180] (1,1) to [out=0, in=90] (2,0) to (0,0);
	\draw[dstrand, Ymarked=.55] (2,0) to [out=90, in=0] (1,1) to [out=180, in=90] (0,0);
\end{tikzpicture}
\colon
\begin{matrix}
\Cq
\\
\rotatebox{90}{$\twoheadrightarrow$}
\\
\fu\fud
\end{matrix},
\\
&
\begin{tikzpicture}[anchorbase, scale=.4, tinynodes]
	\fill[mygreen, opacity=0.8] (0,1) to (0,0) to (1,-1) to (1.5,-1) to (1.5,1) to (0,1);
	\fill[myorange, opacity=0.8] (0,1) to (0,0) to (-1,-1) to (-1.5,-1) to (-1.5,1) to (0,1);
	\fill[mypurple, opacity=0.8] (0,0) to (-1,-1) to (1,-1) to (0,0);
	\draw[dstrand, Ymarked=.55] (-1,-1) to (0,0);
	\draw[dstrand, Ymarked=.55] (1,-1) to (0,0);
	\draw[dstrand, Xmarked=.55] (0,0) to (0,1);
\end{tikzpicture}
\colon
\begin{matrix}
\fu
\\
\rotatebox{90}{$\twoheadrightarrow$}
\\
\fud\fud
\end{matrix},
\end{aligned}
\end{gather}
to be the corresponding inclusions respectively projections, which 
are well-defined up to scalars. 
We do this in all color variations.

In this way we can view $\sltcatgop$ as being generated by the diagrams as in \eqref{eq:basic-webs}.

Fixing scalars appropriately (which we will do below), it is not hard to see that we get
\\
\noindent\begin{tabularx}{0.99\textwidth}{XXX}
\begin{equation}\hspace{-9.75cm}\label{eq:basic-webs-rels-isoa}
\begin{tikzpicture}[anchorbase, scale=.4, tinynodes]
	\fill[mygreen, opacity=0.8] (0,-1.5) to (0,0) to [out=90, in=180] (.5,1) to [out=0, in=90] (1,0) to [out=270, in=180] (1.5,-1) to [out=0, in=270] (2,0) to (2,1.5) to (2.5,1.5) to (2.5,-1.5) to (0,-1.5);
	\fill[myorange, opacity=0.8] (0,-1.5) to (0,0) to [out=90, in=180] (.5,1) to [out=0, in=90] (1,0) to [out=270, in=180] (1.5,-1) to [out=0, in=270] (2,0) to (2,1.5) to (-.5,1.5) to (-.5,-1.5) to (0,-1.5);
	\draw[dstrand, Xmarked=.55] (0,-1.5) to (0,0) to [out=90, in=180] (.5,1) to [out=0, in=90] (1,0) to [out=270, in=180] (1.5,-1) to [out=0, in=270] (2,0) to (2,1.5);
\end{tikzpicture}
{=}
\begin{tikzpicture}[anchorbase, scale=.4, tinynodes]
	\fill[mygreen, opacity=0.8] (0,0) to (0,3) to (.5,3) to (.5,0) to (0,0);
	\fill[myorange, opacity=0.8] (0,0) to (0,3) to (-.5,3) to (-.5,0) to (0,0);
	\draw[dstrand, Xmarked=.55] (0,0) to (0,3);
\end{tikzpicture}
{=}
\begin{tikzpicture}[anchorbase, scale=.4, tinynodes]
	\fill[mygreen, opacity=0.8] (0,-1.5) to (0,0) to [out=90, in=0] (-.5,1) to [out=180, in=90] (-1,0) to [out=270, in=0] (-1.5,-1) to [out=180, in=270] (-2,0) to (-2,1.5) to (.5,1.5) to (.5,-1.5) to (0,-1.5);
	\fill[myorange, opacity=0.8] (0,-1.5) to (0,0) to [out=90, in=0] (-.5,1) to [out=180, in=90] (-1,0) to [out=270, in=0] (-1.5,-1) to [out=180, in=270] (-2,0) to (-2,1.5) to (-2.5,1.5) to (-2.5,-1.5) to (0,-1.5);
	\draw[dstrand, Xmarked=.55] (0,-1.5) to (0,0) to [out=90, in=0] (-.5,1) to [out=180, in=90] (-1,0) to [out=270, in=0] (-1.5,-1) to [out=180, in=270] (-2,0) to (-2,1.5);
\end{tikzpicture}
\end{equation} &
\begin{equation}\hspace{-9.65cm}\label{eq:basic-webs-rels-isob}
\begin{tikzpicture}[anchorbase, scale=.4, tinynodes]
	\fill[myorange, opacity=0.8] (0,0) to [out=270, in=180] (1,-1) to [out=0, in=270] (2,0) to (2,1) to (1,1) to (0,0);
	\fill[mypurple, opacity=0.8] (0,0) to (-1,1) to (1,1) to (0,0); 
	\fill[mygreen, opacity=0.8] (-1.5,1) to (-1,1) to (0,0) to [out=270, in=180] (1,-1) to [out=0, in=270] (2,0) to (2,1) to (2.5,1) to (2.5,-2) to (-1.5,-2) to (-1.5,1);
	\draw[dstrand, Ymarked=.55] (-1,1) to (0,0);
	\draw[dstrand, Ymarked=.55] (1,1) to (0,0);
	\draw[dstrand, Xmarked=.55] (0,0) to [out=270, in=180] (1,-1) to [out=0, in=270] (2,0) to (2,1);
\end{tikzpicture}
{=}
\begin{tikzpicture}[anchorbase, scale=.4, tinynodes]
	\fill[myorange, opacity=0.8] (0,1) to (0,.5) to [out=180, in=90] (-.5,0) to [out=270, in=0] (-1,-.75) to [out=180, in=270] (-2,1) to (0,1);
	\fill[mypurple, opacity=0.8] (0,.5) to [out=0, in=90] (.5,0) to [out=270, in=0] (-1,-1.5) to [out=180, in=270] (-3,1) to (-2,1) to [out=270, in=180] (-1,-.75) to [out=0, in=270] (-.5,0) to [out=90, in=180] (0,.5);
	\fill[mygreen, opacity=0.8] (0,1) to (0,.5) to [out=0, in=90] (.5,0) to [out=270, in=0] (-1,-1.5) to [out=180, in=270] (-3,1) to (-3.5,1) to (-3.5,-2) to (1,-2) to (1,1) to (0,1);
	\draw[dstrand, Xmarked=.55] (0,.5) to [out=180, in=90] (-.5,0) to [out=270, in=0] (-1,-.75) to [out=180, in=270] (-2,1);
	\draw[dstrand, Xmarked=.55] (0,.5) to [out=0, in=90] (.5,0) to [out=270, in=0] (-1,-1.5) to [out=180, in=270] (-3,1);
	\draw[dstrand, Xmarked=.85] (0,.5) to (0,1);
\end{tikzpicture}
\end{equation}
&
\begin{equation}\hspace{-9.65cm}\label{eq:basic-webs-rels-isoc}
\begin{tikzpicture}[anchorbase, scale=.4, tinynodes]
	\fill[myorange, opacity=0.8] (0,0) to [out=90, in=180] (1,1) to [out=0, in=90] (2,0) to (2,-1) to (1,-1) to (0,0);
	\fill[mypurple, opacity=0.8] (0,0) to (-1,-1) to (1,-1) to (0,0); 
	\fill[mygreen, opacity=0.8] (-1.5,-1) to (-1,-1) to (0,0) to [out=90, in=180] (1,1) to [out=0, in=90] (2,0) to (2,-1) to (2.5,-1) to (2.5,2) to (-1.5,2) to (-1.5,-1);
	\draw[dstrand, Xmarked=.55] (-1,-1) to (0,0);
	\draw[dstrand, Xmarked=.55] (1,-1) to (0,0);
	\draw[dstrand, Ymarked=.55] (0,0) to [out=90, in=180] (1,1) to [out=0, in=90] (2,0) to (2,-1);
\end{tikzpicture}
{=}
\begin{tikzpicture}[anchorbase, scale=.4, tinynodes]
	\fill[myorange, opacity=0.8] (0,-1) to (0,-.5) to [out=180, in=270] (-.5,0) to [out=90, in=0] (-1,.75) to [out=180, in=90] (-2,-1) to (0,-1);
	\fill[mypurple, opacity=0.8] (0,-.5) to [out=0, in=270] (.5,0) to [out=90, in=0] (-1,1.5) to [out=180, in=90] (-3,-1) to (-2,-1) to [out=90, in=180] (-1,.75) to [out=0, in=90] (-.5,0) to [out=270, in=180] (0,-.5);
	\fill[mygreen, opacity=0.8] (0,-1) to (0,-.5) to [out=0, in=270] (.5,0) to [out=90, in=0] (-1,1.5) to [out=180, in=90] (-3,-1) to (-3.5,-1) to (-3.5,2) to (1,2) to (1,-1) to (0,-1);
	\draw[dstrand, Ymarked=.55] (0,-.5) to [out=180, in=270] (-.5,0) to [out=90, in=0] (-1,.75) to [out=180, in=90] (-2,-1);
	\draw[dstrand, Ymarked=.55] (0,-.5) to [out=0, in=270] (.5,0) to [out=90, in=0] (-1,1.5) to [out=180, in=90] (-3,-1);
	\draw[dstrand, Ymarked=.85] (0,-.5) to (0,-1);
\end{tikzpicture}
\end{equation}
\end{tabularx}
\\
\noindent\begin{tabularx}{0.99\textwidth}{XXX}
\begin{equation}\hspace{-10.0cm}\label{eq:basic-webs-rels-a}
\begin{tikzpicture}[anchorbase, scale=.4, tinynodes]
	\fill[mygreen, opacity=0.8] (0,0) to [out=270, in=180] (1,-1) to [out=0, in=270] (2,0) to (2.5,0) to (2.5,-2) to (-.5,-2) to (-.5,0) to (0,0);
	\fill[mygreen, opacity=0.8] (0,0) to [out=90, in=180] (1,1) to [out=0, in=90] (2,0) to (2.5,0) to (2.5,2) to (-.5,2) to (-.5,0) to (0,-0);
	\fill[myorange, opacity=0.8] (0,0) to [out=270, in=180] (1,-1) to [out=0, in=270] (2,0) to [out=90, in=0] (1,1) to [out=180, in=90] (0,0);
	\draw[dstrand, Xmarked=.55] (0,0) to [out=270, in=180] (1,-1) to [out=0, in=270] (2,0);
	\draw[dstrand, Xmarked=.55] (2,0) to [out=90, in=0] (1,1) to [out=180, in=90] (0,0);
\end{tikzpicture}
{=}
\qnumber{3}
\begin{tikzpicture}[anchorbase, scale=.4, tinynodes]
	\fill[mygreen, opacity=0.8] (2.5,-2) to (2.5,2) to (-.5,2) to (-.5,-2) to (2.5,-2);
\end{tikzpicture}
\end{equation} &
\begin{equation}\hspace{-9.7cm}\label{eq:basic-webs-rels-b}
\begin{tikzpicture}[anchorbase, scale=.4, tinynodes]
	\fill[mygreen, opacity=0.8] (0,0) to [out=45, in=270] (.5,1) to [out=90, in=315] (0,2) to (0,3) to (1.5,3) to (1.5,-1) to (0,-1) to (0,0);
	\fill[myorange, opacity=0.8] (0,0) to [out=135, in=270] (-.5,1) to [out=90, in=225] (0,2) to [out=315, in=90] (.5,1) to [out=270, in=45] (0,0);
	\fill[mypurple, opacity=0.8] (0,0) to [out=135, in=270] (-.5,1) to [out=90, in=225] (0,2) to (0,3) to (-1.5,3) to (-1.5,-1) to (0,-1) to (0,0);
	\draw[dstrand, Xmarked=.55] (0,0) to [out=135, in=270] (-.5,1) to [out=90, in=225] (0,2);
	\draw[dstrand, Xmarked=.55] (0,0) to [out=45, in=270] (.5,1) to [out=90, in=315] (0,2);
	\draw[dstrand, Ymarked=.55] (0,-1) to (0,0);
	\draw[dstrand, Ymarked=.55] (0,2) to (0,3);
\end{tikzpicture}
{=}
{-}\qnumber{2}
\begin{tikzpicture}[anchorbase, scale=.4, tinynodes]
	\fill[mygreen, opacity=0.8] (0,0) to (0,3) to (1.5,3) to (1.5,-1) to (0,-1) to (0,0);
	\fill[mypurple, opacity=0.8] (0,0) to (0,3) to (-1.5,3) to (-1.5,-1) to (0,-1) to (0,0);
	\draw[dstrand, Ymarked=.55] (0,-1) to (0,3);
\end{tikzpicture}
\end{equation} &
\begin{equation}\hspace{-9.15cm}\label{eq:basic-webs-rels-c}
\begin{tikzpicture}[anchorbase, scale=.4, tinynodes]
	\fill[mygreen, opacity=0.8] (1,-1) to (1,3) to (1.5,3) to (1.5,-1) to (1,-1);
	\fill[mygreen, opacity=0.8] (-1,-1) to (-1,3) to (-1.5,3) to (-1.5,-1) to (-1,-1);
	\fill[myorange, opacity=0.8] (-1,.2) to (1,.4) to (1,1.6) to (-1,1.8) to (-1,.2);
	\fill[mypurple, opacity=0.8] (-1,.2) to (1,.4) to (1,-1) to (-1,-1) to (-1,.2);
	\fill[mypurple, opacity=0.8] (1,1.6) to (1,3) to (-1,3) to (-1,1.8) to (1,1.6);
	\draw[dstrand, Ymarked=.55] (1,-1) to (1,0);
	\draw[dstrand, Xmarked=.55] (1,0) to (1,2);
	\draw[dstrand, Ymarked=.55] (1,2) to (1,3);
	\draw[dstrand, Xmarked=.55] (-1,-1) to (-1,0);
	\draw[dstrand, Ymarked=.55] (-1,0) to (-1,2);
	\draw[dstrand, Xmarked=.55] (-1,2) to (-1,3);
	\draw[dstrand, Ymarked=.55] (-1,.2) to (1,.4);
	\draw[dstrand, Xmarked=.55] (-1,1.8) to (1,1.6);
\end{tikzpicture}
{=}
\begin{tikzpicture}[anchorbase, scale=.4, tinynodes]
	\fill[mygreen, opacity=0.8] (1,-1) to (1,3) to (1.5,3) to (1.5,-1) to (1,-1);
	\fill[mygreen, opacity=0.8] (-1,-1) to (-1,3) to (-1.5,3) to (-1.5,-1) to (-1,-1);
	\fill[mypurple, opacity=0.8] (-1,-1) to (-1,3) to (1,3) to (1,-1) to (-1,-1);
	\draw[dstrand, Ymarked=.55] (1,-1) to (1,3);
	\draw[dstrand, Xmarked=.55] (-1,-1) to (-1,3);
\end{tikzpicture}
{+}
\begin{tikzpicture}[anchorbase, scale=.4, tinynodes]
	\fill[mygreen, opacity=0.8] (-1,3) to (-1.5,3) to (-1.5,-1) to (-1,-1) to [out=90, in=180] (0,0) to [out=0, in=90] (1,-1) to (1.5,-1) to (1.5,3) to (1,3) to [out=270, in=0] (0,2) to [out=180, in=270] (-1,3);
	\fill[mypurple, opacity=0.8] (-1,-1) to [out=90, in=180] (0,0) to [out=0, in=90] (1,-1) to (-1,-1);
	\fill[mypurple, opacity=0.8] (-1,3) to [out=270, in=180] (0,2) to [out=0, in=270] (1,3) to (-1,3);
	\draw[dstrand, Xmarked=.55] (-1,-1) to [out=90, in=180] (0,0) to [out=0, in=90] (1,-1);
	\draw[dstrand, Ymarked=.55] (-1,3) to [out=270, in=180] (0,2) to [out=0, in=270] (1,3);
\end{tikzpicture}
\end{equation}
\end{tabularx}\\
together with those obtained by varying the orientation and the colors, and the vertical 
mirrors of \eqref{eq:basic-webs-rels-isob} and \eqref{eq:basic-webs-rels-isoc}.

The following result is a consequence of \cite[Theorem 6.1]{Kup}.

\begin{lemmaqed}\label{lemma:webs-and-slt}
The $\Uslt$-equivariant maps/diagrams from \eqref{eq:basic-webs} 
together with the relations \eqref{eq:basic-webs-rels-isoa} to \eqref{eq:basic-webs-rels-c} 
give a generator-relation $2$-presentation of $\sltcatgop$.
\end{lemmaqed}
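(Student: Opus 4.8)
The plan is to deduce the statement from Kuperberg's presentation of the $\typea{2}$ web category, treating the $\zeethree$-grading by central characters as a purely combinatorial decoration that carries no extra information. So the whole proof is a reduction: \emph{lift} Kuperberg's one-object spider to the many-object graded $2$-category $\sltcatgop$, \emph{match} generators and relations, and \emph{fix} the scalars.

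First I would recall precisely what \cite[Theorem 6.1]{Kup} provides. Write $\mathrm{Fund}(\Uslt)$ for the full monoidal subcategory of $\sltcat$ whose objects are the finite tensor products of $\fu$ and $\fud$. Kuperberg's theorem says that $\mathrm{Fund}(\Uslt)$ is $\otimes$-equivalent to the category of reduced $\typea{2}$ webs over $\Cq$: as a pivotal $\Cq$-linear category it is presented by the generating object $\fu$ (with dual $\fud$), the generating morphisms being the trivalent vertices $\fu\otimes\fu\to\fud$ and $\fud\otimes\fud\to\fu$ together with the duality cups and caps $\Cq\to\fud\otimes\fu$, $\fud\otimes\fu\to\Cq$, $\Cq\to\fu\otimes\fud$, $\fu\otimes\fud\to\Cq$, and the defining relations being the circle evaluation $=\qnumber{3}$, the bigon evaluation $=-\qnumber{2}$, and the square (``$H=I+I$'') relation, with the pivotal structure recording the planar-isotopy moves. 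Since $\qpar$ is a formal parameter, $\sltcat$ over $\Cq$ is exactly the generic quantum $\slt$ representation category, so Kuperberg's theorem applies verbatim, and the natural transformations in \eqref{eq:basic-webs} are precisely these trivalent vertices and duality maps (each living in a one-dimensional $\Hom$-space, hence a priori defined up to a unit).

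Next I would identify $\sltcatgop$ with the $\zeethree$-colored version of $\mathrm{Fund}(\Uslt)$. By \fullref{lemma:central-character}, tensoring with $\fu$ sends $\sltcat^{\tduc}$ to $\sltcat^{\rho(\tduc)}$ and tensoring with $\fud$ sends $\sltcat^{\tduc}$ to $\sltcat^{\rho^{-1}(\tduc)}$, so a word in $\fu,\fud$ together with a starting color $\tduc$ determines a $1$-morphism $\sltcat^{\tduc}\to\sltcat^{\tdudc}$ of $\sltcatgop$, and by construction the $2$-morphism space between two such $1$-morphisms is the $\Uslt$-equivariant $\Hom$-space between the corresponding tensor products, i.e. a $\Hom$-space in $\mathrm{Fund}(\Uslt)$. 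Conversely, any web representing such a map carries a unique region-coloring by secondary colors: the leftmost region has color $\tdudc$, and each edge separates two regions whose colors differ by $\rho$ or $\rho^{-1}$ according to the orientation of that edge. Thus the $2$-functor from the colored web $2$-category to $\mathrm{Fund}(\Uslt)$ forgetting the region labels is full, faithful and essentially surjective onto the relevant part, so a relation among colored webs holds in $\sltcatgop$ if and only if the underlying relation holds in $\mathrm{Fund}(\Uslt)$. It then remains to observe that the colored generators of \eqref{eq:basic-webs} are exactly the colored avatars of Kuperberg's vertices and duality maps, that \eqref{eq:basic-webs-rels-isoa}--\eqref{eq:basic-webs-rels-isoc} are the colored isotopy/zig-zag relations, and that \eqref{eq:basic-webs-rels-a}, \eqref{eq:basic-webs-rels-b}, \eqref{eq:basic-webs-rels-c} are the colored circle, bigon and square relations. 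To get the coefficients right one chooses the normalizations of the vertices and cups/caps as in \cite{Kup} (equivalently, in the symmetric-web conventions of \cite{RT1}, \cite{TVW1}), which makes circles evaluate to $\qnumber{3}$, bigons to $-\qnumber{2}$, the square relation hold with coefficient $1$, and keeps cups and caps genuine adjunction (co)units; this is possible and unique up to harmless signs. Finally, passing from the planar-algebra/spider format to the $2$-categorical one adds nothing, since in the strict pivotal setting the interchange and height relations are precisely the planar-isotopy moves already recorded by \eqref{eq:basic-webs-rels-isoa}--\eqref{eq:basic-webs-rels-isoc}.

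The hard part will be exactly this translation between Kuperberg's one-object pivotal spider and the honestly many-object graded $2$-category $\sltcatgop$: one has to argue carefully that the secondary-color region-labeling is a faithful bookkeeping device (full faithfulness and essential surjectivity of the colored-to-uncolored comparison $2$-functor) and that the $2$-categorical height and interchange axioms introduce no relations beyond the listed isotopy relations. Everything else -- matching the generators, matching the three substantive relations, and fixing the scalars -- is routine once \cite[Theorem 6.1]{Kup} is invoked.
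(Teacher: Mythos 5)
Your proposal is correct and follows exactly the route the paper takes: the paper states this lemma without proof, as an immediate consequence of Kuperberg's \cite[Theorem 6.1]{Kup}, and your argument is precisely the expansion of that citation — matching Kuperberg's generators (trivalent vertices, cups/caps) and relations (circle $=\qnumber{3}$, bigon $=-\qnumber{2}$, square) with \eqref{eq:basic-webs} and \eqref{eq:basic-webs-rels-isoa}--\eqref{eq:basic-webs-rels-c}, and observing that the $\Seset$-coloring of regions is uniquely determined by the central character of the source object and hence adds no information. Your extra care about the faithfulness of the colored-to-uncolored comparison and about fixing normalizations is exactly the content the paper leaves implicit.
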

 
Following \cite[Section 3]{El1} we define a Satake $2$-functor.

\begin{definition}\label{definition:slt-soergel}
For $\tduc$ let $\elfunctor\colon\sltcatgop\to\aDiagfield$ be the 
$2$-functor defined as follows. On objects by $\elfunctor(\sltcat^{\tduc})=\tduc$, 
on $1$-morphisms by $\elfunctor(\fuf{\tduc}{\rho(\tduc)})=\rho(\tduc)\duc\tduc$ and 
$\elfunctor(\fudf{\tduc}{\rho^{-1}(\tduc)})=\rho^{-1}(\tduc)\duc\tduc$, and on 
$2$-morphisms by
\begin{gather}\label{eq:elfunctor}
\begin{aligned}
&
\begin{tikzpicture}[anchorbase, scale=.4, tinynodes]
	\fill[mygreen, opacity=0.8] (0,0) to [out=270, in=180] (1,-1) to [out=0, in=270] (2,0) to (2.5,0) to (2.5,-2) to (-.5,-2) to (-.5,0) to (0,0);
	\fill[myorange, opacity=0.8] (0,0) to [out=270, in=180] (1,-1) to [out=0, in=270] (2,0) to (0,0);
	\draw[dstrand, Xmarked=.55] (0,0) to [out=270, in=180] (1,-1) to [out=0, in=270] (2,0);
\end{tikzpicture}
\xmapsto{\elfunctor}
\begin{tikzpicture}[anchorbase, scale=.4, tinynodes]
	\fill[mygreen, opacity=0.8] (0,2) to [out=270, in=180] (1,1) to [out=0, in=270] (2,2) to (2.5,2) to (2.5,0) to (-.5,0) to (-.5,2) to (0,2);
	\fill[myorange, opacity=0.8] (1.5,2) to [out=270, in=0] (1,1.5) to [out=180, in=270] (.5,2) to (1.5,2);
	\fill[myyellow, opacity=0.3] (0,2) to [out=270, in=180] (1,1) to [out=0, in=270] (2,2) to (1.5,2) to [out=270, in=0] (1,1.5) to [out=180, in=270] (.5,2) to (0,2);
	\draw[rstrand, directed=.999] (1.5,2) to [out=270, in=0] (1,1.5) to [out=180, in=270] (.5,2);
	\draw[bstrand, directed=.999] (0,2) to [out=270, in=180] (1,1) to [out=0, in=270] (2,2);
\end{tikzpicture}
,
\\
&
\begin{tikzpicture}[anchorbase, scale=.4, tinynodes]
	\fill[mygreen, opacity=0.8] (0,-1) to (0,0) to (1,1) to (1.5,1) to (1.5,-1) to (0,-1);
	\fill[myorange, opacity=0.8] (0,0) to (-1,1) to (1,1) to (0,0); 
	\fill[mypurple, opacity=0.8] (0,1) to (0,0) to (-1,1) to (-1.5,1) to (-1.5,-1) to (0,-1);
	\draw[dstrand, Ymarked=.55] (-1,1) to (0,0);
	\draw[dstrand, Ymarked=.55] (1,1) to (0,0);
	\draw[dstrand, Xmarked=.55] (0,0) to (0,-1);
\end{tikzpicture}
\xmapsto{\elfunctor}
\begin{tikzpicture}[anchorbase, scale=.4, tinynodes]
	\fill[mygreen, opacity=0.8] (1,-1) to (1,-.6) to [out=90, in=315] (.55,0) to [out=45, in=270] (1,1) to (1.5,1) to (1.5,-1) to (1,-1);
	\fill[myorange, opacity=0.8] (-.5,1) to [out=270, in=135] (0,.3) to [out=45, in=270] (.5,1) to (-.5,1);
	\fill[mypurple, opacity=0.8] (-1,-1) to (-1,-.6) to [out=90, in=225] (-.55,0) to [out=135, in=270] (-1,1) to (-1.5,1) to (-1.5,-1) to (-1,-1);
	\fill[myblue, opacity=0.3] (1,-1) to (1,-.6) to [out=90, in=315] (.55,0) to [out=235, in=325] (-.55,0) to [out=225, in=90] (-1,-.6) to (-1,-1) to (1,-1);
	\fill[myred, opacity=0.3] (.5,1) to [out=270, in=45] (0,.3) to (-.55,0) to [out=135, in=270] (-1,1) to (.5,1);
	\fill[myyellow, opacity=0.3] (-.5,1) to [out=270, in=135] (0,.3) to (.55,0) to [out=45, in=270] (1,1) to (-.5,1);
	\draw[ystrand, directed=.35] (1,-1) to (1,-.6) to [out=90, in=270] (-.5,1);
	\draw[rstrand, rdirected=.35] (-1,-1) to (-1,-.6) to [out=90, in=270] (.5,1);
	\draw[bstrand, directed=.55] (-1,1) to [out=270, in=180] (0,-.2) to [out=0, in=270] (1,1);
\end{tikzpicture}
,
\end{aligned}
\;\;
\begin{aligned}
&
\begin{tikzpicture}[anchorbase, scale=.4, tinynodes]
	\fill[mygreen, opacity=0.8] (0,0) to [out=90, in=180] (1,1) to [out=0, in=90] (2,0) to (2.5,0) to (2.5,2) to (-.5,2) to (-.5,0) to (0,0);
	\fill[myorange, opacity=0.8] (0,0) to [out=90, in=180] (1,1) to [out=0, in=90] (2,0) to (0,0);
	\draw[dstrand, Xmarked=.55] (2,0) to [out=90, in=0] (1,1) to [out=180, in=90] (0,0);
\end{tikzpicture}
\xmapsto{\elfunctor}
\begin{tikzpicture}[anchorbase, scale=.4, tinynodes]
	\fill[mygreen, opacity=0.8] (0,-2) to [out=90, in=180] (1,-1) to [out=0, in=90] (2,-2) to (2.5,-2) to (2.5,0) to (-.5,0) to (-.5,-2) to (0,-2);
	\fill[myorange, opacity=0.8] (1.5,-2) to [out=90, in=0] (1,-1.5) to [out=180, in=90] (.5,-2) to (1.5,-2);
	\fill[myyellow, opacity=0.3] (0,-2) to [out=90, in=180] (1,-1) to [out=0, in=90] (2,-2) to (1.5,-2) to [out=90, in=0] (1,-1.5) to [out=180, in=90] (.5,-2) to (0,-2);
	\draw[rstrand, directed=.999] (.5,-2) to [out=90, in=180] (1,-1.5) to [out=0, in=90] (1.5,-2);
	\draw[bstrand, directed=.999] (2,-2) to [out=90, in=0] (1,-1) to [out=180, in=90] (0,-2);
\end{tikzpicture}
,
\\
&
\begin{tikzpicture}[anchorbase, scale=.4, tinynodes]
	\fill[mygreen, opacity=0.8] (0,1) to (0,0) to (1,-1) to (1.5,-1) to (1.5,1) to (0,1);
	\fill[myorange, opacity=0.8] (0,0) to (-1,-1) to (1,-1) to (0,0); 
	\fill[mypurple, opacity=0.8] (0,1) to (0,0) to (-1,-1) to (-1.5,-1) to (-1.5,1) to (0,1);
	\draw[dstrand, Xmarked=.55] (-1,-1) to (0,0);
	\draw[dstrand, Xmarked=.55] (1,-1) to (0,0);
	\draw[dstrand, Ymarked=.55] (0,0) to (0,1);
\end{tikzpicture}
\xmapsto{\elfunctor}
\begin{tikzpicture}[anchorbase, scale=.4, tinynodes]
	\fill[mygreen, opacity=0.8] (1,1) to (1,.6) to [out=270, in=45] (.55,0) to [out=315, in=90] (1,-1) to (1.5,-1) to (1.5,1) to (1,1);
	\fill[myorange, opacity=0.8] (-.5,-1) to [out=90, in=225] (0,-.3) to [out=315, in=90] (.5,-1) to (-.5,-1);
	\fill[mypurple, opacity=0.8] (-1,1) to (-1,.6) to [out=270, in=135] (-.55,0) to [out=225, in=90] (-1,-1) to (-1.5,-1) to (-1.5,1) to (-1,1);
	\fill[myblue, opacity=0.3] (1,1) to (1,.6) to [out=270, in=45] (.55,0) to [out=125, in=55] (-.55,0) to [out=135, in=270] (-1,.6) to (-1,1) to (1,1);
	\fill[myred, opacity=0.3] (.5,-1) to [out=90, in=315] (0,-.3) to (-.55,0) to [out=225, in=90] (-1,-1) to (.5,-1);
	\fill[myyellow, opacity=0.3] (-.5,-1) to [out=90, in=225] (0,-.3) to (.55,0) to [out=315, in=90] (1,-1) to (-.5,-1);
	\draw[ystrand, rdirected=.35] (1,1) to (1,.6) to [out=270, in=90] (-.5,-1);
	\draw[rstrand, directed=.35] (-1,1) to (-1,.6) to [out=270, in=90] (.5,-1);
	\draw[bstrand, rdirected=.55] (-1,-1) to [out=90, in=180] (0,.2) to [out=0, in=90] (1,-1);
\end{tikzpicture}
,
\end{aligned}
\;\;
\begin{aligned}
&
\begin{tikzpicture}[anchorbase, scale=.4, tinynodes]
	\fill[mygreen, opacity=0.8] (0,0) to [out=270, in=180] (1,-1) to [out=0, in=270] (2,0) to (2.5,0) to (2.5,-2) to (-.5,-2) to (-.5,0) to (0,0);
	\fill[mypurple, opacity=0.8] (0,0) to [out=270, in=180] (1,-1) to [out=0, in=270] (2,0) to (0,0);
	\draw[dstrand, Ymarked=.55] (0,0) to [out=270, in=180] (1,-1) to [out=0, in=270] (2,0);
\end{tikzpicture}
\xmapsto{\elfunctor}
\begin{tikzpicture}[anchorbase, scale=.4, tinynodes]
	\fill[mygreen, opacity=0.8] (0,2) to [out=270, in=180] (1,1) to [out=0, in=270] (2,2) to (2.5,2) to (2.5,0) to (-.5,0) to (-.5,2) to (0,2);
	\fill[mypurple, opacity=0.8] (1.5,2) to [out=270, in=0] (1,1.5) to [out=180, in=270] (.5,2) to (1.5,2);
	\fill[myblue, opacity=0.3] (0,2) to [out=270, in=180] (1,1) to [out=0, in=270] (2,2) to (1.5,2) to [out=270, in=0] (1,1.5) to [out=180, in=270] (.5,2) to (0,2);
	\draw[ystrand, directed=.999] (0,2) to [out=270, in=180] (1,1) to [out=0, in=270] (2,2);
	\draw[rstrand, directed=.999] (1.5,2) to [out=270, in=0] (1,1.5) to [out=180, in=270] (.5,2);
\end{tikzpicture}
,
\\
&
\begin{tikzpicture}[anchorbase, scale=.4, tinynodes]
	\fill[mygreen, opacity=0.8] (0,-1) to (0,0) to (1,1) to (1.5,1) to (1.5,-1) to (0,-1);
	\fill[mypurple, opacity=0.8] (0,0) to (-1,1) to (1,1) to (0,0); 
	\fill[myorange, opacity=0.8] (0,1) to (0,0) to (-1,1) to (-1.5,1) to (-1.5,-1) to (0,-1);
	\draw[dstrand, Xmarked=.55] (-1,1) to (0,0);
	\draw[dstrand, Xmarked=.55] (1,1) to (0,0);
	\draw[dstrand, Ymarked=.55] (0,0) to (0,-1);
\end{tikzpicture}
\xmapsto{\elfunctor}
\begin{tikzpicture}[anchorbase, scale=.4, tinynodes]
	\fill[mygreen, opacity=0.8] (1,-1) to (1,-.6) to [out=90, in=315] (.55,0) to [out=45, in=270] (1,1) to (1.5,1) to (1.5,-1) to (1,-1);
	\fill[mypurple, opacity=0.8] (-.5,1) to [out=270, in=135] (0,.3) to [out=45, in=270] (.5,1) to (-.5,1);
	\fill[myorange, opacity=0.8] (-1,-1) to (-1,-.6) to [out=90, in=225] (-.55,0) to [out=135, in=270] (-1,1) to (-1.5,1) to (-1.5,-1) to (-1,-1);
	\fill[myyellow, opacity=0.3] (1,-1) to (1,-.6) to [out=90, in=315] (.55,0) to [out=235, in=325] (-.55,0) to [out=225, in=90] (-1,-.6) to (-1,-1) to (1,-1);
	\fill[myred, opacity=0.3] (.5,1) to [out=270, in=45] (0,.3) to (-.55,0) to [out=135, in=270] (-1,1) to (.5,1);
	\fill[myblue, opacity=0.3] (-.5,1) to [out=270, in=135] (0,.3) to (.55,0) to [out=45, in=270] (1,1) to (-.5,1);
	\draw[ystrand, directed=.55] (-1,1) to [out=270, in=180] (0,-.2) to [out=0, in=270] (1,1);
	\draw[rstrand, rdirected=.35] (-1,-1) to (-1,-.6) to [out=90, in=270] (.5,1);
	\draw[bstrand, directed=.35] (1,-1) to (1,-.6) to [out=90, in=270] (-.5,1);
\end{tikzpicture}
,
\end{aligned}
\;\;
\begin{aligned}
&
\begin{tikzpicture}[anchorbase, scale=.4, tinynodes]
	\fill[mygreen, opacity=0.8] (0,0) to [out=90, in=180] (1,1) to [out=0, in=90] (2,0) to (2.5,0) to (2.5,2) to (-.5,2) to (-.5,0) to (0,0);
	\fill[mypurple, opacity=0.8] (0,0) to [out=90, in=180] (1,1) to [out=0, in=90] (2,0) to (0,0);
	\draw[dstrand, Ymarked=.55] (2,0) to [out=90, in=0] (1,1) to [out=180, in=90] (0,0);
\end{tikzpicture}
\xmapsto{\elfunctor}
\begin{tikzpicture}[anchorbase, scale=.4, tinynodes]
	\fill[mygreen, opacity=0.8] (0,-2) to [out=90, in=180] (1,-1) to [out=0, in=90] (2,-2) to (2.5,-2) to (2.5,0) to (-.5,0) to (-.5,-2) to (0,-2);
	\fill[mypurple, opacity=0.8] (1.5,-2) to [out=90, in=0] (1,-1.5) to [out=180, in=90] (.5,-2) to (1.5,-2);
	\fill[myblue, opacity=0.3] (0,-2) to [out=90, in=180] (1,-1) to [out=0, in=90] (2,-2) to (1.5,-2) to [out=90, in=0] (1,-1.5) to [out=180, in=90] (.5,-2) to (0,-2);
	\draw[ystrand, directed=.999] (2,-2) to [out=90, in=0] (1,-1) to [out=180, in=90] (0,-2);
	\draw[rstrand, directed=.999] (.5,-2) to [out=90, in=180] (1,-1.5) to [out=0, in=90] (1.5,-2);
\end{tikzpicture}
,
\\
&
\begin{tikzpicture}[anchorbase, scale=.4, tinynodes]
	\fill[mygreen, opacity=0.8] (0,1) to (0,0) to (1,-1) to (1.5,-1) to (1.5,1) to (0,1);
	\fill[mypurple, opacity=0.8] (0,0) to (-1,-1) to (1,-1) to (0,0); 
	\fill[myorange, opacity=0.8] (0,1) to (0,0) to (-1,-1) to (-1.5,-1) to (-1.5,1) to (0,1);
	\draw[dstrand, Ymarked=.55] (-1,-1) to (0,0);
	\draw[dstrand, Ymarked=.55] (1,-1) to (0,0);
	\draw[dstrand, Xmarked=.55] (0,0) to (0,1);
\end{tikzpicture}
\xmapsto{\elfunctor}
\begin{tikzpicture}[anchorbase, scale=.4, tinynodes]
	\fill[mygreen, opacity=0.8] (1,1) to (1,.6) to [out=270, in=45] (.55,0) to [out=315, in=90] (1,-1) to (1.5,-1) to (1.5,1) to (1,1);
	\fill[mypurple, opacity=0.8] (-.5,-1) to [out=90, in=225] (0,-.3) to [out=315, in=90] (.5,-1) to (-.5,-1);
	\fill[myorange, opacity=0.8] (-1,1) to (-1,.6) to [out=270, in=135] (-.55,0) to [out=225, in=90] (-1,-1) to (-1.5,-1) to (-1.5,1) to (-1,1);
	\fill[myyellow, opacity=0.3] (1,1) to (1,.6) to [out=270, in=45] (.55,0) to [out=125, in=55] (-.55,0) to [out=135, in=270] (-1,.6) to (-1,1) to (1,1);
	\fill[myred, opacity=0.3] (.5,-1) to [out=90, in=315] (0,-.3) to (-.55,0) to [out=225, in=90] (-1,-1) to (.5,-1);
	\fill[myblue, opacity=0.3] (-.5,-1) to [out=90, in=225] (0,-.3) to (.55,0) to [out=315, in=90] (1,-1) to (-.5,-1);
	\draw[ystrand, rdirected=.55] (-1,-1) to [out=90, in=180] (0,.2) to [out=0, in=90] (1,-1);
	\draw[rstrand, directed=.35] (-1,1) to (-1,.6) to [out=270, in=90] (.5,-1);
	\draw[bstrand, rdirected=.35] (1,1) to (1,.6) to [out=270, in=90] (-.5,-1);
\end{tikzpicture}
,
\end{aligned}
\end{gather}
together with similar assignments for the other generators.
\end{definition}

The following lemma recalls \cite[Claim 3.19]{El1}. We sketch its proof for the convenience of the reader and refer 
to \cite[Proof of Claim 3.19]{El1} for more details.

\begin{lemma}\label{lemma:el-well-def}
The $2$-functor $\elfunctor$ is well-defined.
\end{lemma}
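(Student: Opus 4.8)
The plan is to verify that $\elfunctor$ respects each of the defining relations of $\sltcatgop$ listed in \eqref{eq:basic-webs-rels-isoa}--\eqref{eq:basic-webs-rels-c}, since by \fullref{lemma:webs-and-slt} these relations (together with their orientation and color variations, and the vertical mirrors of \eqref{eq:basic-webs-rels-isob} and \eqref{eq:basic-webs-rels-isoc}) give a complete presentation of $\sltcatgop$. Concretely, I would take the images under $\elfunctor$ of the left- and right-hand sides of each web relation, as read off from \eqref{eq:elfunctor}, and reduce both sides to a common normal form using the relations of $\Adiag$ from \fullref{definition:ssbim} --- isotopy \eqref{eq:iso-rel}, the circle removals \eqref{eq:circle-first}--\eqref{eq:circle-primary}, polynomial sliding and neck cutting \eqref{eq:poly-move}--\eqref{eq:neck-cut}, the Reidemeister-like relations \eqref{eq:rm-first}--\eqref{eq:rm-third}, and the square relation \eqref{eq:square} --- together with the computed values of the Demazure operators and Frobenius elements in \fullref{example:q-demazure-action}, \fullref{example:some-relations} and \fullref{example:more-relations}.

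First I would dispense with the isotopy-type relations \eqref{eq:basic-webs-rels-isoa}--\eqref{eq:basic-webs-rels-isoc}: these say that the elementary $\fu$- and $\fud$-strands, and the trivalent vertices, are adjunctions/biadjunctions, and on the Soergel side their images are composites of cups, caps and (sideways) crossings, so these follow from the isotopy relation \eqref{eq:iso-rel} and the Reidemeister relation \eqref{eq:rm-first} exactly as the analogous statements in \cite[Proof of Claim 3.19]{El1}. Next come the ``bubble'' and ``bigon'' evaluations \eqref{eq:basic-webs-rels-a} and \eqref{eq:basic-webs-rels-b}, which carry the scalars $\qnumber{3}$ and $-\qnumber{2}$; their images are circular diagrams that evaluate by \eqref{eq:circle-primary}, \eqref{eq:circle-secondary} and the ranks/traces \eqref{eq:ranks}, precisely as worked out in \fullref{example:some-relations}, where one sees $\qDema_{\gc}^{\yc}(\frobel{\oc}{\yc})=\qnumber{3}$ and similar identities. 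This is also the place where the correct normalization of the scalars in \eqref{eq:basic-webs} is fixed, so I would record those choices explicitly here.

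The main obstacle will be the square/web relation \eqref{eq:basic-webs-rels-c}, the analog of the $\mathfrak{sl}_3$ ``square switch'' (the $H = I + $ rectangle relation in Kuperberg's calculus). Its image under $\elfunctor$ is a genuinely two-dimensional diagram in $\Adiag$ built out of the more complicated assignments in \eqref{eq:elfunctor} (the ones with mixed $\bc$-, $\rc$- and $\yc$-strands), and matching it against the sum of the two terms on the right-hand side requires combining the square relation \eqref{eq:square} of $\Adiag$ --- which is \emph{not} color-symmetric, and picks up the factors $\qpar^{\pm 1}$ or $1$ depending on the colors --- with \eqref{eq:rm-first}, \eqref{eq:rm-third} and the circle removals, in the style of the computation already carried out in the proof of \fullref{lemma:thecrossings-2}. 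In fact the bookkeeping is essentially the same as there: one writes the relevant $2$-morphism as a combination of pieces $\twomorstuff{f}_i,\twomorstuff{g}_i$, computes the various vertical composites using \eqref{eq:square}, \eqref{eq:rm-first}, \eqref{eq:rm-third}, \fullref{example:some-relations} and \fullref{example:more-relations}, and assembles the result using $\qnumber{3}=\qnumber{2}^2-1$. I would therefore present this step by reducing it to \fullref{lemma:thecrossings-2} and \cite[Claim 3.14, Claim 3.19]{El1} rather than redoing the pictorial manipulation, noting only that one must check the $\qpar$-powers in \eqref{eq:square} distribute correctly across the color variations so that the $\qpar$'s introduced in \eqref{eq:o-and-u-cross} cancel. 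Since all remaining color and orientation variants are handled by the symmetry $\rho$ of \eqref{eq:color-tensor} and by taking duals, this completes the verification that $\elfunctor$ is well-defined.
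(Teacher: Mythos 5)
Your proposal follows essentially the same route as the paper's proof: check that the web relations \eqref{eq:basic-webs-rels-isoa}--\eqref{eq:basic-webs-rels-c} are preserved, with the isotopies being immediate, the circle and bigon evaluations reducing to the Demazure/Frobenius computations (as in \fullref{example:some-relations} and the identity $\qDema_{\bc}(\frobel{\oc}{\rc,\yc})=-\qnumber{2}$), and the square relation handled by applying \eqref{eq:square} to the $\elfunctor$-image of the web square. The only cosmetic difference is that you route the bigon relation \eqref{eq:basic-webs-rels-b} through circle removals alone, whereas the paper also invokes \eqref{eq:rm-first} and \eqref{eq:rm-third} to open up the trivalent-vertex images first; this does not affect correctness.
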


\begin{proof}
We only need to show that \eqref{eq:basic-webs-rels-isoa}--\eqref{eq:basic-webs-rels-c} 
hold in the image of $\elfunctor$. 
The isotopies \eqref{eq:basic-webs-rels-isoa}--\eqref{eq:basic-webs-rels-isoc}
are clearly preserved. For \eqref{eq:basic-webs-rels-a} 
we have already verified this in \fullref{example:some-relations}, while 
\eqref{eq:basic-webs-rels-b} follows from \eqref{eq:rm-third}, \eqref{eq:rm-first} 
and \eqref{eq:circle-primary} 
together with $\qDema_{\bc}(\frobel{\oc}{\rc,\yc})=-\qnumber{2}$. 
The relation \eqref{eq:basic-webs-rels-c} is a bit more involved 
(but not hard), 
and can be proved by using \eqref{eq:square} on the $\elfunctor$-image of the square.
\end{proof}

We say that a $2$-functor from an ungraded $2$-category 
(whose $2$-morphisms are all of degree zero, by convention) to a graded $2$-category 
is a degree-zero $2$-equivalence, if it is a bijection on objects,
essentially surjective on $1$-morphisms, 
faithful on $2$-morphisms, and full onto degree-zero $2$-morphisms. 
Using this notion, the quantum Satake correspondence can be formulated as in \cite[Theorem 3.21]{El1}:

\begin{theoremqed}\label{theorem:q-satake}
The $2$-functor $\elfunctor$ is a degree-zero $2$-equivalence.
\end{theoremqed}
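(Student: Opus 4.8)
The plan is to establish that $\elfunctor$ is a degree-zero $2$-equivalence by checking the four defining conditions: bijectivity on objects, essential surjectivity on $1$-morphisms, fullness onto degree-zero $2$-morphisms, and faithfulness on $2$-morphisms. The first two are essentially built into \fullref{definition:slt-soergel}: on objects $\elfunctor$ sends $\sltcat^{\tduc}\mapsto\tduc$, which is a bijection onto the three secondary-colored objects of $\aDiagfield$; and on $1$-morphisms, every string of secondary colors in $\aDiagfield$ in which consecutive colors are related by $\rho^{\pm 1}$ is precisely the $\elfunctor$-image of a composite of the functors $\fuf{\tduc}{\rho(\tduc)}$ and $\fudf{\tduc}{\rho^{-1}(\tduc)}$ from \eqref{eq:some-colored-endos}, so $\elfunctor$ is bijective on $1$-morphisms up to the relevant identifications. (One should note here that the $1$-morphisms of $\aDiagfield$ that do not arise this way are exactly those where two adjacent secondary colors coincide or are incompatible as primary strings, but by \fullref{lemma:thecrossings-2} and \fullref{lemma:remove-white} these reduce to the ones in the image.)

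The substantive content is the statement about $2$-morphisms. First I would reduce the problem to a statement about graded dimensions of $\twoHom$-spaces. On the web side, \fullref{lemma:webs-and-slt} gives a finite generator-relation presentation of $\sltcatgop$ coming from \cite[Theorem 6.1]{Kup}, so the $\twoHom$-spaces in $\sltcatgop$ have known dimensions (they are computed by the $\slt$-web calculus, i.e. by counting non-elliptic webs / dominant lattice paths, equivalently by tensor-product multiplicities in $\sltcat$). On the bimodule side, $\aDiagfield$ is graded, and by \fullref{remark:cat-affine-a2} its Karoubi envelope decategorifies to (the maximally singular part of) the affine $\typea{2}$ Hecke algebroid; the graded $\twoHom$-spaces between Bott--Samelson-type $1$-morphisms are free modules over the coordinate ring of the relevant fixed locus, with graded ranks given by the corresponding structure constants. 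The plan is then: $\elfunctor$ preserves degree zero on $2$-morphisms by the degree bookkeeping already recorded after \eqref{eq:elfunctor} (the cup/cap/trivalent web generators all map to degree-zero diagrams); faithfulness plus fullness onto degree zero will follow once we match the ungraded dimension of $\twoHom_{\sltcatgop}(\morstuff{F},\morstuff{G})$ with the dimension of the degree-zero part of $\twoHom_{\aDiagfield}(\elfunctor\morstuff{F},\elfunctor\morstuff{G})$, for all pairs of $1$-morphisms.

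Concretely, I would argue in two halves. For fullness onto degree zero: every degree-zero $2$-morphism in $\aDiagfield$ between images of web $1$-morphisms can be rewritten, using the circle-removal, neck-cutting, Reidemeister and square relations of \fullref{definition:ssbim} (as illustrated in \fullref{example:some-relations}, \fullref{example:more-relations} and the proof of \fullref{lemma:thecrossings-2}), into a $\Cq$-linear combination of diagrams all lying in the image of $\elfunctor$ — this is where one must push polynomials out of singular facets, trading them for scalars via the Demazure operators $\qDema_{\gc}^{\yc}$ etc. of \fullref{lemma:frob-extensions}, and the key numerical inputs are the rank computations $\qDema(\rootb)=2$, $\qDema^{\bc}_{\gc}(\frobel{\gc}{\bc})=3$, $\qDema_{\gc}(\frobel{\gc}{})=6$ and $\qDema_{\bc}\frobel{\oc}{\rc,\yc}=-\qnumber{2}^2$ from \fullref{example:q-demazure-action} and \fullref{lemma:thecrossings-2}. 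For faithfulness: suppose a web $2$-morphism $\twomorstuff{f}$ maps to zero; apply Kuperberg's confluent rewriting to put $\twomorstuff{f}$ in terms of the basis of non-elliptic webs, transport the resulting basis to $\aDiagfield$, and use the square relation \eqref{eq:square} together with \cite[Theorem A.1]{El1} (which identifies the idempotents/indecomposables, hence pins down the $\twoHom$-dimensions) to see that the images of distinct basis webs are $\Cq$-linearly independent degree-zero $2$-morphisms. Combining the two halves gives the dimension match and hence the $2$-equivalence. Alternatively — and this is probably the cleanest route to write up — since this is \cite[Theorem 3.21]{El1}, I would cite Elias' proof for the core diagrammatic computations and only spell out the points where our conventions (the colored reformulation of \fullref{subsec:our-color-code}, the explicit generators \eqref{eq:basic-webs}, \eqref{eq:elfunctor}) differ from his.

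The main obstacle is the faithfulness/fullness dimension count: one needs a clean, basis-level comparison between the $\slt$-web Hom-spaces and the degree-zero parts of the singular Soergel Hom-spaces, and making the rewriting algorithm on the Soergel side terminate in the image of $\elfunctor$ requires careful use of the Frobenius-extension data (the twisted Leibniz rule, the explicit $\frobel{\tduc}{\duc}$ and $\Frobel{\tduc}{\duc}$ elements) rather than any single slick argument. Everything else — objects, $1$-morphisms, degree-zero preservation — is essentially formal given the setup already in place.
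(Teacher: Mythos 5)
The paper does not prove this theorem: it is stated in the \verb|theoremqed| environment, which auto-appends the qed symbol, and is cited directly from \cite[Theorem 3.21]{El1}; \fullref{remark:q-satake} merely records that Elias works over $\aformq$ and that this suffices for all the ground rings used here. So the ``cleanest route'' you identify in your final sentences --- cite Elias for the diagrammatic heavy lifting and only spell out the translation to the tricolored conventions --- is exactly what the paper does, and that is the right call.

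Your direct-proof sketch is a plausible summary of the shape of Elias's argument, but it has gaps you should be aware of. For essential surjectivity on $1$-morphisms, the reduction you invoke via \fullref{lemma:thecrossings-2} and \fullref{lemma:remove-white} does not cover composites in $\aDiagfield$ such as $\gc\bc\gc$ (identical adjacent secondary colors): \fullref{lemma:remove-white} removes a $\wc$-slot sandwiched between primary colors, and \fullref{lemma:thecrossings-2} exchanges $\gc\yc\oc\yc\gc$ for $\gc\bc\pc\bc\gc$, both of which are already $\elfunctor$-images. What is missing is that $\gc\bc\gc$ splits as a direct sum of shifts of the identity $\gc$, via the rank-three Frobenius extension $\Rbim^{\gc}\subset\Rbim^{\bc}$ of \fullref{lemma:frob-extensions}, after which it lies in the additive closure of the image; Elias's proof treats such reductions systematically. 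For the $2$-morphism comparison, your fullness argument (rewrite every degree-zero diagram into the image of $\elfunctor$ using the circle-removal, neck-cutting and square relations) and your faithfulness argument (transport the non-elliptic web basis and check linear independence) each implicitly presuppose control over the graded hom-dimensions on the singular side, which is itself the substance of the theorem; as sketched, the two halves lean on each other. The actual argument in Elias compares a light-leaves-type basis on the Soergel side with Kuperberg's non-elliptic web basis and carries out a genuine graded-rank computation; none of that is reproduced in this paper, so a self-contained proof here would amount to re-proving Elias's result. Your closing paragraph correctly identifies this as the main obstacle, and citing \cite[Theorem 3.21]{El1} is the intended resolution.
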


\begin{remark}\label{remark:q-satake}
Elias actually proves \fullref{theorem:q-satake} in 
much more generality. For us the important 
case is over the ring $\aformq=\C[\qpar,\qpar^{-1}]$, which then 
implies that \fullref{theorem:q-satake} holds 
over any ground ring we are going to use.
\end{remark}

Let $\cRKLg{m,n}=\cRKLg{m,n}(\fu^m\fud^n)$ denote the (unique) projection 
of $\fu^m\fud^n$ onto the irreducible direct summand $\Ll_{m,n}$, regarded as a 
$2$-morphism in $\sltcatgop$ with rightmost color $\gc$. 
We call $\cRKLg{m,n}$ the (right-green) $\slt$-clasp. 
Similarly, we define $\cRKLo{m,n}$, $\cRKLp{m,n}$ and 
$\cKLg{m,n}$, $\cKLo{m,n}$, $\cKLp{m,n}$. Note that there is actually a different clasp for each   
product of $m$ factors $\fu$ and $n$ factors $\fud$, but these clasps are all closely related, as we will see in 
\fullref{lemma:choice-does-not-matter}. For now, it suffices to consider only the one for $\fu^m\fud^n$. 

\begin{remark}\label{remark:clasps-formulas}
The $\slt$-clasps have a diagrammatic incarnation, obtained 
by coloring the diagrammatic clasps from \cite[Theorem 3.3]{Kim} 
(which gives the $\slt$-clasps in terms of a recursion), 
or (using slightly 
different conventions) from \cite[(1.8) and Section 3.2]{El3}.
\end{remark}

The colored $\slt$-clasps are $2$-morphisms in 
$\aDiagfield$, but do not belong to $\subcatquofield$, since 
their left- and rightmost colors are always secondary colors. Thus, 
we need to `biinduce them up to $\wc$' in order to have 
their appropriate analogs in $\subcatquofield$:

\begin{definition}\label{definition:clasps-categorified-quotient}
The colored (right-$\tduc$) clasps $\CRKLx{m,n}$ are defined by
\[
\CRKLx{m,n}=
\twomorstuff{id}_{\wc\dudc\tdudc}
\hcomp\cRKLx{m,n}\hcomp
\twomorstuff{id}_{\tduc\duc\wc}
\]
with $\hcomp$ being the horizontal composition in $\Adiagfield$. 
Here $\cRKLx{m,n}$ has leftmost color $\tdudc$, and $\duc,\dudc$ 
are compatible colors where we prefer $\bc$ over $\rc$ over $\yc$. 
We define $\CKLx{m,n}$ similarly. 
\end{definition}

The colored clasps are idempotent $2$-morphisms 
in $\subcatquofield$, which depend on the same choices as the colored $\slt$-clasps 
and, additionally, on the choice of $\duc,\tduc$. Again, this dependence is not essential, as 
we will show in \fullref{lemma:choice-does-not-matter}, so we abuse notation.

\begin{example}\label{example:clasps}
Using \cite[Theorem 3.3]{Kim}, one can write down the colored $\slt$-clasps 
explicitly, e.g. in the case $e+1=2$:
\begin{gather*}
\cRKLg{2,0}=
\begin{tikzpicture}[anchorbase, scale=.4, tinynodes]
	\fill[mygreen, opacity=0.8] (0,0) to (0,2) to (1,2) to (1,0) to (0,0);
	\fill[myorange, opacity=0.8] (0,0) to (0,2) to (-1,2) to (-1,0) to (0,0);
	\fill[mypurple, opacity=0.8] (-1,0) to (-1,2) to (-2,2) to (-2,0) to (-1,0);
	\draw[dstrand, Xmarked=.55] (0,0) to (0,2);
	\draw[dstrand, Xmarked=.55] (-1,0) to (-1,2);
\end{tikzpicture}
+
\tfrac{1}{\qnumber{2}}\,
\begin{tikzpicture}[anchorbase, scale=.4, tinynodes]
	\fill[mygreen, opacity=0.8] (0,0) to (-.5,.75) to (-.5,1.25) to (0,2) to (1,2) to (1,0) to (0,0);
	\fill[myorange, opacity=0.8] (-1,2) to (-.5,1.25) to (0,2) to (-1,2);
	\fill[myorange, opacity=0.8] (-1,0) to (-.5,.75) to (0,0) to (-1,0);
	\fill[mypurple, opacity=0.8] (-1,0) to (-.5,.75) to (-.5,1.25) to (-1,2) to (-2,2) to (-2,0) to (-1,0);
	\draw[dstrand, Xmarked=.8] (-.5,1.25) to (-1,2);
	\draw[dstrand, Xmarked=.8] (-.5,1.25) to (0,2);
	\draw[dstrand, Xmarked=.75] (-.5,1.25) to (-.5,.75);
	\draw[dstrand, Xmarked=.6] (-1,0) to (-.5,.75);
	\draw[dstrand, Xmarked=.6] (0,0) to (-.5,.75);
\end{tikzpicture}
,\quad
\cRKLg{1,1}
=
\begin{tikzpicture}[anchorbase, scale=.4, tinynodes]
	\fill[mygreen, opacity=0.8] (0,0) to (0,2) to (1,2) to (1,0) to (0,0);
	\fill[mypurple, opacity=0.8] (0,0) to (0,2) to (-1,2) to (-1,0) to (0,0);
	\fill[mygreen, opacity=0.8] (-1,0) to (-1,2) to (-2,2) to (-2,0) to (-1,0);
	\draw[dstrand, Ymarked=.55] (0,0) to (0,2);
	\draw[dstrand, Xmarked=.55] (-1,0) to (-1,2);
\end{tikzpicture}
-
\tfrac{1}{\qnumber{3}}\,
\begin{tikzpicture}[anchorbase, scale=.4, tinynodes]
	\fill[mygreen, opacity=0.8] (1,0) to (0,0) to [out=90, in=00] (-.5,.75) to [out=180, in=90] (-1,0) to (-2,0) to (-2,2) to (-1,2) to [out=270, in=180] (-.5,1.25) to [out=0, in=270] (0,2) to (1,2) to (1,0);
	\fill[mypurple, opacity=0.8] (0,0) to [out=90, in=00] (-.5,.75) to [out=180, in=90] (-1,0) to (0,0);
	\fill[mypurple, opacity=0.8] (0,2) to [out=270, in=00] (-.5,1.25) to [out=180, in=270] (-1,2) to (0,2);
	\draw[dstrand, Xmarked=.85] (-1,0) to [out=90, in=180] (-.5,.75) to [out=0, in=90] (0,0);
	\draw[dstrand, Xmarked=.85] (0,2) to [out=270, in=0] (-.5,1.25) to [out=180, in=270] (-1,2);
\end{tikzpicture}
,\quad
\cRKLg{0,2}=\!
\begin{tikzpicture}[anchorbase, scale=.4, tinynodes]
	\fill[mygreen, opacity=0.8] (0,0) to (0,2) to (1,2) to (1,0) to (0,0);
	\fill[mypurple, opacity=0.8] (0,0) to (0,2) to (-1,2) to (-1,0) to (0,0);
	\fill[myorange, opacity=0.8] (-1,0) to (-1,2) to (-2,2) to (-2,0) to (-1,0);
	\draw[dstrand, Ymarked=.55] (0,0) to (0,2);
	\draw[dstrand, Ymarked=.55] (-1,0) to (-1,2);
\end{tikzpicture}
+
\tfrac{1}{\qnumber{2}}\,
\begin{tikzpicture}[anchorbase, scale=.4, tinynodes]
	\fill[mygreen, opacity=0.8] (0,0) to (-.5,.75) to (-.5,1.25) to (0,2) to (1,2) to (1,0) to (0,0);
	\fill[mypurple, opacity=0.8] (-1,2) to (-.5,1.25) to (0,2) to (-1,2);
	\fill[mypurple, opacity=0.8] (-1,0) to (-.5,.75) to (0,0) to (-1,0);
	\fill[myorange, opacity=0.8] (-1,0) to (-.5,.75) to (-.5,1.25) to (-1,2) to (-2,2) to (-2,0) to (-1,0);
	\draw[dstrand, Xmarked=.75] (-1,2) to (-.5,1.25);
	\draw[dstrand, Xmarked=.75] (0,2) to (-.5,1.25);
	\draw[dstrand, Xmarked=.75] (-.5,.75) to (-.5,1.25);
	\draw[dstrand, Xmarked=.8] (-.5,.75) to (-1,0);
	\draw[dstrand, Xmarked=.8] (-.5,.75) to (0,0);
\end{tikzpicture}
\end{gather*}
Using $\elfunctor$ and biinduction, we get for example:
\[
\CRKLg{1,1}
=
\begin{tikzpicture}[anchorbase, scale=.4, tinynodes]
	\draw[very thin, densely dotted, fill=white] (-4.5,0) to (-4.5,3) to (-3.5,3) to (-3.5,0) to (-4.5,0);
	\draw[very thin, densely dotted, fill=white] (4.5,0) to (4.5,3) to (3.5,3) to (3.5,0) to (4.5,0);
	\fill[myblue, opacity=0.3] (-3.5,0) to (-3.5,3) to (-2.5,3) to (-2.5,0) to (-3.5,0);
	\fill[myblue, opacity=0.3] (-1.5,0) to (-1.5,3) to (-.5,3) to (-.5,0) to (-1.5,0);
	\fill[myblue, opacity=0.3] (1.5,0) to (1.5,3) to (.5,3) to (.5,0) to (1.5,0);
	\fill[myblue, opacity=0.3] (3.5,0) to (3.5,3) to (2.5,3) to (2.5,0) to (3.5,0);
	\fill[mygreen, opacity=0.8] (2.5,0) to (2.5,3) to (1.5,3) to (1.5,0) to (2.5,0);
	\fill[mygreen, opacity=0.8] (-2.5,0) to (-2.5,3) to (-1.5,3) to (-1.5,0) to (-2.5,0);
	\fill[mypurple, opacity=0.8] (-.5,0) to (-.5,3) to (.5,3) to (.5,0) to (-.5,0);
	\draw[ystrand, directed=.55] (-2.5,0) to (-2.5,3);
	\draw[ystrand, directed=.55] (-1.5,3) to (-1.5,0);
	\draw[ystrand, directed=.55] (1.5,0) to (1.5,3);
	\draw[ystrand, directed=.55] (2.5,3) to (2.5,0);
	\draw[rstrand, directed=.55] (-.5,0) to (-.5,3);
	\draw[rstrand, directed=.55] (.5,3) to (.5,0);
	\draw[bstrand, directed=.55] (-3.5,0) to (-3.5,3);
	\draw[bstrand, directed=.55] (3.5,3) to (3.5,0);
\end{tikzpicture}
-
\tfrac{1}{\qnumber{3}}\,
\begin{tikzpicture}[anchorbase, scale=.4, tinynodes]
	\draw[very thin, densely dotted, fill=white] (-4.5,0) to (-4.5,3) to (-3.5,3) to (-3.5,0) to (-4.5,0);
	\draw[very thin, densely dotted, fill=white] (4.5,0) to (4.5,3) to (3.5,3) to (3.5,0) to (4.5,0);
	\fill[myblue, opacity=0.3] (-3.5,0) to (-3.5,3) to (-2.5,3) to (-2.5,0) to (-3.5,0);
	\fill[myblue, opacity=0.3] (1.5,0) to [out=90, in=0] (0,1.25) to [out=180, in=90] (-1.5,0) to (1.5,0);
	\fill[myblue, opacity=0.3] (-1.5,3) to [out=270, in=180] (0,1.75) to [out=0, in=270] (1.5,3) to (-1.5,3);
	\fill[myblue, opacity=0.3] (3.5,0) to (3.5,3) to (2.5,3) to (2.5,0) to (3.5,0);
	\fill[mygreen, opacity=0.8] (1.5,0) to [out=90, in=0] (0,1.25) to [out=180, in=90] (-1.5,0) to (-2.5,0) to (-2.5,3) to (-1.5,3) to [out=270, in=180] (0,1.75) to [out=0, in=270] (1.5,3) to (2.5,3) to (2.5,0) to (1.5,0);
	\fill[mypurple, opacity=0.8] (-.5,0) to [out=90, in=180] (0,.5) to [out=0, in=90] (.5,0) to (-.5,0);
	\fill[mypurple, opacity=0.8] (.5,3) to [out=270, in=0] (0,2.5) to [out=180, in=270] (-.5,3) to (.5,3);
	\draw[ystrand, directed=.55] (-2.5,0) to (-2.5,3);
	\draw[ystrand, directed=.75] (1.5,0) to [out=90, in=0] (0,1.25) to [out=180, in=90] (-1.5,0);
	\draw[ystrand, directed=.75] (-1.5,3) to [out=270, in=180] (0,1.75) to [out=0, in=270] (1.5,3);
	\draw[ystrand, directed=.55] (2.5,3) to (2.5,0);
	\draw[rstrand, directed=.75] (-.5,0) to [out=90, in=180] (0,.5) to [out=0, in=90] (.5,0);
	\draw[rstrand, directed=.75] (.5,3) to [out=270, in=0] (0,2.5) to [out=180, in=270] (-.5,3);
	\draw[bstrand, directed=.55] (-3.5,0) to (-3.5,3);
	\draw[bstrand, directed=.55] (3.5,3) to (3.5,0);
\end{tikzpicture}
\]
(The outer $\bc$-regions come from our 
choice in \fullref{definition:clasps-categorified-quotient}.)
\end{example}

The next lemma shows that the colored clasps are independent of the choices that we made in their definition:

\begin{lemma}\label{lemma:choice-does-not-matter}
Let $\CRKLx{m,n}$ be a colored clasps and let $(\CRKLx{m,n})^{\prime}$ be defined similarly, 
but with some difference in the involved choices. Then there exists an
invertible $2$-morphisms $\twomorstuff{f}$ in $\subcatquofield$ 
such that $\CRKLx{m,n}\vcomp\twomorstuff{f}=\twomorstuff{f}\vcomp(\CRKLx{m,n})^{\prime}$.
\end{lemma}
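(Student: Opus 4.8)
The plan is to reduce the claim for colored clasps to the corresponding statement for the uncolored $\slt$-clasps $\cRKLx{m,n}$, and then to handle the various ``choices'' one at a time, since each amounts to a different construction of the same idempotent.

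\begin{proof}
There are three sources of ambiguity in \fullref{definition:clasps-categorified-quotient}: (i) the choice of the diagrammatic form of the $\slt$-clasp $\cRKLx{m,n}$ itself (it is only characterized as \emph{the} idempotent projecting $\fu^m\fud^n$ onto $\Ll_{m,n}$, but any honest representative of it as a $2$-morphism is fixed only up to invertible change of the complementary summands); (ii) the choice of the starting secondary color $\tduc$, i.e.\ the choice of which product of $m$ factors $\fu$ and $n$ factors $\fud$ one uses and in which cyclic order; (iii) the choice of the compatible primary colors $\duc,\dudc$ used to biinduce up to $\wc$. It suffices to produce an invertible intertwiner in each case separately, as the general case follows by composing them.

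For (i): both $\cRKLx{m,n}$ and $(\cRKLx{m,n})^{\prime}$ are idempotent $2$-endomorphisms of the same $1$-morphism in $\sltcatgop$ with image a copy of $\Ll_{m,n}$; since $\sltcat$ is semisimple and $\Ll_{m,n}$ appears in $\fu^m\fud^n$ with multiplicity one (\fullref{remark:sl3-cat-GG} and \eqref{eq:L-vs-L}, noting $d^{m,n}_{m,n}=1$), there is an invertible $2$-morphism conjugating one to the other; moreover one can choose this invertible $2$-morphism to be an automorphism of $\fu^m\fud^n$ that is the identity on the $\Ll_{m,n}$-summand, so it in fact commutes with both clasps, giving $\cRKLx{m,n}\vcomp\twomorstuff{f}=\twomorstuff{f}\vcomp(\cRKLx{m,n})^{\prime}$. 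Applying $\elfunctor$ and biinducing up to $\wc$ (i.e.\ $\hcomp$-composing with the appropriate identity $2$-morphisms) transports this to $\subcatquofield$ and yields the required relation for the colored clasps. For (iii): two choices of $\duc,\dudc$ differ by composing the biinduction with a sideways crossing as in \fullref{example:more-gens}, or with the clasp isomorphism $\wc\bc\gc\cong\wc\yc\gc$ of \fullref{lemma:clasps-well-defined}; both of these are invertible $2$-morphisms, and they conjugate one colored clasp to the other because the underlying $\slt$-clasp is unchanged.

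For (ii): changing the starting color amounts to cyclically permuting the tensor factors of $\fu$ and $\fud$; by \fullref{remark-KL-combinatorics} (the categorical avatar of $\fu\fud=\fud\fu$) such a permutation is realized by an invertible $2$-morphism in $\sltcatgop$ built from the basic webs in \eqref{eq:basic-webs}, and this $2$-morphism intertwines the two clasps because each is the unique projection onto $\Ll_{m,n}$ inside the respective $1$-morphism. Again pushing through $\elfunctor$ (well-defined by \fullref{lemma:el-well-def}) and biinducing gives an invertible $2$-morphism $\twomorstuff{f}$ in $\subcatquofield$ with $\CRKLx{m,n}\vcomp\twomorstuff{f}=\twomorstuff{f}\vcomp(\CRKLx{m,n})^{\prime}$. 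The only point requiring care is that $\twomorstuff{f}$ lands in the $2$-full $2$-subcategory $\subcatquofield$ and not merely in $\aDiagfield$; this holds because after biinduction both the source and target $1$-morphisms, and all intermediate diagrams, have $\wc$ as their left- and rightmost color and a secondary color in between, which is exactly the defining property of $\subcatquo$ in \fullref{definition:the-subquo-cat}. Composing the intertwiners from (i), (ii) and (iii) proves the lemma in general.
\end{proof}

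I expect the main obstacle to be a clean bookkeeping of (ii): one must verify that the web realizing the cyclic permutation of tensor factors is genuinely invertible (not merely split) and that, after $\elfunctor$ and biinduction, it stays inside $\subcatquofield$ rather than wandering into a larger singular $2$-subcategory; this is where the precise form of \fullref{definition:the-subquo-cat} and the compatibility conventions of \eqref{eq:color-compatible} have to be invoked carefully.
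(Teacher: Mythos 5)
Your proof is correct and follows essentially the same route as the paper: reduce to the two elementary changes (reordering of the $\fu,\fud$ factors and change of the compatible primary color used for biinduction), handle the latter by \fullref{lemma:clasps-well-defined}, handle the former by an invertible crossing $2$-morphism, and compose/induct. The only real difference is cosmetic: you build the crossing on the web side and transport it through $\elfunctor$, whereas the paper cites \fullref{lemma:thecrossings-2}, which is precisely the statement that the transported crossings of \fullref{definition:the-gog-gpg-iso} are mutually inverse in the Soergel calculus — so the content is identical. Your case (i) is vacuous (since $\Ll_{m,n}$ occurs with multiplicity one, the projection is already unique once the ordering of factors is fixed), but including it does no harm.
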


\begin{proof}
If the ordering of the factors $\fu$ and $\fud$ for $\CRKLx{m,n}$ and $(\CRKLx{m,n})^{\prime}$ differs 
by precisely one pair, then \fullref{lemma:thecrossings-2} shows the claim.
If $\CRKLx{m,n}$ and $(\CRKLx{m,n})^{\prime}$ differ by precisely one choice of compatible color 
for `biinduction', then \fullref{lemma:clasps-well-defined} shows the claim.
Then the general statement then follows by induction. 
\end{proof}

\begin{corollary}\label{corollary:clasps-ideal}
If a two sided $2$-ideal in $\subcatquofield$ contains 
a colored clasp $\CRKLx{m,n}$, then it contains 
all colored clasps $(\CRKLx{m,n})^{\prime}$ 
which differ from $\CRKLx{m,n}$ by some of the choices involved in their definition.
\end{corollary}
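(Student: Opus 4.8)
The plan is to deduce this immediately from \fullref{lemma:choice-does-not-matter}. Suppose $\twocatstuff{K}$ is a two-sided $2$-ideal in $\subcatquofield$ containing the colored clasp $\CRKLx{m,n}$, and let $(\CRKLx{m,n})^{\prime}$ be another colored clasp differing only in the choices (ordering of the $\fu$- and $\fud$-factors, and the compatible colors used for biinduction). By \fullref{lemma:choice-does-not-matter}, there is an invertible $2$-morphism $\twomorstuff{f}$ in $\subcatquofield$ with $\CRKLx{m,n}\vcomp\twomorstuff{f}=\twomorstuff{f}\vcomp(\CRKLx{m,n})^{\prime}$.

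From here the argument is a one-line ideal manipulation. First I would rewrite the intertwining relation as $(\CRKLx{m,n})^{\prime}=\twomorstuff{f}^{-1}\vcomp\CRKLx{m,n}\vcomp\twomorstuff{f}$, using that $\twomorstuff{f}$ is invertible. Since $\CRKLx{m,n}\in\twocatstuff{K}$ and a two-sided $2$-ideal is by definition closed under vertical (and horizontal) pre- and post-composition with arbitrary $2$-morphisms of $\subcatquofield$ — in particular with $\twomorstuff{f}^{-1}$ on the left and $\twomorstuff{f}$ on the right — the element $\twomorstuff{f}^{-1}\vcomp\CRKLx{m,n}\vcomp\twomorstuff{f}$ lies in $\twocatstuff{K}$. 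Hence $(\CRKLx{m,n})^{\prime}\in\twocatstuff{K}$.

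The only subtlety is that \fullref{lemma:choice-does-not-matter} is stated for changing \emph{one} choice at a time (or, via its own internal induction, for the general case), and strictly speaking gives an $\twomorstuff{f}$ depending on which choices are altered; but since the conclusion $(\CRKLx{m,n})^{\prime}\in\twocatstuff{K}$ does not reference $\twomorstuff{f}$, this causes no problem. If one prefers to be fully explicit, one can run the induction on the number of elementary choice-changes separating $\CRKLx{m,n}$ from $(\CRKLx{m,n})^{\prime}$: at each step the current clasp lies in $\twocatstuff{K}$ by the previous paragraph applied to a single change, and after finitely many steps one reaches $(\CRKLx{m,n})^{\prime}$. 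There is no real obstacle here — all the content has already been extracted in \fullref{lemma:choice-does-not-matter}; this corollary merely repackages it in terms of ideals. The analogous statement for the left clasps $\CKLx{m,n}$ follows verbatim.
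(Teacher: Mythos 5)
Your proof is correct and is exactly the intended argument — the paper states this as an immediate corollary of \fullref{lemma:choice-does-not-matter} with no written proof, and conjugation of $\CRKLx{m,n}$ by the invertible $2$-morphism $\twomorstuff{f}$, using closure of a $2$-ideal under vertical composition, is the evident way to obtain $(\CRKLx{m,n})^{\prime}\in\twocatstuff{K}$.
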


\subsubsection{From \texorpdfstring{$\slt$}{sl3} to singular bimodules: the root of unity case}\label{subsec:quotient-algebra-categorified}

From now on we work over $\C$ by specializing $\qpar$ to $\qqpar$ 
which, as usual, is a $2(e+3)^{\mathrm{th}}$ primitive, complex root of unity. 
Formally this is done by repeating the above for the $\aformq$-linear $2$-categories 
which are scalar extended to 
$\aforme=\C[\qpar,\qpar^{-1},\qnumber{2}^{-1},\dots,\qnumber{e{+}1}^{-1}]$. 
We denote these using 
$[e]$ as a subscript, and the specialization at $\qpar=\qqpar$ we denote by 
$\placeholder\otimes_{\aforme}\C$. We also exclude the case $e=0$, which 
is a bit special and can easily be dealt with later on.

First of all, all previous definitions and 
results in the generic case are still valid in this case, except \fullref{lemma:webs-and-slt} 
(which we do not need in the following) and the definition of 
the (various) clasps for $m+n > e+1$. 
In particular, \fullref{lemma:el-well-def} and \fullref{theorem:q-satake} still hold 
for the specialization at $\qpar=\qqpar$. 

\begin{lemma}\label{lemma:slt-soergel-clasps}
The colored clasps are well-defined in $\subcatquoefield$ 
(seen as a $2$-subcategory of $\adiagefield$)
for $0\leq m+n\leq e+1$, and uniquely determined 
up to conjugation by an invertible $2$-morphism.
\end{lemma}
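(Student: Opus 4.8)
The plan is to transfer the well-definedness of the $\slt$-clasps from the generic case to the root of unity case, using the degree-zero $2$-equivalence $\elfunctorefield$ of \fullref{theorem:q-satake} (which, by \fullref{remark:q-satake}, holds after specializing $\qpar$ to $\qqpar$). The key observation is that the $\slt$-clasp $\cRKLx{m,n}$ in $\sltcatefield$ is the projection of $\fu^m\fud^n$ onto the summand $\Ll_{m,n}$, and that for $0\leq m+n\leq e+1$ this summand still makes sense: by the discussion in \fullref{subsec:non-generic}, the irreducibles $\Ll_{m,n}$ with $0\leq m+n\leq e$ survive to $\slqmod[e]$, and the case $m+n=e+1$ is exactly the boundary line which Elias' recursion (\fullref{remark:clasps-formulas}, via \cite[Theorem 3.3]{Kim}) still computes, since the denominators appearing are quantum numbers $\qnumber{1},\dots,\qnumber{e+1}$, which we have inverted by passing to $\aforme=\C[\qpar,\qpar^{-1},\qnumber{2}^{-1},\dots,\qnumber{e{+}1}^{-1}]$ before specializing. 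So the recursion from \cite[Theorem 3.3]{Kim} (colored as in \cite[(1.8) and Section 3.2]{El3}) produces a well-defined idempotent endomorphism of $\fu^m\fud^n$ in $\sltcatefield\otimes_{\aforme}\C$ for all $m,n$ with $m+n\leq e+1$.

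\textbf{Key steps.} First I would recall that, over $\aforme$, the colored $\slt$-clasp $\cRKLx{m,n}$ is defined by the Kim--Elias recursion, and note that this recursion only ever divides by $\qnumber{1},\dots,\qnumber{m+n}$; hence for $m+n\leq e+1$ it is defined over $\aforme$ and survives specialization at $\qpar=\qqpar$, giving an idempotent $2$-morphism in $\sltcatefield$ (with the appropriate rightmost color fixed). Second, I would apply the $2$-functor $\elfunctorefield$: since it is well-defined (\fullref{lemma:el-well-def} holds after specialization) and faithful on $2$-morphisms (by \fullref{theorem:q-satake}), the image $\elfunctorefield(\cRKLx{m,n})$ is a well-defined idempotent in $\aDiagefield$. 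Third, I would biinduce up to $\wc$ exactly as in \fullref{definition:clasps-categorified-quotient}, i.e. horizontally compose with $\twomorstuff{id}_{\wc\dudc\tdudc}$ on the left and $\twomorstuff{id}_{\tduc\duc\wc}$ on the right, which lands us in $\subcatquoefield$ (viewed inside $\adiagefield$ via \fullref{lemma:sts-decomp} and \fullref{lemma:remove-white}); this gives $\CRKLx{m,n}$. Finally, uniqueness up to conjugation by an invertible $2$-morphism is exactly the statement of \fullref{lemma:choice-does-not-matter}, whose proof (via \fullref{lemma:thecrossings-2} and \fullref{lemma:clasps-well-defined}) used only relations among the $2$-morphisms of $\Adiag$ that are stable under the specialization $\placeholder\otimes_{\aforme}\C$ — the scalars appearing there ($\qnumber{2}$, $\qnumber{3}$) are invertible in $\aforme$, and the relevant quotient of \eqref{eq:square} makes sense over $\C$ after specialization — so the same argument applies verbatim.

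\textbf{Main obstacle.} The delicate point is checking that the Kim--Elias recursion really only introduces the denominators $\qnumber{1},\dots,\qnumber{e+1}$ when $m+n\leq e+1$, and no higher quantum numbers; this is what forces the precise choice of the ring $\aforme$ and the bound $m+n\leq e+1$ rather than $m+n\leq e$. I would verify this by induction on $m+n$ using the explicit recursion in \cite[Theorem 3.3]{Kim} / \cite[(1.8)]{El3}, tracking which quantum numbers appear as denominators at each step; the key numerical input is that the clasp absorbing the $(m+n)$-th strand divides by $\qnumber{m+n}$ (and products of smaller quantum numbers), never by $\qnumber{m+n+1}$ or larger. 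Once this bookkeeping is done, everything else is a direct application of the already-established results $\elfunctorefield$ well-defined and faithful (\fullref{lemma:el-well-def}, \fullref{theorem:q-satake}) plus \fullref{lemma:choice-does-not-matter} and \fullref{corollary:clasps-ideal}.
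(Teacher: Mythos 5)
Your overall strategy matches the paper's. The paper's proof is terse: it first cites Andersen--Paradowski \cite[Section 3]{AP} for the representation-theoretic fact that the decomposition of $\fu^m\fud^n$ for $m+n\leq e+1$ persists at the root of unity, so the generic projectors specialize; it then offers, in a parenthesis, exactly your route (check via \fullref{remark:clasps-formulas} that the coefficients of the recursive clasp formulas specialize properly); and it closes by observing that \fullref{lemma:el-well-def} and \fullref{lemma:choice-does-not-matter} survive specialization, which is your final step verbatim. So you have reconstructed the intended argument, with the computational alternative promoted to the main line.

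One concrete slip in your bookkeeping: you assert that the clasp absorbing the $(m+n)$-th strand "never divides by $\qnumber{m+n+1}$ or larger", and that all denominators lie among $\qnumber{1},\dots,\qnumber{e+1}$, hence are formally inverted in $\aforme$. This is contradicted by the paper's own \fullref{example:clasps}: for $m+n=2$ the clasp $\cRKLg{1,1}$ carries the coefficient $\qnumber{3}^{-1}=\qnumber{m+n+1}^{-1}$. The induction you propose would therefore fail as stated. The correct statement, and the one that actually matters, is weaker: the denominators occurring for $m+n\leq e+1$ are quantum numbers $\qnumber{k}$ with $k\leq e+2$, and at a primitive, complex $2(e+3)^{\mathrm{th}}$ root of unity one has $\qnumber{k}\neq 0$ for all $1\leq k\leq e+2$ (only $\qnumber{e+3}$ vanishes, since $\qqpar^{e+3}=\pm 1$ gives $\qnumber{e+2}=\mp 1$). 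So the coefficients specialize to well-defined complex numbers even though $\qnumber{e+2}$ is not formally inverted in $\aforme$; this is what "specialize properly" means in the paper's parenthetical remark. With that correction your argument goes through, and the uniqueness claim is indeed just \fullref{lemma:choice-does-not-matter} after specialization, as you say.
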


\makeautorefname{lemma}{Lemmas}

\begin{proof}
Decomposing $\fu^m\fud^n$ generically, i.e. 
for $\Uslt$, works similarly as in the 
root of unity case as long as $m+n\leq e+1$, see e.g. \cite[Section 3]{AP}.
Thus, one can use the specializations of the projectors 
from the generic case in the root of unity case. 
(Alternatively, using \fullref{remark:clasps-formulas}, one checks 
that the coefficients of the colored $\slt$-clasps specialize properly.)
Finally, note that \fullref{lemma:el-well-def} 
and \ref{lemma:choice-does-not-matter} also hold 
for $\qpar$ being specialized to $\qqpar$.
\end{proof}

\makeautorefname{lemma}{Lemma}

Having all the above established, we 
can define the $2$-category of 
trihedral Soergel bimodules of level $e$:

\begin{definition}\label{definition:categorified-quotient}
Let $\claspideal[{e}]$ be the two-sided $2$-ideal, called vanishing $2$-ideal of level $e$, 
in $\subcatquoefield\otimes_{\aforme}\C$ generated by
\begin{gather*}
\left\{\CRKLx{m,n} \mid m+n=e+1,\; \tduc\in\Seset\right\}
=
\left\{\CKLx{m,n} \mid m+n=e+1,\; \tduc\in\Seset\right\},
\end{gather*}
where we write e.g. $\CRKLx{m,n}=\CRKLx{m,n}\otimes_{\aforme}1$ for simplicity. 
We define
\[
\subcatquo[e]=(\subcatquoefield\otimes_{\aforme}\C)/\claspideal[{e}],
\]
which we call the $2$-category of trihedral Soergel bimodules of level $e$.
\end{definition}

\begin{remark}\label{remark:before-after}
Note that we specialize before taking the quotient, as Andersen--Paradowski do in order to define 
$\sltcat[e]$ in \cite{AP}, where they take the quotient of the already specialized category $\sltcat[\qqpar]$ by the 
ideal of so-called negligible modules. (This is explicitly described in e.g. 
\cite[Section 3.3]{BK1}.) Similarly, we always specialize first throughout.
\end{remark}

Let $\claspidealgope$ 
be the two-sided $2$-ideal in $\sltcatefield\otimes_{\aforme}\C$ generated by
\begin{gather*}
\left\{\cRKLx{m,n} \mid m+n=e+1,\; \tduc\in\Seset\right\}
=
\left\{\cKLx{m,n} \mid m+n=e+1,\; \tduc\in\Seset\right\}.
\end{gather*}
The maximally singular version of $\subcatquo[e]$ is 
\[
\Subcatquo[e]=(\Subcatquoefield\otimes_{\aforme}\C)/\elfunctor[\qqpar]\left(\claspidealgope\right).
\]
Here we again specialize $\qpar$ to $\qqpar$ and use the same conventions as before. 
We state a non-trivial consequence of the 
quantum Satake correspondence from \fullref{theorem:q-satake} 
and \fullref{remark:q-satake}.

\begin{lemma}\label{lemma:Satake}
$\elfunctorefield$ gives rise to 
a degree-zero $2$-equivalence 
$\elfunctor[e]\colon\slqmodgop\to\Subcatquo[e]$.
\end{lemma}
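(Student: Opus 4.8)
The plan is to transport the degree-zero $2$-equivalence $\elfunctorefield\colon\slqmodgop\to\Subcatquoefield\otimes_{\aforme}\C$ (which is just $\elfunctor[\qqpar]$, i.e. $\elfunctor$ specialised at $\qpar=\qqpar$, and is a degree-zero $2$-equivalence by \fullref{theorem:q-satake} together with \fullref{remark:q-satake} and \fullref{lemma:el-well-def}) along the two quotient $2$-functors to obtain the desired $\elfunctor[e]\colon\slqmodgop\to\Subcatquo[e]$. The only thing to check is that $\elfunctorefield$ carries the vanishing $2$-ideal on the source side exactly onto the one on the target side, so that the induced $2$-functor on the quotients is again a degree-zero $2$-equivalence.

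First I would recall that, by \fullref{remark:q-satake}, all the relevant statements hold after specialising $\qpar$ to $\qqpar$, so $\elfunctorefield$ is a degree-zero $2$-equivalence $\slqmodgop\to\Subcatquoefield\otimes_{\aforme}\C$ in the sense defined before \fullref{theorem:q-satake}: a bijection on objects, essentially surjective on $1$-morphisms, faithful on $2$-morphisms, and full onto degree-zero $2$-morphisms. Next I would observe that by the very \fullref{definition:slt-soergel} of $\elfunctor$ and \fullref{definition:clasps-categorified-quotient} of the colored clasps, we have $\elfunctorefield(\cRKLx{m,n})$ equal to the colored $\slt$-clasp in $\aDiagefield$; biinducing up to $\wc$ on both sides shows that the image under $\elfunctorefield$ of the generators $\{\cRKLx{m,n}\mid m+n=e+1\}$ of $\claspidealgope$ is precisely the set of generators $\{\CRKLx{m,n}\mid m+n=e+1\}$ of $\claspideal[e]$ (here one uses that $\Subcatquoefield$ is obtained from the maximally singular piece by the same biinduction that produces $\subcatquoefield$ from the regular piece, so $\elfunctorefield$ is compatible with biinduction). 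Hence $\elfunctorefield$ sends the two-sided $2$-ideal generated by the former to the two-sided $2$-ideal generated by the latter: the image of a two-sided ideal under an essentially surjective, full-onto-degree-zero $2$-functor is again a two-sided ideal, and since the clasps and all morphisms composed with them are degree zero (the clasps are idempotent $2$-morphisms of degree $0$), fullness onto degree-zero $2$-morphisms guarantees that nothing is lost. Thus $\elfunctorefield(\claspidealgope)=\claspideal[e]$ literally, where on the left we mean the ideal generated by the images (which is the definition used for $\Subcatquo[e]$).

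Then I would argue that $\elfunctorefield$ descends to a $2$-functor $\elfunctor[e]\colon\slqmodgop\to\Subcatquo[e]$ on the quotients, and that this descended $2$-functor is still a degree-zero $2$-equivalence. Bijectivity on objects and essential surjectivity on $1$-morphisms are inherited immediately, since quotienting by a $2$-ideal changes neither objects nor $1$-morphisms. For the $2$-morphism statements: on the source, the hom-space from $\morstuff{F}$ to $\morstuff{G}$ in $\slqmodgop$ is quotiented by the subspace of morphisms factoring through the clasps $\cRKLx{m,n}$, and on the target by the subspace factoring through $\CRKLx{m,n}$; since $\elfunctorefield$ was faithful and full onto degree-zero $2$-morphisms and carries the first subspace onto the second (using essential surjectivity to realise arbitrary pre- and post-compositions and fullness onto degree zero to realise the ones that matter), the induced map on quotient hom-spaces is again injective and surjective onto degree zero. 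Here I would invoke \fullref{lemma:choice-does-not-matter} and \fullref{corollary:clasps-ideal} to ensure that the $2$-ideals are well-defined independently of the choices in the clasps, so the identification of the ideals on both sides is choice-free.

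The main obstacle I anticipate is the bookkeeping around biinduction: one must verify carefully that $\elfunctorefield$, which is a priori defined on the maximally singular categories $\sltcatgop$ and $\aDiagefield$, interacts correctly with the passage to $\wc$-bounded $1$-morphisms, i.e. that $\elfunctor[e]$ really lands in $\Subcatquo[e]$ as defined (where $\Subcatquoefield$ is the \emph{maximally singular} Bott--Samelson $2$-category, so the biinduction here is the one internal to $\Adiagefield$ relating maximally singular to itself rather than to regular). Concretely I would need to check that applying $\elfunctorefield$ and then biinducing up to $\wc$ agrees with biinducing up to $\wc$ first (at the level of $\sltcat$) and then applying the appropriate functor — i.e. a naturality square for $\elfunctor$ against biinduction. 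This is where \fullref{lemma:clasps-well-defined} and \fullref{lemma:thecrossings-2} do the real work, exactly as in the proof of \fullref{lemma:choice-does-not-matter}; once that compatibility is in hand, the rest is the formal quotient argument sketched above.
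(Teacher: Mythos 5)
There is a genuine gap, and you have misidentified where the difficulty actually lies. You anticipate the main obstacle to be the bookkeeping around biinduction, but that plays no essential role here: the lemma is stated for $\Subcatquo[e]$, which lives entirely in the maximally singular picture, so no passage to $\wc$-bounded $1$-morphisms is needed inside this proof (that bookkeeping is deferred to \fullref{proposition:cat-the-quoalgebra}). The paper's proof spends its effort on a different and harder point that your proposal glosses over.

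The real issue is \emph{faithfulness} of the induced $2$-functor on the quotients, and your parenthetical justification ``(using essential surjectivity to realise arbitrary pre- and post-compositions and fullness onto degree zero to realise the ones that matter)'' is precisely where the argument breaks. The $2$-ideal $\elfunctor[\qqpar]\left(\claspidealgope\right)$ on the target side is generated inside $\Subcatquoefield\otimes_{\aforme}\C$, a $2$-category whose $2$-hom spaces contain morphisms of \emph{nonzero}, in particular negative, degree. An element of this ideal in degree zero can arise as $g\vcomp(\twomorstuff{id}\hcomp\CRKLx{m,n}\hcomp\twomorstuff{id})\vcomp f$ where $f$ has degree $+d$ and $g$ has degree $-d$ for some $d>0$; such $f,g$ are \emph{not} in the image of $\elfunctor[\qqpar]$ (which only produces degree-zero $2$-morphisms, since $\slqmodgop$ has no others), so fullness onto degree zero gives you nothing about them. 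Hence you cannot conclude, by the route you describe, that the degree-zero part of the target ideal is exactly the image of the source ideal, and so you have not shown that the induced map on quotient hom-spaces is injective. The paper flags this explicitly (``What is not immediately clear, is that $\elfunctor[e]$ is also faithful on $2$-morphisms: since $\subcatquoefield$ has $2$-morphisms of negative degree, the degree-zero part of $\claspidealefielde$ could a priori be bigger than $\elfunctor[\qqpar](\claspidealgope)$'') and then uses what it calls a ``roundabout argument'': it invokes the equivalence $(\sltcatefield\otimes_{\aforme}\C)/\claspidealgope\cong\slqmodgop$ from \cite{AP}, deduces that the indecomposable $1$-morphisms with identity in $\claspidealgope$ sit strictly above the surviving ones in the two-sided cell preorder, transports that cell dichotomy along $\elfunctorefield$ (which, being a degree-zero $2$-equivalence before quotienting, is an isomorphism on cell structures), and concludes faithfulness from semisimplicity of $\slqmodgop$. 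Your proof contains none of this, and the step it replaces it with does not work.
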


\begin{proof}
Because $\elfunctorefield$ is a degree-zero $2$-equivalence 
before quotienting by any clasps, by \fullref{remark:q-satake}, it sends 
indecomposable $1$-morphisms in $\sltcatefield$
to indecomposable $1$-morphisms in $\Subcatquoefield$, and 
the cell structures of $\sltcatefield$ and $\Subcatquoefield$ are 
isomorphic under $\elfunctorefield$.

Clearly, $\elfunctorefield$ specializes to $\elfunctor[\qqpar]\colon \Subcatquoefield\otimes_{\aforme}\C\to 
\sltcatefield\otimes_{\aforme}\C$, which descends to the $2$-functor 
$\elfunctor[e]\colon\slqmodgop\to\Subcatquo[e]$. Both $\elfunctor[\qqpar]$ and $\elfunctor[e]$ 
are essentially surjective on $1$-morphisms and full onto degree-zero $2$-morphisms. 
What is not immediately clear, is that $\elfunctor[e]\colon\slqmodgop\to\Subcatquo[e]$ is 
also faithful on $2$-morphisms: since $\subcatquoefield$ 
has $2$-morphisms of negative degree, the 
degree-zero part of $\claspidealefielde$ could a priori be bigger 
than $\elfunctor[\qqpar]\left(\claspidealgope\right)$. To show that that is not the case, we 
use the following roundabout argument.

From \cite{AP} (cf. \fullref{remark:before-after}) 
we know that we have an equivalence of $2$-categories
\[
(\sltcatefield\otimes_{\aforme}\C)/\claspidealgope
\cong\slqmodgop,
\]  
Hence, the indecomposable $1$-morphisms $\morstuff{F}$ in $\sltcatefield\otimes_{\aforme}\C$ for which 
$\twomorstuff{id}_{\morstuff{F}}\in\claspidealgope$ are 
strictly greater than 
the ones for which $\twomorstuff{id}_{\morstuff{F}}\not\in\claspidealgope$, in 
the two-sided cell preorder. By the observations in the first paragraph, 
the same must hold for the indecomposable 
$1$-morphisms in $\subcatquoefield\otimes_{\aforme}\C$ with respect to the 
$2$-ideal $\claspidealefielde$, which 
is generated by $\elfunctor[\qqpar]\left(\claspidealgope\right)$. This shows 
that $\elfunctor[e]\colon\slqmodgop\to\Subcatquo[e]$ 
is faithful on $2$-morphisms, 
since $\slqmodgop$ is semisimple.
\end{proof}

\begin{proposition}\label{proposition:cat-the-quoalgebra}
The isomorphism from \fullref{proposition:cat-the-algebra} 
gives an isomorphism 
$\subquo[e]\xrightarrow{\cong}\GGcv{\Kar{\subcatquo[e]}}$ 
of algebras.
\end{proposition}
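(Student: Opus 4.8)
\textbf{Proof plan for \fullref{proposition:cat-the-quoalgebra}.}

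The plan is to decategorify the chain of $2$-categories and $2$-functors established above and track what happens to the clasp $2$-ideal under decategorification. Concretely, I would argue that the split Grothendieck group functor $\GGcv{\Kar{\placeholder}}$ sends the defining data of $\subcatquo[e]$ to the defining data of $\subquo[e]$. By \fullref{proposition:cat-the-algebra} we already have an isomorphism $\phi\colon\subquo\xrightarrow{\cong}\GGcv{\Kar{\subcatquofield}}$ under which the colored KL basis $\basisC$ corresponds to (shift classes of) the indecomposable $1$-morphisms of $\Kar{\subcatquofield}$. The key point is then that, after specializing $\qpar\to\qqpar$, $\phi$ carries the vanishing ideal $\killideal{e}$ of \fullref{definition:the-quotient-defined} onto the image in the Grothendieck group of the vanishing $2$-ideal $\claspideal[e]$ of \fullref{definition:categorified-quotient}. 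Granting this, $\phi$ descends to an isomorphism $\subquo[e]=\subquo/\killideal{e}\xrightarrow{\cong}\GGcv{\Kar{\subcatquofield}}/\phi(\killideal{e})\cong\GGcv{\Kar{\subcatquo[e]}}$, where the last isomorphism uses that the Grothendieck group of a Karoubi-closed quotient $2$-category is the Grothendieck group modulo the classes of $1$-morphisms whose identity $2$-morphism lies in the $2$-ideal.

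The first step is therefore to identify, under $\phi$, the Grothendieck class of the colored clasp $\CRKLx{m,n}$ with the colored KL element $\RKLx{m,n}$ from \eqref{eq:the-expressions}. This is essentially a consequence of the quantum Satake correspondence: the $\slt$-clasp $\cRKLx{m,n}$ is by definition the projector onto the irreducible summand $\Ll_{m,n}$ inside $\fu^m\fud^n$, so under the degree-zero $2$-equivalence $\elfunctorefield$ (\fullref{theorem:q-satake}, \fullref{remark:q-satake}) and after biinduction up to $\wc$ (\fullref{definition:clasps-categorified-quotient}), its image decategorifies to the corresponding $\Z$-linear combination of the $\rklx{k,l}$'s — which, by \eqref{eq:L-vs-L} and the scaling built into \eqref{eq:the-expressions}, is exactly $\RKLx{m,n}$. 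Here I would invoke \fullref{remark-KL-combinatorics} and \fullref{example-KL-combinatorics-2}, which already record the dictionary between $\fu^k\fud^l$ and $\rklx{k,l}$ and between $\pxy{m,n}$ and $\RKLx{m,n}$; the content to check is that the colored clasp's Grothendieck class really is the indecomposable $1$-morphism labeled $\RKLx{m,n}$ (equivalently, that the clasp is a primitive idempotent projecting onto that summand), which follows from \fullref{lemma:slt-soergel-clasps} together with the fact that $\elfunctorefield$ sends indecomposables to indecomposables and preserves the cell structure (exactly as used in the proof of \fullref{lemma:Satake}).

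The second step is to match the two-sided ideals. On the algebra side, $\killideal{e}$ is generated by $\{\RKLx{m,n}\mid m+n=e+1,\,\tduc\in\Seset\}$; on the categorified side, $\claspideal[e]$ is generated by $\{\CRKLx{m,n}\mid m+n=e+1,\,\tduc\in\Seset\}$, and by \fullref{corollary:clasps-ideal} this is independent of the choices in \fullref{definition:clasps-categorified-quotient}. By Step 1, $\phi$ sends the generators of $\killideal{e}$ to the Grothendieck classes of the generators of $\claspideal[e]$. Since decategorification is a ring homomorphism and the split Grothendieck group of $\Kar{\subcatquo[e]}$ is the quotient of that of $\Kar{\subcatquofield}\otimes_{\aforme}\C$ by the span of the classes $[\morstuff{F}]$ with $\twomorstuff{id}_{\morstuff{F}}\in\claspideal[e]$ — and such $\morstuff{F}$ are precisely the indecomposables that lie strictly above the generating clasps in the two-sided cell preorder (again by the argument in the proof of \fullref{lemma:Satake}, transported along $\elfunctorefield$) — this span coincides with $\phi(\killideal{e})$. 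Finally I would invoke \fullref{corollary:cells} and \fullref{proposition:dimension}, which tell us that both quotients have the same basis (the colored KL elements $\RKLx{m,n}$ with $0\le m+n\le e$, plus $1$), so no further collapsing occurs and the induced map is an isomorphism rather than merely a surjection.

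\textbf{Main obstacle.} The routine parts are the ring-homomorphism bookkeeping and the identification of bases; the delicate point is controlling the \emph{degree-zero} part of the $2$-ideal $\claspideal[e]$, i.e. ruling out that negative-degree $2$-morphisms in $\subcatquoefield$ conspire to make the quotient smaller than expected. This is the same subtlety that appears in the proof of \fullref{lemma:Satake}, and I expect to handle it the same way: lift the question to $\sltcatefield$, where $(\sltcatefield\otimes_{\aforme}\C)/\claspidealgope\cong\slqmodgop$ is semisimple by \cite{AP}, so the cell-preorder characterization of which indecomposables are killed is available, and then transport this back along the degree-zero $2$-equivalence $\elfunctorefield$. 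Making sure this cell-theoretic input is correctly phrased for the regular (rather than maximally singular) $2$-category $\subcatquo[e]$ — using that its indecomposables are a subset of the KL basis of $\hecke(\typeat{2})$, as noted after \fullref{lemma:sts-decomp} — is where I would spend the most care.
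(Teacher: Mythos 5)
Your proposal is correct and follows essentially the same route as the paper's proof: identify the colored clasps' Grothendieck classes with the colored KL elements via \fullref{proposition:cat-the-algebra} and the Satake correspondence, match the vanishing ideal $\killideal{e}$ with the decategorification of $\claspideal[e]$ using \fullref{corollary:clasps-ideal} and \fullref{lemma:slt-soergel-clasps}, and control the degree-zero part of the $2$-ideal via \fullref{lemma:Satake}. The paper's proof is just a terse summary of exactly these three ingredients, and the subtlety you flag as the main obstacle is precisely the one \fullref{lemma:Satake} is designed to resolve.
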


\begin{proof}
This follows from the discussion 
above: By \fullref{corollary:clasps-ideal} and \fullref{lemma:slt-soergel-clasps} the vanishing $2$-ideal of level $e$ 
contains all colored clasps of level $e+1$. 
By by \fullref{proposition:cat-the-algebra} 
these decategorify to the $\RKLx{m,n}$ in the definition 
of $\subquo[e]$, while \fullref{lemma:Satake} ensures that 
the Grothendieck classes of the remaining $\CRKLx{m,n}$ 
form a basis of $\GGcv{\Kar{\subcatquo[e]}}$.
\end{proof}

\subsection{Generalizing dihedral Soergel bimodules}\label{subsec:dihedral-SB}

As before, we list certain analogies to the dihedral case.

\begin{dihedral}\label{remark:dihedral-SB1} The Hecke algebra $\hecke(\typeat{1})$ of 
\fullref{remark:dihedral-group1} is categorified by Soergel bimodules of affine type $\typea{1}$.
Here the Hecke algebra $\hecke(\typeat{2})$ is categorified by Soergel bimodules of affine type $\typea{2}$.
The difference is that now biinduction of the maximally singular bimodules only gives a proper $2$-subcategory.
\end{dihedral}

\makeautorefname{proposition}{Propositions}

\begin{dihedral}\label{remark:dihedral-SB2}
The Satake $2$-functor from \eqref{eq:elfunctor} 
exists in the dihedral case as well, with a bicolored 
version of quantum $\mathfrak{sl}_2$-modules 
as the source $2$-category. This $2$-category 
has a diagrammatic incarnation in terms of 
a $2$-colored Temperley--Lieb calculus \cite[Section 4.3]{El2}. 
The Soergel bimodules of finite Coxeter type 
$\typei$ can then be defined by annihilating the ideal generated by the colored
Jones--Wenzl projectors (i.e. colored $\mathfrak{sl}_2$ clasps) of level $e+1$ in this $2$-colored 
Temperley--Lieb calculus. Moreover, while 
the colored $\slt$-clasps satisfy the recursion 
in \fullref{lemma:recursion}, their $\mathfrak{sl}_2$ counterparts satisfy 
the Chebyshev recursion from \fullref{remark:dihedral-group2}. 
Finally, the analogs of \fullref{proposition:cat-the-algebra} 
and \ref{proposition:cat-the-quoalgebra}
hold as well.
\end{dihedral}

\makeautorefname{proposition}{Proposition}

\noindent\textbf{Missing proofs from \texorpdfstring{\fullref{section:funny-algebra}}{\ref{section:funny-algebra}}.}

\begin{proof}[Proof of \fullref{lemma:multiplication}]
Recall that the $2$-category $\adiag[\qpar]$ categorifies 
$\hecke$, i.e.
\begin{gather}\label{eq:EW-cat}
\hecke\xrightarrow{\cong}\GGcv{\Kar{\adiag[\qpar]}},
\end{gather}
such that the KL elements are sent 
to the Grothendieck classes of the indecomposable $1$-morphisms (with a fixed choice of grading), 
c.f. \cite{El1}, \cite{EW} or \fullref{remark:cat-affine-a2}. Furthermore, 
by \fullref{lemma:quotient-of-affine}, the algebra 
$\subquo$ can be embedded into $\hecke$ by sending the colored KL elements in $\subquo$ to 
KL elements in $\hecke$. Thus, we can 
identify elements of $\subquo$ with 
Grothendieck classes in $\GGcv{\Kar{\adiag[\qpar]}}$:

The element 
$\rklx{k,l}=\theta_{\tduc_{k+l}}\cdots\theta_{\tduc_1}\theta_{\tduc_0}\in \hecke$, 
with $\tduc_0=\tduc$, corresponds to 
\begin{gather}\label{eq:KL-element}
[
\wc\duc_{k+l}\tduc_{k+l}\duc_{k+l}\wc
\cdots
\wc\duc_1\tduc_1\duc_1\wc\duc_0\tduc_0\duc_0\wc
]
\in\GGcv{\Kar{\adiag[\qpar]}},
\end{gather}
where we can chose any compatible primary colors by 
\fullref{lemma:clasps-well-defined}. In fact, with \fullref{lemma:clasps-well-defined} in mind, we will
 denote all compatible primary colors simply by $\duc$ from now on. 

Using \fullref{lemma:remove-white}, we see that the element 
in \eqref{eq:KL-element} is 
equal to 
\[
\vnumber{2}^{k+l}\,
[
\wc\duc\tduc_{k+l}\duc
\cdots\duc\tduc_1 \duc\tduc_0\duc\wc
]
\in\GGcv{\Kar{\adiag[\qpar]}}.
\] 

Next, we use $\elfunctor$ from \eqref{eq:elfunctor}. 
By definition,  
$[\elfunctor]$ maps $[\fu^k\fud^l]\in\GGcv{\sltcatgop}$ to 
\[
[
\wc\duc\tduc_{k+l}\duc
\cdots\duc\tduc_1 \duc\tduc_0\duc\wc
]
\in\GGcv{\Kar{\adiag[\qpar]}},
\]
and to similar expressions with different rightmost color. 
By \fullref{remark:sl3-cat-GG}, this implies that $[\elfunctor]$ 
maps $[\Ll_{m,n}]$ to
\begin{gather}\label{eq:KL-element2}
{\textstyle\sum_{k,l}}
\vnumber{2}^{-k-l}d^{k,l}_{m,n}
[
\wc\duc\tduc_{k+l}\duc\wc
\cdots
\wc\duc\tduc_1\duc\wc\duc\tduc_0\duc\wc,
]
\end{gather}
and again to similar expressions with different rightmost color.

By \fullref{theorem:q-satake}, $\elfunctor$ is a degree-zero $2$-equivalence. 
In particular, it sends the simple $1$-morphisms to indecomposable $1$-morphisms. 
This implies that the element 
in \eqref{eq:KL-element2} is the Grothendieck 
class of an indecomposable $1$-morphism of $\Adiag[\qpar]$. 
Biinduction preserves indecomposibility, so our element $\RKLx{m,n}$ corresponds to the Grothendieck 
class of an indecomposable $1$-morphism in $\GGcv{\adiag[\qpar]}$. 
By the categorification theorem from \eqref{eq:EW-cat}, 
we see that $\RKLx{m,n}$ corresponds to a 
KL basis element in $\hecke$, and $\rklx{k,l}$ to the Grothendieck class of 
a Bott--Samelson bimodule. 

From the above, we obtain the first 
equation in \eqref{eq:multiplication}, since 
\[
[
\wc\duc\tduc\duc\wc\duc\tduc\duc\wc
]
=\vnumber{2}
[
\wc\duc\tduc\duc\tduc\duc\wc
]
= 
\vfrac{3}
[
\wc\duc\tduc\duc\wc
],
\]
by \fullref{lemma:remove-white} and \fullref{example:some-relations}. 
Using $[\elfunctor]$ and \eqref{eq:sl3-thingy-a}, we deduce the 
second equation in \eqref{eq:multiplication}.
Similarly, one can prove the third 
equation in \eqref{eq:multiplication} using \eqref{eq:sl3-thingy-b}.
\end{proof}

\begin{proof}[Proof of \fullref{proposition:two-bases}]
Let $\tduc$ be fixed for now. Recalling 
the notation from \fullref{section:sl3-stuff},
by \fullref{lemma:multiplication} and its 
proof given above, there is 
a $\Cv$-linear isomorphism between the scalar extension
$\GGcv{\sltcatgop}$ and 
$\Cv\left\{\rklx{k,l}\mid (k,l)\in X^+\right\}$, defined by 
\begin{gather*}
[\fu^k\fud^l]
\mapsto 
\vnumber{2}^{-k-l}\,\rklx{k,l}.
\end{gather*}
This shows that the $\rklx{k,l}$ are all linearly independent, 
and they are also linearly 
independent of $1$, of course. 

Since $\tduc$ was arbitrary and there are no relations in $\subquo$ which 
allow us to change the rightmost color in a word, it 
follows that 
\[
\left\{1 \right\}\cup \left\{\rklx{k,l}\mid (k,l)\in X^+,\; \tduc\in\Seset\right\}
\]
is a basis of $\subquo$.

Because $d^{m,n}_{m,n}=1$, and $d^{k,l}_{m,n}=0$ if $k+l>m+n$,
the above immediately implies that 
\[
\left\{\RKLx{m,n}\mid (m,n)\in X^+,\; \tduc\in\Seset\right\}
\]
is also a basis, 
since the transformation between the two 
sets of elements defined by \eqref{eq:the-expressions} 
is triangular with diagonal factors $\vnumber{2}^{-m-n}\neq 0$.  
\end{proof}
%
\section{Trihedral \texorpdfstring{$2$}{2}-representation theory}\label{sec:2-reps}

Keeping all notations from the previous sections, 
we are now going to explain the $2$-re\-presentation theory of 
the trihedral Soergel bimodules. Again, we have collected the 
analogies to the dihedral case at the end in \fullref{subsec:dihedral-group-cat}.
\medskip

\noindent\textbf{Background.}
\medskip

Let us briefly recall some terminology and results from $2$-re\-presentation theory as in e.g.
\cite{MM3} or \cite{MM5}, where we also need the graded setup as in \cite[Section 3]{MT1}. 

\subsubsection{\texorpdfstring{$\N_{[\vpar]}$}{Nv}-representation theory}\label{subsec:decat-story-a}

We start with the decategorified picture. Recall that $\vpar$ 
denotes a generic parameter, $\aformvN=\N[\vpar,\vpar^{-1}]$, 
$\aformv=\Z[\vpar,\vpar^{-1}]$ and $\Cv=\C(\vpar)$.

Following various authors, see e.g. \cite[Section 1]{EK1}, \cite[Chapter 3]{EGNO} or \cite{KM1}
and the references therein, we define:

\begin{definition}\label{definition:two-bases-integral}
A pair $(\posalg,\posbasis)$ of an associative, unital 
($\Cv$-)algebra $\posalg$ 
and a finite basis $\posbasis$ with $1\in\posbasis$
is called a $\aformvN$-algebra 
if
\[
\algstuff{x}\algstuff{y}\in\aformvN\posbasis
\]
holds for all $\algstuff{x},\algstuff{y}\in\posbasis$.
\end{definition}

\begin{definition}\label{definition:pos-integral-modules}
Let $(\posmod,\posmodbasis)$ be a pair 
of a (left) $(\posalg,\posbasis)$-representation $\posmod$ and a choice of a finite basis 
$\posmodbasis$ for it. We call $(\posmod,\posmodbasis)$
a $\aformvN$-representation if
\[
\posmod(\algstuff{z})\algstuff{m}\in\aformvN\posmodbasis
\]
holds for all $\algstuff{z}\in\posbasis,\algstuff{m}\in\posmodbasis$.
\end{definition}

\begin{example}\label{example:transitive}
These $\aformvN$-algebras and $\aformvN$-representations 
arise naturally as the decategorification 
of $2$-categories and $2$-representations, which will be recalled in the next section. 
\end{example}

Abusing notation, we sometimes write $\posalg$ instead of 
$(\posalg,\posbasis)$ and $\posmod$ instead of $(\posmod,\posmodbasis)$.

\begin{definition}\label{definition:eq-integral-modules}
Two $\aformvN$-representations $\posmod,\posmod^{\prime}$ are 
$\aformvN$-equivalent, denoted by 
$\posmod\cong_{+}\posmod^{\prime}$, if there exist a bijection 
$\posmodbasis\to\posmodbasis[\posmod^{\prime}]$
such that the induced linear map $\posmod\to\posmod^{\prime}$ 
is an isomorphism of $\posalg$-representations.
\end{definition}

\begin{example}\label{example:eq-integral-modules}
$\posmod\cong_{+}\posmod^{\prime}$ implies 
$\posmod\cong\posmod^{\prime}$ (meaning that the are 
isomorphic as $\posalg$-representa\-tions over $\Cv$), 
but not vice versa: 

First of all, $\posmod$ might be isomorphic over 
$\Cv$ to a $\posalg$-representation $\posmod^{\prime}$ that is 
not a $\aformvN$-re\-presentation. For example, consider the 
($\Cv$-)group algebra of any finite group with 
its basis given by the group elements. 
Its regular representation is a $\aformvN$-representation on this basis, and over $\Cv$ this 
representation decomposes into simple modules. 
However, most simple modules are not $\aformvN$-representations 
and the decomposition can usually not be obtained 
via base change matrices with entries from $\aformvN$.

Secondly, even if $\posmod\cong\posmod^{\prime}$ are 
two isomorphic $\aformvN$-representations, 
they may not be $\aformvN$-equivalent. For example, the dihedral 
Hecke algebra of type $\typei[12]$ has two 
$\aformvN$-representations,  
associated to the type $\typeE_6$ Dynkin, which are 
isomorphic over $\Cv$ but not $\aformvN$-equivalent 
(c.f. \cite[Theorem II(iii)]{MT1}).  
\end{example}

\subsubsection{Cells}\label{subsec:background-decat}

For any $\aformvN$-algebra $\posalg$ one can define 
cell theory as in \fullref{definition:cells-first}, e.g. 
$\algstuff{x}\lgeq\algstuff{y}$ for $\algstuff{x},\algstuff{y}\in\posbasis$ 
if there exists an element 
$\algstuff{z}\in\posbasis$ such that $\algstuff{x}$ appears as a summand of 
$\algstuff{z}\algstuff{y}$, when the latter is written as a linear combination 
of elements in $\posbasis$. We 
hence obtain (left, right and two-sided) cells $\lcell$, 
$\rcell$ and $\tcell$, and we can write $\lcell^{\prime}\lgeq\lcell$ etc.
See also e.g. \cite{KM1}  
(incorporating $\vpar$)
for details. The same notions be can defined for any 
$\aformvN$-representation $\posmod$, e.g. 
$\algstuff{m}\lgeq\algstuff{n}$ for 
$\algstuff{m},\algstuff{n}\in\posmodbasis$ if there exists some $\algstuff{z}\in\posbasis$ such that
$\algstuff{m}$ appears in $\posmod(\algstuff{z})\algstuff{n}$ with non-zero coefficient 
when written in terms of $\posmodbasis$.

\begin{definition}\label{definition:two-bases-integral-modules}
We call a $\aformvN$-representation 
$\posmod$ transitive 
if all basis elements belong to the same $\lsim$ equivalence class.
\end{definition}

\begin{remark}\label{remark:transitive}
Consider the graph with vertices given by
$\posmodbasis$ and with an oriented edge from $\algstuff{n}$ to 
$\algstuff{m}$ whenever $\algstuff{m}\lgeq\algstuff{n}$. Transitivity of $\posmod$ means that this graph  
is strongly connected.
\end{remark}

Similarly, we can also define the notion of a transitive $\aformv$-representation associated to a strongly 
connected graph. (Note that $\algstuff{m}\lgeq\algstuff{n}$ also makes sense over $\aformv$.)

\begin{definition}\label{definition:cell-module}
Fix $\lcell$. Let $\amod[M](\lgeq)$, respectively 
$\amod[M](\lgeqs)$, be the $\aformvN$-representations spanned by 
all $\algstuff{x}\in\posbasis$ in the union of all left cells $\lcell^{\prime}\lgeq\lcell$, 
respectively $\lcell^{\prime}\lgeqs\lcell$. (These are well-defined by 
\cite[Proposition 1]{KM1}.) We call 
$\amod[C]_{\lcell}=\amod[M](\lgeq)/\amod[M](\lgeqs)$ 
the (left) cell module for $\lcell$.
\end{definition}

By definition, all cell modules are transitive $\aformvN$-representations.

\begin{example}\label{example:cell-module-sym-group}
Coming back to \fullref{example:eq-integral-modules}: There is only one 
left (right, two-sided) cell for the group algebra of a finite group. 
The associated cell module is the regular representation.

However, on a different basis this might change considerably:
The Hecke algebras for (finite) Coxeter groups 
are $\aformvN$-algebras, where the 
KL basis plays the role of the basis $\posbasis$, see \cite{KaLu}. Their 
cell modules are Kazhdan--Lusztig's original cell modules. 
In the case of the symmetric group, these cell modules are the simple modules, 
but in general cell modules are not simple (since most simples are 
not $\aformvN$-representations).
\end{example}

\begin{example}\label{example:cell-module}
Decategorifications of cell $2$-representations, which will be recalled 
below, are key examples of cell modules.
\end{example}
 
Given any cell module $\amod[C]_{\lcell}$, the results in \cite[Section 8]{KM1} show that there 
exists a unique, maximal two-sided cell, called apex, which does not annihilate $\amod[C]_{\lcell}$. 
The same is true for general transitive $\aformvN$-representations by \cite[Section 9.2]{KM1}. 
Thus, we can restrict the study of transitive $\aformvN$-representations to a given apex.

\subsubsection{\texorpdfstring{$2$}{2}-representations of finitary \texorpdfstring{$2$}{2}-categories}\label{subsec:background-1}

Let $\K$ be a ring. An additive, $\K$-linear, ($\Z$-)graded $2$-category $\twocatstuff{C}$ 
(with the same grading conventions as in \fullref{convention:grading}), 
which is idempotent complete and Krull--Schmidt, is called 
graded finitary if:
\smallskip
\begin{enumerate}[label=$\blacktriangleright$]

\setlength\itemsep{.15cm}

\item It has finitely many objects, and all identity $1$-morphisms are indecomposable.

\item The $2$-hom spaces are free of finite $\K$-rank in each degree, and their grading is bounded from below.

\item Consider the $2$-subcategory of $\twocatstuff{C}$
having the same objects and $1$-morphisms, but only degree-preserving $2$-morphisms. Its 
split Grothendieck group is a free $\aformv$-module, with $\vpar$ corresponding to 
the grading shift, which we assume to be of finite $\aformv$-rank.

\end{enumerate}
\smallskip

(Note that the last point above implies that a graded finitary $2$-category has only 
finitely many equivalence classes of indecomposable $1$-morphisms up to grading shift.)

Similarly, a graded locally finitary $2$-category is as above, but relaxing the 
condition on the Grothendieck group by requiring it to be of countable $\aformv$-rank.

We also use graded finitary categories (having graded 
hom-spaces which are free of finite $\K$-rank), which are 
the objects of a $2$-category $\fincat$ with 
$1$-morphisms being additive, $\K$-linear, degree-preserving functors and 
$2$-morphisms being homogeneous natural
transformations of degree-zero. 

Let $(\fincat)^{\star}$ denote the 
$2$-category obtained from $\fincat$ by adding formal shifts 
to the $1$-morphisms. Its $2$-hom spaces are given by
\[
\twoHom_{(\fincat)^{\star}}(\obstuff{i},\obstuff{j})
=
{\textstyle \bigoplus_{s\in\Z}}\,
\twoHom_{\fincat}(\obstuff{i}\{s\},\obstuff{j}).
\]

\begin{example}\label{example:graded-finitary}
All $2$-categories in \fullref{sec:A2-diagrams} become graded 
(locally) finitary after taking their Karoubi envelope.
\end{example}
 
\begin{example}\label{example:graded-finitary-2}
Let $\algstuff{B}$ be a graded $\K$-algebra which is 
free of finite $\K$-rank.
The category of free, finite $\K$-rank, graded (left) $\algstuff{B}$-representations 
is a prototypical object of $\fincat$. For example, 
the graded representation categories of the quiver algebras $\zig[e]$ in \fullref{subsec:quiver} below 
are objects of $\fincat$.
\end{example}

A graded finitary $2$-representation of $\twocatstuff{C}$ is an additive, $\K$-linear
$2$-functor
\[
\cM\colon
\twocatstuff{C}
\to
(\fincat)^{\star}
\]
which is degree-preserving and commutes with shifts as in \cite[Definition 3.4]{MT1}.

\begin{example}\label{example:graded-finitary-3}
The principal $2$-representation 
$\twocatstuff{P}_{\obstuff{i}}=\twocatstuff{C}(\obstuff{i},\placeholder)$, 
where $\obstuff{i}$ is an object of $\twocatstuff{C}$, is a graded finitary 
$2$-representation of $\twocatstuff{C}$.
\end{example}

Graded finitary $2$-representations of $\twocatstuff{C}$ form a
graded $2$-category (in the sense of \fullref{convention:grading}), 
see \cite{MM3} for details, which can be adapted to the graded setting. In particular, there exists 
a well-defined notion of equivalence between such $2$-representations.

For simplicity, we say \textit{$2$-representation} instead of 
\textit{graded finitary $2$-representation} etc. from now on, i.e. we omit the 
\textit{graded finitary}.

\subsubsection{\texorpdfstring{$2$}{2}-cells}\label{subsec:background-2}

As in the case of $\aformvN$-algebras, one can define cells and cell $2$-representations of 
finitary $2$-categories: Let $\morstuff{X}$ and $\morstuff{Y}$ be 
indecomposable $1$-morphisms in a finitary $2$-category $\twocatstuff{C}$.
Set $\morstuff{X}\lgeq\morstuff{Y}$ if $\morstuff{X}$ is isomorphic 
to a direct summand of $\morstuff{Z}\morstuff{Y}$, up to a degree shift, for some 
indecomposable $1$-morphism $\morstuff{Z}$. Similarly one defines $\rgeq$ and $\tgeq$. 
The equivalence classes for these are 
called the respective cells, denoted by $\lcell$, $\rcell$ or $\tcell$. 
All these notions can be defined in a similar way 
for $2$-representations as well.

A finitary $2$-representation $\cM$ is transitive (see \cite[Section 3.1]{MM5}, 
or \cite[Definition 3.6]{MT1} in the graded setup), if $\cM$ is 
supported on one $\obstuff{i}\in\twocatstuff{C}$, and if all indecomposable objects 
$\obstuff{O},\obstuff{P}\in\cM(\obstuff{i})$ are in the same $\lsim$-equivalence class. A 
transitive $2$-representation is simple transitive, see \cite[Section 3.5]{MM5} (or \cite[Definition 3.6]{MT1} 
in the graded setup), if it does not have any non-zero, proper $\twocatstuff{C}$-invariant ideals.

\begin{remark}\label{remark:graded-finitary-3}
By \cite[Section 4]{MM5}, any $2$-representation has a weak Jordan--H{\"o}lder series
with simple transitive subquotients, which are unique up to 
permutation and equivalence. Therefore, it is natural to ask for the classification 
of simple transitive $2$-representations. Moreover, by \cite[Section 3]{MM5}, any 
transitive $2$-representation has a unique maximal $\twocatstuff{C}$-stable 
ideal which one can quotient by to get 
a simple transitive $2$-representation, called the simple transitive quotient.
\end{remark}

Every (graded) finitary $2$-category comes with a natural class of simple transitive $2$-representations: 

\begin{definition}\label{definition:2-cell-module}
Fix $\lcell$.
Then there exists $\obstuff{i}\in\twocatstuff{C}$ such that all
$1$-morphisms in $\lcell$ start 
at $\obstuff{i}$. 
Let $\cM(\lgeq)$ be the $2$-representations of $\twocatstuff{C}$ spanned by 
the additive closure of all indecomposable $1$-morphisms $\morstuff{F}$, in 
$\coprod_{\obstuff{j}\in\twocatstuff{C}}\twocatstuff{P}_{\obstuff{i}}(\obstuff{j})$, which belong to the union of all 
left cells $\lcell^{\prime}\lgeq\lcell$. 
Let $\twocatstuff{Z}(\lgeq)$ be the unique, 
proper two-sided $2$-ideal in $\cM(\lgeq)$.
(All of this is well-defined by 
\cite[Section 3.3 and Lemma 3]{MM5}.)
We call $\twocatstuff{C}_{\lcell}=\cM(\lgeq)/\twocatstuff{Z}(\lgeq)$ 
the cell $2$-representation for $\lcell$.
\end{definition}

Note that cell $2$-representations are always simple transitive.

\begin{example}\label{example:simple-cell}
In case of Soergel bimodules for the symmetric group,  
these exhaust all simple transitive $2$-representations and categorify 
the simple modules \cite{MM5}. However, both these facts are false in general, as 
the example of dihedral Soergel bimodules shows, see e.g. \cite{KMMZ}, \cite{MT1}.
\end{example}

\begin{remark}\label{remark:might-be-bigger}
On the decategorified level, the cell representation 
is obtained as the quotient of $\M(\lgeq)$ by $\M(\lgeqs)$,
cf. \fullref{definition:cell-module}. On the level of $2$-representations, the proper maximal 
two-sided $2$-ideal $\twocatstuff{Z}(\lgeq)$ strictly 
contains the two-sided $2$-ideal generated by the $2$-subrepresentation $\cM(\lgeqs)$ in general.
\end{remark}

Again, there is a unique, maximal 
two-sided cell, called $2$-apex, 
which does not annihilate a given cell $2$-representation. 
The same works for general transitive $2$-representations. 
See \cite[Section 3.2]{CM1} for more details.

\subsubsection{(Co)algebra \texorpdfstring{$1$}{1}-morphisms}\label{subsec:background-3}

An algebra $1$-morphism in $\twocatstuff{C}$ 
is a triple $(\morstuff{A},\mu,\eta)$, where 
$\morstuff{A}$ is a $1$-morphism and $\mu\colon\morstuff{A}\morstuff{A}\to\morstuff{A}$ 
and $\eta\colon\mathbbm{1}\to\morstuff{A}$ are $2$-morphisms satisfying 
the usual axioms for the multiplication and unit of an 
algebra.

Furthermore, there are compatible notions of module $1$-morphism 
over an algebra $1$-morphism $\morstuff{A}$, and 
of $2$-homomorphism between these. In this way, we get the $2$-categories  
$\modtwocat{\twocatstuff{C}}(\morstuff{A})$ 
(or $(\morstuff{A})\modtwocat{\twocatstuff{C}}$) 
of right (or left) $\morstuff{A}$-module $1$-morphisms in $\twocatstuff{C}$.
By post-composition $\modtwocat{\twocatstuff{C}}(\morstuff{A})$  
becomes a left $2$-representation of $\twocatstuff{C}$. Similarly, by pre-composition 
$(\morstuff{A})\modtwocat{\twocatstuff{C}}$ 
becomes a right $2$-representation of $\twocatstuff{C}$.

One defines coalgebra $1$-morphisms $(\morstuff{C},\delta,\varepsilon)$ in $\twocatstuff{C}$ 
and their respective comodule $2$-categories, which are also $2$-representations of $\twocatstuff{C}$, dually.

Finally, there are also compatible notions of bimodule $1$-morphism over an algebra $1$-morphism and $2$-homomorphism 
between bimodule $1$-morphisms. By definition, 
a Frobenius $1$-morphism $\morstuff{F}$ in $\twocatstuff{C}$ is 
an algebra $1$-morphism which is also a coalgebra $1$-morphism, such that 
the comultiplication $2$-morphism is a $2$-homomorphism between $\morstuff{F}$--$\morstuff{F}$-bimodule $1$-morphisms.

We refer to \cite{MMMT1} or \cite[Chapter 7]{EGNO} for further details.

\begin{remark}\label{remark:alg-objects}
Suppose that $\twocatstuff{C}$ is additionally fiat (meaning that it 
has a certain involution \cite[Section 2.4]{MM1}). Then 
\cite[Theorem 9]{MMMT1} asserts that, for any simple transitive $2$-representation $\cM$ of $\twocatstuff{C}$, 
there exists a simple algebra $1$-morphism $\morstuff{A}$ in $\overline{\twocatstuff{C}}$ 
(the projective abelianization of $\twocatstuff{C}$, as introduced in \cite[Section 3.2]{MMMT1}) 
such that $\cM$ is equivalent (as a $2$-representation of $\twocatstuff{C}$) 
to the subcategory of projective objects of $\modtwocat{\overline{\twocatstuff{C}}}(\morstuff{A})$. 
Hence, the classification of simple transitive $2$-representations of $\twocatstuff{C}$ is equivalent to 
the classification of simple algebra $1$-morphisms in  $\overline{\twocatstuff{C}}$. 
Or, dually, to the classification of cosimple coalgebra $1$-morphisms in $\underline{\twocatstuff{C}}$, the 
injective abelianization of $\twocatstuff{C}$.

The fiat $2$-categories $\twocatstuff{C}$ in this paper are special, because they are closely related to 
semisimple $2$-categories by the quantum Satake correspondence, and
the simple algebra $1$-morphisms which we study
in this paper all belong to $\twocatstuff{C}$.
\end{remark}

\subsection{Decategorified story}\label{subsec:decat-story}

\subsubsection{Trihedral transitive \texorpdfstring{$\N_{[\vpar]}$}{Nv}-representations}\label{subsec:Z-reps}

\makeautorefname{subsection}{Sections}

From  
\fullref{subsec:definition} 
and \ref{subsec:quotient-algebra}, 
in particular 
the connection to the representation theory of $\slt$, 
the following is evident.

\makeautorefname{subsection}{Section}

\makeautorefname{proposition}{Propositions}

\begin{propositionqed}\label{proposition:two-bases-integral}
The trihedral Hecke algebras are $\aformvN$-algebras, i.e.
for the basis $\basisC$ and $\basisC[e]$ from \fullref{proposition:two-bases} 
an \ref{proposition:dimension} we have
\[
\algstuff{x}\algstuff{y}\in\aformvN\basisC
\quad\text{and}\quad
\algstuff{x}^{\prime}\algstuff{y}^{\prime}\in\aformvN\basisC[e]
\]
for all $\algstuff{x},\algstuff{y}\in\basisC$ and 
$\algstuff{x}^{\prime},\algstuff{y}^{\prime}\in\basisC[e]$.
The same holds for the left colored KL bases.  
\end{propositionqed}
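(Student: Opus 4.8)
The plan is to reduce the claim to the multiplication formulas already established in \fullref{lemma:multiplication} and their left-handed counterparts. First I would recall that, by \fullref{proposition:two-bases}, the set $\basisC=\{1\}\cup\{\RKLx{m,n}\mid(m,n)\in X^+,\tduc\in\Seset\}$ is indeed a basis of $\subquo$, so the only thing to check is that the structure constants for this basis, with respect to multiplication, lie in $\aformvN=\N[\vpar,\vpar^{-1}]$. Since $1\in\basisC$ acts as the identity, only products of the form $\RKLx{m,n}\cdot\RKLy{m^{\prime},n^{\prime}}$ need to be analyzed, and it suffices to show by induction on $m^{\prime}+n^{\prime}$ that each such product expands with coefficients in $\aformvN$.

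The base case $m^{\prime}+n^{\prime}=0$ is $\RKLy{m^{\prime},n^{\prime}}=\theta_{\tduc}$, and then $\RKLx{m,n}\theta_{\tduc}$ is handled directly by \eqref{eq:multiplication2}: the right-hand side is either $\vfrac{3}\RKLx{m,n}$ or a $\vnumber{2}$-multiple of a sum of three colored KL elements with non-negative indices (terms with a negative index being zero), and both $\vfrac{3}=\vnumber{3}\vnumber{2}\vnumber{1}$ and $\vnumber{2}$ lie in $\aformvN$ because all $\vpar$-numbers $\vnumber{t}$ with $t\geq 1$ are palindromic polynomials in $\vpar,\vpar^{-1}$ with non-negative integer coefficients. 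For the inductive step, I would write $\RKLy{m^{\prime},n^{\prime}}$ as a product $\theta_{\tduc_{m^{\prime}+n^{\prime}}}\cdots\theta_{\tduc_0}$ using the definition, peel off the rightmost factor $\theta_{\tduc_0}$, and apply the associativity of $\subquo$: $\RKLx{m,n}\bigl(\theta_{\tduc_{m^{\prime}+n^{\prime}}}\cdots\theta_{\tduc_1}\bigr)\theta_{\tduc_0}$. More cleanly, one uses that by \eqref{eq:multiplication} the left action of any $\theta_{\tduc}$ on a colored KL element produces an $\aformvN$-combination of colored KL elements, and dually \eqref{eq:multiplication2} does the same for the right action; iterating either of these, every element of $\basisH$ (hence, by the triangular change of basis with diagonal entries $\vnumber{2}^{-m-n}$, this does \emph{not} immediately work over $\aformvN$ — so I will instead argue directly with the KL basis). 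The correct inductive device: expand $\RKLx{m,n}\RKLy{m^{\prime},n^{\prime}}$ by repeatedly applying \eqref{eq:multiplication2} to move one $\theta$ at a time; since $\RKLy{m^{\prime},n^{\prime}}$ is itself a single product of $m^{\prime}+n^{\prime}$ generators $\theta_{\tduc_i}$ (not a sum), right-multiplying $\RKLx{m,n}$ successively by $\theta_{\tduc_1},\dots,\theta_{\tduc_{m^{\prime}+n^{\prime}}}$ yields at each stage an $\aformvN$-combination of colored KL basis elements by \eqref{eq:multiplication2}; after all $m^{\prime}+n^{\prime}$ steps one has expressed $\RKLx{m,n}\RKLy{m^{\prime},n^{\prime}}$ (up to the constant term issue, which does not arise here since the product of two non-trivial colored KL elements never produces a multiple of $1$, as every term has leftmost factor some $\theta_\tdudc$) as an $\aformvN$-combination of $\basisC$.

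For the quotient $\subquo[e]=\subquo/\killideal{e}$, the statement follows immediately: by \fullref{proposition:dimension} the set $\basisC[e]$ is a basis, and the projection $\subquo\to\subquo[e]$ sends $\basisC\cap(\killideal{e}\text{-span})$ to $0$ and the remaining elements of $\basisC$ bijectively to $\basisC[e]$ (keeping $1$), so the structure constants of $\basisC[e]$ are obtained from those of $\basisC$ by deleting the terms indexed by elements with $m+n\geq e+1$; deleting terms preserves membership in $\aformvN$. The same argument applies verbatim with $\RKLx{m,n}$ replaced by $\KLx{m,n}$ throughout, using the left-handed versions of \eqref{eq:multiplication} and \eqref{eq:multiplication2} noted after \fullref{example-KL-combinatorics-3} and the fact that $\Cbasis$, $\Cbasis[e]$ are bases by \fullref{proposition:two-bases} and \ref{proposition:dimension}. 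The main obstacle I anticipate is purely bookkeeping: making sure the inductive expansion really only ever invokes \eqref{eq:multiplication2} with non-negative coefficients and that the \textquotedblleft$\theta_{\tduc}$ peeled off one at a time\textquotedblright\ argument is legitimate, i.e. that $\RKLy{m^{\prime},n^{\prime}}$ as defined in \eqref{eq:the-expressions} can be reorganized so that right-multiplication proceeds through a genuine sequence of single generators — this is exactly what \fullref{lemma:well-defined-paths} and the normal form \eqref{eq:normal-form} guarantee, so no real difficulty remains, only care.
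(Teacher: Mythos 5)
There is a genuine gap in your inductive step. You assert that ``$\RKLy{m^{\prime},n^{\prime}}$ is itself a single product of $m^{\prime}+n^{\prime}$ generators $\theta_{\tduc_i}$ (not a sum)'', but that describes the Bott--Samelson element $\rkly{m^{\prime},n^{\prime}}\in\basisH$, not the colored KL element. By \eqref{eq:the-expressions} we have $\RKLy{m^{\prime},n^{\prime}}={\textstyle\sum_{k,l}}\,\vnumber{2}^{-k-l}d^{k,l}_{m^{\prime},n^{\prime}}\,\rkly{k,l}$, where the integers $d^{k,l}_{m^{\prime},n^{\prime}}$ are frequently negative (see \fullref{example:sl3-polys}) and $\vnumber{2}^{-1}\notin\aformvN$. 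Consequently, peeling off one generator at a time and applying \eqref{eq:multiplication2} only proves that $\RKLx{m,n}\cdot\rkly{k,l}\in\aformvN\basisC$, i.e. positivity for the product of a colored KL element with a Bott--Samelson element; the signed change of basis, whose coefficients lie outside $\aformvN$, blocks any conclusion about products of two colored KL elements. The obstruction is not cosmetic: if you instead try to induct through the recursion \eqref{eq:multiplication2}, isolating the new colored KL element on one side of the equation introduces two subtracted terms, so non-negativity is not preserved by the induction.

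What is missing is exactly the input the paper invokes with ``the connection to the representation theory of $\slt$''. Via \fullref{proposition:cat-the-algebra}, the elements of $\basisC$ are the classes of the indecomposable $1$-morphisms of $\Kar{\subcatquofield}$ (up to shift), and the composite of two indecomposable $1$-morphisms decomposes into a direct sum of shifted indecomposables; the structure constants are therefore graded multiplicities and hence lie in $\aformvN$. Equivalently, by \fullref{lemma:multiplication} and \fullref{remark:sl3-cat-GG}, the structure constants of $\basisC$ are $\vpar$-number multiples of tensor-product multiplicities for $\slt$, which are non-negative. Your reductions of the level-$e$ case to the level-$\infty$ case and of the left to the right colored KL bases are fine, but they sit on top of an unproved core positivity claim.
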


\makeautorefname{proposition}{Proposition}

This is our starting point for studying $\aformvN$-representations of the trihedral 
Hecke algebras. From now on, we fix the right colored KL bases for $\subquo$ and $\subquo[e]$,  
as in \fullref{proposition:two-bases-integral}.

\begin{example}\label{example:int-valued-0}
Most of the three-dimensional $\subquo[e]$-representations 
in \eqref{eq:the-simples} are not $\aformvN$-re\-presentations 
(for any choice of basis). 

For $e>1$, the one-dimensional representations $\M_{\vfrac{3},0,0}$, $\M_{0,\vfrac{3},0}$ and $\M_{0,0,\vfrac{3}}$
are also not $\aformvN$-representations, e.g. 
by \fullref{example-KL-combinatorics-2}, the action of $\RKLg{1,1}$ on 
$\M_{\vfrac{3},0,0}$ is given by 
\[
\RKLg{1,1}=\vnumber{2}^{-2}\theta_\gc\theta_\pc\theta_\gc
-\,\theta_\gc
\mapsto
-\vfrac{3}.
\]
Thus, $\M_{\vfrac{3},0,0}$ is not an $\aformvN$-representation.
\end{example}

Our next goal is to define 
several families of 
$\aformvN$-representations of the trihedral 
Hecke algebras.
Recall from \fullref{proposition:cells} and \fullref{corollary:cells} that 
the trihedral algebras have one trivial and one non-trivial two-sided
cell, both of which can be the apex of a transitive $\aformvN$-representation.
For the trivial cell there is only one such representation:

\begin{example}\label{example:int-valued}
The simple $\M_{0,0,0}$ 
(cf. \eqref{eq:the-simples})
is a transitive $\aformvN$-re\-pre\-sen\-ta\-tion of $\subquo[\infty]$, which also descends to $\subquo[e]$ for any $e$,
and its apex is the trivial cell. By \eqref{eq:the-one-dims} and \fullref{example:int-valued-0}, there are 
no other transitive $\aformvN$-representations whose apex is the 
trivial cell.
\end{example}

From now on we will only consider transitive $\aformvN$-representations 
whose apex is the unique, non-trivial two-sided cell.
For this purpose, we consider tricolored graphs, denoted by $\Gg$ etc., fixing certain conventions as follows.

\subsubsection{Graph-theoretic recollections}\label{subsec:three-colored-graphs}

For us a graph $\Gg$ is an undirected, connected, finite graph without loops, 
but possibly with multiple edges. We will also need 
graphs with directed edges and we indicate these by adding the superscript ${\placeholder}^\fu$ 
or ${\placeholder}^\fud$.

We call $\Gg=(\Gg,V=\{\Gset,\Oset,\Pset\},E=\{\Bset,\Rset,\Yset\})$ 
tricolored, with colors $\gc,\oc,\pc$, if $V$ and $E$ can be partitioned into three disjoint sets 
$\Gset,\Oset,\Pset$ and $\Bset,\Rset,\Yset$ such that
\[
\left(
\begin{gathered}
\begin{tikzpicture}[anchorbase, xscale=.35, yscale=.5]
	\draw [thick, myyellow] (0,0) to (3,0);
	\node at (0,0) {$\bulletg$};
	\node at (3,0) {$\bulleto$};
\end{tikzpicture}
\in\Yset\Rightarrow
\\
\bulletg\in\Gset\text{ and }\bulleto\in\Oset
\end{gathered}
\right),
\quad\quad
\left(
\begin{gathered}
\begin{tikzpicture}[anchorbase, xscale=.35, yscale=.5]
	\draw [thick, densely dashed, myred] (0,0) to (3,0);
	\node at (0,0) {$\bulleto$};
	\node at (3,0) {$\bulletp$};
\end{tikzpicture}
\in\Rset\Rightarrow
\\
\bulleto\in\Oset\text{ and }\bulletp\in\Pset
\end{gathered}
\right),
\quad\quad
\left(
\begin{gathered}
\begin{tikzpicture}[anchorbase, xscale=.35, yscale=.5]
	\draw [thick, densely dotted, myblue] (0,0) to (3,0);
	\node at (0,0) {$\bulletp$};
	\node at (3,0) {$\bulletg$};
\end{tikzpicture}
\in\Bset\Rightarrow
\\
\bulletp\in\Pset\text{ and }\bulletg\in\Gset
\end{gathered}
\right).
\]
(We usually denote a tricolored graph simply by $\Gg$, suppressing the tricoloring.)

The vertices of any tricolored graph $\Gg$ can be ordered  
such that the adjacency matrix $A(\Gg)$ is of the following form.
\begin{gather}\label{eq:ad-matrix}
A(\Gg)=\!\!
\raisebox{.25cm}{$\begin{tikzpicture}[baseline=(current bounding box.center),yscale=0.6]
  \matrix (m) [matrix of math nodes, row sep=.5em, column
  sep=.5em, text height=.5ex, text depth=0.25ex, ampersand replacement=\&] {
\phantom{a}
\& \Gset   
\& \Oset  
\& \Pset 
\\
\Gset
\& 0   
\& \hspace*{.01cm} A^{\mathrm{T}}\hspace*{.01cm}  
\& C 
\\
\Oset
\& A   
\& 0  
\& B^{\mathrm{T}} 
\\
\Pset
\& \,C^{\mathrm{T}}   
\& \,\,B\,\,  
\& 0 
\\   
};
\draw[densely dotted] (m-2-2.south west) to (m-2-4.south east);
\draw[densely dotted] (m-3-2.south west) to ($(m-3-4.south east)+(-.05,0)$);
\draw[densely dotted] ($(m-2-3.north west)+(0,.15)$) to (m-4-3.south west);
\draw[densely dotted] ($(m-2-3.north east)+(0,.15)$) to (m-4-3.south east);
\draw[thick] ($(m-2-2.north west)+(-.075,.15)$) to [out=255, in=90] ($(m-2-2.north west)+(-.2,-1.05)$) to [out=270, in=105] ($(m-2-2.north west)+(-.075,-2.25)$);
\draw[thick] ($(m-2-4.north east)+(.05,.15)$) to [out=285, in=90] ($(m-2-4.north east)+(.175,-1.05)$) to [out=270, in=75] ($(m-2-4.north east)+(.05,-2.25)$);
\end{tikzpicture}$}
,
\quad\quad
A(\oGg^{\fu})=A(\oGg^{\fud})^{\mathrm{T}}=\!\!
\raisebox{.25cm}{$
\begin{tikzpicture}[baseline=(current bounding box.center),yscale=0.6]
  \matrix (m) [matrix of math nodes, row sep=.5em, column
  sep=.5em, text height=.5ex, text depth=0.25ex, ampersand replacement=\&] {
\phantom{a}
\& \Gset   
\& \Oset  
\& \Pset 
\\
\Gset
\& 0   
\& \,\,\,0\,\,\,  
\& C 
\\
\Oset
\& A   
\& 0  
\& 0
\\
\Pset
\& 0   
\& \,\,B\,\,  
\& 0 
\\   
};
\draw[densely dotted] (m-2-2.south west) to (m-2-4.south east);
\draw[densely dotted] (m-3-2.south west) to ($(m-3-4.south east)+(-.05,0)$);
\draw[densely dotted] ($(m-2-3.north west)+(0,.15)$) to (m-4-3.south west);
\draw[densely dotted] ($(m-2-3.north east)+(0,.15)$) to (m-4-3.south east);
\draw[thick] ($(m-2-2.north west)+(-.075,.15)$) to [out=255, in=90] ($(m-2-2.north west)+(-.2,-1.05)$) to [out=270, in=105] ($(m-2-2.north west)+(-.075,-2.25)$);
\draw[thick] ($(m-2-4.north east)+(.05,.15)$) to [out=285, in=90] ($(m-2-4.north east)+(.175,-1.05)$) to [out=270, in=75] ($(m-2-4.north east)+(.05,-2.25)$);
\end{tikzpicture}$}.
\end{gather}
Here $A,B,C$ are matrices with 
entries in $\N$, encoding the connections 
$\Gset\rightarrow\Oset$ (matrix $A$), 
$\Oset\rightarrow\Pset$ (matrix $B$), 
and $\Pset\rightarrow\Gset$ (matrix $C$). 
We will always consider vertex-orderings of this form.
Moreover, $\Gg$ has two associated 
directed graphs $\oGg^{\fu}$ and $\oGg^{\fud}$
whose adjacency matrices are $A(\oGg^{\fu})$ 
and $A(\oGg^{\fud})$ as in \eqref{eq:ad-matrix}. 
They have the same vertex sets as $\Gg$, 
but their edges are oriented according to \eqref{eq:color-tensor}.

We write $i\in\Gg$ ($i\in\Gset$ etc.) meaning that 
$i$ is a ($\gc$-colored etc.) vertex of $\Gg$.
Furthermore, we denote by $S_{\Gg}$ the spectrum 
of $\Gg$, i.e. the multiset of eigenvalues of $A(\Gg)$, and we use similar notations for $\oGg^{\fu}$ and $\oGg^{\fud}$.

\begin{example}\label{example:triangle0}
Our main examples of tricolored graphs are all displayed in \fullref{subsec:gen-D-list}. 
Their spectra play an important role for us.
\end{example}

\begin{example}\label{example:triangle1}
The simplest examples, which are, however, fundamental for this paper, are the 
generalized type $\typeA$ Dynkin diagrams, e.g.:
\begin{gather*}
\graphA{1}
=
\begin{tikzpicture}[anchorbase, xscale=.35, yscale=.5]
	\draw [thick, myyellow] (0,0) node[below, mygreen] {\tiny{\text 1}} 
	to (1,1) node[below, myorange] {\tiny{\text 1}};
	\draw [thick, densely dotted, myblue] (0,0) 
	to (-1,1) node[below, mypurple] {\tiny{\text 1}};
	\draw [thick, densely dashed, myred] (1,1) to (-1,1);
	\node at (0,0) {$\bulletg$};
	\node at (1,1) {$\bulleto$};
	\node at (-1,1) {$\bulletp$};
\end{tikzpicture}
,\quad\quad
\graphA{1}^{\fu}
=
\begin{tikzpicture}[anchorbase, xscale=.35, yscale=.5]
	\draw [thick, myyellow, directed=.55] (0,0) node[below, mygreen] {\tiny{\phantom{1}}} to (1,1);
	\draw [thick, densely dotted, myblue, rdirected=.55] (0,0) to (-1,1);
	\draw [thick, densely dashed, myred, directed=.55] (1,1) to (-1,1);
	\node at (0,0) {$\bulletg$};
	\node at (1,1) {$\bulleto$};
	\node at (-1,1) {$\bulletp$};
\end{tikzpicture}
,\quad\quad
\graphA{1}^{\fud}
=
\begin{tikzpicture}[anchorbase, xscale=.35, yscale=.5]
	\draw [thick, myyellow, rdirected=.55] (0,0) node[below, mygreen] {\tiny{\phantom{1}}} to (1,1);
	\draw [thick, densely dotted, myblue, directed=.55] (0,0) to (-1,1);
	\draw [thick, densely dashed, myred, rdirected=.55] (1,1) to (-1,1);
	\node at (0,0) {$\bulletg$};
	\node at (1,1) {$\bulleto$};
	\node at (-1,1) {$\bulletp$};
\end{tikzpicture}
\\
\begin{tikzpicture}[baseline=(current bounding box.center),yscale=0.6]
  \matrix (m) [matrix of math nodes, row sep=1.65em, column
  sep=1em, text height=1.5ex, text depth=0.25ex, ampersand replacement=\&,font=\scriptsize] {
A=
\begin{pmatrix}
1\\
\end{pmatrix}
\&
B=
\begin{pmatrix}
1\\
\end{pmatrix}
\&
C=
\begin{pmatrix}
1\\
\end{pmatrix}
\\};
\end{tikzpicture}
\end{gather*}

\begin{gather*}
\graphA{2}
=
\begin{tikzpicture}[anchorbase, xscale=.35, yscale=.5]
	\draw [thick, myyellow] (0,0) node[below, mygreen] {\tiny{\text 1}} 
	to (1,1) node[below, myorange] {\tiny{\text 1}}
	to (0,2) node[below, mygreen] {\tiny{\text 2}};
	\draw [thick, myyellow] (0,2) 
	to (-2,2) node[below, myorange] {\tiny{\text 2}};
	\draw [thick, densely dotted, myblue] (0,0) 
	to (-1,1) node[below, mypurple] {\tiny{\text 1}}
	to (0,2);
	\draw [thick, densely dotted, myblue] (0,2) 
	to (2,2) node[below, mypurple] {\tiny{\text 2}};
	\draw [thick, densely dashed, myred] (1,1) to (-1,1) to (-2,2);
	\draw [thick, densely dashed, myred] (2,2) to (1,1);
	\node at (0,0) {$\bulletg$};
	\node at (0,2) {$\bulletg$};
	\node at (1,1) {$\bulleto$};
	\node at (-2,2) {$\bulleto$};
	\node at (2,2) {$\bulletp$};
	\node at (-1,1) {$\bulletp$};
\end{tikzpicture}
,\quad\quad
\graphA{2}^{\fu}
=
\begin{tikzpicture}[anchorbase, xscale=.35, yscale=.5]
	\draw [thick, myyellow, directed=.55] (0,0) node[below, mygreen] {\tiny{\phantom{1}}} to (1,1);
	\draw [thick, myyellow, rdirected=.55] (1,1) to (0,2);
	\draw [thick, myyellow, directed=.55] (0,2) to (-2,2);
	\draw [thick, densely dotted, myblue, rdirected=.55] (0,0) to (-1,1);
	\draw [thick, densely dotted, myblue, directed=.55] (-1,1) to (0,2);
	\draw [thick, densely dotted, myblue, rdirected=.55] (0,2) to (2,2);
	\draw [thick, densely dashed, myred, directed=.55] (1,1) to (-1,1);
	\draw [thick, densely dashed, myred, rdirected=.55] (-1,1) to (-2,2);
	\draw [thick, densely dashed, myred, rdirected=.55] (2,2) to (1,1);
	\node at (0,0) {$\bulletg$};
	\node at (0,2) {$\bulletg$};
	\node at (1,1) {$\bulleto$};
	\node at (-2,2) {$\bulleto$};
	\node at (2,2) {$\bulletp$};
	\node at (-1,1) {$\bulletp$};
\end{tikzpicture}
,\quad\quad
\graphA{2}^{\fud}
=
\begin{tikzpicture}[anchorbase, xscale=.35, yscale=.5]
	\draw [thick, myyellow, rdirected=.55] (0,0) node[below, mygreen] {\tiny{\phantom{1}}} to (1,1);
	\draw [thick, myyellow, directed=.55] (1,1) to (0,2);
	\draw [thick, myyellow, rdirected=.55] (0,2) to (-2,2); 
	\draw [thick, densely dotted, myblue, directed=.55] (0,0) to (-1,1);
	\draw [thick, densely dotted, myblue, rdirected=.55] (-1,1) to (0,2);
	\draw [thick, densely dotted, myblue, directed=.55] (0,2) to (2,2);
	\draw [thick, densely dashed, myred, rdirected=.55] (1,1) to (-1,1);
	\draw [thick, densely dashed, myred, directed=.55] (-1,1) to (-2,2); 
	\draw [thick, densely dashed, myred, directed=.55] (2,2) to (1,1);
	\node at (0,0) {$\bulletg$};
	\node at (0,2) {$\bulletg$};
	\node at (1,1) {$\bulleto$};
	\node at (-2,2) {$\bulleto$};
	\node at (2,2) {$\bulletp$};
	\node at (-1,1) {$\bulletp$};
\end{tikzpicture}
\\
\begin{tikzpicture}[baseline=(current bounding box.center),yscale=0.6]
  \matrix (m) [matrix of math nodes, row sep=1.65em, column
  sep=1em, text height=1.5ex, text depth=0.25ex, ampersand replacement=\&,font=\scriptsize] {
A=
\begin{pmatrix}
1 & 1 \\
0 & 1 \\
\end{pmatrix}
\&
B=
\begin{pmatrix}
1 & 1\\
1 & 0\\
\end{pmatrix}
\&
C=
\begin{pmatrix}
1 & 0\\
1 & 1\\
\end{pmatrix}
\\};
\end{tikzpicture}
\end{gather*}

\begin{gather*}
\graphA{3}
=
\begin{tikzpicture}[anchorbase, xscale=.35, yscale=.5]
	\draw [thick, myyellow] (0,0) node[below, mygreen] {\tiny{\text 1}} 
	to (1,1) node[below, myorange] {\tiny{\text 1}}
	to (0,2) node[below, mygreen] {\tiny{\text 2}} 
	to (1,3) node[below, myorange] {\tiny{\text 3}}
	to (3,3) node[below, mygreen] {\tiny{\text 3}};
	\draw [thick, myyellow] (0,2) 
	to (-2,2) node[below, myorange] {\tiny{\text 2}}
	to (-3,3) node[below, mygreen] {\tiny{\text 4}};
	\draw [thick, densely dotted, myblue] (0,0) 
	to (-1,1) node[below, mypurple] {\tiny{\text 1}}
	to (0,2) 
	to (-1,3) node[below, mypurple] {\tiny{\text 3}}
	to (-3,3);
	\draw [thick, densely dotted, myblue] (0,2) 
	to (2,2) node[below, mypurple] {\tiny{\text 2}}
	to (3,3);
	\draw [thick, densely dashed, myred] (1,1) to (-1,1) to (-2,2) to (-1,3) to (1,3) to (2,2) to (1,1);
	\node at (0,0) {$\bulletg$};
	\node at (0,2) {$\bulletg$};
	\node at (3,3) {$\bulletg$};
	\node at (-3,3) {$\bulletg$};
	\node at (1,1) {$\bulleto$};
	\node at (-2,2) {$\bulleto$};
	\node at (1,3) {$\bulleto$};
	\node at (2,2) {$\bulletp$};
	\node at (-1,1) {$\bulletp$};
	\node at (-1,3) {$\bulletp$};
\end{tikzpicture}
,\quad\quad
\graphA{3}^{\fu}
=
\begin{tikzpicture}[anchorbase, xscale=.35, yscale=.5]
	\draw [thick, myyellow, directed=.55] (0,0) node[below, mygreen] {\tiny{\phantom{1}}} to (1,1);
	\draw [thick, myyellow, rdirected=.55] (1,1) to (0,2);
	\draw [thick, myyellow, directed=.55] (0,2) to (1,3);
	\draw [thick, myyellow, rdirected=.55] (1,3) to (3,3);
	\draw [thick, myyellow, directed=.55] (0,2) to (-2,2); 
	\draw [thick, myyellow, rdirected=.55] (-2,2) to (-3,3);
	\draw [thick, densely dotted, myblue, rdirected=.55] (0,0) to (-1,1);
	\draw [thick, densely dotted, myblue, directed=.55] (-1,1) to (0,2);
	\draw [thick, densely dotted, myblue, rdirected=.55] (0,2) to (-1,3);
	\draw [thick, densely dotted, myblue, directed=.55] (-1,3) to (-3,3);
	\draw [thick, densely dotted, myblue, rdirected=.55] (0,2) to (2,2);
	\draw [thick, densely dotted, myblue, directed=.55] (2,2) to (3,3);
	\draw [thick, densely dashed, myred, directed=.55] (1,1) to (-1,1);
	\draw [thick, densely dashed, myred, rdirected=.55] (-1,1) to (-2,2);
	\draw [thick, densely dashed, myred, directed=.55] (-2,2) to (-1,3);
	\draw [thick, densely dashed, myred, rdirected=.55] (-1,3) to (1,3);
	\draw [thick, densely dashed, myred, directed=.55] (1,3) to (2,2); 
	\draw [thick, densely dashed, myred, rdirected=.55] (2,2) to (1,1);
	\node at (0,0) {$\bulletg$};
	\node at (0,2) {$\bulletg$};
	\node at (3,3) {$\bulletg$};
	\node at (-3,3) {$\bulletg$};
	\node at (1,1) {$\bulleto$};
	\node at (-2,2) {$\bulleto$};
	\node at (1,3) {$\bulleto$};
	\node at (2,2) {$\bulletp$};
	\node at (-1,1) {$\bulletp$};
	\node at (-1,3) {$\bulletp$};
\end{tikzpicture}
,\quad\quad
\graphA{3}^{\fud}
=
\begin{tikzpicture}[anchorbase, xscale=.35, yscale=.5]
	\draw [thick, myyellow, rdirected=.55] (0,0) node[below, mygreen] {\tiny{\phantom{1}}} to (1,1);
	\draw [thick, myyellow, directed=.55] (1,1) to (0,2);
	\draw [thick, myyellow, rdirected=.55] (0,2) to (1,3);
	\draw [thick, myyellow, directed=.55] (1,3) to (3,3);
	\draw [thick, myyellow, rdirected=.55] (0,2) to (-2,2); 
	\draw [thick, myyellow, directed=.55] (-2,2) to (-3,3);
	\draw [thick, densely dotted, myblue, directed=.55] (0,0) to (-1,1);
	\draw [thick, densely dotted, myblue, rdirected=.55] (-1,1) to (0,2);
	\draw [thick, densely dotted, myblue, directed=.55] (0,2) to (-1,3);
	\draw [thick, densely dotted, myblue, rdirected=.55] (-1,3) to (-3,3);
	\draw [thick, densely dotted, myblue, directed=.55] (0,2) to (2,2);
	\draw [thick, densely dotted, myblue, rdirected=.55] (2,2) to (3,3);
	\draw [thick, densely dashed, myred, rdirected=.55] (1,1) to (-1,1);
	\draw [thick, densely dashed, myred, directed=.55] (-1,1) to (-2,2);
	\draw [thick, densely dashed, myred, rdirected=.55] (-2,2) to (-1,3);
	\draw [thick, densely dashed, myred, directed=.55] (-1,3) to (1,3);
	\draw [thick, densely dashed, myred, rdirected=.55] (1,3) to (2,2); 
	\draw [thick, densely dashed, myred, directed=.55] (2,2) to (1,1);
	\node at (0,0) {$\bulletg$};
	\node at (0,2) {$\bulletg$};
	\node at (3,3) {$\bulletg$};
	\node at (-3,3) {$\bulletg$};
	\node at (1,1) {$\bulleto$};
	\node at (-2,2) {$\bulleto$};
	\node at (1,3) {$\bulleto$};
	\node at (2,2) {$\bulletp$};
	\node at (-1,1) {$\bulletp$};
	\node at (-1,3) {$\bulletp$};
\end{tikzpicture}
\\
\begin{tikzpicture}[baseline=(current bounding box.center),yscale=0.6]
  \matrix (m) [matrix of math nodes, row sep=1.65em, column
  sep=1em, text height=1.5ex, text depth=0.25ex, ampersand replacement=\&,font=\scriptsize] {
A=
\begin{pmatrix}
1 & 1 & 0 & 0\\
0 & 1 & 0 & 1\\
0 & 1 & 1 & 0\\
\end{pmatrix}
\&
B=
\begin{pmatrix}
1 & 1 & 0\\
1 & 0 & 1\\
0 & 1 & 1\\
\end{pmatrix}
\&
C=
\begin{pmatrix}
1 & 0 & 0\\
1 & 1 & 1\\
0 & 1 & 0\\
0 & 0 & 1\\
\end{pmatrix}
\\};
\end{tikzpicture}
\end{gather*}
\vspace*{.1cm}

(The matrices $A,B,C$ are given with respect to the ordering of the vertices as indicated in the unoriented graphs.)  
The vertices of these graphs can be identified with the cut-offs of the positive Weyl chamber 
of $\slt$, cf. \eqref{eq:weight-picture}, where e.g the vertex with label 
$4$ in $\graphA{3}$ corresponds to the $\slt$-weight $(0,3)$. 

Moreover, the spectra of these graphs are:
\begin{gather*}
S_{\graphA{1}^{\fu}}
=\left\{
\text{roots of }
(X-1)
(X^2+X+1)
\right\}
,
\\
S_{\graphA{2}^{\fu}}
=
\left\{
\text{roots of }
(X^2-X-1)
(X^4+X^3+2X^2-X+1)
\right\},
\\
S_{\graphA{3}^{\fu}}=
\left\{
\text{roots of }
X
(X-2)
(X^2 + 2X + 4)
(X^6 - X^3 + 1)
\right\}.
\end{gather*}
The reader should compare these to \fullref{example:plot-zeros}.
\end{example}

Next, recall that an oriented graph $\Gg^{\mathrm{or}}$ is called strongly connected, 
if there is a path from $i$ to $j$ for any $i,j\in\Gg^{\mathrm{or}}$. Further, 
we say that $\Gg^{\mathrm{or}}$ 
is quasi regular if, for all $i,j\in\Gg^{\mathrm{or}}$, 
the number of two-step paths $i\rightarrow\placeholder\leftarrow j$ going first with and then 
against the orientation is the same as the number of two-step paths 
$i\leftarrow\placeholder\rightarrow j$ going first against and then 
with the orientation.

\begin{example}\label{example:weakly-regular}
Recall that an oriented graph is called 
weakly regular if the numbers of incoming and outgoing edges 
agree at each vertex, counting $r$ parallel edges $r^2$ times 
(e.g. a vertex with two incoming parallel edges must have two outgoing parallel edges or four outgoing single edges). By considering $i=j$, we 
see that any quasi regular graph is weakly regular, with the latter being 
a local condition which one easily checks. 
(In particular, each vertex is of even degree.) However, the converse is 
not true as e.g.
\begin{gather*}
\Gg^{\mathrm{or}}
=
\begin{tikzpicture}[anchorbase, xscale=.35, yscale=.5]
	\draw [thick, myyellow, directed=.55] (0,0) to (1,1);
	\draw [thick, myyellow, directed=.55] (0,0) to (1,-1);
	\draw [thick, densely dotted, myblue, directed=.55] (-1,1) to (0,0);
	\draw [thick, densely dotted, myblue, directed=.55] (-1,-1) to (0,0);
	\draw [thick, densely dashed, myred, directed=.55] (1,1) to (-1,1);
	\draw [thick, densely dashed, myred, directed=.55] (1,-1) to (-1,-1);
	\node at (0,0) {$\bulletg$};
	\node at (1,1) {$\bulleto$};
	\node at (1,-1) {$\bulleto$};
	\node at (-1,1) {$\bulletp$};
	\node at (-1,-1) {$\bulletp$};
\end{tikzpicture}
\end{gather*}
is weakly regular, but not quasi regular.
\end{example}

By convention, we call $\Gg$ as above strongly connected, respectively quasi regular,
if $\oGg^{\fu}$ and $\oGg^{\fud}$ are both strongly connected, respectively quasi regular.

\begin{definition}\label{definition:admissible}
A graph $\Gg$ is called admissible if it admits a tricoloring, 
such that $\Gg$ is strongly connected and quasi regular. 
\end{definition}

\begin{example}\label{example:triangle2}
All of our main examples from \fullref{subsec:gen-D-list} 
are admissible. 
\end{example}

\begin{lemma}\label{lemma:weakly-regular}
The matrices $A,B,C$ in \eqref{eq:ad-matrix}, which are blocks of $A(\Gg)$, satisfy
\begin{gather}\label{eq:main-transposes}
A^{\mathrm{T}}A=
CC^{\mathrm{T}},
\quad\quad
AA^{\mathrm{T}}=
B^{\mathrm{T}}B,
\quad\quad
C^{\mathrm{T}}C=
BB^{\mathrm{T}}
\end{gather}
if and only if $\Gg$ is quasi regular.
\end{lemma}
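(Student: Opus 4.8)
The plan is to unwind the definition of \textit{quasi regular} directly in terms of the adjacency matrix $A(\oGg^{\fu})$ (or equivalently $A(\Gg)$), and observe that the three matrix identities in \eqref{eq:main-transposes} are exactly the entrywise statements of quasi regularity, block by block. First I would recall that, with the vertex ordering fixed as in \eqref{eq:ad-matrix}, the oriented graph $\oGg^{\fu}$ has adjacency matrix with $A$ in the $(\Oset,\Gset)$-block, $B$ in the $(\Pset,\Oset)$-block, and $C$ in the $(\Gset,\Pset)$-block, all other blocks zero; in particular every edge of $\oGg^{\fu}$ goes from $\Gset$ to $\Oset$, or from $\Oset$ to $\Pset$, or from $\Pset$ to $\Gset$, following the cyclic order $\rho$ of \eqref{eq:color-tensor}. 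The same edges with reversed orientation give $\oGg^{\fud}$, whose adjacency matrix is the transpose.

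The heart of the argument is the following translation. For two vertices $i,j$, the number of two-step paths $i\rightarrow\bullet\leftarrow j$ in $\oGg^{\fu}$ (first with, then against the orientation) is $(N^{\mathrm{T}}N)_{ij}$ where $N=A(\oGg^{\fu})$ is read as the out-adjacency matrix; the number of two-step paths $i\leftarrow\bullet\rightarrow j$ is $(NN^{\mathrm{T}})_{ij}$. So quasi regularity of $\oGg^{\fu}$ is precisely $N^{\mathrm{T}}N=NN^{\mathrm{T}}$, i.e.\ $N$ is a normal matrix. Now I would compute these two products blockwise using the block form of $N$: writing $N$ with blocks $A\colon\Gset\to\Oset$, $B\colon\Oset\to\Pset$, $C\colon\Pset\to\Gset$, one gets that $NN^{\mathrm{T}}$ is block-diagonal with diagonal blocks $CC^{\mathrm{T}}$ (on $\Gset$), $AA^{\mathrm{T}}$ (on $\Oset$), $BB^{\mathrm{T}}$ (on $\Pset$), while $N^{\mathrm{T}}N$ is block-diagonal with diagonal blocks $A^{\mathrm{T}}A$ (on $\Gset$), $B^{\mathrm{T}}B$ (on $\Oset$), $C^{\mathrm{T}}C$ (on $\Pset$). (The off-diagonal blocks of both products vanish because, in $N$, consecutive nonzero blocks never ``compose'' in a way that lands in an off-diagonal slot — this is the color-cyclicity of the edge orientations.) Comparing block by block yields exactly the three equations $A^{\mathrm{T}}A=CC^{\mathrm{T}}$, $AA^{\mathrm{T}}=B^{\mathrm{T}}B$, $C^{\mathrm{T}}C=BB^{\mathrm{T}}$. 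Finally I would note that $\oGg^{\fud}$ has adjacency matrix $N^{\mathrm{T}}$, so its quasi-regularity condition is $NN^{\mathrm{T}}=N^{\mathrm{T}}N$ as well — the same condition — so nothing new is needed to also cover $\oGg^{\fud}$, and the ``if and only if'' is immediate.

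The main obstacle, such as it is, is bookkeeping rather than anything conceptual: one must be careful that the quantity in \fullref{definition:admissible}'s notion of quasi regular (``number of two-step paths $i\rightarrow\bullet\leftarrow j$ equals number of two-step paths $i\leftarrow\bullet\rightarrow j$'') is being counted with multiplicities matching matrix multiplication (an edge of multiplicity $r$ contributes $r$ to the relevant entry of $N$, hence $r^2$ or products $r s$ to the path counts), and that the ``middle vertex'' $\bullet$ ranges over \emph{all} vertices, so that summing over it is genuinely matrix multiplication. I would state this multiplicity convention explicitly at the start. One should also check the degenerate case $i=j$ is subsumed (it gives the weakly-regular condition, already noted in \fullref{example:weakly-regular}), but that requires no separate work. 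With those conventions pinned down, the proof is a two-line blockwise computation of $NN^{\mathrm{T}}$ and $N^{\mathrm{T}}N$ followed by comparison of the three diagonal blocks, so I would keep the writeup short and leave the elementary block multiplications to the reader.
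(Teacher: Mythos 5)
Your proof is correct and takes essentially the same approach as the paper: both translate quasi regularity into the equality of the two kinds of two-step path counts and identify those counts with entries of $NN^{\mathrm{T}}$ and $N^{\mathrm{T}}N$, then read off the three block equations. Your framing via normality of $A(\oGg^{\fu})$ is a mild repackaging, and your closing remark that the condition for $\oGg^{\fud}$ is automatically the same is a nice (if elementary) observation the paper leaves implicit.
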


In particular, $AA^{\mathrm{T}}$, $A^{\mathrm{T}}A$, 
$BB^{\mathrm{T}}$, $B^{\mathrm{T}}B$,
$CC^{\mathrm{T}}$ and $C^{\mathrm{T}}C$ have the same non-zero eigenvalues 
for any quasi regular graph $\Gg$.

\begin{proof}
Assume that $\Gg$ is quasi regular.
Then, in $\Gg^{\fu}$, the entries of $A^{\mathrm{T}}A$ count the number of two-step paths 
$\Gset\rightarrow\Oset\leftarrow\Gset$, while the entries of $CC^{\mathrm{T}}$ 
count the number of two-step paths 
$\Gset\leftarrow\Pset\rightarrow\Gset$. 
A similar statement holds 
for the other colors respectively matrix equations in \eqref{eq:main-transposes}. 
Hence, all equations in \eqref{eq:main-transposes} 
hold if and only if
$\Gg$ is quasi regular.
\end{proof}
 
\subsubsection{Some trihedral \texorpdfstring{$\N_{[\vpar]}$}{Nv}-represenations}\label{subsec:three-colored-graphs-reps}
 
We denote by $\Cv\{\Gset,\Oset,\Pset\}$ 
the free ($\Cv$-)vector space on the vertex set of $\Gg$.

\begin{definition}\label{definition:n-modules}
We define a $\subquo$-representation
\[
\M_{\Gg}\colon\subquo\to\End_{\Cv}(\Cv\{\Gset,\Oset,\Pset\})
\]
by associating the following matrices to the generators $\theta_{\gc}$, $\theta_{\oc}$, $\theta_{\pc}$:
\begin{gather}\label{eq:main-matrices}
\begin{gathered}
\M_{\Gg}(\theta_{\gc})=
\vnumber{2}
{\scriptstyle
\begin{pmatrix}
\vnumber{3}\Idmatrix & A^{\mathrm{T}} & C \\
0 & 0 & 0 \\
0 & 0 & 0
\end{pmatrix}
}
,\quad\quad
\M_{\Gg}(\theta_{\oc})=
\vnumber{2}
{\scriptstyle
\begin{pmatrix}
0 & 0 & 0 \\
A & \vnumber{3}\Idmatrix & B^{\mathrm{T}} \\
0 & 0 & 0
\end{pmatrix}
}
,
\\
\M_{\Gg}(\theta_{\pc})
=
\vnumber{2}
{\scriptstyle
\begin{pmatrix}
0 & 0 & 0 \\
0 & 0 & 0 \\
C^{\mathrm{T}} & B & \vnumber{3}\Idmatrix
\end{pmatrix}
}
.
\end{gathered}
\end{gather}
Here $A,B,C$ are as in 
\eqref{eq:ad-matrix}. 
\end{definition}

Note that we have
\[
\Mt[\Gg]
=
\M_{\Gg}(\theta_{\gc})
+
\M_{\Gg}(\theta_{\oc})
+
\M_{\Gg}(\theta_{\pc})
=
\vnumber{2}\left(
\vnumber{3}\Idmatrix
+
A(\oGg)
\right).
\]

\begin{remark}\label{remark:spectrum-matrices}
The three-dimensional simple $\subquo[e]$-representations $\M_z$ 
in \eqref{eq:three-dims} are similar to the $\M_{\Gg}$ in \eqref{eq:main-matrices}. 
In $\M_{\Gg}$ the complex entry $z$ of $\M_z$ has been replaced by $\N$-matrices $A,B,C$ 
which have these complex numbers as eigenvalues, as we will see in \fullref{corollary:poly-killed} below. 
However, in $\M_{\Gg}$ the matrices $A,B,C$ need not be equal, whereas in $\M_z$ we only have one complex number.
\end{remark}

We always choose $\{\Gset,\Oset,\Pset\}$ as a basis.
Recalling the setup from \fullref{subsec:three-colored-graphs} we get:

\begin{lemma}\label{lemma:n-modules}
$\M_{\Gg}$ is well-defined 
if and only if $\Gg$ is quasi regular.
\end{lemma}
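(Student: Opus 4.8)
The statement to prove is that $\M_{\Gg}$ from \fullref{definition:n-modules} is well-defined (i.e., a genuine $\subquo$-representation) if and only if $\Gg$ is quasi regular. The strategy is to check directly which of the defining relations \eqref{eq:first-rel} and \eqref{eq:second-rel} of $\subquo$ are automatic and which impose a condition on the blocks $A,B,C$, and then to identify that condition via \fullref{lemma:weakly-regular}. I would first observe that \eqref{eq:first-rel}, the quadratic relations $\theta_{\tduc}^2=\vfrac{3}\theta_{\tduc}$, hold for \emph{any} choice of $A,B,C$: this is a block computation. For instance, $\M_{\Gg}(\theta_{\gc})^2$ has only its top block row nonzero, equal to $\vnumber{2}^2$ times $(\vnumber{3}\Idmatrix)(\vnumber{3}\Idmatrix\mid A^{\mathrm{T}}\mid C) = \vnumber{3}\vnumber{2}^2(\vnumber{3}\Idmatrix\mid A^{\mathrm{T}}\mid C)$ (the off-diagonal blocks of $\M_{\Gg}(\theta_{\gc})$ get killed because the second and third block rows vanish), and since $\vnumber{2}\vnumber{3}=\vfrac{3}/\vnumber{2}\cdot\vnumber{1}$... more precisely $\vfrac{3}=\vnumber{3}\vnumber{2}\vnumber{1}=\vnumber{3}\vnumber{2}$, so $\M_{\Gg}(\theta_{\gc})^2=\vfrac{3}\M_{\Gg}(\theta_{\gc})$. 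The same holds for $\oc$ and $\pc$ by the cyclic symmetry of the matrices. So \eqref{eq:first-rel} is no constraint.

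Next I would turn to the cubic relations \eqref{eq:second-rel}, say $\M_{\Gg}(\theta_{\gc})\M_{\Gg}(\theta_{\oc})\M_{\Gg}(\theta_{\gc})=\M_{\Gg}(\theta_{\gc})\M_{\Gg}(\theta_{\pc})\M_{\Gg}(\theta_{\gc})$, and compute both sides as block matrices. Because $\M_{\Gg}(\theta_{\gc})$ has only a nonzero top block row and $\M_{\Gg}(\theta_{\oc})$ only a nonzero middle block row, the product $\M_{\Gg}(\theta_{\gc})\M_{\Gg}(\theta_{\oc})\M_{\Gg}(\theta_{\gc})$ will again have only a nonzero top block row, and tracking the blocks one sees that the only block where the two sides can differ is the $(\Gset,\Gset)$-block, which on the left picks up a term proportional to $A^{\mathrm{T}}A$ and on the right a term proportional to $CC^{\mathrm{T}}$ (all other blocks match identically, since they only involve the $\vnumber{3}\Idmatrix$ diagonal pieces and one copy each of $A^{\mathrm{T}}$, $C$, $A$, $B$ in ways that coincide). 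Hence this relation holds if and only if $A^{\mathrm{T}}A=CC^{\mathrm{T}}$. Cyclically permuting the colors, $\theta_{\oc}\theta_{\gc}\theta_{\oc}=\theta_{\oc}\theta_{\pc}\theta_{\oc}$ becomes $AA^{\mathrm{T}}=B^{\mathrm{T}}B$ and $\theta_{\pc}\theta_{\gc}\theta_{\pc}=\theta_{\pc}\theta_{\oc}\theta_{\pc}$ becomes $C^{\mathrm{T}}C=BB^{\mathrm{T}}$. These are exactly the equations \eqref{eq:main-transposes} of \fullref{lemma:weakly-regular}.

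Finally I would invoke \fullref{lemma:weakly-regular}: the three equations \eqref{eq:main-transposes} hold simultaneously if and only if $\Gg$ is quasi regular. Combining with the fact that \eqref{eq:first-rel} is automatic and \eqref{eq:second-rel} is equivalent to \eqref{eq:main-transposes}, we conclude that $\M_{\Gg}$ is a well-defined $\subquo$-representation precisely when $\Gg$ is quasi regular, as claimed.

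\textbf{Main obstacle.} The only real work is the block-matrix bookkeeping in the cubic relation: one must be careful that the powers of $\vnumber{2}$ and the $\vnumber{3}$-factors from the diagonal blocks match on both sides, so that the \emph{sole} discrepancy is the single $(\Gset,\Gset)$-block comparison $A^{\mathrm{T}}A$ versus $CC^{\mathrm{T}}$ (and its cyclic variants). This is routine but needs to be carried out carefully; it is essentially a $3\times 3$ block multiplication performed three times. Nothing conceptually deep is involved, and \fullref{lemma:weakly-regular} does the combinatorial translation for us.
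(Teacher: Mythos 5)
Your proposal is correct and follows essentially the same route as the paper: check that \eqref{eq:first-rel} holds automatically by block multiplication, observe that \eqref{eq:second-rel} is equivalent to the equations \eqref{eq:main-transposes}, and invoke \fullref{lemma:weakly-regular}. The only quibble is your parenthetical claim that "all other blocks match identically" in the cubic relation — the remaining blocks of the nonzero block row are $A^{\mathrm{T}}AA^{\mathrm{T}}$ versus $CC^{\mathrm{T}}A^{\mathrm{T}}$ etc., which match only once $A^{\mathrm{T}}A=CC^{\mathrm{T}}$ is known — but since the $(\Gset,\Gset)$-block alone forces that equation and conversely it implies agreement everywhere, the stated equivalence is unaffected.
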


\begin{proof}
By direct computation, one immediately sees that \eqref{eq:first-rel} always holds, 
irrespective of $A,B$ and $C$. 
Furthermore, note that $\M_{\Gg}$ preserves the relations in \eqref{eq:second-rel} if and only if 
the equations in \eqref{eq:main-transposes} hold. The claim then follows from \fullref{lemma:weakly-regular}.
\end{proof}

From now on we assume that $\Gg$ is quasi regular whenever we write $\M_{\Gg}$. 
Proving that these are $\aformvN$-representations 
is hard and follows from categorification. However, if we drop the positivity condition, then the following is clear 
by noting that the scalars $\vnumber{2}^{-k-l}$ appearing in the 
definition of the colored KL elements cancel against the positive powers of $\vnumber{2}$ in \eqref{eq:main-matrices}.

\begin{lemmaqed}\label{lemma:trans-graphs}
$\M_{\Gg}$ is a transitive $\aformv$-representation if and only if 
$\Gg$ is admissible.
\end{lemmaqed}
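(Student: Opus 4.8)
The statement has two parts: first, that $\M_{\Gg}$ is a genuine $\aformv$-representation (i.e. the matrices in \eqref{eq:main-matrices} define an action of $\subquo$ over $\aformv$), and second, that this $\aformv$-representation is transitive precisely when $\Gg$ is admissible. For the first part, \fullref{lemma:n-modules} already shows $\M_{\Gg}$ is a well-defined $\Cv$-representation exactly when $\Gg$ is quasi regular, so in particular it is well-defined when $\Gg$ is admissible. To see that the action lands in $\aformv$ rather than only in $\Cv$, I would compute the matrices of the colored KL basis elements $\RKLx{m,n}$ directly: by definition \eqref{eq:the-expressions} these carry a scalar $\vnumber{2}^{-k-l}$ for each term $\rklx{k,l}$, but applying a word of $k+l+1$ generators, each of which is $\vnumber{2}$ times an integral (in fact $\N$-) matrix, contributes a compensating factor $\vnumber{2}^{k+l+1}$. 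Hence $\M_{\Gg}(\RKLx{m,n})$ equals $\vnumber{2}$ times a matrix with entries in $\Z[\vpar,\vpar^{-1}]=\aformv$ — this is precisely the cancellation noted in the statement and already used implicitly in the proof of \fullref{lemma:three-dim-rep1}. Thus $\M_{\Gg}$ restricted to the span of $\basisC$ (the non-trivial cell) is an $\aformv$-representation; since $\M_\Gg(1)=\Idmatrix$, the whole algebra acts by $\aformv$-matrices.

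\textbf{Transitivity.} For the second part, I would work with the basis $\{\Gset,\Oset,\Pset\}$ of $\Cv\{\Gset,\Oset,\Pset\}$ and the cell order on $\basisC$ from \fullref{definition:cells-first}: a vertex $j\in\Gg$ dominates $i\in\Gg$ (written $j\lgeq i$) if $j$ appears with nonzero coefficient in $\M_\Gg(\algstuff{z})\,i$ for some $\algstuff{z}\in\basisC$. Using \fullref{lemma:multiplication}, or more directly the explicit matrices \eqref{eq:main-matrices}, I would show that acting by $\RKLx{0,0}=\theta_\tduc$ on a vertex $i$ produces exactly the neighbours of $i$ reached along edges of the appropriate color (together with $i$ itself, via the diagonal $\vnumber{3}\Idmatrix$ term). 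More precisely, $\M_\Gg(\theta_\gc)$ sends an $\oc$-vertex to a $\Z_{\geq 0}$-combination of $\gc$-vertices recording the edges $\Gset\to\Oset$ (matrix $A^{\mathrm{T}}$), a $\pc$-vertex to the $\gc$-vertices recording edges $\Pset\to\Gset$ (matrix $C$), and fixes (plus scales) $\gc$-vertices; similarly for the other two generators according to \eqref{eq:color-tensor}. Therefore the ``domination graph'' on $\{\Gset,\Oset,\Pset\}$ induced by the action of the generators is exactly the disjoint union of the directed graphs $\oGg^{\fu}$ and $\oGg^{\fud}$ (edges traversed in both orientations as the color conventions dictate), together with all loops. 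Consequently $\M_\Gg$ is transitive, i.e. all vertices lie in a single $\lsim$-class, if and only if the underlying directed graphs $\oGg^{\fu},\oGg^{\fud}$ are strongly connected — which, combined with the standing quasi-regularity hypothesis needed for $\M_\Gg$ to be defined at all, is precisely the definition of $\Gg$ being admissible (\fullref{definition:admissible}).

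\textbf{Putting it together and the main obstacle.} So the argument is: $\M_\Gg$ exists as an $\aformv$-representation iff $\Gg$ is quasi regular (\fullref{lemma:n-modules} plus the $\vnumber{2}$-cancellation); and granting existence, it is transitive iff the strong-connectivity condition holds; the conjunction is admissibility. For the ``only if'' direction one notes that if $\Gg$ fails to be quasi regular then $\M_\Gg$ is not even well-defined (so the claim is vacuous in the sense the statement intends, where $\M_\Gg$ is only written for quasi regular $\Gg$), and if $\Gg$ is quasi regular but not strongly connected then the non-trivial strongly connected components give a proper invariant subspace, violating transitivity. The step requiring the most care is the bookkeeping that identifies the domination graph of the $\subquo$-action with $\oGg^{\fu}\sqcup\oGg^{\fud}$: one must check that multiplying a single vertex by a generator reaches a neighbour \emph{with positive coefficient} (so no accidental cancellations hide an edge), which is where positivity of the entries of $A,B,C$ and of $\vnumber{2},\vnumber{3}$ over $\aformvN$ — though here we only claim the $\aformv$-statement — is used; and one must verify that iterating the three generators suffices to realize arbitrary paths in both directed graphs, which follows from the fact that each of $A,A^{\mathrm{T}},B,B^{\mathrm{T}},C,C^{\mathrm{T}}$ appears as a block of some $\M_\Gg(\theta_\tduc)$. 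Beyond that the argument is a direct unwinding of definitions together with \fullref{lemma:multiplication} and \fullref{lemma:weakly-regular}.
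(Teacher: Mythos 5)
Your proof is correct and takes essentially the same route the paper intends: the lemma is stated there without proof, justified only by the preceding remark that the scalars $\vnumber{2}^{-k-l}$ in \eqref{eq:the-expressions} cancel against the positive powers of $\vnumber{2}$ in \eqref{eq:main-matrices} (giving $\aformv$-integrality), with transitivity read off from the domination graph exactly as you do. One small point worth tightening: since each generator $\theta_{\tduc}$ reaches every neighbour of a vertex, the action realizes each edge of $\Gg$ in \emph{both} orientations, so transitivity is directly equivalent only to strong connectivity of the union $\oGg^{\fu}\cup\oGg^{\fud}$ (i.e. connectivity of $\Gg$); that $\oGg^{\fu}$ and $\oGg^{\fud}$ are then each strongly connected uses quasi regularity, via the standard fact that a connected digraph with balanced in- and out-degrees at every vertex is strongly connected.
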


\begin{example}\label{example:small-tri}
Take $e=2$ and the graph $\graphA{2}$ as
in \fullref{subsec:gen-D-list}. Fix $\gc$ as a starting color. Then the 
six non-trivial, colored KL basis elements of 
$\subquo[2]$ act on $\M_{\graphA{2}}$ via matrices whose 
entries are all in $\aformvN$. 
For $\RKLg{0,0}=\theta_{\gc}$, 
$\RKLg{1,0}=\vnumber{2}^{-1}\theta_{\oc}\theta_{\gc}$
and $\RKLg{0,1}=\vnumber{2}^{-1}\theta_{\pc}\theta_{\gc}$ 
this is immediately clear. For the other basis elements, one can check the claim by calculation. For example, 
$\RKLg{2,0}=\vnumber{2}^{-2}\theta_{\pc}\theta_{\oc}\theta_{\gc}-
\vnumber{2}^{-1}\theta_{\pc}\theta_{\gc}$, since $\pxy{2,0}=\fu^2-\fud$, so  
\[
\M_{\graphA{2}}(\RKLg{2,0})=
\vnumber{2}
{\scriptstyle
\scalebox{0.85}{$
\begin{pmatrix}
0 & 0 & 0 & 0 & 0 & 0 \\
0 & 0 & 0 & 0 & 0 & 0 \\
0 & 0 & 0 & 0 & 0 & 0 \\
0 & 0 & 0 & 0 & 0 & 0 \\
0 & \vnumber{3} & 1 & 1 & 1 & 1 \\
\vnumber{3} & 0 & 1 & 0 & 1 & 0 \\
\end{pmatrix}$}}.
\]
The matrices associated to $\RKLg{1,1}$ and $\RKLg{0,2}$ can be computed similarly. The fact that we get a 
$\aformvN$-representation is non-trivial, because the expressions for the $\RKLg{m,n}$ 
in terms of the $\rklx{k,l}$ have negative coefficients.
\end{example}

The following can be proved as in the dihedral case \cite[Section 5.4]{MT1}.

\begin{lemmaqed}\label{lemma:find-all-transitives-2}
Let $\Gg$ and $\Gg^{\prime}$ be two admissible graphs. Then
$\M_{\Gg}\cong_{+}\M_{\Gg^{\prime}}$ 
if and only if $\Gg$ and $\Gg^{\prime}$ are 
isomorphic as tricolored graphs.
\end{lemmaqed}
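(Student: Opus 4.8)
The plan is to prove both directions of the equivalence, with the forward direction (\emph{$\aformvN$-equivalence forces graph isomorphism}) being the substantial one.

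For the easy direction, suppose $\Gg$ and $\Gg^{\prime}$ are isomorphic as tricolored graphs. Then a tricoloring-preserving bijection of vertices induces a bijection $\posmodbasis[\M_{\Gg}]\to\posmodbasis[\M_{\Gg^{\prime}}]$ which, after reordering vertices into the block form of \eqref{eq:ad-matrix}, simultaneously conjugates the adjacency blocks $A,B,C$ of $\Gg$ into those of $\Gg^{\prime}$ by permutation matrices respecting the $\Gset,\Oset,\Pset$ partition. By \eqref{eq:main-matrices} this conjugates $\M_{\Gg}(\theta_\tduc)$ into $\M_{\Gg^{\prime}}(\theta_\tduc)$ for each $\tduc\in\Seset$, hence (since the $\theta_\tduc$ generate $\subquo$) the induced linear map is an isomorphism of $\subquo$-representations carrying one distinguished basis to the other, i.e. $\M_{\Gg}\cong_{+}\M_{\Gg^{\prime}}$.

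For the hard direction, I would follow the dihedral template of \cite[Section 5.4]{MT1}. Assume $\M_{\Gg}\cong_{+}\M_{\Gg^{\prime}}$, so there is a bijection $\phi\colon\{\Gset,\Oset,\Pset\}\to\{\Gset^{\prime},\Oset^{\prime},\Pset^{\prime}\}$ whose linearization intertwines the actions. First I would show $\phi$ respects the tricoloring, i.e. sends $\Gset$-vertices to $\Gset^{\prime}$-vertices etc. (up to a global cyclic relabelling of the three secondary colors). This uses the structure of \eqref{eq:main-matrices}: the idempotent $\vnumber{3}^{-1}\vnumber{2}^{-1}$ times the diagonal block of $\M_{\Gg}(\theta_\gc)$ — or more robustly, the left cell decomposition from \fullref{corollary:cells}, since the action of $\theta_\gc$ has image exactly $\Cv\Gset$ as a left-cell-submodule feature — pins down which basis vectors are $\gc$-colored as those not killed by $\theta_\oc$ and $\theta_\pc$. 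Concretely, $\algstuff{m}\in\Gset$ iff $\M_{\Gg}(\theta_\oc)\algstuff{m}=0=\M_{\Gg}(\theta_\pc)\algstuff{m}$, a condition preserved by any $\subquo$-isomorphism; the possible global cyclic permutation of $\{\gc,\oc,\pc\}$ corresponds to an automorphism of $\subquo$ (the $\rho$-twist of \eqref{eq:color-tensor}) and gives a graph automorphism, so we may assume $\phi$ is colour-preserving. Once this is known, writing $\phi$ in block form gives permutation matrices $P_{\gc},P_{\oc},P_{\pc}$ with $P_{\oc}AP_{\gc}^{-1}=A^{\prime}$, $P_{\pc}BP_{\oc}^{-1}=B^{\prime}$, $P_{\gc}CP_{\pc}^{-1}=C^{\prime}$, obtained by reading off the off-diagonal blocks of the conjugation $\M_{\Gg^{\prime}}(\theta_\tduc)=Q\,\M_{\Gg}(\theta_\tduc)\,Q^{-1}$ where $Q=\mathrm{diag}(P_\gc,P_\oc,P_\pc)$; dividing out the common scalar $\vnumber{2}$ makes these exact matrix identities over $\Z$. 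These three identities say precisely that the directed graphs $\oGg^{\fu}$ and $(\Gg^{\prime})^{\fu}$ are isomorphic via $(P_\gc,P_\oc,P_\pc)$, hence $\Gg\cong\Gg^{\prime}$ as tricolored graphs.

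The main obstacle I anticipate is justifying that $Q$ can be taken block-diagonal with \emph{permutation} blocks rather than merely a block-diagonal invertible matrix. An arbitrary $\subquo$-isomorphism $\M_{\Gg}\cong\M_{\Gg^{\prime}}$ would only give invertible blocks; it is the extra rigidity of $\cong_{+}$ — that the isomorphism is the linearization of a bijection of the \emph{distinguished bases} $\posmodbasis$ — that forces $Q$ to be a permutation matrix, and one must check this interacts correctly with the block (colour) decomposition. The dihedral argument of \cite[Section 5.4]{MT1} handles the analogous point, and the same reasoning transfers: the distinguished basis elements are characterized internally (as the images of the vertices, equivalently as the elements on which the colored KL generators act by $0/1$-matrices with a specific combinatorial pattern), so any $\cong_{+}$ must permute them compatibly with colour. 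I would spell this out carefully, as it is the only place where the positivity/basis data (as opposed to mere $\Cv$-isomorphism, cf. \fullref{example:eq-integral-modules}) is genuinely used.
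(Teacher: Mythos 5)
Your overall strategy is the intended one (the paper itself gives no proof, deferring to the dihedral argument of \cite[Section 5.4]{MT1}), and the second half of your hard direction — reading off $P_{\oc}AP_{\gc}^{-1}=A^{\prime}$ etc.\ from the block-diagonal permutation matrix $Q$ and concluding an isomorphism of $\oGg^{\fu}$ with $(\Gg^{\prime})^{\fu}$ — is fine. But the ``concrete'' criterion you use to pin down the coloring is false, and it is the linchpin of the argument. You claim $\algstuff{m}\in\Gset$ iff $\M_{\Gg}(\theta_{\oc})\algstuff{m}=0=\M_{\Gg}(\theta_{\pc})\algstuff{m}$. From \eqref{eq:main-matrices}, for $i\in\Gset$ one has $\M_{\Gg}(\theta_{\oc})e_i=\vnumber{2}\,Ae_i$ and $\M_{\Gg}(\theta_{\pc})e_i=\vnumber{2}\,C^{\mathrm{T}}e_i$; since $\Gg$ is admissible, hence $\oGg^{\fu}$ is strongly connected, every $\Gset$-vertex has an outgoing $\fu$-edge into $\Oset$ and an incoming $\fu$-edge from $\Pset$, so $A$ has no zero column and $C^{\mathrm{T}}$ has no zero column, and both vectors are nonzero. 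The same computation for the other colors shows that for an admissible graph \emph{no} basis vector is annihilated by two of the generators, so your criterion is vacuous and identifies the empty set.

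The fix is the characterization actually used in the proof of the neighbouring \fullref{lemma:find-all-transitives}: $\algstuff{m}\in\Gset$ if and only if $\M_{\Gg}(\theta_{\gc})\algstuff{m}=\vfrac{3}\algstuff{m}$, equivalently iff $\algstuff{m}$ lies in the image of $\M_{\Gg}(\theta_{\gc})$ (the $\vfrac{3}$-eigenspace of $\M_{\Gg}(\theta_{\gc})$ is exactly $\Cv\Gset$, since the lower two block rows force the $\Oset$- and $\Pset$-components of any eigenvector to vanish). This condition is manifestly preserved by any basis bijection intertwining the actions, and it shows more than you claim: under the strict definition of $\cong_{+}$ in \fullref{definition:eq-integral-modules} (intertwining on the nose, not up to an automorphism of $\subquo$) the bijection sends $\Gset$ to $\Gset^{\prime}$, $\Oset$ to $\Oset^{\prime}$ and $\Pset$ to $\Pset^{\prime}$ exactly, so the ``global cyclic relabelling'' you allow for never arises and the conclusion is an isomorphism of tricolored graphs as stated. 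With this replacement the rest of your argument, including the easy direction and the concluding block computation, goes through.
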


\begin{example}\label{example:find-all-transitives-2}
For the graphs from \fullref{subsec:gen-D-list} we get the following.
The graph $\graphA{e}$ allows three 
non-isomorphic tricolorings in case $e\equiv 0\bmod 3$, but only one otherwise.
The graph $\graphD{e}$ can always be tricolored in three non-isomorphic ways, while the 
graph $\graphC{e}$ admits only one tricoloring up to isomorphism. Finally, 
in type $\typeE$ there are always three non-isomorphic tricolorings except for 
the graph $\graphE{5}$ which has only one such tricoloring up to isomorphism.
Thus, \fullref{lemma:find-all-transitives-2} gives us the corresponding $\aformv$-representations 
which are not $\aformvN$-equivalent.
\end{example}

\begin{lemma}\label{lemma:find-all-transitives}
Let $\M$ be a transitive $\aformvN$-representation of 
$\subquo[\infty]$ which satisfies 
\begin{gather*}
\M(\theta_{\tduc})\algstuff{m}
=
a\algstuff{m}
+
\aformvN
\left(\posmodbasis[\M]{-}\{\algstuff{m}\}\right)
\;\Rightarrow\;
a\in\{0,\vfrac{3}\},
\quad\text{for all }\tduc,m,
\\
\text{and}\quad
a=\vfrac{3}\text{ only if }\M(\theta_{\tduc})\algstuff{m}
=
a\algstuff{m}.
\end{gather*}
Then there exists an admissible 
graph $\Gg$ with $\M\cong_{+}\M_{\Gg}$.
\end{lemma}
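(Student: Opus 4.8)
The plan is to reconstruct an admissible tricolored graph $\Gg$ directly from the combinatorial data of $\M$ and then invoke \fullref{lemma:find-all-transitives-2} to finish. First I would use the hypothesis to set up the vertex set: since the apex of $\M$ is the non-trivial two-sided cell (the only transitive $\aformvN$-representations with trivial apex being $\M_{0,0,0}$ by \fullref{example:int-valued}), the quadratic relations \eqref{eq:first-rel} together with the hypothesis on the diagonal coefficients $a\in\{0,\vfrac{3}\}$ force each generator $\theta_{\tduc}$ to act on $\posmodbasis[\M]$ as a matrix whose only possible diagonal entry is $0$ or $\vnumber{2}\vnumber{3}$, and the second clause says the $\vnumber{2}\vnumber{3}$-case happens only on an eigenvector; moreover $\theta_\tduc^2 = \vfrac{3}\theta_\tduc = \vnumber{2}\vnumber{3}\theta_\tduc$ (using $\vfrac{3}=\vnumber{2}\vnumber{3}$ since $\vnumber{1}=1$) forces $\theta_\tduc/(\vnumber{2}\vnumber{3})$ to be idempotent on the vector space it doesn't annihilate, which pins down its structure. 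This partitions $\posmodbasis[\M]$ into three sets $\Gset,\Oset,\Pset$, where $\algstuff m\in\Gset$ means $\theta_\gc\algstuff m = \vnumber{2}\vnumber{3}\algstuff m + \dots$, i.e.\ the vertices on which $\theta_\gc$ acts diagonally; transitivity guarantees all three are nonempty. Off the diagonal, positivity plus the rescaling by $\vnumber{2}^{-1}$ built into the colored KL elements forces the off-diagonal blocks of $\M(\theta_\gc), \M(\theta_\oc), \M(\theta_\pc)$ to be $\vnumber{2}$ times $\N$-matrices; call the relevant blocks $A^{\mathrm{T}}, C$ etc., matching \eqref{eq:main-matrices} — the point is that the grading degree of $\theta_\tduc$ acting on a basis vector is minimal, so only the lowest-degree term $\vpar^{0}$ can appear with a coefficient in $\N$, and higher powers of $\vpar$ are excluded exactly by the ``$\aformvN$ only on the stated coefficient'' hypothesis.

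Next I would extract the graph. Define $\Gg$ to have vertex set $\Gset\sqcup\Oset\sqcup\Pset$, with the number of $\yc$-colored edges between $i\in\Gset$ and $j\in\Oset$ given by the $(j,i)$-entry of the $\N$-matrix $A$ appearing in $\M(\theta_\oc)$, and similarly for $B$ ($\oc$–$\pc$) and $C$ ($\pc$–$\gc$); orient $\oGg^{\fu}$ and $\oGg^{\fud}$ as in \eqref{eq:color-tensor}. The relations \eqref{eq:second-rel} of $\subquo$, pulled back through $\M$, translate (after cancelling the overall powers of $\vnumber{2}$ and $\vnumber{3}$ and using the quadratic relations to simplify the length-three products $\theta_\gc\theta_\oc\theta_\gc$ etc.) into precisely the matrix identities $A^{\mathrm{T}}A = CC^{\mathrm{T}}$, $AA^{\mathrm{T}} = B^{\mathrm{T}}B$, $C^{\mathrm{T}}C = BB^{\mathrm{T}}$ of \eqref{eq:main-transposes}; by \fullref{lemma:weakly-regular} this says exactly that $\Gg$ is quasi regular, so by \fullref{lemma:n-modules} the representation $\M_\Gg$ is well-defined, and by construction $\M(\theta_\tduc) = \M_\Gg(\theta_\tduc)$ on the chosen bases, hence $\M\cong_{+}\M_\Gg$ as $\aformv$-representations. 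Transitivity of $\M$ then forces $\Gg$ to be strongly connected (a path in $\oGg^{\fu}$ or $\oGg^{\fud}$ from $i$ to $j$ corresponds to a product of generators taking the basis vector $\algstuff{m}_i$ to a linear combination involving $\algstuff{m}_j$ with positive coefficient), so $\Gg$ is admissible, and \fullref{lemma:trans-graphs} confirms $\M_\Gg$ is a transitive $\aformv$-representation consistent with $\M$.

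The main obstacle I expect is the bookkeeping that turns the relation $\theta_\gc\theta_\oc\theta_\gc = \theta_\gc\theta_\pc\theta_\gc$ into the clean identity $A^{\mathrm{T}}A = CC^{\mathrm{T}}$: one has to compute $\M(\theta_\gc\theta_\oc\theta_\gc)$ as a block matrix, observe that the diagonal $\vnumber{3}$-terms contribute only to an overall scalar multiple of $\M(\theta_\gc)$ (which cancels on both sides), and isolate the genuinely new content in the upper-left $\Gset\times\Gset$ block, which is where $A^{\mathrm{T}}A$ (from the $\oc$ side) versus $CC^{\mathrm{T}}$ (from the $\pc$ side) appear; one must check carefully that no other block produces an obstruction and that the hypotheses on $\M$ are exactly strong enough to rule out $\vpar$-corrections to these matrix entries. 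A secondary subtlety is verifying that the three-way partition $\Gset,\Oset,\Pset$ is forced rather than merely possible — this uses that the quadratic relation makes $\M(\theta_\tduc)$ a (scaled) idempotent plus a nilpotent-looking off-diagonal part, so each basis vector lies in the image of exactly one rescaled idempotent $\M(\theta_\tduc)/(\vnumber 2\vnumber 3)$, together with the second hypothesis which says the $\vfrac{3}$-eigenvalue occurs only when $\algstuff m$ is literally an eigenvector (ruling out a basis vector being ``mixed''). Once these two points are secured, the rest is routine, and the dependence on the non-trivial apex (which guarantees we are not in the degenerate situation of \fullref{example:int-valued}) enters only to know that all of $\Gset,\Oset,\Pset$ are nonempty.
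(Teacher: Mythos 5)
Your overall strategy coincides with the paper's: use the hypothesis to put each $\M(\theta_{\tduc})$ into the block form $\begin{psmallmatrix}\vfrac{3}\Idmatrix & D\\ 0&0\end{psmallmatrix}$, read a tricolored graph off the off-diagonal blocks, translate \eqref{eq:second-rel} into the quasi-regularity identities \eqref{eq:main-transposes} via \fullref{lemma:weakly-regular} and \fullref{lemma:n-modules}, and get strong connectivity (hence admissibility) from transitivity; the paper does exactly this, outsourcing the bookkeeping you spell out to \cite{Zi1} and \cite{KMMZ}.

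There is, however, one concrete gap. Your claim that the quadratic relation forces each basis vector to lie ``in the image of exactly one rescaled idempotent'' is not a consequence of \eqref{eq:first-rel}: a priori a basis vector could be a $\vfrac{3}$-eigenvector of zero, two, or all three generators, and the images of the three idempotents need not partition $\posmodbasis[\M]$. The paper establishes the trichotomy with three separate short arguments, none of which appears in your proposal: (i) every $\algstuff{m}$ is a $\vfrac{3}$-eigenvector of at least one $\theta_{\tduc}$, since otherwise the $\algstuff{m}$-row of $\M(\theta_{\gc})+\M(\theta_{\oc})+\M(\theta_{\pc})$ vanishes, contradicting transitivity; (ii) no $\algstuff{m}$ is a $\vfrac{3}$-eigenvector of all three, since transitivity would then force $\M$ to be one-dimensional with every generator acting by $\vfrac{3}$, which is excluded as in \fullref{lemma:further-restrictions}; (iii) no $\algstuff{m}$ is a common $\vfrac{3}$-eigenvector of exactly two, say with $\M(\theta_{\gc})\algstuff{m}=\M(\theta_{\oc})\algstuff{m}=\vfrac{3}\algstuff{m}$ and $\M(\theta_{\pc})\algstuff{m}=0$, since then the two sides of \eqref{eq:second-rel} act differently on $\algstuff{m}$. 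Your second hypothesis (eigenvalue $\vfrac{3}$ only on honest eigenvectors) is what yields the block form, not the trichotomy. A smaller point in the same vein: the hypothesis constrains only the diagonal coefficients, so your assertion that the off-diagonal entries are forced to equal $\vnumber{2}$ times non-negative integers does not follow from it as written; that is precisely the part of the argument the paper delegates to \cite{Zi1} and \cite{KMMZ}, and it needs its own justification rather than the grading heuristic you give.
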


\begin{proof}
Recall that $\M$ has a fixed basis $\posmodbasis[\M]$ 
on which all elements of the colored KL basis act by 
matrices with entries in $\aformvN$, that $\theta_{\tduc}^2=\vfrac{3}\theta_{\tduc}$
and that the trace of an idempotent 
matrix is equal to its rank (which thus holds 
for $\vfrac{3}^{-1}\M(\theta_{\tduc})$). In particular,
the assumption implies that for each generator 
$\theta_{\tduc}$ there is an ordering of $\posmodbasis[\M]$ such that
\begin{gather}\label{eq:funny-matrix}
\M(\theta_{\tduc})=
\begin{pmatrix}
\vfrac{3}\Idmatrix & D \\
0 & 0
\end{pmatrix}
\end{gather}
for some matrix $D$ with entries in $\aformvN$. The rest of the proof now follows along the lines of 
\cite[Corollary 5.5]{Zi1} or \cite[Section 4.3]{KMMZ}:
\smallskip
\begin{enumerate}[label=$\blacktriangleright$]

\setlength\itemsep{.15cm}

\item First observe that each $\algstuff{m}$ is a $\vfrac{3}$-eigenvector 
of some $\theta_\tduc$, since otherwise 
$\M(\theta_{\gc})+\M(\theta_{\oc})+\M(\theta_{\pc})$ would have a zero row 
by \eqref{eq:funny-matrix}, which contradicts the transitivity.

\item Secondly, $\algstuff{m}$ is not a $\vfrac{3}$-eigenvector for all the 
$\theta_\tduc$. To see this, assume the contrary. Then, by transitivity, 
$\M$ has to be one-dimensional with all $\theta_\tduc$ acting by $\vfrac{3}$. 
However, as in \fullref{lemma:further-restrictions}, this contradicts 
the fact that $\M$ is a $\subquo[e]$-representation.

\item Finally, $\algstuff{m}$ is not a common $\vfrac{3}$-eigenvector of two of the 
$\theta_\tduc$. Assume on the contrary that $\theta_\gc$ and $\theta_\oc$ had such a common 
eigenvector. Then $\M(\theta_{\gc})\algstuff{m}=\M(\theta_{\oc})\algstuff{m}=\vfrac{3}\algstuff{m}$ and $\M(\theta_{\pc})\algstuff{m}=0$.
This contradicts \eqref{eq:second-rel}.
\end{enumerate}
\smallskip
\makeautorefname{lemma}{Lemmas}
Together with \fullref{lemma:weakly-regular}, \ref{lemma:n-modules} and \ref{lemma:trans-graphs}, 
this proves the claim. \makeautorefname{lemma}{Lemma}
\end{proof}

\subsubsection{The classification problem \fullref{problem:classification}}\label{subsec:Z-reps-2}

Back to the polynomials $\pxy{m,n}$ from \fullref{definition:sl3-polys}. 
Observe that quasi regularity implies that
\begin{gather}\label{eq:weakly-regular}
A(\oGg^{\fu})A(\oGg^{\fud})
=
{\scriptstyle
\begin{pmatrix}
CC^{\mathrm{T}} & 0 & 0 \\
0 & AA^{\mathrm{T}} & 0 \\
0 & 0 & BB^{\mathrm{T}}
\end{pmatrix}
}
\stackrel{\eqref{eq:main-transposes}}{=}
{\scriptstyle
\begin{pmatrix}
A^{\mathrm{T}}A & 0 & 0 \\
0 & B^{\mathrm{T}}B & 0 \\
0 & 0 & C^{\mathrm{T}}C
\end{pmatrix}
}
=
A(\oGg^{\fud})A(\oGg^{\fu}).
\end{gather}
Thus, we can formulate the following classification problem. 
\newline

\begin{problem}\label{problem:classification}
Classify all admissible graphs $\Gg$ such that 
\begin{gather*}
\pxy[A(\oGg^{\fu}),A(\oGg^{\fud})]{m,n}=0, \quad\text{for all }m+n=e+1.
\end{gather*}
(In other words, classify all admissible graphs $\Gg$ such 
that $z\in S_{\Gg^{\fu}}$ only if $(z,\overline{z})\in\vanset{e}$.)
\end{problem}

\begin{proposition}\label{proposition:poly-killed}
A graph $\Gg$ is a solution of \fullref{problem:classification} 
if and only if $\M_{\Gg}$ 
descends to a transitive $\aformv$-representation of $\subquo[e]$.
\end{proposition}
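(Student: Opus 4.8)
The plan is to unwind the definitions on both sides and reduce everything to the vanishing statement about Koornwinder's polynomials. By \fullref{lemma:n-modules} and \fullref{lemma:trans-graphs}, since $\Gg$ is assumed admissible (hence quasi regular), $\M_{\Gg}$ is already a well-defined transitive $\aformv$-representation of $\subquo[\infty]$. So the only question is whether it descends to $\subquo[e]=\subquo/\killideal{e}$, i.e. whether $\M_{\Gg}$ annihilates the two-sided ideal $\killideal{e}$. Since $\killideal{e}$ is generated (as a two-sided ideal) by $\{\RKLx{m,n}\mid m+n=e+1,\ \tduc\in\Seset\}$, and $\M_{\Gg}$ is already a representation of $\subquo[\infty]$, it annihilates $\killideal{e}$ if and only if $\M_{\Gg}(\RKLx{m,n})=0$ for all $m+n=e+1$ and all $\tduc\in\Seset$.

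The next step is to compute $\M_{\Gg}(\RKLx{m,n})$ explicitly. Here I would use the multiplication rules \eqref{eq:multiplication} and \eqref{eq:multiplication2}, which (as noted in \fullref{example-KL-combinatorics-3}) mirror the Chebyshev-like recursion of \fullref{lemma:recursion}; equivalently, one uses directly that $\RKLx{m,n}=\sum_{k,l}\vnumber{2}^{-k-l}d^{k,l}_{m,n}\rklx{k,l}$ from \eqref{eq:the-expressions} together with $d^{k,l}_{m,n}$ from \eqref{eq:L-vs-L}. The key observation is the block structure: from the definition \eqref{eq:main-matrices} of $\M_{\Gg}$ on the generators, together with $\Mt[\Gg]=\vnumber{2}(\vnumber{3}\Idmatrix+A(\oGg))$ and the fact that $\fu$, $\fud$ act by counterclockwise and clockwise rotation (\fullref{remark-KL-combinatorics}), the element $\rklx{k,l}$ acts — up to the overall scalar $\vnumber{2}^{k+l}$ and up to landing in a single block-row indexed by $\tduc$ — by a word in $A(\oGg^{\fu})$ and $A(\oGg^{\fud})$ corresponding to $k$ factors $\fu$ and $l$ factors $\fud$. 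By quasi regularity and \eqref{eq:weakly-regular}, $A(\oGg^{\fu})$ and $A(\oGg^{\fud})$ commute, so this word depends only on $k,l$ and one gets, say on the $\gc$-block,
\[
\M_{\Gg}(\RKLg{m,n})=\vnumber{2}\cdot P,
\quad
P={\textstyle\sum_{k,l}}\,d^{k,l}_{m,n}\,A(\oGg^{\fu})^{k}A(\oGg^{\fud})^{l}\big|_{\Gset\text{-block}},
\]
so that $P=\pxy[A(\oGg^{\fu}),A(\oGg^{\fud})]{m,n}$ restricted to the relevant block (and likewise for $\oc$, $\pc$). This is exactly the same bookkeeping already carried out in the proof of \fullref{lemma:three-dim-rep1}, where the positive powers of $\vnumber{2}$ from \eqref{eq:three-dims} cancel the negative powers of $\vnumber{2}$ in \eqref{eq:the-expressions} up to one overall factor of $\vnumber{2}$; here the role of the scalar $z$ is played by the commuting matrices $A(\oGg^{\fu})$, $A(\oGg^{\fud})$ (cf. \fullref{remark:spectrum-matrices}).

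Putting these together: $\M_{\Gg}(\RKLx{m,n})=0$ for all $m+n=e+1$ and all $\tduc$ if and only if $\pxy[A(\oGg^{\fu}),A(\oGg^{\fud})]{m,n}=0$ for all $m+n=e+1$ as matrix identities, which is precisely the condition defining a solution of \fullref{problem:classification}. Combined with the reduction of the first paragraph, this gives: $\M_{\Gg}$ descends to a transitive $\aformv$-representation of $\subquo[e]$ iff $\Gg$ solves \fullref{problem:classification}. (Transitivity of the descended representation is inherited from transitivity over $\subquo[\infty]$ established in \fullref{lemma:trans-graphs}, since the cell structure only shrinks under the quotient, cf. \fullref{corollary:cells}; and the parenthetical reformulation in \fullref{problem:classification} in terms of $S_{\Gg^{\fu}}\subset\vanset{e}$ follows because the commuting matrices $A(\oGg^{\fu})$, $A(\oGg^{\fud})$ can be simultaneously triangularized with $A(\oGg^{\fud})$ having conjugate-eigenvalues to $A(\oGg^{\fu})$ by \eqref{eq:main-transposes}, so a polynomial in them vanishes iff it vanishes on all joint eigenvalue pairs $(z,\overline z)$.)

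The main obstacle is the careful verification that $\rklx{k,l}$ acts, after the rotation-of-colors identification, by the commuting word $A(\oGg^{\fu})^kA(\oGg^{\fud})^l$ in the correct block — i.e. tracking the grading-shift scalars $\vnumber{2}^{\pm(k+l)}$ and confirming that the well-definedness of $\rklx{k,l}$ (\fullref{lemma:well-defined-paths}) matches the commutativity $A(\oGg^{\fu})A(\oGg^{\fud})=A(\oGg^{\fud})A(\oGg^{\fu})$ from \eqref{eq:weakly-regular}. Once that dictionary is in place the rest is the same cancellation-of-$\vnumber{2}$ computation already done in \fullref{lemma:three-dim-rep1} and the triangularity argument from \fullref{proposition:two-bases}.
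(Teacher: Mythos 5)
Your proposal follows essentially the same approach as the paper's proof: transitivity of $\M_{\Gg}$ is automatic from admissibility, descending to $\subquo[e]$ reduces to checking $\M_{\Gg}(\RKLx{m,n})=0$ for all $m+n=e+1$ and all $\tduc$, and this is then matched against the vanishing of $\pxy[A(\oGg^{\fu}),A(\oGg^{\fud})]{m,n}$ via the $\vnumber{2}$-cancellation you correctly attribute to the proof of \fullref{lemma:three-dim-rep1}.

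The spot you yourself flag as the main obstacle is a genuine imprecision, and it cannot be waved away as ``the same dictionary'': $\M_{\Gg}(\rklg{k,l})$ is not, even after stripping the scalar and isolating a block-row, a single block of the monomial $A(\oGg^{\fu})^kA(\oGg^{\fud})^l$, because the diagonal $\vnumber{3}\Idmatrix$-blocks of the $\M_{\Gg}(\theta_\tduc)$ survive. Already
$\M_{\Gg}(\rklg{1,0})=\M_{\Gg}(\theta_\oc)\M_{\Gg}(\theta_\gc)
=\vnumber{2}^2\left(\begin{smallmatrix}0&0&0\\ \vnumber{3}A&AA^{\mathrm T}&AC\\ 0&0&0\end{smallmatrix}\right)$,
which is not $\vnumber{2}^2$ times any block of $A(\oGg^{\fu})$. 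The paper's computation establishes the corrected, factored form
\[
\M_{\Gg}(\RKLg{m,n})=\vnumber{2}\, N\cdot\left(\vnumber{3}\Idmatrix\;\; A^{\mathrm T}\;\; C\right)
\]
supported on a single block-row, where $N$ is the unique non-zero block in the appropriate row of $\pxy[A(\oGg^{\fu}),A(\oGg^{\fud})]{m,n}$ (denoted $N^{\Gset}_{m,n}$, $N^{\Oset}_{m,n}$ or $N^{\Pset}_{m,n}$, depending on $m+2n\bmod 3$). Since $\vnumber{3}\Idmatrix$ is invertible, the left-hand side vanishes iff $N=0$, and ranging over the three starting colors $\tduc\in\Seset$ recovers all three blocks, hence vanishing of the full polynomial. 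With your displayed formula $\M_{\Gg}(\RKLg{m,n})=\vnumber{2}\cdot P$ replaced by this factored expression, the rest of your argument goes through exactly as you intend.
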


\begin{proof}
Recall that admissible graphs are always strongly connected. Thus,
the claim about transitivity is clear and it remains to check the other claims.

To this end, fix $m,n$. 
Observe that $\pxy[A(\oGg^{\fu}),A(\oGg^{\fud})]{m,n}$ 
has at most one non-zero block matrix entry in each of the $\Gset$-, $\Oset$- and $\Pset$-rows 
(as indicated in \eqref{eq:ad-matrix}), since 
$A(\oGg^{\fu})$ and $A(\oGg^{\fud})$ just permute the $\Gset$-, $\Oset$- and $\Pset$-blocks, 
and multiply them by $A,B,C$ or their transpose. Let us denote these 
block matrix entries by $N^{\Gset}_{m,n}$, $N^{\Oset}_{m,n}$ and $N^{\Pset}_{m,n}$. 

Let us fix $\gc$ as a starting color, the other two cases work verbatim.
In this case an easy calculation yields:
\begin{gather*}
\M_{\Gg}(\RKLg{m,n})
=
\begin{cases}
\vnumber{2}
{\scriptstyle
\begin{pmatrix}
N^{\Gset}_{m,n}\cdot\vnumber{3}\Idmatrix
& 
N^{\Gset}_{m,n}\cdot A^{\mathrm{T}}
& 
N^{\Gset}_{m,n}\cdot C
\\
0 & 0 & 0 \\
0 & 0 & 0
\end{pmatrix}
}
,&
\text{if }
m+2n \equiv 0 \bmod 3,
\\
\vnumber{2}
{\scriptstyle
\begin{pmatrix}
0 & 0 & 0 \\
N^{\Oset}_{m,n}\cdot\vnumber{3}\Idmatrix
& 
N^{\Oset}_{m,n}\cdot A^{\mathrm{T}}
& 
N^{\Oset}_{m,n}\cdot C
\\
0 & 0 & 0
\end{pmatrix}
}
,&
\text{if }
m+2n \equiv 1 \bmod 3,
\\
\vnumber{2}
{\scriptstyle
\begin{pmatrix}
0 & 0 & 0 \\
0 & 0 & 0 \\
N^{\Pset}_{m,n}\cdot\vnumber{3}\Idmatrix
& 
N^{\Pset}_{m,n}\cdot A^{\mathrm{T}}
& 
N^{\Pset}_{m,n}\cdot C
\end{pmatrix}
}
,&
\text{if }
m+2n \equiv 2 \bmod 3.
\end{cases}
\end{gather*}
As in the proof of \fullref{lemma:three-dim-rep1}, 
we note that in the calculation of $\M_{\Gg}(\RKLg{m,n})$ 
the positive powers of $\vnumber{2}$, due to \eqref{eq:main-matrices}, 
cancel against the negative powers of $\vnumber{2}$, 
which appear in \eqref{eq:the-expressions}, up to an 
overall factor $\vnumber{2}$. We see that $\M_{\Gg}(\RKLg{m,n})$ 
vanishes if and only if $\pxy[A(\oGg^{\fu}),A(\oGg^{\fud})]{m,n}=0$.
\end{proof}

Our main examples of solutions of \fullref{problem:classification} are the graphs from \fullref{subsec:gen-D-list}. 
Indeed, as can be seen in \fullref{subsec:gen-D-spectra}, their spectra are such that 
\fullref{proposition:poly-killed} applies:

\begin{corollary}\label{corollary:poly-killed}
The generalized $\ADE$ Dynkin diagrams from \fullref{subsec:gen-D-list} 
give transitive $\aformv$-representations $\M_{\Gg}$ 
for the associated level $e$.
\end{corollary}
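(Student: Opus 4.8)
The proof is essentially a bookkeeping exercise: the hard analytic work has already been done. I want to verify that each generalized $\ADE$ Dynkin diagram $\Gg$ listed in \fullref{subsec:gen-D-list} is a solution of \fullref{problem:classification}, and then invoke \fullref{proposition:poly-killed}.

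First I would recall that each $\Gg$ in the list is admissible (this is \fullref{example:triangle2}), so by \fullref{lemma:trans-graphs} the representation $\M_{\Gg}$ is already a transitive $\aformv$-representation of $\subquo[\infty]$, and by \fullref{proposition:poly-killed} it descends to $\subquo[e]$ precisely when $\pxy[A(\oGg^{\fu}),A(\oGg^{\fud})]{m,n}=0$ for all $m+n=e+1$. By \fullref{problem:classification}'s reformulation, this is equivalent to the condition that $z\in S_{\Gg^{\fu}}$ implies $(z,\overline{z})\in\vanset{e}$. So the entire content of the corollary reduces to checking, diagram by family, that the spectrum $S_{\Gg^{\fu}}$ — which is tabulated in \fullref{subsec:gen-D-spectra} — is contained in the set of first coordinates of $\vanset{e}$, whose explicit description is given in \fullref{remark:level-vanishing} (the values $Z(\sigma,\tau)$ for $(\sigma,\tau)$ as in \eqref{eq:zeros2}).

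The key step, then, is the comparison of two tables: for each graph $\Gg$ of the associated level $e$, the listed spectrum of $\Gg^{\fu}$ versus the listed common roots of $\{\pxy{m,n}\mid m+n=e+1\}$. For the type $\typeA$ family this comparison is already visible in \fullref{example:triangle1} (where $S_{\graphA{1}^{\fu}}, S_{\graphA{2}^{\fu}}, S_{\graphA{3}^{\fu}}$ are displayed) against \fullref{example:plot-zeros}, and indeed the two lists of polynomials whose roots are taken coincide verbatim; this is no accident, since the vertices of $\graphA{e}$ are exactly the cut-off $X^+(e)$ of the positive Weyl chamber and the action of $A(\oGg^{\fu})$ mirrors multiplication by $\fu$ in $\GGc{\slqmod[e]}$, so $S_{\graphA{e}^{\fu}}$ is literally the spectrum of multiplication by $\fu$ on $\C[\fu,\fud]/\vanideal{e}$, which is $\vanset{e}$ projected to the first coordinate. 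For the remaining families $\graphD{e}, \graphC{e}, \graphE{e}$ I would note that in each case $\Gg^{\fu}$ is obtained from (a truncation or symmetrization of) the $\graphA{e}$-picture in a way that can only shrink the spectrum — explicitly, the $\zeethree$-quotient construction for type $\typeD$ and the explicit small graphs for type $\typeE$ and conjugate $\typeA$ — so one reads off from \fullref{subsec:gen-D-spectra} that $S_{\Gg^{\fu}}\subseteq S_{\graphA{e}^{\fu}}\subseteq\{\alpha\mid(\alpha,\overline{\alpha})\in\vanset{e}\}$. Alternatively, and more robustly, one observes that $\M_{\Gg}$ factors through the fusion ring: since $\fu$ and $\fud$ act on $\M_{\Gg}$ with commuting matrices $A(\oGg^{\fu}), A(\oGg^{\fud})$ satisfying the $\slt$-recursion of \fullref{lemma:recursion} (this is exactly why \eqref{eq:main-transposes} and \eqref{eq:weakly-regular} were needed), every joint eigenvalue $(\alpha,\beta)$ must satisfy all relations of $\C[\fu,\fud]/\vanideal{e}$, hence $(\alpha,\beta)\in\vanset{e}$, and $\beta=\overline{\alpha}$ because $\fud=\fu^\ast$ forces $A(\oGg^{\fud})=A(\oGg^{\fu})^{\mathrm{T}}$ and the eigenvalues pair up as conjugates.

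The main obstacle is bureaucratic rather than conceptual: I need to make sure the spectra listed in \fullref{subsec:gen-D-spectra} are matched to the correct level $e$ for each graph $\Gg$ and that, in the type $\typeD$ and $\typeE$ cases, the $\zeethree$-orbit structure doesn't introduce a spurious eigenvalue outside $\vanset{e}$ — but this is precisely what the tables in \fullref{subsec:gen-D-spectra} are there to rule out, so the proof amounts to citing them. I would therefore write: \emph{By \fullref{example:triangle2} each graph $\Gg$ in \fullref{subsec:gen-D-list} is admissible, so \fullref{lemma:trans-graphs} applies; by \fullref{proposition:poly-killed} it remains to check that $\Gg$ solves \fullref{problem:classification}, i.e. that $S_{\Gg^{\fu}}$ consists of first coordinates of points of $\vanset{e}$. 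This follows by comparing the spectra listed in \fullref{subsec:gen-D-spectra} with the explicit description of $\vanset{e}$ in \fullref{remark:level-vanishing}; equivalently, the matrices $A(\oGg^{\fu})$ and $A(\oGg^{\fud})$ commute and satisfy the recursion of \fullref{lemma:recursion} by quasi regularity (\eqref{eq:weakly-regular}), hence every joint eigenvalue lies in $\vanset{e}$.}
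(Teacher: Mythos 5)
Your main line of argument is exactly the paper's: the corollary is stated as an immediate consequence of \fullref{proposition:poly-killed} together with the spectral data collected in \fullref{subsec:gen-D-spectra} (Claim$\typeA$ identifies $S_{\graphA{e}^{\fu}}$ with the first coordinates of $\vanset{e}$, and Claim$\typeD\typeE$ gives $S_{\Gg^{\fu}}\subset S_{\graphA{e}^{\fu}}$ up to zero eigenvalues, which lie in $\vanset{e}$ only when the level permits), plus admissibility and \fullref{lemma:trans-graphs} for transitivity. So the proposal is correct and matches the intended proof.

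One caution: the ``alternative, more robust'' argument you sketch at the end is circular and should not be relied on. Commutativity of $A(\oGg^{\fu})$ and $A(\oGg^{\fud})$ together with the recursion of \fullref{lemma:recursion} only expresses the polynomials $\pxy[A(\oGg^{\fu}),A(\oGg^{\fud})]{m,n}$ in terms of the adjacency matrices; it does not force the joint eigenvalues to satisfy the relations of $\C[\fu,\fud]/\vanideal{e}$ --- that is precisely the content of \fullref{problem:classification}, and it genuinely fails for most admissible graphs (and for a given graph at the wrong level). The spectral check against \fullref{subsec:gen-D-spectra} is therefore not optional bookkeeping but the actual substance of the corollary.
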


\makeautorefname{lemmaqed}{Lemmas}

By \fullref{lemma:find-all-transitives-2}, 
\ref{lemma:find-all-transitives} and \fullref{proposition:poly-killed}, classifying all 
$\aformv$-representations of $\subquo[e]$ boils down to 
\fullref{problem:classification}. We have already seen that the generalized 
$\ADE$ Dynkin diagrams give solutions of \fullref{problem:classification}. 
So two questions remain: whether these are all solutions and whether these are 
$\aformvN$-representations (transitivity is clear because 
the graphs are strongly connected).

\makeautorefname{lemmaqed}{Lemma}

We do not have a complete answer to these questions. However, we are able to prove:

\begin{proposition}\label{proposition:typeA-D-decat}
Let $\gc,\oc,\pc$ indicate the starting color. Then
we have (at least) the following transitive $\aformvN$-representations 
of $\subquo[e]$. 
\begin{gather}\label{eq:the-transitives}
\begin{tikzpicture}[baseline=(current bounding box.center),yscale=0.6]
  \matrix (m) [matrix of math nodes, row sep=1em, column
  sep=1em, text height=1.5ex, text depth=0.25ex, ampersand replacement=\&] {
\phantom{a}
\& 
e\equiv 0\bmod 3 
\&  
e\not\equiv 0\bmod 3 
\\
\text{$\aformvN$-reps.}
\&
\begin{gathered}
\M_{\graphA{e}^{\gc}},\,
\M_{\graphA{e}^{\oc}},\,
\M_{\graphA{e}^{\pc}},
\\
\M_{\graphD{e}^{\gc}},\,
\M_{\graphD{e}^{\oc}},\,
\M_{\graphD{e}^{\pc}}
\end{gathered}
\& 
\M_{\graphA{e}^{\gc}}
\\
\text{quantity}
\&
6
\& 
1
\\
\\};
  \draw[densely dashed] ($(m-1-1.south west)+ (-.75,0)$) to ($(m-1-3.south east)+ (.6,0)$);
  \draw[densely dashed] ($(m-1-3.north west) + (-.3,0)$) to ($(m-1-3.north west) + (-.3,-4.2)$);
  \draw[densely dashed] ($(m-1-2.north west) + (-.85,0)$) to ($(m-1-2.north west) + (-.85,-4.2)$);
\end{tikzpicture}
\end{gather}
Moreover, the representations $\M_{\graphA{e}}$ 
are the cell modules of $\subquo[e]$.
\end{proposition}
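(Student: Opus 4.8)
The plan is to prove the two assertions of \fullref{proposition:typeA-D-decat} separately: first that the listed $\M_{\Gg}$ are genuine $\aformvN$-representations (i.e.\ have non-negative integral structure constants on their vertex bases), and second that the type $\typeA$ ones coincide with the cell modules of $\subquo[e]$.

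For the first assertion I would appeal to categorification rather than attempt a direct combinatorial argument. By \fullref{proposition:poly-killed} and the spectra computations collected in \fullref{subsec:gen-D-spectra}, each of the listed graphs $\graphA{e}$ and $\graphD{e}$ solves \fullref{problem:classification}, so $\M_{\Gg}$ descends to a transitive $\aformv$-representation of $\subquo[e]$. To upgrade this to a $\aformvN$-representation, I would construct a $2$-representation of the categorified algebra $\Kar{\subcatquo[e]}$ whose Grothendieck class is $\M_{\Gg}$: for type $\typeA$ these are (up to grading shifts) the cell $2$-representations $\twocatstuff{C}_{\lcell}$ from \fullref{definition:2-cell-module} — using \fullref{proposition:cells} and \fullref{corollary:cells}, which identify the non-trivial left cells $\lcell_{\tduc}$ with the vertex sets of the $\graphA{e}^{\tduc}$ — and for type $\typeD$ one obtains a $2$-representation from the $\zeethree$-symmetry construction hinted at in the introduction and realised via the algebra $1$-morphisms coming from the symmetric $\slt$-web calculus. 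Once a $2$-representation categorifying $\M_{\Gg}$ exists, its decategorification is automatically an $\aformvN$-representation on the basis of isomorphism classes of indecomposable $1$-morphisms, because the action matrices of indecomposable $1$-morphisms have entries in $\aformvN$; it then remains to match this basis with the vertex basis $\{\Gset,\Oset,\Pset\}$, which one does by comparing the transitive $\aformv$-structure with \fullref{lemma:find-all-transitives-2}. Since the categorical input (the existence of $\Kar{\subcatquo[e]}$, the clasps, and the Satake $2$-functor) is all available from \fullref{sec:A2-diagrams}, this step is essentially a bookkeeping argument once the relevant $2$-representations are in hand.

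For the second assertion, that $\M_{\graphA{e}}$ is the cell module $\amod[C]_{\lcell_{\tduc}}$, I would argue directly from \fullref{corollary:cells} and the multiplication rules \eqref{eq:multiplication2}. Fix the starting color $\tduc$ and consider the left cell module $\amod[C]_{\lcell_{\tduc}}$: by \fullref{proposition:cells} its basis is $\{\RKLx{m,n}\mid (m,n)\in X^+,\,0\leq m+n\leq e\}$, indexed by the cut-off of the positive Weyl chamber, which is exactly the vertex set of $\graphA{e}$ (cf.\ \fullref{example:triangle1} and \eqref{eq:weight-picture}). The action of $\theta_{\tdudc}$ on this basis is governed by \eqref{eq:multiplication2} and \fullref{lemma:multiplication}, whose recursion is precisely the Koornwinder recursion of \fullref{lemma:recursion}; comparing this with \eqref{eq:main-matrices} and the fact (from \fullref{lemma:recursion} / \fullref{example:sl3-polys}) that the matrices $A,B,C$ of $\graphA{e}$ encode the $\slt$-tensor product multiplicities $d^{k,l}_{m,n}$ restricted to the cut-off, one reads off that $\amod[C]_{\lcell_{\tduc}}$ and $\M_{\graphA{e}^{\tduc}}$ have the same structure constants on matched bases, hence are $\aformvN$-equivalent. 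One should also check that the quotient $\amod[M](\lgeq)/\amod[M](\lgeqs)$ defining the cell module does not collapse anything — but since $\{1\}$ is the only smaller cell and it acts trivially, $\amod[M](\lgeqs)=0$ and $\amod[C]_{\lcell_{\tduc}}=\amod[M](\lgeq)$ already has the right dimension $t_e$.

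The main obstacle I anticipate is the type $\typeD$ part of the first assertion: unlike type $\typeA$, the graphs $\graphD{e}$ are not left cells of $\subquo[e]$, so their $2$-representations are not cell $2$-representations and must be built by hand from the $\zeethree$-symmetry of the type $\typeA$ picture (the trihedral analog of the $\zeetwo$-symmetry producing $\typeD$ from $\typeA$ in the dihedral case). Making this precise requires identifying the relevant algebra $1$-morphism in $\Kar{\subcatquo[e]}$ — equivalently, running the construction of \fullref{subsec:quotient-category} with the $\zeethree$-equivariant structure — and verifying that its category of modules decategorifies to $\M_{\graphD{e}}$ with the correct vertex basis. This is the step where one genuinely needs the symmetric $\slt$-web input, and where the paper presumably does the real work; everything else reduces to invoking \fullref{proposition:poly-killed}, \fullref{lemma:find-all-transitives-2}, \fullref{lemma:find-all-transitives}, and the cell combinatorics of \fullref{section:funny-algebra}.
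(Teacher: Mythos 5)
Your proposal follows essentially the same route as the paper: transitivity and well-definedness over $\aformv$ come from \fullref{proposition:poly-killed}, \fullref{lemma:trans-graphs} and \fullref{lemma:find-all-transitives-2}, the identification of $\M_{\graphA{e}}$ with the cell modules is read off from \fullref{corollary:cells} and \eqref{eq:multiplication2}, and positivity is deferred to the categorification (\fullref{theorem:typeA-D-cat}), with the type $\typeD$ case being exactly where the symmetric-web algebra $1$-morphism construction does the real work. The one piece you omit is the justification of the \emph{quantity} row: by \fullref{lemma:find-all-transitives-2} this amounts to counting isomorphism classes of tricolorings, and the paper's proof spends a paragraph showing that $\graphA{e}$ admits a unique tricoloring up to isomorphism when $e\not\equiv 0\bmod 3$ (a tricoloring of the lowest triangle determines everything, and the six choices give isomorphic tricolored graphs), whereas for $e\equiv 0\bmod 3$ one gets three classes each for $\graphA{e}$ and $\graphD{e}$, distinguished by the color of the corner vertices; you should add this counting argument, but it is elementary bookkeeping rather than a conceptual gap.
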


\makeautorefname{lemmaqed}{Lemmas}

\begin{proof}
Except for the claim about positivity, 
this is clear by \fullref{corollary:poly-killed}, \fullref{lemma:trans-graphs}, 
\ref{lemma:find-all-transitives-2} and the construction. 

For example, if $e\not\equiv 0\bmod 3$, then there is only one type 
$\typeA$ representation up to $\aformvN$-equivalence, since all tricolorings of $\graphA{e}$ give isomorphic 
tricolored graphs. To see this, note that a tricoloring of the lowest 
triangle fixes the tricoloring of the whole graph, 
and that there are six choices. When $e\not\equiv 0\bmod 3$, they all give isomorphic tricolored graphs, 
as can be easily seen. When $e\equiv 0\bmod 3$, we get three different 
isomorphism classes of tricolored graphs, 
which are determined by the color of the corner vertices. Note that there 
is one more vertex with that color, e.g. $\gc$,  than vertices with one 
of the other two colors, e.g. $\oc$ resp. $\pc$. This explains why tricolored graphs whose 
corner vertices have different colors, are non-isomorphic. Finally, for 
any fixed color of the corner vertices, there are 
two tricolored graphs, which are isomorphic by a $\zeetwo$-symmetry in the 
bisector of the angle at that vertex.
 
Positivity 
follows from \fullref{theorem:typeA-D-cat} 
which we proof later on.
\end{proof}

\makeautorefname{lemmaqed}{Lemma}

In contrast to simples, the 
transitive $\aformvN$-representations of $\subquo[e]$ can 
get arbitrarily large as $e$ grows, c.f. \eqref{eq:the-simples} and \eqref{eq:the-transitives}. We only know 
their complete classification for small values of $e$. 

\subsubsection{Classification for small levels}\label{subsec:decat-story-b}

\begin{theorem}\label{theorem:low-level-classification}
Let $e\in\{0,1,2,3\}$.
An admissible graph $\Gg$ provides a solution 
to \fullref{problem:classification} if and only if 
$\Gg$ is a generalized $\ADE$ Dynkin 
diagram of the corresponding level or
\begin{gather}\label{eq:special-solution}
\Gg=
\begin{tikzpicture}[anchorbase, xscale=.35, yscale=.5]
	\draw [thick, myyellow, double] (0,0) to (1,1);
	\draw [thick, densely dotted, myblue, double] (0,0) to (-1,1);
	\draw [thick, densely dashed, myred, double] (1,1) to (-1,1);
	\node at (0,0) {$\bulletg$};
	\node at (1,1) {$\bulleto$};
	\node at (-1,1) {$\bulletp$};
\end{tikzpicture}
,
\quad
\text{for }e=3,
\end{gather}
called special solution. (Note the double edges.)
\end{theorem}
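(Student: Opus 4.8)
The plan is to combine the abstract classification ingredients already available with an explicit finite computation for the four small levels. By \fullref{proposition:poly-killed}, an admissible graph $\Gg$ solves \fullref{problem:classification} if and only if $\M_{\Gg}$ descends to a transitive $\aformv$-representation of $\subquo[e]$; by \fullref{remark:level-vanishing} and \fullref{lemma:level-vanishing} we know $\vanset{e}$ explicitly (it has $t_e$ points, all in the interior of the discoid), so the condition becomes: $\Gg$ is admissible, and every eigenvalue $z$ of $A(\oGg^{\fu})$ satisfies $(z,\overline z)\in\vanset{e}$. First I would record, via \fullref{lemma:weakly-regular} and \eqref{eq:weakly-regular}, that admissibility forces $A(\oGg^{\fu})A(\oGg^{\fud})=A(\oGg^{\fud})A(\oGg^{\fu})$ to be block-diagonal with blocks $A^{\mathrm{T}}A$, $B^{\mathrm{T}}B$, $C^{\mathrm{T}}C$, so the spectrum of $\Gg^{\fu}$ is tightly constrained by the three (coinciding, up to nonzero spectrum) positive semidefinite matrices $AA^{\mathrm{T}}$, $BB^{\mathrm{T}}$, $CC^{\mathrm{T}}$. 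For $e=0,1,2$ the vanishing set is small (sizes $t_0=1$, $t_1=3$, $t_2=6$) and, crucially, contains no point whose modulus exceeds what a connected admissible graph can realize as a Perron eigenvalue; a short case analysis on the possible Perron–Frobenius value of $A(\oGg)$ (which must be of the form $\qqnumber[\qqpar]{3}^{-1}\cdot(\text{something in }\vanset e)$ after the shift in $\Mt[\Gg]$) rules out everything except the generalized type $\typeA$ diagrams $\graphA{e}$.

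The substantive case is $e=3$. Here $\vanset{3}$ has $t_3=10$ points: by \fullref{example:plot-zeros} the $\fu$-coordinates are the roots of $X(X-2)(X^2+2X+4)(X^6-X^3+1)$. The real roots are $0$ and $2$, and $2$ is the unique root of maximal modulus among the real roots; the Perron eigenvalue of $A(\oGg^{\fu})$ for a connected admissible $\Gg$ must be a real algebraic integer lying in $\vanset 3$, hence must be $2$ (it cannot be $0$ for a nonzero connected graph). I would then argue that $2$ being the Perron value of $A(\oGg^{\fu})$, together with quasi-regularity forcing each vertex to have equal in- and out-degree (counted with multiplicity $r^2$ for $r$-fold edges, cf. \fullref{example:weakly-regular}), pins down the possible graphs: either each vertex has two distinct out-neighbours and two distinct in-neighbours (leading, after the connectivity and tricoloring constraints, to the honest generalized $\ADE$ graphs $\graphA{3}$, $\graphC{3}$, $\graphD{3}$, $\graphE{3}$ of level $3$ listed in \fullref{subsec:gen-D-list}), or a vertex has a single out-neighbour via a double edge — and pursuing that branch, the smallest closed possibility is the one-triangle graph with all three edges doubled in \eqref{eq:special-solution}. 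For that special solution one computes directly $A(\oGg^{\fu})=\bigl(\begin{smallmatrix}0&0&2\\2&0&0\\0&2&0\end{smallmatrix}\bigr)$ (in the cyclic $\gc,\oc,\pc$ ordering), whose eigenvalues are $2,\,2\zetaroot,\,2\zetaroot^{-1}$; since $2$ is a root of $X-2$ and $2\zetaroot,2\zetaroot^{-1}$ are roots of $X^2+2X+4$, all three lie in the $\fu$-coordinates of $\vanset 3$, and one checks the remaining polynomials $\pxy{m,n}$ with $m+n=4$ vanish on these eigenvalues using \fullref{lemma:recursion}; admissibility (strong connectedness and quasi-regularity — here $A=B=C=(2)$, so \eqref{eq:main-transposes} holds trivially) is immediate.

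The bookkeeping that the \emph{genuine} Dynkin-type solutions of level $3$ are exactly $\graphA{3},\graphC{3},\graphD{3},\graphE{3}$ and nothing else, I would handle by the eigenvalue test: any admissible $\Gg$ with Perron value $2$ has all of $S_{\Gg^{\fu}}$ inside the ten known points, so in particular $|z|\le 2$ for every eigenvalue and $AA^{\mathrm T},BB^{\mathrm T},CC^{\mathrm T}$ have all eigenvalues in $\{|z|^2 : (z,\overline z)\in\vanset 3\}=\{0,4,4,\,|\text{roots of }X^6-X^3+1|^2,\dots\}$; a finite search over such nonnegative integer matrices (using that each row/column sum of $A(\oGg)$ is bounded, since the degree is even and bounded by the Perron value being $2$) recovers precisely the listed graphs plus \eqref{eq:special-solution}. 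The main obstacle I expect is exactly this last finiteness/enumeration step: making the ``bounded degree $\Rightarrow$ finitely many candidates'' argument airtight without an unpleasantly long case list, and in particular verifying that the only quasi-regular connected graph with a double edge satisfying the spectral constraint is the triangle \eqref{eq:special-solution} (larger configurations with double edges quickly violate either the $|z|\le 2$ bound on some eigenvalue of $AA^{\mathrm T}$, since doubling an edge contributes a $4$ to a diagonal entry and thus pushes an eigenvalue above $4$ unless the matrix is $1\times 1$, or strong connectedness). Once that local analysis is done, the theorem follows by assembling the four levels.
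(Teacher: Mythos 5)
Your proposal is aimed in the right direction: like the paper, you reduce everything to a spectral constraint on $A(\oGg^{\fu})$ via \fullref{proposition:poly-killed}, observe for $e=3$ that the $\fu$-coordinates of $\vanset 3$ are roots of $X(X-2)(X^2+2X+4)(X^6-X^3+1)$, deduce that the Perron eigenvalue is $2$, verify the special solution directly (your computation of $A(\oGg^{\fu})$ and its eigenvalues $2,2\zetaroot,2\zetaroot^{-1}$ is correct), and then try to finish by enumeration. The trouble is that the enumeration is precisely the substantive part of the theorem, and you explicitly flag it as an unresolved obstacle rather than carrying it out.

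Where the paper closes this gap and you do not: after reducing (via a Gröbner basis manipulation in the variables $\varx=\fu\fud$, $\vary=\fud^3$) to the condition that $A^{\mathrm{T}}A$ is annihilated by $\varx(\varx-1)(\varx-4)$, it observes that $\begin{psmallmatrix}0 & A^{\mathrm{T}}\\ A & 0\end{psmallmatrix}$ is the adjacency matrix of the bicolored subgraph $\Gg_{\gc,\oc}$ and hence has all eigenvalues in $\{0,\pm1,\pm2\}$. That is exactly the hypothesis of Smith's classification of graphs with spectral radius at most $2$ (finite and affine $\ADE$ Dynkin diagrams), which yields a short explicit list of possible bicolored subgraphs for each pair of colors. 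The proof then enumerates all ways to assemble a tricolored graph from three such bicolored pieces and checks the remaining Gröbner relations against each triple; the table of candidates is finite and manageable precisely because Smith's theorem has done the hard bounding. Your heuristic about double edges (``doubling an edge contributes a $4$ to a diagonal entry and pushes an eigenvalue above $4$ unless $1\times1$'') is a sound observation, and your Perron bound of $2$ for $A(\oGg^{\fu})$ is correct, but neither of these by itself pins down the finite list of admissible $\Gg$, and you do not supply the argument that connects the local degree bound to a complete enumeration. Invoking the $\ADE$ classification of spectral-radius-$\le 2$ graphs applied to each $\Gg_{\tduc,\tdudc}$ is the missing ingredient that turns your vague ``finite search'' into an actual proof, and it is where most of the work lives.

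A smaller point: your treatment of $e=0,1,2$ is similarly brief, but the paper also delegates those cases to ``similar and easier arguments,'' so that part is defensible as long as the $e=3$ template is nailed down; the template is exactly what you have not finished.
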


\begin{proof}
We do the hardest case where $e=3$ and omit the others, all of which 
can be proven similarly. In this case, the vanishing ideal $\vanideal{3}$ is 
generated by
\[
\left\{
\pxy{4,0}
,
\pxy{3,1}
,
\pxy{2,2}
,
\pxy{1,3}
,
\pxy{0,4}
\right\}
\subset\Z[\fu,\fud]
\]
with the polynomials as in \fullref{example:sl3-polys}. 
We proceed as follows: consider the polynomials
\begin{gather}\label{eq:new-cp}
\left\{
\fud^{4}\pxy{4,0}
,
\fud^{2}\pxy{3,1}
,
\fud^{3}\pxy{2,2}
,
\fud\pxy{1,3}
,
\fud^{2}\pxy{0,4}
\right\}
\subset\Z[\fu,\fud],
\end{gather} 
which are now polynomials in the variables $\varx=\fu\fud$ and $\vary=\fud^3$. 
Clearly, any solution of \fullref{problem:classification} gives an admissible graph $\Gg$ 
such that $(\varx=A(\Gg^{\fu})A(\Gg^{\fud}),\vary=A(\Gg^{\fud})^3)$ 
is annihilated by the polynomials in \eqref{eq:new-cp}. 
Hence, one can solve \fullref{problem:classification} by first classifying 
all solutions of \eqref{eq:new-cp} in terms of 
$\varx$ and $\vary$ and then checking which ones give solutions of 
\fullref{problem:classification} in terms of $A(\Gg^{\fu})$ and $A(\Gg^{\fud})$. 

To this end, we can use the theory of Gr{\"o}bner bases, for the 
lexicographical ordering on monomials induced by $\varx<\vary$, 
to rewrite \eqref{eq:new-cp}. This shows that $\Gg$ solves        
\fullref{problem:classification} if and only if $\varx$ and $\vary$ satisfy
\begin{gather}\label{eq:groebner-2}
\varx^3-5\varx^2+4\varx=0
\;\;\&\;\;
\varx\vary-\vary-2\varx^2+2\varx=0
\;\;\&\;\;
\vary^2-\vary-5\varx^2+6\varx=0.
\end{gather}
We observe further that the polynomial 
$\varx^3-5\varx^2+4\varx$ evaluated at $A(\Gg^{\fu})A(\Gg^{\fud})$ vanishes 
if and only if it vanishes evaluated at 
the symmetric matrix $A^{\mathrm{T}}A$, cf. \fullref{lemma:weakly-regular}
and \eqref{eq:weakly-regular}. 
Thus, it suffices to solve the first equation in \eqref{eq:groebner-2} 
for $\varx=A^{\mathrm{T}}A$, and then to check 
whether the candidate solutions one obtains satisfy \fullref{problem:classification}.
The upshot is that the first equation in \eqref{eq:groebner-2} is then 
an equation in one symmetric matrix $A^{\mathrm{T}}A$ 
with entries from $\N$.

In order to check which matrices $A^{\mathrm{T}}A$ are annihilated by 
$\varx^3-5\varx^2+4\varx$ we first note that the complex roots of the polynomial 
$\varx^3-5\varx^2+4\varx=\varx(\varx-1)(\varx-4)$
are $0,1,4$, and that $\begin{psmallmatrix}0 & A^{\mathrm{T}}\\A & 0\end{psmallmatrix}$ 
is the adjacency matrix of the connected, bicolored subgraph of $\Gg$ 
obtained by erasing $\Pset$ (and all edges with a vertex in $\Pset$). 
Moreover, the eigenvalues of 
$\begin{psmallmatrix}0 & A^{\mathrm{T}}\\A & 0\end{psmallmatrix}$ 
are the square roots of the eigenvalues of $A^{\mathrm{T}}A$ 
(this linear algebra fact follows from e.g. \cite[Theorem 3]{Si-block-matrices})
and hence, have to be $0,\pm 1,\pm 2$. Therefore, 
$\begin{psmallmatrix}0 & A^{\mathrm{T}}\\A & 0\end{psmallmatrix}$ has 
to be the adjacency matrix of a finite or affine 
type $\ADE$ Dynkin diagram, by the classification in 
\cite{Sm1}, \cite[Section 3.1.1]{BH1}, 
with its Perron--Frobenius eigenvalue being $1$ or $2$, provided it is not zero.  
Furthermore, again by the classification in 
\cite{Sm1}, \cite[Section 3.1.1]{BH1}, the only 
connected graph such that $\begin{psmallmatrix}0 & A^{\mathrm{T}}\\A & 0\end{psmallmatrix}$ 
has Perron--Frobenius eigenvalue $1$ is of 
finite type $\typea{2}$, all those with Perron--Frobenius 
eigenvalue $2$ correspond to affine types. But for finite type $\typea{2}$ we 
get $A=A^{\mathrm{T}}=\begin{psmallmatrix}1\end{psmallmatrix}$ 
which by \eqref{eq:main-transposes} 
and strong connectivity 
implies $B=B^{\mathrm{T}}=C=C^{\mathrm{T}}=\begin{psmallmatrix}1\end{psmallmatrix}$, and thus, 
\eqref{eq:groebner-2} is not satisfied. 
Hence, we only need to consider affine type $\ADE$ Dynkin diagrams 
whose only eigenvalues are $0,\pm 1,-2$ in addition to $2$.

Using the list of eigenvalues from \cite[Section 3.1.1]{BH1}, 
we obtain the following possibilities for the associated 
bicolored graph.
\begin{gather*}
\typeat{1}
=
\begin{tikzpicture}[anchorbase, xscale=.35, yscale=.5]
	\draw [thick, myyellow] (2,0) to [out=192, in=348] (0,0);
	\draw [thick, myyellow] (2,0) to [out=168, in=12] (0,0);
	\node at (0,0) {$\bulletg$};
	\node at (2,0) {$\bulleto$};
\end{tikzpicture}
,
\quad
\typeat{3}
=
\begin{tikzpicture}[anchorbase, xscale=.35, yscale=.5]
	\draw [thick, myyellow] (2,-.7) to (0,-.7) to (0,.7) to (2,.7) to (2,-.7);
	\node at (0,-.7) {$\bulletg$};
	\node at (2,-.7) {$\bulleto$};
	\node at (0,.7) {$\bulleto$};
	\node at (2,.7) {$\bulletg$};
\end{tikzpicture}
,
\quad
\typeat{5}
=
\begin{tikzpicture}[anchorbase, xscale=.35, yscale=.5]
	\draw [thick, myyellow] (3,0) to (2,-1.2) to (0,-1.2) to (-1,0) to (0,1.2) to (2,1.2) to (3,0);
	\node at (0,-1.2) {$\bulletg$};
	\node at (2,-1.2) {$\bulleto$};
	\node at (0,1.2) {$\bulletg$};
	\node at (2,1.2) {$\bulleto$};
	\node at (-1,0) {$\bulleto$};
	\node at (3,0) {$\bulletg$};
\end{tikzpicture}
,
\quad
\typedt{4}^{\Gset=1}
=
\begin{tikzpicture}[anchorbase, xscale=.35, yscale=.5]
	\draw [thick, myyellow] (-1,-.7) to (0,0);
	\draw [thick, myyellow] (1,-.7) to (0,0);
	\draw [thick, myyellow] (-1,.7) to (0,0);
	\draw [thick, myyellow] (1,.7) to (0,0);
	\node at (-1,-.7) {$\bulleto$};
	\node at (-1,.7) {$\bulleto$};
	\node at (0,0) {$\bulletg$};
	\node at (1,-.7) {$\bulleto$};
	\node at (1,.7) {$\bulleto$};
\end{tikzpicture}
\\
\typedt{4}^{\Gset=4}
=
\begin{tikzpicture}[anchorbase, xscale=.35, yscale=.5]
	\draw [thick, myyellow] (-1,-.7) to (0,0);
	\draw [thick, myyellow] (1,-.7) to (0,0);
	\draw [thick, myyellow] (-1,.7) to (0,0);
	\draw [thick, myyellow] (1,.7) to (0,0);
	\node at (-1,-.7) {$\bulletg$};
	\node at (-1,.7) {$\bulletg$};
	\node at (0,0) {$\bulleto$};
	\node at (1,-.7) {$\bulletg$};
	\node at (1,.7) {$\bulletg$};
\end{tikzpicture}
,
\quad
\typedt{5}
=
\begin{tikzpicture}[anchorbase, xscale=.35, yscale=.5]
	\draw [thick, myyellow] (-1,-.7) to (0,0);
	\draw [thick, myyellow] (3,-.7) to (2,0);
	\draw [thick, myyellow] (-1,.7) to (0,0);
	\draw [thick, myyellow] (3,.7) to (2,0);
	\draw [thick, myyellow] (0,0) to (2,0);
	\node at (-1,-.7) {$\bulletg$};
	\node at (-1,.7) {$\bulletg$};
	\node at (0,0) {$\bulleto$};
	\node at (2,0) {$\bulletg$};
	\node at (3,-.7) {$\bulleto$};
	\node at (3,.7) {$\bulleto$};
\end{tikzpicture}
,
\quad
\typeet{6}^{\Gset=3}
=
\begin{tikzpicture}[anchorbase, xscale=.35, yscale=.5]
	\draw [thick, myyellow] (-2,0) to (2,0);
	\draw [thick, myyellow] (0,0) to (0,1.4);
	\node at (-2,0) {$\bulleto$};
	\node at (-1,0) {$\bulletg$};
	\node at (0,0) {$\bulleto$};
	\node at (1,0) {$\bulletg$};
	\node at (2,0) {$\bulleto$};
	\node at (0,.7) {$\bulletg$};
	\node at (0,1.4) {$\bulleto$};
\end{tikzpicture}
,
\quad
\typeet{6}^{\Gset=4}
=
\begin{tikzpicture}[anchorbase, xscale=.35, yscale=.5]
	\draw [thick, myyellow] (-2,0) to (2,0);
	\draw [thick, myyellow] (0,0) to (0,1.4);
	\node at (-2,0) {$\bulletg$};
	\node at (-1,0) {$\bulleto$};
	\node at (0,0) {$\bulletg$};
	\node at (1,0) {$\bulleto$};
	\node at (2,0) {$\bulletg$};
	\node at (0,.7) {$\bulleto$};
	\node at (0,1.4) {$\bulletg$};
\end{tikzpicture}
\end{gather*}
($\typeat{2}$, which has eigenvalues $-1,-1,2$, 
is ruled out since it does not allow a bicoloring.) The same holds for the other colors, of course.

One can now write down all candidates solutions for the bicolored subgraphs:
\begin{center}
\renewcommand{\arraystretch}{1.25}
\begin{tabular}{c||c|c|c|c|c|c|c|c|c|c}
$\Gg_{\gc,\oc}$ & $\typeat{1}$ & $\typeat{1}$ & $\typeat{3}$ & $\typeat{5}$ & $\typeat{5}$ & $\typeat{5}$ & $\typeat{5}$ & $\typeat{5}$ & $\typedt{4}^{\Gset=1}$ & $\typedt{4}^{\Gset=4}$ \\ 
\hline 
$\Gg_{\oc,\pc}$ & $\typeat{1}$ & $\typedt{4}^{\Oset=1}$ & $\typeat{3}$ & $\typeat{5}$ & $\typeat{5}$ & $\typedt{5}$ & $\typedt{5}$ & $\typeet{6}^{\Oset=3}$ & $\typedt{4}^{\Oset=4}$ & $\typeat{1}$ \\ 
\hline 
$\Gg_{\pc,\gc}$ & $\typeat{1}$ & $\typedt{4}^{\Pset=4}$ & $\typeat{3}$ & $\typeat{5}$ & $\typedt{5}$ & $\typeat{5}$ & $\typedt{5}$ & $\typeet{6}^{\Pset=4}$ & $\typeat{1}$ & $\typedt{4}^{\Pset=1}$ \\
\hline\hline
\eqref{eq:groebner-2}? & \eqref{eq:special-solution} & $\graphD{3}^{\pc}$ & $\graphC{3}$ & no & no & no & no & $\graphA{3}^{\pc}$ & $\graphD{3}^{\oc}$ & $\graphD{3}^{\gc}$
\end{tabular} 
\end{center}
\begin{center}
\renewcommand{\arraystretch}{1.25}
\begin{tabular}{c||c|c|c|c|c|c|c|c|c}
$\Gg_{\gc,\oc}$ & $\typedt{5}$ & $\typedt{5}$ & $\typedt{5}$ & $\typedt{5}$ & $\typedt{5}$ & $\typeet{6}^{\Gset=3}$ & $\typeet{6}^{\Gset=3}$ & $\typeet{6}^{\Gset=4}$ & $\typeet{6}^{\Gset=4}$ \\ 
\hline 
$\Gg_{\oc,\pc}$ & $\typeat{5}$ & $\typeat{5}$ & $\typedt{5}$ & $\typedt{5}$ & $\typeet{6}^{\Oset=3}$ & $\typeet{6}^{\Oset=4}$ & $\typeet{6}^{\Oset=4}$ & $\typeat{5}$ & $\typedt{5}$ \\ 
\hline 
$\Gg_{\pc,\gc}$ & $\typeat{5}$ & $\typedt{5}$ & $\typeat{5}$ & $\typedt{5}$ & $\typeet{6}^{\Pset=4}$ & $\typeat{5}$ & $\typedt{5}$ & $\typeet{6}^{\Pset=3}$ & $\typeet{6}^{\Pset=3}$ \\
\hline\hline
\eqref{eq:groebner-2}? & no & no & no & no & no & $\graphA{3}^{\oc}$ & no & $\graphA{3}^{\gc}$ & no
\end{tabular} 
\end{center}
Here we have indicated whether all equations in
\eqref{eq:groebner-2} are satisfied or not.
The remaining possibilities 
give solutions to \fullref{problem:classification}.
\end{proof}

The solution \eqref{eq:special-solution} is not on the list of 
generalized $\ADE$ Dynkin diagrams, and we do not know whether this is an exception 
for $e=3$ or whether there exist more solutions which are not 
generalized $\ADE$ Dynkin diagrams for $e>3$.

\begin{example}\label{example:classification}
For $e=0,1,2,3$ the list of transitive $\aformvN$-representations 
given in \eqref{eq:the-transitives} is almost complete. Adding a 
representation $\M_{\graphC{3}}$ to this list, for any color by 
\fullref{lemma:find-all-transitives-2}, and a representation 
for the special solution \eqref{eq:special-solution} completes the list, where 
one can check by hand that these are $\aformvN$-representations.
\end{example}

\subsection{Categorified story}\label{subsec:cat-story}

Recall that $\GGc{\subcatquo[e]}\cong\subquo[e]$, see \fullref{proposition:cat-the-quoalgebra} 
(excluding $e=0$), and 
assume that we have a transitive $2$-representation $\cM$ of $\subcatquo[e]$. 
Then $\GGc{\cM}$ is a transitive $\aformvN$-representation of $\subquo[e]$. 
So, by the discussion in \fullref{subsec:decat-story}, the 
classification of simple transitive $2$-representations of $\subcatquo[e]$ 
boils down to \fullref{problem:classification} together with the construction of the 
corresponding $2$-representations (i.e. their categorification). We are going to explain this construction 
for types $\typeA$ and $\typeD$.

\subsubsection{Satake and \texorpdfstring{$2$}{2}-representations}\label{subsec:cat-story-0}

Let us first sketch how \fullref{theorem:q-satake} gives rise to a correspondence between 
the simple transitive $2$-representations of $\slqmodgop$ 
and those of $\subcatquo[e]$. We will discuss the details in the sections below.

Recall that there is a bijection 
between the equivalence classes of 
simple transitive $2$-re\-presentations of $\slqmodgop$ 
and the Morita equivalence classes of 
simple algebra $1$-morphisms in $\slqmodgop$, cf. \cite[Theorem 9]{MMMT1}. 

For $\subcatquo[e]$ the 
situation is more complicated, because it is 
additive, but not abelian. However, 
it is still true that, if $\morstuff{A}$ is an indecomposable 
algebra $1$-morphism in $\subcatquo[e]$, 
then $\modcat{\subcatquo[e]}(\morstuff{A})$ 
is a transitive $2$-representation of $\subcatquo[e]$. By 
taking its simple quotient, as 
described in \fullref{remark:graded-finitary-3}, we get a simple transitive 
$2$-representation associated to $\morstuff{A}$.

As we will see, any algebra $1$-morphism in $\morstuff{A}$ in $\slqmod$ gives rise to 
an algebra $1$-morphism $\morstuff{A}^{\tduc}$ in $\slqmodgop[e](\tduc,\tduc)$. (Without loss of generality, 
we will concentrate on the case $\tduc=\gc$ below.)

The Satake $2$-functor from \fullref{lemma:Satake} transports $\morstuff{A}^{\gc}$
to an algebra $1$-morphism in $\Subcatquo[e](\gc,\gc)$ (where we keep the same notation). Biinduction, 
which means gluing white outer regions to the diagrams which 
define the multiplication and unit $2$-morphisms (see also \fullref{example:clasps}), then gives an algebra $1$-morphism 
$\morstuff{B}^{\gc}=\morstuff{B}(\morstuff{A}^{\gc})\in\subcatquo[e]=\Subcatquo[e](\wc,\wc)$. 
As we will show, the algebra $1$-morphism also has to be translated 
(which would correspond to shifting the grading if the algebra $1$-morphism were given by a graded bimodule), 
so that the final degree of the unit and multiplication $2$-morphisms becomes zero. 

The fact that $\morstuff{B}^{\gc}$ is associative 
and unital, follows almost immediately from the associativity and unitality 
of $\morstuff{A}^{\gc}$. (For a detailed proof we refer to \cite[Section 7.3]{MMMT1}.)
By construction, $\morstuff{B}^{\gc}$ is indecomposable if $\morstuff{A}$ is simple. 

Thus, given a simple transitive $2$-representation $\cM$ 
of $\slqmodgop$, let $\morstuff{A}$ 
be the corresponding simple algebra $1$-morphism in 
$\slqmodgop$ and take $\cM_{\gc}$ to be the simple quotient 
(as recalled in \fullref{remark:graded-finitary-3}) of 
$\modcat{\subcatquo[e]}(\morstuff{B}^{\gc})$.

\begin{remark}\label{remark:nothing-new}
Note that all simple algebra $1$-morphisms in 
$\slqmodgop$ arise via coloring from simple algebra 
$1$-morphisms in $\slqmod$. So our first task below is to 
recall the latter, which were already known, see e.g. \cite{Sch}. 
However, we present a self-contained construction in this paper. 
As a service to the reader, we also recall the proof of the 
known classification of their simple module $1$-morphisms 
in $\slqmod$.   
\end{remark}

\subsubsection{Type \texorpdfstring{$\typeA$}{A} \texorpdfstring{$2$}{2}-representations}\label{subsec:cat-story-b}

The object $\obstuff{A}^{\graphA{e}}=\Ll_{0,0}\cong\C$ is clearly 
a simple algebra object in $\slqmod$, because it is the identity 
object. Thus, coloring gives us a simple algebra 
$1$-morphism $\morstuff{A}^{\graphA{e}^{\gc}}$
in $\slqmodgop$. After applying the Satake $2$-functor, 
we get $\elfunctor[e](\morstuff{A}^{\graphA{e}^{\gc}})=\gc$, 
which is the identity $1$-morphism in $\Subcatquo[e](\gc,\gc)$. 

Therefore, we have $\morstuff{B}^{\graphA{e}^{\gc}}=\wc\bc\gc\bc\wc \{-3\}\in\subcatquo[e]$, which is an indecomposable 
$1$-morphism in $\subcatquo[e]$. Recall that $\wc\bc\gc\bc\wc\cong 
\wc\yc\gc\yc\wc$, by \fullref{lemma:clasps-well-defined}. We have translated 
$\wc\bc\gc\bc\wc$ by $-3$, so that the final degree of the unit and multiplication $2$-morphisms 
below becomes zero. 
Note further that $\morstuff{B}^{\graphA{e}}$ is a Frobenius $1$-morphism, and its
(co)unit and (co)multiplication $2$-morphisms (with their respective unshifted degrees) are given by 
\[
\xy
(0,0)*{
\begin{tikzpicture}[anchorbase, scale=.4, tinynodes]
	\draw[very thin, densely dotted, fill=white] (-.5,0) to [out=270, in=180] (1,-1.5) to [out=0, in=270] (2.5,0) to (3,0) to (3,-3.5) to (-1,-3.5) 
	to (-1,0) to (-.5,0);
	\fill[mygreen, opacity=0.8] (.25,0) to [out=270, in=180] (1,-.75) to [out=0, in=270] (1.75,0) to (.25,0);                   
	\fill[myblue, opacity=0.3] (-.5,0) to [out=270, in=180] (1,-1.5) to [out=0, in=270] (2.5,0) to (1.75,0) to [out=270, in=0] (1,-.75) to [out=180, in=270] (.25,0) to (-.5,0);
	\draw[ystrand, rdirected=.5] (.25,0) to [out=270, in=180] (1,-.75) to [out=0, in=270] (1.75,0);
	\draw[bstrand, rdirected=.5] (-.5,0) to [out=270, in=180] (1,-1.5) to [out=0, in=270] (2.5,0);
\end{tikzpicture}
};
(0,10)*{\text{{\tiny unit}}};
(0,-10)*{\text{{\tiny degree $3$}}};
\endxy
\quad,\quad
\xy
(0,0)*{
\begin{tikzpicture}[anchorbase, scale=.4, tinynodes]
	\draw[very thin, densely dotted, fill=white] (-.5,0) to [out=90, in=180] (1,1.5) to [out=0, in=90] (2.5,0) to (3,0) to (3,3.5) to (-1,3.5) 
	to (-1,0) to (-.5,0);
	\fill[mygreen, opacity=0.8] (.25,0) to [out=90, in=180] (1,.75) to [out=0, in=90] (1.75,0) to (.25,0);                   
	\fill[myblue, opacity=0.3] (-.5,0) to [out=90, in=180] (1,1.5) to [out=0, in=90] (2.5,0) to (1.75,0) to [out=90, in=0] (1,.75) to [out=180, in=90] (.25,0) to (-.5,0);
	\draw[ystrand, directed=.5] (.25,0) to [out=90, in=180] (1,.75) to [out=0, in=90] (1.75,0);
	\draw[bstrand, directed=.5] (-.5,0) to [out=90, in=180] (1,1.5) to [out=0, in=90] (2.5,0);
\end{tikzpicture}
};
(0,10)*{\text{{\tiny counit}}};
(0,-10)*{\text{{\tiny degree $-3$}}};
\endxy
\quad,\quad
\xy
(0,0)*{
\begin{tikzpicture}[anchorbase, scale=.4, tinynodes]
	\draw[very thin, densely dotted, fill=white] (4,0) to (4,3.5) to (4.5,3.5) to (4.5,0) to (4,0);
	\draw[very thin, densely dotted, fill=white] (-2,0) to (-2,3.5) to (-2.5,3.5) to (-2.5,0) to (-2,0);
	\draw[very thin, densely dotted, fill=white] (.25,0) to [out=90, in=180] (1,.75) to [out=0, in=90] (1.75,0) to (.25,0);
	\fill[myblue, opacity=0.3] (-.5,0) to [out=90, in=180] (1,1.5) to [out=0, in=90] (2.5,0) to [out=180, in=0] (1.75,0) to [out=90, in=0] (1,.75) to [out=180, in=90] (.25,0) to [out=180, in=0] (-.5,0);
	\fill[mygreen, opacity=0.8] (2.5,0) to  (3.25,0) to [out=90, in=-45] (2.75,1.25) to [out=135, in=270] (1.75,3.5) 
to (.25,3.5) to [out=270, in=45] (-.75,1.25) to [out=225, in=90] (-1.25,0) to  (-.5,0) to [out=90, in=180] (1,1.5) to [out=0, in=90] (2.5,0);
	\fill[myblue, opacity=0.3]  (3.25,0) to [out=90, in=-45] (2.75,1.25) to [out=135, in=270] (1.75,3.5) to (4,3.5) to (4,0) to (3.25,0);
	\fill[myblue, opacity=0.3]  (-1.25,0) to [out=90, in=225] (-.75,1.25) to [out=45, in=270] (.25,3.5) to (-2,3.5) to (-2,0) to (-1.25,0);
    \draw[bstrand, rdirected=.5] (4,0) to (4,3.5);
    \draw[bstrand, directed=.5] (-2,0) to (-2,3.5);
	\draw[bstrand, rdirected=.5] (.25,0) to [out=90, in=180] (1,.75) to [out=0, in=90] (1.75,0);
	\draw[ystrand, rdirected=.5] (-.5,0) to [out=90, in=180] (1,1.5) to [out=0, in=90] (2.5,0);
    \draw[ystrand, directed=.5] (-1.25,0) to [out=90, in=225] (-.75,1.25) to [out=45, in=270] (.25,3.5);
    \draw[ystrand, rdirected=.5] (3.25,0) to [out=90, in=-45] (2.75,1.25) to [out=135, in=270] (1.75,3.5);
\end{tikzpicture}
};
(0,10)*{\text{{\tiny multiplication}}};
(0,-10)*{\text{{\tiny degree $-3$}}};
\endxy
\quad,\quad
\xy
(0,0)*{
\begin{tikzpicture}[anchorbase, scale=.4, tinynodes]
	\draw[very thin, densely dotted, fill=white] (4,0) to (4,-3.5) to (4.5,-3.5) to (4.5,0) to (4,0);
	\draw[very thin, densely dotted, fill=white] (-2,0) to (-2,-3.5) to (-2.5,-3.5) to (-2.5,0) to (-2,0);
	\draw[very thin, densely dotted, fill=white] (.25,0) to [out=270, in=180] (1,-.75) to [out=0, in=270] (1.75,0) to (.25,0);
	\fill[myblue, opacity=0.3] (-.5,0) to [out=270, in=180] (1,-1.5) to [out=0, in=270] (2.5,0) to [out=180, in=0] (1.75,0) to [out=270, in=0] (1,-.75) to [out=180, in=270] (.25,0) to [out=180, in=0] (-.5,0);
	\fill[mygreen, opacity=0.8] (2.5,0) to (3.25,0) to [out=270, in=45] (2.75,-1.25) to [out=-135, in=90] (1.75,-3.5) 
to (.25,-3.5) to [out=90, in=-45] (-.75,-1.25) to [out=-225, in=270] (-1.25,0) to  (-.5,0) to [out=270, in=180] (1,-1.5) to [out=0, in=270] (2.5,0);
	\fill[myblue, opacity=0.3]  (3.25,0) to [out=270, in=45] (2.75,-1.25) to [out=-135, in=90] (1.75,-3.5) to (4,-3.5) to (4,0) to (3.25,0);
	\fill[myblue, opacity=0.3]  (-1.25,0) to [out=270, in=-225] (-.75,-1.25) to [out=-45, in=90] (.25,-3.5) to (-2,-3.5) to (-2,0) to (-1.25,0);
    \draw[bstrand, directed=.5] (4,0) to (4,-3.5);
    \draw[bstrand, rdirected=.5] (-2,0) to (-2,-3.5);
	\draw[bstrand, directed=.5] (.25,0) to [out=270, in=180] (1,-.75) to [out=0, in=270] (1.75,0);
	\draw[ystrand, directed=.5] (-.5,0) to [out=270, in=180] (1,-1.5) to [out=0, in=270] (2.5,0);
    \draw[ystrand, rdirected=.5] (-1.25,0) to [out=270, in=-225] (-.75,-1.25) to [out=-45, in=90] (.25,-3.5);
    \draw[ystrand, directed=.5] (3.25,0) to [out=270, in=45] (2.75,-1.25) to [out=-135, in=90] (1.75,-3.5);
\end{tikzpicture}
};
(0,10)*{\text{{\tiny comultiplication}}};
(0,-10)*{\text{{\tiny degree $3$}}};
\endxy
\]

By construction,
the corresponding simple transitive 
$2$-representation $\cM_{\graphA{e}^{\gc}}$
of $\subcatquo[e]$ is equivalent to a cell $2$-representation 
and decategorifies to $\M_{\graphA{e}^{\gc}}$ from \fullref{definition:n-modules}.
Similarly for the secondary colors $\oc$ and $\pc$.

\subsubsection{Type \texorpdfstring{$\typeD$}{D} \texorpdfstring{$2$}{2}-representations}\label{subsec:cat-story-c}

Let $e\equiv 0 \bmod 3$. 
In this case, the decomposition of the algebra $1$-morphism into simple $1$-morphisms in $\slqmod$ is given by 
\[
\obstuff{A}^{\graphD{e}}\cong \Ll_{0,0}\oplus\Ll_{e,0}\oplus\Ll_{0,e}.
\]
In order to define the multiplication and unit $2$-morphisms of 
$\obstuff{A}^{\graphD{e}}$ in $\slqmod$,
recall from the representation theory of $\slt$ that
\[
(\Ll_{e,0})^{\ast}
\cong
(\mathrm{Sym}^{e}(\C^{3}))^{\ast}
\cong
\mathrm{Sym}^{e}((\C^{3})^{\ast})
\cong
\Ll_{0,e},
\]
where ${}^{\ast}$ means the dual module and $\mathrm{Sym}^{e}$ the $e^{\mathrm{th}}$ symmetric power. 
Note that, by using e.g. \cite[Proposition 2.11]{BZ1}, we have similar isomorphisms in the quantum 
case as well.
Thus, in order to delineate the monoidal subcategory
generated by the quantum
symmetric powers, we can use the symmetric web categories 
described in \cite{RT1}, \cite{TVW1}, after adding the duals as in \cite{QS1}. 
These symmetric web categories are built from certain labeled,
trivalent graphs, and we need the 
monoidal subcategory of it generated by the objects $e,e^{\ast}$
and the morphisms
\[
\begin{tikzpicture}[anchorbase, scale=.4, tinynodes]
	\draw[dstrand, Ymarked=.55] (0,0) node [above] {$e$} to [out=270, in=180] (1,-1) to [out=0, in=270] (2,0) node [above] {$e^{\ast}$};
\end{tikzpicture}
\;\;,\;\;
\begin{tikzpicture}[anchorbase, scale=.4, tinynodes]
	\draw[dstrand, Xmarked=.55] (0,0) node [above] {$e^{\ast}$} to [out=270, in=180] (1,-1) to [out=0, in=270] (2,0) node [above] {$e$};
\end{tikzpicture}
\;\;,\;\;
\begin{tikzpicture}[anchorbase, scale=.4, tinynodes]
	\draw[dstrand, Xmarked=.55] (2,0) node [below] {$e$} to [out=90, in=0] (1,1) to [out=180, in=90] (0,0) node [below] {$e^{\ast}$};
\end{tikzpicture}
\;\;,\;\;
\begin{tikzpicture}[anchorbase, scale=.4, tinynodes]
	\draw[dstrand, Xmarked=.55] (2,0) node [below] {$e^{\ast}$} to [out=90, in=0] (1,1) to [out=180, in=90] (0,0) node [below] {$e$};
\end{tikzpicture}
\;\;,\;\;
\begin{tikzpicture}[anchorbase, scale=.4, tinynodes]
	\draw[dstrand, Ymarked=.55] (-1,1) node [above] {$e$} to (0,0);
	\draw[dstrand, Ymarked=.55] (1,1) node [above] {$e$} to (0,0);
	\draw[dstrand, Xmarked=.55] (0,0) to (0,-1) node [below] {$e^{\ast}$};
\end{tikzpicture}
\;\;,\;\;
\begin{tikzpicture}[anchorbase, scale=.4, tinynodes]
	\draw[dstrand, Xmarked=.55] (-1,-1) node [below] {$e$} to (0,0);
	\draw[dstrand, Xmarked=.55] (1,-1) node [below] {$e$} to (0,0);
	\draw[dstrand, Ymarked=.55] (0,0) to (0,1) node [above] {$e^{\ast}$};
\end{tikzpicture}
\;\;,\;\;
\begin{tikzpicture}[anchorbase, scale=.4, tinynodes]
	\draw[dstrand, Xmarked=.55] (-1,1) node [above] {$e^{\ast}$} to (0,0);
	\draw[dstrand, Xmarked=.55] (1,1) node [above] {$e^{\ast}$} to (0,0);
	\draw[dstrand, Ymarked=.55] (0,0) to (0,-1) node [below] {$e$};
\end{tikzpicture}
\;\;,\;\;
\begin{tikzpicture}[anchorbase, scale=.4, tinynodes]
	\draw[dstrand, Ymarked=.55] (-1,-1) node [below] {$e^{\ast}$} to (0,0);
	\draw[dstrand, Ymarked=.55] (1,-1) node [below] {$e^{\ast}$} to (0,0);
	\draw[dstrand, Xmarked=.55] (0,0) to (0,1) node [above] {$e$};
\end{tikzpicture}
\]
(our reading conventions are still from bottom to top), 
subject to some relations which are all stated e.g. in \cite[Section 2.1]{RW1}. 
(We stress that some of the 
cited papers work with $\mathrm{gl}$ instead of $\mathrm{sl}$, and 
we also semisimplify according to the cut-off as in \eqref{eq:weight-picture}. 
In diagrammatic terms this amounts to a slightly different web calculus where 
e.g. an edge of label $2e$ in \cite[Section 2.1]{RW1} is identified 
in our notation with an 
edge of label $e^{\ast}$ with the orientation reversed.)
These are basically thick, but uncolored versions 
of the webs which we met in \fullref{subsec:quotient-category}, and the 
object $e$ corresponds to the $e^{\mathrm{th}}$ quantum symmetric power 
of $\Ll_{1,0}$, which is $\Ll_{e,0}$, and $e^{\ast}$ to its dual, 
which is $\Ll_{0,e}$.

Hence, we can use the diagrammatic calculus of symmetric webs to 
describe the intertwiners in $\sltcat[\qpar]$ that we need. So far, we have 
assumed that $\qpar$ is a generic parameter. By putting it equal to a primitive, complex 
$2(e+3)^{\mathrm{th}}$ root of unity $\qqpar$, we get a projection onto $\sltcat[e]$, and we can use the 
specialized relations of the symmetric web calculus. 

To be absolutely clear, we do not claim that the symmetric web calculi are equivalent 
to the monoidal subcategories in question. All we 
need is that the functor from the web calculus to $\sltcat[e]$ is full, which is true.

We use the following shorthand:
\[
\Ll_{0,0}\leftrightsquigarrow\emptyset,
\quad
\Ll_{e,0}\leftrightsquigarrow e,
\quad
\Ll_{0,e}\leftrightsquigarrow e^{\ast},
\]
and $\Ll_{e,0}\otimes\Ll_{0,e}\leftrightsquigarrow ee^{\ast}$ etc.,
where we omit the $\otimes$ symbol.

\begin{proposition}\label{proposition:typeD-object}
$\obstuff{A}^{\graphD{e}}$ has the structure of a Frobenius object in $\slqmod$ with 
unit $\iota\colon\emptyset\to\obstuff{A}^{\graphD{e}}$, $\iota(1)=1$, 
counit $\epsilon\colon\obstuff{A}^{\graphD{e}}\to\emptyset$, $\epsilon(1)=1$ and multiplication 
$m\colon\obstuff{A}^{\graphD{e}}\otimes\obstuff{A}^{\graphD{e}}\to\obstuff{A}^{\graphD{e}}$ given by
\begin{gather}\label{eq:typeD-mult-table}
\begin{tikzpicture}[baseline=(current bounding box.center),yscale=0.6]
  \matrix (m) [matrix of math nodes, row sep={1.0cm,between origins}, column
  sep={1.0cm,between origins}, text height=1.5ex, text depth=0.25ex, ampersand replacement=\&] {
 \phantom{a} \&
\emptyset\emptyset \&  \emptyset e \& \emptyset e^{\ast} \& 
e\emptyset \& ee \& ee^{\ast} \& 
e^{\ast}\emptyset \& e^{\ast}e \& e^{\ast}e^{\ast}
\\
\emptyset
\&
\emptyset \&  {\color{mygray}0} \& {\color{mygray}0} \& 
{\color{mygray}0} \& {\color{mygray}0} \& \begin{tikzpicture}[anchorbase, scale=.4, tinynodes]
	\draw[dstrand, Ymarked=.55] (1,0) to [out=90, in=0] (.5,.5) to [out=180, in=90] (0,0);
	\node at (0,-.4) {$e$};
	\node at (1.1,-.4) {$e^{\hspace*{-.04cm}\ast}$};
	\node at (0,1) {$\phantom{e}$};
\end{tikzpicture} \& 
{\color{mygray}0} \& \begin{tikzpicture}[anchorbase, scale=.4, tinynodes]
	\draw[dstrand, Ymarked=.55] (0,0) to [out=90, in=180] (.5,.5) to [out=0, in=90] (1,0);
	\node at (.1,-.4) {$e^{\hspace*{-.04cm}\ast}$};
	\node at (1,-.4) {$e$};
	\node at (0,1) {$\phantom{e}$};
\end{tikzpicture} \& {\color{mygray}0}
\\
e
\&
{\color{mygray}0} \& \begin{tikzpicture}[anchorbase, scale=.4, tinynodes]
	\draw[dstrand, Xmarked=.7] (.25,0) to [out=90, in=270] (-.25,1);
	\node at (.25,-.4) {$e$};
	\node at (-.25,1.15) {$e$};
\end{tikzpicture} \& {\color{mygray}0} \& 
\begin{tikzpicture}[anchorbase, scale=.4, tinynodes]
	\draw[dstrand, Xmarked=.7] (-.25,0) to [out=90, in=270] (.25,1);
	\node at (-.25,-.4) {$e$};
	\node at (.25,1.15) {$e$};
\end{tikzpicture} \& {\color{mygray}0} \& {\color{mygray}0} \& 
{\color{mygray}0} \& {\color{mygray}0} \& \begin{tikzpicture}[anchorbase, scale=.4, tinynodes]
	\draw[dstrand, Ymarked=.55] (-.5,-.5) to (0,0);
	\draw[dstrand, Ymarked=.55] (.5,-.5) to (0,0);
	\draw[dstrand, Xmarked=.7] (0,0) to (0,.5);
	\node at (-.35,-.9) {$e^{\hspace*{-.04cm}\ast}$};
	\node at (.6,-.9) {$e^{\hspace*{-.04cm}\ast}$};
	\node at (0,.65) {$e$};
\end{tikzpicture}
\\
e^{\ast}
\&
{\color{mygray}0} \& {\color{mygray}0} \& \begin{tikzpicture}[anchorbase, scale=.4, tinynodes]
	\draw[dstrand, Ymarked=.55] (.25,-1) to [out=90, in=270] (-.25,0);
	\node at (.35,-1.4) {$e^{\hspace*{-.04cm}\ast}$};
	\node at (-.15,.1) {$e^{\hspace*{-.04cm}\ast}$};
\end{tikzpicture} \& 
{\color{mygray}0} \& \begin{tikzpicture}[anchorbase, scale=.4, tinynodes]
	\draw[dstrand, Xmarked=.7] (-.5,-.5) to (0,0);
	\draw[dstrand, Xmarked=.7] (.5,-.5) to (0,0);
	\draw[dstrand, Ymarked=.55] (0,0) to (0,.5);
	\node at (-.5,-.9) {$e$};
	\node at (.5,-.9) {$e$};
	\node at (.1,.6) {$e^{\hspace*{-.04cm}\ast}$};
\end{tikzpicture} \& {\color{mygray}0} \& 
\begin{tikzpicture}[anchorbase, scale=.4, tinynodes]
	\draw[dstrand, Ymarked=.55] (-.25,-1) to [out=90, in=270] (.25,0);
	\node at (-.15,-1.4) {$e^{\hspace*{-.04cm}\ast}$};
	\node at (.35,.1) {$e^{\hspace*{-.04cm}\ast}$};
\end{tikzpicture} \& {\color{mygray}0} \& {\color{mygray}0}
\\
};
\draw[densely dashed] ($(m-1-1.south west)+ (-.2,-.3)$) to ($(m-1-3.south east)+ (7.25,-.3)$);
\draw[densely dashed] ($(m-1-1.south west)+ (-.2,-2.0)$) to ($(m-1-3.south east)+ (7.25,-2.0)$);
\draw[densely dashed] ($(m-1-1.south west)+ (-.2,-3.8)$) to ($(m-1-3.south east)+ (7.25,-3.8)$);
\draw[densely dashed] ($(m-1-1.east)+ (.25,.35)$) to ($(m-3-1.east)+ (.25,-2.4)$);
\draw[densely dashed] ($(m-1-1.east)+ (1.25,.35)$) to ($(m-3-1.east)+ (1.25,-2.4)$);
\draw[densely dashed] ($(m-1-1.east)+ (2.25,.35)$) to ($(m-3-1.east)+ (2.25,-2.4)$);
\draw[densely dashed] ($(m-1-1.east)+ (3.25,.35)$) to ($(m-3-1.east)+ (3.25,-2.4)$);
\draw[densely dashed] ($(m-1-1.east)+ (4.25,.35)$) to ($(m-3-1.east)+ (4.25,-2.4)$);
\draw[densely dashed] ($(m-1-1.east)+ (5.25,.35)$) to ($(m-3-1.east)+ (5.25,-2.4)$);
\draw[densely dashed] ($(m-1-1.east)+ (6.25,.35)$) to ($(m-3-1.east)+ (6.25,-2.4)$);
\draw[densely dashed] ($(m-1-1.east)+ (7.25,.35)$) to ($(m-3-1.east)+ (7.25,-2.4)$);
\draw[densely dashed] ($(m-1-1.east)+ (8.25,.35)$) to ($(m-3-1.east)+ (8.25,-2.4)$);
\end{tikzpicture}
\end{gather}
The comultiplication $\Delta\colon\obstuff{A}^{\graphD{e}}\to\obstuff{A}^{\graphD{e}}\otimes\obstuff{A}^{\graphD{e}}$ 
is given by transposing \eqref{eq:typeD-mult-table} and turning the symmetric webs upside down.
\end{proposition}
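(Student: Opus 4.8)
The plan is to verify directly that the data described — the unit $\iota$, counit $\epsilon$, multiplication $m$ given by the table \eqref{eq:typeD-mult-table}, and the comultiplication $\Delta$ obtained by flipping that table — satisfy the Frobenius object axioms in $\slqmod$. The first step is to recall, via the symmetric web calculus of \cite{RT1}, \cite{TVW1}, \cite{QS1}, that $\obstuff{A}^{\graphD{e}}=\Ll_{0,0}\oplus\Ll_{e,0}\oplus\Ll_{0,e}$ has a well-understood Hom-space structure: since $e\equiv 0\bmod 3$, one checks from the fusion rules of $\slt$ (or of $\slqmod[e]$, using that $e+e\leq 2e$ stays just inside the cut-off in the relevant summands, together with the duality $\Ll_{e,0}^{\ast}\cong\Ll_{0,e}$) that $\Ll_{e,0}\otimes\Ll_{e,0}$ contains $\Ll_{0,e}$ exactly once, $\Ll_{e,0}\otimes\Ll_{0,e}$ contains $\Ll_{0,0}$ exactly once, and $\Ll_{0,e}\otimes\Ll_{0,e}$ contains $\Ll_{e,0}$ exactly once. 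This makes each entry of \eqref{eq:typeD-mult-table} the unique (up to scalar) web that could appear, so that associativity and coassociativity become, in each of the finitely many relevant triple products, a statement about webs that is forced up to scalar; the scalars can then be fixed once and for all using the symmetric web relations from \cite[Section 2.1]{RW1}.

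Next I would organize the verification into the three Frobenius axioms. Unitality and counitality are immediate from the table: the $\emptyset$-row and $\emptyset$-column of $m$ are just identity maps (up to the normalization $\iota(1)=1$), and similarly for $\Delta$. For associativity of $m$ one runs through the triple tensor products $X\otimes Y\otimes Z$ with $X,Y,Z\in\{\emptyset,e,e^{\ast}\}$; by the Hom-space computation above, only those triples whose product contains a summand of $\obstuff{A}^{\graphD{e}}$ contribute, and in each such case both composites $m\circ(m\otimes\mathrm{id})$ and $m\circ(\mathrm{id}\otimes m)$ are webs landing in a one-dimensional space, so one only needs to check that the scalars agree — which follows from the isotopy relations for trivalent symmetric webs (merging two trivalent vertices, the ``square switch'' relation, and the removal of bigons with the appropriate quantum-integer coefficient). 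Coassociativity of $\Delta$ follows by the same argument applied to the upside-down webs, or, more cleanly, by observing that turning all diagrams upside down is the duality/rigidity functor on $\slqmod$, under which $\obstuff{A}^{\graphD{e}}$ is self-dual (each summand $\Ll_{0,0},\Ll_{e,0},\Ll_{0,e}$ is sent to one of the three summands), so coassociativity is literally the associativity statement read backwards. The Frobenius compatibility $(\mathrm{id}\otimes m)\circ(\Delta\otimes\mathrm{id})=\Delta\circ m=(m\otimes\mathrm{id})\circ(\mathrm{id}\otimes\Delta)$ is then checked the same way, entry by entry; again each side lands in a one-dimensional Hom-space and the scalars match because the defining webs are related by the straightening (zig-zag) relations.

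The main obstacle I anticipate is bookkeeping of scalars rather than any conceptual difficulty: the table \eqref{eq:typeD-mult-table} must be normalized so that all the Frobenius relations hold simultaneously with the chosen $\iota,\epsilon$, and the symmetric web relations carry quantum-integer coefficients (e.g. $\qnumber{e}$-type factors from bigon removals) that one must track to confirm the claimed entries are consistent. A secondary subtlety is the passage from the generic case to $\qpar=\qqpar$: one must note, as in \cite{AP} and \fullref{remark:before-after}, that the relevant Hom-spaces and the symmetric web calculus specialize correctly at the root of unity for the weights $e,e^{\ast}$ in question (which stay within the cut-off), so the generically-defined Frobenius structure descends to $\slqmod[e]$; since the functor from the symmetric web category to $\sltcat[e]$ is full, no new relations interfere. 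With these points in place, the proof amounts to: (i) the Hom-dimension count forcing uniqueness of each web, (ii) the self-duality argument reducing comultiplication statements to multiplication statements, and (iii) a finite, routine check of web isotopies fixing the scalars.
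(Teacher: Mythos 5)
Your overall strategy is the same as the paper's: verify the Frobenius axioms by direct web manipulations in the symmetric web calculus, exploiting symmetry to reduce the number of cases. The Hom-dimension observation (each multiplication entry lands in a one-dimensional Hom-space, e.g. $\Ll_{e,0}\otimes\Ll_{e,0}\cong\Ll_{0,e}$ in $\slqmod$, so webs are determined up to scalar) is a nice organizing principle that the paper uses only implicitly; and your self-duality argument for coassociativity is essentially the paper's observation that the web relations are invariant under reflections. So far so good.

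There is, however, one genuine gap. You repeatedly describe the scalar verification as ``routine bookkeeping'' and list the specialization to $\qpar=\qqpar$ as a ``secondary subtlety'' about Hom-spaces specializing correctly. In fact, the scalar match is the crux of the proof, and it fails at generic $\qpar$. The symmetric web calculus relations used in the associativity check --- closed-circle removal and bubble removal --- carry quantum binomial coefficients, and the associativity diagrams only commute because those coefficients become trivial at a $2(e{+}3)^{\mathrm{th}}$ root of unity: one has $\qqnumber{e{+}1}=\qqnumber{2}$, hence $\qbinq{e+2}{e}=\qqnumber{e+1}\qqnumber{2}^{-1}=1$, so that both the closed circle labeled $e$ and the digon formed by merging the two trivalent vertices collapse to the identity with scalar $1$. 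Without this observation the entries of \eqref{eq:typeD-mult-table} do not define an associative product with the stated $\iota,\epsilon$. You need to state this explicitly; it is not a routine check of web isotopies, nor merely a statement that ``the Hom-spaces specialize correctly.''

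A smaller point: your justification for the Hom-dimension count, namely ``$e+e\leq 2e$ stays just inside the cut-off,'' is not correct as written (the cut-off is $m+n\leq e$, not $\leq 2e$). The right justification is the explicit fusion rule $\Ll_{m,n}\otimes\Ll_{e,0}\cong\Ll_{e-m-n,m}$ in $\slqmod$ (which the paper cites from Sawin), giving $\Ll_{e,0}\otimes\Ll_{e,0}\cong\Ll_{0,e}$, $\Ll_{e,0}\otimes\Ll_{0,e}\cong\Ll_{0,0}$, etc. This is a matter of wording rather than substance, but the imprecise phrasing obscures that the fusion ring is the quotient fusion ring, not a naive truncation of the classical $\slt$ tensor decomposition.
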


We omit the edge labels (which are always $e$ or $e^{\ast}$) from now on, 
and also sometimes silently identify $e\emptyset=e$ etc.

\begin{proof}
First, we observe that the relations in the symmetric web calculus are invariant under 
horizontal and vertical reflections, which 
reduces the number of cases we need to verify. For example, 
checking the unitality of $\obstuff{A}^{\graphD{e}}$ boils down 
to checking the commutativity of
\[
\xymatrix@C+=1.0cm@L+=6pt{
e 
\ar[r]^{
\begin{tikzpicture}[anchorbase, scale=.4, tinynodes]
	\draw[dstrand, Xmarked=.7] (.25,0) to [out=90, in=270] (-.25,1);
\end{tikzpicture}}
\ar[dr]_{
\begin{tikzpicture}[anchorbase, scale=.4, tinynodes]
	\draw[dstrand, Xmarked=.7] (0,0) to (0,1);
\end{tikzpicture}
}
& 
e\emptyset
\ar[d]^{
\begin{tikzpicture}[anchorbase, scale=.4, tinynodes]
	\draw[dstrand, Xmarked=.7] (-.25,0) to [out=90, in=270] (.25,1);
\end{tikzpicture}}
\\
& 
e
}
\]
which follows directly from the symmetric web calculus.

Next, we show that $m$ and $\Delta$ are (co)associative. 
Up to symmetries and trivial compositions, we need 
to check that
\[
\xymatrix@C+=1.0cm@L+=6pt{
eee 
\ar[r]^{
\begin{tikzpicture}[anchorbase, scale=.4, tinynodes]
	\draw[dstrand, Xmarked=.7] (-.5,-.5) to (0,0);
	\draw[dstrand, Xmarked=.7] (.5,-.5) to (0,0);
	\draw[dstrand, Ymarked=.55] (0,0) to (0,.5);
	\draw[dstrand, Xmarked=.7] (1,-.5) to (1,.5);
\end{tikzpicture}}
\ar[d]_{
\begin{tikzpicture}[anchorbase, scale=.4, tinynodes]
	\draw[dstrand, Xmarked=.7] (-.5,-.5) to (0,0);
	\draw[dstrand, Xmarked=.7] (.5,-.5) to (0,0);
	\draw[dstrand, Ymarked=.55] (0,0) to (0,.5);
	\draw[dstrand, Xmarked=.7] (-1,-.5) to (-1,.5);
\end{tikzpicture}}
& 
e^{\ast}e
\ar[d]^{
\begin{tikzpicture}[anchorbase, scale=.4, tinynodes]
	\draw[dstrand, Ymarked=.55] (0,0) to [out=90, in=180] (.5,.5) to [out=0, in=90] (1,0);
\end{tikzpicture}}
\\
ee^{\ast} 
\ar[r]_{
\begin{tikzpicture}[anchorbase, scale=.4, tinynodes]
	\draw[dstrand, Ymarked=.55] (1,0) to [out=90, in=0] (.5,.5) to [out=180, in=90] (0,0);
\end{tikzpicture}}
& 
\emptyset
}
\quad\quad
\xymatrix@C+=1.0cm@L+=6pt{
eee^{\ast} 
\ar[r]^{
\begin{tikzpicture}[anchorbase, scale=.4, tinynodes]
	\draw[dstrand, Xmarked=.7] (-.5,-.5) to (0,0);
	\draw[dstrand, Xmarked=.7] (.5,-.5) to (0,0);
	\draw[dstrand, Ymarked=.55] (0,0) to (0,.5);
	\draw[dstrand, Ymarked=.55] (1,-.5) to (1,.5);
\end{tikzpicture}}
\ar[d]_{
\begin{tikzpicture}[anchorbase, scale=.4, tinynodes]
	\draw[dstrand, Ymarked=.55] (1,0) to [out=90, in=0] (.5,.5) to [out=180, in=90] (0,0);
	\draw[dstrand, Xmarked=.7] (-.5,0) to [out=90, in=270] (-.5,1);
\end{tikzpicture}}
& 
e^{\ast}e^{\ast}
\ar[d]^{
\begin{tikzpicture}[anchorbase, scale=.4, tinynodes]
	\draw[dstrand, Ymarked=.55] (-.5,-.5) to (0,0);
	\draw[dstrand, Ymarked=.55] (.5,-.5) to (0,0);
	\draw[dstrand, Xmarked=.7] (0,0) to (0,.5);
\end{tikzpicture}}
\\
e\emptyset
\ar[r]_{
\begin{tikzpicture}[anchorbase, scale=.4, tinynodes]
	\draw[dstrand, Xmarked=.7] (0,0) to [out=90, in=270] (0,1);
\end{tikzpicture}}
& 
e
}
\quad\quad
\xymatrix@C+=1.0cm@L+=6pt{
ee^{\ast}e 
\ar[r]^{
\begin{tikzpicture}[anchorbase, scale=.4, tinynodes]
	\draw[dstrand, Ymarked=.55] (1,0) to [out=90, in=0] (.5,.5) to [out=180, in=90] (0,0);
	\draw[dstrand, Xmarked=.7] (1.5,0) to [out=90, in=270] (1.5,1);
\end{tikzpicture}}
\ar[d]_{
\begin{tikzpicture}[anchorbase, scale=.4, tinynodes]
	\draw[dstrand, Ymarked=.55] (0,0) to [out=90, in=180] (.5,.5) to [out=0, in=90] (1,0);
	\draw[dstrand, Xmarked=.7] (-.5,0) to [out=90, in=270] (-.5,1);
\end{tikzpicture}}
& 
\emptyset e
\ar[d]^{
\begin{tikzpicture}[anchorbase, scale=.4, tinynodes]
	\draw[dstrand, Xmarked=.7] (0,0) to [out=90, in=270] (0,1);
\end{tikzpicture}}
\\
e\emptyset
\ar[r]_{
\begin{tikzpicture}[anchorbase, scale=.4, tinynodes]
	\draw[dstrand, Xmarked=.7] (0,0) to [out=90, in=270] (0,1);
\end{tikzpicture}}
& 
e
}
\]
commute. The leftmost case is just an isotopy in the symmetric web calculus.
The other two cases follow by observing that we have
\begin{gather}\label{eq:sym-web-rels}
\begin{tikzpicture}[anchorbase, scale=.4, tinynodes]
	\draw[dstrand, Xmarked=.55] (2,0) to [out=90, in=0] (1,1) to [out=180, in=90] (0,0);
	\draw[dstrand, Xmarked=.55] (0,0) to [out=270, in=180] (1,-1) to [out=0, in=270] (2,0);
\end{tikzpicture}
=
\begin{tikzpicture}[anchorbase, scale=.4, tinynodes]
	\draw[dstrand, Ymarked=.55] (2,0) to [out=90, in=0] (1,1) to [out=180, in=90] (0,0);
	\draw[dstrand, Ymarked=.55] (0,0) to [out=270, in=180] (1,-1) to [out=0, in=270] (2,0);
\end{tikzpicture}
=
1
\quad
\text{and}
\quad
\begin{tikzpicture}[anchorbase, scale=.4, tinynodes]
	\draw[dstrand, Xmarked=.7] (0,-3) to (0,-2);
	\draw[dstrand, Ymarked=.55] (0,-2) to [out=180, in=270] (-.75,-1) to [out=90, in=180] (0,0);
	\draw[dstrand, Ymarked=.55] (0,-2) to [out=0, in=270] (.75,-1) to [out=90, in=0] (0,0);
	\draw[dstrand, Xmarked=.7] (0,0) to (0,1);
\end{tikzpicture}
=
\begin{tikzpicture}[anchorbase, scale=.4, tinynodes]
	\draw[dstrand, Xmarked=.55] (0,-3) to (0,1);
\end{tikzpicture}
\quad
\text{and}
\quad
\begin{tikzpicture}[anchorbase, scale=.4, tinynodes]
	\draw[dstrand, Ymarked=.7] (0,-3) to (0,-2);
	\draw[dstrand, Xmarked=.55] (0,-2) to [out=180, in=270] (-.75,-1) to [out=90, in=180] (0,0);
	\draw[dstrand, Xmarked=.55] (0,-2) to [out=0, in=270] (.75,-1) to [out=90, in=0] (0,0);
	\draw[dstrand, Ymarked=.7] (0,0) to (0,1);
\end{tikzpicture}
=
\begin{tikzpicture}[anchorbase, scale=.4, tinynodes]
	\draw[dstrand, Ymarked=.55] (0,-3) to (0,1);
\end{tikzpicture}
\end{gather}
by specializing the relations for symmetric powers
in \cite[Equations (12), (14) and (15)]{RW1}. (As recalled above, 
a label $m+n=2e$ in their picture corresponds to $e^{\ast}$ 
in our notation and all $m+n=2e$ in \cite[Equations (12), (14) and (15)]{RW1}
are then to be replaced by $e$.) Here it is crucial that $\qqpar$ is a 
$2(e+3)^{\mathrm{th}}$ root of unity. For example, $\qqnumber{e{+}1}=\qqnumber{2}$ in this case, 
which implies that $\qbinq{e{+}2}{e}=\qqnumber{e{+}1}\qqnumber{2}^{-1}=1$.

Next, the relations in \eqref{eq:sym-web-rels} give
\begin{gather}\label{eq:sym-rel-1}
\begin{tikzpicture}[anchorbase, scale=.4, tinynodes]
	\draw[dstrand, Ymarked=.55] (2,0) to [out=90, in=0] (1,1) to [out=180, in=90] (0,0);
	\draw[dstrand, Ymarked=.55] (0,3) to [out=270, in=180] (1,2) to [out=0, in=270] (2,3);
\end{tikzpicture}
=
\begin{tikzpicture}[anchorbase, scale=.4, tinynodes]
	\draw[dstrand, Ymarked=.55] (0,3) to (0,0);
	\draw[dstrand, Ymarked=.55] (2,0) to (2,3);
\end{tikzpicture}
\quad
\text{and}
\quad
\begin{tikzpicture}[anchorbase, scale=.4, tinynodes]
	\draw[dstrand, Xmarked=.55] (2,0) to [out=90, in=0] (1,1) to [out=180, in=90] (0,0);
	\draw[dstrand, Xmarked=.55] (0,3) to [out=270, in=180] (1,2) to [out=0, in=270] (2,3);
\end{tikzpicture}
=
\begin{tikzpicture}[anchorbase, scale=.4, tinynodes]
	\draw[dstrand, Xmarked=.55] (0,3) to (0,0);
	\draw[dstrand, Xmarked=.55] (2,0) to (2,3);
\end{tikzpicture}
\quad
\text{and}
\quad
\begin{tikzpicture}[anchorbase, scale=.4, tinynodes]
	\draw[dstrand, Ymarked=.55] (1,1) to (2,0);
	\draw[dstrand, Ymarked=.55] (1,1) to (0,0);
	\draw[dstrand, Ymarked=.55] (0,3) to (1,2);
	\draw[dstrand, Ymarked=.55] (2,3) to (1,2);
	\draw[dstrand, Ymarked=.55] (1,1) to (1,2);
\end{tikzpicture}
=
\begin{tikzpicture}[anchorbase, scale=.4, tinynodes]
	\draw[dstrand, Ymarked=.55] (0,3) to (0,0);
	\draw[dstrand, Ymarked=.55] (2,3) to (2,0);
\end{tikzpicture}
\quad
\text{and}
\quad
\begin{tikzpicture}[anchorbase, scale=.4, tinynodes]
	\draw[dstrand, Xmarked=.55] (1,1) to (2,0);
	\draw[dstrand, Xmarked=.55] (1,1) to (0,0);
	\draw[dstrand, Xmarked=.55] (0,3) to (1,2);
	\draw[dstrand, Xmarked=.55] (2,3) to (1,2);
	\draw[dstrand, Xmarked=.55] (1,1) to (1,2);
\end{tikzpicture}
=
\begin{tikzpicture}[anchorbase, scale=.4, tinynodes]
	\draw[dstrand, Xmarked=.55] (0,3) to (0,0);
	\draw[dstrand, Xmarked=.55] (2,3) to (2,0);
\end{tikzpicture}
\end{gather}
which are needed to show associativity.

For $\obstuff{A}^{\graphD{e}}$ to be Frobenius, we additionally need to check that
\[
\xymatrix@C+=1.0cm@L+=6pt{
ee 
\ar[r]^{
\begin{tikzpicture}[anchorbase, scale=.4, tinynodes]
	\draw[dstrand, Xmarked=.7] (-.5,-.5) to [out=90, in=270] (-1,.5);
	\draw[dstrand, Xmarked=.7] (0,-.5) to (0,.5);
\end{tikzpicture}}
\ar[d]_{
\begin{tikzpicture}[anchorbase, scale=.4, tinynodes]
	\draw[dstrand, Xmarked=.7] (-.5,-.5) to (0,0);
	\draw[dstrand, Xmarked=.7] (.5,-.5) to (0,0);
	\draw[dstrand, Ymarked=.55] (0,0) to (0,.5);
\end{tikzpicture}}
& 
e\emptyset e
\ar[d]^{
\begin{tikzpicture}[anchorbase, scale=.4, tinynodes]
	\draw[dstrand, Xmarked=.7] (1,-.5) to [out=90, in=270] (.5,.5);
	\draw[dstrand, Xmarked=.7] (0,-.5) to (0,.5);
\end{tikzpicture}}
\\
e^{\ast}
\ar[r]_{
\begin{tikzpicture}[anchorbase, scale=.4, tinynodes]
	\draw[dstrand, Ymarked=.55] (-.5,.5) to (0,0);
	\draw[dstrand, Ymarked=.55] (.5,.5) to (0,0);
	\draw[dstrand, Xmarked=.7] (0,0) to (0,-.5);
\end{tikzpicture}}
& 
ee
}
\;\;\;\;
\xymatrix@C+=1.0cm@L+=6pt{
ee^{\ast}
\ar[r]^{
\begin{tikzpicture}[anchorbase, scale=.4, tinynodes]
	\draw[dstrand, Xmarked=.7] (-.5,.5) to (0,0);
	\draw[dstrand, Xmarked=.7] (.5,.5) to (0,0);
	\draw[dstrand, Ymarked=.55] (0,0) to (0,-.5);
	\draw[dstrand, Ymarked=.5] (1,-.5) to (1,.5);
\end{tikzpicture}}
\ar[d]_{
\begin{tikzpicture}[anchorbase, scale=.4, tinynodes]
	\draw[dstrand, Ymarked=.55] (1,0) to [out=90, in=0] (.5,.5) to [out=180, in=90] (0,0);
\end{tikzpicture}}
& 
e^{\ast}e^{\ast}e^{\ast}
\ar[d]^{
\begin{tikzpicture}[anchorbase, scale=.4, tinynodes]
	\draw[dstrand, Ymarked=.55] (-.5,-.5) to (0,0);
	\draw[dstrand, Ymarked=.55] (.5,-.5) to (0,0);
	\draw[dstrand, Xmarked=.7] (0,0) to (0,.5);
	\draw[dstrand, Ymarked=.55] (-1,-.5) to (-1,.5);
\end{tikzpicture}}
\\
\emptyset
\ar[r]_{
\begin{tikzpicture}[anchorbase, scale=.4, tinynodes]
	\draw[dstrand, Ymarked=.55] (1,0) to [out=270, in=0] (.5,-.5) to [out=180, in=270] (0,0);
\end{tikzpicture}}
& 
e^{\ast}e
}
\;\;\;\;
\xymatrix@C+=1.0cm@L+=6pt{
ee^{\ast}
\ar[r]^{
\begin{tikzpicture}[anchorbase, scale=.4, tinynodes]
	\draw[dstrand, Ymarked=.55] (-.5,.5) to (0,0);
	\draw[dstrand, Ymarked=.55] (.5,.5) to (0,0);
	\draw[dstrand, Xmarked=.7] (0,0) to (0,-.5);
	\draw[dstrand, Ymarked=.5] (-1,.5) to (-1,-.5);
\end{tikzpicture}}
\ar[d]_{
\begin{tikzpicture}[anchorbase, scale=.4, tinynodes]
	\draw[dstrand, Ymarked=.55] (1,0) to [out=90, in=0] (.5,.5) to [out=180, in=90] (0,0);
\end{tikzpicture}}
& 
eee
\ar[d]^{
\begin{tikzpicture}[anchorbase, scale=.4, tinynodes]
	\draw[dstrand, Xmarked=.7] (-.5,-.5) to (0,0);
	\draw[dstrand, Xmarked=.7] (.5,-.5) to (0,0);
	\draw[dstrand, Ymarked=.55] (0,0) to (0,.5);
	\draw[dstrand, Ymarked=.55] (1,.5) to (1,-.5);
\end{tikzpicture}}
\\
\emptyset
\ar[r]_{
\begin{tikzpicture}[anchorbase, scale=.4, tinynodes]
	\draw[dstrand, Ymarked=.55] (1,0) to [out=270, in=0] (.5,-.5) to [out=180, in=270] (0,0);
\end{tikzpicture}}
& 
e^{\ast}e
}
\]
commute, which follows from \eqref{eq:sym-rel-1}.
The other diagrammatic equations, which prove the 
compatibility between the multiplication and the comultiplication, are immediate.
\end{proof}

\begin{proposition}\label{proposition:typeD}
The rank of the module category associated to $\obstuff{A}^{\graphD{e}}$ is equal to $\tfrac{t_e-1}{3}+3$, 
the number of nodes of the graph $\graphD{e}$.
\end{proposition}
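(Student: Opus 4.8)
The plan is to recognize $\obstuff{A}^{\graphD{e}}$ as the $\zeethree$ simple-current algebra in $\slqmod$ and then to count its simple module $1$-morphisms by the standard orbit argument for such algebras.

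\emph{Step 1 (the algebra is a simple-current extension).} The three objects $\Ll_{0,0},\Ll_{e,0},\Ll_{0,e}$ are precisely the invertible objects of $\slqmod$: by the fusion rules of $\slqmod$ (cf. the references after \eqref{eq:roots1} and \cite{Sch}) one has $\Ll_{e,0}\otimes\Ll_{e,0}\cong\Ll_{0,e}$ and $\Ll_{e,0}\otimes\Ll_{0,e}\cong\Ll_{0,0}$, so they form a subgroup of the (abelian) group of invertibles isomorphic to $\zeethree$. Since $\obstuff{A}^{\graphD{e}}\cong\Ll_{0,0}\oplus\Ll_{e,0}\oplus\Ll_{0,e}$ as an object, and the commutative, associative Frobenius structure of \fullref{proposition:typeD-object} is determined up to isomorphism once the underlying object is fixed (the potential cocycle obstruction lies in $H^{2}(\zeethree,\C^{\times})=0$), the algebra $\obstuff{A}^{\graphD{e}}$ is exactly the regular algebra $\bigoplus_{g\in\zeethree}g$ of this $\zeethree$. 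Tensoring with the generator $\Ll_{e,0}$ induces, on the set of simple objects of $\slqmod$, the order-three cyclic symmetry $\rho$ of the triangular alcove $X^{+}(e)$ from \eqref{eq:weight-picture}.

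\emph{Step 2 (from orbits to modules).} Since $\slqmod$ is semisimple over $\C$ and $\obstuff{A}^{\graphD{e}}$ is Frobenius, the category of $\obstuff{A}^{\graphD{e}}$-modules in $\slqmod$ is finite semisimple, so its rank is the number of isomorphism classes of simple modules. By the free/forgetful adjunction, every simple module is a direct summand of $\obstuff{A}^{\graphD{e}}\otimes\Ll_{m,n}$ for some simple $\Ll_{m,n}$, one has $\obstuff{A}^{\graphD{e}}\otimes\Ll_{m,n}\cong\obstuff{A}^{\graphD{e}}\otimes\rho(\Ll_{m,n})$, and $\End_{\obstuff{A}^{\graphD{e}}}(\obstuff{A}^{\graphD{e}}\otimes\Ll_{m,n})\cong\bigoplus_{g\in\zeethree}\Hom_{\slqmod}(\Ll_{m,n},g\otimes\Ll_{m,n})$, whose dimension is the size of the $\rho$-stabilizer of $\Ll_{m,n}$. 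Hence if $\Ll_{m,n}$ lies on a free $\zeethree$-orbit then $\obstuff{A}^{\graphD{e}}\otimes\Ll_{m,n}$ is simple, contributing one simple module per orbit; if $\Ll_{m,n}$ is $\rho$-fixed then this endomorphism algebra is a $3$-dimensional semisimple $\C$-algebra, hence $\cong\C^{3}$, so $\obstuff{A}^{\graphD{e}}\otimes\Ll_{m,n}$ splits into three pairwise non-isomorphic simple modules. Therefore the rank equals $\#\{\text{free }\zeethree\text{-orbits on simples}\}+3\cdot\#\{\rho\text{-fixed simples}\}$.

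\emph{Step 3 (orbit count and conclusion).} The simple objects of $\slqmod$ are indexed by the $t_{e}=\tfrac{(e+1)(e+2)}{2}$ points of $X^{+}(e)$. As $\rho$ has order three and permutes the three corners of the alcove non-trivially, every $\rho$-orbit has size $1$ or $3$, and the unique fixed point of $\rho$ is the centroid $(\tfrac{e}{3},\tfrac{e}{3})$, which lies in $X^{+}(e)$ precisely because $e\equiv 0\bmod 3$. Thus $t_{e}=1+3f$ with $f=\tfrac{t_{e}-1}{3}$ free orbits, and the rank is $\tfrac{t_{e}-1}{3}+3$. Comparing with \fullref{subsec:gen-D-list}, the graph $\graphD{e}$ is the $\zeethree$-folding of $\graphA{e}$, whose nodes are the $\tfrac{t_{e}-1}{3}$ free $\rho$-orbits together with the three nodes replacing the fixed vertex; so $\graphD{e}$ has $\tfrac{t_{e}-1}{3}+3$ nodes, as claimed. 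The only genuinely delicate point is the splitting into exactly three simples at the fixed weight, but this is forced by the fact that any $3$-dimensional semisimple complex algebra is $\C^{3}$, so no obstruction can interfere; the identification of the Frobenius structure of \fullref{proposition:typeD-object} with the simple-current one is then immediate from its object decomposition and $H^{2}(\zeethree,\C^{\times})=0$.
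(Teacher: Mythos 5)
Your argument follows essentially the same route as the paper: semisimplicity of the module category, the free/forgetful biadjunction $\morstuff{F}\dashv\morstuff{G}$, the computation $\morstuff{GF}(\Ll_{m,n})\cong\bigoplus_{g}g\otimes\Ll_{m,n}$, the dichotomy between free $\zeethree$-orbits and the unique fixed point $\Ll_{e/3,e/3}$, and the splitting of $\morstuff{F}$ at the fixed point into three simples because its endomorphism algebra is $3$-dimensional semisimple (your observation that $3=1^2+1^2+1^2$ forces $\C^3$ is a clean way to finish that step; the paper instead rules out dimensions $5$ and $9$). The simple-current framing is a nice conceptual gloss, but it carries the argument no further than the paper's explicit fusion rules.

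Two points are stated too quickly. First, ``$\obstuff{A}^{\graphD{e}}$ is Frobenius, hence its module category is finite semisimple'' is not a valid implication in a fusion category: a Frobenius algebra with vanishing categorical dimension can have a non-semisimple module category. The paper's proof computes $\qdim(\obstuff{A}^{\graphD{e}})=3\neq 0$ and invokes Kirillov--Ostrik's rigidity/semisimplicity theorem; you need this (or an equivalent separability check) to license Step 2. Second, the remark in Step 1 that the algebra structure is essentially unique because ``the cocycle obstruction lies in $H^2(\zeethree,\C^\times)=0$'' is misleading: for a sum of invertibles in a \emph{braided} (indeed ribbon) category, the genuine obstruction to a commutative algebra structure is the ribbon twist being trivial on the relevant invertibles, and that is precisely where the hypothesis $e\equiv 0\bmod 3$ enters (the paper flags explicitly that the twist condition fails otherwise). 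Fortunately your rank count in Steps 2--3 never uses this uniqueness claim --- only the object-level decomposition of $\obstuff{A}^{\graphD{e}}$ and the existence of \emph{some} rigid algebra structure on it --- so the overall proof survives once the semisimplicity step is repaired.
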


Note here that $t_e=\tfrac{(e+1)(e+2)}{2}\equiv 1\bmod 3$ since $e\equiv 0\bmod 3$, by assumption.

\begin{proof}
We first recall some 
facts. By \cite[Section 3.3]{BK1} or \cite[Lemma 3.2.1]{Sch}, the so-called twist 
of $\obstuff{A}^{\graphD{e}}$ is the identity morphism. 
(This is false when $e\not\equiv 0 \bmod 3$.) Note also that $\obstuff{A}^{\graphD{e}}$ is
simple as an algebra $1$-morphism since it cannot have a proper, non-zero two-sided ideal, because any ideal containing
$e$ or $e^{\ast}$ has to contain $ee^{\ast}=e^{\ast}e$, which is isomorphic to the unit object 
by e.g. \eqref{eq:sym-web-rels} and \eqref{eq:sym-rel-1}.
Moreover, let $\qdim(\obstuff{O})$ denote the quantum 
dimension of $\obstuff{O}\in\slqmod$, which is defined by specializing the generic quantum dimension 
at the primitive, complex root of unity $\qqpar$, see e.g. \cite[Section 4.7]{EGNO}. 
By Weyl's character formula \cite[Theorem 5.15]{Ja} and its specialization, we have  
\[
\qdim(\Ll_{m,n})=\qqnumber{2}^{-1}\qqnumber{m+1}\qqnumber{n+1}\qqnumber{m+n+2}.
\]
Hence, we get $\qdim(\obstuff{A}^{\graphD{e}})=3\neq 0$.   
By \cite[Lemma 1.20]{K-O}, this 
implies that 
$\obstuff{A}^{\graphD{e}}$ is rigid, 
as defined in \cite[Definition 1.11]{K-O}. 

Therefore, $\modcat{\slqmod}(\obstuff{A}^{\graphD{e}})$ is a semisimple 
category with finitely many isomorphism classes of simples, 
by \cite[Theorem 3.3]{K-O}. Furthermore, any simple module in 
$\modcat{\slqmod}(\obstuff{A}^{\graphD{e}})$ is a direct 
summand of $\morstuff{F}(\obstuff{O})$ for a certain $\obstuff{O}\in\slqmod$, 
by \cite[Lemma 3.4]{K-O}. Here $\morstuff{F}\colon\slqmod\to 
\modcat{\slqmod}(\obstuff{A}^{\graphD{e}})$ is the free functor 
defined by $\morstuff{F}(\obstuff{O})=\obstuff{O}\otimes\obstuff{A}^{\graphD{e}}$. 
By \cite[Lemma 1.16]{K-O}, this functor is biadjoint to the forgetful functor 
$\morstuff{G}\colon\modcat{\slqmod}(\obstuff{A}^{\graphD{e}})\to 
\slqmod$. As a last ingredient recall that
\[
\Ll_{m,n}\otimes\Ll_{e,0}\cong\Ll_{e{-}m{-}n,m},
\quad
\Ll_{m,n}\otimes\Ll_{0,e}\cong\Ll_{n,e{-}m{-}n},
\]
hold in $\slqmod$ by e.g. \cite[Corollary 8]{Saw}.

It is now easy to determine the simples in $\modcat{\slqmod}(\obstuff{A}^{\graphD{e}})$. 
Let us write $e=3r$. 
\smallskip
\begin{enumerate}[label=$\blacktriangleright$]

\setlength\itemsep{.15cm}

\item Assume that $(m,n)\neq(r,r)$. Then
\[
\morstuff{GF}(\Ll_{m,n})\cong\Ll_{m,n}\oplus
\Ll_{e{-}m{-}n,m}\oplus\Ll_{n,e{-}m{-}n}. 
\]
These three summands form a three element orbit of the cut-off of the positive 
Weyl chamber under the rotation by $\tfrac{2\pi}{3}$. 
Therefore, we have 
\[
\dim_{\C}\Hom_{\obstuff{A}^{\graphD{e}}}(\morstuff{F}(\Ll_{m,n}),
\morstuff{F}(\Ll_{m,n}))=\dim_{\C}  
\Hom_{\slqmod}(\Ll_{m,n},\morstuff{GF}(\Ll_{m,n}))=1.
\]
By the categorical version of Schur's lemma, see e.g. 
\cite[Lemma 1.5.2]{EGNO}, $\morstuff{F}(\Ll_{m,n})$ is a 
simple $\obstuff{A}^{\graphD{e}}$-module object.
Note further that 
$\morstuff{GF}(\Ll_{m,n})\cong\morstuff{GF}(\Ll_{e-m-n,m})\cong\morstuff{GF}(\Ll_{n,e-m-n})$. Thus, we have 
\[
\dim_{\C}\Hom_{\obstuff{A}^{\graphD{e}}}(\morstuff{F}(\Ll_{m,n}),\morstuff{F}(\Ll_{e-m-n,m}))=
\dim_{\C}\Hom_{\obstuff{A}^{\graphD{e}}}(\morstuff{F}(\Ll_{m,n}),\morstuff{F}(\Ll_{n,e-m-n}))=1,
\]
by adjointness of 
$\morstuff{F}$ and $\morstuff{G}$. Thus, 
$\morstuff{F}(\Ll_{m,n})
\cong\morstuff{F}(\Ll_{e-m-n,m})
\cong\morstuff{F}(\Ll_{n,e-m-n})$. 

Finally, $\morstuff{F}(\Ll_{m,n})\not\cong\morstuff{F}(\Ll_{m^{\prime},n^{\prime}})$,
if $(m^{\prime},n^{\prime})\not\in\{(m,n),(e-m-n,m),(n,e-m-n)\}$, because 
in that case $\morstuff{GF}(\Ll_{m,n})\not\cong\morstuff{GF}(\Ll_{m^{\prime},n^{\prime}})$.

\item Assume that $(m,n)=(r,r)$. Then 
\[
\morstuff{GF}(\Ll_{r,r})\cong
\Ll_{r,r}\oplus\Ll_{r,r}\oplus\Ll_{r,r}.
\]
Here $\Ll_{r,r}$ is the fixed point in the cut-off of the positive 
Weyl chamber under the rotation by $\tfrac{2\pi}{3}$.
Therefore, we have 
\begin{gather}\label{eq:dim-argument}
\dim_{\C}\Hom_{\obstuff{A}^{\graphD{e}}}(\morstuff{F}(\Ll_{r,r}),\morstuff{F}(\Ll_{r,r}))=\dim_{\C}  
\Hom_{\slqmod}(\Ll_{r,r}, \morstuff{GF}(\Ll_{r,r}))= 3.
\end{gather}
This shows that $\morstuff{F}(\Ll_{r,r})$ decomposes 
into three simples, each of which is mapped to $\Ll_{r,r}$ by 
$\morstuff{G}$, but which are 
pairwise non-isomorphic as $\obstuff{A}^{\graphD{e}}$-module objects. 
(Otherwise, the dimension in \eqref{eq:dim-argument} would be $5$ or $9$.)
\end{enumerate}
\smallskip
The statement now follows by counting.
\end{proof}

\begin{example}\label{example:hom-formula}
In case $e=3$ (the case illustrated on the left in 
\fullref{fig:typeD} below), 
we have ten simple objects in $\slqmod$:
\smallskip
\begin{enumerate}[label=$\blacktriangleright$]

\setlength\itemsep{.15cm}

\item $\Ll_{0,0},\Ll_{3,0},\Ll_{0,3}$, which have quantum dimension $\qqnumber{1}=1$.

\item $\Ll_{1,0},\Ll_{2,1},\Ll_{0,2}$ and $\Ll_{0,1},\Ll_{1,2},\Ll_{2,0}$, which have quantum dimension 
$\qqnumber{3}=2$.

\item $\Ll_{1,1}$, which has quantum dimension $\qqnumber{2}\qqnumber{4}=3$. 
\end{enumerate}
\smallskip

In contrast, the simple $\obstuff{A}^{\graphD{3}}$-module objects are:
\smallskip
\begin{enumerate}[label=$\blacktriangleright$]

\setlength\itemsep{.15cm}

\item $\Ll_{0,0}\oplus\Ll_{3,0}\oplus\Ll_{0,3}$, which have quantum dimension $3\qqnumber{1}=3$.

\item $\Ll_{1,0}\oplus\Ll_{2,1}\oplus\Ll_{0,2}$ and $\Ll_{0,1}\oplus\Ll_{1,2}\oplus\Ll_{2,0}$, 
which have quantum dimension 
$3\qqnumber{3}=6$.

\item Three non-isomorphic 
copies of $\Ll_{1,1}$, which still have quantum dimension $3$.
\end{enumerate}
(The reader should compare this with the $\Z/3\Z$-symmetry in \fullref{fig:typeD} 
and the identification respectively splitting of the vertices illustrated therein.)
\end{example}

\begin{remark}\label{remark:hom-formula}
The above classification is consistent 
with the following analog of \eqref{eq:ineq-semisimple}.
Let 
\[
\qdim(\slqmod)=
{\textstyle\sum_{0\leq m+n\leq e}}\,\qdim(\Ll_{m,n})^2. 
\]  

Since $\obstuff{A}^{\graphD{e}}$ is rigid, we have 
\begin{gather}\label{eq:semisimpleincat}
\qdim(\obstuff{A}^{\graphD{e}})\qdim(\slqmod)=
{\textstyle\sum_{\obstuff{S}}}\; \qdim(\obstuff{S})^2
\end{gather}
where we sum over a complete set of pairwise 
non-isomorphic, simple $\obstuff{A}^{\graphD{e}}$-module objects $\obstuff{S}$. 
The formula in \eqref{eq:semisimpleincat} follows e.g. from \cite[Example 7.16.9(ii)]{EGNO}. 
Note that in this example $\qdim(\obstuff{O})$ equals the Perron--Frobenius dimension of $\obstuff{O}$ 
as used in \cite[Example 7.16.9(ii)]{EGNO}
because $\obstuff{A}^{\graphD{e}}$ is rigid.
\end{remark}

By \fullref{proposition:typeD-object}, we see that $\obstuff{A}^{\graphD{e}}$ 
can be regarded as a Frobenius algebra $1$-morphism in $\slqmodgop$. Hence,
we get a Frobenius algebra $1$-morphism $\algstuff{B}^{\graphD{e}^{\gc}}\{-3\}$ in $\subcatquo[e]$.

\begin{remark}\label{remark:hard-clasps} 
In this case it would be hard to write down explicitly the diagrams which define the structural 
$2$-morphisms of $\algstuff{B}^{\graphD{e}^{\gc}}$, i.e. unit, multiplication, counit and comultiplication. The reason is 
that the symmetric web calculus suppresses the clasps corresponding to 
$\Ll_{e,0}$ and $\Ll_{0,e}$, i.e. the quantum symmetrizers and 
antisymmetrizers on $e$ strands in Kuperberg's web calculus \cite{Kup}, cf. \fullref{example:clasps}. 
\end{remark}    

By \fullref{proposition:typeD}, $\cM_{\graphD{e}^{\gc}}$ does not 
correspond to the cell $2$-representation 
if $e\equiv 0 \bmod 3$, and, by construction, it categorifies $\M_{\graphD{e}^{\gc}}$ 
from \fullref{definition:n-modules}. Similarly for $\oc$ and $\pc$.

\subsubsection{Some simple transitive \texorpdfstring{$2$}{2}-representations}\label{subsec:cat-story-d}

Note that an equivalence of simple transitive $2$-re\-presentations decategorifies 
to a $\aformvN$-equivalence of transitive $\aformvN$-representations. Hence, 
the following is the summary of the above and \fullref{lemma:find-all-transitives-2}:

\begin{theoremqed}\label{theorem:typeA-D-cat}
Let us indicate by 
$\gc,\oc,\pc$ the starting color. Then
we have (at least) the following simple transitive $2$-representations 
of $\subcatquo[e]$.
\begin{gather}\label{eq:the-transitives-2}
\begin{tikzpicture}[baseline=(current bounding box.center),yscale=0.6]
  \matrix (m) [matrix of math nodes, row sep=1em, column
  sep=1em, text height=1.5ex, text depth=0.25ex, ampersand replacement=\&] {
\phantom{a}
\& 
e\equiv 0\bmod 3 
\&  
e\not\equiv 0\bmod 3 
\\
\text{$2$-reps.}
\&
\begin{gathered}
\cM_{\graphA{e}^{\gc}},\,
\cM_{\graphA{e}^{\oc}},\,
\cM_{\graphA{e}^{\pc}},
\\
\cM_{\graphD{e}^{\gc}},\,
\cM_{\graphD{e}^{\oc}},\,
\cM_{\graphD{e}^{\pc}}
\end{gathered}
\& 
\cM_{\graphA{e}^{\gc}} 
\\
\text{quantity}
\&
6
\& 
1
\\};
  \draw[densely dashed] ($(m-1-1.south west)+ (-.75,0)$) to ($(m-1-3.south east)+ (.75,0)$);
  \draw[densely dashed] ($(m-1-2.north west) + (-.9,0)$) to ($(m-1-2.north west) + (-.9,-4.2)$);
  \draw[densely dashed] ($(m-1-2.north west) + (3.2,0)$) to ($(m-1-2.north west) + (3.2,-4.2)$);
\end{tikzpicture}
\end{gather}
Moreover, the $2$-representations $\cM_{\graphA{e}}$ 
are the cell $2$-representations of $\subcatquo[e]$, and 
all of these decategorify 
to the corresponding $\subquo[e]$-representations in \eqref{eq:the-transitives}.
\end{theoremqed}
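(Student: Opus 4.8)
The plan is to assemble the statement from the constructions carried out in \fullref{subsec:cat-story-0}, \fullref{subsec:cat-story-b} and \fullref{subsec:cat-story-c}, together with \fullref{lemma:find-all-transitives-2}. Throughout I would fix a starting color, which by the cyclic symmetry $\rho$ of \eqref{eq:color-tensor} may be taken to be $\gc$. The general mechanism is the one recalled in \fullref{subsec:cat-story-0}: from an algebra $1$-morphism $\obstuff{A}$ in $\slqmod$ one produces, by coloring, an algebra $1$-morphism $\obstuff{A}^{\gc}$ in $\slqmodgop$; \fullref{lemma:Satake} transports it to an algebra $1$-morphism in $\Subcatquo[e](\gc,\gc)$; biinduction (gluing white regions to the defining $2$-morphisms, as in \fullref{example:clasps}), together with the shift $\{-3\}$ needed to put the unit and multiplication in degree zero, yields a Frobenius algebra $1$-morphism $\morstuff{B}^{\gc}\in\subcatquo[e]$, which is indecomposable whenever $\obstuff{A}$ is simple as an algebra $1$-morphism; and $\cM_{\gc}$ is by definition the simple transitive quotient (in the sense of \fullref{remark:graded-finitary-3}) of the transitive $2$-representation $\modcat{\subcatquo[e]}(\morstuff{B}^{\gc})$. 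Applying this with $\obstuff{A}=\obstuff{A}^{\graphA{e}}=\Ll_{0,0}$, the monoidal unit --- hence trivially a simple algebra $1$-morphism --- gives $\morstuff{B}^{\graphA{e}^{\gc}}=\wc\bc\gc\bc\wc\{-3\}$, which is indecomposable by \fullref{lemma:sts-decomp}; this defines $\cM_{\graphA{e}^{\gc}}$, and the secondary colors $\oc,\pc$ are obtained by recoloring.

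To identify $\cM_{\graphA{e}^{\gc}}$ with a cell $2$-representation, I would use that, by \fullref{lemma:sts-decomp} and \fullref{proposition:cat-the-algebra}, the $1$-morphism $\wc\bc\gc\bc\wc$ is the indecomposable object corresponding to $\RKLg{0,0}=\theta_{w_\gc}$, so that $\modcat{\subcatquo[e]}(\morstuff{B}^{\graphA{e}^{\gc}})$ is generated, as a $2$-representation, by the indecomposable $1$-morphisms in the union of the left cells $\lgeq\lcell_{\gc}$; by \fullref{corollary:cells} this union is $\lcell_{\gc}$ together with the trivial cell, whence the simple transitive quotient coincides with the cell $2$-representation of \fullref{definition:2-cell-module} attached to $\lcell_{\gc}$, which therefore has rank $t_e$.

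For type $\typeD$ (so $e\equiv 0\bmod 3$) the input is \fullref{proposition:typeD-object} and \fullref{proposition:typeD}: the object $\obstuff{A}^{\graphD{e}}\cong\Ll_{0,0}\oplus\Ll_{e,0}\oplus\Ll_{0,e}$ carries a Frobenius algebra structure, is simple as an algebra $1$-morphism, and its module category has rank $\tfrac{t_e-1}{3}+3$, the number of nodes of $\graphD{e}$. Feeding this into the mechanism above produces an indecomposable Frobenius algebra $1$-morphism $\morstuff{B}^{\graphD{e}^{\gc}}\{-3\}$ in $\subcatquo[e]$ (indecomposability follows since \fullref{lemma:Satake} is a degree-zero equivalence, hence preserves indecomposability of algebra $1$-morphisms, and biinduction preserves it by \fullref{lemma:clasps-well-defined} and \fullref{lemma:sts-decomp}), and hence $\cM_{\graphD{e}^{\gc}}$; since $\tfrac{t_e-1}{3}+3\neq t_e$ (using $t_e\equiv 1\bmod 3$), this has a different rank from the type $\typeA$ representation and so is inequivalent to it. Together with the tricoloring counts of \fullref{example:find-all-transitives-2}, this accounts for the six pairwise inequivalent $2$-representations listed for $e\equiv 0\bmod 3$ and the single one otherwise.

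Finally, for the decategorification claim, in type $\typeA$ the identification with the cell $2$-representation already does it: it decategorifies to the cell module, which is $\M_{\graphA{e}^{\gc}}$ by \fullref{proposition:typeA-D-decat}. In type $\typeD$, I would apply the split Grothendieck group functor to $\modcat{\subcatquo[e]}(\morstuff{B}^{\graphD{e}^{\gc}})$, using \fullref{proposition:cat-the-quoalgebra} to identify the ambient algebra with $\subquo[e]$: the resulting $\aformvN$-basis is the set of isomorphism classes of indecomposable right $\morstuff{B}^{\graphD{e}^{\gc}}$-module $1$-morphisms, on which $\theta_{\gc},\theta_{\oc},\theta_{\pc}$ act by the matrices encoding the fusion graph of the $\morstuff{B}^{\graphD{e}^{\gc}}$-module category; via $\elfunctor[e]$ and biinduction this module category is the one of $\obstuff{A}^{\graphD{e}}$ over $\slqmod$ analyzed in \fullref{proposition:typeD}, so its fusion graph is the tricolored graph $\graphD{e}$, and comparing with \eqref{eq:main-matrices} identifies the decategorification with $\M_{\graphD{e}^{\gc}}$ of \fullref{definition:n-modules} (the same semisimplicity input, via the degree-zero Satake equivalence, shows the module category is already simple transitive, so the quotient changes nothing). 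The distinctness of the different tricolorings, and hence of the listed $2$-representations, is then \fullref{lemma:find-all-transitives-2} combined with the fact --- recorded just before the theorem --- that an equivalence of simple transitive $2$-representations induces an $\aformvN$-equivalence of their decategorifications, and the final list matches \eqref{eq:the-transitives}. The main obstacle is precisely this last step in type $\typeD$: showing on the nose that the edge multiplicities obtained by tensoring simple $\morstuff{B}^{\graphD{e}^{\gc}}$-module $1$-morphisms with the images of $\fu$ and $\fud$ under $\elfunctor[e]$ equal the adjacency blocks $A,B,C$ of \eqref{eq:ad-matrix}. This rests on the module-category analysis of \fullref{proposition:typeD} together with the compatibility of $\elfunctor[e]$ with tensoring by $\fu,\fud$; everything else is bookkeeping built on results already established.
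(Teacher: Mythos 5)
Your proposal is correct and takes essentially the same route as the paper: the theorem is, in effect, a summary of the constructions in \fullref{subsec:cat-story-0}--\fullref{subsec:cat-story-c} (type $\typeA$ via $\obstuff{A}^{\graphA{e}}=\Ll_{0,0}$, type $\typeD$ via the Frobenius object of \fullref{proposition:typeD-object} and the rank count of \fullref{proposition:typeD}), combined with \fullref{lemma:find-all-transitives-2} and the observation immediately preceding the theorem that an equivalence of simple transitive $2$-representations induces an $\aformvN$-equivalence of their decategorifications. One minor caveat: your claim that ``the module category is already simple transitive, so the quotient changes nothing'' is a bit stronger than what the paper asserts --- the paper passes to the simple transitive quotient and only needs that the decategorification is unaffected, which holds since the underlying $\aformvN$-basis is unchanged.
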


We note here that the case $e=0$ is not included 
in our discussion above, but \fullref{theorem:typeA-D-cat} holds 
in this case as well (but types $\typeA$ and $\typeD$ coincide), which can 
be checked directly.

\subsection{Trihedral zigzag algebras}\label{subsec:quiver}

We are going to describe a weak categorification of the 
$\aformvN$-representations $\M_{\graphA{\infty}}$ 
and $\M_{\graphA{e}}$
from \fullref{subsec:decat-story} 
using a certain quiver algebra. (By weak categorification 
we mean the same as e.g. in \cite[Definition 1]{KMS1}.)

Below we let $\somevert{i},\somevert{j},\somevert{k}$ 
always be different elements in $\{\somevert{x},\somevert{y},\somevert{z}\}$. 
Moreover, we write $\somevert{i}_{m,n}$ for the 
idempotent at a given vertex labeled by $(m,n)$, and 
a path from $\somevert{i}_{m,n}$ to 
$\somevert{j}_{m^{\prime},n^{\prime}}$ is denoted by
$\pathx{i}{j}=\somevert{j}_{m^{\prime},n^{\prime}}
\circ\pathx{i}{j}\circ\somevert{i}_{m,n}$ 
(abusing notation, 
we omit the idempotents) with compositions $\pathx{j}{k}\circ\pathx{i}{j}=\pathxx{i}{j}{k}$ 
and $\pathx{i}{j}\circ\loopy{k}=\pathx{i}{j}\loopy{k}$ etc.

\subsubsection{The trihedral zigzag algebra of level \texorpdfstring{$\infty$}{infty}}\label{subsec:quiver-algebra}

We work over $\C$ in this section.

\begin{definition}\label{definition:quiver}
Let $\zig[\ast]$ be the path algebra of the following quiver.

\begin{gather*}
\xy
(0,0)*{
\begin{tikzcd}[ampersand replacement=\&,row sep=large,column sep=scriptsize,arrows={shorten >=-.5ex,shorten <=-.5ex},labels={inner sep=.05ex}]
\dvert{y_{0,2}}
\arrow[out=90,in=120,loop,distance=1.25em,swap]{}{\loopy{x}} 
\arrow[out=150,in=180,loop,distance=1.25em,swap]{}{\loopy{y}}
\arrow[out=210,in=240,loop,distance=1.25em,swap]{}{\loopy{z}}
\arrow[xshift=.6ex,<-]{dr}{\pathy{z}{y}}
\arrow[xshift=-.6ex,->,swap]{dr}{\pathx{y}{z}}
\&
\phantom{.}
\&
\dvert{x_{1,1}}
\arrow[out=30,in=60,loop,distance=1.25em,swap]{}{\loopy{x}} 
\arrow[out=77,in=102,loop,distance=1.25em,swap]{}{\loopy{y}}
\arrow[out=120,in=150,loop,distance=1.25em,swap]{}{\loopy{z}}
\arrow[xshift=-.6ex,->,swap]{dl}{\pathy{x}{z}}
\arrow[xshift=.6ex,<-]{dl}{\pathx{z}{x}}
\arrow[yshift=-.4ex,<-]{ll}{\pathy{z}{x}}
\arrow[yshift=.4ex,->,swap]{ll}{\pathx{x}{z}}
\arrow[xshift=.6ex,<-]{dr}{\pathy{y}{x}}
\arrow[xshift=-.6ex,->,swap]{dr}{\pathx{x}{y}}
\&
\phantom{.}
\&
\dvert{z_{2,0}}
\arrow[out=60,in=90,loop,distance=1.25em,swap]{}{\loopy{x}} 
\arrow[out=0,in=30,loop,distance=1.25em,swap]{}{\loopy{y}}
\arrow[out=300,in=330,loop,distance=1.25em,swap]{}{\loopy{z}}
\arrow[xshift=-.6ex,->,swap]{dl}{\pathy{z}{y}}
\arrow[xshift=.6ex,<-]{dl}{\pathx{y}{z}}
\arrow[yshift=-.4ex,<-]{ll}{\pathy{x}{z}}
\arrow[yshift=.4ex,->,swap]{ll}{\pathx{z}{x}}
\\
\phantom{.}
\&
\dvert{z_{0,1}}
\arrow[out=150,in=180,loop,distance=1.25em,swap,pos=0.575]{}{\loopy{x}} 
\arrow[out=195,in=225,loop,distance=1.25em,swap]{}{\loopy{y}}
\arrow[out=240,in=270,loop,distance=1.25em,swap]{}{\loopy{z}}
\arrow[xshift=.6ex,<-]{dr}{\pathy{x}{z}}
\arrow[xshift=-.6ex,->,swap]{dr}{\pathx{z}{x}}
\&
\phantom{.}
\&
\dvert{y_{1,0}}
\arrow[out=0,in=30,loop,distance=1.25em,swap,pos=0.475]{}{\loopy{x}} 
\arrow[out=315,in=345,loop,distance=1.25em,swap]{}{\loopy{y}}
\arrow[out=270,in=300,loop,distance=1.25em,swap]{}{\loopy{z}}
\arrow[xshift=-.6ex,->,swap]{dl}{\pathy{y}{x}}
\arrow[xshift=.6ex,<-]{dl}{\pathx{x}{y}}
\arrow[yshift=-.4ex,<-]{ll}{\pathy{z}{y}}
\arrow[yshift=.4ex,->,swap]{ll}{\pathx{y}{z}}
\&
\phantom{.}
\\
\phantom{.}
\&
\phantom{.}
\&
\dvert{x_{0,0}}
\arrow[out=195,in=225,loop,distance=1.25em,swap]{}{\loopy{x}} 
\arrow[out=255,in=285,loop,distance=1.25em,swap]{}{\loopy{y}}
\arrow[out=315,in=345,loop,distance=1.25em,swap]{}{\loopy{z}}
\&
\phantom{.}
\&
\phantom{.}
\\
\end{tikzcd}};
(-37.5,38)*{\rotatebox{40}{{\Huge$\vdots$}}};
(0,38)*{\rotatebox{0}{{\Huge$\vdots$}}};
(37.5,38)*{\rotatebox{-40}{{\Huge$\vdots$}}};
(0,-31)*{\text{{\tiny living on the $\graphA{\infty}$ graph}}};
\endxy
\end{gather*}
We view $\zig[\ast]$ as being graded by putting paths $\pathx{i}{j}$ 
and loops $\loopy{i}$ in degree $2$.
\end{definition}

\begin{definition}\label{definition:quiver-infty}
Let $\zig[\infty]$ be the quotient algebra of $\zig[\ast]$ by the following 
relations.
\smallskip
\begin{enumerate}[label=(\alph*)]

\setlength\itemsep{.15cm}

\renewcommand{\theenumi}{(\ref{definition:quiver-infty}.a)}
\renewcommand{\labelenumi}{\theenumi}

\item \label{enum:quiver-1} \textbf{Leaving a triangular face is zero.} Any oriented path of length two 
between non-adjacent vertices is zero. 

\renewcommand{\theenumi}{(\ref{definition:quiver-infty}.b)}
\renewcommand{\labelenumi}{\theenumi}

\item \label{enum:quiver-2} \textbf{The relations of the cohomology ring of 
the variety of full flags in $\C^3$.} $\loopy{i}\loopy{j}=\loopy{j}\loopy{i}$, 
$\loopy{x}+\loopy{y}+\loopy{z}=0$, 
$\loopy{x}\loopy{y}+\loopy{x}\loopy{z}+\loopy{y}\loopy{z}=0$ 
and $\loopy{x}\loopy{y}\loopy{z}=0$.

\renewcommand{\theenumi}{(\ref{definition:quiver-infty}.c)}
\renewcommand{\labelenumi}{\theenumi}

\item \label{enum:quiver-3} \textbf{Sliding loops.} 
$\pathx{i}{j}\loopy{i}=-\loopy{j}\pathx{i}{j}$, 
$\pathx{i}{j}\loopy{j}=-\loopy{i}\pathx{i}{j}$ and
$\pathx{i}{j}\loopy{k}=\loopy{k}\pathx{i}{j}=0$.

\renewcommand{\theenumi}{(\ref{definition:quiver-infty}.d)}
\renewcommand{\labelenumi}{\theenumi}

\item \label{enum:quiver-4} \textbf{Zigzag.} 
$\pathxx{i}{j}{i}=\loopy{i}\loopy{j}$.

\renewcommand{\theenumi}{(\ref{definition:quiver-infty}.e)}
\renewcommand{\labelenumi}{\theenumi}

\item \label{enum:quiver-5} \textbf{Zigzig equals 
zag times loop.} $\pathxx{i}{j}{k}=\pathx{i}{k}\loopy{i}=-\loopy{k}\pathx{i}{k}$.

\end{enumerate}
\smallskip
We call $\zig[\infty]$ the trihedral zigzag algebra of 
level $\infty$.
\end{definition}

The relations \ref{enum:quiver-1} to \ref{enum:quiver-5}
are homogeneous with respect to the degree defined in 
\fullref{definition:quiver}, which endows $\zig[\infty]$ with 
the structure of a graded algebra, and
we write $\qdim[\vpar](\placeholder)\in\N[\vpar]$ for 
the graded dimension, viewing $\vpar$ as a 
variable of degree $1$. Note that $\zig[\infty]$ is zero in all odd degrees, by definition.  

\subsubsection{Some basic properties}\label{subsec:quiver-algebra-props}

\begin{proposition}\label{proposition:quadratic}
$\zig[\infty]$ is quadratic, i.e. it is generated in degree $2$ and the relations 
are generated in degree $4$.
\end{proposition}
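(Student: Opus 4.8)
The plan is to verify directly that all the defining relations of $\zig[\infty]$ listed in \fullref{definition:quiver-infty} are consequences of relations of degree $4$, together with the fact that $\zig[\ast]$ is a path algebra and hence automatically generated in degree $0$ (idempotents) and degree $2$ (arrows and loops). Since \fullref{definition:quiver} puts every arrow $\pathx{i}{j}$ and every loop $\loopy{i}$ in degree $2$, the algebra $\zig[\ast]$ is already generated in degree $2$, and so is its quotient $\zig[\infty]$. Thus the only thing to check is that the ideal of relations is generated in degree $4$.

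I would go through the relations one by one and record their degrees. Relation \ref{enum:quiver-1} (leaving a triangular face is zero) concerns oriented paths of length two, which have degree $4$; so these are degree-$4$ relations. In \ref{enum:quiver-2}, the relations $\loopy{i}\loopy{j}=\loopy{j}\loopy{i}$ and $\loopy{x}\loopy{y}+\loopy{x}\loopy{z}+\loopy{y}\loopy{z}=0$ are degree $4$; the relation $\loopy{x}+\loopy{y}+\loopy{z}=0$ is degree $2$, and $\loopy{x}\loopy{y}\loopy{z}=0$ is degree $6$. Relation \ref{enum:quiver-3} (sliding loops) is degree $4$, \ref{enum:quiver-4} (zigzag, $\pathxx{i}{j}{i}=\loopy{i}\loopy{j}$) is degree $4$, and \ref{enum:quiver-5} (zigzig equals zag times loop, $\pathxx{i}{j}{k}=\pathx{i}{k}\loopy{i}=-\loopy{k}\pathx{i}{k}$) is degree $6$. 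So the apparent obstructions to quadraticity are the degree-$2$ relation $\loopy{x}+\loopy{y}+\loopy{z}=0$ and the degree-$6$ relations $\loopy{x}\loopy{y}\loopy{z}=0$ and $\pathxx{i}{j}{k}=\pathx{i}{k}\loopy{i}$.

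The main step is then to show these ``bad-degree'' relations are redundant once we impose only the degree-$4$ relations, or more precisely are implied by degree-$4$ relations in a quadratic presentation. For the degree-$2$ relation: at the level of generators it is legitimate in a quadratic presentation to eliminate $\loopy{z}$ via $\loopy{z}=-\loopy{x}-\loopy{y}$ at every vertex, which simply means choosing a smaller generating set in degree $2$ and rewriting all other relations accordingly; this is the standard move and does not violate quadraticity. For the degree-$6$ relation $\loopy{x}\loopy{y}\loopy{z}=0$: after substituting $\loopy{z}=-\loopy{x}-\loopy{y}$ we get $\loopy{x}\loopy{y}(\loopy{x}+\loopy{y})$, and using $\loopy{x}\loopy{y}+\loopy{x}\loopy{z}+\loopy{y}\loopy{z}=0$ (rewritten as $\loopy{x}^2+\loopy{x}\loopy{y}+\loopy{y}^2=0$ after the substitution, a degree-$4$ relation) one expresses $\loopy{x}^2\loopy{y}$ and $\loopy{x}\loopy{y}^2$ in terms of $\loopy{x}\loopy{y}^2$, and checks the product vanishes — i.e. $\loopy{x}\loopy{y}\loopy{z}$ lies in the ideal generated by the degree-$4$ relations, exactly as in the classical presentation of $H^*(\mathrm{Fl}(\C^3))$ as a quadratic algebra. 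For \ref{enum:quiver-5}: the relation $\pathxx{i}{j}{k}=\pathx{i}{k}\loopy{i}$ should follow from \ref{enum:quiver-4} and \ref{enum:quiver-3} together with \ref{enum:quiver-1}; concretely, $\pathxx{i}{j}{k}\circ\pathx{k}{i}=\pathx{i}{k}\loopy{i}\circ\pathx{k}{i}$ can be checked using the zigzag relation on both sides and the loop-sliding relations, and then one argues that the map $\placeholder\circ\pathx{k}{i}$ is injective on the relevant graded piece (or, better, one exhibits $\pathxx{i}{j}{k}-\pathx{i}{k}\loopy{i}$ directly as an element of the ideal generated by the quadratic relations by a short explicit manipulation). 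The second half $\pathx{i}{k}\loopy{i}=-\loopy{k}\pathx{i}{k}$ is just \ref{enum:quiver-3} again, hence quadratic.

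The hard part will be organizing the bookkeeping so that it is clear the reduced presentation — with generators the idempotents, the arrows $\pathx{i}{j}$, and the loops $\loopy{x},\loopy{y}$ at each vertex — has only degree-$4$ relations, and that all of \ref{enum:quiver-1}--\ref{enum:quiver-5} are recovered from them. I expect the cleanest route is: (i) fix the quadratic generating set; (ii) list the explicit degree-$4$ relations (the length-two-path-is-zero relations, the loop commutativity and second symmetric function relation, loop sliding, and zigzag $\pathxx{i}{j}{k}=\loopy{i}\loopy{j}$ with $k=i$); (iii) check by direct computation that $\loopy{x}\loopy{y}\loopy{z}=0$ and the zigzig relation are consequences, citing the analogous computation for the flag-variety cohomology ring and the corresponding one for dihedral zigzag algebras (e.g. \cite{HK1}). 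This is essentially a finite check, so the obstacle is purely expository rather than conceptual, and I would present it as a short lemma-by-inspection rather than grinding every identity.
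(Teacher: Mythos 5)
There are two problems with your write-up. The first is a bookkeeping error: relation \ref{enum:quiver-5} is homogeneous of degree $4$, not $6$ --- $\pathxx{i}{j}{k}$ is a composite of two degree-$2$ arrows and $\pathx{i}{k}\loopy{i}$ is an arrow times a loop, so both sides live in degree $4$. There is nothing to derive there: the relation can simply be kept as one of the quadratic relations, and your attempted derivation of it (composing with $\pathx{k}{i}$ and invoking an unproved injectivity of $\placeholder\circ\pathx{k}{i}$) is both unnecessary and unsubstantiated.

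The second problem is a genuine gap: your derivation of $\loopy{x}\loopy{y}\loopy{z}=0$ from the degree-$4$ relations does not work. You try to deduce it purely from the loop relations at a single vertex, ``exactly as in the classical presentation of $H^*(\mathrm{Fl}(\C^3))$ as a quadratic algebra'' --- but the coinvariant algebra $\C[x_1,x_2,x_3]/(e_1,e_2,e_3)$ is \emph{not} quadratic as a standalone commutative algebra. After eliminating $x_3=-x_1-x_2$, the quadratic part of the ideal gives $\C[x_1,x_2]/(x_1^2+x_1x_2+x_2^2)$, which is infinite-dimensional, and there $x_1x_2(x_1+x_2)=x_2^3\neq 0$; so the cubic relation is genuinely not a consequence of the quadratic loop relations alone. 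The correct argument must use the interaction between loops and arrows: by the zigzag relation \ref{enum:quiver-4} one has $\loopy{x}\loopy{y}=\pathxx{x}{y}{x}$, hence $\loopy{x}\loopy{y}\loopy{z}=\pathxx{x}{y}{x}\loopy{z}$, and this vanishes by the sliding relation \ref{enum:quiver-3}, which kills $\pathx{i}{j}\loopy{k}$. That two-line computation is exactly what is missing from your proposal. Your treatment of the degree-$2$ relation (eliminating one loop per vertex to pass to a genuinely quadratic presentation) is fine and matches the intended argument.
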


\begin{proof}
All relations except $\loopy{x}+\loopy{y}+\loopy{z}=0$ and
$\loopy{x}\loopy{y}\loopy{z}=0$ are of degree $4$. 

The degree two relation shows that our 
presentation is redundant: we could give a presentation with 
fewer generators and no degree two relation. We prefer our 
presentation, which is more symmetric and therefore easier 
to write down. But one could get rid of the degree two relation by using only two degree two loops per vertex. 
 
Thus, up to base change, remains to show that 
$\loopy{x}\loopy{y}\loopy{z}=0$ is a consequence of 
degree $4$ relations, which can be done as follows:
\[
\loopy{x}\loopy{y}\loopy{z}
\stackrel{\ref{enum:quiver-4}}{=}
\pathxx{x}{y}{x}\loopy{z}
\stackrel{\ref{enum:quiver-3}}{=}
0.
\]
This finishes the proof that $\zig[\infty]$ is quadratic.
\end{proof}

\begin{lemma}\label{lemma:quiver-homs}
Let $\obstuff{S}=\{(m{\pm1},n), (m{\pm}1,n{\mp}1), (m,n{\pm}1)\}$.
\begin{gather}\label{eq:basis-quiver}
\begin{aligned}
\Hom_{\C}(\somevert{i}_{m,n},\somevert{i}_{m^{\prime},n^{\prime}})
&=
\begin{cases}
\C
\{\somevert{i}_{m,n},\loopy{i},\loopy{j},\loopy{i}\loopy{j},\loopy{i}\loopy{k},
\loopy{i}^2\loopy{j}\}
, &\text{if }(m,n)=(m^{\prime},n^{\prime}),
\\
0, &\text{else},
\end{cases}
\\
\Hom_{\C}(\somevert{i}_{m,n},\somevert{j}_{m^{\prime},n^{\prime}})
&=
\begin{cases}
\C
\{\pathx{i}{j},\pathx{i}{j}\loopy{i}\}
, &\text{if }(m^{\prime},n^{\prime})\in\obstuff{S},
\\
0, &\text{else}.
\end{cases}
\end{aligned}
\end{gather}
Moreover, the non-trivial graded dimensions are 
$\qdim[\vpar](\Hom_{\C}(\somevert{i}_{m,n},
\somevert{i}_{m,n}))=\vpar^3\vfrac{3}$ and 
$\qdim[\vpar](\Hom_{\C}(\somevert{i}_{m,n},
\somevert{j}_{m^{\prime},n^{\prime}}))=\vpar\vnumber{2}$, when $(m^{\prime},n^{\prime})\in\obstuff{S}$.
\end{lemma}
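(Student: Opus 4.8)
The plan is to prove \fullref{lemma:quiver-homs} by explicitly computing a spanning set of paths modulo the relations \ref{enum:quiver-1}--\ref{enum:quiver-5}, and then showing these spanning sets are linearly independent. First I would observe that by \ref{enum:quiver-1} any nonzero path stays inside a single triangular face at each step, so a nonzero path between $\somevert{i}_{m,n}$ and $\somevert{j}_{m^{\prime},n^{\prime}}$ can only exist when the two vertices are equal or are adjacent in $\graphA{\infty}$; this immediately gives the vanishing statements (the ``$0$, else'' cases), since the only edges leaving a vertex go to the six neighbours in $\obstuff{S}$ (those lying in a common triangular face). So the content is computing the two nonzero Hom-spaces.

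For the endomorphism space $\Hom_{\C}(\somevert{i}_{m,n},\somevert{i}_{m,n})$: a loop-only path is a word in $\loopy{x},\loopy{y},\loopy{z}$, which by \ref{enum:quiver-2} is a polynomial in the commuting variables $\loopy{i},\loopy{j}$ (using $\loopy{k}=-\loopy{i}-\loopy{j}$), modulo the two relations $\loopy{i}\loopy{j}+\loopy{i}\loopy{k}+\loopy{j}\loopy{k}=0$ and $\loopy{i}\loopy{j}\loopy{k}=0$ rewritten in $\loopy{i},\loopy{j}$. This is precisely the presentation of the cohomology ring $H^*(\mathrm{Fl}(\C^3))$, whose (graded) dimension is $\vfrac{3}=\vnumber{1}\vnumber{2}\vnumber{3}$ up to the overall degree shift; I would just quote the standard basis $\{1,\loopy{i},\loopy{j},\loopy{i}\loopy{j},\loopy{i}\loopy{k},\loopy{i}^2\loopy{j}\}$ (six elements in degrees $0,2,2,4,4,6$ internally, i.e. $\vpar^3\vfrac{3}$ after the degree-$2$-per-arrow normalization gives an extra shift — I should double-check the exact power of $\vpar$ against the claimed $\vpar^3\vfrac{3}$). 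Then one must argue no \emph{longer} paths contribute: a path that leaves vertex $\somevert{i}_{m,n}$ and returns must, by \ref{enum:quiver-1}, go to an adjacent vertex and come back within the same face, so it is of the form $\pathxx{i}{j}{i}$ possibly decorated by loops; by \ref{enum:quiver-4} this equals $\loopy{i}\loopy{j}$ times loops, hence reduces to the loop-only case. An induction on path length, using \ref{enum:quiver-4} and \ref{enum:quiver-5} to collapse any ``excursion to a neighbour and back'' into loops, closes this off.

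For $\Hom_{\C}(\somevert{i}_{m,n},\somevert{j}_{m^{\prime},n^{\prime}})$ with $(m^{\prime},n^{\prime})\in\obstuff{S}$: any nonzero path is a single arrow $\pathx{i}{j}$ decorated by loops at the source and target; using \ref{enum:quiver-3} all target-loops can be slid to the source and $\loopy{k}$-decorations vanish, so every path reduces to $\pathx{i}{j}\loopy{i}^a$ for some $a\ge 0$. Then \ref{enum:quiver-3} gives $\pathx{i}{j}\loopy{i}^2 = \loopy{j}^2\pathx{i}{j} = (\loopy{i}^2 + \text{stuff in }\loopy{i}\loopy{k},\loopy{k}^2)\pathx{i}{j}$, and combined with $\pathx{i}{j}\loopy{k}=0$ one finds $\pathx{i}{j}\loopy{i}^2 = \loopy{j}\loopy{k}\cdots$ forces a relation killing degree $\ge 6$; concretely $\pathx{i}{j}\loopy{i}\loopy{j} = -\loopy{j}\pathx{i}{j}\loopy{j} = \loopy{j}\loopy{i}\pathx{i}{j}$ and iterating shows only $\pathx{i}{j}$ and $\pathx{i}{j}\loopy{i}$ survive (degrees $2$ and $4$ internally), giving $\vpar\vnumber{2}$ after normalization. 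Longer paths through other vertices are again collapsed by \ref{enum:quiver-5}: $\pathxx{i}{j}{k} = \pathx{i}{k}\loopy{i}$, so any length-$\ge 2$ path between distinct vertices reduces to a single arrow decorated by a loop.

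The main obstacle will be establishing \emph{linear independence} of the proposed bases, i.e. that the relations do not force any further collapse. For this I would exhibit an explicit faithful representation — the natural candidate is the $\aformvN$-representation $\M_{\graphA{\infty}}$ itself acting on $\C\{\Gset,\Oset,\Pset\}$, or better, realize $\zig[\infty]$ as an endomorphism algebra inside the cell $2$-representation $\cM_{\graphA{e}}$ from \fullref{theorem:typeA-D-cat} (which is exactly what the surrounding section is building towards, with the cohomology-of-full-flag-variety structure appearing on the objects). Alternatively, one can give a direct combinatorial argument: order monomials and check that the rewriting system given by \ref{enum:quiver-1}--\ref{enum:quiver-5} is confluent and terminating, with normal forms exactly the listed elements; the known graded dimension $\vpar^3\vfrac{3}$ of $H^*(\mathrm{Fl}(\C^3))$ provides the lower bound on the endomorphism side, and the upper bound from the rewriting gives equality. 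I expect the cleanest writeup quotes the flag-variety cohomology presentation for the hardest (endomorphism) piece and handles the arrow-decorated pieces by the short explicit slide computations above.
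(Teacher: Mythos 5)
Your spanning argument is essentially the paper's: use \ref{enum:quiver-1} to kill everything between non-adjacent vertices, reduce loop monomials via \ref{enum:quiver-2} to the standard flag-cohomology basis, and collapse longer paths with \ref{enum:quiver-3}--\ref{enum:quiver-5} (the paper does this by an explicit degree-by-degree computation in degrees $4$, $6$ and $>6$ rather than an induction on excursions, but the content is the same). The genuine gap is in linear independence, and none of the three routes you propose actually closes it. (i) $\M_{\graphA{\infty}}$ is a representation of the trihedral Hecke algebra $\subquo$ on $\Cv\{\Gset,\Oset,\Pset\}$, not a module over the quiver algebra $\zig[\infty]$, so it cannot detect linear independence of paths. (ii) Realizing $\zig[\infty]$ as the endomorphism algebra inside the cell $2$-representation is exactly what the authors state they \emph{cannot} do in general (\fullref{remark:zig-zag-zog}: the relations only agree ``up to certain scalars'' which they could not compute), and it would in any case be circular, since \fullref{lemma:quiver-homs} is the input to \fullref{lemma:endo-functors} and \fullref{proposition:endo-functors}, i.e.\ to the (weak) categorification itself. (iii) The logic of your flag-variety bound is reversed: the loop subalgebra at a vertex is a priori only a \emph{quotient} of $H^*(\mathrm{Fl}(\C^3))$, because the non-loop relations \ref{enum:quiver-3}--\ref{enum:quiver-5} could impose further relations among loops (e.g.\ $\loopy{i}^2\loopy{j}=\pathxx{i}{j}{i}\loopy{i}$ ties the top-degree loop class to paths), so $\vpar^3\vfrac{3}$ is an upper bound, not a lower bound. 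The confluence/rewriting alternative could in principle work but is only announced, not verified.

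The device the paper uses, and which your proposal is missing, is an explicit $\C$-linear map $\mathrm{tr}\colon\zig[\infty]\to\C$ supported on the degree-six loop monomials (sending $\loopy{x}^2\loopy{z},\loopy{x}\loopy{y}^2,\loopy{y}\loopy{z}^2$ to $1$ and $\loopy{x}^2\loopy{y},\loopy{y}^2\loopy{z},\loopy{x}\loopy{z}^2$ to $-1$, and everything else to $0$). One checks it respects the relations, that it is a symmetric trace, and then exhibits, for each proposed spanning set, a dual set with respect to the induced pairing; non-degeneracy of the pairing on the spanning sets forces linear independence. If you want to salvage your write-up, this (or an honest construction of a faithful $\zig[\infty]$-module) is the step you must supply.
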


\begin{proof}
This is clear for the trivial hom-spaces by 
\ref{enum:quiver-1}. So let us focus on the non-trivial ones.
To this end, we first consider  
homogeneous linear combinations of loops of degree $2,4,6$: 
\begin{gather}\label{eq:loopy-rels}
\begin{gathered}
\loopy{x}+\loopy{y}=-\loopy{z},
\\
\loopy{x}^2+\loopy{x}\loopy{y}=-\loopy{x}\loopy{z},
\quad\quad
\loopy{x}\loopy{y}+\loopy{y}^2=-\loopy{y}\loopy{z},
\quad\quad
\loopy{x}\loopy{z}+\loopy{y}\loopy{z}=-\loopy{z}^2,
\\
\loopy{x}^2\loopy{z}=\loopy{x}\loopy{y}^2=\loopy{y}\loopy{z}^2
=
-\loopy{x}^2\loopy{y}=-\loopy{y}^2\loopy{z}=-\loopy{x}\loopy{z}^2.
\end{gathered}
\end{gather}
These relations follow immediately from \ref{enum:quiver-2}, 
and show that the endomorphism space of any vertex is spanned by the ones in \eqref{eq:basis-quiver}.

Next, we consider all homogeneous elements of degree $4$ in $\zig[\ast]$. 
The ones that are composites of two paths leaving a 
triangular face are zero by \ref{enum:quiver-1}, 
and the remaining ones are linear combinations of those 
appearing in \ref{enum:quiver-3}, \ref{enum:quiver-4}, \ref{enum:quiver-5}. 

The homogeneous elements of degree $6$ that are not annihilated by 
\ref{enum:quiver-1} are
\begin{gather*}
\pathxxx{i}{j}{k}{i}
\stackrel{\ref{enum:quiver-5}}{=}
\pathxx{i}{k}{i} \loopy{i}
\stackrel{\ref{enum:quiver-4}}{=}
\loopy{i}^2\loopy{k}
\stackrel{\eqref{eq:loopy-rels}}{=}
-
\loopy{i}^2\loopy{j}
\stackrel{\ref{enum:quiver-4}}{=}
-
\pathxx{i}{j}{i} \loopy{i}
\stackrel{\ref{enum:quiver-5}}{=}
-
\pathxxx{i}{k}{j}{i},
\\
\pathxxx{i}{j}{i}{j}
\stackrel{\ref{enum:quiver-4}}{=}
\pathx{i}{j}
\loopy{i}\loopy{j}
\stackrel{\ref{enum:quiver-2}}{=}
\pathx{i}{j}
(-\loopy{i}\loopy{k}-\loopy{j}\loopy{k})
\stackrel{\ref{enum:quiver-3}}{=}
0,
\\
\pathxxx{i}{j}{i}{k}
\stackrel{\ref{enum:quiver-4}}{=}
\pathx{i}{k}\loopy{i}\loopy{j}
\stackrel{\ref{enum:quiver-3}}{=}
0,
\end{gather*}
including versions obtained by changing sources and targets.

Finally, we claim that
all homogeneous elements of degree $>6$ in $\zig[\ast]$ 
are zero in $\zig[\infty]$. For paths leaving a face or composites of only loops,  
there is nothing to show by \ref{enum:quiver-1} 
and \ref{enum:quiver-2}. For paths of length four around one 
triangular face, we get 
\begin{gather*}
\pathxxxx{i}{j}{k}{i}{j}
\stackrel{\ref{enum:quiver-5}}{=}
\pathxxx{i}{k}{i}{j}\loopy{i}
\stackrel{\ref{enum:quiver-4}}{=}
\pathx{i}{j}\loopy{i}^2\loopy{k}
\stackrel{\ref{enum:quiver-3}}{=}
0,
\\
\pathxxxx{i}{j}{k}{i}{k}
\stackrel{\ref{enum:quiver-5}}{=}
\pathxxx{i}{k}{i}{k}\loopy{i}
\stackrel{\ref{enum:quiver-4}}{=}
\pathx{i}{k}\loopy{i}^2\loopy{k}
\stackrel{\eqref{eq:loopy-rels}}{=}
-\pathx{i}{k}\loopy{i}^2\loopy{j}
\stackrel{\ref{enum:quiver-3}}{=}
0,
\\
\pathxxxx{i}{j}{i}{j}{i}
\stackrel{\ref{enum:quiver-4}}{=}
\pathxx{i}{j}{i}\loopy{i}\loopy{j}
\stackrel{\ref{enum:quiver-4}}{=}
\loopy{i}^2\loopy{j}^2
\stackrel{\eqref{eq:loopy-rels}}{=}
0,
\\
\pathxxxx{i}{j}{i}{k}{i}
\stackrel{\ref{enum:quiver-4}}{=}
\pathxx{i}{k}{i}\loopy{i}\loopy{j}
\stackrel{\ref{enum:quiver-4}}{=}
\loopy{i}^2\loopy{j}\loopy{k}
\stackrel{\ref{enum:quiver-2}}{=}
0.
\end{gather*}
Again, there are analogous relations obtained by changing sources and targets.  
Altogether this shows that the sets in 
\eqref{eq:basis-quiver} span the hom-spaces.

To show linear independence, we consider the following linear map, which on monomials in the path algebra is given by:
\[
\mathrm{tr}\colon
\zig[\infty]\to\C,
\quad
\mathrm{tr}(a)
=
\begin{cases}
1, &\text{if }a\in\{\loopy{x}^2\loopy{z},\loopy{x}\loopy{y}^2,\loopy{y}\loopy{z}^2\},
\\
-1, &\text{if }a\in\{\loopy{x}^2\loopy{y},\loopy{y}^2\loopy{z},\loopy{x}\loopy{z}^2\},
\\
0, &\text{else}.
\end{cases}
\]
Note that $\mathrm{tr}$ is well-defined, which can be 
checked by showing that the relations are preserved. It is also easy to see 
that $\mathrm{tr}$ is a non-degenerate and symmetric trace form, e.g.
\begin{gather*}
\mathrm{tr}(\pathx{x}{y}\cdot\pathxx{y}{z}{x})
=
\mathrm{tr}(\pathxxx{x}{y}{z}{x})
\stackrel{\ref{enum:quiver-5}}{=}
\mathrm{tr}(\pathxx{x}{y}{x}\loopy{x})
\stackrel{\ref{enum:quiver-4}}{=}
\mathrm{tr}(\loopy{x}^2\loopy{y})
\\
\stackrel{\eqref{eq:loopy-rels}}{=}
\mathrm{tr}(-\loopy{x}\loopy{y}^2)
\stackrel{\ref{enum:quiver-4}}{=}
-\mathrm{tr}(\loopy{y}\pathxx{y}{x}{y})
\stackrel{\ref{enum:quiver-5}}{=}
\mathrm{tr}(\pathxxx{y}{z}{x}{y})
=
\mathrm{tr}(\pathxx{y}{z}{x}\cdot\pathx{x}{y})
\end{gather*}
We can now write down sets of morphisms 
which are dual to the ones from \eqref{eq:basis-quiver} with 
respect to $\mathrm{tr}$, e.g.:
\begin{align*}
(
\somevert{i}_{m,n},
\loopy{x},
\loopy{y},
\loopy{x}\loopy{y},
\loopy{x}\loopy{z},
\loopy{x}^2\loopy{z}
)
&
\leftrightsquigarrow
(
\loopy{x}^2\loopy{z},
\loopy{x}\loopy{z},
\loopy{x}\loopy{y},
\loopy{y},
\loopy{x},
\somevert{i}_{m,n}
),
\\
(
\pathx{i}{j}, \pathx{i}{j}\loopy{i}
) 
&
\leftrightsquigarrow
(
\pm \pathx{j}{i}\loopy{j}, \pm  \pathx{j}{i}
).
\end{align*}
Since these sets span the corresponding hom-spaces, we are done.
\end{proof}

The following result follows immediately from the proof of \fullref{lemma:quiver-homs}. 

\begin{corollary}\label{corollary:frob}
$\zig[\infty]$ is a positively graded, symmetric Frobenius algebra.
\end{corollary}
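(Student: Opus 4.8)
The plan is to read off both assertions from the material already assembled in the proof of \fullref{lemma:quiver-homs}, organizing it into the three ingredients that constitute a positively graded symmetric Frobenius structure: the grading, the symmetric trace, and its non-degeneracy.

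First I would record the grading. By \fullref{definition:quiver} and \fullref{definition:quiver-infty}, $\zig[\infty]$ is generated by the arrows $\pathx{i}{j}$ and the loops $\loopy{i}$, all placed in degree $2$, and every defining relation is homogeneous; hence $\zig[\infty]$ carries a $\Z_{\geq 0}$-grading, concentrated in even degrees, whose degree-zero part is the semisimple algebra $\bigoplus_{(m,n)}\C\,\somevert{i}_{m,n}$. This is exactly the statement that $\zig[\infty]$ is positively graded. Moreover, by the degree analysis in the proof of \fullref{lemma:quiver-homs} each nonzero hom-space is supported in degrees $\leq 6$ (every local endomorphism algebra $\somevert{i}_{m,n}\zig[\infty]\somevert{i}_{m,n}$ is, via the relations of \fullref{definition:quiver-infty}, a copy of the cohomology ring of the full flag variety of $\C^3$, of top degree $6$, and paths raise degree by $2$), so the grading is locally bounded with uniform top local degree $6$.

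Second I would invoke the trace form. The proof of \fullref{lemma:quiver-homs} constructs a linear functional $\mathrm{tr}\colon\zig[\infty]\to\C$, nonzero only on the degree-$6$ local loop monomials (so $\mathrm{tr}$ is homogeneous of degree $-6$), checks that it is well-defined by verifying it annihilates the defining relations, and shows it is symmetric, $\mathrm{tr}(ab)=\mathrm{tr}(ba)$. Non-degeneracy is then witnessed, for every pair of vertices, by the explicit bases of $\Hom_{\C}(\somevert{i}_{m,n},\somevert{j}_{m^{\prime},n^{\prime}})$ of \eqref{eq:basis-quiver} together with the dual bases exhibited at the end of that proof: since all hom-spaces are finite-dimensional and vanish outside the set of equal or adjacent vertices, the pairing $(a,b)\mapsto\mathrm{tr}(ab)$ restricts to a perfect pairing $\somevert{i}\,\zig[\infty]\,\somevert{j}\times\somevert{j}\,\zig[\infty]\,\somevert{i}\to\C$ identifying the degree-$d$ part of one side with the degree-$(6-d)$ part of the other. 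Combining the two ingredients: $\zig[\infty]$ is a locally unital graded algebra with semisimple degree-zero part and a homogeneous, symmetric, non-degenerate trace, i.e. a positively graded symmetric Frobenius algebra.

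I do not expect a genuine obstacle here, since every computation needed has been carried out in the proof of \fullref{lemma:quiver-homs}; the only point demanding care is a bookkeeping one, namely spelling out the meaning of "Frobenius" in the locally unital, infinite-dimensional setting of the graph $\graphA{\infty}$ — it must be read vertex-wise, as a compatible family of non-degenerate pairings between the finite-dimensional hom-spaces, with $\mathrm{tr}$ genuinely of homogeneous degree $-6$ uniformly across vertices. Once this convention is fixed, \fullref{corollary:frob} is immediate.
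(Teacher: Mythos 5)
Your proposal is correct and follows exactly the route the paper takes: the paper simply remarks that \fullref{corollary:frob} "follows immediately from the proof of \fullref{lemma:quiver-homs}", i.e. from the homogeneity of the defining relations (positivity of the grading) together with the symmetric trace $\mathrm{tr}$ of degree $-6$ and the explicit dual bases exhibited there (non-degeneracy). Your additional remark about reading "Frobenius" vertex-wise in the locally unital setting is a reasonable clarification but not a departure from the paper's argument.
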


\subsubsection{The quotient of level \texorpdfstring{$e$}{e}}\label{subsec:quotient-algebra-zig}

\begin{definition}\label{definition:quiver-e}
For fixed level $e$, let $\zigideal{e}$ 
be the two-sided ideal in $\zig[\infty]$ generated by
\begin{gather*}
\left\{
\somevert{i}_{m,n} \mid m+n\geq e+1
\right\}.
\end{gather*}
We define the trihedral zigzag algebra of level $e$ as
\[
\zig[e]=\zig[\infty]/\zigideal{e}
\]
and we call $\zigideal{e}$ the vanishing zigzag ideal of level $e$.
\end{definition}

Clearly, $\zig[e]$ has a basis given 
by the elements in \eqref{eq:basis-quiver} for $m+n\leq e$.
Thus, $\zig[e]$ is a finite-dimensional, positively graded algebra, which is a symmetric Frobenius 
algebra by \fullref{corollary:frob}. By the proof of 
\fullref{proposition:quadratic} it is also quadratic, 
as long as $e\neq 0$.

\subsubsection{Weak categorification}\label{subsec:quiver-algebra-weak}

Following ideas from 
\cite{KS1}, \cite{AT1} and
\cite[Section 2]{MT1}, we let 
$\lpro$, respectively $\rpro$, denote the 
left, respectively right, ideals in $\zig[\infty]$ 
generated by $\somevert{i}_{m,n}$. These
are indecomposable, graded projective $\zig[\infty]$-modules, and 
all indecomposable, graded projective left, respectively right, $\zig[\infty]$-modules 
are of this form, up to grading shifts. 

By the above, $\lpro\otimes\rpro$ is a biprojective 
$\zig[\infty]$-bimodule, i.e. it is
projective as a left and as a right $\zig[\infty]$-module. 
Therefore,  
\begin{gather*}
\thetaf{\gc}(\placeholder)
=
{\textstyle\bigoplus_{\somevert{x}_{m,n}}}
(\lpro[\somevert{x}_{m,n}]\otimes \rpro[\somevert{x}_{m,n}]\{-3\})
\otimes_{\zig[\infty]}\placeholder,
\quad\;
\thetaf{\oc}(\placeholder)
=
{\textstyle\bigoplus_{\somevert{y}_{m,n}}}
(\lpro[\somevert{y}_{m,n}]\otimes\rpro[\somevert{y}_{m,n}]\{-3\})
\otimes_{\zig[\infty]}\placeholder,
\\
\thetaf{\pc}(\placeholder)
=
{\textstyle\bigoplus_{\somevert{z}_{m,n}}}
(\lpro[\somevert{z}_{m,n}]\otimes\rpro[\somevert{z}_{m,n}]\{-3\})
\otimes_{\zig[\infty]}\placeholder,
\end{gather*}
define endofunctors on the category $\zigmod$ of 
finite-dimensional, graded projective (left) $\zig$-modules. 
Here $\otimes$ denotes $\otimes_{\C}$. 

To state the weak categorification we denote by 
$\twoEnd(\zigmod)$ the category of endofunctors on $\zigmod$. 
Considering $\subquo$ as a one-object category with the formal 
object $\ast$ and morphisms being 
its elements,  
we get the following.

\begin{lemma}\label{lemma:endo-functors}
The functor $\subquo\to\twoEnd(\zigmod)$ given by $\ast\mapsto\zigmod$ and
\[
\theta_{\gc}
\mapsto
\thetaf{\gc}(\placeholder),
\quad\quad
\theta_{\oc}
\mapsto
\thetaf{\oc}(\placeholder),
\quad\quad
\theta_{\pc}
\mapsto
\thetaf{\pc}(\placeholder)
\]
is well-defined. 

Moreover, decategorification gives the transitive 
$\aformvN$-representation $\M_{\graphA{\infty}}$ of $\subquo$.
\end{lemma}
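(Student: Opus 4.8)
The plan is to verify the two assertions of \fullref{lemma:endo-functors} separately: first the well-definedness of the functor $\subquo\to\twoEnd(\zigmod)$, and then the claim about its decategorification.

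For well-definedness, I would need to check that the functors $\thetaf{\gc}$, $\thetaf{\oc}$, $\thetaf{\pc}$ satisfy the defining relations \eqref{eq:first-rel} and \eqref{eq:second-rel} of $\subquo$, up to natural isomorphism. Since $\lpro[\somevert{i}_{m,n}]\otimes\rpro[\somevert{i}_{m,n}]\otimes_{\zig[\infty]}\placeholder\cong\lpro[\somevert{i}_{m,n}]\otimes(\rpro[\somevert{i}_{m,n}]\otimes_{\zig[\infty]}\placeholder)$, computing a composition like $\thetaf{\gc}\thetaf{\oc}$ reduces to understanding the tensor products $\rpro[\somevert{x}_{m,n}]\otimes_{\zig[\infty]}\lpro[\somevert{y}_{m^{\prime},n^{\prime}}]\cong\somevert{x}_{m,n}\zig[\infty]\somevert{y}_{m^{\prime},n^{\prime}}$, which are exactly the hom-spaces computed in \fullref{lemma:quiver-homs}. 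For \eqref{eq:first-rel}, the relevant fact is that $\somevert{x}_{m,n}\zig[\infty]\somevert{x}_{m,n}$ has graded dimension $\vpar^3\vfrac{3}$, so $\thetaf{\gc}\thetaf{\gc}\cong\thetaf{\gc}^{\oplus\vfrac{3}}$ once the grading shift $\{-3\}$ is accounted for; concretely one uses that $\somevert{x}_{m,n}\zig[\infty]\somevert{x}_{m,n}$ is a free module of rank $\vfrac{3}$ over $\C$ in the appropriate graded sense, giving a natural isomorphism of the two composite functors. For \eqref{eq:second-rel}, I would expand both $\thetaf{\gc}\thetaf{\oc}\thetaf{\gc}$ and $\thetaf{\gc}\thetaf{\pc}\thetaf{\gc}$ in terms of the middle hom-spaces: both composites are built from chains $\somevert{x}_{m,n}\zig[\infty]\somevert{i}_{m^{\prime},n^{\prime}}\otimes\somevert{i}_{m^{\prime},n^{\prime}}\zig[\infty]\somevert{x}_{m^{\prime\prime},n^{\prime\prime}}$ with $\somevert{i}\in\{\somevert{y},\somevert{z}\}$, and \fullref{lemma:quiver-homs} together with the combinatorics of the $\graphA{\infty}$ graph (each $\gc$-vertex is adjacent to the same number of $\oc$- and $\pc$-vertices, reflecting quasi-regularity of $\graphA{\infty}$, cf. \fullref{lemma:weakly-regular}) shows that the two composites have isomorphic underlying bimodules. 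The symmetry among colors then handles the remaining relations.

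For the decategorification statement, I would pass to the split Grothendieck group $\GGcv{\zigmod}$, which has $\aformvN$-basis the classes $[\lpro[\somevert{i}_{m,n}]]$ indexed by vertices of $\graphA{\infty}$, i.e.\ by the cut-off weights grouped by central character into $\Gset$, $\Oset$, $\Pset$. The endofunctor $\thetaf{\gc}$ sends $[\lpro[\somevert{j}_{m^{\prime},n^{\prime}}]]$ to $\bigoplus_{\somevert{x}_{m,n}}[\lpro[\somevert{x}_{m,n}]]\cdot\qdim[\vpar](\somevert{x}_{m,n}\zig[\infty]\somevert{j}_{m^{\prime},n^{\prime}})$, and by \fullref{lemma:quiver-homs} this graded dimension is $\vpar^3\vfrac{3}$ when $\somevert{x}_{m,n}=\somevert{j}_{m^{\prime},n^{\prime}}$, is $\vpar\vnumber{2}$ when the two vertices are adjacent (both on a triangular face), and is $0$ otherwise. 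After the overall shift $\{-3\}$ this is precisely the matrix $\M_{\graphA{\infty}}(\theta_{\gc})$ from \eqref{eq:main-matrices}, with the diagonal block $\vnumber{2}\vnumber{3}\Idmatrix$ coming from the endomorphism term (using $\vfrac{3}=\vnumber{3}\vnumber{2}\vnumber{1}=\vnumber{2}\vnumber{3}$) and the off-diagonal blocks $\vnumber{2}A^{\mathrm{T}}$, $\vnumber{2}C$ from the adjacency data of $\graphA{\infty}$, with the primary color strand contributing the extra factor $\vnumber{2}$. The same computation for $\thetaf{\oc}$ and $\thetaf{\pc}$ matches $\M_{\graphA{\infty}}(\theta_{\oc})$ and $\M_{\graphA{\infty}}(\theta_{\pc})$, and transitivity is immediate since $\graphA{\infty}$ is (strongly) connected.

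I expect the main obstacle to be the verification of \eqref{eq:second-rel} at the level of functors rather than Grothendieck classes: one must produce an actual natural isomorphism $\thetaf{\gc}\thetaf{\oc}\thetaf{\gc}\cong\thetaf{\gc}\thetaf{\pc}\thetaf{\gc}$, not merely check that both sides have the same graded dimension, and this requires a careful bookkeeping of the multiplication maps in $\zig[\infty]$ — in particular the sliding-loop relations \ref{enum:quiver-3} and the zigzag relations \ref{enum:quiver-4}, \ref{enum:quiver-5} — to match up the two bimodules $\somevert{x}_{m,n}\zig[\infty]\somevert{y}_{\ast}\otimes_{\C}\somevert{y}_{\ast}\zig[\infty]\somevert{x}_{m^{\prime},n^{\prime}}$ and $\somevert{x}_{m,n}\zig[\infty]\somevert{z}_{\ast}\otimes_{\C}\somevert{z}_{\ast}\zig[\infty]\somevert{x}_{m^{\prime},n^{\prime}}$ summed over the relevant vertices. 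This is analogous to the dihedral computation in \cite[Section 2]{MT1} and should go through by the same method, but it is the one genuinely computational point; everything else is a direct consequence of the hom-space description in \fullref{lemma:quiver-homs} and the definition of $\M_{\graphA{\infty}}$.
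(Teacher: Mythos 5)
Your proposal is correct and follows essentially the same route as the paper: compositions of the $\thetaf{\tduc}$ reduce to the hom-spaces of \fullref{lemma:quiver-homs}, relation \eqref{eq:first-rel} follows from the graded dimension $\vpar^3\vfrac{3}$ of the endomorphism spaces, relation \eqref{eq:second-rel} from the equality of the numbers of $\oc$- and $\pc$-colored intermediate vertices in $\graphA{\infty}$, and the decategorification matches \eqref{eq:main-matrices}. The one obstacle you flag is not actually one: since the defining bimodules are external tensor products $\lpro[\somevert{i}_{m,n}]\otimes_{\C}\rpro[\somevert{i}_{m,n}]$, the middle factors $\rpro[\somevert{i}_{m,n}]\otimes_{\zig[\infty]}\lpro[\somevert{j}_{m^{\prime},n^{\prime}}]$ appearing in a composite are pure (graded) multiplicity spaces, so the bimodule isomorphism $\thetaf{\gc}\thetaf{\oc}\thetaf{\gc}\cong\thetaf{\gc}\thetaf{\pc}\thetaf{\gc}$ is already forced by the graded-dimension count and no bookkeeping of the multiplication maps of $\zig[\infty]$ is required --- which is exactly how the paper argues.
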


\begin{proof}
Let us first show that 
$\thetaf{\gc}\thetaf{\gc}\cong \thetaf{\gc}^{\oplus \vfrac{3}}$, with 
the superscript $\oplus \vfrac{3}$ meaning six degree-shifted copies of $\thetaf{\gc}$, with $\vpar$ 
corresponding to a degree-shift by one. Similar arguments show that the same holds for 
$\thetaf{\oc}$ and $\thetaf{\pc}$, of course. Note that $\thetaf{\gc}\thetaf{\gc}$ is given by 
tensoring with the bimodule 
\[
{\textstyle\bigoplus_{\somevert{x}_{m,n}}}\,
\lpro[\somevert{x}_{m,n}]\otimes\rpro[\somevert{x}_{m,n}]\otimes_{\zig[\infty]}
\lpro[\somevert{x}_{m,n}]\otimes\rpro[\somevert{x}_{m,n}]\{-6\},
\]
with all other direct summands being zero. 
By definition, this is isomorphic to 
\begin{gather*}
{\textstyle\bigoplus_{\somevert{x}_{m,n}}}\,
\lpro[\somevert{x}_{m,n}]\otimes \mathrm{End}_{\zig[\infty]}(\somevert{x}_{m,n})\otimes\rpro[\somevert{x}_{m,n}]\{-6\}\cong 
{\textstyle\bigoplus_{\somevert{x}_{m,n}}}\,
\left(\lpro[\somevert{x}_{m,n}]\otimes
\rpro[\somevert{x}_{m,n}]\{-3\}\right)^{\oplus\vnumber{3}!},
\end{gather*}
where the displayed isomorphism follows from \fullref{lemma:quiver-homs}.

Next, we show that 
$\thetaf{\gc}\thetaf{\oc}\thetaf{\gc}\cong\thetaf{\gc}\thetaf{\pc}\thetaf{\gc}$. 
Again, similar arguments show the 
analogous result in the remaining cases. The functor 
$\thetaf{\gc}\thetaf{\oc}\thetaf{\gc}$ 
is given by tensoring with 
\begin{gather}\label{eq:gogbimodule1}
{\textstyle\bigoplus}\,
\lpro[\somevert{x}_{m,n}]\otimes\rpro[\somevert{x}_{m,n}]\otimes_{\zig[\infty]} \lpro[\somevert{y}_{m^{\prime},n^{\prime}}]\otimes\rpro[\somevert{y}_{m^{\prime},n^{\prime}}] \otimes_{\zig[\infty]} \lpro[\somevert{x}_{m^{\prime\prime},n^{\prime\prime}}]\otimes\rpro[\somevert{x}_{m^{\prime\prime},n^{\prime\prime}}]\{-9\},
\end{gather}
where the direct sum is over all neighboring 
pairs $(m,n), (m^{\prime},n^{\prime})$ and $(m^{\prime},n^{\prime}),
(m^{\prime\prime},n^{\prime\prime})$, 
i.e. $(m^{\prime},n^{\prime})\in \{(m\pm 1,n), (m,n\pm 1), (m\pm 1, n\mp 1)\}$ 
and $(m^{\prime\prime},n^{\prime\prime})\in\{(m,n), (m\pm 1,n\pm 1), 
(m\pm 2, n\mp 1), (m\pm 1, n\mp 2)\}$. This is isomorphic to 
\begin{gather}\label{eq:gogbimodule2}
\begin{gathered}
{\textstyle\bigoplus}\,
\lpro[\somevert{x}_{m,n}]\otimes\Hom_{\zig[\infty]}
(\somevert{x}_{m,n}, \somevert{y}_{m^{\prime},n^{\prime}})\otimes
\Hom_{\zig[\infty]}(\somevert{y}_{m^{\prime},n^{\prime}},
\somevert{x}_{m^{\prime\prime},n^{\prime\prime}})\otimes\rpro[\somevert{x}_{m^{\prime\prime},n^{\prime\prime}}]   \{-9\}
\cong
\\
{\textstyle\bigoplus}\,\left(
\lpro[\somevert{x}_{m,n}]\otimes
\rpro[\somevert{x}_{m^{\prime\prime},n^{\prime\prime}}]\{-7\}\right)^{\oplus \vnumber{2}^2},
\end{gathered}
\end{gather}
where the superscript $\oplus \vnumber{2}^2$ should be interpreted as before.
The isomorphism displayed in
\eqref{eq:gogbimodule2} follows from \fullref{lemma:quiver-homs}. 

The functor $\thetaf{\gc}\thetaf{\oc}\thetaf{\gc}$ is 
given by tensoring with the $\zig[\infty]$-bimodule obtained from the 
one in \eqref{eq:gogbimodule1} by replacing 
$\somevert{y}_{m^{\prime},n^{\prime}}$ with $\somevert{z}_{m^{\prime},n^{\prime}}$, which is 
also isomorphic to the $\zig[\infty]$-bimodule in \eqref{eq:gogbimodule2}. 
Although the neighboring pairs change when 
we replace $\somevert{y}_{m^{\prime},n^{\prime}}$ 
with $\somevert{z}_{m^{\prime},n^{\prime}}$, 
their total number is equal by symmetry. 
Thus, the final number of direct summands in \eqref{eq:gogbimodule2} is the 
same in both cases. This finishes the 
proof that $\thetaf{\gc}\thetaf{\oc}\thetaf{\gc}\cong\thetaf{\gc}\thetaf{\pc}\thetaf{\gc}$.
\end{proof}

\begin{proposition}\label{proposition:endo-functors}
The functor from \fullref{lemma:endo-functors} 
descends to a functor $\subquo[e]\to\twoEnd(\zig[e])$. 
Moreover, decategorification gives the transitive 
$\aformvN$-representation $\M_{\graphA{e}}$ of $\subquo[e]$.
\end{proposition}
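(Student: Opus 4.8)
The plan is to transcribe the argument of \fullref{lemma:endo-functors} to level $e$ and then read off the decategorification. Since $\zig[e]=\zig[\infty]/\zigideal{e}$, where $\zigideal{e}$ is generated by the idempotents $\somevert{i}_{m,n}$ with $m+n\geq e+1$, the indecomposable graded projective $\zig[e]$-modules are the $\zig[e]\otimes_{\zig[\infty]}\lpro[\somevert{i}_{m,n}]$ with $m+n\leq e$, and the hom-spaces of $\zig[e]$ between vertices surviving in the cut-off $\graphA{e}$ are the level-$e$ truncations of those in \fullref{lemma:quiver-homs} (this is precisely the basis of $\zig[e]$ recorded after \fullref{definition:quiver-e}). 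I would therefore define the endofunctors $\thetaf{\gc},\thetaf{\oc},\thetaf{\pc}$ of $\zigmod$ by the same formulas as in \fullref{lemma:endo-functors}, summing now only over the $\somevert{x}$-, $\somevert{y}$-, resp.\ $\somevert{z}$-coloured vertices $\somevert{x}_{m,n}$, $\somevert{y}_{m,n}$, $\somevert{z}_{m,n}$ with $m+n\leq e$; equivalently, by tensoring with the biprojective $\zig[e]$-bimodules obtained from the level-$\infty$ bimodules by reduction modulo $\zigideal{e}$.

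First I would check that the defining relations \eqref{eq:first-rel} and \eqref{eq:second-rel} of $\subquo$ still hold for these functors. This is the verbatim analogue of the computation in the proof of \fullref{lemma:endo-functors}: the only inputs are the graded dimensions of $\End_{\zig[e]}(\somevert{i}_{m,n})$ and of the hom-spaces between adjacent vertices of $\graphA{e}$, and by the truncated \fullref{lemma:quiver-homs} these are unchanged. Hence $\thetaf{\gc}\thetaf{\gc}\cong\thetaf{\gc}^{\oplus\vfrac{3}}$ (and cyclically for the other two colours) and $\thetaf{\gc}\thetaf{\oc}\thetaf{\gc}\cong\thetaf{\gc}\thetaf{\pc}\thetaf{\gc}$ (and cyclically), so the assignment $\theta_{\gc}\mapsto\thetaf{\gc}$, $\theta_{\oc}\mapsto\thetaf{\oc}$, $\theta_{\pc}\mapsto\thetaf{\pc}$ yields a weak categorification of $\subquo$ in $\twoEnd(\zigmod)$.

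Next I would decategorify. The split Grothendieck group $\GG{\zigmod}$ is free over $\aformvN$ on the classes of the indecomposable projectives $\lpro[\somevert{i}_{m,n}]$ with $m+n\leq e$, which I identify with the vertex set of $\graphA{e}$. Using the truncated hom-space dimensions from \fullref{lemma:quiver-homs} together with the fact that no edge of $\graphA{e}$ joins two vertices of the same colour, a direct bookkeeping---the level-$e$ version of the one in the proof of \fullref{lemma:endo-functors}, the shift $\{-3\}$ producing the overall factor $\vnumber{2}$---shows that $[\thetaf{\gc}]$, $[\thetaf{\oc}]$, $[\thetaf{\pc}]$ act on $\GG{\zigmod}$ exactly by the matrices $\M_{\graphA{e}}(\theta_{\gc})$, $\M_{\graphA{e}}(\theta_{\oc})$, $\M_{\graphA{e}}(\theta_{\pc})$ of \eqref{eq:main-matrices}. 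By \fullref{corollary:poly-killed} (via \fullref{proposition:poly-killed}), $\M_{\graphA{e}}$ descends to a representation of $\subquo[e]$, so the generators $\RKLx{m,n}$ with $m+n=e+1$ of $\killideal{e}$ act by zero on $\GG{\zigmod}$. Thus the weak categorification above kills $\killideal{e}$ and descends to $\subquo[e]=\subquo/\killideal{e}$, and since $\GG{\zigmod}$ is by construction an $\aformvN$-representation which is identified above with $\M_{\graphA{e}}$ (a transitive $\aformv$-representation by \fullref{lemma:trans-graphs}, as $\graphA{e}$ is admissible), this exhibits $\M_{\graphA{e}}$ as a transitive $\aformvN$-representation of $\subquo[e]$.

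The step I expect to be the main obstacle is pinning down the hom-space structure of $\zig[e]$ near the boundary of the triangle $\graphA{e}$, i.e.\ verifying that reduction of $\zig[\infty]$ modulo $\zigideal{e}$ kills exactly the paths running through vertices with $m+n\geq e+1$ and does not collapse the endomorphism ring of any surviving vertex. This is precisely the basis statement following \fullref{definition:quiver-e}; once it is in hand, everything else is the transcription of the level-$\infty$ arguments together with the bookkeeping that identifies the decategorified action with \eqref{eq:main-matrices}.
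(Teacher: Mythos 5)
Your proposal is correct and follows essentially the same route as the paper's own (very terse) proof: compute the action of the truncated functors on the indecomposable projectives via \fullref{lemma:quiver-homs}, identify the decategorified action with the matrices \eqref{eq:main-matrices} defining $\M_{\graphA{e}}$, and conclude the descent to $\subquo[e]$ from \fullref{corollary:poly-killed}. The paper omits exactly the bookkeeping you supply, so there is nothing to add.
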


\begin{proof}
Recall that the
$\aformvN$-representation 
$\M_{\graphA{e}}$ 
of $\subquo[e]$ from \fullref{definition:n-modules} satisfies 
\begin{gather*}
\Mt[\Gg]
=
\M_{\Gg}(\theta_{\gc})
+
\M_{\Gg}(\theta_{\oc})
+
\M_{\Gg}(\theta_{\pc})
=
\vnumber{2}\left(
\vnumber{3}\Idmatrix
+
A(\oGg)
\right).
\end{gather*} 

Up to natural isomorphism, the same holds for the functors 
$\thetaf{\gc},\thetaf{\oc}$ and $\thetaf{\pc}$ when they are 
applied to $\lpro[\somevert{x}_{m,n}], 
\lpro[\somevert{y}_{m,n}]$ and $\lpro[\somevert{z}_{m,n}]$, by 
\fullref{lemma:quiver-homs}. The proof uses exactly the same sort of arguments 
as the proof of \fullref{lemma:endo-functors}. We therefore omit further details. 
The statement then follows from \fullref{corollary:poly-killed}.
\end{proof}

\begin{remark}\label{remark:zig-zag-zog}
The quiver algebra defined in \cite{Gr1} is (in the 
case of $\graphA{e}$ graphs) a subalgebra of $\zig[e]$. 
We do not know any further connection between these two algebras.

In fact, up to certain scalars, the defining relations 
of the quiver algebra in \fullref{definition:quiver-infty} are the ones of the quiver algebra 
underlying the cell $2$-representations 
of $\subcatquo[e]$. Those scalars can be computed 
for small values of $e$, but we have not been able 
to compute them for general $e$, unfortunately. 
However, even without the correct scalars, we 
thought that the trihedral zigzag algebra, 
and its connection to \cite{Gr1}, 
was too nice to exclude it from this paper. 

One also wonders whether any of the constructions 
involving the zigzag algebras in \cite{HK1} have an analogue for the 
trihedral zigzag algebras. 
\end{remark}

\subsection{Generalizing dihedral \texorpdfstring{$2$}{2}-representation theory}\label{subsec:dihedral-group-cat}

\begin{dihedral}\label{remark:dihedral-group4}
Yet another analogy to the dihedral case $\dihquo[e]$ is provided 
by \fullref{problem:classification} and \fullref{proposition:poly-killed}: 
One can define transitive $\aformvN$-representations of $\dihquo$ 
analogously to the $\aformvN$-representations $\M_{\Gg}$, where 
in the dihedral case $\Gg$ is any connected, bicolored graph. These 
descend to the finite-dimensional $\dihquo[e]$ if and only if 
$\pxy[A(\Gg)]{e+1}=0$, where 
$\pxy[\placeholder]{e+1}$ is the Chebyshev 
polynomial as in \fullref{remark:dihedral-group2}. 
(This follows from \cite{KMMZ} and, a bit more directly, from \cite{MT1}.) 
In that case, the analog of \fullref{problem:classification} has a 
well-understood answer, namely $\Gg$ has to be of $\ADE$ 
Dynkin type.
\end{dihedral}

\begin{dihedral}\label{remark-zigzag}
In the dihedral case, the classification of 
simple transitive $2$-representation is an 
$\ADE$-type classification 
(assuming gradeability), cf. \cite{KMMZ} and \cite{MT1}. 
This follows from the classification of graphs recalled in 
\fullref{remark:dihedral-group4} and the associated $2$-representations, which,
in analogy to \fullref{subsec:cat-story}, can be constructed using algebra $1$-morphisms 
in the $\mathfrak{sl}_2$ analog of $\slqmod$ (cf. \cite[Section 7]{MMMT1}).
\end{dihedral}

\begin{dihedral}\label{remark-zigzag-real}
The quiver underlying the cell 
$2$-representa\-tions in the dihedral case is the zigzag algebra from \cite{HK1}, which could be 
presented as in our setup, although this is never done in the literature, 
using two loops $\loopy{x},\loopy{y}$ at each vertex, subject to the relations of the cohomology ring of the variety of full flags in $\C^2$, i.e. 
$\loopy{x}\loopy{y}=\loopy{y}\loopy{x}=0$, $\loopy{x}+\loopy{y}=0$.
(To make the connection with \cite{HK1}, note that this cohomology ring is 
isomorphic to $\C[X]/(X^2)$.) 
The same holds for all other simple transitive $2$-representations (in the dihedral case) 
with the zigzag algebra for the corresponding graph, see \cite{MT1}. Thus, \fullref{subsec:quiver} can 
be seen as the trihedral version of this. We think it would be interesting to work out the trihedral quiver algebras 
for other simple transitive $2$-representations of $\subcatquo[e]$.
\end{dihedral}
\addtocontents{toc}{\protect\setcounter{tocdepth}{1}}
%
\section*{Appendix: Generalized \texorpdfstring{$\ADE$}{ADE} Dynkin diagrams}\label{section:gen-D}

\renewcommand{\thesection}{App}
\renewcommand{\theequation}{\thesection-\arabic{equation}}
\renewcommand{\thefigure}{App-\arabic{figure}}
\setcounter{theoremm}{0}
\setcounter{equation}{0}
\setcounter{figure}{0}
\setcounter{subsection}{0}

In this appendix we have listed certain solutions to \fullref{problem:classification}. Following \cite{Zu}, we call 
these the generalized $\ADE$ Dynkin diagrams. The graphs below depend on the level $e$, which is 
the same as e.g. in \fullref{section:sl3-stuff} and is indicated as a subscript.

\subsection{The list}\label{subsec:gen-D-list}

\makeautorefname{figure}{Figures}

The following are the generalized $\ADE$ Dynkin diagrams.
There are three infinite families, displayed in \fullref{fig:typeA}, \ref{fig:typeD} 
and \ref{fig:typeC}, \makeautorefname{figure}{Figure} and a finite number of exceptions,  
displayed in \fullref{fig:typeE}.

\begin{figure}[ht]
\[
\begin{tikzpicture}[anchorbase, xscale=.35, yscale=.5]
	\draw [thick, myyellow] (0,0) node[below, black] {$\graphA{0}$} to (0,0);
	\node at (0,0) {$\bulletgstart$};
\end{tikzpicture}
\;\;,\;\;
\begin{tikzpicture}[anchorbase, xscale=.35, yscale=.5]
	\draw [thick, myyellow] (0,0) node[below, black] {$\graphA{1}$} to (1,1);
	\draw [thick, densely dotted, myblue] (0,0) to (-1,1);
	\draw [thick, densely dashed, myred] (1,1) to (-1,1);
	\node at (0,0) {$\bulletg$};
	\node at (1,1) {$\bulleto$};
	\node at (-1,1) {$\bulletp$};
\end{tikzpicture}
\;\;,\;\;
\begin{tikzpicture}[anchorbase, xscale=.35, yscale=.5]
	\draw [thick, myyellow] (0,0) node[below, black] {$\graphA{2}$} to (1,1) to (0,2);
	\draw [thick, myyellow] (0,2) to (-2,2);
	\draw [thick, densely dotted, myblue] (0,0) to (-1,1) to (0,2);
	\draw [thick, densely dotted, myblue] (0,2) to (2,2);
	\draw [thick, densely dashed, myred] (2,2) to (1,1) to (-1,1) to (-2,2);
	\node at (0,0) {$\bulletg$};
	\node at (0,2) {$\bulletg$};
	\node at (1,1) {$\bulleto$};
	\node at (-2,2) {$\bulleto$};
	\node at (2,2) {$\bulletp$};
	\node at (-1,1) {$\bulletp$};
\end{tikzpicture}
\;\;,\;\;
\begin{tikzpicture}[anchorbase, xscale=.35, yscale=.5]
	\draw [thick, myyellow] (0,0) node[below, black] {$\graphA{3}$} to (1,1) to (0,2) to (1,3) to (3,3);
	\draw [thick, myyellow] (0,2) to (-2,2) to (-3,3);
	\draw [thick, densely dotted, myblue] (0,0) to (-1,1) to (0,2) to (-1,3) to (-3,3);
	\draw [thick, densely dotted, myblue] (0,2) to (2,2) to (3,3);
	\draw [thick, densely dashed, myred] (1,1) to (-1,1) to (-2,2) to (-1,3) to (1,3) to (2,2) to (1,1);
	\node at (0,0) {$\bulletgstart$};
	\node at (0,2) {$\bulletg$};
	\node at (3,3) {$\bulletg$};
	\node at (-3,3) {$\bulletg$};
	\node at (1,1) {$\bulleto$};
	\node at (-2,2) {$\bulleto$};
	\node at (1,3) {$\bulleto$};
	\node at (2,2) {$\bulletp$};
	\node at (-1,1) {$\bulletp$};
	\node at (-1,3) {$\bulletp$};
\end{tikzpicture}
\;\;,\;\;
\begin{tikzpicture}[anchorbase, xscale=.35, yscale=.5]
	\draw [thick, myyellow] (0,0) node[below, black] {$\graphA{4}$} to (1,1) to (0,2) to (1,3) to (3,3) to (4,4);
	\draw [thick, myyellow] (0,2) to (-2,2) to (-3,3) to (-2,4) to (0,4) to (1,3);
	\draw [thick, densely dotted, myblue] (0,0) to (-1,1) to (0,2) to (-1,3) to (-3,3) to (-4,4);
	\draw [thick, densely dotted, myblue] (0,2) to (2,2) to (3,3) to (2,4) to (0,4) to (-1,3);
	\draw [thick, densely dashed, myred] (1,1) to (-1,1) to (-2,2) to (-1,3) to (1,3) to (2,2) to (1,1);
	\draw [thick, densely dashed, myred] (-4,4) to (-2,4) to (-1,3);
	\draw [thick, densely dashed, myred] (4,4) to (2,4) to (1,3);
	\node at (0,0) {$\bulletg$};
	\node at (0,2) {$\bulletg$};
	\node at (3,3) {$\bulletg$};
	\node at (-3,3) {$\bulletg$};
	\node at (0,4) {$\bulletg$};
	\node at (1,1) {$\bulleto$};
	\node at (-2,2) {$\bulleto$};
	\node at (1,3) {$\bulleto$};
	\node at (-2,4) {$\bulleto$};
	\node at (4,4) {$\bulleto$};
	\node at (2,2) {$\bulletp$};
	\node at (-1,1) {$\bulletp$};
	\node at (-1,3) {$\bulletp$};
	\node at (2,4) {$\bulletp$};
	\node at (-4,4) {$\bulletp$};
\end{tikzpicture}
\dots
\]
\caption{The infinite family of (generalized) type $\typeA$, indexed by $e\in\N$. The graph of type $\graphA{e}$ 
can be obtained by cutting off the $\slt$-weight lattice at level $e+1$, as in \eqref{eq:weight-picture}.}
\label{fig:typeA}
\end{figure}

\begin{figure}[ht]
\[
\begin{tikzpicture}[anchorbase, xscale=.35, yscale=.5]
	\draw [thick, myyellow] (0,0) node[below, black] {$\graphA{3}$} to (1,1) to (0,2) to (1,3) to (3,3);
	\draw [thick, myyellow] (0,2) to (-2,2) to (-3,3);
	\draw [thick, densely dotted, myblue] (0,0) to (-1,1) to (0,2) to (-1,3) to (-3,3);
	\draw [thick, densely dotted, myblue] (0,2) to (2,2) to (3,3);
	\draw [thick, densely dashed, myred] (1,1) to (-1,1) to (-2,2) to (-1,3) to (1,3) to (2,2) to (1,1);
	\node at (0,0) {$\bulletgstart$};
	\node at (0,2) {$\bulletg$};
	\node at (3,3) {$\bulletg$};
	\node at (-3,3) {$\bulletg$};
	\node at (1,1) {$\bulleto$};
	\node at (-2,2) {$\bulleto$};
	\node at (1,3) {$\bulleto$};
	\node at (2,2) {$\bulletp$};
	\node at (-1,1) {$\bulletp$};
	\node at (-1,3) {$\bulletp$};
	\draw [->, mygreen] (3,3.2) to [out=170, in=10] (-3,3.2);
	\draw [->, mygreen] (-3,2.8) to [out=280, in=170] (-.3,0);
	\draw [->, mygreen] (.3,0) to [out=10, in=260] (3,2.8);
	\draw [<-, myorange] (1,2.8) to (1,1.2);
	\draw [<-, myorange] (.8,1.1) to (-1.7,1.9);
	\draw [<-, myorange] (-1.7,2.1) to (.8,2.9);
	\draw [->, mypurple] (-1,2.8) to (-1,1.2);
	\draw [->, mypurple] (-.8,1.1) to (1.7,1.9);
	\draw [->, mypurple] (1.7,2.1) to (-.8,2.9);
\end{tikzpicture}
\rightsquigarrow
\begin{tikzpicture}[anchorbase, xscale=.35, yscale=.5]
	\draw [thick, myyellow] (0,0) node[below, black] {$\graphD{3}$} to (1,1);
	\draw [thick, myyellow] (-2,2) to (1,1) to (0,2);
	\draw [thick, myyellow] (2,2) to (1,1);
	\draw [thick, densely dotted, myblue] (0,0) to (-1,1);
	\draw [thick, densely dotted, myblue] (2,2) to (-1,1) to (0,2);
	\draw [thick, densely dotted, myblue] (-2,2) to (-1,1);
	\draw [thick, densely dashed, myred, double] (1,1) to (-1,1);
	\node at (0,0) {$\bulletgstart$};
	\node at (-2,2) {$\bulletg$};
	\node at (0,2) {$\bulletg$};
	\node at (2,2) {$\bulletg$};
	\node at (1,1) {$\bulleto$};
	\node at (-1,1) {$\bulletp$};
\end{tikzpicture}
,
\begin{tikzpicture}[anchorbase, xscale=.35, yscale=.5]
	\draw [thick, myyellow] (0,0) node[below, black] {$\graphA{6}$} to (1,1) to (0,2) to (1,3) to (3,3) to (4,4);
	\draw [thick, myyellow] (0,2) to (-2,2) to (-3,3) to (-2,4) to (0,4) to (1,3);
	\draw [thick, myyellow] (4,4) to (3,5);
	\draw [thick, myyellow] (-2,4) to (-3,5);
	\draw [thick, myyellow] (0,4) to (1,5);
	\draw [thick, myyellow] (6,6) to (4,6) to (3,5) to (1,5) to (0,6) to (-2,6) to (-3,5) to (-5,5) to (-6,6);
	\draw [thick, densely dotted, myblue] (0,0) to (-1,1) to (0,2) to (-1,3) to (-3,3) to (-4,4);
	\draw [thick, densely dotted, myblue] (0,2) to (2,2) to (3,3) to (2,4) to (0,4) to (-1,3);
	\draw [thick, densely dotted, myblue] (-4,4) to (-3,5);
	\draw [thick, densely dotted, myblue] (2,4) to (3,5);
	\draw [thick, densely dotted, myblue] (0,4) to (-1,5);
	\draw [thick, densely dotted, myblue] (-6,6) to (-4,6) to (-3,5) to (-1,5) to (0,6) to (2,6) to (3,5) to (5,5) to (6,6);
	\draw [thick, densely dashed, myred] (1,1) to (-1,1) to (-2,2) to (-1,3) to (1,3) to (2,2) to (1,1);
	\draw [thick, densely dashed, myred] (-4,4) to (-2,4) to (-1,3);
	\draw [thick, densely dashed, myred] (4,4) to (2,4) to (1,3);
	\draw [thick, densely dashed, myred] (-4,4) to (-5,5) to (-4,6) to (-2,6) to (-1,5) to (-2,4);
	\draw [thick, densely dashed, myred] (4,4) to (5,5) to (4,6) to (2,6) to (1,5) to (2,4);
	\draw [thick, densely dashed, myred] (-1,5) to (1,5);
	\node at (0,0) {$\bulletgstart$};
	\node at (-3,3) {$\bulletg$};
	\node at (-6,6) {$\bulletg$};
	\node at (0,2) {$\bulletg$};
	\node at (-3,5) {$\bulletg$};
	\node at (0,4) {$\bulletg$};
	\node at (3,3) {$\bulletg$};
	\node at (0,6) {$\bulletg$};
	\node at (3,5) {$\bulletg$};
	\node at (6,6) {$\bulletg$};
	\node at (-2,4) {$\bulleto$};
	\node at (1,1) {$\bulleto$};
	\node at (-2,2) {$\bulleto$};
	\node at (-5,5) {$\bulleto$};
	\node at (-2,6) {$\bulleto$};
	\node at (1,3) {$\bulleto$};
	\node at (1,5) {$\bulleto$};
	\node at (4,4) {$\bulleto$};
	\node at (4,6) {$\bulleto$};
	\node at (-4,4) {$\bulletp$};
	\node at (-1,1) {$\bulletp$};
	\node at (-1,3) {$\bulletp$};
	\node at (-4,6) {$\bulletp$};
	\node at (2,2) {$\bulletp$};
	\node at (-1,5) {$\bulletp$};
	\node at (2,4) {$\bulletp$};
	\node at (2,6) {$\bulletp$};
	\node at (5,5) {$\bulletp$};
	\draw [thick, mypurple, ->] (1.8,6) to [out=180, in=30] (-3.8,4.2);
	\draw [thick, mypurple, ->] (-3.8,3.8) to [out=330, in=180] (1.8,2);
	\draw [thick, mypurple, ->] (2,2.2) to [out=60, in=300] (2,5.8);
	\draw [thick, myorange, ->] (1,3.2) to (1,4.8);
	\draw [thick, myorange, ->] (.8,4.925) to (-1.8,4.075);
	\draw [thick, myorange, ->] (-1.8,3.925) to (.8,3.075);
	\draw [thick, mygreen, ->] (-6,5.8) to [out=270, in=180] (-.2,0);
	\draw [thick, mygreen, ->] (.2,0) to [out=0, in=270] (6,5.8);
	\draw [thick, mygreen, ->] (5.9,6.1) to [out=165, in=15] (-5.9,6.1);
\end{tikzpicture}
\rightsquigarrow
\begin{tikzpicture}[anchorbase, xscale=.35, yscale=.5]
	\draw [thick, myyellow] (0,0) node[below, black] {$\graphD{6}$} to (1,1) to (0,2) to (1,3);
	\draw [thick, myyellow] (0,2) to (-2,2);
	\draw [thick, myyellow] (-2,4) to (1,3) to (0,4);
	\draw [thick, myyellow] (2,4) to (1,3);
	\draw [thick, densely dotted, myblue] (0,0) to (-1,1) to (0,2) to (-1,3);
	\draw [thick, densely dotted, myblue] (0,2) to (2,2);
	\draw [thick, densely dotted, myblue] (-2,4) to (-1,3) to (0,4);
	\draw [thick, densely dotted, myblue] (2,4) to (-1,3);
	\draw [thick, densely dashed, myred] (1,1) to (-1,1) to (-2,2) to (-1,3);
	\draw [thick, densely dashed, myred] (1,3) to (2,2) to (1,1);
	\draw [thick, densely dashed, myred, double] (-1,3) to (1,3);
	\draw [thick, myyellow] (0,6) to [out=0, in=90] (2.5,4) to [out=270, in=0] (1,3);
	\draw [thick, densely dotted, myblue] (0,6) to [out=180, in=90] (-2.5,4) to [out=270, in=180] (-1,3);
	\draw [thick, densely dotted, myblue] (0,6) to [out=0, in=90] (3,4) to [out=270, in=0] (2,2);
	\draw [thick, myyellow] (0,6) to [out=180, in=90] (-3,4) to [out=270, in=180] (-2,2);
	\node at (0,0) {$\bulletgstart$};
	\node at (-2,4) {$\bulletg$};
	\node at (0,2) {$\bulletg$};
	\node at (0,4) {$\bulletg$};
	\node at (2,4) {$\bulletg$};
	\node at (0,6) {$\bulletg$};
	\node at (1,1) {$\bulleto$};
	\node at (-2,2) {$\bulleto$};
	\node at (1,3) {$\bulleto$};
	\node at (-1,1) {$\bulletp$};
	\node at (-1,3) {$\bulletp$};
	\node at (2,2) {$\bulletp$};
\end{tikzpicture}
\dots
\]
\caption{The infinite family of (generalized) type $\typeD$, indexed by $e\equiv 0\bmod 3$ and $e\neq 0$. 
The graph of type $\graphD{e}$ comes from the $\zeethree$-symmetry of the graph of type $\graphA{e}$ with 
the fixed points splitting into three copies, cf. \fullref{example:hom-formula}. 
(Note the double edges.) By convention, $\graphA{0}=\graphD{0}$.
}\label{fig:typeD}
\end{figure}

\begin{figure}[ht]
\[
\begin{tikzpicture}[anchorbase, xscale=.35, yscale=.5]
	\draw [thick, myyellow] (0,0) node[below, black] {$\graphC{1}\cong\graphA{1}$} to (1,1);
	\draw [thick, densely dotted, myblue] (0,0) to (-1,1);
	\draw [thick, densely dashed, myred] (1,1) to (-1,1);
	\node at (0,0) {$\bulletg$};
	\node at (1,1) {$\bulleto$};
	\node at (-1,1) {$\bulletp$};
\end{tikzpicture}
,
\begin{tikzpicture}[anchorbase, xscale=.35, yscale=.5]
	\draw [thick, myyellow] (0,0) node[below, black] {$\graphC{2}\cong\graphA{2}$} to (1,1) to (0,2) to (-2,2);
	\draw [thick, densely dotted, myblue] (0,0) to (-1,1) to (0,2) to (2,2);
	\draw [thick, densely dashed, myred] (2,2) to (1,1) to (-1,1) to (-2,2);
	\node at (0,0) {$\bulletg$};
	\node at (0,2) {$\bulletg$};
	\node at (1,1) {$\bulleto$};
	\node at (-2,2) {$\bulleto$};
	\node at (-1,1) {$\bulletp$};
	\node at (2,2) {$\bulletp$};
	\node at (0,1.35) {\text{\tiny$\graphC{1}$}};
	\node at (0,2.35) {\text{\tiny$\phantom{3}$}};
\end{tikzpicture}
,
\begin{tikzpicture}[anchorbase, xscale=.35, yscale=.5]
	\draw [thick, myyellow] (0,0) node[below, black] {$\graphC{3}$} to (1,1) to (0,2) to (-2,2) to [out=270, in=180] (0,0);
	\draw [thick, densely dotted, myblue] (0,0) to (-1,1) to (0,2) to (2,2) to [out=270, in=0] (0,0);
	\draw [thick, densely dashed, myred] (2,2) to (1,1) to (-1,1) to (-2,2) to [out=20, in=160] (2,2);
	\node at (0,0) {$\bulletg$};
	\node at (0,2) {$\bulletg$};
	\node at (1,1) {$\bulleto$};
	\node at (-2,2) {$\bulleto$};
	\node at (-1,1) {$\bulletp$};
	\node at (2,2) {$\bulletp$};
	\node at (0,2.35) {\text{\tiny$\phantom{3}$}};
\end{tikzpicture}
,
\begin{tikzpicture}[anchorbase, xscale=.35, yscale=.5]
	\draw [thick, myyellow] (0,0) node[below, black] {$\graphC{4}$} to (2,2) to [out=90, in=0] (0,4) to (-4,4);
	\draw [thick, myyellow] (0,4) to (-1,3) to (0,2) to (2,2);
	\draw [thick, densely dotted, myblue] (0,0) to (-2,2) to [out=90, in=180] (0,4) to (4,4);
	\draw [thick, densely dotted, myblue] (0,4) to (1,3) to (0,2) to (-2,2);
	\draw [thick, densely dashed, myred] (4,4) to (2,2) to [out=200, in=340] (-2,2) to (-4,4);
	\draw [thick, densely dashed, myred] (-2,2) to (-1,3) to (1,3) to (2,2);
	\node at (0,0) {$\bulletg$};
	\node at (0,4) {$\bulletg$};
	\node at (0,2) {$\bulletg$};
	\node at (2,2) {$\bulleto$};
	\node at (-4,4) {$\bulleto$};
	\node at (-1,3) {$\bulleto$};
	\node at (-2,2) {$\bulletp$};
	\node at (4,4) {$\bulletp$};
	\node at (1,3) {$\bulletp$};
	\node at (-.05,2.7) {\text{\tiny$\graphC{3}$}};
	\node at (0,4.75) {\text{\tiny$\phantom{3}$}};
\end{tikzpicture}
,
\begin{tikzpicture}[anchorbase, xscale=.35, yscale=.5]
	\draw [thick, myyellow] (0,0) node[below, black] {$\graphC{5}$} to (2,2) to [out=90, in=0] (0,4) to (-4,4) to [out=270, in=180] (0,0);
	\draw [thick, myyellow] (0,4) to (-1,3) to (0,2) to (2,2);
	\draw [thick, densely dotted, myblue] (0,0) to (-2,2) to [out=90, in=180] (0,4) to (4,4) to [out=270, in=0] (0,0);
	\draw [thick, densely dotted, myblue] (0,4) to (1,3) to (0,2) to (-2,2);
	\draw [thick, densely dashed, myred] (4,4) to (2,2) to [out=200, in=340] (-2,2) to (-4,4) to [out=20, in=160] (4,4);
	\draw [thick, densely dashed, myred] (-2,2) to (-1,3) to (1,3) to (2,2);
	\node at (0,0) {$\bulletg$};
	\node at (0,4) {$\bulletg$};
	\node at (0,2) {$\bulletg$};
	\node at (2,2) {$\bulleto$};
	\node at (-4,4) {$\bulleto$};
	\node at (-1,3) {$\bulleto$};
	\node at (-2,2) {$\bulletp$};
	\node at (4,4) {$\bulletp$};
	\node at (1,3) {$\bulletp$};
	\node at (0,4.75) {\text{\tiny$\phantom{3}$}};
\end{tikzpicture}
\cdots
\]
\caption{The infinite family of conjugate type $\typeA$, indexed by $e\in\N$. 
The graph of type $\graphC{e}$ comes from an iterative procedure on the graph of type $\graphA{e}$. 
By convention, $\graphA{0}=\graphC{0}$.
}\label{fig:typeC}
\end{figure}

\begin{figure}[ht]
\[
\begin{tikzpicture}[anchorbase, xscale=.35, yscale=.5]
	\draw [thick, myyellow] (-1,3) to (2,2) to (1,1);
	\draw [thick, myyellow] (-1,3) to (-2,2) to (1,1);
	\draw [thick, myyellow] (0,4) to (-1,3);
	\draw [thick, myyellow] (0,0) node[below, black] {$\graphE{5}$} to (1,1);
	\draw [thick, myyellow] (2,2) to (3,3);
	\draw [thick, myyellow] (-2,2) to (-3,1);
	\draw [thick, densely dotted, myblue] (1,3) to (2,2) to (-1,1);
	\draw [thick, densely dotted, myblue] (1,3) to (-2,2) to (-1,1);
	\draw [thick, densely dotted, myblue] (0,4) to (1,3);
	\draw [thick, densely dotted, myblue] (0,0) to (-1,1);
	\draw [thick, densely dotted, myblue] (-2,2) to (-3,3);
	\draw [thick, densely dotted, myblue] (2,2) to (3,1);
	\draw [thick, densely dashed, myred] (1,3) to (-1,3) to (-1,1);
	\draw [thick, densely dashed, myred] (1,3) to (1,1) to (-1,1);
	\draw [thick, densely dashed, myred] (1,1) to (3,1);
	\draw [thick, densely dashed, myred] (-1,3) to (-3,3);
	\draw [thick, densely dashed, myred] (-3,1) to (-1,1);
	\draw [thick, densely dashed, myred] (3,3) to (1,3);
	\node at (0,0) {$\bulletg$};
	\node at (-2,2) {$\bulletg$};
	\node at (2,2) {$\bulletg$};
	\node at (0,4) {$\bulletg$};
	\node at (1,1) {$\bulleto$};
	\node at (-1,3) {$\bulleto$};
	\node at (-3,1) {$\bulleto$};
	\node at (3,3) {$\bulleto$};
	\node at (-1,1) {$\bulletp$};
	\node at (1,3) {$\bulletp$};
	\node at (3,1) {$\bulletp$};
	\node at (-3,3) {$\bulletp$};
\end{tikzpicture}
\;\;,\;\;
\begin{tikzpicture}[anchorbase, xscale=.35, yscale=.5]
	\draw [thick, myyellow] (0,0) node[below, black] {$\graphE{9_1}$} to (1,1) to (0,3);
	\draw [thick, myyellow] (-4,0) to (-3,1) to (0,3);
	\draw [thick, myyellow] (4,0) to (5,1) to (0,3);
	\draw [thick, myyellow, double] (0,3) to (4,3);
	\draw [thick, densely dotted, myblue] (0,0) to (-1,1) to (0,3);
	\draw [thick, densely dotted, myblue] (4,0) to (3,1) to (0,3);
	\draw [thick, densely dotted, myblue] (-4,0) to (-5,1) to (0,3);
	\draw [thick, densely dotted, myblue, double] (0,3) to (-4,3);
	\draw [thick, densely dashed, myred] (-3,1) to (-5,1);
	\draw [thick, densely dashed, myred] (1,1) to (-1,1);
	\draw [thick, densely dashed, myred] (5,1) to (3,1);
	\draw [thick, densely dashed, myred] (-5,1) to (4,3);
	\draw [thick, densely dashed, myred] (-1,1) to (4,3);
	\draw [thick, densely dashed, myred] (3,1) to (4,3);
	\draw [thick, densely dashed, myred] (5,1) to (-4,3);
	\draw [thick, densely dashed, myred] (1,1) to (-4,3);
	\draw [thick, densely dashed, myred] (-3,1) to (-4,3);
	\draw [thick, densely dashed, myred] (-4,3) to [out=10, in=180] (0,3.5) to [out=0, in=170] (4,3);
	\node at (0,0) {$\bulletg$};
	\node at (-4,0) {$\bulletg$};
	\node at (4,0) {$\bulletg$};
	\node at (0,3) {$\bulletg$};
	\node at (-3,1) {$\bulleto$};
	\node at (1,1) {$\bulleto$};
	\node at (5,1) {$\bulleto$};
	\node at (4,3) {$\bulleto$};
	\node at (-5,1) {$\bulletp$};
	\node at (-1,1) {$\bulletp$};
	\node at (3,1) {$\bulletp$};
	\node at (-4,3) {$\bulletp$};
\end{tikzpicture}
\;\;,\;\;
\begin{tikzpicture}[anchorbase, xscale=.35, yscale=.5]
	\draw [thick, myyellow] (0,-1) node[below, black] {$\graphE{9_2}$} to (1,0) to (0,2);
	\draw [thick, myyellow] (1,0) to (0,2);
	\draw [thick, myyellow] (1,0) to (0,4);
	\draw [thick, myyellow] (1,0) to (0,6);
	\draw [thick, myyellow] (0,2) to (-3,1) to (0,6);
	\draw [thick, myyellow] (0,2) to (-3,3) to (0,4);
	\draw [thick, myyellow] (0,4) to (-3,5) to (0,6);
	\draw [thick, densely dotted, myblue] (0,-1) to (-1,0) to (0,2);
	\draw [thick, densely dotted, myblue] (-1,0) to (0,2);
	\draw [thick, densely dotted, myblue] (-1,0) to (0,4);
	\draw [thick, densely dotted, myblue] (-1,0) to (0,6);
	\draw [thick, densely dotted, myblue] (0,2) to (3,1) to (0,6);
	\draw [thick, densely dotted, myblue] (0,2) to (3,3) to (0,4);
	\draw [thick, densely dotted, myblue] (0,4) to (3,5) to (0,6);
	\draw [thick, densely dashed, myred] (-3,1) to (-1,0) to (1,0) to (3,1);
	\draw [thick, densely dashed, myred] (-3,3) to (-1,0);
	\draw [thick, densely dashed, myred] (1,0) to (3,3);
	\draw [thick, densely dashed, myred] (-3,5) to (-1,0);
	\draw [thick, densely dashed, myred] (1,0) to (3,5);
	\draw [thick, densely dashed, myred] (-3,1) to (3,1);
	\draw [thick, densely dashed, myred] (-3,3) to (3,3);
	\draw [thick, densely dashed, myred] (-3,5) to (3,5);
	\node at (0,-1) {$\bulletg$};
	\node at (0,2) {$\bulletg$};
	\node at (0,4) {$\bulletg$};
	\node at (0,6) {$\bulletg$};
	\node at (1,0) {$\bulleto$};
	\node at (-3,1) {$\bulleto$};
	\node at (-3,3) {$\bulleto$};
	\node at (-3,5) {$\bulleto$};
	\node at (-1,0) {$\bulletp$};
	\node at (3,1) {$\bulletp$};
	\node at (3,3) {$\bulletp$};
	\node at (3,5) {$\bulletp$};
\end{tikzpicture}
\]
\[
\begin{tikzpicture}[anchorbase, xscale=.7, yscale=1]
	\draw [thick, myyellow] (1,1) to (0,2) to (1,3) to (3,3) to (4,4);
	\draw [thick, myyellow] (0,2) to (-2,2) to (-3,3) to (-2,4) to (0,4) to (1,3);
	\draw [thick, myyellow] (1,3) to (0,2.5) to [out=180, in=20] (-2,2);
	\draw [thick, myyellow] (0,2.5) to (-.35,3.25) to (0,4);
	\draw [thick, densely dotted, myblue] (-1,1) to (0,2) to (-1,3) to (-3,3) to (-4,4);
	\draw [thick, densely dotted, myblue] (0,2) to (2,2) to (3,3) to (2,4) to (0,4) to (-1,3);
	\draw [thick, densely dotted, myblue] (-1,3) to (0,2.5) to [out=0, in=160] (2,2);
	\draw [thick, densely dotted, myblue] (0,2.5) to (.35,3.25) to (0,4);
	\draw [thick, densely dashed, myred] (-1,1) to (-2,2) to (-1,3) to (1,3) to (2,2) to (1,1);
	\draw [thick, densely dashed, myred] (-4,4) to (-2,4) to (-1,3);
	\draw [thick, densely dashed, myred] (4,4) to (2,4) to (1,3);
	\draw [thick, densely dashed, myred] (-1,3) to (-.35,3.25) to (.35,3.25) to (1,3);
	\draw [thick, densely dashed, myred] (2,2) to [out=200, in=0] (0,1.5) to [out=180, in=340] (-2,2);
	\node at (0,.5) {$\graphE{9_3}$};
	\node at (0,2) {$\bulletgstart$};
	\node at (3,3) {$\bulletg$};
	\node at (-3,3) {$\bulletg$};
	\node at (0,4) {$\bulletg$};
	\node at (0,2.5) {$\bulletg$};
	\node at (1,1) {$\bulleto$};
	\node at (-2,2) {$\bulleto$};
	\node at (1,3) {$\bulleto$};
	\node at (-2,4) {$\bulleto$};
	\node at (4,4) {$\bulleto$};
	\node at (-.35,3.25) {$\bulleto$};
	\node at (2,2) {$\bulletp$};
	\node at (-1,1) {$\bulletp$};
	\node at (-1,3) {$\bulletp$};
	\node at (2,4) {$\bulletp$};
	\node at (-4,4) {$\bulletp$};
	\node at (.35,3.25) {$\bulletp$};
\end{tikzpicture}
\;\;,\;\;
\begin{tikzpicture}[anchorbase, xscale=.35, yscale=.5]
	\draw [thick, myyellow] (0,0) node[below, black] {$\graphE{9_4}$} to (2,2);
	\draw [thick, myyellow] (0,1) to (2,2) to (0,3) to (-3,5) to (0,4) to (2,2) to (0,6) to (-3,5);
	\draw [thick, myyellow] (0,3) to (2,7) to (0,4) to (2,7) to (0,6);
	\draw [thick, densely dotted, myblue] (0,0) to (-2,2);
	\draw [thick, densely dotted, myblue] (0,1) to (-2,2) to (0,3) to (3,5) to (0,4) to (-2,2) to (0,6) to (3,5);
	\draw [thick, densely dotted, myblue] (0,3) to (-2,7);
	\draw [thick, densely dotted, myblue] (0,4) to (-2,7);
	\draw [thick, densely dotted, myblue] (-2,7) to (0,6);
	\draw [thick, densely dashed, myred, double] (2,2) to (-2,2);
	\draw [thick, densely dashed, myred] (2,2) to (3,5) to (2,7) to (-2,7) to (-3,5) to (-2,2);
	\draw [thick, densely dashed, myred] (3,5) to (-3,5);
	\node at (0,0) {$\bulletgstart$};
	\node at (0,1) {$\bulletg$};
	\node at (0,3) {$\bulletg$};	
	\node at (0,4) {$\bulletg$};
	\node at (0,6) {$\bulletg$};	
	\node at (2,2) {$\bulleto$};
	\node at (-3,5) {$\bulleto$};
	\node at (2,7) {$\bulleto$};
	\node at (-2,2) {$\bulletp$};
	\node at (3,5) {$\bulletp$};
	\node at (-2,7) {$\bulletp$};
\end{tikzpicture}
\;\;,\;\;
\begin{tikzpicture}[anchorbase, xscale=.35, yscale=.5]
	\draw [thick, myyellow] (0,0) node[below, black] {$\graphE{21}$} to (1,1) to (0,2);
	\draw [thick, myyellow] (1,3) to (0,2) to (-2,2);
	\draw [thick, myyellow] (0,8) to (-1,7) to (0,6);
	\draw [thick, myyellow] (-1,5) to (0,6) to (2,6);
	\draw [thick, myyellow] (-1,5) to (2,4) to (1,3) to (-2,4) to (-1,5);
	\draw [thick, myyellow] (-5,3) to (-4,4) to (-1,5);
	\draw [thick, myyellow] (-2,2) to (-4,4);
	\draw [thick, myyellow] (5,5) to (4,4) to (1,3);
	\draw [thick, myyellow] (2,6) to (4,4);
	\draw [thick, myyellow] (-5,3) to (-2,4);
	\draw [thick, myyellow] (5,5) to (2,4);
	\draw [thick, densely dotted, myblue] (0,0) to (-1,1) to (0,2);
	\draw [thick, densely dotted, myblue] (-1,3) to (0,2) to (2,2);
	\draw [thick, densely dotted, myblue] (0,8) to (1,7) to (0,6);
	\draw [thick, densely dotted, myblue] (1,5) to (0,6) to (-2,6);
	\draw [thick, densely dotted, myblue] (1,5) to (-2,4) to (-1,3) to (2,4) to (1,5);
	\draw [thick, densely dotted, myblue] (5,3) to (4,4) to (1,5);
	\draw [thick, densely dotted, myblue] (2,2) to (4,4);
	\draw [thick, densely dotted, myblue] (-5,5) to (-4,4) to (-1,3);
	\draw [thick, densely dotted, myblue] (-2,6) to (-4,4);
	\draw [thick, densely dotted, myblue] (5,3) to (2,4);
	\draw [thick, densely dotted, myblue] (-5,5) to (-2,4);
	\draw [thick, densely dashed, myred] (-1,1) to (-2,2) to (-1,3) to (1,3) to (2,2) to (1,1) to (-1,1);
	\draw [thick, densely dashed, myred] (-1,7) to (-2,6) to (-1,5) to (1,5) to (2,6) to (1,7) to (-1,7);
	\draw [thick, densely dashed, myred] (-5,3) to (-5,5);
	\draw [thick, densely dashed, myred] (-1,3) to (-1,5);
	\draw [thick, densely dashed, myred] (1,3) to (1,5);
	\draw [thick, densely dashed, myred] (5,3) to (5,5);
	\draw [thick, densely dashed, myred] (-5,3) to (-1,3);
	\draw [thick, densely dashed, myred] (-5,5) to (-1,5);
	\draw [thick, densely dashed, myred] (5,3) to (1,3);
	\draw [thick, densely dashed, myred] (5,5) to (1,5);
	\node at (0,0) {$\bulletgstart$};
	\node at (0,2) {$\bulletg$};
	\node at (-2,4) {$\bulletg$};
	\node at (2,4) {$\bulletg$};
	\node at (-4,4) {$\bulletg$};
	\node at (4,4) {$\bulletg$};
	\node at (0,6) {$\bulletg$};
	\node at (0,8) {$\bulletg$};
	\node at (1,1) {$\bulleto$};
	\node at (-2,2) {$\bulleto$};
	\node at (1,3) {$\bulleto$};
	\node at (-1,5) {$\bulleto$};
	\node at (2,6) {$\bulleto$};
	\node at (-1,7) {$\bulleto$};
	\node at (5,5) {$\bulleto$};
	\node at (-5,3) {$\bulleto$};
	\node at (-1,1) {$\bulletp$};
	\node at (2,2) {$\bulletp$};
	\node at (-1,3) {$\bulletp$};
	\node at (1,5) {$\bulletp$};
	\node at (-2,6) {$\bulletp$};
	\node at (1,7) {$\bulletp$};
	\node at (-5,5) {$\bulletp$};
	\node at (5,3) {$\bulletp$};
\end{tikzpicture}
\]
\caption{The finite exceptional family of (generalized) type $\typeE$, indexed as indicated and 
denoted by $\graphE{e}$. (Note that there are four for $e=9$.)
}\label{fig:typeE}
\end{figure}

All the above graphs exist for color variations as well. Note further that
we have also indicated a starting vertex $\star$ in case such a choice is essential, 
i.e. in case different tricolorings give non-isomorphic tricolored graphs.

We point out that the above list was obtained from \cite[Section 4]{Oc} by excluding the ones 
that are not tricolorable.

\subsection{The spectra}\label{subsec:gen-D-spectra}

The spectra of the graphs from \fullref{subsec:gen-D-list} are 
known, cf. \cite[Section 2]{EP}. Let us sketch 
how they look like. To this end, recall vanishing set $\vanset{e}$ of level $e$ 
and the discoid $\discoid$ from \fullref{subsec:roots-poly}.
\newline

\noindent\textit{\setword{`\fullref{subsec:gen-D-spectra}.Claim$\typeA$'}{A-spectra}.}
$z\in S_{\graphA{e}^{\fu}}$ if and only if $(z,\overline{z})\in\vanset{e}$.
\newline

\noindent\textit{Proof(Sketch) of \ref{A-spectra}.} 
Observe that $\graphA{e}^{\fu}$ and $\graphA{e}^{\fud}$
are the graphs encoding the action of $[\fu\otimes\underline{\phantom{a}}\,]$, 
respectively of $[\fud\otimes\underline{\phantom{a}}\,]$, on $\GGc{\slqmod}$, 
and the claim follows.
\newline

\noindent\textit{\setword{`\fullref{subsec:gen-D-spectra}.Claim$\typeD\typeE$'}{ADE-spectra}.}
We have, without counting multiplicities of the zero eigenvalue, $S_{\Gg^{\fu}}\subset S_{\graphA{e}^{\fu}}$ 
for any $\Gg$ as in \fullref{subsec:gen-D-list}.
\newline

\noindent\textit{Proof(Sketch) of \ref{ADE-spectra}.} 
For graphs of type $\graphD{e}$ this holds by virtue of their construction, using the $\zeethree$-symmetry of the $\graphA{e}$ graphs. 
In fact, one can get the eigenvalues of $A(\graphD{e})$ from the ones of $A(\graphA{e})$ 
by deleting two out of every three eigenvalues and adding 
two additional eigenvalues $0$, e.g. for $e=3$:
\[
\begin{tikzpicture}[anchorbase, scale=.6, tinynodes]
\draw[thin, marked=.0, marked=.166, marked=.333, marked=.666, marked=.833, marked=1.0, white] (0,-3) to (0,3);
\draw[thin, marked=.0, marked=.166, marked=.333, marked=.666, marked=.833, marked=1.0, white] (-3,0) to (3,0);
\draw[thick, white, fill=mygreen, opacity=.2] (3,0) to [out=170, in=315] (-1.5,2.5) to [out=290, in=70] (-1.5,-2.5) to [out=45, in=190] (3,0);
\draw[thin, densely dotted, ->, >=stealth] (-3.5,0) 
to (-3.35,0) node [above] {$-3$}
to (3.2,0) node [above] {$3$}
to (3.5,0) node[right] {$x$};
\draw[thin, densely dotted, ->, >=stealth] (0,-3.5) 
to (0,-3.2) node [right] {$-3$}
to (0,3.2) node [right] {$3$}
to (0,3.5) node[above] {$y$};
\draw[thick] (3,0) to [out=170, in=315] (-1.5,2.5) to [out=290, in=70] (-1.5,-2.5) to [out=45, in=190] (3,0);
\node at (3,3) {$\C$};
\node[myblue] at (0,0) {$\bullet$};
\node[myblue] at (.25,.25) {$1$};
\node[myblue] at (2,0) {$\bullet$};
\node[myblue] at (-1,1.73) {$\bullet$};
\node[myblue] at (-1,-1.73) {$\bullet$};
\node[myblue] at (-.77,.64) {$\bullet$};
\node[myblue] at (-.77,-.64) {$\bullet$};
\node[myblue] at (-.17,.98) {$\bullet$};
\node[myblue] at (-.17,-.98) {$\bullet$};
\node[myblue] at (.94,.34) {$\bullet$};
\node[myblue] at (.94,-.34) {$\bullet$};
\draw[very thin, densely dashed, myblue] (2,0) to (.94,.34) to (-.17,.98) to (-1,1.73) to (-.77,.64) to (-.77,-.64) to (-1,-1.73) to (-.17,-.98) to (.94,-.34) to (2,0);
\node[myblue] at (2.75,1.5) {spectrum of $\graphA{3}^{\fu}$};
\end{tikzpicture}
\quad
\rightsquigarrow
\quad
\begin{tikzpicture}[anchorbase, scale=.6, tinynodes]
\draw[thin, marked=.0, marked=.166, marked=.333, marked=.666, marked=.833, marked=1.0, white] (0,-3) to (0,3);
\draw[thin, marked=.0, marked=.166, marked=.333, marked=.666, marked=.833, marked=1.0, white] (-3,0) to (3,0);
\draw[thick, white, fill=mygreen, opacity=.2] (3,0) to [out=170, in=315] (-1.5,2.5) to [out=290, in=70] (-1.5,-2.5) to [out=45, in=190] (3,0);
\draw[thin, densely dotted, ->, >=stealth] (-3.5,0) 
to (-3.35,0) node [above] {$-3$}
to (3.2,0) node [above] {$3$}
to (3.5,0) node[right] {$x$};
\draw[thin, densely dotted, ->, >=stealth] (0,-3.5) 
to (0,-3.2) node [right] {$-3$}
to (0,3.2) node [right] {$3$}
to (0,3.5) node[above] {$y$};
\draw[thick] (3,0) to [out=170, in=315] (-1.5,2.5) to [out=290, in=70] (-1.5,-2.5) to [out=45, in=190] (3,0);
\node at (3,3) {$\C$};
\node[mypurple] at (0,0) {$\bullet$};
\node[mypurple] at (.25,.25) {$3$};
\node[mypurple] at (2,0) {$\bullet$};
\node[mypurple] at (-1,1.73) {$\bullet$};
\node[mypurple] at (-1,-1.73) {$\bullet$};
\draw[very thin, densely dashed, mypurple] (2,0) to [out=170, in=315] (-1,1.73) to [out=290, in=70] (-1,-1.73) to [out=45, in=190] (2,0);
\node[mypurple] at (2.75,1.5) {spectrum of $\graphD{3}^{\fu}$};
\end{tikzpicture}
\]
The case of $\graphC{e}$ can be shown similarly (precisely which eigenvalues of $S_{\graphA{e}}$ also 
belong to $S_{\graphC{e}}$ depends on $e\bmod 3$), with a prototypical 
example given by:
\[
\begin{tikzpicture}[anchorbase, scale=.6, tinynodes]
\draw[thin, marked=.0, marked=.166, marked=.333, marked=.666, marked=.833, marked=1.0, white] (0,-3) to (0,3);
\draw[thin, marked=.0, marked=.166, marked=.333, marked=.666, marked=.833, marked=1.0, white] (-3,0) to (3,0);
\draw[thick, white, fill=mygreen, opacity=.2] (3,0) to [out=170, in=315] (-1.5,2.5) to [out=290, in=70] (-1.5,-2.5) to [out=45, in=190] (3,0);
\draw[thin, densely dotted, ->, >=stealth] (-3.5,0) 
to (-3.35,0) node [above] {$-3$}
to (3.2,0) node [above] {$3$}
to (3.5,0) node[right] {$x$};
\draw[thin, densely dotted, ->, >=stealth] (0,-3.5) 
to (0,-3.2) node [right] {$-3$}
to (0,3.2) node [right] {$3$}
to (0,3.5) node[above] {$y$};
\draw[thick] (3,0) to [out=170, in=315] (-1.5,2.5) to [out=290, in=70] (-1.5,-2.5) to [out=45, in=190] (3,0);
\node at (3,3) {$\C$};
\node[myblue] at (-.80,0) {$\bullet$};
\node[myblue] at (.56,0) {$\bullet$};
\node[myblue] at (2.27,0) {$\bullet$};
\node[myblue] at (-1.12,1.95) {$\bullet$};
\node[myblue] at (-1.12,-1.95) {$\bullet$};
\node[myblue] at (-.28,.48) {$\bullet$};
\node[myblue] at (-.28,-.48) {$\bullet$};
\node[myblue] at (.40,.69) {$\bullet$};
\node[myblue] at (.40,-.69) {$\bullet$};
\node[myblue] at (-.50,1.32) {$\bullet$};
\node[myblue] at (-.50,-1.32) {$\bullet$};
\node[myblue] at (-.90,1.09) {$\bullet$};
\node[myblue] at (-.90,-1.09) {$\bullet$};
\node[myblue] at (1.40,.23) {$\bullet$};
\node[myblue] at (1.40,-.23) {$\bullet$};
\draw[very thin, densely dashed, myblue] (.56,0) to (-.28,.48) to (-.28,-.48) to (.56,0);
\draw[very thin, densely dashed, myblue] (2.27,0) to (1.40,.23) to (.40,.69) to (-.50,1.32) to (-1.12,1.95) to (-.90,1.09) to (-.80,0) to (-.90,-1.09) to (-1.12,-1.95) to (-.50,-1.32) to (.40,-.69) to (1.40,-.23) to (2.27,0);
\node[myblue] at (2.75,1.5) {spectrum of $\graphA{4}^{\fu}$};
\end{tikzpicture}
\quad
\rightsquigarrow
\quad
\begin{tikzpicture}[anchorbase, scale=.6, tinynodes]
\draw[thin, marked=.0, marked=.166, marked=.333, marked=.666, marked=.833, marked=1.0, white] (0,-3) to (0,3);
\draw[thin, marked=.0, marked=.166, marked=.333, marked=.666, marked=.833, marked=1.0, white] (-3,0) to (3,0);
\draw[thick, white, fill=mygreen, opacity=.2] (3,0) to [out=170, in=315] (-1.5,2.5) to [out=290, in=70] (-1.5,-2.5) to [out=45, in=190] (3,0);
\draw[thin, densely dotted, ->, >=stealth] (-3.5,0) 
to (-3.35,0) node [above] {$-3$}
to (3.2,0) node [above] {$3$}
to (3.5,0) node[right] {$x$};
\draw[thin, densely dotted, ->, >=stealth] (0,-3.5) 
to (0,-3.2) node [right] {$-3$}
to (0,3.2) node [right] {$3$}
to (0,3.5) node[above] {$y$};
\draw[thick] (3,0) to [out=170, in=315] (-1.5,2.5) to [out=290, in=70] (-1.5,-2.5) to [out=45, in=190] (3,0);
\node at (3,3) {$\C$};
\node[mypurple] at (-.80,0) {$\bullet$};
\node[mypurple] at (.56,0) {$\bullet$};
\node[mypurple] at (2.27,0) {$\bullet$};
\node[mypurple] at (-1.12,1.95) {$\bullet$};
\node[mypurple] at (-1.12,-1.95) {$\bullet$};
\node[mypurple] at (-.28,.48) {$\bullet$};
\node[mypurple] at (-.28,-.48) {$\bullet$};
\node[mypurple] at (.40,.69) {$\bullet$};
\node[mypurple] at (.40,-.69) {$\bullet$};
\draw[very thin, densely dashed, mypurple] (.56,0) to (-.28,.48) to (-.28,-.48) to (.56,0);
\draw[very thin, densely dashed, mypurple] (2.27,0) to (.40,.69) to (-1.12,1.95) to (-.80,0) to (-1.12,-1.95) to (.40,-.69) to (2.27,0);
\node[mypurple] at (2.75,1.5) {spectrum of $\graphC{4}^{\fu}$};
\end{tikzpicture}
\]
For the exceptional type $\typeE$ graphs the claim can be checked case-by-case.
\newline

In particular, the spectra of the generalized $\ADE$ Dynkin diagrams 
are all inside $\discoid$.

\begin{example}\label{example:triangle3}
The spectra of $\graphA{1}^{\fu},\graphA{2}^{\fu}$ and $\graphA{3}^{\fu}$ 
are given in \fullref{example:triangle1}. (Again, these should be 
compared to \fullref{example:plot-zeros}.)
Additionally we have
\begin{gather*}
S_{\graphD{3}^{\fu}}
=S_{\graphC{3}^{\fu}}
=
\left\{
\text{roots of }
X^3
(X-2)
(X^2 + 2X + 4)
\right\}
\end{gather*}
Forgetting the multiplicity of zero, we get the inclusion of the corresponding spectra.
\end{example}

\begin{example}\label{example:sce}
The graphs $\graphD{3}^{\fu}$ and $\graphC{3}^{\fu}$ have the same 
spectrum, cf. \fullref{example:triangle3}. 
However, they are not isomorphic as graphs, e.g. $\graphD{3}$ has a double-edge and $\graphC{3}^{\fu}$ does not. 
Both observations are true in general for $\graphD{e}$ and $\graphC{e}$.
\end{example}

\subsection{Zuber's classification problem and \texorpdfstring{\fullref{problem:classification}}{CP}}\label{subsec:gen-D-zuber}

Zuber (based on joint work with Di Francesco \cite{DFZ} and 
Petkova \cite{PZ}) introduced the notion of a generalized $\ADE$
Dynkin diagram.
These graphs appear in various 
disguises in the literature, e.g. 
in conformal field
theories, integrable lattice models, topological field theories 
for $3$-manifold invariants and subfactor theory.

Zuber wrote down a list of six axioms 
which these graphs should satisfy, see \cite[Section 1.2]{Zu}, 
and asked for the classification of such graphs. 
In \cite{Oc}, Ocneanu argued that Zuber's classification problem is related to the classification problem 
of the so-called quantum subgroups of $\mathrm{SU}(N)$. He also proposed a list of graphs which should solve 
Zuber's classification problem. The ones that are tricolorable are the graphs 
that we reproduced in \fullref{subsec:gen-D-list}.
However, we already saw in \fullref{theorem:low-level-classification} 
that we get solutions which are not on Ocneanu's list, 
so we do not know whether \fullref{problem:classification} 
and Zuber's classification problem are the same or not, or even how they are related.
\addtocontents{toc}{\protect\setcounter{tocdepth}{2}}
%
\bibliographystyle{alphaurl}
\bibliography{trihedral-alg}
\end{document}